\newcommand{\N}{\mathbb{N}}
\newcommand{\Z}{\mathbb{Z}}
\newcommand{\Q}{\mathbb{Q}}
\newcommand{\R}{\mathbb{R}}
\newcommand{\C}{\mathbb{C}}
\renewcommand{\S}{\mathbb{S}}
\newcommand{\indexone}{j}
\newcommand{\indextwo}{k}
\newcommand{\indexthree}{l}
\newcommand{\indexfour}{i}
\newcommand{\Ind}[1]{\mathbbm{1}_{\text{#1}}}
\newcommand{\eps}{\varepsilon}
\newcommand{\Lp}{L^p}
\newcommand{\LpCn}{L^p(\T^d, \, \C^N)}
\newcommand{\LtwoTCN}{L^2(\T^d, \, \C^N)}
\newcommand{\ltwoZCN}{l^2(\Z^d, \, \C^N)}
\newcommand{\T}{\mathbb{T}}
\newcommand{\half}{\frac{1}{2}}
\newcommand{\intT}{\int_{\T^d}}
\newcommand*\colvec[1]{\begin{pmatrix}#1\end{pmatrix}}
\newcommand*\LpCnnorm[2]{\| #2 \|_{L^#1(\T^d, \, \C^N)}}
\newcommand*\pnorm[2]{\| #2 \|_{L^#1(\T^d)}}
\newcommand*\Wkp[2]{W^{#1,#2}(\T^d)}
\newcommand*\WkpCn[2]{W^{#1,#2}(\T^d, \, \C^N)}
\newcommand*\Lpnorm[2]{\|#2\|_{L^#1(\T^d)}}
\newcommand\restr[2]{{% we make the whole thing an ordinary symbol
		\left.\kern-\nulldelimiterspace % automatically resize the bar with \right
		#1 % the function
		\vphantom{\big|} % pretend it's a little taller at normal size
		\right|_{#2} % this is the delimiter
}}
\def\Xint#1{\mathchoice
	{\XXint\displaystyle\textstyle{#1}}%
	{\XXint\textstyle\scriptstyle{#1}}%
	{\XXint\scriptstyle\scriptscriptstyle{#1}}%
	{\XXint\scriptscriptstyle\scriptscriptstyle{#1}}%
	\!\int}
\def\XXint#1#2#3{{\setbox0=\hbox{$#1{#2#3}{\int}$ }
		\vcenter{\hbox{$#2#3$ }}\kern-.6\wd0}}
\def\dashint{\Xint-}
\date{August 16, 2021}
\begin{document}
	% Changes the chapter name of the bibliography, \bibname for scrbook
	\renewcommand{\bibname}{Bibliography}
	
	% Theorembox
	\tcolorboxenvironment{theorem}{breakable, enhanced jigsaw, colback={shadecolor},
		sharp corners, frame hidden}
	
	% Change to one-sided for the title
	\KOMAoptions{twoside=false}
	
	\maketitle
	
	% Change back to two-sided if necessary (print version)
	% \KOMAoptions{twoside=true}
	
	% Start binding left instead of right (default)
	\let\tmp\oddsidemargin
	\let\oddsidemargin\evensidemargin
	\let\evensidemargin\tmp
	\reversemarginpar
	
	% Dedication
	\thispagestyle{empty}
	\vspace*{2cm}
	\noindent
	At first, I would like to express my gratitude to my advisors Dr. Christina Lienstromberg and Prof. Dr. Juan J. L. Velázquez. They always took time to provide guidance and feedback, which led to many fruitful discussions.\newline
	\newline
	I would also like to thank my parents and my sister. The intense work on my thesis during the last months would not have been possible without their continuous support. 
	\newpage
	
	% Bibliography
	\tableofcontents
	\newpage
	
	\listoffigures
	
	% print out layout setup and LaTeX parameters
	%\newpage
	%\layout

	\chapter{Introduction}\label{chapter_Introduction}
	
	Transport-reaction equations arise in the mathematical modeling of the movement of populations. In contrast to parabolic equations, hyperbolic models realistically account for a finite propagation speed of organisms. Therefore, scientists increasingly focus on hyperbolic and kinetic models \cite[p. 36]{eftimie2012hyperbolic}. \newline
	A particularly interesting question is whether mathematical models are able to describe the variety of patterns observed in nature: rippling waves in myxobacteria swarms, milling schools of fish, swarms of insects or pedestrian traffic jams, to name only a few of them \cite{eftimie2012hyperbolic}. Diffusive-generated patterns have originally been studied by Turing in his pioneer work \cite{turing1990chemical} and his idea has been applied many times, for instance in the context of morphogenesis \cite{gierer1972theory}. Patterns generated by transport on the other hand are less well understood. \newline
	
	The aim of this master's thesis is to study the linear transport-reaction system
	\begin{equation}\label{intro_transport_reaction_eq}
		\partial_t \colvec{u_1 \\ \vdots \\ u_N}(t,\mathbf{x}) + \colvec{\mathbf{v_1} \cdot \nabla u_1 \\ \vdots \\ \mathbf{v_N} \cdot \nabla u_N}(t,\mathbf{x}) = B \colvec{u_1 \\ \vdots \\ u_N}(t,\mathbf{x})
	\end{equation}
	on the $d$-dimensional hypercube $[0,1]^d$ with periodic boundary conditions, i.e. on the $d$-dimensional torus $\T^d.$ Concerning notation and interpretation, the parameter $N$ is the number of population subgroups and $\mathbf{v_1},\cdots, \mathbf{v_N} \in \R^d$ are the transport directions of each component. The matrix $B\in \R^{N\times N}$ describes linear interactions of the subgroups.\newline
	Typically, \eqref{intro_transport_reaction_eq} appears as a linearization of the non-linear partial differential equation
	\begin{equation}\label{intro_transport_reaction_eq_nonlinear}
		\partial_t \colvec{u_1 \\ \vdots \\ u_N}(t,\mathbf{x}) + \colvec{\mathbf{v_1} \cdot \nabla u_1 \\ \vdots \\ \mathbf{v_N} \cdot \nabla u_N}(t,\mathbf{x}) = F(u(t,\mathbf{x})),
	\end{equation}
	where $u=(u_1,\cdots,u_N)$ and $F\colon \R^N \to \R^N$ is a sufficiently smooth function. There is a large amount of research which deals with the question of stability of a given equilibrium point of \eqref{intro_transport_reaction_eq_nonlinear}; see for instance \cite{hillen1996turing,lutscher2002emerging,Primi2013}. Many times, however, only the one dimensional case $d=1$ is studied. \newline
	The transport-reaction equation \eqref{intro_transport_reaction_eq} can also be interpreted as a velocity discretization of
	\begin{equation}\label{intro_integral_interpret}
		\partial_t u(t,\mathbf{x}, \mathbf{v}) + \mathbf{v} \cdot \nabla_x u(t,\mathbf{x},\mathbf{v}) = - \sigma(\mathbf{v}) + \int_V \kappa(\mathbf{v}, \mathbf{v}') u(t,\mathbf{x},\mathbf{v}) \, d\mathbf{v}',
	\end{equation}
	where the velocity $\mathbf{v}$ can take values in a set $V\subseteq \R^d.$ The function $\sigma\colon V \to \R_{\geq 0}$ describes collisions causing absorption \cite{Batkai2017} and corresponds to the diagonal entries of $B$. The so called scattering kernel $\kappa\colon V \times V \to \R_{\geq 0}$ governs the transition of particles with incoming velocity $\mathbf{v}'$ to particles with outgoing velocity $\mathbf{v}$ \cite{Batkai2017}. It corresponds to the off-diagonal entries of $B.$ The linear Boltzmann-type equation \eqref{intro_integral_interpret} is, for example, connected to velocity-jump processes and neutron transport, see \cite{hillen2002hyperbolic,eftimie2012hyperbolic,Batkai2017}. \newline
	
	Throughout the whole thesis, we try to be as general as possible in the sense that we consider the transport-reaction equation \eqref{intro_transport_reaction_eq} on the spaces $\LpCn$ for all $1\leq p <\infty$ and all dimensions $d\in\N$ simultaneously. Whenever we need to restrict ourselves to $p=2$ or $d=1,$ we try to at least indicate how future research can approach the other cases.\newline
	The thesis is structured as follows: \Cref{chapter_transport_semigroup} and \Cref{chapter_transport_reaction_SG} deal with well-posedness of \eqref{intro_transport_reaction_eq} and basic properties such as positivity and mass conservation. We use a semigroup approach and show that
	\begin{equation*}
		-A_p + B_p = -\colvec{\mathbf{v_1} \cdot \nabla u_1 \\ \vdots \\ \mathbf{v_N} \cdot \nabla u_N} + B \colvec{u_1 \\ \vdots \\ u_N}
	\end{equation*}
	with an appropriately chosen domain generates a semigroup on $\LpCn,$ which we call the transport-reaction semigroup $(R_p(t))_{t\geq0}.$ We continue our study with a detailed spectral analysis of both the generator $-A_p+B_p$ and the transport-reaction semigroup $(R_p(t))_{t\geq0}$. \Cref{chapter_side_length_neumann} deals with the incorporation of different side lengths $L>0$ of the torus $\T^d$ and Neumann boundary conditions for symmetric models in dimension $d=1$. In \Cref{chapter_application}, we apply our results to, for example, a Goldstein-Kac-type equation \cite{kac1974stochastic} and a sophisticated model from \cite{hillen1996turing}. Finally, \Cref{chapter_pattern_formation} is devoted to pattern formation and transport-driven instabilities. \newline
	We tried our best to write every chapter as self-contained as possible and to motivate all sections of the thesis with a detailed introduction at the beginning of each chapter. \newline	
	
	One main contribution of the thesis is the weak spectral mapping theorem
	\begin{equation*}
		\sigma(R_2(t)) = \overline{e^{t\sigma(-A_2+B_2)}} \quad \text{ for all } t\in\R
	\end{equation*}
	and a description of $\sigma(-A_2+B_2)$ in terms of roots of polynomials of order $N$ for all dimensions $d\in\N$. This result allows a very detailed study of the long-term behavior of solution to \eqref{intro_transport_reaction_eq}. To the best of our knowledge, all current results deal with the one-dimensional case $d=1.$ The proofs are based on Fourier transform and so called matrix multiplication operators from \cite{holderrieth1991matrix}.\newline
	The other main contribution is the chapter on pattern formation and transport-driven instabilities. We introduce the notion of hyperbolic instabilities, which allows us to prove a dichotomy: every transport-driven instability in $d=1$ and with arbitrary number of components $N$ is either a Turing pattern or a hyperbolic instability. Intuitively, hyperbolic instabilities correspond to increasingly oscillating and chaotic behavior. We do not hold expertise in biological modeling but we believe that our finding reflects the nature of movement of populations: either there is a regular pattern or one observes bustle. As a byproduct of our analysis, we find an algebraic condition in terms of the transport speeds $v_1,\cdots,v_N$ and $B$ which ensures the existence of Turing patterns. The mathematical details of this chapter are based on perturbation theory from \cite{kato2013perturbation}. \newline

	\section{Function Spaces and Notation}
		
	In order to rigorously define standard $\Lp$ spaces for $1\leq p < \infty$ on a 
	$d$-dimensional torus $\T^d$, we follow the approach from \cite[p. 238f.]{folland1999real} and consider the quotient map
	\begin{equation*}
		q\colon [0,1]^d \rightarrow \T^d,
	\end{equation*}
	which identifies opposite sides of the hypercube. This mapping induces the
	measurable space $(\T^d, \, q_\sharp\mathcal{B}([0,1]^d), \,
	q_\sharp\mathcal{L}^d)$ where $\mathcal{B}([0,1]^d)$ is the Borel
	$\sigma$-algebra on the hypercube and $\mathcal{L}^d$ is the $d$-dimensional
	Lebesgue measure. Moreover, the canonical norm on $\Lp(\T^d, \C^N)$ is given by 
	\begin{equation}\label{definition_Lp_norm}
		\LpCnnorm{p}{u} \coloneqq \left( \sum_{\indexone=1}^{N}
		\pnorm{p}{u_\indexone}^2\right)^\half
	\end{equation}
	for $u=(u_1,\cdots,u_N) \in \LpCn.$ To give a better visualization, it should be mentioned that functions on $\T^d$ may be considered as periodic functions on $\R^d$ or as functions on $[0,1)^d.$
	The more appropriate point of view will follow by the context when it matters
	\cite[p. 238f.]{folland1999real}. Generally, we are interested in functions
	$u\colon \R_{\geq0} \times \T^d \to \C^N,$ where the value of $u$ at $(t,\mathbf{x})$  is for instance the concentration of population subgroups at time $t$. The reason we are
	working with complex-valued functions is that we approach the transport-reaction equation \eqref{intro_transport_reaction_eq} from a
	semigroup and spectral theory perspective. This may be counterintuitive at first
	glance, but ultimately the only difference is that also complex initial functions are
	allowed and that no inconsistencies in the spectral analysis occur. Given
	\textit{real} initial values $u_0\colon \T^d \to \R^N,$ every solution of the
	equations we study will also be real valued. \newline
	
	It should also be mentioned that we almost always mean strongly continuous (semi-)group, when we use the term (semi-)group. A relatively detailed introduction to semigroup theory is given in Appendix \ref{appendix_semigroup_theory} to Appendix \ref{appendix_Long_time_behavior_SG}. \newline
	Last but not least, the thesis contains a list of symbols, which can be found at the very end.

	\chapter{The Transport Semigroup}\label{chapter_transport_semigroup}
	
	The first questions regarding the transport-reaction model
	\eqref{intro_transport_reaction_eq} are to ask for existence and uniqueness of
	solutions. This chapter is devoted to answer these questions in the non-reacting case $B=0$ by using a semigroup approach. In the process, we study the domain of the generator of the
	transport semigroup and identify $C_c^\infty(\T^d, \C^N)$ as a core. The linear reactions in \eqref{intro_transport_reaction_eq} are incorporated in the next chapter.\newline	
	
	Throughout the whole chapter, we assume that the dimension $d\in\N$, the number
	of components $N\in\N$, the transport directions
	$\mathbf{v_1},\cdots,\mathbf{v_N}\in \R^d$ and $1\leq p < \infty$ are
	arbitrarily chosen fixed parameters, if not stated otherwise. We work on the
	Banach space $\LpCn$ with norm $\LpCnnorm{p}{\cdot}$, which is defined in
	\eqref{definition_Lp_norm}. \newline
	
	We start by introducing the transport semigroup on $\LpCn$ explicitly via
	\begin{equation}\label{transport_semigroup_definition}
		(T(t)u)(\mathbf{x}) \coloneqq \colvec{u_1(\mathbf{x} - t \mathbf{v_1}) \\
		\vdots \\ u_N(\mathbf{x} - t\mathbf{v_N})}
	\end{equation}
	for $t\geq0$, $N\in\N$ directions $\mathbf{v_1}, \cdots, \mathbf{v_N} \in \R^d$
	and $u = (u_1,\cdots,u_N) \in \LpCn.$ Strictly speaking, we should include the indices $d$ and $p$
	into the notation as the semigroup depends on the Banach space, but we will
	often drop these two indices to improve readability.
	
	\begin{lemma}
		$(T(t))_{t\geq0}$ defines a contraction semigroup on $\LpCn.$
	\end{lemma}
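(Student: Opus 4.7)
The plan is to verify the four defining properties of a strongly continuous contraction semigroup on $\LpCn$: namely $T(0)=I$, the semigroup law $T(t+s)=T(t)T(s)$, the contraction estimate $\LpCnnorm{p}{T(t)u}\leq\LpCnnorm{p}{u}$, and strong continuity at $t=0$.

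The identity at $t=0$ is immediate from \eqref{transport_semigroup_definition}. For the semigroup law, I would work componentwise: for each $\indexone$, writing $u_\indexone(\mathbf{x}-(t+s)\mathbf{v_\indexone}) = u_\indexone((\mathbf{x}-t\mathbf{v_\indexone})-s\mathbf{v_\indexone})$, so that $T(t+s)u = T(t)(T(s)u)$. Here I should briefly remark that although $u_\indexone$ is defined only as an equivalence class on $\T^d$, the translation $u_\indexone(\cdot - t\mathbf{v_\indexone})$ is well-defined as an element of $\Lp(\T^d)$ because we identify $u_\indexone$ with its $1$-periodic extension to $\R^d$ before shifting, and the resulting function descends back to $\T^d$.

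For the contraction property I would use translation invariance of the Lebesgue (Haar) measure on the torus: for each component,
\begin{equation*}
	\pnorm{p}{u_\indexone(\cdot - t\mathbf{v_\indexone})}^p = \intT |u_\indexone(\mathbf{x}-t\mathbf{v_\indexone})|^p\, d\mathbf{x} = \pnorm{p}{u_\indexone}^p,
\end{equation*}
so plugging into the definition \eqref{definition_Lp_norm} of the mixed $\LpCn$-norm gives $\LpCnnorm{p}{T(t)u}=\LpCnnorm{p}{u}$, which is actually an isometry and in particular a contraction.

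The main obstacle, as usual for translation semigroups, is strong continuity at $t=0$. Since
\begin{equation*}
	\LpCnnorm{p}{T(t)u-u}^2 = \sum_{\indexone=1}^N \pnorm{p}{u_\indexone(\cdot - t\mathbf{v_\indexone}) - u_\indexone}^2,
\end{equation*}
it suffices to show $\pnorm{p}{u_\indexone(\cdot - t\mathbf{v_\indexone})-u_\indexone} \to 0$ as $t\to 0$ for each $\indexone$. I would first establish this on the dense subspace $C(\T^d)\subset \Lp(\T^d)$, where continuity of $u_\indexone$ on the compact set $\T^d$ gives uniform continuity and hence uniform (and therefore $\Lp$) convergence of the translates. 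The general case follows by a standard $\eps/3$-argument, using the contraction property established above to control the approximation errors uniformly in $t$. Combining the four items yields the claim.
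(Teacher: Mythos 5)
Your proof is correct and follows essentially the same route as the paper: verify the algebraic semigroup identities directly (using the periodic extension to handle modular addition), establish $L^p$-norm preservation via translation invariance of the measure on $\T^d$, and derive strong continuity from the contraction estimate plus a density argument on a class of nice functions. The paper merely sketches this by citing the well-known $\R^d$ case, while you fill in the details, but there is no substantive difference in approach.
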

	\begin{proof}
		The lemma with domain $\R^d$ instead of $\T^d$ is a standard example for a
		contraction semigroup and strong continuity follows by the density of
		$\C_c^\infty(\R^d, \, \C^N);$ see for example \cite[Proposition 8.5, p. 238]{folland1999real}. On a torus, the proof works similarly. A density argument
		and Fubini can be used to show
		\begin{equation*}
			\intT |u_\indexone(\mathbf{x})|^p \, d\mathbf{x} = \intT |u_\indexone(\mathbf{x}-t\mathbf{v_\indexone})|^p \, d\mathbf{x}
		\end{equation*}
		for all $\indexone=1,\cdots,N$ and all $t\geq0.$ Moreover, the semigroup
		property is a consequence of 
		\begin{equation*}
			((a \mod 1) + b) \mod 1 = (a+b) \mod 1 \qquad \text{for all } a, \,b\in\R.
		\end{equation*}
	\end{proof}

	In the following, we denote the generator of the transport semigroup $(T(t))_{t\geq0}$ by $(-A, \, D(-A)).$
	Similarly to the semigroup, the generator depends on $d$ and $p$. This should be
	kept in mind for later chapters on the spectral analysis of the generator. In
	this chapter, however, the dependence does not play a significant role, which is
	why we drop the index. By definition, it holds
	\begin{equation}\label{generator_by_definition}
		D(-A) = \left\{ u \in \LpCn \, \colon \, \lim_{t \searrow 0}
		\frac{u_\indexone(\cdot - t\mathbf{v_\indexone}) - u_\indexone(\cdot)}{t} \text{
		exists in } \Lp(\T^d) \text{ for all } \indexone=1,\cdots,N \right\}.
	\end{equation}
	For continuously differentiable functions $u$, the limits in $L^p(\T^d)$ are simply given by 
	\begin{equation*}
		\lim_{t \searrow 0}	\frac{u_\indexone(\cdot - t\mathbf{v_\indexone}) - u_\indexone(\cdot)}{t} = - \mathbf{v}_\indexone \cdot \nabla u_\indexone
	\end{equation*}
	for all $\indexone=1,\cdots,N.$ We start with a functional analytic
	approach to extend this result to weakly differentiable functions $u\in
	W^{1,p}(\T^d, \, \C^N)$. Studying convergence of difference quotients with
	respect to the $L^p$-norm is very common and can for example be found in
	\cite[Part 2; 5.8.2, p. 293f.]{evans10}. Nevertheless, we decided to include this
	approach because in the literature, the arguments are typically only given for the
	standard unit vectors as directions. In our case, we have arbitrary directions
	$\mathbf{v_1},\cdots,\mathbf{v_N} \in \R^d.$ This functional analytic
	perspective is somewhat technical in the case $p=1$ since $L^1$ is not
	reflexive. Therefore, these proofs also motivate a pure semigroup approach: on
	$\T^d$, abstract semigroup theory can be applied to obtain a shorter and easier
	proof for the $L^p$-convergence of the difference quotients.\newline
	
	Consider a function $u \in \Lp(\T^d)$ and define the difference quotients
	\begin{equation}\label{definition_difference_quotient}
		D_\indextwo^h u(\cdot) \coloneqq \frac{u(\cdot + h\mathbf{e_\indextwo} ) -
		u(\cdot)}{h}
	\end{equation}
	for $h \in \R $ and $\indextwo = 1,\cdots, d.$ In this definition,
	$\mathbf{e_\indextwo}$ is the $k$-th standard unit vector of $\R^d.$ The next lemma
	deals with the limit behavior as $h\to 0$ (note that $h$ is not assumed to have
	a particular sign).
	
	\begin{lemma}\label{lemma_convergence_unit_difference_quotients}
		Let $u \in \Lp(\T^d)$ and let $k=1,\cdots,d.$
		\begin{enumerate}
			\item
			Let $D_\indextwo^h u \rightharpoonup v$ in $\Lp(\T^d)$. Then the
			$\indextwo$-th weak derivative exists in $\Lp(\T^d)$ and $\partial_\indextwo u =
			v.$
			\item
			Conversely, if $u \in \Wkp{1}{p}$, it follows that $D_\indextwo^h u \to
			\partial_\indextwo u$ in $\Lp(\T^d)$ for $p>1$ and $D_\indextwo^h u
			\rightharpoonup \partial_\indextwo u$ for $p=1$.
		\end{enumerate}
		All convergences are with respect to $h \to 0.$
	\end{lemma}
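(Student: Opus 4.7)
For Part 1, the plan is to test the defining weak convergence against an arbitrary smooth function $\varphi \in C^\infty(\T^d)$. A discrete integration by parts using the translation invariance of the Lebesgue measure on $\T^d$ yields
\begin{equation*}
	\int_{\T^d} D_\indextwo^h u(\mathbf{x})\, \varphi(\mathbf{x})\, d\mathbf{x} = -\int_{\T^d} u(\mathbf{x})\, \frac{\varphi(\mathbf{x}) - \varphi(\mathbf{x} - h \mathbf{e_\indextwo})}{h}\, d\mathbf{x}.
\end{equation*}
The left-hand side tends to $\int_{\T^d} v\, \varphi\, d\mathbf{x}$ by the weak convergence hypothesis (since $\varphi \in L^\infty(\T^d) \subseteq L^{p'}(\T^d)$), while the smoothness of $\varphi$ gives uniform convergence of the difference quotient on the right to $\partial_\indextwo \varphi$, so the right-hand side tends to $-\int_{\T^d} u\, \partial_\indextwo \varphi\, d\mathbf{x}$. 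Since $\varphi$ was arbitrary, this is precisely the definition of $\partial_\indextwo u = v$ in $\Lp(\T^d)$.

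For Part 2, the plan is to first establish the a priori bound $\Lpnorm{p}{D_\indextwo^h u} \leq \Lpnorm{p}{\partial_\indextwo u}$ for every $u \in \Wkp{1}{p}$. When $u$ is smooth, the fundamental theorem of calculus gives
\begin{equation*}
	D_\indextwo^h u(\mathbf{x}) = \frac{1}{h} \int_0^h \partial_\indextwo u(\mathbf{x} + s\mathbf{e_\indextwo})\, ds,
\end{equation*}
and Minkowski's integral inequality together with translation invariance on $\T^d$ produces the bound. Density of $C^\infty(\T^d)$ in $\Wkp{1}{p}$ then extends it to the whole space. The same formula, combined with the continuity of translations in $L^p$ (valid for every $1 \leq p < \infty$), immediately delivers strong convergence $D_\indextwo^h u \to \partial_\indextwo u$ in $\Lp(\T^d)$ whenever $u$ is smooth.

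To reach the case of general $u \in \Wkp{1}{p}$ with $p > 1$, I would exploit reflexivity: the a priori bound makes $\{D_\indextwo^h u\}_{h}$ bounded in the reflexive space $\Lp(\T^d)$, so Banach--Alaoglu yields a sequence $h_n \to 0$ along which $D_\indextwo^{h_n} u \rightharpoonup w$. Part~1 forces $w = \partial_\indextwo u$, and the uniqueness of this subsequential limit promotes the convergence to the full family $h \to 0$. The upgrade from weak to strong convergence follows from a three-term $\eps$-argument with smooth approximants $u_m \to u$ in $\Wkp{1}{p}$, applying the a priori bound to $u - u_m$ to control $D_\indextwo^h(u - u_m)$ uniformly in $h$. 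For $p = 1$, reflexivity is unavailable, but the same smoothing decomposition shows that $\{D_\indextwo^h u\}_h$ is uniformly integrable in $L^1(\T^d)$, so the Dunford--Pettis theorem supplies weak sequential compactness and the uniqueness argument from Part 1 again identifies the limit.

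The main obstacle is exactly the non-reflexive case $p = 1$, where Banach--Alaoglu cannot be invoked and uniform integrability must be verified by hand via the smoothing decomposition. The remark preceding the lemma about $L^1$ being non-reflexive strongly suggests this is the intended source of the split in the statement: a pure density argument would in fact yield strong convergence for $p = 1$ as well, but the functional-analytic route that the author wishes to highlight naturally produces only the weak conclusion in that case.
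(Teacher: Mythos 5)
Your proof is correct, and for the strong-convergence step in the case $p>1$ it takes a genuinely different route from the paper. The paper establishes the a priori bound $\Lpnorm{p}{D_\indextwo^h u} \leq \Lpnorm{p}{\partial_\indextwo u}$, extracts a weakly convergent subsequence by reflexivity, identifies the limit via Part~1 and a subsubsequence argument, and then upgrades weak to strong convergence by invoking the Radon--Riesz property: the bound forces $\limsup_h \Lpnorm{p}{D_\indextwo^h u} \leq \Lpnorm{p}{\partial_\indextwo u}$, and uniform convexity of $\Lp(\T^d)$ for $p>1$ (Brezis, Proposition 3.32) converts weak convergence plus norm control into strong convergence. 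Your upgrade instead runs a three-term density argument: approximate $u$ by smooth $u_m$ in $\Wkp{1}{p}$, use the a priori bound on $u - u_m$ to control the first term uniformly in $h$, and use strong convergence for smooth functions for the middle term. This density argument is more elementary (no uniform convexity needed) and, as you correctly observe, works verbatim for $p=1$ as well; you have in effect noticed that the paper's claim that ``the functional analytic arguments are insufficient to show strong convergence in the case $p=1$'' applies only to its own Radon--Riesz route. One small structural remark: once you have the three-term argument, the detour through Banach--Alaoglu and subsequential weak limits is logically redundant for proving strong convergence --- the density argument is self-contained and does not need the weak convergence established first. For the $p=1$ part, your use of Dunford--Pettis via uniform integrability of the difference quotients matches the paper's argument.
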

	\begin{proof}
		The details are given in Appendix \ref{proof_lemma_diff_quot}.
	\end{proof}
	
	Although the directions $\mathbf{v_1}, \cdots, \mathbf{v_N} \in \R^d$ are
	arbitrary in the definition of the generator of the transport semigroup
	\eqref{generator_by_definition}, a corresponding result can be shown in the exact same manner. That is, if we define the directional difference quotient
	
	\begin{equation}\label{definition_directional_difference_quotient}
		D_{\mathbf{v}}^h u(\cdot) \coloneqq \frac{u(\cdot + h\mathbf{v} ) -
		u(\cdot)}{h}.
	\end{equation}
	for $u\in \Lp(\T^d), \, \mathbf{v} \in \R^d$ and $h \in\R,$ the following lemma
	holds true.
	
	\begin{lemma}\label{lemma_convergence_diff_quotients_p>1}
		Let $1 \leq p < \infty, \, u \in \Wkp{1}{p}$ and let $\mathbf{v} \in \R^d.$
		Then $D_{\mathbf{v}}^h u \to \mathbf{v} \cdot \nabla u$ in $\Lp(\T^d)$ for $p>1$
		and $D_{\mathbf{v}}^h u \rightharpoonup \mathbf{v} \cdot \nabla u$ in $\Lp(\T^d)$ for $p=1$ as
		$h\to0.$
	\end{lemma}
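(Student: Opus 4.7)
The plan is to reduce the claim to a one-line application of Minkowski's integral inequality by first establishing the directional fundamental theorem of calculus
\begin{equation*}
u(\mathbf{x} + h\mathbf{v}) - u(\mathbf{x}) = \int_0^h \mathbf{v} \cdot \nabla u(\mathbf{x} + t\mathbf{v}) \, dt
\end{equation*}
as an identity in $L^p(\T^d)$ for arbitrary $u\in W^{1,p}(\T^d)$. This route bypasses any use of weak compactness or reflexivity and handles $p=1$ and $p>1$ simultaneously, in contrast to the previous lemma which needed a case distinction.

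First I would verify the identity pointwise for smooth $u\in C^\infty(\T^d)$ by applying the one-dimensional fundamental theorem of calculus to the curve $t\mapsto u(\mathbf{x}+t\mathbf{v})$. Then, using that $C^\infty(\T^d)$ is dense in $W^{1,p}(\T^d)$ and that translation is an $L^p$-isometry on the torus, I would pass to the $L^p$-limit: the left-hand side converges directly, and the right-hand side, controlled by Minkowski's inequality, converges as smooth approximants $u_n\to u$ in $W^{1,p}$.

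Dividing the identity by $h$ and subtracting the trivial identity $\mathbf{v}\cdot\nabla u = \frac{1}{h}\int_0^h \mathbf{v}\cdot\nabla u\,dt$, Minkowski's integral inequality yields
\begin{equation*}
\Lpnorm{p}{D_{\mathbf{v}}^h u - \mathbf{v}\cdot\nabla u} \leq \frac{1}{h}\int_0^h \Lpnorm{p}{(\mathbf{v}\cdot\nabla u)(\cdot + t\mathbf{v}) - \mathbf{v}\cdot\nabla u}\,dt.
\end{equation*}
By the strong $L^p$-continuity of translation for $1\leq p<\infty$, which is precisely the strong continuity of the transport semigroup established in the first lemma of this chapter, the integrand tends to zero as $t\to 0$ and is uniformly bounded by $2\Lpnorm{p}{\mathbf{v}\cdot\nabla u}$, so the right-hand side vanishes as $h\to 0$. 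Note that this argument actually gives \emph{strong} $L^p$-convergence for every $1\le p<\infty$, which is stronger than the weak convergence asserted for $p=1$.

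The main obstacle is the transfer of the integral identity from $C^\infty(\T^d)$ to $W^{1,p}(\T^d)$, as this requires simultaneously taking $L^p$-limits on both sides and justifying Fubini for the right-hand side. Once this is in place, Minkowski combined with translation continuity makes the remainder essentially automatic.
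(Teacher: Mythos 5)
Your proof is correct and takes a genuinely different route. The paper proves this lemma by appealing to the argument of \Cref{lemma_convergence_unit_difference_quotients}: establish the uniform bound $\| D_{\mathbf{v}}^h u \|_{L^p} \leq |\mathbf{v}| \Lpnorm{p}{|\nabla u|}$, then extract a weak limit via reflexivity (for $p>1$) or via Dunford--Pettis and equiintegrability (for $p=1$), identify the limit using part (i) of that lemma, and upgrade weak to strong convergence for $p>1$ via uniform convexity. That argument is genuinely stuck at weak convergence when $p=1$, which is why the paper then presents \Cref{lemma_A_is_an_extension} as a separate semigroup-theoretic proof of the $L^1$ strong convergence. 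Your approach instead establishes the directional FTC identity $u(\cdot+h\mathbf{v}) - u(\cdot) = \int_0^h (\mathbf{v}\cdot\nabla u)(\cdot+t\mathbf{v})\,dt$ in $L^p(\T^d)$ by density and then converts the difference quotient into an average of translates, at which point Minkowski's integral inequality and the strong $L^p$-continuity of translation deliver strong convergence for \emph{every} $1\le p<\infty$ in one stroke. This bypasses weak compactness, uniform convexity, and Dunford--Pettis entirely, and it directly yields the strengthening that the paper only obtains afterwards in \Cref{lemma_A_is_an_extension}. The only mild imprecision is the sign handling when $h<0$ (one should write $\frac{1}{|h|}\bigl|\int_0^h\ldots\bigr|$ in the averaging step), and the transfer of the FTC identity from smooth functions to $W^{1,p}$ does require the brief Fubini/Bochner-integral argument you flag; both are routine. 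In short, your argument is both more elementary and stronger than the one the paper uses for this lemma.
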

	\begin{proof}
		Use the inequality
		\begin{equation*}
			\| D_{\mathbf{v}}^h u \|_{L^p(\T^d)} \leq | \mathbf{v} | \, \Lpnorm{p}{|\nabla u|}
		\end{equation*}
		for $u \in \Wkp{1}{p}$ and argue as in the proof of
		\Cref{lemma_convergence_unit_difference_quotients}. Also the case $p=1$ works
		analogously.
	\end{proof}
	
	We define the operator $-\mathcal{A}$ on $\LpCn $ by
	\begin{equation}\label{definition_generatorA}
		-\mathcal{A} u = -\colvec{\mathbf{v_1} \cdot \nabla u_1 \\ \vdots \\
		\mathbf{v_N} \cdot \nabla u_N}.
	\end{equation}
	Here, we did not specify the domain intentionally. It will vary and we will
	specify it later on for different scenarios. With the functional analytic arguments
	from above, we obtain the following result concerning the domain of the
	generator of $(T(t))_{t\geq0}.$
	
	\begin{lemma}
		For $1<p<\infty,$ the generator $(-A, \, D(-A))$ is an extension of
		$(-\mathcal{A}, \, \WkpCn{1}{p})$.	In the case $p=1,$ the generator is an
		extension of $(-\mathcal{A}, \, C^1(\T^d, \, \C^N)).$
	\end{lemma}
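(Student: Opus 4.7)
The plan is to verify, for any $u$ in the proposed domain, that the defining limit in \eqref{generator_by_definition} exists in $\Lp(\T^d)$ componentwise and equals $-\mathcal{A}u$. Since both the limit defining $D(-A)$ and the operator $-\mathcal{A}$ act componentwise, it suffices to treat a single scalar component $u_j$ together with the corresponding direction $\mathbf{v}_j \in \R^d$.

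For $1 < p < \infty$, I would reduce the generator's difference quotient to the directional difference quotient \eqref{definition_directional_difference_quotient}. Setting $h = -t$, one has
\[
\frac{u_j(\cdot - t\mathbf{v}_j) - u_j(\cdot)}{t} = -D_{\mathbf{v}_j}^{-t} u_j,
\]
and $t \searrow 0$ corresponds to $h = -t \nearrow 0$. Since $u_j \in \Wkp{1}{p}$ by assumption, \Cref{lemma_convergence_diff_quotients_p>1} provides strong convergence $D_{\mathbf{v}_j}^{h} u_j \to \mathbf{v}_j \cdot \nabla u_j$ in $\Lp(\T^d)$ as $h \to 0$ from either side. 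Consequently the required limit exists in $\Lp(\T^d)$ and equals $-\mathbf{v}_j \cdot \nabla u_j = (-\mathcal{A}u)_j$, which shows $u \in D(-A)$ and $-A u = -\mathcal{A}u$ on $\WkpCn{1}{p}$.

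The case $p = 1$ is the main obstacle, because \Cref{lemma_convergence_diff_quotients_p>1} only supplies weak convergence for $W^{1,1}$ functions, which is too weak to verify the strong $L^1$-limit demanded by \eqref{generator_by_definition}. The remedy is to impose the stronger regularity $u \in C^1(\T^d, \C^N)$, which opens up an elementary direct argument. Applying the fundamental theorem of calculus to the curve $s \mapsto u_j(\mathbf{x} - s\mathbf{v}_j)$ one obtains
\[
\frac{u_j(\mathbf{x} - t\mathbf{v}_j) - u_j(\mathbf{x})}{t} + \mathbf{v}_j \cdot \nabla u_j(\mathbf{x}) = -\frac{1}{t}\int_0^t \mathbf{v}_j \cdot \bigl[\nabla u_j(\mathbf{x} - s\mathbf{v}_j) - \nabla u_j(\mathbf{x})\bigr]\, ds.
\]
Since $\nabla u_j$ is continuous on the compact torus $\T^d$ it is uniformly continuous, so the right-hand side tends to zero uniformly in $\mathbf{x}$ as $t \searrow 0$. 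Uniform convergence on the finite-measure space $\T^d$ implies $L^1$-convergence, giving $u \in D(-A)$ and $-A u = -\mathcal{A}u$ on $C^1(\T^d, \C^N)$ in the $L^1$ setting.
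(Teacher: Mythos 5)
Your proposal is correct. For $1<p<\infty$ you invoke Lemma~\ref{lemma_convergence_diff_quotients_p>1} componentwise on the reduction $\frac{u_j(\cdot-t\mathbf{v}_j)-u_j(\cdot)}{t}=-D_{\mathbf{v}_j}^{-t}u_j$, which is exactly the paper's argument. For $p=1$ you deviate slightly: the paper dispatches the $C^1$ case with ``chain rule and dominated convergence'' (pointwise convergence of the quotient plus the uniform bound $|D_{\mathbf{v}_j}^{-t}u_j|\le|\mathbf{v}_j|\,\|\nabla u_j\|_\infty$ on the finite-measure torus), whereas you use the fundamental theorem of calculus together with uniform continuity of $\nabla u_j$ on the compact torus to get \emph{uniform} convergence, which in turn gives $L^1$-convergence. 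Both routes are elementary, hinge on the same $C^1$ hypothesis, and are equally valid; yours yields a marginally stronger intermediate conclusion (uniform rather than merely $L^1$ convergence) at the cost of writing out the FTC identity. No gaps.
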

	\begin{proof}
		The statement for $p=1$ follows immediately from the chain rule and dominated
		convergence. For $p>1,$ we can apply \Cref{lemma_convergence_diff_quotients_p>1} componentwise to obtain
		\begin{equation*}
			\frac{u_\indexone(\cdot - t\mathbf{v_\indexone}) - u_\indexone(\cdot)}{t} = -
			D_{\mathbf{v_\indexone}}^{-t} u_\indexone \longrightarrow -\mathbf{v_\indexone}
			\cdot \nabla u_j
		\end{equation*}
		in $L^p(\T^d)$ for $\indexone=1,\cdots,N.$
	\end{proof}
	
	Remarkably enough, the previous functional analytic arguments are insufficient
	to show strong convergence in the case $p=1$. An alternative and shorter proof
	that uses semigroup theory works for all cases $1 \leq p < \infty.$
	
	\begin{lemma}\label{lemma_A_is_an_extension}
		Let $1 \leq p < \infty$. Then $(-A, \, D(-A))$ extends $(-\mathcal{A}, \,
		\WkpCn{1}{p}).$
	\end{lemma}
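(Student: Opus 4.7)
The plan is to combine the density of smooth functions in $W^{1,p}(\T^d, \C^N)$ with the closedness of $-A$, which holds automatically because $(-A, D(-A))$ generates a strongly continuous semigroup. This avoids the functional analytic arguments from \Cref{lemma_convergence_diff_quotients_p>1} entirely, so the proof works uniformly in $1\leq p<\infty$, including the problematic case $p=1$.

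First, I would establish the inclusion $C^1(\T^d, \C^N) \subseteq D(-A)$ with $-Au = -\mathcal{A}u$ on this subspace. For $u \in C^1(\T^d, \C^N)$ the fundamental theorem of calculus yields, for each component $j=1,\dots,N$,
\begin{equation*}
	\frac{u_\indexone(\mathbf{x} - t\mathbf{v_\indexone}) - u_\indexone(\mathbf{x})}{t} + \mathbf{v_\indexone} \cdot \nabla u_\indexone(\mathbf{x}) = \frac{1}{t}\int_0^t \bigl(\mathbf{v_\indexone} \cdot \nabla u_\indexone(\mathbf{x}) - \mathbf{v_\indexone}\cdot \nabla u_\indexone(\mathbf{x}-s\mathbf{v_\indexone})\bigr)\, ds.
\end{equation*}
Since $\nabla u_\indexone$ is uniformly continuous on the compact torus, the right hand side tends to $0$ uniformly in $\mathbf{x}$ as $t\searrow 0$, hence in $\Lp(\T^d)$ for every $1\leq p<\infty$.

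Next, I would pass from $C^1$ to $W^{1,p}$ using approximation: for any $u \in \WkpCn{1}{p}$, standard mollification on $\T^d$ produces a sequence $(u^n)\subset C^\infty(\T^d, \C^N)$ with $u^n \to u$ in $W^{1,p}$. Then $u^n \to u$ in $\LpCn$ and, because $\mathcal{A}$ is bounded from $\WkpCn{1}{p}$ into $\LpCn$, also $-\mathcal{A}u^n \to -\mathcal{A}u$ in $\LpCn$. By the first step, $-Au^n = -\mathcal{A}u^n$ for every $n$.

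Finally, the generator $(-A, D(-A))$ of a strongly continuous semigroup is closed (a basic result from semigroup theory, cf.\ Appendix \ref{appendix_semigroup_theory}). Applying closedness to the sequence $(u^n)$ gives $u \in D(-A)$ and $-Au = -\mathcal{A}u$, which is exactly the claimed extension property. The conceptual point to highlight is that the direct estimates underlying \Cref{lemma_convergence_diff_quotients_p>1} are the main obstacle at $p=1$; here the obstacle dissolves because we only need the $L^p$-convergence of difference quotients for $C^1$-functions (trivially available via FTC) and then exploit the abstract closedness of $-A$ to transfer the result to all of $W^{1,p}$.
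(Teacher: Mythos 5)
Your proof is correct and follows essentially the same strategy as the paper's: approximate a $W^{1,p}$ function by smooth functions converging in $W^{1,p}$, observe that $-A$ and $-\mathcal{A}$ agree on smooth functions, and invoke closedness of the generator to pass to the limit. The only stylistic difference is that you reprove the inclusion $C^1(\T^d,\C^N)\subseteq D(-A)$ from scratch via the fundamental theorem of calculus and uniform continuity of the gradient, whereas the paper leans on the lemma immediately preceding it for that step; both routes are valid and the FTC argument is a clean, self-contained alternative.
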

	\begin{remark}
		This implies in particular, that strong convergence in
		\Cref{lemma_convergence_diff_quotients_p>1} also holds for $p=1.$
	\end{remark}
	\begin{proof}
		Let $1 \leq p < \infty$ and let $u \in \WkpCn{1}{p}.$ Moreover, take an
		approximation $(u^{(n)})_{n\in\N}\subset C_c^\infty(\T^d, \, \C^N)$ such that
		$u^{(n)}\to u $ in $\WkpCn{1}{p}.$ By the convergence of the derivatives, it
		follows that $-Au^{(n)} = -\mathcal{A}u^{(n)} \to -\mathcal{A}u$ in $L^p(\T^d,
		\, \C^N).$ Since generators of $C_0$-semigroups are closed by Hille-Yosida, see
		\Cref{hille-yosida}, we obtain $u \in D(-A)$ and $-Au = -\mathcal{A} u.$
	\end{proof}
	
	At this point, it is unclear, whether the generator of the transport semigroup
	is uniquely determined by $(-\mathcal{A}, \, \WkpCn{1}{p}).$ However, the
	semigroup is extremely well behaved in the sense that it is easy to identify
	$C_c^\infty(\T^d, \, \C^N)$ as a core, i.e. $C_c^\infty(\T^d, \, \C^N)$ is dense
	in $D(-A)$ w.r.t. the graph norm $\|\cdot\|_{-A}$.
	
	\begin{theorem}\label{theorem_Ccinfty_core_A}
		$C_c^\infty(\T^d, \, \C^N)$ is a core for $(-A, \, D(-A)).$ In particular,
		$(T(t))_{t\geq0}$ is the only strongly continuous semigroup on $\Lp(\T^d, \C^N)$
		with a generator that extends $(-\mathcal{A}, \, C_c^\infty(\T^d, \, \C^N)).$
	\end{theorem}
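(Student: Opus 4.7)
The plan is to verify the three hypotheses of the standard semigroup-theoretic core criterion (a subspace $D \subseteq D(-A)$ that is dense in $\LpCn$ and invariant under $(T(t))_{t\geq 0}$ is automatically a core for $-A$, as recorded in the semigroup appendix) for the candidate $D = C_c^\infty(\T^d, \, \C^N)$. Once this is done, the uniqueness statement follows from a short closure argument together with the maximality of generators of $C_0$-semigroups.

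Since $\T^d$ is compact, one has $C_c^\infty(\T^d, \, \C^N) = C^\infty(\T^d, \, \C^N)$, which is dense in $\LpCn$ by componentwise mollification. The inclusion $C_c^\infty(\T^d, \, \C^N) \subseteq \WkpCn{1}{p} \subseteq D(-A)$ is provided by \Cref{lemma_A_is_an_extension}. For the invariance, fix $u \in C_c^\infty(\T^d, \, \C^N)$ and $t \geq 0$; the components of $T(t)u$ are $u_\indexone(\cdot - t\mathbf{v_\indexone})$, and since translation on $\R^d$ preserves smoothness and commutes with $\Z^d$-periodicity, each component is again smooth on $\T^d$. Hence $T(t)(C_c^\infty(\T^d, \, \C^N)) \subseteq C_c^\infty(\T^d, \, \C^N)$ for every $t \geq 0$, and the core criterion yields the first assertion.

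For the uniqueness claim, let $(S(t))_{t\geq 0}$ be another $C_0$-semigroup on $\LpCn$ whose generator $(-B, \, D(-B))$ extends $(-\mathcal{A}, \, C_c^\infty(\T^d, \, \C^N))$. Since $-B$ is closed by Hille-Yosida (\Cref{hille-yosida}), it also extends the closure of the restricted operator, and by the core property just established this closure coincides with $(-A, \, D(-A))$. Consequently $-A \subseteq -B$, and the standard maximality of generators of $C_0$-semigroups (for any sufficiently large $\lambda > 0$ both $\lambda + A$ and $\lambda + B$ map their domains bijectively onto $\LpCn$, and the only bijective extension of a bijection is itself) forces $-A = -B$. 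Uniqueness of the semigroup generated by an operator then yields $S(t) = T(t)$ for all $t \geq 0$. The only step that is not entirely routine is the invariance check, but this is immediate on the torus, so I do not anticipate a serious obstacle.
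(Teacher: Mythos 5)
Your proof is correct and follows essentially the same route as the paper: it invokes the same dense-invariant-subspace core criterion from Engel--Nagel (spelling out the density, inclusion, and translation-invariance checks that the paper compresses into ``clearly''), and the uniqueness argument via closedness, the core property, and maximality of generators at a common resolvent point is identical to the paper's.
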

	\begin{proof}
		Clearly, we have $T(t)(C_c^\infty(\T^d, \, \C^N)) \subset C_c^\infty(\T^d,
		\C^N).$ This implies that $C_c^\infty(\T^d, \, \C^N)$ is a core for $(-A, \, D(-A))$ by
		\cite[\Romannum{2}. Proposition 1,7, p. 53]{engel2001one}. \newline
		The addendum is well known but we recall its short and nice proof. Let
		$(\widetilde{T}(t))_{t\geq0}$ be a semigroup with generator $(-\widetilde{A}, \,
		D(-\widetilde{A})) \supseteq (-\mathcal{A}, \, C_c^\infty(\T^d, \, \C^N))$. The
		fact that $C_c^\infty(\T^d,\, \C^N)$ is a core and the closedness of
		$-\widetilde{A}$ due to Hille-Yosida, see \Cref{hille-yosida}, imply
		$(-\widetilde{A}, \, D(-\widetilde{A})) \supseteq (-A, \, D(-A))$. Hille-Yosida
		also yields existence of some $\lambda \in \rho(-A) \cap \rho(-\widetilde{A})$ and
		hence, both the operators
		\begin{equation*}
			\lambda+\widetilde{A} \colon D(-\widetilde{A}) \longrightarrow \LpCn
		\end{equation*}
		and
		\begin{equation*}
			\restr{\lambda+\widetilde{A}}{D(-A)} = \lambda + A \colon D(-A) \longrightarrow \LpCn
		\end{equation*}
		are bijective. We obtain $-A = -\widetilde{A}$, so the semigroups
		$(T(t))_{t\geq0}$ and $(\widetilde{T}(t))_{t\geq0}$ coincide by \cite[Chapter 1,
		Theorem 2.6, p. 6]{pazy2012semigroups}.
		
	\end{proof}
	
	\begin{remark}\label{remark_-A_generates_C0_group}
		By defining \eqref{transport_semigroup_definition} for all $t\in\R,$ one can
		directly see that $(T(t))_{t\geq0}$ can be extended to a $C_0$-group
		$(T(t))_{t\in\R}$ and that
		\begin{equation}\label{equation_domain_transport_group}
		D(-A) = \left\{ u \in \LpCn \, \colon \, \lim_{t \to 0} \frac{u_\indexone(\cdot
			- t\mathbf{v_\indexone}) - u_\indexone(\cdot)}{t} \text{ exists in } \Lp(\T^d)
		\text{ for all } \indexone=1,\cdots,N \right\}.
		\end{equation}
		The difference is subtle but notice that we have $t\to 0$ in the above formula
		instead of $t \searrow 0$ as in \eqref{generator_by_definition}. This equality
		on the domain follows by an explicit computation because
		\begin{equation*}
		\lim_{t \searrow 0} \frac{u_\indexone(\cdot - t\mathbf{v_\indexone}) -
			u_\indexone(\cdot)}{t} \text{ exists in } \Lp(\T^d) \text{ for all }
		\indexone=1,\cdots,N
		\end{equation*}
		if and only if
		\begin{equation*}
		\lim_{t \nearrow 0} \frac{u_\indexone(\cdot - t\mathbf{v_\indexone}) -
			u_\indexone(\cdot)}{t} \text{ exists in } \Lp(\T^d) \text{ for all }
		\indexone=1,\cdots,N.
		\end{equation*}
		In particular, the operator $A$ with $D(A)\coloneqq D(-A)$ generates the
		$C_0$-semigroup $(T(-t))_{t\geq0}$ given by
		\begin{equation*}
		(T(-t)u)(\mathbf{x}) \coloneqq \colvec{u_1(\mathbf{x} + t \mathbf{v_1}) \\
			\vdots \\ u_N(\mathbf{x} + t\mathbf{v_N})}
		\end{equation*}
		for $t\geq0,$ cf. \cite[\Romannum{2}. 3.a, p. 79]{engel2001one}.
	\end{remark}
	
	We now compute the adjoint semigroup $(T_p(t)^*)_{t\geq0}$ of
	$(T_p(t))_{t\geq0}$ on $\LpCn^*$ and the adjoint $-A_p^*$ of the generator
	$-A_p$, which will be pretty useful for the spectral analysis performed in
	\Cref{chapter_spectral_analysis}. We would like to point out that the index $p$
	does play a crucial role here, in contrast to all other results in the chapter.
	\newline
	Recall the classical result $\LpCn^* = L^q(\T^d, \, \C^N)$ where $1 < p <
	\infty$ and $q$ is the dual exponent of $p,$ i.e. $\tfrac{1}{p} + \tfrac{1}{q} =
	1,$ see \cite[\Romannum{4}.8, Theorem 1, p. 286]{dunford1988linear}. Moreover,
	the adjoint of the bounded operator $T_p(t)$ is defined by
	\begin{equation*}
		\langle T_p(t)u, \, v \rangle = \langle u, \, T_p(t)^* v\rangle \quad \text{
		for all } u\in L^p(\T^d, \, \C^N) \text{ and all } v \in L^q(\T^d, \, \C^N).
	\end{equation*}
	Again, the case $p=1$ is somewhat a special case because $L^1(\T^d, \, \C^N)$ is
	not reflexive.
	
	\begin{proposition}\label{adjoint_transport_gen}
		Let $1<p<\infty$ and let $(T_p(t))_{t\geq0}$ be the semigroup generated by
		$(-A_p, \, D(-A_p)).$ Then $(T_p(t)^*)_{t\geq0}$ is strongly continuous on
		$L^q(\T^d, \, \C^N)$ and it holds
		\begin{align*}
			(T_p(t)^*)_{t\geq 0} &= (T_q(-t))_{t\geq 0}, \\
			(-A_p^*, \, D(-A_p^*)) &= (A_q, \, D(A_q)).
		\end{align*}
	\end{proposition}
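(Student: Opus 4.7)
My plan is to first compute $T_p(t)^*$ explicitly via a change of variables, and then deduce both the strong continuity and the generator statement from the fact that $(T_q(t))_{t\in\R}$ is already known (by Remark \ref{remark_-A_generates_C0_group}) to be a $C_0$-group.

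For the first step, I would fix $u \in L^p(\T^d, \C^N)$ and $v \in L^q(\T^d, \C^N)$ and compute
\begin{equation*}
  \langle T_p(t)u, v\rangle
  = \sum_{\indexone=1}^{N}\intT u_\indexone(\mathbf{x}-t\mathbf{v_\indexone})\,\overline{v_\indexone(\mathbf{x})}\,d\mathbf{x}
  = \sum_{\indexone=1}^{N}\intT u_\indexone(\mathbf{y})\,\overline{v_\indexone(\mathbf{y}+t\mathbf{v_\indexone})}\,d\mathbf{y}
  = \langle u, T_q(-t)v\rangle,
\end{equation*}
using translation invariance of $q_\sharp\mathcal{L}^d$ on $\T^d$ (the substitution $\mathbf{y}=\mathbf{x}-t\mathbf{v_\indexone}$ is valid by the same density/Fubini argument used to prove that $(T(t))_{t\geq0}$ is a contraction). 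Since this holds for all admissible $v$, we conclude $T_p(t)^* = T_q(-t)$ for every $t\geq0$.

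For the second step, strong continuity of $(T_p(t)^*)_{t\geq0}$ on $L^q(\T^d,\C^N)$ is immediate from the identification $T_p(t)^*=T_q(-t)$, because Remark \ref{remark_-A_generates_C0_group} gives that $(T_q(s))_{s\in\R}$ is a $C_0$-group on $L^q(\T^d,\C^N)$, so in particular $t\mapsto T_q(-t)$ is strongly continuous on $[0,\infty)$. In the same remark, it is moreover observed that $A_q$ with domain $D(A_q)=D(-A_q)$ generates the semigroup $(T_q(-t))_{t\geq0}$.

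It then remains to identify the generator of $(T_p(t)^*)_{t\geq0}$ with $-A_p^*$. For this I would invoke the standard fact that if $L^p$ is reflexive (which holds for $1<p<\infty$) and $(-A_p, D(-A_p))$ generates a $C_0$-semigroup $(T_p(t))_{t\geq0}$, then $(-A_p^*, D(-A_p^*))$ generates the adjoint $C_0$-semigroup $(T_p(t)^*)_{t\geq0}$ on $(L^p)^* = L^q$; this can be cited from Pazy or Engel--Nagel. Combining this with the previous identification and the uniqueness of the generator of a $C_0$-semigroup yields $(-A_p^*, D(-A_p^*)) = (A_q, D(A_q))$.

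The only real content is the change of variables in step one, which is routine once translation invariance of the measure on $\T^d$ is in hand; the rest is bookkeeping plus two cited semigroup-theoretic facts (strong continuity of the adjoint semigroup on reflexive spaces, and uniqueness of generators). I do not foresee a serious obstacle: the case $p=1$ is excluded from the statement precisely because $L^1$ fails to be reflexive and the adjoint semigroup is in general only weak-$*$ continuous on $L^\infty$.
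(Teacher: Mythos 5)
Your proposal is correct, but it takes a genuinely different route from the paper's. The paper works at the \emph{generator} level: it computes $\langle -A_p u, v\rangle = \langle u, A_q v\rangle$ for $u,v \in C_c^\infty(\T^d,\C^N)$ by integration by parts, uses the fact (from \Cref{theorem_Ccinfty_core_A}) that $C_c^\infty(\T^d,\C^N)$ is a core for $-A_p$ to show that $-A_p^*$ extends $(A_q, C_c^\infty(\T^d,\C^N))$, and then invokes the uniqueness statement for semigroups whose generator extends a given core to conclude $(T_p(t)^*)_{t\geq0} = (T_q(-t))_{t\geq0}$. You instead work at the \emph{semigroup} level: you compute $T_p(t)^* = T_q(-t)$ directly by a change of variables in the duality pairing, read off strong continuity and the generator of the right-hand side from \Cref{remark_-A_generates_C0_group}, and then deduce $-A_p^* = A_q$ from the standard result (valid on reflexive spaces) that the adjoint of the generator generates the adjoint semigroup. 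Both routes lean on reflexivity and on the cited result about adjoint semigroups, but yours replaces the integration-by-parts-plus-core argument with an explicit computation of the adjoint operators --- which is more elementary here because the transport semigroup has a concrete formula, whereas the paper's approach is a template that would also apply to generators whose semigroups are not given explicitly (and is indeed reused almost verbatim for the perturbed operator $-A_p+B_p$ in \Cref{proposition_adjoint_semigroup_pneq1}).
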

	\begin{proof}
		The space $L^p(\T, \, \C^N)$ is reflexive by for $1<p<\infty,$ see
		\cite[\Romannum{4}.8, Corollary 2, p. 288]{dunford1988linear}. Hence, the
		adjoint semigroup is again strongly continuous by \cite[\Romannum{1} 5.14
		Proposition, p. 44]{engel2001one} and the sun dual semigroup coincides with the
		adjoint semigroup, cf. \cite[\Romannum{2} 2.6, p. 62]{engel2001one}. In
		particular, the generator of the adjoint (strongly continuous) semigroup is the
		adjoint of $-A_p,$ namely $-A_p^*,$ by \cite[\Romannum{2} 2.6 proposition, p. 63]{engel2001one}. \newline
		It is left to show $-A_p^* = A_q$ and to characterize the domain. Recall that
		we have
		\begin{align*}
			D(-A_p^*) = \left\{ v \in L^q(\T^d, \, \C^N) \, \colon \, \exists w \in
			L^q(\T^d, \, \C^N) \text{ with } \langle -A_pu, v\rangle = \langle u, w\rangle
			\text{ for all } u \in D(-A_p) \right\}
		\end{align*}
		and
		\begin{equation*}
			-A_p^*v = w
		\end{equation*}
		for all $v\in D(-A_p^*).$ Using that $C_c^\infty(\T^d, \, \C^N)$ is a core for
		$(-A_p, \, D(-A_p))$ by \Cref{theorem_Ccinfty_core_A}, it follows
		\begin{equation*}
			D(-A_p^*) = \left\{ v \in L^q(\T^d, \, \C^N) \, \colon \, \exists w \in
			L^q(\T^d, \, \C^N) \text{ with } \langle -A_pu, v\rangle = \langle u, w\rangle
			\text{ for all } u \in C_c^\infty(\T^d, \, \C^N) \right\}.
		\end{equation*}
		The next step is to show
		\begin{equation}\label{auxeq34}
			(-A_p^*, \, D(-A_p^*)) \supseteq (A_q, \, C_c^\infty(\T^d, \, \C^N)).
		\end{equation}
		To this end, let $u,v\in C_c^\infty(\T^d, \, \C^N).$ We use the notation
		$\mathbf{v_{\indexone,\indextwo}}$ for the $\indextwo$-th component of
		$\mathbf{v_\indexone}$ and compute
		\begin{align*}
			\langle -A_pu, v \rangle &= \sum_{\indexone = 1}^N \intT
			(-\mathbf{v_\indexone} \cdot \nabla u_\indexone) \overline{v_\indexone} \,
			d\mathbf{x} =  - \sum_{\indexone=1}^N \sum_{\indextwo=1}^d
			\mathbf{v_{\indexone,\indextwo}} \intT \partial_\indextwo u_\indexone
			\overline{v_\indexone} \, d\mathbf{x} \\
			&= \sum_{\indexone = 1}^N \sum_{\indextwo=1}^d
			\mathbf{v_{\indexone,\indextwo}} \intT u_\indexone \partial_\indextwo
			\overline{v_\indexone} \, d\mathbf{x} = \sum_{\indexone=1}^N \intT u_\indexone
			\overline{\mathbf{v_\indexone} \cdot \nabla v_\indexone} \, d\mathbf{x} =
			\langle u, \, A_q v\rangle.
		\end{align*}
		This shows \eqref{auxeq34}, i.e. $(T_p^*(t))_{t\geq0}$ is a semigroup with a
		generator that extends $(\mathcal{A}_q, \, C_c^\infty(\T^d, \, \C^N)).$ \newline
		On the other hand, $(T_q(-t))_{t\geq0}$ is the unique semigroup on $L^q(\T^d,
		\, \C^N)$ with a generator that extends $(\mathcal{A}_q, \, C_c^\infty(\T^d, \,
		\C^N))$ by \Cref{remark_-A_generates_C0_group} and
		\Cref{theorem_Ccinfty_core_A}, which yields
		\begin{equation*}
			(T_p(t)^*)_{t\geq 0} = (T_q(-t))_{t\geq 0}.
		\end{equation*}
		In particular, the generators of both semigroups coincide.
	\end{proof}
	
	The dual characterization $L^1(\T^d, \, \C^N)^* = L^\infty(\T^d, \, \C^N)$  via
	the dual exponent also holds for $p=1,$ see \cite[\Romannum{4}.8, Theorem 5, p. 289]{dunford1988linear}. The difference to the cases $1<p<\infty$ is that the
	adjoint semigroup fails to be strongly continuous, hence we have not considered
	the generators on $L^\infty(\T^d, \, \C^N)$ yet. We restrict ourselves to the case
	$d=1$ with $v_1,\cdots,v_N \neq 0$ and make up this technical leeway with the
	definitions
	\begin{equation}\label{definition_generator_p_infinity}
		-A_\infty u \coloneqq -\colvec{v_1 u_1^\prime \\ \vdots \\ v_Nu_N^\prime }
	\end{equation}
	and
	\begin{equation}
		D(-A_\infty) \coloneqq  \big\{ u \in L^\infty(\T, \, \C^N) \, \colon \, u \text{ is
		weakly differentiable, } -A_\infty u \in L^\infty(\T, \, \C^N) \big\}.
	\end{equation}
	We obtain the following result concerning the dual of $(-A_1, \, D(-A_1)).$
	
	\begin{proposition}\label{proposition_adjoint_peq1}
		Let $p=1$ and let $d=1$. Assume that the transport directions $v_1,\cdots, v_N$
		are non-vanishing, i.e. $v_1,\cdots,v_N\neq0$. Then it holds
		\begin{align*}
		D(-A_\infty) &= W^{1,\infty}(\T, \, \C^N), \\
		(-A_1^* ,\, D(-A_1^*)) &= (A_\infty, W^{1,\infty}(\T, \, \C^N)).
		\end{align*}
	\end{proposition}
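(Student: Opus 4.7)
The plan is to adapt the strategy used in the proof of \Cref{adjoint_transport_gen} to the non-reflexive setting $p=1$, exploiting that $C_c^\infty(\T, \C^N)$ remains a core for $(-A_1, D(-A_1))$ by \Cref{theorem_Ccinfty_core_A}. The non-vanishing hypothesis $v_\indexone \neq 0$ will be used throughout, as it is precisely what allows one to pass from an equation of the form $v_\indexone \phi_\indexone' = \psi_\indexone$ to the conclusion $\phi_\indexone' \in L^\infty(\T)$.

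The first equality $D(-A_\infty) = W^{1,\infty}(\T, \C^N)$ is essentially tautological: membership in $D(-A_\infty)$ asks that each component $u_\indexone$ be weakly differentiable with $v_\indexone u_\indexone' \in L^\infty(\T)$, and since each $v_\indexone$ is a nonzero real scalar this is equivalent to $u_\indexone' \in L^\infty(\T)$ for all $\indexone$, i.e.\ $u \in W^{1,\infty}(\T, \C^N)$.

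For the adjoint I would first restrict the defining identity for $D(-A_1^*)$ to test functions in the core $C_c^\infty(\T, \C^N)$. The inclusion $(A_\infty, W^{1,\infty}(\T, \C^N)) \subseteq (-A_1^*, D(-A_1^*))$ then follows from the componentwise integration by parts already performed in the proof of \Cref{adjoint_transport_gen}: for $u \in C_c^\infty(\T, \C^N)$ and $\phi \in W^{1,\infty}(\T, \C^N)$ one obtains
\[
\langle -A_1 u, \phi\rangle = \sum_{\indexone=1}^N \int_\T u_\indexone \, \overline{v_\indexone \phi_\indexone'} \, d\mathbf{x} = \langle u, A_\infty \phi\rangle,
\]
with no boundary contributions thanks to periodicity on $\T$.

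The main content is the converse inclusion. Given $\phi \in D(-A_1^*)$ with $-A_1^* \phi = \psi \in L^\infty(\T, \C^N)$, I would insert test functions of the form $u = (0,\ldots, u_\indexone, \ldots, 0) \in C_c^\infty(\T, \C^N)$ supported in a single component, reducing the adjoint identity to the scalar equation
\[
-v_\indexone \int_\T u_\indexone' \, \overline{\phi_\indexone} \, d\mathbf{x} = \int_\T u_\indexone \, \overline{\psi_\indexone} \, d\mathbf{x}.
\]
Taking complex conjugates, using that $v_\indexone \in \R$ and that complex conjugation bijects $C_c^\infty(\T)$ onto itself, this is precisely the defining identity of weak differentiability of $\phi_\indexone$ with $v_\indexone \phi_\indexone' = \psi_\indexone$; dividing by the nonzero $v_\indexone$ then yields $\phi_\indexone' \in L^\infty(\T)$, so $\phi \in W^{1,\infty}(\T, \C^N)$ and $-A_1^* \phi = \psi = A_\infty \phi$. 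I expect the only real obstacle to be careful bookkeeping of the complex conjugations in the $L^1$--$L^\infty$ duality pairing, so that the adjoint identity unfolds into the standard definition of a weak derivative rather than a conjugate variant; the fact that $v_\indexone$ is real is what makes this step painless.
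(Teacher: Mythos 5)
Your proposal is correct and follows essentially the same route as the paper: both characterize $D(-A_\infty)$ by the scalar relation $u_\indexone' = -\tfrac{1}{v_\indexone}(-A_\infty u)_\indexone$, both obtain weak differentiability of $\varphi\in D(-A_1^*)$ by testing against single-component smooth functions and conjugating, and both finish via integration by parts on $C_c^\infty(\T,\C^N)$ together with the core property from \Cref{theorem_Ccinfty_core_A}. The only cosmetic difference is that the paper rescales the test function by $1/v_\indexone$ up front, whereas you divide by $v_\indexone$ at the end.
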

	\begin{proof}
		The characterization of the domain follows directly from
		\begin{equation*}
			u_\indexone^\prime = - \frac{1}{v_\indexone}  (-A_\infty u)_\indexone
		\end{equation*}
		for all $\indexone=1,\cdots, N$, so we are left to compute the adjoint $-A_1^*.$
		Let $\widetilde{u}\in D(-A_1^*).$ Then there exists $w\in L^\infty(\T, \, \C^N)$ with
		\begin{equation*}
			\langle -A_1 u, \, \widetilde{u}\rangle = \langle u, \, w\rangle
		\end{equation*}
		for all $u\in D(-A_1).$ In particular, given $\indexone=1,\cdots,N,$ we can choose $\varphi \in \C_c^\infty(\T)$ and 
		\begin{equation*}
			u_\indexone \coloneqq \frac{1}{v_\indexone} \varphi, \qquad
			u_i \coloneqq 0 \quad \text{ for } i\neq j
		\end{equation*}
		as the transport directions are non-vanishing. Testing with $u$ implies
		\begin{equation}\label{equation_dual_generator_explicit_form}
			-\int_{\T} \varphi' \overline{\widetilde{u}_\indexone} \, dx = \frac{1}{v_\indexone} \int_{\T}
			\varphi \overline{w_\indexone} \, dx
		\end{equation}
		for all test functions $\varphi \in C_c^\infty(\T).$ We obtain that $\widetilde{u}$ is
		weakly differentiable with $\widetilde{u} \in W^{1,\infty}(\T, \, \C^N)$. Now, given $\widetilde{u} \in	W^{1,\infty}(\T, \, \C^N)$ and $u\in C_c^\infty(\T, \, \C^N),$ we can compute
		\begin{align*}
			\langle -A_1u, \, \widetilde{u} \rangle &= -\sum_{\indexone=1}^N v_\indexone \int_{\T}
			u_\indexone^\prime \overline{\widetilde{u}_\indexone} \, dx = \sum_{\indexone = 1}^N
			v_\indexone \int_{\T} u_\indexone \overline{\widetilde{u}_\indexone^\prime} \, dx = \langle
			u,\, A_\infty \widetilde{u} \rangle
		\end{align*}
		This shows $-A_1^*\widetilde{u} = A_\infty \widetilde{u}$ because $C_c^\infty(\T, \, \C^N)$ is a core for $(-A_1, \, D(-A_1))$ by \Cref{theorem_Ccinfty_core_A}.
	\end{proof}

	In dimension $d=1,$ we can even show $D(-A) = W^{1,p}(\T, \, \C^N)$ if we assume
	that the transport directions are non-vanishing, i.e. $v_\indexone \neq 0$ for
	all $\indexone=1,\cdots,N.$ It should be noted that the assumption cannot be
	dropped. By definition \eqref{generator_by_definition} of the domain of the
	generator, there are no restrictions on $u_\indexone$ if $v_\indexone=0$ and the
	component could be an arbitrary function in $L^p(\T).$
	
	\begin{corollary}\label{corollary_domain_generator_one_dimensional}
		Let $1\leq p < \infty$ and let $d=1.$ Assume that $v_1, \cdots, v_N \in \R$ are
		non-vanishing, i.e. $v_1,\cdots,v_N\neq 0.$ Then 
		\begin{equation*}
			(-A, \, D(-A)) = (-\mathcal{A}, \, W^{1,p}(\T, \,\C^N)).
		\end{equation*}
	\end{corollary}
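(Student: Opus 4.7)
The inclusion $(-\mathcal{A}, W^{1,p}(\T,\C^N)) \subseteq (-A, D(-A))$ is already contained in \Cref{lemma_A_is_an_extension}, so the plan is to establish only the reverse inclusion $D(-A) \subseteq W^{1,p}(\T,\C^N)$ and simultaneously verify that $-A$ acts as $-\mathcal{A}$ on this set. My strategy is to reduce the existence of the generator's defining limit (in the direction $v_\indexone$) to convergence of the standard difference quotients $D^h u_\indexone$ and then invoke part (1) of \Cref{lemma_convergence_unit_difference_quotients} componentwise.

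Fix $u \in D(-A)$. By \Cref{remark_-A_generates_C0_group}, for each $\indexone = 1,\ldots,N$ the limit
\begin{equation*}
\lim_{t \to 0} \frac{u_\indexone(\cdot - t v_\indexone) - u_\indexone(\cdot)}{t} = (-Au)_\indexone
\end{equation*}
exists in $L^p(\T)$, where $t \to 0$ (not just $t \searrow 0$). Since $v_\indexone \neq 0$, I substitute $h \coloneqq -t v_\indexone$ and read this as
\begin{equation*}
D^h u_\indexone(\cdot) = \frac{u_\indexone(\cdot + h) - u_\indexone(\cdot)}{h} \longrightarrow -\frac{1}{v_\indexone} (-Au)_\indexone \quad \text{in } L^p(\T)
\end{equation*}
as $h \to 0$.

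Strong convergence implies weak convergence in any Banach space, so part (1) of \Cref{lemma_convergence_unit_difference_quotients} applies uniformly in $1 \leq p < \infty$ and yields that the weak derivative $\partial u_\indexone$ exists in $L^p(\T)$ with
\begin{equation*}
\partial u_\indexone = -\frac{1}{v_\indexone} (-Au)_\indexone.
\end{equation*}
Hence $u \in W^{1,p}(\T,\C^N)$ and $(-Au)_\indexone = -v_\indexone \partial u_\indexone = (-\mathcal{A}u)_\indexone$ for every component, which combined with \Cref{lemma_A_is_an_extension} proves the claim.

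The one subtle point is the case $p = 1$, where \Cref{lemma_convergence_diff_quotients_p>1} only gave \emph{weak} convergence of difference quotients for $W^{1,1}$-functions and the functional-analytic arguments were insufficient to directly identify the domain. The clean resolution here is that we do not need strong convergence of the quotients as an input — we already have it from $u \in D(-A)$ — and we only need \emph{weak} convergence to feed into part (1) of \Cref{lemma_convergence_unit_difference_quotients}, which is available for all $p \geq 1$. So no separate $p = 1$ argument is necessary, and the corollary holds uniformly in $1 \leq p < \infty$.
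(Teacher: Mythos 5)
Your proof is correct, but it follows a genuinely different route from the paper's. The paper invokes \Cref{theorem_Ccinfty_core_A} to get a sequence $(u^{(n)})_n \subset C_c^\infty(\T, \C^N)$ converging to $u$ in the graph norm, then observes that $\|u_\indexone^{\prime(n)} - u_\indexone^{\prime(m)}\|_{L^p} = |v_\indexone|^{-1}\|(\mathcal{A}u^{(n)})_\indexone - (\mathcal{A}u^{(m)})_\indexone\|_{L^p} \to 0$ to conclude that the sequence is Cauchy in $W^{1,p}(\T,\C^N)$, whence $u \in W^{1,p}$ and $Au = \mathcal{A}u$ by \Cref{lemma_A_is_an_extension}. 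You instead work directly with the difference quotients: using the two-sided characterization from \Cref{remark_-A_generates_C0_group} and the substitution $h = -tv_\indexone$ (which is where non-vanishing of $v_\indexone$ enters), you convert the directional quotient into the standard unit-direction quotient $D^h u_\indexone$, observe that strong $L^p$-convergence is granted by $u \in D(-A)$, downgrade to weak convergence, and feed that into part (1) of \Cref{lemma_convergence_unit_difference_quotients}. Both proofs are valid; yours is more elementary in that it bypasses the core theorem entirely and makes transparent why the $p=1$ case causes no trouble here (the delicate issue in that lemma concerns producing strong convergence as a \emph{conclusion}, whereas you receive it as a \emph{hypothesis} and need only the weak version as input). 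The paper's core-density argument is arguably more portable — the same Cauchy-sequence-in-$W^{1,p}$ pattern would apply even if one only had a dense subspace rather than an explicit difference-quotient characterization of the generator — but for this specific corollary your approach is a clean alternative.
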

	\begin{proof}
		One direction follows from \Cref{lemma_A_is_an_extension}. Concerning the other
		direction, let $u\in D(-A)$. By \Cref{theorem_Ccinfty_core_A}, we can choose a
		sequence $(u^{(n)})_n \subset C_c^\infty(\T, \, \C^N)$ such that $u^{(n)} \to u$
		and $\mathcal{A}u^{(n)} = Au^{(n)} \to Au$ in $\Lp(\T, \, \C^N).$ Since we
		assumed non-vanishing transport directions, it follows that
		\begin{equation*}
			\| {u_\indexone^\prime}^{(n)} - {u_\indexone^\prime}^{(m)} \|_{\Lp(\T)} =
			\frac{1}{|v_\indexone|} \| (\mathcal{A} u^{(n)})_\indexone - (\mathcal{A}
			u^{(m)})_\indexone \|_{\Lp(\T)} \longrightarrow 0 \quad \text{as } n, \, m \to \infty
		\end{equation*}
		holds for all $j=1,\cdots,N$. Hence, the sequence $(u^{(n)})_n$ is Cauchy in
		$W^{1,p}(\T, \, \C^N).$ We obtain $u \in W^{1,p}(\T, \, \C^N)$ and $Au =
		\mathcal{A}u$ by \Cref{lemma_A_is_an_extension}. \newline
	\end{proof}
	
	For higher dimensions, there are in some sense too many directions. On an
	intuitive level, a function $u\in\LpCn$ such that the components $u_\indexone$
	``behave nicely" along the directions $\mathbf{v_\indexone}$ has enough
	regularity to be an element of $D(-A).$ In general, this should be insufficient
	for $u$ to be weakly differentiable for dimensions $d\geq2$ and one would expect
	$W^{1,p}(\T^d, \, \C^N) \subsetneq D(-A)$. However, we can only prove this under
	the assumption that one component is transport periodic. Let us clarify what we
	mean by that.
	
	\begin{definition}\label{definition_transport_periodic}
		Let $\mathbf{v_1}, \cdots, \mathbf{v_N} \in \R^d$ be non-vanishing, i.e.
		$\mathbf{v_1,} \cdots, \mathbf{v_N} \neq 0.$ Moreover, let  $\indexone =
		1,\cdots,N$. 
		\begin{enumerate}
			\item
			Component $\indexone$ is transport periodic if there exists $t^*>0$ such that
			$t^*\mathbf{v_\indexone} \in \Z^d$. In this case,
			\begin{equation*}
			\tau_\indexone \coloneqq \min \{ t > 0 \, \colon \, t\mathbf{v_\indexone} \in
			\Z^d \}.
			\end{equation*}
			is called the period of the $\indexone$-th component.
			\item
			The model is jointly periodic if there exists $t^*>0$ such that
			$t^*\mathbf{v_\indexone} \in \Z^d$ for all $\indexone=1, \cdots, N$. In this
			case,
			\begin{equation*}
			\tau \coloneqq \min\{ t>0 \, \colon \, t\mathbf{v_\indexone} \in \Z^d \text{
				for all } \indexone=1,\cdots,N\}.
			\end{equation*}
			is called the period of the model.
		\end{enumerate}
	\end{definition}
	
	\begin{remark}
		The definition of jointly periodic coincides with the transport semigroup
		$(T(t))_{t\geq0}$ defined in \eqref{transport_semigroup_definition} being
		periodic in the canonical sense of \cite[\Romannum{4}. Definition 2.23, p. 266]{engel2001one}. We need a concept to distinguish between the different
		components in order to give a relatively weak condition for $\WkpCn{1}{p} =
		D(-A)$ to fail in higher dimensions.
	\end{remark}
	\begin{remark}
		In dimension $d=1,$ every component is transport periodic with $\tau_\indexone
		= 1/|v_\indexone|,$ assuming that $v_\indexone \neq 0.$ This is not the case in
		higher dimensions.
	\end{remark}
	
	Let us characterize transport periodicity algebraically. To this end, we use the
	notation $\mathbf{v_{\indexone,\indextwo}}$ for the $\indextwo$-th component of
	$\mathbf{v_\indexone}.$
	
	\begin{lemma}\label{lemma_component_transport_periodic_characterization}
		Let $\mathbf{v_1}, \cdots, \mathbf{v_N} \in \R^d$ be non-vanishing, i.e.
		$\mathbf{v_1,} \cdots, \mathbf{v_N} \neq 0.$ Moreover, let $\indexone = 1,
		\cdots, N.$ The following assertions are equivalent:
		\begin{enumerate}
			\item
			Component $j$ is transport periodic.
			\item
			$\mathbf{v_{\indexone,\indextwo}} \in \mathbf{v_{\indexone,\indextwo^*}} \Q$
			for all $\indextwo=1,\cdots,d$ and one (and hence all) $\indextwo^*$ with
			$\mathbf{v_{\indexone,\indextwo^*}} \neq 0.$
			\item
			$ \displaystyle\bigcap^d_{\substack{\indextwo=1 \\
			\mathbf{v_{\indexone,\indextwo}} \neq 0}} \mathbf{v_{\indexone,\indextwo}} \Z
			\neq \{0\}.$
		\end{enumerate}
	\end{lemma}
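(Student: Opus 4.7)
The plan is to prove the three-way equivalence by the cycle $(1) \Rightarrow (2) \Rightarrow (3) \Rightarrow (1)$. Morally, all three conditions express that the nonzero components of $\mathbf{v_\indexone}$ are pairwise commensurable: (1) packages this as a common time $t^*$ that rescales $\mathbf{v_\indexone}$ into $\Z^d$, (2) as pairwise rationality of the coordinate ratios, and (3) as the existence of a common integer multiple in each relevant coordinate direction. None of the implications is deep, but care is required because the intersection in (3) is restricted to coordinates with $\mathbf{v_{\indexone,\indextwo}} \neq 0$, whereas (1) quantifies over all coordinates.

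For $(1) \Rightarrow (2)$, I would write $t^* \mathbf{v_{\indexone,\indextwo}} = n_\indextwo \in \Z$. If $\mathbf{v_{\indexone,\indextwo^*}} \neq 0$ then $n_{\indextwo^*} \neq 0$ because $t^* > 0$, so $\mathbf{v_{\indexone,\indextwo}} = (n_\indextwo / n_{\indextwo^*}) \mathbf{v_{\indexone,\indextwo^*}} \in \mathbf{v_{\indexone,\indextwo^*}} \Q$ for every $\indextwo$. The ``and hence all'' clause reduces to the observation that if $\mathbf{v_{\indexone,\indextwo^{**}}} \neq 0$ then $\mathbf{v_{\indexone,\indextwo^*}}$ is itself a nonzero rational multiple of $\mathbf{v_{\indexone,\indextwo^{**}}}$ by the same argument, so the two sets $\mathbf{v_{\indexone,\indextwo^*}} \Q$ and $\mathbf{v_{\indexone,\indextwo^{**}}} \Q$ agree.

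For $(2) \Rightarrow (3)$, let $I \coloneqq \{\indextwo : \mathbf{v_{\indexone,\indextwo}} \neq 0\}$, which is nonempty since $\mathbf{v_\indexone} \neq 0$. Fix $\indextwo^* \in I$ and write each ratio $\mathbf{v_{\indexone,\indextwo}} / \mathbf{v_{\indexone,\indextwo^*}} = p_\indextwo / q_\indextwo$ in lowest terms with $p_\indextwo \in \Z \setminus \{0\}$ and $q_\indextwo \in \N$ for $\indextwo \in I$. Set $M \coloneqq \prod_{l \in I} |p_l|$ and $z \coloneqq M \mathbf{v_{\indexone,\indextwo^*}}$. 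Then $z$ is nonzero and $z / \mathbf{v_{\indexone,\indextwo}} = M q_\indextwo / p_\indextwo \in \Z$ for every $\indextwo \in I$ since $p_\indextwo$ divides $M$, so $z \in \bigcap_{\indextwo \in I} \mathbf{v_{\indexone,\indextwo}} \Z \setminus \{0\}$.

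For $(3) \Rightarrow (1)$, I would pick a nonzero $z$ in the intersection (without loss of generality $z > 0$), write $z = m_\indextwo \mathbf{v_{\indexone,\indextwo}}$ with $m_\indextwo \in \Z \setminus \{0\}$ for each $\indextwo \in I$, and put $t^* \coloneqq \mathrm{lcm}\{|m_\indextwo| : \indextwo \in I\} / z$. Then $t^* > 0$, and $t^* \mathbf{v_{\indexone,\indextwo}} = \mathrm{lcm}\{|m_l| : l \in I\} / m_\indextwo \in \Z$ for every $\indextwo \in I$, while $t^* \mathbf{v_{\indexone,\indextwo}} = 0 \in \Z$ trivially for $\indextwo \notin I$. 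Thus $t^* \mathbf{v_\indexone} \in \Z^d$, as required. The only real bookkeeping obstacle is to handle vanishing coordinates consistently across the three formulations, which is taken care of by isolating the index set $I$ and noting that any candidate $t^*$ automatically acts correctly on the coordinates outside $I$.
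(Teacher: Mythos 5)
Your proof is correct and takes essentially the same approach as the paper: extracting the integer ratios $n_\indextwo$ (resp.\ $p_\indextwo/q_\indextwo$, $m_\indextwo$) and combining them multiplicatively to produce the required common rescaling. The only cosmetic difference is that you use $\mathrm{lcm}\{|m_\indextwo|\}$ and absolute values where the paper uses raw products of numerators, which actually sidesteps a small sign issue that the paper glosses over when asserting positivity of its $t$, and you prove $(3) \Rightarrow (1)$ directly rather than via $(2)$; neither change alters the substance of the argument.
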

	\begin{proof}
		The details are given in Appendix \ref{proof_lemma_periodicity}.
	\end{proof}
	
	Joint periodicity can also be characterized in an algebraic way.
	
	\begin{lemma}\label{lemma_jointly_periodic}
		Let $\mathbf{v_1}, \cdots, \mathbf{v_N} \in \R^d$ be non-vanishing, i.e.
		$\mathbf{v_1,} \cdots, \mathbf{v_N} \neq 0.$ The following assertions are
		equivalent:
		\begin{enumerate}
			\item
			The model is jointly periodic.
			\item
			$\mathbf{v_{\indexone,\indextwo}} \in \mathbf{v_{\indexone^*,\indextwo^*}} \Q$
			for all   $\indexone=1,\cdots,N,$ all $\indextwo=1,\cdots,d$ and one (and hence
			all) $\indexone^*$ and $\indextwo^*$ with $\mathbf{v_{\indexone^*,\indextwo^*}}
			\neq 0.$
			\item
			$ \displaystyle\bigcap_{\indexone=1}^N
			\displaystyle\bigcap^d_{\substack{\indextwo=1 \\
					\mathbf{v_{\indexone,\indextwo}} \neq 0}} \mathbf{v_{\indexone,\indextwo}} \Z
			\neq \{0\}.$
		\end{enumerate}
	\end{lemma}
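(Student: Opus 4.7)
The plan is to reduce the statement directly to \Cref{lemma_component_transport_periodic_characterization} via a concatenation trick. I would form the vector $\hat{\mathbf{v}} := (\mathbf{v_1}, \ldots, \mathbf{v_N}) \in \R^{Nd}$, which is non-zero by the assumption that each $\mathbf{v_j}$ is non-vanishing. The model being jointly periodic in the sense of \Cref{definition_transport_periodic} asks for some $t^* > 0$ with $t^* \mathbf{v_j} \in \Z^d$ for every $j$, which is clearly equivalent to $t^* \hat{\mathbf{v}} \in \Z^{Nd}$. In other words, joint periodicity of the model is precisely transport periodicity of $\hat{\mathbf{v}}$ when viewed as a single velocity vector in dimension $Nd$.

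Next I would observe that under the reindexing $k \leftrightarrow (j,k)$ of the coordinate axes, the algebraic conditions (ii) and (iii) of the current lemma coincide letter-for-letter with conditions (ii) and (iii) of \Cref{lemma_component_transport_periodic_characterization} applied to $\hat{\mathbf{v}}$: the quantifier ``for all $k=1,\ldots,d$ and one (and hence all) $k^*$ with $\mathbf{v_{j,k^*}} \neq 0$'' becomes ``for all $(j,k)$ and one (and hence all) $(j^*, k^*)$ with $\mathbf{v_{j^*,k^*}} \neq 0$,'' and the intersection $\bigcap_{k \colon \mathbf{v_{j,k}} \neq 0} \mathbf{v_{j,k}} \Z$ becomes $\bigcap_{j} \bigcap_{k \colon \mathbf{v_{j,k}} \neq 0} \mathbf{v_{j,k}} \Z$. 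Therefore all three equivalences in the current lemma follow at once from \Cref{lemma_component_transport_periodic_characterization} applied to $\hat{\mathbf{v}}$ in $\R^{Nd}$.

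The only substantive point to verify is that the proof of \Cref{lemma_component_transport_periodic_characterization} is purely combinatorial in the coordinate index set and does not rely on the ambient dimension $d$ in any essential way, so that it applies equally well in dimension $Nd$. Since the previous lemma's intersection condition already excludes vanishing coordinates, no special care is needed for zero entries $\mathbf{v_{j,k}} = 0$ of the concatenated vector. The ``hard part'' thus amounts only to recognizing that the concatenation reduction is available; after that the proof is a one-line corollary of \Cref{lemma_component_transport_periodic_characterization}. (If one preferred a self-contained argument, the implication (i) $\Rightarrow$ (ii) follows by taking ratios of the integers $t^* \mathbf{v_{j,k}}$, (ii) $\Rightarrow$ (i) by clearing denominators of the rational ratios via an lcm, and (ii) $\Leftrightarrow$ (iii) by repeating the argument of \Cref{lemma_component_transport_periodic_characterization}; but this adds no new content.)
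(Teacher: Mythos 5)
Your concatenation argument is exactly the paper's proof: the paper forms the vector $(\mathbf{v_1},\ldots,\mathbf{v_N})\in\R^{Nd}$, notes that joint periodicity is precisely transport periodicity of that concatenated vector, and then cites \Cref{lemma_component_transport_periodic_characterization}. Your additional remark about reindexing coordinates and verifying that the earlier lemma's proof is dimension-agnostic makes the reduction slightly more explicit, but the approach and content are identical.
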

	\begin{proof}
		Notice that the model is jointly periodic if and only if there exists $t^*>0$
		such that
		\begin{equation*}
		t^* \colvec{\mathbf{v_1} \\ \vdots \\ \mathbf{v_N}} \in \Z^{Nd}.
		\end{equation*}
		The result follows from an application of
		\Cref{lemma_component_transport_periodic_characterization} to the concatenated
		vector.
	\end{proof}
	
	\begin{remark}
		For $d\geq2,$ the above results show that there are very few transport
		directions such that one component is transport periodic or the model is jointly
		periodic in the sense that these directions are a nullset in $\R^d$ and
		$\R^{Nd}$ respectively. However, these are the natural cases a human would
		choose as an example when trying to show results for $d\geq2.$ Even for these
		"simple" periodic cases, many properties which hold true for $d=1$ fail in
		higher dimensions.
	\end{remark}
	
	The first negative result for higher dimensions is the next theorem.
	
	\begin{theorem}\label{theorem_domain_generator_higher_dimensions}
		Let $d\geq2$ and let $\mathbf{v_1}, \cdots, \mathbf{v_N}$ be non-vanishing,
		i.e. $\mathbf{v_1},\cdots,\mathbf{v_N}\neq 0$. Assume that there is one
		transport periodic component. Then 
		\begin{equation*}
			(-\mathcal{A}, \, \WkpCn{1}{p}) \subsetneq (-A, \, D(-A)).
		\end{equation*}
	\end{theorem}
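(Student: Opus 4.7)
The plan is to exhibit an element of $D(-A) \setminus \WkpCn{1}{p}$, since the reverse inclusion is already given by \Cref{lemma_A_is_an_extension}. Without loss of generality, component $1$ is transport periodic, and I set $\mathbf{w} := \tau_1 \mathbf{v_1} \in \Z^d$. The minimality of $\tau_1$ forces $\mathbf{w}$ to be \emph{primitive}, i.e.\ the greatest common divisor of its entries equals $1$. The strategy is to construct $u_1 \in L^p(\T^d)$ that is $\mathbf{v_1}$-translation invariant but not weakly differentiable, and then take $u := (u_1, 0, \ldots, 0)$. Translation invariance makes the difference quotient defining $-A$ vanish identically, placing $u$ in $D(-A)$ with $-Au = 0$; the lack of weak differentiability rules out $u \in \WkpCn{1}{p}$.

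To build such a $u_1$, I would first invoke the classical fact that any primitive vector in $\Z^d$ extends to a basis of $\Z^d$ (via the Smith normal form, or an elementary B\'ezout-type column reduction). This provides $M \in GL_d(\Z)$ with $M \mathbf{e_1} = \mathbf{w}$. Since $M$ and $M^{-1}$ have integer entries, the map $\mathbf{x} \mapsto M^{-1} \mathbf{x}$ descends to a measure-preserving diffeomorphism of $\T^d$. Using $d \geq 2$, pick any $g \in L^p(\T^{d-1}) \setminus W^{1,p}(\T^{d-1})$; concretely one may take $g(y_2, \ldots, y_d) := h(y_2)$ for some $h \in L^p(\T) \setminus W^{1,p}(\T)$. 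The candidate function is then
\begin{equation*}
	u_1(\mathbf{x}) := g\bigl((M^{-1} \mathbf{x})_2, \ldots, (M^{-1} \mathbf{x})_d\bigr),
\end{equation*}
well defined on $\T^d$ by the integrality of $M^{-1}$.

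The remaining verifications are short. Membership in $L^p(\T^d)$ follows from $|\det M^{-1}| = 1$. Translation invariance along $\mathbf{v_1}$ is immediate from $M^{-1}(s \mathbf{v_1}) = (s/\tau_1)\mathbf{e_1}$, which gives $u_1(\mathbf{x} + s \mathbf{v_1}) = u_1(\mathbf{x})$ for all $s \in \R$; hence the difference quotient $(u_1(\cdot - t \mathbf{v_1}) - u_1)/t$ is identically zero. Finally, if $u_1 \in \WkpCn{1}{p}$, the chain rule for the linear diffeomorphism $M$ would transfer weak differentiability to $\tilde{u}(\mathbf{y}) := g(y_2, \ldots, y_d)$, and Fubini applied to the weak derivatives in the $y_2, \ldots, y_d$ directions would force $g \in W^{1,p}(\T^{d-1})$, contradicting the choice of $g$. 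The one nontrivial ingredient is the linear-algebraic step of completing the primitive vector $\mathbf{w}$ to a $\Z$-basis of $\Z^d$; once $M$ is in hand, the argument amounts to a coordinate reduction to the transparent model case $\mathbf{v_1} \parallel \mathbf{e_1}$, where one simply picks a function depending only on the transverse variables $(x_2, \ldots, x_d)$ with poor regularity.
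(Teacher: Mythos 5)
Your proof is correct but takes a genuinely different route from the paper's. The paper constructs a \emph{geometric} example: it takes the closed orbit $K=\gamma([0,\tau_j])$ of the periodic flow through the center of the torus, forms a small tubular neighborhood $U_\delta$, and uses the indicator $u_j=\Ind{$U_\delta$}$, which is constant along $\mathbf{v_j}$. Showing $\Ind{$U_\delta$}\notin W^{1,p}(\T^d)$ is then done with Gauss's theorem and the fundamental lemma of the calculus of variations (the hypothetical weak gradient is shown to vanish a.e., forcing $u_j$ to be constant, a contradiction). Your argument instead performs an \emph{algebraic coordinate reduction}: you observe that $\tau_1\mathbf{v_1}$ is necessarily primitive, complete it to a $\Z$-basis of $\Z^d$ via $M\in GL_d(\Z)$, and pull back any function $g$ of the transverse variables $(y_2,\ldots,y_d)$ lying in $L^p\setminus W^{1,p}$; translation invariance along $\mathbf{v_1}$ is then immediate from $M^{-1}\mathbf{v_1}=\tau_1^{-1}\mathbf{e_1}$, and the lack of weak differentiability transfers via the linear chain rule and a Fubini-type slicing. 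Your route is more elementary (no surface integrals, no Gauss's theorem), explicitly reduces the general transport-periodic case to the transparent model $\mathbf{v_1}\parallel\mathbf{e_1}$, and in fact yields a much larger family of counterexamples ($u_1$ need not be an indicator). The paper's construction has the advantage of being visually suggestive (see the paper's \Cref{fig:theorem_domain_generator_higher_dimensions}) and of reusing the same tubular neighborhood idea later in the proof that the generator fails to have compact resolvent (\Cref{proposition_no_compact_resolvent}), which your change-of-variables setup would also accommodate but is not as directly adapted to the quantitative scaling needed there. Both proofs are valid; the one nontrivial ingredient in yours that the paper sidesteps is the linear-algebraic fact that a primitive integer vector extends to a $\Z$-basis, but this is classical and your sketch of it (Smith normal form / B\'ezout reduction) is adequate.
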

	\begin{proof}
		Let component $\indexone$ be transport periodic with period $\tau_\indexone$.
		We show existence of a function $u\in D(-A)$ with $u\notin \WkpCn{1}{p}$ for all
		$1\leq p < \infty$. The idea is to take a non-differentiable function $u \in
		\LpCn$ such that $u_\indexone$ is constant along direction
		$\mathbf{v_\indexone}.$ \newline
		Let $\mathbf{c} = (\half, \cdots, \half) \in \T^d$ be the center of the torus.
		By assumption, the curve
		\begin{equation*}
			\gamma\colon [0,\tau_j] \rightarrow \T^d \quad \text{with} \quad \gamma(t)
			\coloneqq \mathbf{c} + t \mathbf{v_j}
		\end{equation*}
		fulfills $\gamma(0)=\gamma(\tau_j).$ Let $K=\range(\gamma)$ be the range of
		$\gamma$. Then there exists $\delta>0$ with
		\begin{equation}\label{domain_generator_construction_function}
			U_\delta = \{\mathbf{x}\in \T^d \, \colon \, \dist(\mathbf{x}, \, K) < \delta \} \subsetneq
			\T^d.
		\end{equation}
		We define $u_j \coloneqq \Ind{$U_\delta$}$ and $u_i \equiv 0$ for all
		$i=1,\cdots, N$ with $i\neq j.$ Notice that $u \in \LpCn$ and that $u_j$ is
		constant along direction $\mathbf{v_j},$ i.e.
		\begin{equation*}
			u_j(\cdot - t\mathbf{v_j}) - u_j(\cdot) \equiv 0.
		\end{equation*}
		Hence, $u\in D(-A)$ with $-Au=0$ and it is left to show that $u_j \notin
		W^{1,p}(\T^d)$ for all $1\leq p < \infty.$ Assume that for all
		$\indextwo=1,\cdots,d$ there exist $g_\indextwo \in L^1_{\loc}(\T^d)$  such that
		\begin{equation*}
			- \int_{\T^d} g_k \varphi \, d\mathbf{x} = \int_{\T^d} u_j \partial_k \varphi
			\, d\mathbf{x} = \int_{U_\delta} \partial_k \varphi \, d\mathbf{x} =
			\int_{\partial U_\delta} \varphi \mathbf{n_k} \, d\mathcal{H}^{d-1}(\mathbf{x})
		\end{equation*}
		holds for all $\varphi \in C_c^\infty(\T^d)$ with outer normal $\mathbf{n}$ of
		$U_\delta.$ Notice that the last step follows from Gauss's theorem. Testing with
		$\varphi \in C_c^\infty(U_\delta)$ and $\varphi \in
		C_c^\infty(\overline{U_\delta}^c)$ respectively implies $g_k = 0$ almost
		everywhere on $(\partial U_\delta)^c$ by the fundamental lemma of calculus of
		variations. The boundary $\partial U_\delta$ is a Lebesgue-nullset and hence,
		$g_k = 0$ holds almost everywhere in $\T^d$ for all $k=1,\cdots,d.$
		Consequently, $\nabla u_j = 0$ and $u_j$ is constant by the connectedness of the
		torus. This contradicts the definition of $u_j$.
	\end{proof}
	
	\begin{figure}[H]
		\centering
		\includegraphics{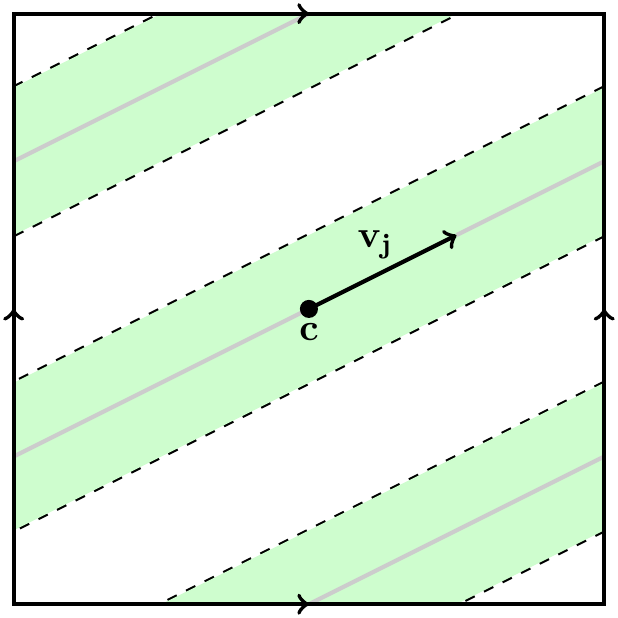}
		\caption[Domain Generator $\subsetneq W^{1,p}$: An Illustration of the Idea]{An
			illustration of the idea behind the function constructed in
			\eqref{domain_generator_construction_function} for $d=2$ in the exemplary case
			$\mathbf{v_j} = (\tfrac{1}{4}, \tfrac{1}{8}).$ The whole area is the torus
			$\T^2$ and the green area is the support of $u_j$ for
			$\delta=\tfrac{1}{4\sqrt{5}}$.}
		\label{fig:theorem_domain_generator_higher_dimensions}
	\end{figure}
	
	After achieving a better understanding of the domain of the generator, it would
	be reasonable to turn our attention to the question of well-posedness for two
	abstract Cauchy problems. Let $T>0$ and $I = [0,T) \subseteq \R_{t\geq0}$ be an interval. The linear hyperbolic Cauchy problem on $I$ reads
	\begin{equation}
		\begin{cases}
		\begin{array}{rrll}
		\dot{u}(t) + Au(t) & = & F(t) & \qquad 0<t<T, \\
		u(0) & = & u_0
		\end{array}
		\end{cases}
	\end{equation}
	for $u_0 \in \LpCn,$ an interval  and $F \in C(I, \,
	\LpCn)$. The semilinear hyperbolic Cauchy problem on $I$ is given by
	\begin{equation}
		\begin{cases}
		\begin{array}{rrll}
		\dot{u}(t) + Au(t) & = & F(t,u(t)) & \qquad 0<t<T, \\
		u(0) & = & u_0
		\end{array}
		\end{cases}
	\end{equation}
	for $u_0 \in \LpCn$ and $F \in C(I \times\LpCn, \, \LpCn).$ \newline
	
	We decided to postpone the overview on well-posedness results for these
	equations to \Cref{chapter_transport_reaction_SG} because the transport
	semigroup $(T(t))_{t\geq0}$ with generator $(-A, \, D(-A))$ is a special case of
	the transport-reaction semigroup studied in the next chapter. \newline
	
	\newpage
	
	\chapter{The Transport-Reaction Semigroup}\label{chapter_transport_reaction_SG}
	
	As in the previous chapter, we assume that the dimension $d\in\N$, the number of
	components $N\in\N$, the transport directions
	$\mathbf{v_1},\cdots,\mathbf{v_N}\in \R^d$ and $1\leq p < \infty$ are
	arbitrarily chosen fixed parameters, if not stated otherwise. We work on the
	Banach space $\LpCn$ with norm $\LpCnnorm{p}{\cdot}$, which is defined in
	\eqref{definition_Lp_norm}. \newline
	
	Eventually, our goal is to study the stability and long-time behavior of
	solutions to the transport-reaction model \eqref{intro_transport_reaction_eq_nonlinear}. Given an equilibrium state $u = c = \const \in \R^N$ with $F(c) = 0,$ we follow the
	standard approach by defining $u = c + w$ and linearizing the system for the
	perturbation $w.$ For a solution of the form $u=c+w$, one has
	\begin{equation*}
		\partial_t \colvec{w_1 \\ \vdots \\ w_N} +  \colvec{\mathbf{v_1} \cdot \nabla
		w_1\\ \vdots \\ \mathbf{v_N} \cdot \nabla w_N} = F(u) = DF(c)w +
		\mathcal{O}(|w|^2)
	\end{equation*}
	where $DF(c) \in \R^{N\times N}$ is the Jacobian of $F$ evaluated at $c.$ The
	(relabeled) linearized system then reads
	\begin{equation*}
		\partial_t \colvec{u_1 \\ \vdots \\ u_N} +  \colvec{\mathbf{v_1} \cdot \nabla
		u_1\\ \vdots \\ \mathbf{v_N} \cdot \nabla u_N} = DF(c)u.
	\end{equation*}
	For this reason, we will study the equation
	\begin{equation}\label{linear_transport_reaction_eq}
		\partial_t u + \colvec{\mathbf{v_1} \cdot \nabla u_1\\ \vdots \\ \mathbf{v_N}
		\cdot \nabla u_N} = Bu
	\end{equation}
	for an a priori arbitrary matrix $B \in \R^{N \times N}.$ It should be noted
	that $B$ is a \textit{real} matrix and that it induces a bounded linear operator
	on $\LpCn$ by pointwise matrix multiplication. We will use the letter $B$ for
	the matrix \textit{and} the operator. From a semigroup point of view, there are
	two different perspectives on \eqref{linear_transport_reaction_eq}. One possibility is to consider the abstract Cauchy problem
	\begin{equation}\label{perturbed_PDE_perspective_1_non_vanishing_rhs}
		\dot{u}(t) + Au(t) = Bu(t),
	\end{equation}
	and to solve it via analyzing the generator $(-A, \, D(-A))	$ of the transport
	semigroup. This approach can be carried out with the knowledge from
	\Cref{chapter_transport_semigroup} and Appendix \ref{appendix_semigroup_theory}.
	The other possibility is to consider the abstract Cauchy problem
	\begin{equation}\label{perturbed_PDE_perspective_2_perturbed_generator}
		\dot{u}(t) + (A-B)u(t) = 0
	\end{equation}
	and to study the semigroup generated by $-A+B.$ This section addresses the
	second approach and we study the reaction and transport-reaction semigroups
	generated by $B$ and $-A+B$ respectively. The main advantage is that standard
	perturbation theory gives a detailed description of the perturbed semigroup,
	which allows us to prove qualitative properties of the solutions to
	\eqref{linear_transport_reaction_eq}. \newline
	\indent An overview of the results from perturbation theory for semigroups we
	apply, including references and adaptations to our simple case, can be found in
	Appendix \ref{appendix_perturbation_theory}. \newline
	
	As a first step, we take a closer look to the semigroup generated by $(B, \,
	\LpCn)$ and characterize positivity. The main idea for this characterization was
	found on \cite{Conifold2014}.
	
	\begin{definition}
		Let $1\leq p < \infty.$ A function $u\in\LpCn$ is called non-negative if
		\begin{equation*}
			u_\indexone(\mathbf{x}) \geq 0 \text{ for almost all } \mathbf{x}\in\T^d
			\text{ and all } \indexone=1,\cdots,N.
		\end{equation*}
		In this case, we write $u\geq0$ and we define $\LpCn_+ \coloneqq \{ u \in \LpCn
		\, \colon \, u \geq0\}.$
	\end{definition}
	\begin{remark}\label{remark_positive_functions_closed_subset}
		The set $\LpCn_+$ is a closed subset of $\LpCn$ since convergence in
		$\|\cdot\|_{\LpCn}$ implies pointwise convergence almost everywhere along a
		subsequence.
	\end{remark}
	\begin{definition}\label{definition_positive}
		A semigroup $(T(t))_{t\geq0}$ on $\LpCn$ is called positive if each operator
		$T(t)$ is positive, i.e. if
		\begin{equation*}
			u \geq 0 \implies T(t)u \geq 0
		\end{equation*}
		for all $t\geq0.$ Similarly, a group $(T(t))_{t\in\R}$ is called positive if
		$T(t)$ is positive for all $t\in\R.$
	\end{definition}
	\begin{remark}
		These defintions make $\LpCn$ a Banach lattice and are just special cases of
		already existing general theory, cf. \cite{schaefer2012banach}. Postive
		semigroups arise naturally in biology and physics when $u$ describes non-negative
		quantities, like for example concentrations. Moreover, there are strong results concerning
		specral theory and long-time asymptotics of positive semigroups, which is why we
		are interested in a characterization of all matrices $B$ such that the semigroup
		generated by $-A+B$ becomes positive. The fundamentals in a concrete context can
		be found in \cite{engel2001one} and the detailed theory is presented in
		\cite{arendt1986one}.
	\end{remark}
	
	\begin{lemma}\label{lemma_S(t)_positive_SG_iff_B_off_diag_non-neg}
		The operator $(B, \, \LpCn)$ generates the uniformly continuous semigroup
		$(S(t))_{t\geq0} = (e^{tB})_{t\geq0}$ on $\LpCn,$ where $e^{tB}$ is the operator
		on $\LpCn$ given by pointwise matrix multiplication with the matrix $e^{tB}.$
		The semigroup is positive if and only if the off-diagonal entries of $B$ are
		non-negative.
	\end{lemma}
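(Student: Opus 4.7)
The plan is to split the claim into the generation part and the positivity characterization, and to push the second assertion down to the level of the matrix $B$ itself, where it becomes a classical fact about Metzler matrices.

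For generation, the operator of pointwise matrix multiplication by $B$ is bounded on $\LpCn$ with norm controlled by the usual matrix norm of $B$; hence by standard semigroup theory (see the appendix on semigroup theory) it generates a uniformly continuous semigroup given by the norm-convergent series $\sum_{n\geq 0} (tB)^n/n!$. Since pointwise matrix multiplication is an algebra homomorphism from $\R^{N\times N}$ into the bounded operators on $\LpCn$, the $n$-th power of the operator $B$ is the operator of pointwise multiplication by the matrix $B^n$, and passing the sum inside the integrand (by the norm convergence of the matrix exponential series) shows that $S(t)$ agrees with pointwise multiplication by the matrix $e^{tB}$.

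For positivity, the first reduction is the equivalence: pointwise multiplication by a matrix $M\in\R^{N\times N}$ maps $\LpCn_+$ into itself if and only if all entries of $M$ are non-negative. The ``if'' direction is immediate; for ``only if'', one tests against constant non-negative functions supported on indicator vectors $e_\indexone \Ind{$\T^d$}$ and reads off each column of $M$. This turns the positivity of $(S(t))_{t\geq 0}$ into the requirement that the matrix $e^{tB}$ has non-negative entries for all $t\geq 0$.

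It remains to prove that this last property of the matrix exponential is equivalent to $B_{\indexone\indextwo}\geq 0$ for all $\indexone\neq \indextwo$. For the forward implication, assuming off-diagonal non-negativity, I would choose $\alpha>0$ with $\alpha\geq -\min_\indexone B_{\indexone\indexone}$, so that $B+\alpha I$ is entrywise non-negative; then $e^{t(B+\alpha I)} = \sum_n t^n(B+\alpha I)^n/n!$ is a sum of non-negative matrices and hence entrywise non-negative, and the identity $e^{tB} = e^{-t\alpha} e^{t(B+\alpha I)}$ concludes. For the reverse implication, fix $\indexone\neq \indextwo$; then $(e^{tB})_{\indexone\indextwo}\geq 0$ for all $t\geq 0$ and vanishes at $t=0$, so
\begin{equation*}
B_{\indexone\indextwo} = \frac{d}{dt}\bigg|_{t=0}(e^{tB})_{\indexone\indextwo} = \lim_{t\searrow 0}\frac{(e^{tB})_{\indexone\indextwo}}{t} \geq 0.
\end{equation*}
The only mildly delicate step is the reduction of operator positivity on $\LpCn$ to entrywise positivity of a matrix, but once that is observed the rest is bookkeeping.
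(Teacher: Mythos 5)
Your proof is correct, and it takes a genuinely different route from the paper for the positivity characterization. The paper proves the ``if'' direction directly on $\LpCn$: it first handles strictly positive off-diagonal entries by expanding $e^{tB} = I + tB + R_t$ with $\|R_t\|_\infty = \mathcal{O}(t^2)$, obtains positivity of $S(t)$ for small $t$ via a quantitative lower bound of the form $t(b - Ct)\|u(\mathbf{x})\|_\infty$, extends to all $t\geq 0$ by the semigroup property, and then passes to non-negative off-diagonal entries through an approximation $B^{(n)} \to B$; the ``only if'' direction tests $u\equiv e_\indexone$ and produces a negative component for small $t$ from the same expansion. You instead make the finite-dimensional reduction explicit (operator positivity on $\LpCn$ is equivalent to entrywise non-negativity of the matrix $e^{tB}$, proved by testing constant vector-valued functions), and then settle the matrix-level question by the shift trick $e^{tB} = e^{-t\alpha} e^{t(B+\alpha I)}$ with $\alpha$ large enough that $B+\alpha I$ is entrywise non-negative, together with the derivative-at-zero observation $B_{\indexone\indextwo} = \lim_{t\searrow 0}(e^{tB})_{\indexone\indextwo}/t \geq 0$ for $\indexone\neq\indextwo$. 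What you gain is a cleaner and more conceptual argument that dispenses with the paper's $t_1, t_2, b/C$ bookkeeping and the approximation step, and makes visible that the result is really a classical fact about Metzler matrices; what the paper's version offers is a self-contained estimate in the form actually needed if one later wanted quantitative control of $S(t)$ for small $t$. One small point worth stating explicitly in your writeup: when you pick $\alpha$, you need $\alpha \geq -\min_\indexone B_{\indexone\indexone}$ so that the diagonal entries of $B+\alpha I$ are non-negative (the off-diagonal entries are untouched by $\alpha I$), and any such $\alpha$ works; your parenthetical ``$\alpha > 0$'' is a harmless sufficient choice.
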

	\begin{proof}
		Notice that $B$ defines a bounded linear operator on $\LpCn$ due to the
		estimate
		\begin{equation*}
			\LpCnnorm{p}{Bu} \lesssim \| B \|_\infty \LpCnnorm{p}{u}.
		\end{equation*}
		Hence, $(B, \, \LpCn)$ generates the uniformly continuous semigroup
		$(S(t))_{t\geq0} = (e^{tB})_{t\geq0}$ by \cite[Chapter 1, Theorem 1.2, p. 2]{pazy2012semigroups}. \newline
		Let us show that this semigroup is positive if the off-diagonal entries of $B$
		are non-negative. To this end, we firstly assume that the off-diagonal entries
		are positive, i.e.
		\begin{equation}\label{equation_off_diagonal_positive}
			b \coloneqq \min_{\indexfour \neq \indexone} B_{\indexfour\indexone} > 0.
		\end{equation}
		Let $u \in \LpCn_+$ and let $\mathbf{x}\in\T^d$ such that $u(\mathbf{x})
		\geq0$. By definition, we have
		\begin{equation*}
			(S(t)u)(\mathbf{x}) = e^{tB}u(\mathbf{x}) = (I_{\C^{N\times N}} +
			tB)u(\mathbf{x}) + R_tu(\mathbf{x}) 
		\end{equation*}
		for a matrix $R_t$ with $\|R_t\|_\infty = \mathcal{O}(t^2)$ as $t\to0.$ This implies
		existence of $t_1 = t_1(B) > 0$ and $C = C(B) > 0$ such that
		\begin{equation*}
			\left( e^{tB}u(\mathbf{x}) \right)_\indexone \geq \big( (I_{\C^{N\times N}} +
			tB)u(\mathbf{x}) \big)_\indexone- \| R_t\|_\infty \|u(\mathbf{x})\|_\infty \geq \big(
			(I_{\C^{N\times N}} + tB)u(\mathbf{x}) \big)_\indexone - Ct^2
			\|u(\mathbf{x})\|_\infty
		\end{equation*}
		for all $\indexone=1,\cdots, N$ and $0\leq t\leq t_1.$ By our assumption
		\eqref{equation_off_diagonal_positive}, we also have existence of $t_2=t_2(B)>0$
		with
		\begin{equation*}
			\big( I_{\C^{N\times N}} + tB \big)_{\indexfour \indexone} \geq t b >0
		\end{equation*}
		for all $\indexfour,\indexone =1,\cdots, N$ and all $0\leq t\leq t_2.$ Notice
		that the identity matrix dominates the diagonal for small $t.$
		Both estimates together imply
		\begin{equation*}
			\left( e^{tB}u(\mathbf{x}) \right)_\indexone \geq t b \sum_{\indexfour=1}^{N}
			u_\indexfour(\mathbf{x}) - Ct^2 \|u(\mathbf{x})\|_\infty \geq t(b - Ct)
			\|u(\mathbf{x})\|_\infty \geq 0
		\end{equation*}
		for all $\indexone=1, \cdots, N$ and $t\leq t^* = t^*(B) \coloneqq \min\{t_1,
		\, t_2, \, b/C \}.$ This implies that the operators $S(t)$ are positive for
		$0\leq t\leq t^*.$\newline
		For $t\in\R_{\geq0},$ choose $m\in \N_0$ and $r\in [0, t^*)$ such that $t=mt^* + r.$
		Then
		\begin{equation*}
			S(t) = S(r) S(t^*)^m
		\end{equation*}
		is a positive operator, since compositions preserve positivity. \newline
		Secondly, let the off-diagonal entries of $B$ be non-negative. In this case, we
		take a sequence $(B^{(n)})_{n\in\N} $ of matrices with positive off-diagonal entries
		such that $B^{(n)} \to B$ in $\|\cdot\|_\infty.$ Let $u \geq 0, \,
		\mathbf{x}\in\T^d$ with $u(\mathbf{x})\geq0$ and $t\geq0.$ Our first step
		implies
		\begin{equation*}
			(S(t)u)(\mathbf{x}) = e^{tB^{(n)}}u(\mathbf{x}) + (e^{tB} - e^{t B^{(n)}})
			u(\mathbf{x}) \geq - \| e^{tB} - e^{tB^{(n)}} \|_\infty \|u(\mathbf{x})\|_\infty
			\longrightarrow 0
		\end{equation*}
		as $n\to\infty,$ i.e. $(S(t))_{t\geq0}$ defines a positive semigroup. \newline
		Conversely, if there exists one negative off-diagonal entry
		$B_{\indexfour\indexone} < 0$, we define the non-negative function $u \equiv
		e_\indexone \in \LpCn$ for the standard unit vector $e_\indexone \in \C^N.$
		Similar to before, we obtain the estimate
		\begin{equation*}
			(S(t)u)_\indexfour (\mathbf{x}) \equiv \left(
			e^{tB}e_\indexone\right)_\indexfour \leq 	\big((I_{\C^{N\times N}} +
			tB)e_\indexone\big)_\indexfour + Ct^2 = t(B_{\indexfour\indexone} + Ct) < 0
		\end{equation*}
		for some constant $C=C(B)>0$ and small $t.$ The last inequality is a
		consequence of the negativity of $B_{\indexfour\indexone}$. In particular,
		$S(t)$ is not a positive operator for small enough $t>0$ and the semigroup
		$(S(t))_{t\geq0}$ cannot be positive.
	\end{proof}
	
	Given this knowledge, we can use perturbation theory to show that $(-A+B, \,
	D(-A))$ generates a strongly continuous semigroup, which we call the
	transport-reaction semigroup $(R(t))_{t\geq0}.$
	
	\begin{theorem}\label{theorem_semigroup_generated_by_-A+B}
		The operator $(-A+B, \, D(-A))$ generates a $C_0$-semigroup $(R(t))_{t\geq0}$
		on $\LpCn$. Moreover, the following properties hold:
		\begin{enumerate}
			\item
			$\|R(t)\|_{\mathcal{L}(\LpCn)} \leq e^{\| B \|_{\mathcal{L}(\LpCn)}t}$
			\item
			$R(t)u = T(t)u + \int_{0}^{t} T(t-s) B R(s)u \, ds$
			\item
			$R(t)u = T(t)u + \int_{0}^{t} R(t-s) B T(s) \, ds$
			\item
			\begin{equation*}
			R(t) = \sum_{k=0}^{\infty} R_k(t)
			\end{equation*}
			with
			\begin{equation*}
			R_0(t) = T(t), \quad R_{k+1}(t)u = \int_{0}^{t} T(t-s)BR_k(s)u \, ds
			\end{equation*}
			\item
			\begin{equation*}
			R(t)u = \lim_{n \to \infty} \left(T(\tfrac{t}{n}) e^{t/n \, B}\right)^n u
			\end{equation*}
		\end{enumerate}
		for all $t\geq 0$ and $u \in \LpCn.$ Moreover, the graph norms of $-A$ and
		$-A+B$ on $D(-A)$ are equivalent, the series in $(iv)$ converges in
		$\mathcal{L}(\LpCn)$ uniformly on compact subsets of $\R_{\geq0}$ and for every
		fixed $u\in \LpCn,$ the convergence in $(v)$ is uniform on compact subsets of
		$\R_{\geq0}.$\newline
		In all of the above statements, $(T(t))_{t\geq0}$ is the transport semigroup on $\LpCn$
		defined in \eqref{transport_semigroup_definition} and with generator $(-A, \, D(-A))$.
	\end{theorem}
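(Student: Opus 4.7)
The plan is to invoke the bounded perturbation theorem for $C_0$-semigroups, which is exactly the setting here: by \Cref{lemma_S(t)_positive_SG_iff_B_off_diag_non-neg} the matrix $B$ induces a bounded operator on $\LpCn$, and by \Cref{theorem_Ccinfty_core_A} the operator $(-A,D(-A))$ generates the transport $C_0$-semigroup $(T(t))_{t\geq0}$. The standard bounded perturbation result (which should be stated in Appendix \ref{appendix_perturbation_theory}) then asserts that $-A+B$ with the unchanged domain $D(-A)$ generates a $C_0$-semigroup $(R(t))_{t\geq0}$ and simultaneously delivers almost all of the listed properties. So the proof is essentially a matter of quoting the theorem correctly and then verifying the few ingredients it requires.

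First, I would record that $B\in\mathcal{L}(\LpCn)$ with operator norm controlled by $\|B\|_\infty$, so that $-A+B$ with domain $D(-A)$ is well-defined. The equivalence of the graph norms of $-A$ and $-A+B$ on $D(-A)$ then follows from the triangle inequality $\|Bu\|_{\LpCn}\leq \|B\|_{\mathcal{L}(\LpCn)}\|u\|_{\LpCn}$, which gives a two-sided estimate between $\|u\|_{-A}$ and $\|u\|_{-A+B}$. With this in hand, I would next apply the bounded perturbation theorem (Phillips' theorem) to conclude that $(-A+B,D(-A))$ generates a $C_0$-semigroup with growth bound inherited from that of $(T(t))_{t\geq0}$ plus $\|B\|_{\mathcal{L}(\LpCn)}$. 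Since $(T(t))_{t\geq0}$ is a contraction semigroup, this immediately yields (i).

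The variation-of-constants identities (ii) and (iii) are the two equivalent integrated forms of $\dot u = -Au+Bu$: to derive (ii), differentiate $s\mapsto T(t-s)R(s)u$ for $u\in D(-A)$ and integrate, using that $T$ and $R$ agree on $D(-A)$ with the corresponding generators; (iii) follows symmetrically, or equivalently by a substitution argument. For $u\in\LpCn$ not in $D(-A)$ one concludes by density and the uniform bound from (i). The Dyson–Phillips series in (iv) arises by iterating (ii): setting $R_0(t)=T(t)$ and $R_{k+1}(t)u=\int_0^t T(t-s)BR_k(s)u\,ds$, one checks inductively that $\|R_k(t)\|_{\mathcal{L}(\LpCn)}\leq (t\|B\|_{\mathcal{L}(\LpCn)})^k/k!$, which gives uniform convergence on compacts in $\R_{\geq0}$ and identifies the sum with $R(t)$ via (ii). Finally, the Trotter product formula (v) follows from the Chernoff / Trotter product theorem, once one verifies that $F(t)\coloneqq T(t)e^{tB}$ is a contraction-type family with $F(0)=I$ and $F'(0)u=(-A+B)u$ for $u\in D(-A)$, and that $-A+B$ generates a $C_0$-semigroup (already established).

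There is no serious obstacle here; the only subtlety is checking the hypotheses of the Trotter product formula in (v), which requires a little care because one must verify the generator condition on a core, and $C_c^\infty(\T^d,\C^N)$ serves as such a core by \Cref{theorem_Ccinfty_core_A}. All remaining assertions are immediate bookkeeping once the perturbation theorem has been cited.
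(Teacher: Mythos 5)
Your proposal follows essentially the same route as the paper: generation, properties (i)--(iii), the Dyson--Phillips series (iv), and the Trotter formula (v) are all obtained from the bounded perturbation and Lie--Trotter results collected in Appendix \ref{appendix_perturbation_theory}, applied to the transport semigroup and the uniformly continuous semigroup $(e^{tB})_{t\geq0}$, exactly as the paper does. The only cosmetic difference is that you sketch derivations of (ii)--(iv) inline where the paper cites the appendix theorems as black boxes, and for (v) the paper verifies the hypotheses of \Cref{perturbation_theory_Lie_Trotter_formula} by using Hille--Yosida on the already-constructed generator $(-A+B, D(-A))$ to get the required range density, which is the same idea you express slightly more loosely as checking the generator condition on the core $C_c^\infty(\T^d, \C^N)$.
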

	\begin{proof}
		Let us recall that $(-A, \, D(-A))$ generates the transport semigroup
		$(T(t))_{t\geq0}$ from \Cref{chapter_transport_semigroup} and that $(B, \,
		\LpCn)$ generates the uniformly continuous semigroup $(e^{tB})_{t\geq0},$ see
		\Cref{lemma_S(t)_positive_SG_iff_B_off_diag_non-neg}. \newline
		The fact that $(-A+B, \, D(-A))$ generates a $C_0$-semigroup and properties
		$(i)-(iii)$ directly follow from
		\Cref{perturbation_theory_-A+B_gen_integral_identities}. Property $(iv)$ is a
		consequence of \Cref{perturbation_theory_-A+B_dyson_phillips_series}. Also the
		addenda, except for the uniform convergence of $(v)$, are consequences of these
		two theorems. \newline
		Furthermore, property $(v)$ and its corresponding addendum can be shown by
		applying \Cref{perturbation_theory_Lie_Trotter_formula}. Notice that
		$(T(t))_{t\geq0}$ is a contraction semigroup, so we have the stability bound
		\begin{align*}
		\| (T(\tfrac{t}{n}) e^{t/n \, B} )^n \|_{\mathcal{L}(\LpCn)} &\leq
		\left(\|T(\tfrac{t}{n})\|_{\mathcal{L}(\LpCn)} \|e^{t/n \,
			B}\|_{\mathcal{L}(\LpCn)}  \right)^n \\
		&\leq (e^{t/n \, \|B\|_{\mathcal{L}(\LpCn)} })^n = e^{t
			\|B\|_{\mathcal{L}(\LpCn)}}.
		\end{align*}
		Also, we already know that $(-A+B, \, D(-A))$ generates a $C_0$-semigroup from
		the previous steps. Hence, we can find some $\lambda > \|B\|_{\mathcal{L}(\LpCn)}$
		such that $(\lambda I_{\LpCn} - (-A+B))D(-A) = \LpCn$ by Hille-Yosida, see
		\Cref{hille-yosida}. Another consequence of Hille-Yosida is that the closure of
		$(-A+B, \, D(-A))$ is again $(-A+B, \, D(-A)).$ Therefore, in view of
		\Cref{perturbation_theory_Lie_Trotter_formula}, the formula on the right-hand
		side of $(v)$ is indeed a formula for the transport-reaction semigroup. 
	\end{proof}

	\begin{corollary}\label{corollary_Ccinfty_core_A+B}
		$C_c^\infty(\T^d, \C^N)$ is a core for $(-A+B, \, D(-A)).$ In particular,
		$(R(t))_{t\geq0}$ is the only strongly continuous semigroup on $\LpCn$ with a
		generator that extends $(-A+B, \, C_c^\infty(\T^d, \, \C^N)).$
	\end{corollary}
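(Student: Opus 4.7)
The plan is to mimic the proof of \Cref{theorem_Ccinfty_core_A} almost verbatim, with the key observation that the graph norms of $-A$ and $-A+B$ on $D(-A)$ are equivalent (as stated in \Cref{theorem_semigroup_generated_by_-A+B}), so the notion of being a core is the same for both operators.

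First I would argue that $C_c^\infty(\T^d, \C^N)$ is a core for $(-A+B, D(-A))$. By \Cref{theorem_Ccinfty_core_A}, $C_c^\infty(\T^d, \C^N)$ is dense in $D(-A)$ with respect to the graph norm $\|\cdot\|_{-A}$. Since $B$ is a bounded operator on $\LpCn$, the estimate $\|Bu\|_{\LpCn} \leq \|B\|_{\mathcal{L}(\LpCn)} \|u\|_{\LpCn}$ immediately gives the equivalence of the graph norms $\|\cdot\|_{-A}$ and $\|\cdot\|_{-A+B}$ on $D(-A)$. Hence density transfers, and $C_c^\infty(\T^d, \C^N)$ is also dense in $D(-A)$ with respect to $\|\cdot\|_{-A+B}$, which is the definition of being a core for $(-A+B, D(-A))$.

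For the addendum on uniqueness, I would reproduce the argument in the proof of \Cref{theorem_Ccinfty_core_A} with $-A$ replaced by $-A+B$. Let $(\widetilde{R}(t))_{t\geq 0}$ be a $C_0$-semigroup on $\LpCn$ whose generator $(\widetilde{G}, D(\widetilde{G}))$ extends $(-A+B, C_c^\infty(\T^d, \C^N))$. Because $C_c^\infty(\T^d, \C^N)$ is a core for $(-A+B, D(-A))$ and $\widetilde{G}$ is closed by Hille-Yosida (\Cref{hille-yosida}), the closure of the restriction of $\widetilde{G}$ to $C_c^\infty(\T^d, \C^N)$ is contained in $\widetilde{G}$, yielding $(\widetilde{G}, D(\widetilde{G})) \supseteq (-A+B, D(-A))$. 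Hille-Yosida then produces some $\lambda \in \rho(-A+B) \cap \rho(\widetilde{G})$, so the two operators
\begin{equation*}
    \lambda - \widetilde{G} \colon D(\widetilde{G}) \to \LpCn, \qquad \restr{\lambda - \widetilde{G}}{D(-A)} = \lambda - (-A+B) \colon D(-A) \to \LpCn
\end{equation*}
are both bijective, forcing $D(\widetilde{G}) = D(-A)$ and $\widetilde{G} = -A+B$. By \cite[Chapter 1, Theorem 2.6, p. 6]{pazy2012semigroups}, the semigroups $(\widetilde{R}(t))_{t\geq 0}$ and $(R(t))_{t\geq 0}$ coincide.

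There is no real obstacle here; the main subtlety is simply noting that the equivalence of graph norms (already handed to us by \Cref{theorem_semigroup_generated_by_-A+B}) is exactly what lets us port the core property from $(-A, D(-A))$ to $(-A+B, D(-A))$ without any additional work.
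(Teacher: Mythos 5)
Your proposal is correct and follows exactly the route the paper takes: transfer the core property from $(-A, D(-A))$ to $(-A+B, D(-A))$ via the equivalence of graph norms supplied by \Cref{theorem_semigroup_generated_by_-A+B}, then reuse the uniqueness argument from \Cref{theorem_Ccinfty_core_A}. The paper states this more tersely, but there is no substantive difference.
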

	\begin{proof}
		We know from \Cref{theorem_Ccinfty_core_A} that $C_c^\infty(\T^d, \, \C^N)$ is
		a core for $(-A, \, D(-A)).$ Therefore, $C_c^\infty(\T^d, \, \C^N)$ is also a
		core for $(-A+B, \, D(-A))$ because the graph norms of both operators are
		equivalent by \Cref{theorem_semigroup_generated_by_-A+B}. The uniqueness property is a general result for cores, cf.
		\Cref{theorem_Ccinfty_core_A}.
	\end{proof}
	
	For completeness, we give a brief intermezzo on the well-posedness of two abstract
	Cauchy problems.  Let $T>0$. The linear hyperbolic Cauchy problem on the interval $I=[0,T)$ reads
	\begin{equation}\label{equation_abstract_cauchy_problem_linear}
		\begin{cases}
		\begin{array}{rrll}
		\dot{u}(t) + (A-B)u(t) & = & F(t) &  \qquad 0<t<T, \\
		u(0) & = & u_0
		\end{array}
		\end{cases}
	\end{equation}
	for $u_0 \in \LpCn$ and $F \colon (0,T) \to \LpCn$. The semilinear hyperbolic
	Cauchy problem on $I$ is given by
	\begin{equation}\label{equation_abstract_cauchy_problem_semilinear}
		\begin{cases}
		\begin{array}{rrll}
		\dot{u}(t) + (A-B)u(t) & = & F(t,u(t)) & \qquad 0<t<T, \\
		u(0) & = & u_0
		\end{array}
		\end{cases}
	\end{equation}
	for $u_0 \in \LpCn$ and $F\colon (0,T) \times \LpCn \to \LpCn.$ \newline
	The standard solution concepts for equations of type
	\eqref{equation_abstract_cauchy_problem_linear} and
	\eqref{equation_abstract_cauchy_problem_semilinear} are classical and mild
	solutions, as well as maximal existence times $T_{\text{max}}(u_0)$. A detailed
	overview of all concepts we use, including precise definitions and references,
	can be found in Appendix \ref{appendix_semigroup_theory} and we
	strongly recommend reading Appendix \ref{appendix_semigroup_theory} for further details
	on the topic. \newline
	Without any a priori bounds of the solution in concrete examples, we can only
	apply the standard existence and uniqueness results from
	Appendix \ref{appendix_semigroup_theory}.
	
	\begin{theorem}[cf. {\Cref{classical_sol_linear}}]
		The initial value problem \eqref{equation_abstract_cauchy_problem_linear} has a
		unique classical solution $u\in C(I, \, \LpCn) \cap C^1((0,T), \, \LpCn)$ on $I$
		for every $u_0\in D(-A),$ if $F$ satisfies one of the following two properties:
		\begin{enumerate}
			\item
			$F\in C^1([0,T], \, \LpCn),$
			\item
			$F \in C((0,T), \, \LpCn) \cap L^1((0,T), \, \LpCn) \quad \text{ and } \quad
			F(s) \in D(-A) \text{ for all } s\in (0,T) \text{ with } AF(\cdot) \in
			L^1((0,T), \, \LpCn).$
		\end{enumerate}
	\end{theorem}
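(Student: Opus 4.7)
The plan is essentially to invoke the standard existence and uniqueness theory for linear inhomogeneous abstract Cauchy problems driven by the generator of a $C_0$-semigroup. The equation \eqref{equation_abstract_cauchy_problem_linear} has the form $\dot u(t) = Gu(t) + F(t)$ with $G \coloneqq -A+B$, and by \Cref{theorem_semigroup_generated_by_-A+B}, the pair $(-A+B, D(-A))$ generates a $C_0$-semigroup $(R(t))_{t\geq 0}$ on $\LpCn$. Thus the hypothesis ``$G$ generates a $C_0$-semigroup'' of the abstract result \Cref{classical_sol_linear} is satisfied, and $D(G) = D(-A)$.

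Next I would write down the variation of constants formula
\begin{equation*}
u(t) = R(t)u_0 + \int_0^t R(t-s) F(s) \, ds,
\end{equation*}
which is the natural candidate for the (unique) classical solution. In case $(i)$, $F \in C^1([0,T], \LpCn)$: an integration by parts in $s$ converts the Duhamel integral into an expression whose $t$-derivative can be computed directly and shown to lie in $D(-A)$, yielding $u \in C(I, \LpCn) \cap C^1((0,T), \LpCn)$ with $u(t) \in D(-A)$ and the required differential identity. In case $(ii)$, where $F(s) \in D(-A)$ with $AF \in L^1$, I would instead exploit the commutation $R(t-s)(-A+B)F(s) = (-A+B)R(t-s)F(s)$ for $F(s) \in D(-A)$, allowing one to pull $-A+B$ outside the integral and again verify the Cauchy problem pointwise. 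Uniqueness is immediate from linearity: two classical solutions differ by a classical solution of the homogeneous problem with zero initial data, and such a solution must vanish identically since $t \mapsto R(-t)\cdot$ applied to it is constant (formally differentiate $R(-t)w(t)$ and use $R$ being a semigroup generated by $-A+B$).

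The only mild subtlety is that the appendix theorem \Cref{classical_sol_linear} is stated for an abstract generator of a $C_0$-semigroup, so one must make sure the graph-norm equivalence from \Cref{theorem_semigroup_generated_by_-A+B} is used to identify $D(-A+B) = D(-A)$ and justify that ``$F(s) \in D(-A+B)$ and $(-A+B)F(\cdot) \in L^1$'' is equivalent to the formulation ``$F(s) \in D(-A)$ and $AF(\cdot) \in L^1$'' appearing in condition $(ii)$; in particular the bounded perturbation $B$ never creates domain trouble because $BF \in L^1$ whenever $F \in L^1$. The hard part, if any, is purely the bookkeeping of unpacking \Cref{classical_sol_linear}; the PDE content is already encoded in \Cref{theorem_semigroup_generated_by_-A+B}, so no further analytic estimates on $A$ itself are needed here.
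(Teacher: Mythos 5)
Your proposal is correct and matches the paper's approach exactly: the theorem is proved by applying the abstract result \Cref{classical_sol_linear} with generator $-A+B$ (which by \Cref{theorem_semigroup_generated_by_-A+B} generates the $C_0$-semigroup $(R(t))_{t\geq 0}$ with domain $D(-A)$), so no new PDE-level work is needed. The domain bookkeeping you flag --- that the abstract condition ``$(-A+B)F(\cdot)\in L^1$'' is equivalent to the stated ``$AF(\cdot)\in L^1$'' because $B$ is bounded and $F\in L^1$ --- is precisely the small point one must check, and you handle it correctly.
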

	\begin{remark}
		The homogeneous equation with $F\equiv0$ has the global classical solution
		$u(\cdot)=R(\cdot)u_0 \in C^1([0,\infty), \, \LpCn)$ for $u_0\in D(-A),$ see
		\Cref{existence_homogeneous_eq}.
	\end{remark}
	
	In the semilinear case \eqref{equation_abstract_cauchy_problem_semilinear}, the
	existence theorem for mild solutions reads as follows.
	
	\begin{theorem}[cf. {\Cref{theorem_mildsol}}]
		Let $F\colon [0,\infty)\times \LpCn \to \LpCn$ be continuous in $t$ and locally
		Lipschitz continuous in $u$, uniformly in $t$ on bounded intervals. Then for
		every $u_0\in \LpCn,$ there is a time $T_{\text{max}} =
		T_{\text{max}}(\|u_0\|_{\LpCn}) > 0,$ possibly $T_{\text{max}}=T,$ such that the
		initial value problem \eqref{equation_abstract_cauchy_problem_semilinear} has a
		unique mild solution $u$ on $[0,T_{\text{max}}).$ Moreover, if $T_{\text{max}} <
		T,$ then
		\begin{equation*}
			\lim_{t \nearrow T_{\text{max}}} \| u(t) \|_{\LpCn} = \infty.
		\end{equation*}
	\end{theorem}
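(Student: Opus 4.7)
The plan is to obtain the mild solution as a fixed point of the integral operator
\begin{equation*}
\Phi(u)(t) = R(t) u_0 + \int_0^t R(t-s) F(s, u(s)) \, ds
\end{equation*}
on a suitable ball in $C([0, T'], \LpCn)$ for sufficiently small $T' > 0$, and then extend the solution maximally. This is the standard Picard--Lindelöf-type argument for semilinear equations, applied to the semigroup $(R(t))_{t\geq 0}$ constructed in \Cref{theorem_semigroup_generated_by_-A+B}. The crucial quantitative input is the growth bound $\|R(t)\|_{\mathcal{L}(\LpCn)} \leq e^{\|B\|_{\mathcal{L}(\LpCn)} t}$ from property $(i)$, which ensures that $R(t)$ acts in an equicontinuous way on bounded time intervals.

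First, fix $u_0 \in \LpCn$ and set $M = 2 e^{\|B\|_{\mathcal{L}(\LpCn)}} \|u_0\|_{\LpCn} + 1$. Choose a Lipschitz constant $L$ for $F$ on the ball of radius $M$ around $0$ in $\LpCn$, uniform on some interval $[0, \tau]$, and a bound $K$ for $\|F(s, 0)\|_{\LpCn}$ on $[0, \tau]$. For $T'$ small enough, I would verify that $\Phi$ maps the closed ball of radius $M$ in $C([0, T'], \LpCn)$ into itself and is a strict contraction there; both inequalities follow from the growth bound on $R(\cdot)$, the local Lipschitz hypothesis on $F$, and the standard estimate $\int_0^t e^{\|B\|(t-s)} \, ds \leq T' e^{\|B\| T'}$. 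Banach's fixed point theorem then produces a unique mild solution on $[0, T']$.

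Next, I would extend the solution to a maximal interval $[0, T_{\text{max}})$ by iterating the local construction: starting from $u(T')$, repeat the argument to extend on $[T', T' + T'']$, and so on. The resulting union defines the unique mild solution on a maximal half-open interval, and uniqueness on each subinterval follows from the uniqueness in Banach's theorem together with a Grönwall argument on differences of two solutions.

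The main obstacle, and the part that requires the most care, is the blow-up alternative. Assume $T_{\text{max}} < T$ but $\limsup_{t \nearrow T_{\text{max}}} \|u(t)\|_{\LpCn} < \infty$, say bounded by $M_0$. Then on $[0, T_{\text{max}})$ the quantity $\|F(s, u(s))\|_{\LpCn}$ is bounded, so the Duhamel representation shows that $u(t)$ is Cauchy as $t \nearrow T_{\text{max}}$, giving a limit $u(T_{\text{max}}^-) \in \LpCn$. Applying the local existence result with this limit as a new initial datum (and with the quantitative radius in the fixed point argument depending only on $M_0 + 1$, hence bounded away from zero on a neighborhood of $T_{\text{max}}$) would yield a mild solution strictly beyond $T_{\text{max}}$, contradicting maximality. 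Since the detailed version of this abstract argument is the content of \Cref{theorem_mildsol} in Appendix \ref{appendix_semigroup_theory}, the proof here reduces to invoking that theorem once the hypotheses on $(-A+B, D(-A))$ and $F$ have been verified.
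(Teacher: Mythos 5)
Your proposal is correct and follows essentially the same route as the paper: the paper proves this statement simply by citing the abstract result \Cref{theorem_mildsol} from the appendix (Pazy's local existence theorem for semilinear Cauchy problems), applied to the $C_0$-semigroup $(R(t))_{t\geq 0}$ on $\LpCn$ constructed in \Cref{theorem_semigroup_generated_by_-A+B}, and you acknowledge this reduction explicitly at the end. The Picard--Lindelöf fixed-point sketch, maximal extension, and blow-up alternative you outline are precisely the proof of that abstract theorem, so the two arguments are the same in substance.
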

	
	\begin{remark}
		If for each $t'>0,$ there exists a constant $C(t') >0$ such that
		\begin{equation*}
			\|F(t,u)\|_{\LpCn} \leq C (1+\|u\|_{\LpCn})
		\end{equation*}
		holds for all $t\in [0,t']$ and all $u\in \LpCn$, then
		\eqref{equation_abstract_cauchy_problem_semilinear} has a mild solution on
		$[0,\infty),$ i.e. one can choose $T=\infty$ and it holds $T_{\text{max}}=\infty,$
		see \Cref{remark_global_mild_sol}.
	\end{remark}
	
	Under stronger regularity assumptions on the right-hand side and the initial
	state, one obtains existence of strong solutions.
	
	\begin{theorem}[cf. {\Cref{semilinear_strong_sol}}]
		Let $F\in C([0,\infty)\times \LpCn, \, D(-A))$ be uniformly (in t) locally
		Lipschitz continuous in $u$. Then for every $u_0\in D(-A),$ the initial value
		problem \eqref{equation_abstract_cauchy_problem_semilinear} possesses a unique
		maximal strong solution
		\begin{equation*}
			u \in C([0,T_{\text{max}}), \, D(-A)) \cap C^1((0,T_{\text{max}}), \, \LpCn).
		\end{equation*}
	\end{theorem}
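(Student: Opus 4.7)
The plan is to invoke the abstract semilinear strong solution result \Cref{semilinear_strong_sol} with generator $-A+B$. The structural input is already available from \Cref{theorem_semigroup_generated_by_-A+B}: $(-A+B, D(-A))$ generates the $C_0$-semigroup $(R(t))_{t\geq 0}$ on $\LpCn$, and the graph norms of $-A$ and $-A+B$ are equivalent on the common domain $D(-A)$. Consequently $D(-A+B) = D(-A)$ as Banach spaces, so the hypothesis $F \in C([0,\infty)\times\LpCn, D(-A))$ is precisely the continuity requirement into $D(-A+B)$ that the abstract theorem demands, and the uniform local Lipschitz condition transfers across the norm equivalence without change.

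First I would verify the hypotheses of \Cref{semilinear_strong_sol} one by one: the generator property for $-A+B$ is already established; $u_0 \in D(-A+B)$ is given; $F$ maps continuously into $D(-A+B)$ by the equivalence of the graph norms; and $F$ is uniformly locally Lipschitz in $u$ by assumption. Having checked these, the abstract theorem directly produces the claimed unique maximal strong solution
\begin{equation*}
u \in C([0, T_{\text{max}}), \, D(-A)) \cap C^1((0, T_{\text{max}}), \, \LpCn).
\end{equation*}

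Conceptually the construction proceeds through the Duhamel formula
\begin{equation*}
u(t) = R(t) u_0 + \int_0^t R(t-s)\, F(s, u(s))\, ds,
\end{equation*}
producing a local mild solution by Picard iteration in $\LpCn$ (as in \Cref{theorem_mildsol}) and then upgrading its regularity. The homogeneous term $R(t) u_0$ lies in $C([0,\infty), D(-A)) \cap C^1([0,\infty), \LpCn)$ because $R(t)$ leaves $D(-A)$ invariant and commutes there with $-A+B$. The inhomogeneous term can then be differentiated in $t$ precisely because $s \mapsto F(s, u(s))$ takes values in $D(-A)$, which is the classical regularity condition for the convolution $\int_0^t R(t-s) g(s)\, ds$ to be a strong (not merely mild) solution of the inhomogeneous Cauchy problem with forcing $g$.

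The main obstacle I anticipate is the handover from the mild to the strong formulation: one must show that $s \mapsto F(s, u(s))$ is not only continuous into $\LpCn$ but sufficiently well-behaved as a map into $D(-A)$, so that the Duhamel integral is genuinely differentiable in $t$ with derivative giving back the PDE. This is handled inside \Cref{semilinear_strong_sol} by combining the uniform local Lipschitz continuity of $F$ with the continuity of the mild trajectory $t \mapsto u(t)$, so on our side the work reduces to confirming that our hyperbolic setting satisfies the abstract hypotheses verbatim, which the previous two chapters have essentially already arranged.
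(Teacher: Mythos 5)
Your proposal is correct and matches the paper's (implicit) approach: the theorem is stated as a direct application of the abstract result \Cref{semilinear_strong_sol} to the generator $(-A+B, D(-A))$, and the paper does not give a separate proof beyond the citation. You correctly identify the one subtlety that actually needs checking — that $D(-A)$ and $D(-A+B)$ coincide as Banach spaces because the graph norms are equivalent by \Cref{theorem_semigroup_generated_by_-A+B}, so the hypothesis $F \in C([0,\infty)\times \LpCn, D(-A))$ is exactly what the abstract theorem requires — and your remaining commentary on the Duhamel construction correctly summarizes what happens inside the abstract theorem rather than duplicating it.
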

	
	Moreover, the solution depends continuously on the initial data in the sense of
	the next theorem.
	
	\begin{corollary}[cf. {\Cref{semilinear_lipschitz_dependence}}]
		Let $F\in C([0,\infty)\times \LpCn, \, \LpCn)$ be globally Lipschitz continuous
		in $u.$  Then the mild solution depends Lipschitz continuously on $u_0\in
		\LpCn,$ i.e. for all $T>0,$ the mapping $u_0\mapsto u$ is Lipschitz continuous
		from $\LpCn$ into $C([0,T], \, \LpCn)$.
	\end{corollary}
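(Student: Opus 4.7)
The plan is to use the integral (mild) representation of the solution together with Gronwall's lemma. Writing $u$ and $v$ for the mild solutions corresponding to initial data $u_0$ and $v_0$ respectively, both functions satisfy
\begin{equation*}
    u(t) = R(t)u_0 + \int_0^t R(t-s) F(s, u(s)) \, ds, \qquad v(t) = R(t)v_0 + \int_0^t R(t-s) F(s, v(s)) \, ds.
\end{equation*}
Before even forming this difference I would first observe that global Lipschitz continuity of $F$ in $u$ implies the linear growth bound $\|F(t,u)\|_{\LpCn} \leq \|F(t,0)\|_{\LpCn} + L\|u\|_{\LpCn}$, where $L$ is the Lipschitz constant. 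By the global existence remark stated just above the corollary, this guarantees $T_{\text{max}}=\infty$ for every $u_0\in\LpCn$, so the mapping $u_0\mapsto u$ is indeed well defined on all of $\LpCn$ with image in $C([0,T], \LpCn)$ for every $T>0$.

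Next I would take the difference, apply the triangle inequality, and use the exponential bound $\|R(t)\|_{\mathcal{L}(\LpCn)} \leq e^{\|B\|_{\mathcal{L}(\LpCn)} t}$ from property $(i)$ of \Cref{theorem_semigroup_generated_by_-A+B} together with the global Lipschitz assumption on $F$ in its second argument. This yields
\begin{equation*}
    \|u(t)-v(t)\|_{\LpCn} \leq e^{\|B\|_{\mathcal{L}(\LpCn)} t}\|u_0-v_0\|_{\LpCn} + L\int_0^t e^{\|B\|_{\mathcal{L}(\LpCn)} (t-s)} \|u(s)-v(s)\|_{\LpCn} \, ds
\end{equation*}
for all $t\in[0,T]$. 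Multiplying by $e^{-\|B\|_{\mathcal{L}(\LpCn)} t}$ and setting $\varphi(t) \coloneqq e^{-\|B\|_{\mathcal{L}(\LpCn)} t}\|u(t)-v(t)\|_{\LpCn}$ transforms this into the scalar integral inequality $\varphi(t) \leq \|u_0-v_0\|_{\LpCn} + L\int_0^t \varphi(s)\, ds$.

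Finally, the integral form of Gronwall's lemma gives $\varphi(t)\leq e^{Lt}\|u_0-v_0\|_{\LpCn}$, and multiplying back by $e^{\|B\|_{\mathcal{L}(\LpCn)} t}$ yields the pointwise bound $\|u(t)-v(t)\|_{\LpCn} \leq e^{(\|B\|_{\mathcal{L}(\LpCn)}+L)t}\|u_0-v_0\|_{\LpCn}$. Taking the supremum over $t\in[0,T]$ produces a Lipschitz constant $e^{(\|B\|_{\mathcal{L}(\LpCn)}+L)T}$ for the map $u_0\mapsto u$ from $\LpCn$ into $C([0,T],\LpCn)$, which is exactly the claim. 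There is no real obstacle here; the only subtlety is remembering that one must first verify that the mild solutions exist on all of $[0,T]$, which is why the global (rather than merely local) Lipschitz hypothesis on $F$ is essential.
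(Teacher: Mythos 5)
Your proof is correct, and it is the standard Gronwall argument. Worth noting: the paper itself supplies no proof at all for this corollary — it is stated as an instance of \Cref{semilinear_lipschitz_dependence}, which in turn simply cites Pazy \cite[Chapter 6, Theorem 1.2]{pazy2012semigroups}. The argument you spell out (take the mild formulation of both solutions, subtract, use the exponential bound $\|R(t)\|_{\mathcal{L}(\LpCn)}\leq e^{\|B\|_{\mathcal{L}(\LpCn)}t}$ from \Cref{theorem_semigroup_generated_by_-A+B}(i) and the global Lipschitz constant of $F$, rescale by $e^{-\|B\|_{\mathcal{L}(\LpCn)}t}$, and apply the integral form of Gronwall) is exactly what is behind the cited Pazy theorem, so you are reproducing the reference rather than diverging from it. Your preliminary observation that global Lipschitz continuity together with continuity in $t$ gives the linear growth bound of \Cref{remark_global_mild_sol}, hence $T_{\text{max}}=\infty$ and global well-definedness of $u_0\mapsto u$, is the right way to make the map into $C([0,T],\LpCn)$ well defined before estimating; the paper leaves this implicit.
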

	
	After this summary of well-posedness results for the general transport reaction
	equation \eqref{intro_transport_reaction_eq_nonlinear}, we again focus on the linear case
	\eqref{linear_transport_reaction_eq}. \newline
	
	Similar to the transport semigroup $(T(t))_{t\geq0},$ we can extend the
	semigroup $(R(t))_{t\geq0}$ to a strongly continuous group $(R(t))_{t\in\R}$ and
	show an analogous result to \eqref{equation_domain_transport_group}. However, we
	have to be a little bit more careful as in \Cref{remark_-A_generates_C0_group}
	because the semigroup is less explicit.
	
	\begin{lemma}\label{lemma_-A+B_generates_C_0_group}
		The semigroup $(R(t))_{t\geq0}$ can be (uniquely) extended to the $C_0$-group
		$(R(t))_{t\in\R}$ where $(R(-t))_{t\geq0}$ is the semigroup $(R_-(t))_{t\geq0}$
		generated by $(A-B, \, D(A))$ with $D(A) = D(-A).$ Moreover, we have
		\begin{equation}\label{equation_domain_group_gen_by_-A+B}
		D(-A) = \left\{ u \in \LpCn \, \colon \, \lim_{t \to 0} \frac{R(t)u - u}{t}
		\textup{ exists in } \LpCn \right\}.
		\end{equation}
	\end{lemma}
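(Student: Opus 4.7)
The plan is to construct a backward semigroup $(R_-(t))_{t\geq 0}$ generated by $(A-B, D(A))$, show that $R(t)$ and $R_-(t)$ are mutual inverses via a differentiation argument, assemble these into a $C_0$-group, and then identify its domain by matching one-sided limits from both semigroups.

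First, by \Cref{remark_-A_generates_C0_group} the operator $(A, D(A))$ with $D(A) = D(-A)$ generates the $C_0$-semigroup $(T(-t))_{t\geq 0}$. Applying the bounded perturbation theory underlying \Cref{theorem_semigroup_generated_by_-A+B}, now with perturbation $-B$ in place of $B$, yields a $C_0$-semigroup $(R_-(t))_{t\geq 0}$ on $\LpCn$ generated by $(A-B, D(A))$. Next, for $u \in D(-A)$ I would study $\varphi(t) := R(t) R_-(t) u$. Domain-invariance under each semigroup places $R_-(t)u$ in $D(-A)$, and splitting $\varphi(t+h) - \varphi(t) = R(t+h)[R_-(t+h)u - R_-(t)u] + [R(t+h) - R(t)]R_-(t)u$ and taking $h \to 0$ (using local boundedness and strong continuity of $R$ for the first quotient, and $R_-(t)u \in D(-A)$ for the second) produces
\begin{equation*}
\varphi'(t) = R(t)(A-B) R_-(t) u + (-A+B) R(t) R_-(t) u.
\end{equation*}
The standard commutation $R(t)(A-B)v = (A-B)R(t)v$ on $v \in D(-A)$ turns this into $\varphi'(t) = 0$, so $\varphi(t) \equiv u$. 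Density of $D(-A) \supset C_c^\infty(\T^d, \C^N)$ (see \Cref{theorem_Ccinfty_core_A}) together with uniform boundedness of $R(t)R_-(t)$ then upgrades the identity to $R(t)R_-(t) = I$ on all of $\LpCn$; the symmetric argument gives $R_-(t)R(t) = I$.

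With invertibility established, I define $R(-t) := R_-(t)$ for $t \geq 0$. The group law $R(t+s) = R(t)R(s)$ on $\R$ then follows by a short case distinction (same-sign pairs reduce to the two semigroup laws; opposite-sign pairs use the inverse relation), strong continuity at $0$ combines one-sided continuity from both semigroups, and uniqueness of the extension is the standard fact that a $C_0$-group is determined by its generator. For the domain identity \eqref{equation_domain_group_gen_by_-A+B}, the semigroup property gives $u \in D(-A)$ iff $\lim_{t \searrow 0}(R(t)u - u)/t$ exists in $\LpCn$; applying the same fact to $(R_-(t))_{t\geq 0}$, whose generator has identical domain $D(-A)$, and substituting $t \mapsto -t$ gives the analogous equivalence with the left-sided limit $\lim_{t \nearrow 0}(R(t)u - u)/t$. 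Both one-sided limits exist precisely on $D(-A)$ and evaluate there to $(-A+B)u$, yielding the two-sided characterization.

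The main technical point is the cancellation in the expression for $\varphi'(t)$, which simultaneously uses domain-invariance of $D(-A)$ under each semigroup and commutation of each semigroup with its generator on that domain. These are routine semigroup facts, but they are applied to operators that are mutual sign-reversed perturbations of one another, so keeping the signs straight is where a slip is most likely.
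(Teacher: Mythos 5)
Your proof is correct and follows the same core strategy as the paper's (second) proof: generate the backward semigroup $(R_-(t))_{t\geq 0}$ by applying bounded perturbation to the generator $A$ of $(T(-t))_{t\geq0}$, then establish $R(t)R_-(t) = I$ by differentiating the product and using the cancellation $(A-B) + (-A+B) = 0$ together with the commutation of each semigroup with its own generator on the common domain $D(-A)$. The one place you genuinely diverge from the paper is in proving the domain identity \eqref{equation_domain_group_gen_by_-A+B}. The paper's argument goes through the abstract generator $(\widetilde{A}, D(\widetilde{A}))$ of the resulting group, observes that the inclusion $D(\widetilde{A}) \subseteq D(-A)$ is immediate, and establishes the reverse inclusion by noting that $\widetilde{A}$ is closed and extends $(-A+B, C_c^\infty(\T^d,\C^N))$, which is enough because $C_c^\infty$ is a core of $(-A+B, D(-A))$ by \Cref{corollary_Ccinfty_core_A+B}. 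You instead match one-sided difference quotients directly: the right-sided limit exists iff $u \in D(-A+B) = D(-A)$, and substituting $t \mapsto -t$ shows the left-sided limit exists iff $u \in D(A-B) = D(-A)$, with both limits equal to $(-A+B)u$ there, so the two-sided characterization follows. Your version is more elementary and self-contained -- it avoids invoking the core machinery a second time -- at the price of having to carry the sign-reversal bookkeeping through carefully, which you do. Both are valid; yours makes the appeal to the inverse relation $R(-t) = R(t)^{-1}$ do the work that the paper delegates to the closedness of $\widetilde{A}$.
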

	\begin{proof}
		Both $(-A+B, \, D(-A))$ and $(A-B, \, D(A))$ generate the $C_0$-semigroups
		$(R(t))_{t\geq0}$ and $(R_-(t))_{t\geq0}$ respectively by
		\Cref{theorem_semigroup_generated_by_-A+B}, since the change of signs simply
		changes the direction of movement and the sign of $B,$ i.e. the model is still
		of the type covered in \Cref{theorem_semigroup_generated_by_-A+B}. \newline
		Now, \cite[\Romannum{2} 3.11, p. 79]{engel2001one} implies that $(-A+B,
		\, D(-A))$ generates a strongly continuous group $(\widetilde{R}(t))_{t\in\R}$
		with
		\begin{equation}\label{auxeq1}
		D(-A) = \left\{ u \in \LpCn \, \colon \, \lim_{t \to 0}
		\frac{\widetilde{R}(t)u - u}{t} \text{ exists in } \LpCn \right\}.
		\end{equation}
		Notice that the generator theorems in \cite{engel2001one} are insufficient to
		show that $(\widetilde{R}(t))_{t\in\R}$ is in fact the $C_0$-group we would
		expect. We only have that $(\widetilde{R}(t))_{t\geq0}$ is a semigroup with a
		generator that \textit{extends} $(-A+B, \, D(-A))$ and that
		$(\widetilde{R}(-t))_{t\geq0}$ is a semigroup whose generator \textit{extends}
		$(A-B, \, D(A)).$ But this has the consequence that the generators of these
		semigroups extend $(-A+B, \, C_c^\infty(\T^d, \, \C^N))$ and $(A-B, \,
		C_c^\infty(\T^d, \, \C^N))$ respectively. However, $C_c^\infty(\T^d, \, \C^N)$
		is a core of $(-A+B, \, D(-A))$ and $(A-B, \, D(A))$ by
		\Cref{corollary_Ccinfty_core_A+B}, so we obtain
		\begin{equation*}
		(\widetilde{R}(t))_{t\geq 0} = (R(t))_{t\geq 0} \quad \text{ and } \quad
		(\widetilde{R}(-t))_{t\geq 0} = (R_-(t))_{t\geq 0}.
		\end{equation*}
		That is, $(\widetilde{R}(t))_{t\in\R} = (R(t))_{t\in\R}$ is an extension of
		$(R(t))_{t\geq0}$. Given this equality,
		\eqref{equation_domain_group_gen_by_-A+B} follows from \eqref{auxeq1}, which
		completes the proof.
	\end{proof}
	
	Alternatively, one can prove the above result in a natural way by explicitly
	defining the extension $(R(t))_{t\in\R}$ and showing the group property. In this
	case, one can use a comparable argument with cores to show
	\eqref{equation_domain_group_gen_by_-A+B}. The logic is interesting in itself
	and we also present this approach.
	
	\begin{proof}[A second proof of \Cref{lemma_-A+B_generates_C_0_group}]
		Let $(R_-(t))_{t\geq0}$ be the semigroup generated by $(A-B, \, D(A)).$ We
		define
		\begin{equation*}
		R(t) = \begin{cases}
		R(t) & t\geq 0, \\
		R_-(-t) & t\leq 0.
		\end{cases}
		\end{equation*}
		This defines a strongly continuous group by $R(-t)=R(t)^{-1}.$ The latter
		follows from
		\begin{equation*}
		\frac{d}{dt} R(-t)R(t)u = (A-B)R(-t)R(t)u + R(-t)(-A+B)R(t)u = 0
		\end{equation*}
		for all $u\in D(-A)$ and density of $D(-A)$ in $\LpCn$. Let $(\widetilde{A}, \,
		D(\widetilde{A}))$ be the generator of the $C_0$-group $(R(t))_{t\in\R}$ in the
		sense of \cite[\Romannum{2} 3.11, p. 79]{engel2001one}, i.e.
		\begin{equation*}
		D(\widetilde{A}) = \left\{ u \in \LpCn \, \colon \, \lim_{t \to 0} \frac{R(t)u
			- u}{t} \text{ exists in } \LpCn \right\}
		\end{equation*}
		and
		\begin{equation*}
		\widetilde{A}u = \lim_{t \to 0} \frac{R(t)u - u}{t}
		\end{equation*}
		for $u\in D(\widetilde{A}).$ It is left to show that $(\widetilde{A}, \,
		D(\widetilde{A})) = (-A+B, \, D(-A))$ holds. The direction ``$\subseteq$"
		follows directly from the definition of a generator, see \Cref{def_generator}.
		For the other direction, notice that $(\widetilde{A}, \, D(\widetilde{A}))$
		generates a strongly continuous semigroup $(\widetilde{R}(t))_{t\geq0}$ by
		\cite[\Romannum{2} 3.11, p. 79]{engel2001one} with $C_c^\infty(\T^d, \, \C^N)
		\subseteq D(\widetilde{A})$ because $C_c^\infty(\T^d, \, \C^N)$ lies in the
		domains of the generators of $(R(t))_{t\geq0}$ and $(R_-(t))_{t\geq0}$ by
		\Cref{lemma_A_is_an_extension}. Hence, $(\widetilde{A}, \, D(\widetilde{A}))$ is
		closed by \Cref{hille-yosida} and extends $(-A+B, \, C_c^\infty(\T^d, \,
		\C^N)).$ The set $C_c^\infty(\T^d, \, \C^N)$ is a core of $(-A+B, \, D(-A))$,
		see \Cref{corollary_Ccinfty_core_A+B}, and as a consequence, we have
		$(\widetilde{A}, \, D(\widetilde{A})) \supseteq (-A+B, \, D(-A))$ by the
		closedness of $\widetilde{A}.$ 
	\end{proof}
	
	The next chapter deals with the spectral properties of the generators
	$(-A_p+B_p, \, D(-A_p))$ and the generated semigroups $(R_p(t))_{t\geq0}$ on
	$\LpCn$, mostly in the case $p=2.$ \newline
	We also present methods which enable some extensions of the results to $1\leq p<\infty.$ At this	point, it is unclear whether the spectra are independent of $p$ and a key
	strategy will be to start by showing equality of the spectra for all $p\geq2.$
	In a second step, one can extend the results to $1\leq p \leq 2$ by using a
	duality argument. To this end, it is crucial to study the adjoint transport-reaction semigroup $(R_p(t)^{*})_{t\geq0}$ on $L^p(\T^d, \, \C^N)^*.$ Recall the classical result
	$\LpCn^* = L^q(\T^d, \, \C^N)$, where $1 < p < \infty$ and $q$ is the dual
	exponent of $p,$ i.e. $\tfrac{1}{p} + \tfrac{1}{q} = 1,$ see
	\cite[\Romannum{4}.8, Theorem 1, p. 286]{dunford1988linear}. Moreover, the
	adjoint of the bounded operator $R_p(t)$ is defined by
	\begin{equation*}
	\langle R_p(t)u, \, v \rangle = \langle u, \, R_p(t)^* v\rangle \quad \text{
		for all } u\in L^p(\T^d, \, \C^N) \text{ and all } v\in L^q(\T^d, \, \C^N).
	\end{equation*}
	Similar to \Cref{adjoint_transport_gen} and \Cref{proposition_adjoint_peq1}, we
	split our analysis into the cases $1<p<\infty$ and $p=1.$
	
	\begin{proposition}\label{proposition_adjoint_semigroup_pneq1}
		Let $1<p<\infty$ and let $(R_p(t))_{t\geq0}$ be the semigroup on $\LpCn$
		generated by $(-A_p+B_p, \, D(-A_p)).$ Then $(R_p(t)^*)_{t\geq0}$ is a
		strongly continuous semigroup on $L^q(\T^d, \, \C^N)$ with generator $(A_q+B_q^T, \,
		D(A_q))$. Moreover, it holds
		\begin{equation*}
			((-A_p+B_p)^*, \, D((-A_p+B_p)^*)) = (A_q+B_q^T, \, D(A_q))^.
		\end{equation*}
	\end{proposition}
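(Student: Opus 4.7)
The plan is to mimic the two-step structure from the proof of \Cref{adjoint_transport_gen}. In the first step, I would exploit reflexivity of $L^p(\T^d,\C^N)$ for $1<p<\infty$: by \cite[\Romannum{1}.5.14 Proposition, p.~44]{engel2001one} the adjoint semigroup $(R_p(t)^*)_{t\geq0}$ is automatically strongly continuous on $L^q(\T^d,\C^N)$, and by \cite[\Romannum{2}.2.6 Proposition, p.~63]{engel2001one} its generator coincides with the Banach space adjoint $((-A_p+B_p)^*, D((-A_p+B_p)^*))$. This reduces the statement to the purely operator-theoretic identity $((-A_p+B_p)^*, D((-A_p+B_p)^*)) = (A_q+B_q^T, D(A_q))$.

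In the second step, I would handle the bounded multiplication operator $B_p$ separately. Because the matrix $B$ is \emph{real}, a direct computation in the sesquilinear pairing $\langle u,v\rangle=\intT \sum_\indexone u_\indexone \overline{v_\indexone}\, d\mathbf{x}$ yields
\begin{equation*}
\langle Bu,v\rangle = \intT \sum_{\indexone,\indextwo} B_{\indexone\indextwo} u_\indextwo \overline{v_\indexone}\, d\mathbf{x} = \intT \sum_\indextwo u_\indextwo \overline{(B^T v)_\indextwo}\, d\mathbf{x} = \langle u, B^T v\rangle,
\end{equation*}
so $B_p^* = B_q^T$ on all of $L^q(\T^d,\C^N)$. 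Note that the adjoint is the \emph{transpose} and not the conjugate transpose precisely because $B$ has real entries.

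Third, I would combine these ingredients to evaluate $(-A_p+B_p)^*$ by testing against the core $C_c^\infty(\T^d,\C^N)$ for $(-A_p+B_p,D(-A_p))$ provided by \Cref{corollary_Ccinfty_core_A+B}. For $v\in D(A_q)$ and $u\in C_c^\infty(\T^d,\C^N)$, \Cref{adjoint_transport_gen} and the previous step give
\begin{equation*}
\langle (-A_p+B_p)u,v\rangle = \langle u, A_q v\rangle + \langle u, B_q^T v\rangle = \langle u, (A_q+B_q^T)v\rangle,
\end{equation*}
yielding the inclusion $(A_q+B_q^T, D(A_q)) \subseteq ((-A_p+B_p)^*, D((-A_p+B_p)^*))$. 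For the reverse inclusion, suppose $v\in D((-A_p+B_p)^*)$ with adjoint image $w$. Since $B_p$ is everywhere defined and bounded, the term $\langle Bu,v\rangle = \langle u, B^Tv\rangle$ can be moved across the pairing, producing $\langle -A_p u,v\rangle = \langle u,w-B_q^T v\rangle$ for all $u\in D(-A_p)$. Then \Cref{adjoint_transport_gen} forces $v\in D(A_q)$ with $A_q v = w-B_q^T v$, i.e. $w=(A_q+B_q^T)v$.

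The main obstacle is really only bookkeeping: one must be attentive that the sesquilinearity of the duality pairing interacts correctly with the real matrix $B$ (producing transpose, not conjugate transpose), and one must carry out the first inclusion on the core $C_c^\infty(\T^d,\C^N)$ rather than on all of $D(-A_p)$, invoking \Cref{corollary_Ccinfty_core_A+B} to ensure that this is sufficient. Once these points are handled, the proof is structurally a bounded perturbation of \Cref{adjoint_transport_gen}.
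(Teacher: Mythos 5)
Your proposal is correct and follows essentially the same route as the paper: reflexivity plus \cite[\Romannum{1}.5.14, \Romannum{2}.2.6]{engel2001one} to identify the adjoint semigroup with the strongly continuous semigroup generated by $(-A_p+B_p)^*$, the computation $B_p^*=B_q^T$, and then \Cref{adjoint_transport_gen}. The only difference is that where the paper invokes boundedness of $B_p$ to write $(-A_p+B_p)^*=-A_p^*+B_p^*$ in one line, you prove both inclusions of this identity by hand (and your restriction to the core $C_c^\infty$ in the first inclusion is harmless but unnecessary, since $\langle -A_pu,v\rangle=\langle u,A_qv\rangle$ already holds for all $u\in D(-A_p)$ and $v\in D(A_q)$ by \Cref{adjoint_transport_gen}).
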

	\begin{proof}
		The space $\LpCn$ is reflexive, see \cite[\Romannum{4}.8, Corollary 2, p. 288]{dunford1988linear}. Hence, the adjoint semigroup is again strongly
		continuous by \cite[\Romannum{1} 5.14 Proposition, p. 44]{engel2001one} and
		the sun dual semigroup coincides with the adjoint semigroup, cf.
		\cite[\Romannum{2} 2.6, p. 62]{engel2001one}. In particular, the generator of
		the adjoint (strongly continuous) semigroup is the adjoint $(-A_p + B_p)^*$ by
		\cite[\Romannum{2} 2.6 proposition, p. 63]{engel2001one}. \newline
		It is left to show $(-A_p+B_p)^* = A_q + B_q^T$. First of all, it should be
		noted that
		\begin{align*}
			\langle B_p u, \, v\rangle &= \sum_{\indexone = 1}^N \intT
			(Bu(\mathbf{x}))_\indexone \overline{v_\indexone(\mathbf{x})} \, d\mathbf{x} =
			\intT \langle Bu(\mathbf{x}), \, v(\mathbf{x}) \rangle_{\C^N} \, d\mathbf{x} =
			\int_{\T} \langle u(\mathbf{x}), \, B^T v(\mathbf{x}) \rangle_{\C^N} \,
			d\mathbf{x} \\
			&= \sum_{\indexone = 1}^N \int_{\T} u_\indexone(\mathbf{x})
			\overline{(B^Tv(\mathbf{x}))_\indexone} \, d\mathbf{x} = \langle u, \, B_q^T v
			\rangle
		\end{align*}
		holds for all $u\in L^p(\T,\, \C^N)$ and all $v\in L^q(\T, \, \C^N),$ i.e.
		$B_p^* = B_q^T.$ \newline
		Secondly, recall that we have already shown $(-A_p^*, \, D(-A_p^*)) = (A_q, \,
		D(A_q))$ in \Cref{adjoint_transport_gen}. Finally, the boundedness of $B_p$
		implies
		\begin{equation*}
			((-A_p + B_p)^*, \, D((-A_p+B_p)^*)) = (-A_p^*+B_p^*, \, D(-A_p^*)) =
			(A_q+B_q^T, \, D(A_q)).
		\end{equation*}
	\end{proof}
	The dual characterization $L^1(\T^d, \, \C^N)^* = L^\infty(\T^d, \, \C^N)$  via
	the dual exponent also holds for $p=1,$ see \cite[\Romannum{4}.8, Theorem 5, p. 289]{dunford1988linear}. The difference to the cases $1<p<\infty$ is that the
	adjoint semigroup fails to be strongly continuous. As in
	\Cref{chapter_transport_semigroup}, we restrict ourselves to the case $d=1$ with
	$v_1,\cdots,v_N \neq 0$ and make up this technical leeway with the definitions
	\begin{equation}\label{def_transport_reaction_gen_p_eq_infty}
		(-A_\infty+B_\infty) u \coloneqq -\colvec{v_1u_1^{'} \\ \vdots \\ v_Nu_N^{'} }
		+ Bu
	\end{equation}
	and
	\begin{equation}\label{def_domain_transport_reaction_gen_p_eq_infty}
		D(-A_\infty+B_\infty) \coloneqq D(-A_\infty) = \big\{ u \in L^\infty(\T, \, \C^N)
		\, \colon \, u \text{ is weakly differentiable, } -A_\infty u \in L^\infty(\T,
		\, \C^N) \big\}.
	\end{equation}
	We obtain the following result concerning the dual of $(-A_1+B_1, \, D(-A_1)).$
	
	\begin{proposition}\label{proposition_adjoint_generator_peq1}
		Let $p=1$ and let $d=1$. Assume that the transport directions $v_1,\cdots, v_N$
		are non-vanishing, i.e. $v_1,\cdots,v_N\neq0$. Then it holds
		\begin{align*}
			D(-A_\infty+B_\infty) &= W^{1,\infty}(\T, \, \C^N), \\
			((-A_1+B_1)^* ,\, D((-A_1+B_1)^*)) &= (A_\infty+B_\infty^T, \,
			W^{1,\infty}(\T, \, \C^N)).
		\end{align*}
	\end{proposition}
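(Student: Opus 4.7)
The plan is to reduce both statements to their transport-only counterparts in Proposition \ref{proposition_adjoint_peq1} by exploiting the fact that $B$ acts as a bounded pointwise multiplier on every $L^p(\T, \C^N)$, whose adjoint is simply pointwise multiplication by $B^T$. For the domain identity, essentially nothing is to be shown: by the definition \eqref{def_domain_transport_reaction_gen_p_eq_infty} we have $D(-A_\infty+B_\infty) = D(-A_\infty)$, and Proposition \ref{proposition_adjoint_peq1} already identifies the right-hand side as $W^{1,\infty}(\T, \C^N)$.

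For the adjoint identity, my plan is to mirror the proof of Proposition \ref{proposition_adjoint_peq1} and split the dual pairing into a transport part and a reaction part. The crucial preparatory observation is that for every $u \in L^1(\T, \C^N)$ and $v \in L^\infty(\T, \C^N)$, a straightforward Fubini computation gives
\begin{equation*}
\langle B_1 u, v\rangle = \int_\T \langle Bu(x), v(x)\rangle_{\C^N}\, dx = \int_\T \langle u(x), B^T v(x)\rangle_{\C^N}\, dx = \langle u, B_\infty^T v\rangle,
\end{equation*}
so the reaction half of the adjoint is a bounded multiplication operator sending $L^\infty(\T, \C^N)$ into itself without imposing any regularity on $v$.

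With this in hand, both inclusions follow by isolating the transport part. Starting from $\tilde u \in D((-A_1+B_1)^*)$ with defining element $w\in L^\infty(\T, \C^N)$, subtracting the reaction pairing above yields $\langle -A_1 u, \tilde u\rangle = \langle u, w - B_\infty^T \tilde u\rangle$ for every $u \in D(-A_1)$, so Proposition \ref{proposition_adjoint_peq1} forces $\tilde u \in W^{1,\infty}(\T, \C^N)$ with $w - B_\infty^T \tilde u = A_\infty \tilde u$. Conversely, given $\tilde u \in W^{1,\infty}(\T, \C^N)$, the same proposition provides $\langle -A_1 u, \tilde u\rangle = \langle u, A_\infty \tilde u\rangle$ for all $u \in D(-A_1)$, and adding the reaction pairing identifies $\tilde u$ as an element of $D((-A_1+B_1)^*)$ with adjoint equal to $A_\infty \tilde u + B_\infty^T \tilde u$.

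Since Proposition \ref{proposition_adjoint_peq1} carries all the analytic content on the transport side, there is no serious obstacle here; the only delicate point worth flagging is that $L^1(\T, \C^N)$ is non-reflexive, but this merely forces the ad hoc definition \eqref{def_transport_reaction_gen_p_eq_infty}--\eqref{def_domain_transport_reaction_gen_p_eq_infty} and does not interfere with the purely algebraic computation of the adjoint sketched above.
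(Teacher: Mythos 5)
Your proof is correct and takes essentially the same route as the paper: both reduce to \Cref{proposition_adjoint_peq1} via the domain identity $D(-A_\infty+B_\infty)=D(-A_\infty)$, compute $B_1^* = B_\infty^T$ from the pointwise pairing, and conclude $(-A_1+B_1)^* = A_\infty + B_\infty^T$ on $W^{1,\infty}(\T,\C^N)$. The only difference is one of presentation: the paper invokes the standard fact that bounded perturbations commute with taking adjoints (``because $B_1$ is bounded''), whereas you unpack that lemma by explicitly splitting the dual pairing and checking both inclusions.
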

	\begin{proof}
		The characterization of the domain was proven in
		\Cref{proposition_adjoint_peq1}. Moreover, we have $B_1^* = B_\infty^T,$ which
		can be obtained just like in \Cref{proposition_adjoint_semigroup_pneq1}.
		\Cref{proposition_adjoint_peq1} also yields
		\begin{equation*}
			((-A_1+B_1)^* ,\, D((-A_1+B_1)^*)) = (-A_1^* + B_1^* ,\, D(-A_1^*)) =
			(A_\infty+B_\infty^T, \, W^{1,\infty}(\T, \, \C^N))
		\end{equation*}
		because $B_1$ is bounded.
	\end{proof}

	We will now focus our attention on two important qualitative properties of
	transport-reaction semigroups: positivity and mass conservation. It turns out
	that both properties can be fully characterized by conditions on the matrix $B$.
	These conditions are the same as for the ODE model
	\begin{equation}
		\begin{cases}
		\begin{array}{rrll}
		\dot{y}(t) & = & By(t) & \qquad t>0, \\
		y(0) & = & y_0
		\end{array}
		\end{cases}
	\end{equation}
	with given initial state $y_0\in\C^N,$ which is very reasonable because
	transport should not have any impact on the positivity of the concentrations or
	the total mass of the system.
	
	\begin{theorem}\label{theorem_semigroup_positive}
		The semigroup $(R(t))_{t\geq0}$ on $\LpCn$ generated by $(-A+B, \, D(A))$ is
		positive if and only if the off-diagonal entries of $B$ are non-negative.
	\end{theorem}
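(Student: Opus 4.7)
The plan is to handle the two directions separately and to rely on the Lie-Trotter product formula from \Cref{theorem_semigroup_generated_by_-A+B}(v), together with the characterization of positivity for the pure reaction semigroup in \Cref{lemma_S(t)_positive_SG_iff_B_off_diag_non-neg}. Observe first that the transport semigroup $(T(t))_{t\geq0}$ is positive: if $u\geq0$, then each component of $T(t)u$ is just a translate of $u_\indexone$, so $T(t)u\geq0$ for every $t\geq0.$

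For the \emph{if} direction, assume that the off-diagonal entries of $B$ are non-negative. By \Cref{lemma_S(t)_positive_SG_iff_B_off_diag_non-neg}, the semigroup $(e^{tB})_{t\geq0}$ is positive on $\LpCn$. Hence for every $n\in\N$ and every $t\geq0$, the operator
\begin{equation*}
    \bigl(T(t/n)\,e^{(t/n)B}\bigr)^n
\end{equation*}
is a composition of positive operators and therefore positive. By \Cref{theorem_semigroup_generated_by_-A+B}(v), these operators converge strongly to $R(t)u$ for every $u\in\LpCn$. Since $\LpCn_+$ is closed in $\LpCn$ by \Cref{remark_positive_functions_closed_subset}, the limit $R(t)u$ lies in $\LpCn_+$ whenever $u\in\LpCn_+$, so $(R(t))_{t\geq0}$ is positive.

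For the \emph{only if} direction, the key idea is to test the semigroup on constant functions. For the standard unit vector $e_\indexone\in\C^N$, let $\widehat{e}_\indexone\in\LpCn$ denote the constant function equal to $e_\indexone$. Since $\widehat{e}_\indexone$ is spatially constant, we have $T(s)\widehat{e}_\indexone=\widehat{e}_\indexone$ for all $s\geq0$, and $e^{(t/n)B}\widehat{e}_\indexone$ is again a constant function whose value is the vector $e^{(t/n)B}e_\indexone$. Iterating the composition in \Cref{theorem_semigroup_generated_by_-A+B}(v) gives
\begin{equation*}
    \bigl(T(t/n)\,e^{(t/n)B}\bigr)^n\widehat{e}_\indexone = \widehat{e^{tB}e_\indexone},
\end{equation*}
so passing to the limit yields $R(t)\widehat{e}_\indexone = \widehat{e^{tB}e_\indexone}$ for every $t\geq0$. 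If $(R(t))_{t\geq0}$ is positive, then $e^{tB}e_\indexone\geq0$ componentwise for every $\indexone$ and every $t\geq0$, which means that $e^{tB}$ has non-negative entries for all $t\geq0$. By (the matrix part of) \Cref{lemma_S(t)_positive_SG_iff_B_off_diag_non-neg}, this forces the off-diagonal entries of $B$ to be non-negative.

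The only step that requires some care is the identity $R(t)\widehat{e}_\indexone=\widehat{e^{tB}e_\indexone}$; the rest is bookkeeping with closedness of $\LpCn_+$ and the matrix characterization already established. The Lie-Trotter formula makes this identity essentially immediate, which is why I prefer it here over the Dyson-Phillips series or the integral identity (ii) of \Cref{theorem_semigroup_generated_by_-A+B}, where one would instead have to argue by iteration that $R(s)\widehat{e}_\indexone$ remains spatially constant.
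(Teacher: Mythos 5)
Your proof of the \emph{if} direction follows the paper's argument exactly: compose the Lie--Trotter approximants, note they are positive because both factors are, and pass to the limit using closedness of $\LpCn_+.$

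For the \emph{only if} direction you take a genuinely different and in fact more elementary route than the paper. The paper writes $B=(-A+B)+A$ and invokes \Cref{perturbation_theory_Lie_Trotter_formula} a second time to show that $S(t)u = \lim_{n\to\infty}(R(t/n)T(-t/n))^n u$, using positivity of both $(R(t))_{t\geq0}$ and $(T(-t))_{t\geq0}$ to conclude that $(S(t))_{t\geq0}$ is positive; this requires verifying the stability bound and the range/density hypotheses of the Lie--Trotter theorem afresh. Your argument instead tests on the spatially constant functions $\widehat{e}_\indexone$ and uses the exact algebraic identity $\bigl(T(t/n)e^{(t/n)B}\bigr)^n\widehat{e}_\indexone = \widehat{(e^{(t/n)B})^n e_\indexone} = \widehat{e^{tB}e_\indexone}$, valid for every $n$, so no limit argument is even needed to obtain $R(t)\widehat{e}_\indexone=\widehat{e^{tB}e_\indexone}$. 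Positivity of $R(t)$ then forces $e^{tB}$ to have non-negative entries for all $t\geq0$, and \Cref{lemma_S(t)_positive_SG_iff_B_off_diag_non-neg} gives the conclusion. This is shorter and avoids re-checking the hypotheses of the abstract product formula; the paper's version is slightly more robust in that it does not rely on the special property that the transport semigroup fixes constants, but for this model your route is the cleaner one. (Incidentally, the identity $R(t)\widehat{e}_\indexone=\widehat{e^{tB}e_\indexone}$ is a special case of \Cref{time_development_averages}, which the paper also establishes via the same Lie--Trotter manipulation you use.)
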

	\begin{proof}
		Let us begin by showing that $(R(t))_{t\geq0}$ is positive if the off-diagonal
		entries of $B$ are non-negative. The strategy is to apply $(v)$ from
		\Cref{theorem_semigroup_generated_by_-A+B}. Recall that $\LpCn_+$ is a closed
		subset of $\LpCn$, see \Cref{remark_positive_functions_closed_subset}. Let
		$u\geq0$ and let $t\geq0.$ The result simply follows from the fact that the
		transport semigroup $(T(t))_{t\geq0}$ and $(S(t))_{t\geq0} = (e^{tB})_{t\geq0}$
		are positive by \Cref{lemma_S(t)_positive_SG_iff_B_off_diag_non-neg}. Hence,
		\begin{equation*}
			\left(T(\tfrac{t}{n}) e^{t/n \, B}\right)^n u \geq 0
		\end{equation*}
		for all $n\in\N$ and the limit $n\to\infty$ implies $R(t)u \geq 0$. \newline
		Conversely, assume that the semigroup $(R(t))_{t\geq0}$ is a positive
		semigroup. This time, the idea is to conclude that $B = (-A+B) + A$ generates a
		positive semigroup. If this is the case, the statement will follow from
		\Cref{lemma_S(t)_positive_SG_iff_B_off_diag_non-neg}. Some caution is required,
		since the perturbation of the generator $-A+B$ of the positive semigroup
		$(R(t))_{t\geq0}$ is unbounded. Nevertheless, we can apply
		\Cref{perturbation_theory_Lie_Trotter_formula}. More precisely, we
		know that $(-A+B, \, D(-A))$ and $(A, \, D(A))$ generate the positive
		$C_0$-semigroups $(R(t))_{t\geq0}$ and $(T(-t))_{t\geq0}$ respectively. The
		latter is a consequence of the fact that $(-A, \, D(-A))$ generates a positive
		$C_0$-group, cf. \Cref{remark_-A_generates_C0_group}. We have the stability
		bound
		\begin{align*}
			\| (R(\tfrac{t}{n}) T(-\tfrac{t}{n}))^n \|_{\mathcal{L}(\LpCn)} \leq \|
			R(\tfrac{t}{n}) \|_{\mathcal{L}(\LpCn)}^n \leq e^{\|B\|_{\mathcal{L}(\LpCn)} t},
		\end{align*}
		since $(T(-t))_{t\geq0}$ is a $C_0$-semigroup of contractions and by an
		application of statement $(i)$ from \Cref{theorem_semigroup_generated_by_-A+B}.
		Next, we notice that $B \in \mathcal{L}(\LpCn)$ implies that
		\begin{equation*}
			(\lambda I_{\LpCn} - B) \LpCn = \LpCn
		\end{equation*}
		holds for $\lambda > \|B\|_{\mathcal{L}(\LpCn)}$ and therefore, a standard density
		argument implies density of $(\lambda I_{\LpCn} - B)D(-A)$ by the density of
		$D(-A)$ and the continuity of $B$. We can finally apply
		\Cref{perturbation_theory_Lie_Trotter_formula} and conclude that $(B, \, D(-A))$
		has a closure which generates a $C_0$-semigroup $(\widetilde{S}(t))_{t\geq0}$.
		This is unsurprising as the closure of $(B, \, D(-A))$ is the operator $B$ with
		full domain. As a consequence, $(\widetilde{S}(t))_{t\geq0} = (S(t))_{t\geq0}$
		by \cite[\Romannum{2}. 1.4, p. 51]{engel2001one}. More importantly, we obtain
		the formula
		\begin{equation*}
			S(t)u = \lim_{n \to \infty} \left(R(\tfrac{t}{n}) T(-\tfrac{t}{n})\right)^nu
		\end{equation*}
		for all $u\in\LpCn$ and all $t\geq0.$ It now follows similarly to the first
		part of the proof that $(S(t))_{t\geq0}$ is a positive semigroup, i.e. $B$
		generates a positive semigroup.
	\end{proof}

	\begin{corollary}\label{corollary_group_positive}
		The group $(R(t))_{t\in\R}$ on $\LpCn$ generated by $(-A+B, \, D(A))$ is
		positive if and only if $B$ is a diagonal matrix.
	\end{corollary}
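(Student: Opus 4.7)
The plan is to apply \Cref{theorem_semigroup_positive} twice---once to the forward semigroup $(R(t))_{t\geq 0}$ generated by $(-A+B, D(-A))$, and once to the backward semigroup $(R(-t))_{t\geq 0}$ generated by $(A-B, D(A))$---and then combine the two necessary and sufficient conditions on $B$.

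First I would observe that positivity of the group $(R(t))_{t\in\R}$ is equivalent, by the very definition of positivity for groups (\Cref{definition_positive}), to the simultaneous positivity of both semigroups $(R(t))_{t\geq 0}$ and $(R_-(t))_{t\geq 0}$, where $R_-(t)=R(-t)$ for $t\geq 0$, cf. \Cref{lemma_-A+B_generates_C_0_group}. So the task reduces to reading off the positivity of each of these two semigroups from \Cref{theorem_semigroup_positive}.

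Next, \Cref{theorem_semigroup_positive} applied directly to $(R(t))_{t\geq 0}$ shows that this semigroup is positive if and only if the off-diagonal entries of $B$ are non-negative. To handle the backward semigroup, I would invoke \Cref{remark_-A_generates_C0_group}, which shows that $(A, D(A))$ is itself the generator of a transport semigroup, namely the one corresponding to the reversed transport directions $-\mathbf{v_1},\ldots,-\mathbf{v_N}$. Hence $(A-B, D(A)) = (-\widetilde{A} + \widetilde{B}, D(\widetilde{A}))$ with $\widetilde{A} = -A$ and $\widetilde{B} = -B$ fits the framework of \Cref{theorem_semigroup_positive}, and the backward semigroup is positive if and only if the off-diagonal entries of $-B$ are non-negative, i.e., if and only if the off-diagonal entries of $B$ are non-positive.

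Finally, the two conditions together force every off-diagonal entry of $B$ to be both non-negative and non-positive, hence zero, so $B$ must be a diagonal matrix. Conversely, if $B$ is diagonal, both sign conditions are trivially satisfied, so both semigroups are positive and the group $(R(t))_{t\in\R}$ is positive. I do not anticipate a real obstacle; the only subtlety is the justification that \Cref{theorem_semigroup_positive} may be applied to $(R_-(t))_{t\geq 0}$ with reversed velocities, which is clean thanks to \Cref{remark_-A_generates_C0_group}.
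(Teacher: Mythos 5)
Your proposal is correct and takes essentially the same route as the paper: reduce group positivity to simultaneous positivity of the forward and backward semigroups via \Cref{lemma_-A+B_generates_C_0_group}, then apply \Cref{theorem_semigroup_positive} to each and intersect the sign conditions on the off-diagonal entries of $B$. Your extra step justifying that \Cref{theorem_semigroup_positive} applies to $(A-B,\,D(A))$ (by recognizing it as a transport-reaction generator with reversed velocities and matrix $-B$) is a minor but welcome clarification the paper leaves implicit.
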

	\begin{proof}
		By \Cref{lemma_-A+B_generates_C_0_group}, the group $(R(t))_{t\in\R}$ is
		positive if and only if the two semigroups $(R(t))_{t\geq0}$ and
		$(R_-(t))_{t\geq0}$, which are generated by $(-A+B, \, D(-A))$ and $(A-B, \,
		D(A))$ respectively, are positive semigroups. \Cref{theorem_semigroup_positive}
		implies that this is the case if and only if the diagonal entries of $B$ and
		$-B$ are non-negative, i.e. iff $B$ is a diagonal matrix.
	\end{proof}
	
	\begin{definition}\label{definition_conservation}
		We stay that $y\in\C^N$ is associated to conservation if the quantity
		\begin{equation*}
			\langle y, \, \overline{\intT (R(t)u)(\mathbf{x}) \, d\mathbf{x}}
			\rangle_{\C^N} \in\C
		\end{equation*}
		is independent of $t\geq0$ for all $u\in \LpCn.$ The model is called mass
		conserving if $(1,\cdots,1)^T$ is associated to conservation.
	\end{definition}
	\begin{remark}
		Mass conservation essentially means
		\begin{equation*}
			\sum_{\indexone = 1}^N \intT u_\indexone(t, \mathbf{x}) \, d\mathbf{x} =
			\sum_{\indexone = 1}^N \intT u_0(\mathbf{x}) \, d\mathbf{x}
		\end{equation*}
		for all $t>0$, the solution $u$ of
		\begin{equation*}
			\partial_t u + \colvec{\mathbf{v_1} \cdot \nabla u_1\\ \vdots \\ \mathbf{v_N}
			\cdot \nabla u_N} = Bu
		\end{equation*}
		and ``any" initial value $u_0.$ This definition via the PDE has the
		disadvantage that one needs to specify an appropriate set of admissible initial
		functions, whereas the semigroup is defined on the whole space $\LpCn.$
	\end{remark}
	
	We now characterize all vectors $y\in\C^N$ associated to conservation.
	Interestingly enough, the proof relies on the structure of the semigroup and
	does not use $\tfrac{d}{dt}R(t)u = (-A+B)R(t)u$ for $u\in D(-A).$
	
	\begin{lemma}\label{time_development_averages}
		Let $(R(t))_{t\geq0}$ be the semigroup on $\LpCn$ generated by $(-A+B, \,
		D(-A))$. Then
		\begin{equation*}
			\intT (R(t)u)(\mathbf{x}) \, d\mathbf{x} = e^{tB} \intT u(\mathbf{x}) \, d\mathbf{x}
		\end{equation*}
		holds for all $u\in\LpCn$ and all $t\geq0.$
	\end{lemma}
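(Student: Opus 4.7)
The plan is to leverage the Lie--Trotter product formula, item $(v)$ of \Cref{theorem_semigroup_generated_by_-A+B}, which expresses the transport-reaction semigroup as
\begin{equation*}
R(t)u = \lim_{n\to\infty} \bigl(T(\tfrac{t}{n}) e^{(t/n)B}\bigr)^n u
\end{equation*}
with convergence in $\LpCn$. The two building blocks $T(s)$ and $e^{sB}$ both interact very simply with integration over $\T^d$, so I expect to compute the integral of the discrete approximation exactly and then pass to the limit.

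The first observation I would record is that the integration functional $\Phi\colon \LpCn \to \C^N$, $\Phi(u) = \intT u(\mathbf{x}) \, d\mathbf{x}$, is bounded. Since $\T^d$ has finite measure and $p\geq 1$, Hölder's inequality gives $|\Phi(u)| \lesssim \LpCnnorm{p}{u}$, so $\Phi$ is continuous and commutes with limits taken in $\LpCn$. Next I would verify two elementary identities: $\Phi(T(s)v) = \Phi(v)$ for every $v\in\LpCn$ and $s\geq 0$, which follows from the translation invariance of Lebesgue measure on the torus (this was already used in the proof of contractivity of $T(t)$); and $\Phi(e^{sB}v) = e^{sB}\Phi(v)$, which is immediate because $e^{sB}$ acts by pointwise multiplication with a constant matrix, so one can pull it out of the componentwise integral.

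Combining both identities, for any $v\in\LpCn$ and $s\geq 0$,
\begin{equation*}
\Phi\bigl(T(s)e^{sB}v\bigr) = \Phi(e^{sB}v) = e^{sB}\Phi(v).
\end{equation*}
Iterating this $n$ times with $s=t/n$, an induction on $n$ yields
\begin{equation*}
\Phi\Bigl(\bigl(T(\tfrac{t}{n}) e^{(t/n)B}\bigr)^n u\Bigr) = \bigl(e^{(t/n)B}\bigr)^n \Phi(u) = e^{tB}\Phi(u).
\end{equation*}

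Finally, letting $n\to\infty$ and using continuity of $\Phi$ together with the Lie--Trotter formula gives the claim. There is really no hard obstacle here; the only thing to watch is that one must use the semigroup-theoretic formula from \Cref{theorem_semigroup_generated_by_-A+B}$(v)$ rather than the Cauchy-problem identity $\tfrac{d}{dt}R(t)u = (-A+B)R(t)u$, because integration by parts on $\mathbf{v_\indexone}\cdot\nabla u_\indexone$ would otherwise require the a priori regularity $u\in D(-A)$ and a density argument, whereas the Lie--Trotter approach delivers the identity directly for all $u\in\LpCn$.
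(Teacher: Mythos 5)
Your proof is correct and follows the same approach as the paper: both use the Lie--Trotter formula from \Cref{theorem_semigroup_generated_by_-A+B}$(v)$, the translation invariance of the integral under $T(s)$, the fact that $e^{sB}$ pulls out of the integral, and an induction followed by passage to the limit. Your remark on the boundedness of the integration functional $\Phi$ makes explicit a step the paper leaves implicit, but otherwise the arguments coincide.
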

	\begin{proof}
		The two key ingredients to this observation are
		\begin{equation}\label{auxeq32}
		\intT (T(t)u)(\mathbf{x}) \, d\mathbf{x} = \intT u(\mathbf{x}) \, dx
		\end{equation}
		and
		\begin{equation}\label{auxeq33}
		\intT (e^{tB}u)(\mathbf{x}) \, d\mathbf{x} = e^{tB} \intT u(\mathbf{x}) \,
		d\mathbf{x}
		\end{equation}
		for all $u\in\LpCn$ and all $t\geq0.$ They follow from the independence of
		integrals of periodic functions w.r.t. translations and the definition of the
		operator $e^{tB}$ as a pointwise matrix multiplication. The equations
		\eqref{auxeq32} and \eqref{auxeq33} imply
		\begin{align*}
			\intT (R(t)u)(\mathbf{x}) \, d\mathbf{x} &= \lim_{n \to \infty} \intT
			\big((T(\tfrac{t}{n})e^{t/n \,B})^n u\big)(\mathbf{x}) \, d\mathbf{x} = \lim_{n \to \infty}\intT \big(e^{t/n \, B} (T(\tfrac{t}{n})e^{t/n \,
			B})^{n-1} u\big)(\mathbf{x}) \, d\mathbf{x} \\
			&= \lim_{n \to \infty} e^{t/n \, B} \intT  \big((T(\tfrac{t}{n})e^{t/n \,
			B})^{n-1} u\big)(\mathbf{x}) \, d\mathbf{x} = e^{tB} \intT u(\mathbf{x}) \,
			d\mathbf{x}.
		\end{align*}
		for all $u\in\LpCn$ and all $t\geq0$ by an application of property $(v)$ from
		\Cref{theorem_semigroup_generated_by_-A+B} and a simple induction.
	\end{proof}
	\begin{remark}
		If $B$ is a stable matrix, i.e. $\sigma(B) \subseteq \C_-=\{\lambda \in \C \,
		\colon \, \real(\lambda) < 0 \}$, the above lemma shows that the averages of the
		solution converge to $0$.
	\end{remark}
	
	\begin{theorem}\label{theorem_conservation}
		A vector $y\in\C^N$ is associated to conservation if and only if $B^T y = 0.$
	\end{theorem}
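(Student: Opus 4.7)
The plan is to apply \Cref{time_development_averages} to reduce the PDE-level statement to a purely algebraic condition on $e^{tB}$, and then to differentiate at $t=0$ to extract a condition on $B^T$.

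First I would rewrite the conservation quantity using the lemma. For any $u\in \LpCn$, set $z_0 \coloneqq \intT u(\mathbf{x})\, d\mathbf{x} \in \C^N$. Then \Cref{time_development_averages} gives
\begin{equation*}
\intT (R(t)u)(\mathbf{x}) \, d\mathbf{x} = e^{tB} z_0,
\end{equation*}
so the conservation quantity in \Cref{definition_conservation} equals
\begin{equation*}
\langle y, \, \overline{e^{tB} z_0}\rangle_{\C^N} = y^T e^{tB} z_0 .
\end{equation*}
Thus $y$ is associated to conservation if and only if $y^T e^{tB} z_0$ is independent of $t\geq 0$ for every $u \in \LpCn$.

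Next I would observe that as $u$ ranges over $\LpCn$ (in particular over constant functions $u\equiv c$ with $c\in\C^N$), the vector $z_0$ takes every value in $\C^N$. Consequently the condition reduces to $y^T e^{tB} = y^T$ for all $t\geq 0$, equivalently $e^{tB^T} y = y$ for all $t\geq 0$.

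Finally I would close the equivalence by differentiating the map $t\mapsto e^{tB^T} y$ at $t=0$: the derivative is $B^T y$, so the constancy forces $B^T y = 0$. Conversely, if $B^T y = 0$ then $(B^T)^k y = 0$ for all $k\geq 1$, and the power series expansion $e^{tB^T} y = \sum_{k\geq 0} \tfrac{t^k}{k!}(B^T)^k y = y$ yields $y^T e^{tB} z_0 = y^T z_0$ for every $z_0$, establishing the other direction. There is no real obstacle here — the only point requiring minor care is unpacking the complex-conjugate notation in \Cref{definition_conservation} so that the inner product becomes the bilinear pairing $y^T z$; once this is observed, \Cref{time_development_averages} does essentially all of the work.
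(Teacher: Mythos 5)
Your proof is correct and follows essentially the same route as the paper: both use \Cref{time_development_averages} to reduce the statement to a matrix computation, both exploit constant test functions to let the averaged initial datum range over all of $\C^N$, and both differentiate in $t$ to extract the condition $B^T y = 0$. The only cosmetic differences are that you make the conjugate-bilinear bookkeeping explicit up front and use the power-series expansion of $e^{tB^T}$ for the sufficiency direction, whereas the paper differentiates and invokes continuity of $t\mapsto R(t)u$ at $t=0$.
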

	\begin{proof}
		$`` \impliedby "$ Let $y\in\C^N$ with $B^Ty = 0$ and let $u\in \LpCn.$ We can
		use \Cref{time_development_averages} to compute
		\begin{align*}
			\frac{d}{dt} \langle y, \, \overline{\intT (R(t)u)(\mathbf{x}) \, d\mathbf{x}}
			\rangle_{\C^N} &= \frac{d}{dt} \langle y, \, e^{tB} \overline{\intT
			u(\mathbf{x}) \, d\mathbf{x}} \rangle_{\C^N} =\langle y, \, Be^{tB}\overline{\intT u(\mathbf{x}) \, d\mathbf{x}}
			\rangle_{\C^N} \\
			&= \langle B^T y, \, e^{tB} \overline{\intT u(\mathbf{x}) \,
			d\mathbf{x}} \rangle_{\C^N} = 0
		\end{align*}
		for $t>0.$ The continuity of $t\mapsto R(t)u$ on $[0,\infty)$ yields
		\begin{equation}\label{auxeq18}
		\langle y, \, \overline{\intT (R(t)u)(\mathbf{x}) \, d\mathbf{x}}
		\rangle_{\C^N} = \langle y, \, \overline{\intT u(\mathbf{x}) \, d\mathbf{x}}
		\rangle_{\C^N}
		\end{equation}
		for all $t\geq0.$ \newline
		
		\noindent $`` \implies "$ Let $y\in\C^N$ be associated to conservation, i.e.
		\begin{equation*}
		\langle y, \, \overline{\intT (R(t)u)(\mathbf{x}) \, d\mathbf{x}}
		\rangle_{\C^N} = \langle y, \, \overline{\intT u(\mathbf{x}) \, d\mathbf{x}}
		\rangle_{\C^N}
		\end{equation*}
		holds for all $u\in\LpCn$ and all $t\geq0.$ Differentiating with respect to $t$
		implies
		\begin{equation*}
		\langle B^T y, \, e^{tB} \overline{\intT u(\mathbf{x}) \, d\mathbf{x}}
		\rangle_{\C^N} = 0
		\end{equation*}
		for all $t>0$ because of the computation done in $`` \impliedby "$ and the fact
		that the right-hand side is independent of $t$. Letting $t\searrow0$ results in
		\begin{equation*}
		\langle B^T y, \, \overline{\intT u(\mathbf{x}) \, d\mathbf{x}} \rangle_{\C^N}
		= 0.
		\end{equation*}
		Finally, taking $u\equiv \overline{z}$ for $z\in\C^N$ yields $\langle B^Ty, \,
		z\rangle_{\C^N} = 0$ for all $z\in\C^N$, which finishes the proof.
	\end{proof}

	\chapter{Spectral Analysis of the Transport-Reaction Semigroup}\label{chapter_spectral_analysis}
	
	As always, we assume that the dimension $d\in\N$, the number of components
	$N\in\N$, the transport directions $\mathbf{v_1},\cdots,\mathbf{v_N}\in \R^d$
	and $1\leq p < \infty$ are arbitrarily chosen fixed parameters, if not
	stated otherwise. We work on the Banach space $\LpCn$ with norm
	$\LpCnnorm{p}{\cdot}$, which was defined in \eqref{definition_Lp_norm}. \newline
	
	Our long-term goal is to study the stability of solutions of the
	transport-reaction model \eqref{intro_transport_reaction_eq_nonlinear}. Let us recall from the
	introduction of \Cref{chapter_transport_reaction_SG} that the linearized
	equation reads as
	\begin{equation*}
		\partial_t u + \colvec{\mathbf{v_1} \cdot \nabla u_1 \\ \vdots \\ \mathbf{v_N} \cdot \nabla u_N} = Bu
	\end{equation*}
	for an a priori arbitrary matrix $B \in \R^{N \times N}.$ In
	\Cref{chapter_transport_reaction_SG}, we considered the abstract Cauchy problem
	\begin{equation}\label{equation_abstract_cauchy_problem}
		\dot{u}(t) + (A-B)u(t) = 0
	\end{equation}
	on $\LpCn$, where $(-A, \, D(-A))$ is the generator of the transport semigroup
	introduced in \Cref{chapter_transport_semigroup} and $B$ is the bounded linear
	operator on $\LpCn$ induced by the matrix $B$. Notice that we
	slightly abuse notation and again use the letter $B$ for the matrix \textit{and} the
	operator. Distinction between both of them will always follow within the context
	of usage. \newline
	\newline
	Semigroup and spectral theory offer a great tool to study stability properties
	both qualitatively and quantitatively with precise convergence rates. The most
	important definitions and results we will apply can be found in Appendix
	\ref{appendix_Long_time_behavior_SG}, including additional references. Roughly
	speaking, if $(R(t))_{t\geq0}$ is the semigroup generated by $(-A+B, \, D(-A))$,
	certain spectral properties of the operators $R(t)$ for $t\geq0$ imply
	stability. It is then natural to try to find a connection between the spectrum
	of the generator $(-A+B, \, D(-A))$ and the spectrum of $R(t)$ for $t\geq0.$
	This connection does indeed exist via the spectral mapping theorem
	\begin{equation}\label{motivation_spectral_mapping_theorem}
		``\quad\sigma(R(t)) = e^{t\sigma(-A+B)}\quad".
	\end{equation}
	Standard results in the literature write $\sigma(R(t)) \backslash \{0\}$ in
	\eqref{motivation_spectral_mapping_theorem}. But we have seen that
	$(R(t))_{t\geq0}$ extends to a strongly continuous group, which implies $0\in
	\rho(R(t))$ for all $t\in\R,$ see \Cref{lemma_-A+B_generates_C_0_group}.
	However, the equation is put in quotation marks as it does not always hold. In
	our case, the PDE is hyperbolic and the semigroup $(R(t))_{t\geq0}$ does not
	have powerful properties like analyticity. It is in general not even eventually
	norm continuous and one cannot apply \Cref{smtm}. In fact, \eqref{motivation_spectral_mapping_theorem}
	fails for the transport semigroup $(T(t))_{t\geq0}$ and the best one can hope
	for is a weaker version of \eqref{motivation_spectral_mapping_theorem}, the so
	called weak spectral mapping theorem
	\begin{equation}\label{equation_goal_weak_spectral_mapping_theorem}
	\sigma(R(t)) = \overline{e^{t\sigma(-A+B)}}
	\end{equation}
	The objective of the whole chapter is to show
	\eqref{equation_goal_weak_spectral_mapping_theorem} for the transport-reaction
	semigroups $(R(t))_{t\geq0}$ on $\LpCn$. Unfortunately, we often need to
	restrict ourselves to the cases $p=2$ or $d=1.$ \newline
	
	This chapter is structured as follows: we start by performing a complete
	spectral analysis of the transport semigroup $(T(t))_{t\geq0}$ for $d=1$. These
	computations mainly serve as a motivating example and show that
	\eqref{motivation_spectral_mapping_theorem} cannot be expected to hold for the
	transport-reaction semigroups. Beyond that, the computations indicate that a
	weak spectral mapping theorem should hold. \newline
	We continue by characterizing the spectrum of the generator $(-A_2+B_2, \,
	D(-A_2))$ for arbitrary dimension $d\in\N$. Afterwards, we study the spectra of the
	generators $(-A_p+B_p, \, D(-A_p))$ for dimension $d=1$ and arbitrary $1\leq p <
	\infty$. It may seem counter intuitive to start this way, since these results
	are insufficient to perform a stability analysis for the model without knowing
	that \eqref{equation_goal_weak_spectral_mapping_theorem} holds. However, a great understanding of the spectrum of the generator for all $1\leq p < \infty$ is
	required to extend a weak spectral mapping theorem from $p=2$ to different $p$.
	In particular, independence of $\sigma(-A_p+B_p)$ of $p$ plays a crucial
	role. When studying the spectrum for $p\neq 2$, we assume $d=1$ because in this case,
	the generator has compact resolvent if the transport directions are
	non-vanishing. This implies that the spectrum consists only of eigenvalues. For
	higher dimensions, we show that $(-A+B, \, D(-A))$ does not have
	compact resolvent if one component $\indexone$ is transport periodic, see
	\Cref{definition_transport_periodic}. Rigorously proving stability results then
	turns into a difficult task for $p\neq 2$ and $d\geq 2$. A computation of the
	eigenvalues of $(-A+B, \, D(-A))$ could yield an incomplete picture without
	further work on the problem $\sigma(-A+B) = \sigma_p(-A+B)$. \newline
	We finally show  \eqref{equation_goal_weak_spectral_mapping_theorem} for $p=2$
	and all dimensions $d\in \N$. For $d=1,$ we can partially extend the result to
	other $p$ by using the main ideas from \cite{latrach2008weak}. More precisely,
	we show that the weak spectral mapping theorem all $1\leq p < \infty$
	is basically equivalent to the weak spectral mapping theorem for $p=1$, cf. \cref{wsmthm_deq1_equiv} for the precise statement. In particular, \eqref{equation_goal_weak_spectral_mapping_theorem} extends to all $1\leq p < \infty$ if the group $(R(t))_{t\in\R}$ is positive but unfortunately, the model
	is quite unspectacular if $B$ is a diagonal matrix, cf. \Cref{corollary_group_positive}. \newline
	
	We conjecture that \eqref{equation_goal_weak_spectral_mapping_theorem} holds for
	all dimensions $d\in \N$ and all $1\leq p < \infty$ but proving this seems to be
	a difficult task.
	
	\section{Spectral Analysis of the Transport Semigroup}
	
	The next example is a complete spectral analysis in the non-reacting case in dimension
	$d=1$ and with non-vanishing transport directions $v_1, \cdots, v_N \neq 0$. It shows
	that one cannot expect \eqref{motivation_spectral_mapping_theorem} to hold,
	even in the most simplest scenario. Essentially, the example we formulate is a
	detailed and generalized version of \cite[\Romannum{4}. Example 2.6 (iv), p. 254]{engel2001one} and \cite[Example 4.12, p. 100]{schnaubelt2011lecture}. The
	main ideas are already presented there.

	\begin{example}[$N=1$]\label{example_spectral_analysis_transport_1d}
		Let us consider $d=1, \, N=1$ and $B=0$ with non-vanishing transport direction
		$v \neq 0.$ In this case, we have
		\begin{equation*}
			-Au = -vu' \quad \text{and} \quad (T(t)u)(x) = u(x-tv)
		\end{equation*}
		with $D(-A)=W^{1,p}(\T)$ by \Cref{corollary_domain_generator_one_dimensional}.
		Clearly, the semigroup is periodic with period $\tau = 1/|v|.$ By the spectral
		inclusion \Cref{spectral_inclusion_theorem}, we obtain
		\begin{equation*}
			e^{\tau \sigma(-A)} \subseteq \sigma(T(\tau)) = \sigma(I_{L^p(\T, \, \C^N)}) =
			\{ 1 \}
		\end{equation*}
		and thus $\sigma(-A) \subseteq 2\pi i v \Z.$ In fact, there holds equality:
		for $\lambda \in 2\pi i v \Z$ we define the function
		\begin{equation*}
			u(x) = e^{-\tfrac{\lambda}{v}x}.
		\end{equation*}
		Then
		\begin{equation*}
			-Au = -v u' = \lambda u
		\end{equation*}
		and $u\in W^{1,p}(\T)$ as it fulfills the periodic boundary conditions. That is, we have shown
		\begin{equation*}
			\sigma(-A) = \sigma_p(-A) = 2\pi i v \Z
		\end{equation*}
		explicitly.\newline
		In the following, we compute the spectrum of $T(t)$ for $t\geq0.$ To this end,
		let us introduce some notation. For a rational number $p/q \in \Q$, we choose
		$p\in\Z , \, q\in\N$ and define the $q$-th roots of unit $\Gamma_q \coloneqq \{z
		\in \C \, \colon \, z^q =1\}.$ The goal is to show
		\begin{equation*}
			\sigma(T(t)) = \begin{cases}
			\S^1 \qquad t v \in \R \backslash \Q, \\
			\Gamma_q \qquad t v = \frac{p}{q} \text{ with } p \text{ and } q \text{
				coprime}
			\end{cases}.
		\end{equation*}
		Recall that $(T(t))_{t\geq0}$ defines a group of isometries and hence
		\begin{align}\label{example_spectral_radius_bounded_from_above}
			r(T(t)) = \max \{ |\lambda| \, \colon \, \lambda \in \sigma(T(t))\} = \lim_{n
				\to \infty} \| T(t)^n \|^{\frac{1}{n}} = \lim_{n \to \infty} \| T(nt)
			\|^{\frac{1}{n}} = 1,
		\end{align}
		where $r(T(t))$ is the spectral radius of $T(t),$ see
		\Cref{long_time_behavior_definition_spectral_radius}, and the second equality follows from \cite[Theorem 1.16, p. 7]{schnaubelt2012lecture}. Equation \eqref{example_spectral_radius_bounded_from_above} holds for
		all $t\in\R$ and we particularly obtain
		\begin{equation*}
			r(T(t)^{-1}) = r(T(-t)) = 1.
		\end{equation*}
		Now, note that boundedness and invertibility of $T(t)$ implies
		\begin{equation*}
			\sigma(T(-t)) = \frac{1}{\sigma(T(t))}.
		\end{equation*}
		Therefore, we also get
		\begin{equation}\label{example_spectral_radius_bounded_from_below}
			1 = r(T(-t)) = \max \{ |\lambda| \, \colon \, \lambda \in \sigma(T(-t))\} =
			\max \left\{ \tfrac{1}{|\lambda|} \, \colon \, \lambda \in \sigma(T(t)) \right\},
		\end{equation}
		so \eqref{example_spectral_radius_bounded_from_above} and \eqref{example_spectral_radius_bounded_from_below} imply $\sigma(T(t))
		\subseteq \S^1$ for all $t\geq0.$ \newline
		At this point, we again use the spectral inclusion \Cref{spectral_inclusion_theorem} to deduce
		\begin{equation}\label{example_spectral_inclusion_theorem}
			e^{t\sigma(-A)} \subseteq \sigma(T(t))
		\end{equation}
		for all $t\geq0.$ Recall $\sigma(-A)= 2\pi i v \Z$. In the case $tv \in
		\R\backslash\Q,$ it is well known and relatively easy to see that
		\begin{equation*}
			e^{t 2 \pi i v \Z} \subseteq \S^1
		\end{equation*}
		is a dense inclusion. By the closedness of spectra, it follows
		\begin{equation*}
			\S^1 = \overline{e^{t\sigma(-A)}} \subseteq \overline{\sigma(T(t))} =
			\sigma(T(t)) \subseteq \S^1.
		\end{equation*}
		In the second case, let $tv = \tfrac{p}{q} \in \Q$ for coprime $p\in\Z$ and
		$q\in\N.$ We can use the spectral mapping theorem \cite[Theorem 5.3, p. 84]{schnaubelt2012lecture} for bounded linear operators and the holomorphic map
		$z \mapsto z^q$ to obtain
		\begin{equation*}
			\sigma(T(t))^q = \sigma\big(T(\tfrac{p}{q v})\big)^q =
			\sigma\big(T(\tfrac{p}{qv})^q\big) = \sigma\big(T(\tfrac{1}{v})^p\big) =
			\sigma(T(\sign(v)\tau)^p) = \sigma(I_{L^p(\T, \, \C^N)}) = \{1\},
		\end{equation*}
		i.e. $\sigma(T(t)) \subseteq \Gamma_q.$ On the other hand,
		\begin{equation*}
			\Gamma_q = e^{2 \pi i \tfrac{p}{q} \Z} =  e^{t 2 \pi i v \Z} = e^{t\sigma(-A)} \subseteq \sigma(T(t))
		\end{equation*}
		by \eqref{example_spectral_inclusion_theorem} and the fact that $p$ and $q$ are
		coprime ($e^{2\pi i p/q}$ is a primitive $q$-th root if and only if $p$ and $q$
		are coprime, which is a classical result from Algebra).
	\end{example}
	\begin{remark}
		The result on the spectrum of $T(t)$ for $t\geq0$ is an interesting result in
		itself. It gives precise information on the solvability of a class of difference
		equations on $L^p(\T),$ i.e. solvability in $L^p(\T)$ of the equation
		\begin{equation*}
		\lambda u(x) - u(x-tv) = \big( (\lambda-T(t))u \big)(x) = v(x) \quad \text{ for
			almost all } x\in\T,
		\end{equation*}
		given $\lambda \in \C$ and $v\in L^p(\T).$ Note that also the question of
		uniqueness of the solution can be answered precisely due to $\sigma_p(T(t)) =
		e^{t\sigma_p(-A)} = e^{t\sigma(-A)}$ by \Cref{spectral_theorem_point_spec}.
	\end{remark}
	
	\begin{example}[arbitrary $N\in\N$]\label{example_spectrum_transport_sg_N}
		The result from \Cref{example_spectral_analysis_transport_1d} can be naturally
		extended to the case $d=1, \, N \in \N$ and $B=0$ with non-vanishing transport
		directions $v_j \neq 0$ for $\indexone=1,\cdots,N.$ That is, $N$ species move
		along a circle without reacting with each other. We have
		\begin{equation*}
			-Au = -\colvec{v_1 u_1' \\ \vdots \\ v_N u_N'} \quad \text{ and } \quad
			(T(t)u)(x) = \colvec{u_1(x-tv_1) \\ \vdots \\ u_N(x-tv_N)}
		\end{equation*}
		with $D(-A) = W^{1,p}(\T, \, \C^N)$ by
		\Cref{corollary_domain_generator_one_dimensional}. Both the generator and the
		semigroup act on the $N$ different components independently and therefore, we get
		\begin{equation*}
			\sigma(-A) = \sigma_p(-A) = 2 \pi i \bigcup_{\substack{\indexone=1}}^N v_j \Z
		\end{equation*}
		and
		\begin{equation}\label{example_spectrum_T(t)}
			\sigma(T(t)) = \begin{cases}
			\S^1  &\exists \indexone=1,\cdots,N \text{ with } tv_\indexone \in \R
			\backslash \Q \\
			\cup_{\indexone=1}^N \Gamma_{q_\indexone}  &tv_\indexone =
			\tfrac{p_\indexone}{q_\indexone} \text{ with } \, p_\indexone \text{ and }
			q_\indexone \text{ coprime}
			\end{cases}.
		\end{equation}
		If we additionally assume that the semigroup is periodic, the first condition
		in \eqref{example_spectrum_T(t)} reduces to a condition on $tv_1$ because
		$tv_\indexone$ is irrational any $\indexone=1,\cdots,N$ if and only if $tv_1$ is irrational, see \Cref{lemma_jointly_periodic}.
	\end{example}
	
	Let us emphasize that the above computation verify the weak spectral mapping
	theorem
	\begin{equation*}
		\sigma(T(t)) =
		\begin{rcases}
		\begin{dcases}
		\S^1  &\exists \indexone=1,\cdots,N\colon tv_\indexone \in \R \backslash \Q
		\\
		\cup_{\indexone=1}^N \Gamma_{q_\indexone}  &tv_\indexone =
		\tfrac{p_\indexone}{q_\indexone} \text{ with } \, p_\indexone \text{ and } q_\indexone \text{
			coprime}
		\end{dcases}
		\end{rcases} = \bigcup_{\indexone=1}^N \overline{e^{t2\pi i v_\indexone \Z}} =
		\overline{e^{t\sigma(-A)}}
	\end{equation*}
	and show that the spectral mapping theorem
	\eqref{motivation_spectral_mapping_theorem} does not hold. The equality in
	\eqref{motivation_spectral_mapping_theorem} fails for all $t\geq0$ such that
	there exists $\indexone=1,\cdots,N$ with $tv_\indexone \in \R \backslash \Q.$ In
	addition, we see that the spectrum of the transport semigroup and its generator
	are independent of $1 \leq p < \infty.$
	
	\section{Spectral Analysis of the Generator}
	
	As mentioned in the introduction of the chapter, we start with a
	characterization of $\sigma(-A_2+B_2)$ for all dimensions $d\in\N$. The key
	observation, which will also be crucial for the proof of the weak
	spectral mapping theorem, is that Fourier transformation turns linear differential
	operators acting on $L^2(\T^d, \, \C^N)$ to \textit{matrix multiplication
		operators} on $l^2(\Z^d, \, \C^N)$, the space of the Fourier coefficients. Then, we
	can use existing theory of matrix multiplication operators from
	\cite{holderrieth1991matrix} and obtain a complete characterization of
	$\sigma(-A_2+B_2)$ in terms of roots of polynomials of degree $N$. \newline
	
	Consider the Fourier transformation $\mathcal{F}\colon L^2(\T^d, \, \C^N)
	\rightarrow l^2(\Z^d, \, \C^N)$ with
	\begin{equation}\label{def_fourier_transform}
		(\mathcal{F}u)(\indextwo) = \hat{u}(k) \qquad \text{ for } \indextwo \in \Z^d,
	\end{equation}
	where $\hat{u}(k) = (\hat{u}_1(\indextwo), \cdots, \hat{u}_N(\indextwo))^T \in
	\C^N$ are the unique Fourier coefficients
	\begin{equation*}
		\hat{u}(\indextwo) = \intT u(x) e^{-2\pi i \indextwo \cdot \mathbf{x}} \,
		d\mathbf{x}
	\end{equation*}
	with
	\begin{equation}\label{fourier_series}
		u(\mathbf{x}) = \sum_{\indextwo \in \Z^d} \hat{u}(\indextwo) e^{2\pi i
		\indextwo \cdot \mathbf{x}}.
	\end{equation}
	It is well known that the Fourier transformation defines an isometric
	isomorphism and that the Fourier series \eqref{fourier_series} of $u\in
	L^2(\T^d, \, \C^N)$ converges to $u$ in the $L^2(\T^d, \, \C^N)$ norm, see
	\cite[Theorem 8.20, p. 248]{folland1999real}. In particular, equation
	\begin{equation}\label{parseval}
		\intT |u(\mathbf{x})|^2 \, d\mathbf{x} = \sum_{\indextwo \in \Z^d}
		|\hat{u}(\indextwo)|^2
	\end{equation}
	is called Parseval's identity. \newline
	For our application, it is of great importance that Fourier series of
	$C^\infty(\T^d, \, \C^N)$ functions are very well-behaved: for any $u\in
	C^\infty(\T^d, \, \C^N)$ and all $m\in\N,$ there exists a constant
	$C=C(d,m,u)>0$ such that
	\begin{equation}\label{decay_fourier_smooth}
		|\hat{u}(\indextwo)| \leq C (1+|\indextwo|)^{-m}
	\end{equation}
	holds, see \cite[Theorem 3.3.3, p. 196]{grafakos2008classical}. Moreover, for
	$u\in C^\infty(\T^d, \, \C^N),$ the Fourier series \eqref{fourier_series}
	converges to $u$ in the $C^\infty$ norm by \cite[Inversion theorem for
	$C^\infty$ 5.4, p. 72]{roe1999elliptic}, i.e. all derivatives of the Fourier series of $u$ converge uniformly to the derivatives of $u$. \newline
	
	At a first glance, the connection of our problem to the abstract setting in
	\cite{holderrieth1991matrix} is not evident. We recall the setting from the
	paper to make it clear and comprehensible. \newline
	Let $(X, \, \Sigma, \, \mu)$ be a $\sigma$-finite measure space, let $1\leq p <
	\infty$ and define $E\coloneqq L^p(X, \, \C^N)$. Let $q\colon X \rightarrow
	\C^{N\times N}$ be a measurable function.
	
	\begin{definition}[{\cite[Definition 1]{holderrieth1991matrix}}]\label{def_matrix_mult_op}
		An operator $(\mathcal{A}_q, \, D(\mathcal{A}_q))$ defined on $L^p(X, \, \C^N)$
		by $\mathcal{A}_q\colon f \mapsto qf,$ i.e.
		\begin{equation}\label{matrix_mult_op}
			\mathcal{A}_q f(x) = q(x) f(x)
		\end{equation}
		for $x\in X$ and for all $f \in D(\mathcal{A}_q) = \left\{ f \in L^p(X, \,
		\C^N) \, \colon \, qf \in L^p(X, \, \C^N)\right\}$ is called matrix
		multiplication operator.
	\end{definition}
	
	\begin{proposition}[{\cite[Proposition 1]{holderrieth1991matrix}}]
		If $(\mathcal{A}_q, \, D(\mathcal{A}_q))$ is a matrix multiplication operator
		on $L^p(X, \, \C^N)$ with non-void resolvent set $\rho(\mathcal{A}_q)$, its
		spectrum is given by
		\begin{equation}\label{spec_matrix_mult_op}
			\sigma(\mathcal{A}_q) = \bigcap_{q' \in [q]} \overline{\bigcup_{x\in X}
				\sigma(q'(x))}.
		\end{equation}
		Here, $[q]$ is the equivalence class of all measurable functions $\mu$-a.e.
		equal to $q$.
	\end{proposition}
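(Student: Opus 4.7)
My plan is to prove the two inclusions separately. For the easy direction, $\bigcap_{q' \in [q]} \overline{\bigcup_{x\in X} \sigma(q'(x))} \subseteq \sigma(\mathcal{A}_q)$, I would argue contrapositively: if $\lambda \in \rho(\mathcal{A}_q)$, then $(\lambda - \mathcal{A}_q)^{-1}$ is itself a matrix multiplication operator, namely multiplication by an essentially bounded function $r$ that equals $(\lambda - q(x))^{-1}$ almost everywhere. Since the spectral radius of $(\lambda - q(x))^{-1}$ equals $1/\mathrm{dist}(\lambda, \sigma(q(x)))$ and is dominated by $\|r(x)\|$, one obtains $\mathrm{dist}(\lambda, \sigma(q(x))) \geq 1/\|r\|_{\infty}$ almost everywhere. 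Modifying $q$ on the corresponding null set to a representative $q' \in [q]$ for which this separation holds pointwise (and for which $\lambda \notin \sigma(q'(x))$ throughout), one concludes that $\lambda \notin \overline{\bigcup_x \sigma(q'(x))}$ and hence $\lambda$ fails to lie in the intersection.

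For the reverse inclusion, suppose $\lambda$ lies outside the intersection, so that some $q' \in [q]$ satisfies $\delta := \mathrm{dist}(\lambda, \bigcup_x \sigma(q'(x))) > 0$. Pick any $\mu \in \rho(\mathcal{A}_q)$ and set $R(x) := (\mu - q'(x))^{-1}$; after a further modification on a null set we may assume $\|R(x)\| \leq K$ pointwise. The factorization $\lambda I - q'(x) = R(x)^{-1}[I - (\mu - \lambda) R(x)]$ reduces the task to a uniform bound on $\|[I - (\mu - \lambda) R(x)]^{-1}\|$. The eigenvalues of $M(x) := I - (\mu - \lambda) R(x)$ take the form $(\lambda - \nu)/(\mu - \nu)$ with $\nu \in \sigma(q'(x))$; combining $|\lambda - \nu| \geq \delta$ with $|\mu - \nu| \leq |\mu - \lambda| + |\lambda - \nu|$ yields the uniform eigenvalue lower bound $\eta := \delta/(\delta + |\mu - \lambda|) > 0$. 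Together with $\|M(x)\| \leq 1 + |\mu - \lambda| K$, the adjugate formula $M^{-1} = (\det M)^{-1}\,\mathrm{adj}(M)$ and the bound $|\det M(x)| \geq \eta^N$ give $\|M(x)^{-1}\| \leq N!\,(1 + |\mu - \lambda| K)^{N-1}/\eta^N$, uniformly in $x$. Hence $(\lambda - q'(x))^{-1}$ is essentially bounded, and $\lambda \in \rho(\mathcal{A}_q)$.

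The main obstacle is precisely this finite-dimensional estimate: a lower bound on $\mathrm{dist}(\lambda, \sigma(q(x)))$ does not, by itself, control $\|(\lambda - q(x))^{-1}\|$, as seen from a family of $2\times 2$ nilpotent Jordan blocks whose upper-right entry tends to infinity—the spectra remain $\{0\}$ but the resolvents blow up, and correspondingly $\rho(\mathcal{A}_q)$ turns out to be empty for such $q$. The non-void resolvent set hypothesis supplies the missing uniform control at a single point $\mu$, and the finite-dimensionality of $\C^{N\times N}$, accessed through the adjugate identity, is exactly what transfers this control from $\mu$ to every $\lambda$ that is separated from the union of the spectra $\sigma(q'(x))$.
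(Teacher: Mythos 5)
The paper does not prove this proposition: it states it with a citation to Hölderrieth's original article and then applies it as a black box, so there is no ``paper's own proof'' to compare against. Your argument must therefore be judged on its own merits.

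Your overall strategy is sound and is, as far as one can tell, close to what the original source must do: reduce everything to the statement that the resolvent of $\mathcal{A}_q$ (when it exists) is itself a matrix multiplication operator with essentially bounded symbol, and then use the single point $\mu\in\rho(\mathcal{A}_q)$ guaranteed by the non-void resolvent hypothesis to propagate a uniform bound on $(\mu-q(x))^{-1}$ to any $\lambda$ that stays at distance $\delta>0$ from $\bigcup_x\sigma(q'(x))$, via the factorization $\lambda-q'=R^{-1}(I-(\mu-\lambda)R)$ and the adjugate estimate. The nilpotent-Jordan-block example you give is exactly the right illustration of why the separation $\delta>0$ alone is not enough and why the hypothesis $\rho(\mathcal{A}_q)\neq\emptyset$ is genuinely needed; identifying this as the crux is the most important part of understanding the proposition.

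There is one step you assert but do not prove, and it is not completely trivial: that for $\lambda\in\rho(\mathcal{A}_q)$, the bounded inverse $(\lambda-\mathcal{A}_q)^{-1}$ is a matrix multiplication operator whose symbol equals $(\lambda-q(x))^{-1}$ a.e.\ and is essentially bounded. This requires (a) showing $\lambda-q(x)$ is invertible for a.e.\ $x$ (otherwise one builds, via a measurable selection of a kernel vector on a positive-measure set, a nonzero element of $\ker(\lambda-\mathcal{A}_q)$, contradicting injectivity); (b) using surjectivity to conclude that the resolvent acts pointwise as $(\lambda-q(x))^{-1}$; and (c) showing that boundedness of this pointwise multiplication on $L^p$ forces essential boundedness of $x\mapsto\|(\lambda-q(x))^{-1}\|$ (otherwise one produces unit-norm test functions concentrated where the symbol is large). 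None of these are hard, but together they constitute the key lemma behind the ``easy'' inclusion, and the argument should spell them out rather than cite them as a known fact. Two further small points: when you modify $q'$ on the null sets, you must choose the replacement matrix (for instance $cI$ with $c$ large) so that both the pointwise bound $\|R(x)\|\leq K$ and the separation $\operatorname{dist}(\lambda,\sigma(q'(x)))\geq\delta$ persist simultaneously; and the constant in the adjugate estimate depends on the chosen matrix norm, though this affects nothing qualitative.

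Given these clarifications, the proof proposal is correct and captures the essential mechanism of the result.
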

	
	In our application, we take $(X, \Sigma, \, \mu) = (\Z^d, \mathcal{P}(\Z^d), \,
	\#)$, where $\#$ is the counting measure and $p=2$. Then $L^2(X, \, \C^N) =
	l^2(\Z^d, \, \C^N), \, q\colon \Z^d \rightarrow \C^{N\times N}$ and
	\eqref{matrix_mult_op} reduces to
	\begin{equation}
		\mathcal{A}_q f(\indextwo) = q(\indextwo) f(\indextwo)
	\end{equation}
	for $\indextwo\in\Z^d$ and $f\in D(\mathcal{A}_q) = \left\{ f \in l^2(\Z^d, \,
	\C^N) \, \colon \, qf \in l^2(\Z^d, \, \C^N)\right\}.$ Secondly,
	\eqref{spec_matrix_mult_op} comes down to
	\begin{equation}\label{spec_matrix_mult_op_my_case}
		\sigma(\mathcal{A}_q) = \overline{\bigcup_{\indextwo\in \Z^d}
		\sigma(q(\indextwo))}
	\end{equation}
	because functions on $\Z^d$ coinciding $\#$-a.e. are equal. Notice that
	$q(\indextwo)$ is a matrix and its spectrum is simply given by the roots of
	$\lambda \mapsto \det(\lambda I_{\C^{N\times N}} - q(\indextwo))$.\newline

	The following lemma formalizes the above mentioned idea and can for
	instance be applied to every reasonable linear differential operator on
	$L^2(\T^d, \, \C^N)$ in order to compute its spectrum.
	
	\begin{proposition}\label{fourier_diagram_spectrum}
		Let $(\mathcal{A}, \, D(\mathcal{A}))$ be a densely defined and closed operator
		on $L^2(\T^d, \, \C^N)$ with non-void resolvent set $\rho(\mathcal{A})$. Assume that there exists a function $q\colon
		\Z^d \to \C^{N\times N}$ such that $\mathcal{F}(D(\mathcal{A})) =
		D(\mathcal{A}_q)$ and
		\begin{equation*}
			\begin{tikzcd}
			\LtwoTCN \arrow[r, hookleftarrow] & D(\mathcal{A}) \arrow[r, "\mathcal{A}"] &
			\LtwoTCN \\
			\ltwoZCN \arrow[r, hookleftarrow] \arrow[u, leftarrow, "\mathcal{F}"] &
			D(\mathcal{A}_q) \arrow[r, "\mathcal{A}_q"] \arrow[u, leftarrow, "\mathcal{F}"]
			& \ltwoZCN \arrow[u, "\mathcal{F}^{-1}"]
			\end{tikzcd}
		\end{equation*}
		commutes for its induced matrix multiplication operator $(\mathcal{A}_q, \,
		D(\mathcal{A}_q)).$ Then it holds
		\begin{equation*}
			\sigma(\mathcal{A}) = \sigma(\mathcal{A}_q) = \overline{\bigcup_{\indextwo\in
			\Z^d} \sigma(q(\indextwo))}.
		\end{equation*}
	\end{proposition}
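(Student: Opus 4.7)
The plan is to reduce everything to the observation that $\mathcal{F}$ is a unitary isomorphism that conjugates $\mathcal{A}$ to $\mathcal{A}_q$, so the two operators are unitarily equivalent and hence have the same spectrum. Once this is established, the second equality is simply the cited spectral formula for matrix multiplication operators, adapted to the counting measure on $\Z^d$ where the equivalence class $[q]$ is a singleton.

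Concretely, I would first record that $\mathcal{F}\colon L^2(\T^d, \C^N) \to l^2(\Z^d, \C^N)$ is an isometric isomorphism by Parseval's identity \eqref{parseval}, applied componentwise. The commuting diagram in the hypothesis then states precisely that $\mathcal{A}_q = \mathcal{F} \mathcal{A} \mathcal{F}^{-1}$ on the domains $D(\mathcal{A}_q) = \mathcal{F}(D(\mathcal{A}))$. From this I would deduce that for every $\lambda \in \C$,
\begin{equation*}
\lambda - \mathcal{A}_q = \mathcal{F}(\lambda - \mathcal{A})\mathcal{F}^{-1}
\end{equation*}
as operators from $D(\mathcal{A}_q)$ to $l^2(\Z^d, \C^N)$. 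Since $\mathcal{F}$ and $\mathcal{F}^{-1}$ are bounded bijections between the full spaces, $\lambda - \mathcal{A}$ is a bijection $D(\mathcal{A}) \to L^2(\T^d, \C^N)$ with bounded inverse if and only if the analogous statement holds for $\lambda - \mathcal{A}_q$. This shows $\rho(\mathcal{A}) = \rho(\mathcal{A}_q)$ and therefore $\sigma(\mathcal{A}) = \sigma(\mathcal{A}_q)$.

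In particular, the assumption $\rho(\mathcal{A}) \neq \emptyset$ transfers to $\rho(\mathcal{A}_q) \neq \emptyset$, which is exactly what is needed to invoke Hölderrieth's Proposition 1 in the form \eqref{spec_matrix_mult_op}. Specializing to our measure space $(\Z^d, \mathcal{P}(\Z^d), \#)$, the equivalence class $[q]$ with respect to the counting measure contains only $q$ itself, so the intersection over $[q]$ collapses and \eqref{spec_matrix_mult_op} reduces to \eqref{spec_matrix_mult_op_my_case}, yielding
\begin{equation*}
\sigma(\mathcal{A}_q) = \overline{\bigcup_{\indextwo \in \Z^d} \sigma(q(\indextwo))}.
\end{equation*}

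There is no serious obstacle here; the proposition is essentially a bookkeeping lemma combining unitary equivalence with the already-cited theory. The only point requiring a sentence of care is checking that the relation $\mathcal{A}_q = \mathcal{F}\mathcal{A}\mathcal{F}^{-1}$ really respects domains in both directions, which is precisely what the hypothesis $\mathcal{F}(D(\mathcal{A})) = D(\mathcal{A}_q)$ is there to guarantee. Closedness and dense definiteness of $\mathcal{A}_q$ follow automatically from the same properties of $\mathcal{A}$ via the isomorphism, so no additional verification is needed to apply the matrix-multiplication spectral formula.
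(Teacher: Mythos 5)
Your argument is correct and is essentially identical to the paper's own proof: both observe that the commuting diagram makes $\mathcal{F}$ a unitary conjugation between $\lambda - \mathcal{A}$ and $\lambda - \mathcal{A}_q$, giving $\sigma(\mathcal{A}) = \sigma(\mathcal{A}_q)$, and then invoke \eqref{spec_matrix_mult_op_my_case} for the second equality. Your extra remarks on domains, closedness, and the collapse of $[q]$ to a singleton under the counting measure are fine but were already folded into the surrounding discussion in the paper.
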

	\begin{proof}
		Let $\lambda \in \C.$ By assumption,
		\begin{equation*}
			\lambda I_{L^2(\T^d, \, \C^N)} - \mathcal{A} \colon D(\mathcal{A})
			\longrightarrow L^2(\T^d, \, \C^N)
		\end{equation*}
		is invertible if and only if
		\begin{equation*}
			\mathcal{F} \big(\lambda I_{L^2(\T, \, \C^N)} - \mathcal{A}\big)
			\mathcal{F}^{-1} = \lambda I_{\ltwoZCN} - \mathcal{A}_q \colon D(\mathcal{A}_q)
			\longrightarrow \ltwoZCN
		\end{equation*}
		is invertible. The second equality follows from
		\eqref{spec_matrix_mult_op_my_case}.
	\end{proof}
	
	Before applying \Cref{fourier_diagram_spectrum} to the generator $(-A_2+B_2, \,
	D(-A_2))$ of the transport-reaction semigroup $(R(t))_{t\geq0}$, we need to
	characterize the domain $D(-A_2)$ and the application of $-A_2+B_2$ in terms of Fourier coefficients. Let us start with the latter.
	
	\begin{proposition}\label{proposition_generator_application_fourier}
		Let $(-A_2+B_2, \, D(-A_2))$ be the generator of the transport-reaction
		semigroup $(R_2(t))_{t\geq0}$ on $\LtwoTCN.$ Then
		\begin{equation*}
			\big((-A_2+B_2)u\big)(\mathbf{x}) = \sum_{\indextwo \in \Z^d} M(\indextwo)
			\hat{u}(\indextwo) e^{2\pi i \indextwo \cdot \mathbf{x}}
		\end{equation*}
		holds for all $u\in D(-A_2)$, where $M(\indextwo) \in \C^{N\times N}$ are the matrices defined by
		\begin{equation}
			M(\indextwo) \coloneqq -2\pi i 
			\begin{pmatrix}
			\indextwo \cdot \mathbf{v_1} & & \\
			& \ddots & \\
			& & \indextwo \cdot \mathbf{v_N}
			\end{pmatrix}
			+ B \eqqcolon - 2\pi i \mathbf{V}(\indextwo) + B.
		\end{equation}
	\end{proposition}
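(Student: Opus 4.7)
The plan is to reduce the claim to smooth test functions, where it follows from term-by-term differentiation, and then to extend it to all of $D(-A_2)$ by using the fact that $C_c^\infty(\T^d,\C^N)$ is a core for $(-A_2+B_2,D(-A_2))$ established in \Cref{corollary_Ccinfty_core_A+B}, together with the closedness of $-A_2+B_2$ and the $L^2$-$l^2$ isometry of the Fourier transform.

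First, I would verify the formula on $C_c^\infty(\T^d,\C^N)$. For such a $u$, the Fourier series converges in $C^\infty$ by \cite[5.4, p.~72]{roe1999elliptic}, so one can differentiate term by term. Since
\begin{equation*}
-\mathbf{v}_\indexone \cdot \nabla \bigl(\hat{u}_\indexone(\indextwo)\,e^{2\pi i \indextwo \cdot \mathbf{x}}\bigr) = -2\pi i\,(\indextwo \cdot \mathbf{v}_\indexone)\,\hat{u}_\indexone(\indextwo)\,e^{2\pi i \indextwo \cdot \mathbf{x}},
\end{equation*}
plugging the Fourier series of $u$ into $-\mathcal{A}u+Bu$ and collecting components produces exactly $\sum_{\indextwo\in\Z^d} M(\indextwo)\hat{u}(\indextwo)\,e^{2\pi i \indextwo\cdot \mathbf{x}}$; the rapid decay \eqref{decay_fourier_smooth} justifies all exchanges of summation and differentiation.

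Second, I would pass to the limit. Given arbitrary $u\in D(-A_2)$, \Cref{corollary_Ccinfty_core_A+B} yields a sequence $(u^{(n)})_n \subset C_c^\infty(\T^d,\C^N)$ with $u^{(n)}\to u$ and $(-A_2+B_2)u^{(n)} \to (-A_2+B_2)u$ in $L^2(\T^d,\C^N)$. Since $\mathcal{F}$ is an isometric isomorphism onto $l^2(\Z^d,\C^N)$, I can transfer both convergences to $l^2$. In particular, for each fixed $\indextwo\in\Z^d$, the $\indextwo$-th Fourier coefficient converges: $\hat{u}^{(n)}(\indextwo)\to \hat{u}(\indextwo)$ in $\C^N$, and likewise the $\indextwo$-th coefficient of $(-A_2+B_2)u^{(n)}$ converges to the $\indextwo$-th coefficient of $(-A_2+B_2)u$. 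Combined with the identity already proved for $u^{(n)}$, this forces the $\indextwo$-th Fourier coefficient of $(-A_2+B_2)u$ to equal
\begin{equation*}
\lim_{n\to\infty} M(\indextwo)\,\hat{u}^{(n)}(\indextwo) = M(\indextwo)\,\hat{u}(\indextwo),
\end{equation*}
because $M(\indextwo)$ is a fixed matrix. Expanding $(-A_2+B_2)u$ in its Fourier series in $L^2$ then yields the claimed formula, and as a by-product we obtain $(M(\indextwo)\hat{u}(\indextwo))_\indextwo \in l^2(\Z^d,\C^N)$.

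I do not expect a serious obstacle here: the only subtle point is that the asserted equality is as an $L^2$-valued Fourier series rather than pointwise, but both sides have the same $\indextwo$-th Fourier coefficient, and Parseval together with the isometry property closes the argument. Should one prefer a more structural formulation, the same reasoning establishes the commuting diagram $\mathcal{F}(-A_2+B_2)\mathcal{F}^{-1} = \mathcal{A}_M$ on $\mathcal{F}(D(-A_2))$, which is precisely the hypothesis needed to apply \Cref{fourier_diagram_spectrum} in the next step.
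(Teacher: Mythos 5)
Your proposal matches the paper's proof in approach and execution: both first establish the formula on smooth functions via termwise differentiation of the Fourier series (justified by the rapid decay estimate), then pass to general $u\in D(-A_2)$ using that $C_c^\infty(\T^d,\C^N)$ is a core (\Cref{corollary_Ccinfty_core_A+B}), transferring graph-norm convergence through the Fourier isometry to deduce $\hat v(\indextwo)=M(\indextwo)\hat u(\indextwo)$ for each fixed $\indextwo$. The argument is correct and not meaningfully different from the paper's.
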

	\begin{proof}
		The first step is to consider $u\in C^\infty(\T^d, \, \C^N).$ Using \Cref{lemma_A_is_an_extension}, a calculation shows
		\begin{equation}\label{auxiliary_eq34}
			((-A_2+B_2)u)(\mathbf{x}) = -\colvec{\mathbf{v_1} \cdot \nabla u_1(\mathbf{x}) \\ \vdots \\ \mathbf{v_N} \cdot \nabla u_N(\mathbf{x})} + Bu(\mathbf{x}) = \sum_{\indextwo \in \Z^d} M(\indextwo)
			\hat{u}(\indextwo) e^{2\pi i \indextwo \cdot \mathbf{x}}.
		\end{equation}
		Every technicality and convergence of the Fourier series works out nicely because the Fourier series of $u$ converges to $u$ in the $C^\infty$ norm, see \eqref{decay_fourier_smooth} and its subsequent comment. \newline
		Now, take an arbitrary function $u\in D(-A_2).$ By \Cref{corollary_Ccinfty_core_A+B}, there exists a sequence $(u^{(n)})_{n\in\N} \subset C_c^\infty(\T^d, \, \C^N)$ such that $u^{(n)}$ converges to $u$ in the graph norm of $-A_2+B_2.$ Let $v\coloneqq (-A_2+B_2)u \in \LtwoTCN$ with
		\begin{equation*}
			v(\mathbf{x}) = \sum_{\indextwo \in \Z^d} \hat{v}(\indextwo) e^{2\pi i \indextwo \cdot \mathbf{x}}.
		\end{equation*}
		Parseval's identity \eqref{parseval} and \eqref{auxiliary_eq34} imply that the convergence of $u^{(n)}$ to $u$ in the graph norm is equivalent to
		\begin{equation*}
			\sum_{\indextwo \in \Z^d} |\hat{u^{(n)}}(\indextwo) - \hat{u}(\indextwo)|^2 \longrightarrow 0
		\end{equation*}
		and
		\begin{equation*}
			\sum_{\indextwo \in \Z^d} |M(\indextwo)\hat{u^{(n)}}(\indextwo) - \hat{v}(\indextwo)|^2 \longrightarrow 0
		\end{equation*}
		as $n\to\infty.$ In particular, we obtain
		\begin{align*}
			\hat{u^{(n)}}(\indextwo^*) \longrightarrow \hat{u}(\indextwo^*) \qquad \text{ and } \qquad M(\indextwo^*)\hat{u^{(n)}}(\indextwo^*) \longrightarrow \hat{v}(\indextwo^*)
		\end{align*}
		as $n\to\infty$ for every fixed $k^*\in\Z^d,$ which implies $\hat{v}(\indextwo^*) = M(\indextwo^*)\hat{u}(\indextwo^*).$ Since $k^*\in\Z^d$ was chosen arbitrarily, we arrive at
		\begin{equation*}
			((-A_2+B_2)u)(\mathbf{x}) = v(\mathbf{x}) = \sum_{\indextwo \in \Z^d} \hat{v}(\indextwo) e^{2\pi i \indextwo \cdot \mathbf{x}} = \sum_{\indextwo \in \Z^d} M(\indextwo)\hat{u}(\indextwo) e^{2\pi i \indextwo \cdot \mathbf{x}}.
		\end{equation*}
	\end{proof}
	
	With the knowledge from \Cref{proposition_generator_application_fourier}, it is easy to characterize the domain $D(-A_2+B_2)=D(-A_2)$ in terms of Fourier coefficients.
	
	\begin{proposition}\label{proposition_domain_generator_fourier}
		Let $(-A_2+B_2, \, D(-A_2))$ be the generator of the transport-reaction
		semigroup $(R_2(t))_{t\geq0}$ on $\LtwoTCN.$ Then it holds
		\begin{align*}
			\mathcal{F}(D(-A_2)) &= \bigg\{ \hat{u} \in \ltwoZCN \, \colon \,
			\sum_{\indextwo \in \Z^d} |M(\indextwo)\hat{u}(\indextwo)|^2 < \infty \bigg\} \\
			&= \bigg\{ \hat{u} \in \ltwoZCN \, \colon \,
			\sum_{\indextwo \in \Z^d} |\mathbf{V}(\indextwo)\hat{u}(\indextwo)|^2 < \infty \bigg\},
		\end{align*}
		where $M(\indextwo)$ and $ \mathbf{V}(\indextwo) \in \C^{N \times N}$ are the matrices given by
		\begin{equation}
			M(\indextwo) = -2\pi i 
			\begin{pmatrix}
			\indextwo \cdot \mathbf{v_1} & & \\
			& \ddots & \\
			& & \indextwo \cdot \mathbf{v_N}
			\end{pmatrix}
			+ B = - 2\pi i \mathbf{V}(\indextwo) + B.
		\end{equation}
	\end{proposition}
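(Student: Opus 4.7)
The proof naturally splits into two parts: characterizing $\mathcal{F}(D(-A_2))$ by the condition involving $M(\indextwo)$, and then showing this is equivalent to the condition involving $\mathbf{V}(\indextwo)$.

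For the first equality, the inclusion ``$\subseteq$'' is almost immediate from \Cref{proposition_generator_application_fourier} and Parseval's identity \eqref{parseval}: if $u \in D(-A_2)$, then $(-A_2+B_2)u \in \LtwoTCN$ has Fourier coefficients $M(\indextwo)\hat{u}(\indextwo)$, so $\sum_{\indextwo \in \Z^d} |M(\indextwo)\hat{u}(\indextwo)|^2 = \LpCnnorm{2}{(-A_2+B_2)u}^2 < \infty$. For the reverse inclusion ``$\supseteq$'', the plan is to take $\hat{u} \in \ltwoZCN$ satisfying the summability condition, define the candidate image $v \in \LtwoTCN$ via its Fourier coefficients $\hat{v}(\indextwo) \coloneqq M(\indextwo)\hat{u}(\indextwo)$ (which lies in $\ltwoZCN$ by assumption, hence defines an $L^2$-function by Parseval), and approximate $u$ by the trigonometric polynomials
\begin{equation*}
u^{(n)}(\mathbf{x}) \coloneqq \sum_{|\indextwo| \leq n} \hat{u}(\indextwo) e^{2\pi i \indextwo \cdot \mathbf{x}}.
\end{equation*}
Each $u^{(n)}$ lies in $C^\infty(\T^d, \, \C^N) \subset D(-A_2)$ and by \Cref{proposition_generator_application_fourier} one has $(-A_2+B_2)u^{(n)}(\mathbf{x}) = \sum_{|\indextwo|\leq n} M(\indextwo)\hat{u}(\indextwo) e^{2\pi i \indextwo \cdot \mathbf{x}}$. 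Parseval then yields $u^{(n)} \to u$ and $(-A_2+B_2)u^{(n)} \to v$ in $\LtwoTCN$. Since the generator $(-A_2+B_2, \, D(-A_2))$ is closed by Hille--Yosida (\Cref{hille-yosida}), we conclude $u \in D(-A_2)$ with $(-A_2+B_2)u = v$.

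The second equality reduces to showing that the two summability conditions are equivalent, which follows from the boundedness of $B$ as an operator on $\C^N$. Indeed, from $M(\indextwo) = -2\pi i \mathbf{V}(\indextwo) + B$ and the triangle inequality
\begin{equation*}
\big| |M(\indextwo)\hat{u}(\indextwo)| - 2\pi |\mathbf{V}(\indextwo)\hat{u}(\indextwo)| \big| \leq \|B\|_\infty \, |\hat{u}(\indextwo)|
\end{equation*}
for every $\indextwo \in \Z^d$, together with $\hat{u} \in \ltwoZCN$, one summability is finite if and only if the other is.

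I do not expect a serious obstacle here: the only mild subtlety is to be careful that the approximating partial sums $u^{(n)}$ do lie in $D(-A_2)$ and that the $L^2$-convergence on both sides is justified purely by Parseval, without needing any regularity of $u$ beyond the Fourier side assumption. The bounded perturbation $B$ makes the passage between $M(\indextwo)$ and $\mathbf{V}(\indextwo)$ routine, and this is also what guarantees $D(-A_2+B_2) = D(-A_2)$ as used implicitly throughout.
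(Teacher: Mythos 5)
Your proposal is correct and takes essentially the same route as the paper: the ``$\subseteq$'' direction from \Cref{proposition_generator_application_fourier} and Parseval, the ``$\supseteq$'' direction via the smooth partial sums $u^{(n)}$ combined with closedness of the generator from Hille--Yosida, and the second equality via the triangle inequality and $\|B\|_\infty$-boundedness. The paper states the last step at the level of $l^2$-norms rather than pointwise, but the content is identical.
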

	\begin{proof}
		If $u\in D(-A_2),$ the function $(-A_2+B_2)u \in \LtwoTCN$ is given by
		\begin{equation}
			\big((-A_2+B_2)u\big)(\mathbf{x}) = \sum_{\indextwo \in \Z^d} M(\indextwo)
			\hat{u}(\indextwo) e^{2\pi i \indextwo \cdot \mathbf{x}}
		\end{equation}
		due to \Cref{proposition_generator_application_fourier}. Therefore, Parseval's identity \eqref{parseval} yields
		\begin{equation*}
			\sum_{\indextwo \in \Z^d} |M(\indextwo)\hat{u}(\indextwo)|^2 < \infty.
		\end{equation*}
		Conversely, let $\hat{u}\in \ltwoZCN$ with
		\begin{equation}\label{auxiliary_eq35}
			\sum_{\indextwo \in \Z^d} |M(\indextwo)\hat{u}(\indextwo)|^2 < \infty.
		\end{equation}
		We need to show that the function $u\in\LtwoTCN$ given by
		\begin{equation*}
			u(\mathbf{x}) = \sum_{\indextwo \in \Z^d} \hat{u}(\indextwo) e^{2\pi i \indextwo \cdot \mathbf{x}}
		\end{equation*}
		lies in the domain $D(-A_2)$ of the generator $-A_2+B_2.$ To this end, consider the sequence $(u^{(n)})_{n\in\N}$ defined by
		\begin{equation*}
			u^{(n)}(\mathbf{x}) \coloneqq \sum_{|\indextwo| \leq n} \hat{u}(\indextwo) e^{2\pi i \indextwo \cdot \mathbf{x}}.
		\end{equation*}
		Notice that $u^{(n)} \in C^\infty(\T^d, \, \C^N)$ converges to $u$ in $\LtwoTCN$ by Parseval's identity \eqref{parseval}. Moreover,
		\begin{equation*}
			\big((-A_2+B_2)u^{(n)}\big)(\mathbf{x}) = \sum_{|\indextwo| \leq n} M(\indextwo)
			\hat{u}(\indextwo) e^{2\pi i \indextwo \cdot \mathbf{x}} \longrightarrow \sum_{\indextwo \in \Z^d} M(\indextwo)
			\hat{u}(\indextwo) e^{2\pi i \indextwo \cdot \mathbf{x}}
		\end{equation*}
		in $\LtwoTCN$ as $n\to\infty$  due to \eqref{auxiliary_eq35} and Parseval's identity \eqref{parseval}. This yields $u\in D(-A_2)$ by the closedness of $(-A_2+B_2, \, D(-A_2)),$ which is a consequence of the Hille-Yosida \Cref{hille-yosida}. \newline
		The second equality in the proposition is an immediate consequence of the triangle inequality in $\ltwoZCN.$
	\end{proof}
	\begin{remark}
		Complementary to \Cref{theorem_domain_generator_higher_dimensions}, \Cref{proposition_domain_generator_fourier} shows that the domain $D(-A_2)$ is in general a larger subspace than $H^1(\T^d, \, \C^N)$ for dimensions $d\geq2,$ even if the transport directions $\mathbf{v_1},\cdots,\mathbf{v_N}\in\R^d$ are non-vanishing.
	\end{remark}
	
	Finally, we can rigorously characterize the spectrum of the generator of the transport-reaction semigroup $(R_2(t))_{t\geq0}.$
	
	\begin{theorem}\label{char_spectrum_generator}
		Let $(-A_2+B_2, \, D(-A_2))$ be the generator of the transport-reaction
		semigroup $(R_2(t))_{t\geq0}$ on $\LtwoTCN.$ Then it holds
		\begin{equation*}
			\sigma(-A_2+B_2) = \overline{\bigcup_{\indextwo\in \Z^d}
			\sigma(M(\indextwo))},
		\end{equation*}
		where $M(\indextwo) \in \C^{N\times N}$ is the matrix
		\begin{equation}\label{def_Mk}
			M(\indextwo) = -2\pi i 
			\begin{pmatrix}
			\indextwo \cdot \mathbf{v_1} & & \\
			& \ddots & \\
			& & \indextwo \cdot \mathbf{v_N}
			\end{pmatrix}
			+ B = - 2\pi i \mathbf{V}(\indextwo) + B.
		\end{equation}
	\end{theorem}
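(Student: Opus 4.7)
The plan is to reduce the claim to a direct application of \Cref{fourier_diagram_spectrum}. First I would verify the hypotheses of that proposition for the operator $(-A_2+B_2, \, D(-A_2))$: density of the domain and closedness follow from the fact that, by \Cref{theorem_semigroup_generated_by_-A+B}, this operator generates the $C_0$-semigroup $(R_2(t))_{t\geq 0}$, and the Hille-Yosida theorem (\Cref{hille-yosida}) guarantees both closedness and non-emptiness of the resolvent set (in fact $\rho(-A_2+B_2)$ contains a right half-plane).

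Next I would identify the induced matrix multiplication operator. Take $q = M \colon \Z^d \to \C^{N\times N}$ given by \eqref{def_Mk}, and let $(\mathcal{A}_M, \, D(\mathcal{A}_M))$ be the associated matrix multiplication operator on $\ltwoZCN$ in the sense of \Cref{def_matrix_mult_op}. \Cref{proposition_domain_generator_fourier} shows that $\mathcal{F}(D(-A_2)) = D(\mathcal{A}_M)$, since membership in both domains is characterized by the same summability condition $\sum_{\indextwo \in \Z^d} |M(\indextwo)\hat{u}(\indextwo)|^2 < \infty$. Moreover, \Cref{proposition_generator_application_fourier} shows that for all $u \in D(-A_2)$ the Fourier coefficients of $(-A_2+B_2)u$ are exactly $M(\indextwo)\hat{u}(\indextwo)$, which means precisely that the diagram
\begin{equation*}
\mathcal{F} \circ (-A_2+B_2) = \mathcal{A}_M \circ \mathcal{F}
\end{equation*}
commutes on $D(-A_2)$, as required by \Cref{fourier_diagram_spectrum}.

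With all hypotheses verified, \Cref{fourier_diagram_spectrum} together with \eqref{spec_matrix_mult_op_my_case} immediately gives
\begin{equation*}
\sigma(-A_2+B_2) = \sigma(\mathcal{A}_M) = \overline{\bigcup_{\indextwo \in \Z^d} \sigma(M(\indextwo))},
\end{equation*}
which is the desired identity. No step requires new computation; the real work has already been done in \Cref{proposition_generator_application_fourier} and \Cref{proposition_domain_generator_fourier}. The only conceptual point worth flagging is the invocation of Hille-Yosida to secure $\rho(-A_2+B_2) \neq \emptyset$, since the abstract result from \cite{holderrieth1991matrix} used in \Cref{fourier_diagram_spectrum} explicitly requires a non-void resolvent set; but this is free in our setting because we already know from \Cref{theorem_semigroup_generated_by_-A+B} that $-A_2+B_2$ is a $C_0$-generator.
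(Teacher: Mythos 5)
Your proposal is correct and follows essentially the same route as the paper's own proof: both reduce the statement to \Cref{fourier_diagram_spectrum}, verify $\mathcal{F}(D(-A_2)) = D(\mathcal{A}_M)$ via \Cref{proposition_domain_generator_fourier}, obtain commutativity of the Fourier diagram from \Cref{proposition_generator_application_fourier}, and secure the non-void resolvent set via Hille--Yosida. The only (harmless) difference is that you spell out density and closedness explicitly, which the paper leaves implicit.
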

	\begin{proof}
		Let $(\mathcal{A}_M, \, D(\mathcal{A}_M))$ be the matrix multiplication
		operator on $\ltwoZCN$ induced by $M = (k\mapsto M(k))$ (in the sense of \Cref{def_matrix_mult_op}). Notice that \Cref{proposition_domain_generator_fourier} shows $\mathcal{F}(D(-A_2)) = D(\mathcal{A}_M)$. Furthermore, the diagram
		\begin{equation*}
			\begin{tikzcd}
			\LtwoTCN \arrow[r, hookleftarrow] & D(-A_2) \arrow[r, "-A_2+B_2"] &
			\LtwoTCN \\
			\ltwoZCN \arrow[r, hookleftarrow] \arrow[u, leftarrow, "\mathcal{F}"] &
			D(\mathcal{A}_M) \arrow[r, "\mathcal{A}_M"] \arrow[u, leftarrow, "\mathcal{F}"]
			& \ltwoZCN \arrow[u, "\mathcal{F}^{-1}"]
			\end{tikzcd}
		\end{equation*}
		commutes by \Cref{proposition_generator_application_fourier}. The resolvent set $\rho(-A_2+B_2)$ is non-empty by the Hille-Yosida \Cref{hille-yosida} and all properties together imply that the assumptions of \Cref{fourier_diagram_spectrum} are fulfilled. An application of \Cref{fourier_diagram_spectrum} finishes the proof.
	\end{proof}
	
	A simple observation in form of the next lemma shows that the spectrum $\sigma(-A_2+B_2)$ is symmetric to the real line.
	
	\begin{lemma}\label{spectrum_symmetric}
		Let $k\in\Z^d$ and let $M(\indextwo) \in \C^{N\times N}$ be the matrix
		\begin{equation*}
			M(\indextwo) = -2\pi i 
			\begin{pmatrix}
			\indextwo \cdot \mathbf{v_1} & & \\
			& \ddots & \\
			& & \indextwo \cdot \mathbf{v_N}
			\end{pmatrix}
			+ B = - 2\pi i \mathbf{V}(\indextwo) + B.
		\end{equation*}
		Then
		\begin{equation*}
			\sigma(M(-k)) = \overline{\sigma(M(k))}
		\end{equation*}
		holds for all $k\in\Z^d,$ where the overline denotes complex conjugation.
	\end{lemma}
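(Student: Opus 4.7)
The plan is to observe that the symmetry in the statement follows from two facts: $\mathbf{V}(k)$ is linear in $k$ (so $\mathbf{V}(-k) = -\mathbf{V}(k)$), and $B$ together with $\mathbf{V}(k)$ are \emph{real} matrices. The combination of these with the factor $-2\pi i$ will exchange the sign of $k$ with entrywise complex conjugation.

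Concretely, I would first write
\begin{equation*}
M(-k) = -2\pi i \mathbf{V}(-k) + B = 2\pi i \mathbf{V}(k) + B,
\end{equation*}
using the linearity of $k \mapsto \mathbf{V}(k)$. Next, since $B \in \R^{N\times N}$ and $\mathbf{V}(k)$ is a real diagonal matrix (the entries $k\cdot \mathbf{v}_\indexone$ are real), entrywise complex conjugation gives
\begin{equation*}
\overline{M(k)} = \overline{-2\pi i \mathbf{V}(k) + B} = 2\pi i \mathbf{V}(k) + B.
\end{equation*}
Comparing the two displays yields the identity $M(-k) = \overline{M(k)}$.

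To conclude, I invoke the elementary fact that for any matrix $M \in \C^{N\times N}$ one has $\sigma(\overline{M}) = \overline{\sigma(M)}$. This is immediate: if $M v = \lambda v$ with $v \neq 0$, then taking complex conjugates gives $\overline{M}\, \overline{v} = \overline{\lambda}\, \overline{v}$ with $\overline{v} \neq 0$, and conversely. Equivalently, $\det(\lambda I - \overline{M}) = \overline{\det(\overline{\lambda} I - M)}$, so the characteristic polynomials have conjugate roots. Combining this with $M(-k) = \overline{M(k)}$ yields
\begin{equation*}
\sigma(M(-k)) = \sigma\big(\overline{M(k)}\big) = \overline{\sigma(M(k))},
\end{equation*}
which is the claim. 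There is no genuine obstacle here; the only thing to be careful about is to record explicitly the reality of $B$ and of the scalars $k\cdot\mathbf{v}_\indexone$, since this is precisely what makes the sign flip in $k$ correspond to conjugation.
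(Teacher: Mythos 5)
Your proof is correct and matches the paper's approach: the paper's entire argument is the one-line observation that $M(-k) = \overline{M(k)}$, which you establish from the linearity of $\mathbf{V}$ in $k$ and the reality of $B$ and $\mathbf{V}(k)$, and you then spell out the standard fact $\sigma(\overline{M}) = \overline{\sigma(M)}$ that the paper leaves implicit.
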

	\begin{proof}
		This is a direct consequence of
		\begin{equation*}
			M(-\indextwo) = \overline{M(\indextwo)}.
		\end{equation*}
	\end{proof}
	Carrying over the characterization of $\sigma(-A_2+B_2)$ from $p=2$ to $1\leq p < \infty$ in all dimensions $d\in\N$ seems to be a very difficult task. In the following, we mostly restrict ourselves to the one dimensional case $d=1$ and assume that the transport directions $v_1,\cdots,v_N \in \R$ are non-vanishing, i.e. $v_1,\cdots,v_N\neq0.$ All proofs rely on the fact that $(-A_p+B_p, \, D(-A_p))$ has compact resolvent in these simple cases. Our assumptions are necessary for this approach, since $(-A_p+B_p, \, D(-A_p))$ generally fails to have compact resolvent if one transport direction vanishes or if the model is considered in higher dimensions. \newline
	
	Let us recall that we have $D(-A_p)=W^{1,p}(\T, \, \C^N)$ under the assumptions $d=1$ and non-vanishing transport directions $v_1,\cdots,v_N\neq0$ by \Cref{corollary_domain_generator_one_dimensional}. \newline

	\begin{theorem}\label{theorem_generator_has_compact_resolvent_spectrum=point_spectrum}
		Let $1\leq p < \infty.$ Let $d=1$ and let the transport directions $v_1,
		\cdots, v_N$ be non-vanishing, i.e. $v_1,\cdots,v_N\neq0$. Then the generator $(-A_p+B_p, \, W^{1,p}(\T, \, \C^N))$ of $(R_p(t))_{t\geq0}$ has
		compact resolvent, i.e. $R(\lambda, \, -A_p+B_p)$ is compact for one (and hence all)
		$\lambda \in \rho(-A_p+B_p).$ \newline
		In particular, the following properties hold true:
		\begin{enumerate}
			\item
			$\sigma(-A_p+B_p)=\sigma_p(-A_p+B_p)$ contains at
			most countably many eigenvalues $\lambda_\indexone.$
			\item
			If $\sigma(-A_p+B_p)$ is infinite, then $|\lambda_\indexone| \to \infty $ as
			$\indexone \to \infty.$
			\item
			For all $\lambda \in \sigma(-A_p+B_p), $ the operator $\lambda_\indexone I_{L^p(\T, \, \C^N)}
			- (-A_p+B_p)$ has closed range and $\emph{dim}\ker(\lambda_\indexone I_{L^p(\T, \, \C^N)}
			- (-A_p+B_p)) = \emph{codim}\range(\lambda_\indexone I_{L^p(\T, \, \C^N)}
			- (-A_p+B_p))< \infty.$
		\end{enumerate}
	\end{theorem}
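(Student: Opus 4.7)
The plan is to factor the resolvent through the compactly embedded space $W^{1,p}(\T, \C^N)$, obtaining compactness as the composition of a bounded map with the Rellich--Kondrachov embedding. Properties $(i)$--$(iii)$ then drop out from the standard theory for operators with compact resolvent.

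First, I would invoke the Hille--Yosida \Cref{hille-yosida} to pick some $\lambda \in \rho(-A_p+B_p)$. By \Cref{corollary_domain_generator_one_dimensional} the domain equals $D(-A_p) = W^{1,p}(\T, \C^N)$ under our standing assumptions, so $R(\lambda, -A_p+B_p)$ maps $L^p(\T, \C^N)$ boundedly into $W^{1,p}(\T, \C^N)$ when the target is equipped with the graph norm of $-A_p+B_p$. By \Cref{theorem_semigroup_generated_by_-A+B} this graph norm is equivalent to the graph norm of $-A_p$, which under non-vanishing transport directions is in turn equivalent to the full $W^{1,p}$-norm: for any $u\in W^{1,p}(\T, \C^N)$ one has the componentwise bound
\[
\|u_j'\|_{L^p(\T)} = \frac{1}{|v_j|}\|(\mathcal{A}u)_j\|_{L^p(\T)} \leq \frac{1}{\min_k |v_k|}\,\|\mathcal{A}u\|_{L^p(\T,\C^N)},
\]
and the reverse inequality is immediate. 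Hence $R(\lambda, -A_p+B_p) \in \mathcal{L}(L^p(\T, \C^N), W^{1,p}(\T, \C^N))$.

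Second, since $\T$ is a compact smooth manifold, the embedding $W^{1,p}(\T, \C^N) \hookrightarrow L^p(\T, \C^N)$ is compact by the Rellich--Kondrachov theorem. Composing yields compactness of $R(\lambda, -A_p+B_p)$ on $L^p(\T, \C^N)$, and the resolvent identity propagates this to $R(\mu, -A_p+B_p)$ for every $\mu \in \rho(-A_p+B_p)$.

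For the three listed consequences I would simply invoke standard results on operators with compact resolvent. Using the spectral relation $\sigma(-A_p+B_p) = \{\lambda - \mu^{-1} : \mu \in \sigma(R(\lambda, -A_p+B_p)) \setminus \{0\}\}$ together with the fact that the spectrum of a compact operator on an infinite-dimensional Banach space is at most countable, consists (apart from $0$) solely of eigenvalues, and can accumulate only at $0$, one obtains $(i)$ and $(ii)$. The Fredholm alternative for compact operators then yields the kernel/range dimension and closed-range statements in $(iii)$. I do not expect a serious obstacle here --- the only conceptual step is the norm-equivalence identification between the graph norm of $-A_p + B_p$ and the $W^{1,p}$-norm, while everything else is essentially a citation of standard abstract theory as found in \cite{engel2001one} or \cite{kato2013perturbation}.
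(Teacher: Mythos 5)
Your proof is correct and takes essentially the same route as the paper. Both arguments hinge on the same two facts: (a) the graph norm of $-A_p+B_p$ on $W^{1,p}(\T, \C^N)$ is equivalent to the $W^{1,p}$-norm, using the equivalence of the graph norms of $-A_p+B_p$ and $-A_p$ from \Cref{theorem_semigroup_generated_by_-A+B} together with the non-vanishing transport directions to control $\|u_j'\|_{L^p}$; and (b) the Rellich--Kondrachov compact embedding of $W^{1,p}(\T,\C^N)$ into $L^p(\T,\C^N)$. You realize this by factoring the resolvent $R(\lambda, -A_p+B_p)$ as a bounded map into $W^{1,p}$ followed by the compact inclusion, whereas the paper invokes the abstract equivalence between compact resolvent and compactness of the canonical embedding $(D(-A_p), \|\cdot\|_{-A_p}) \hookrightarrow L^p(\T,\C^N)$ from Engel--Nagel; these are the same observation dressed slightly differently. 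One small point where the paper is more careful: before citing the standard structure theorem for operators with compact resolvent, the paper verifies that $\sigma(-A_p+B_p)$ is non-empty by exhibiting a constant eigenfunction $u\equiv z$ with $Bz = \lambda z$. Your reduction via the spectral identity $\sigma(-A_p+B_p) = \{\lambda - \mu^{-1} : \mu \in \sigma(R(\lambda, -A_p+B_p)) \setminus \{0\}\}$ sidesteps this (the statements (i)--(iii) would be vacuously true for empty spectrum anyway), so it is not a gap, but it is worth being aware that the cited abstract result may be stated under a non-emptiness hypothesis.
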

	\begin{proof}
		We start by showing that the generator has compact resolvent. By
		\cite[\Romannum{2}. 4.25, p. 117]{engel2001one}, it is equivalent to show that
		the canonical embedding
		\begin{equation*}
			(W^{1,p}(\T, \, \C^N), \, \|\cdot\|_{-A+B}) \longhookrightarrow (\LpCn, \, \|\cdot\|_{\LpCn})
		\end{equation*}
		is compact. As the graph norms of $(-A+B, \, W^{1,p}(\T, \, \C^N))$ and $(-A, \, W^{1,p}(\T, \, \C^N))$ are
		equivalent by \Cref{theorem_semigroup_generated_by_-A+B}, it is sufficient to
		show compactness of the embedding
		\begin{equation*}
			(W^{1,p}(\T, \, \C^N), \, \|\cdot\|_{-A}) \longhookrightarrow (\LpCn, \, \|\cdot\|_{\LpCn}).
		\end{equation*}
		Notice that the proof of \Cref{corollary_domain_generator_one_dimensional} also implies equivalence of the norms $\|\cdot\|_{-A}$ and \mbox{$\|\cdot\|_{W^{1,p}(\T, \, \C^N)}$} on
		$W^{1,p}(\T, \, \C^N).$ The claim now follows from the classical
		Rellich-Kondrachov embedding 
		\begin{equation*}
			W^{1,p}(\T, \, \C^N) \subset \subset L^p(\T, \, \C^N),
		\end{equation*}
		see \cite[Theorem 9.16, p. 285]{brezis2010functional}. Properties $(i)$ to $(iii)$ are a consequence of \cite[Theorem 2.16, p. 27]{schnaubelt2012lecture} because the spectrum $\sigma(-A_p+B_p)$ is non-empty. The easiest way to see is is to take an eigenvalue $\lambda\in\C$ of $B$ with eigenvector $z\in\C^N\backslash\{0\}.$ Then, $u\equiv z \in W^{1,p}(\T, \, \C^N)$ satisfies $(-A_p+B_p)u \equiv Bz = \lambda z = \lambda u.$
	\end{proof}
	
	\begin{remark}\label{remark_spectrum_generator_p_equal_infty_point_spec}
		Below, it will become important that the theorem also holds true for the generator considered on $L^\infty(\T, \, \C^N)$ with $D(-A_\infty) = W^{1,\infty}(\T, \, \C^N)$ from \Cref{proposition_adjoint_generator_peq1}. This follows from
		$W^{1,\infty}(\T, \C^N) \subset \subset L^\infty(\T, \, \C^N)$ due to
		$W^{1,\infty}(\T, \C^N) \subseteq W^{1,p}(\T, \, \C^N) \subset \subset C(\T, \,
		\C^N) \subseteq L^\infty(\T, \, \C^N)$, where $p>1$ can be chosen arbitrarily.
	\end{remark}
	
	The assumption of non-vanishing transport directions $v_1,\cdots,v_N\neq0$ cannot be dropped. By definition \eqref{generator_by_definition} of the domain of the
	generator, there are no restrictions on $u_\indexone$ if $v_\indexone=0$ and the
	component could be an arbitrary function in $L^p(\T).$ Consequently, the domain of the operator would no longer be compactly embedded in $L^p(\T, \, \C^N).$\newline
	Concerning higher dimensions $d\geq2$, we observed in \Cref{theorem_domain_generator_higher_dimensions} that the domain
	of the generator of the semigroup $(R(t))_{t\geq0}$ is less regular if one component $\indexone$ is transport periodic in the sense of \Cref{definition_transport_periodic}. This suggests that the generator also fails to have compact resolvent. The next lemma shows this negative result.
	
	\begin{proposition}\label{proposition_no_compact_resolvent}
		Let $1 \leq p < \infty.$ Let $d\geq2$ and let the transport directions
		$\mathbf{v_1}, \cdots, \mathbf{v_N}$ be non-vanishing, i.e. $\mathbf{v_1},\cdots,\mathbf{v_N} \neq 0$. Assume that one component
		$\indexone$ is transport periodic. Then the generator $(-A+B, \, D(-A))$ of
		$(R(t))_{t\geq0}$ does not have compact resolvent.
	\end{proposition}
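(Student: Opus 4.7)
The plan is to show directly that the canonical embedding $(D(-A+B), \|\cdot\|_{-A+B}) \hookrightarrow \LpCn$ fails to be compact, which by \cite[\Romannum{2}. 4.25, p. 117]{engel2001one} is equivalent to the generator not having compact resolvent. To this end, I would construct a sequence $(u^{(n)})_{n\in\N}$ that is bounded in the graph norm but admits no $L^p$-convergent subsequence. The key idea is to use Fourier modes on $\T^d$ that are annihilated by the transport in component $\indexone$; transport periodicity together with $d\geq 2$ is precisely what guarantees there are infinitely many such modes.

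Concretely, transport periodicity of component $\indexone$ yields $\tau_\indexone \mathbf{v_\indexone} =: \mathbf{m} \in \Z^d \setminus \{0\}$, and the neutral sublattice
\begin{equation*}
L_\indexone \coloneqq \{\mathbf{k}\in\Z^d \, \colon \, \mathbf{k}\cdot \mathbf{m}=0\}
\end{equation*}
has rank $d-1 \geq 1$, hence is infinite. I would pick distinct $\mathbf{k}_n\in L_\indexone$ with $|\mathbf{k}_n|\to\infty$ and set $u^{(n)}(\mathbf{x}) \coloneqq e^{2\pi i \mathbf{k}_n\cdot \mathbf{x}}\, e_\indexone$, where $e_\indexone\in\C^N$ is the $\indexone$-th standard unit vector. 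Each $u^{(n)}$ is smooth, hence lies in $W^{1,p}(\T^d, \, \C^N) \subseteq D(-A)$ by \Cref{lemma_A_is_an_extension}, and satisfies $\mathbf{v_\indexone}\cdot\nabla u^{(n)}_\indexone = 2\pi i \tau_\indexone^{-1}(\mathbf{k}_n\cdot\mathbf{m})\, u^{(n)}_\indexone = 0$. Therefore $-A u^{(n)} = 0$ and
\begin{equation*}
\|u^{(n)}\|_{-A+B} = \|u^{(n)}\|_{\LpCn} + \|B u^{(n)}\|_{\LpCn} \leq 1 + \|B\|_{\mathcal{L}(\LpCn)},
\end{equation*}
so the sequence is bounded in the graph norm, while at the same time $\|u^{(n)}\|_{\LpCn} = 1$ for every $n$.

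To rule out a convergent subsequence, I would show $u^{(n)}\rightharpoonup 0$ weakly in $\LpCn$; any strong $L^p$-limit would then coincide with the weak limit $0$, in contradiction with $\|u^{(n)}\|_{\LpCn} = 1$. Weak convergence follows from the Riemann-Lebesgue lemma: for any test function $\phi$ in the dual exponent space $L^q(\T^d, \, \C^N)$ (with $q=\infty$ if $p=1$), finiteness of the torus gives $\phi_\indexone \in L^1(\T^d)$, and the dual pairing reduces to the Fourier coefficient $\hat{\phi}_\indexone(\pm \mathbf{k}_n)$, which tends to $0$ as $|\mathbf{k}_n|\to\infty$.

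I regard the main technical content as the construction step: the existence of the infinite neutral lattice $L_\indexone$ relies on both the transport periodicity assumption (which forces $\mathbf{v_\indexone}$ to be a rational direction in the sense of \Cref{lemma_component_transport_periodic_characterization}) and $d\geq 2$ (which ensures $\mathbf{m}^\perp \cap \Z^d$ has rank at least $1$). Without either assumption, only the zero mode would be neutral and the argument would collapse; this matches the sharpness of the statement, since in dimension $d=1$ the resolvent \emph{is} compact by \Cref{theorem_generator_has_compact_resolvent_spectrum=point_spectrum}.
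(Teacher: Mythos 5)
Your proof is correct, and it takes a genuinely different route from the paper's. Both arguments exploit the same structural fact, namely that transport periodicity of component $\indexone$ produces a large kernel of $-A$ restricted to the $\indexone$-th slot, and both construct a graph-norm-bounded sequence with $-Au^{(n)}=0$ and constant $L^p$-norm admitting no convergent subsequence. But the paper realizes this in physical space: it takes normalized indicator functions of tubular neighborhoods of the closed orbit $\gamma(t)=\mathbf{c}+t\mathbf{v_\indexone}$, computes their $L^p$-norms from the volume of a $(d-1)$-ball, and rules out convergence by pointwise a.e.\ convergence to zero. You instead realize it on the Fourier side: you identify the neutral sublattice $L_\indexone=\mathbf{m}^\perp\cap\Z^d$ (nontrivial precisely because $\mathbf{m}=\tau_\indexone\mathbf{v_\indexone}\in\Z^d\setminus\{0\}$ and $d\geq 2$), take the corresponding exponentials, and rule out convergence by weak convergence to zero via the Riemann--Lebesgue lemma. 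Your version avoids the Gauss-theorem/boundary computation and the tube-volume estimate, is fully explicit, and fits naturally with the matrix-multiplication-operator point of view developed later in \Cref{proposition_generator_application_fourier}--\Cref{char_spectrum_generator}; the paper's version is more geometric and needs only elementary pointwise convergence rather than duality/weak compactness. One small remark: you bound directly in the graph norm $\|\cdot\|_{-A+B}$, which sidesteps the appeal to equivalence of the graph norms of $-A$ and $-A+B$ that the paper invokes from \Cref{theorem_semigroup_generated_by_-A+B}; both are fine. Your closing observation about sharpness (neutral lattice trivial if $d=1$ or periodicity fails) is accurate and matches \Cref{theorem_generator_has_compact_resolvent_spectrum=point_spectrum}.
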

	\begin{proof}
		We show that the embedding
		\begin{equation*}
			(D(-A), \, \|\cdot\|_{-A}) \longhookrightarrow (\LpCn, \, \|\cdot\|_{\LpCn}).
		\end{equation*}
		is not compact. This is equivalent to our statement by \cite[\Romannum{2}.
		4.25, p. 117]{engel2001one} and the equivalence of the graph norms $\|\cdot\|_{-A+B}$ and $\|\cdot\|_{-A}$, which was shown in \Cref{theorem_semigroup_generated_by_-A+B}. \newline
		To this end, we construct a bounded sequence $(u^{(n)})_{n\in\N} \subset (D(-A), \, \|
		\cdot \|_{-A})$, which fails to have a convergent subsequence in $\LpCn.$ The
		idea is very similar to the proof of
		\Cref{theorem_domain_generator_higher_dimensions}. Let $\mathbf{c} = (\half, \cdots, \half) \in \T^d$ be the center of the torus.
		By assumption, the curve
		\begin{equation*}
			\gamma\colon [0,\tau_j] \rightarrow \T^d \quad \text{with} \quad \gamma(t)
			\coloneqq \mathbf{c} + t \mathbf{v_j}
		\end{equation*}
		fulfills $\gamma(0)=\gamma(\tau_j).$ Let $K=\range(\gamma)$ be the range of
		$\gamma$ and consider the sets $U^{(n)}$ given by
		\begin{equation*}
			U^{(n)} = \{\mathbf{x}\in \T^d \, \colon \, \dist(\mathbf{x}, \, K) < \tfrac{1}{n} \}.
		\end{equation*}
		We define 
		\begin{equation*}
			u_\indexone^{(n)}(\mathbf{x}) \coloneqq \begin{cases}
			n^{(d-1)/p} & \mathbf{x} \in U^{(n)}, \\
			0 & \text{otherwise}
			\end{cases}
		\end{equation*}
		and $u_\indexfour^{(n)} \equiv 0$ for all
		$\indexfour=1,\cdots, N$ with $\indexfour\neq \indexone.$ Notice that $u \in \LpCn$ with
		\begin{equation}\label{proposition_no_compact_resolvent_LP_BOUND}
			\LpCnnorm{p}{u^{(n)}}^p = n^{d-1} \tau_\indexone |\mathbf{v_\indexone}| \frac{\pi^{(d-1)/2}}{\Gamma(\tfrac{d+1}{2})} n^{1-d} = \tau_\indexone |\mathbf{v_\indexone}| \frac{\pi^{(d-1)/2}}{\Gamma(\tfrac{d+1}{2})}
		\end{equation}
		for sufficiently large $n\in\N$. The exact formula is a consequence of the volume of the $(d-1)$-dimensional Euclidean ball with radius $1/n$. In addition, $u_\indexone^{(n)}$ is constant along direction $\mathbf{v_\indexone},$ i.e.
		\begin{equation*}
			u_\indexone^{(n)}(\cdot - t\mathbf{v_\indexone}) - u_\indexone^{(n)}(\cdot) \equiv 0.
		\end{equation*}
		This implies $u^{(n)} \in D(-A)$ with $-Au^{(n)} = 0$ for all $n\in\N$ and boundedness of the sequence in	$(D(-A), \, \|\cdot\|_{-A}).$ \newline
		On the other hand, $u^{(n)} \to 0$ pointwise almost everywhere because of $d\geq2$. If a
		subsequence of $(u^{(n)})_{n\in\N}$ converged in $\LpCn$, a subsubsequence would converge to $0$ in $\LpCn$. This contradicts
		\eqref{proposition_no_compact_resolvent_LP_BOUND}.
	\end{proof}
	
	\begin{remark}
		We refer to \Cref{theorem_domain_generator_higher_dimensions} and \Cref{fig:theorem_domain_generator_higher_dimensions} for an illustration of the idea.
	\end{remark}

	\begin{theorem}\label{theorem_spectrum_generator}
		Let $1\leq p < \infty.$ Let $d=1$ and let the transport directions $v_1,
		\cdots, v_N$ be non-vanishing, i.e. $v_1,\cdots,v_N\neq0.$ Let $(-A_p+B_p, \, W^{1,p}(\T, \, \C^N))$ be the generator of the transport-reaction semigroup $(R_p(t))_{t\geq0}$ on $L^p(\T, \, \C^N).$ Then it holds
		\begin{equation*}
			\sigma(-A_p+B_p) = \sigma_p(-A_p+B_p) = \overline{\bigcup_{k\in \Z} \sigma(M(k))},
		\end{equation*}
		where $M(k)\in \C^{N\times N}$ is the matrix
		\begin{equation*}
			M(\indextwo) = -2\pi i \indextwo
			\begin{pmatrix}
			v_1  & & \\
			& \ddots & \\
			& & v_N
			\end{pmatrix}
			+ B \eqqcolon - 2\pi i \indextwo V + B.
		\end{equation*}
		In particular, the spectrum is independent of $p$.
	\end{theorem}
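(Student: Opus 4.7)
The plan is to compute the point spectrum directly by solving the eigenvalue equation as an ordinary differential equation, and to combine this with the compact resolvent property from \Cref{theorem_generator_has_compact_resolvent_spectrum=point_spectrum}. The equality $\sigma(-A_p+B_p) = \sigma_p(-A_p+B_p)$ is immediate from that theorem, so only the characterization of the point spectrum in terms of the matrices $M(k)$ remains. Since the eigenvalues have no finite accumulation point, the union $\bigcup_k \sigma(M(k))$ will automatically coincide with its closure once we identify it with $\sigma_p(-A_p+B_p)$.

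For the inclusion $\bigcup_{k\in\Z}\sigma(M(k)) \subseteq \sigma_p(-A_p+B_p)$, given $k\in\Z$ and $z\in\C^N\setminus\{0\}$ with $M(k)z = \lambda z$, the function $u(x) \coloneqq z\,e^{2\pi i k x}$ lies in $C^\infty(\T, \C^N) \subset W^{1,p}(\T, \C^N) = D(-A_p)$, and a direct computation gives
\begin{equation*}
(-A_p+B_p)u(x) = (-2\pi i k V + B)\, z\, e^{2\pi i k x} = M(k)z\, e^{2\pi i k x} = \lambda u(x),
\end{equation*}
so $\lambda$ is an eigenvalue. Since $\sigma(-A_p+B_p)$ is closed, this yields $\overline{\bigcup_{k\in\Z}\sigma(M(k))} \subseteq \sigma(-A_p+B_p)$.

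For the reverse inclusion, suppose $u \in W^{1,p}(\T, \C^N)\setminus\{0\}$ satisfies $(-A_p+B_p)u = \lambda u$. Invertibility of $V$ (from the non-vanishing transport assumption) rewrites this as the linear system $u'(x) = V^{-1}(B - \lambda I)\, u(x)$. The one-dimensional Sobolev embedding $W^{1,p}(\T) \hookrightarrow C(\T)$ combined with a bootstrap through this identity shows that $u$ is in fact smooth, so $u(x) = e^{x V^{-1}(B-\lambda I)}\, u(0)$ in the classical sense. The periodicity constraint $u(0) = u(1)$ forces $u(0) \in \ker\!\left(e^{V^{-1}(B-\lambda I)} - I\right)$, and nontriviality of this kernel is equivalent to $1 \in \sigma(e^{V^{-1}(B-\lambda I)}) = e^{\sigma(V^{-1}(B-\lambda I))}$, hence to the existence of $k \in \Z$ with $2\pi i k \in \sigma(V^{-1}(B-\lambda I))$. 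Multiplying the corresponding eigenvector relation by $V$ converts this into $\lambda \in \sigma(B - 2\pi i k V) = \sigma(M(k))$, giving $\sigma_p(-A_p+B_p) \subseteq \bigcup_{k\in\Z}\sigma(M(k))$.

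The main mild obstacle is justifying the smoothness of $u$ in the borderline case $p=1$, but this is handled by absolute continuity of $W^{1,1}(\T)$ functions together with the constant-coefficient bootstrap, so the argument proceeds uniformly for all $1\leq p < \infty$. Independence of $p$ is then automatic, since the characterization produced makes no reference to $p$ at all.
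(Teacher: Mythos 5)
Your proof is correct but takes a genuinely different route from the paper's. The paper proves the theorem in three steps built around the already-established $p=2$ case from \Cref{char_spectrum_generator}: Step~1 uses the nesting $W^{1,p'}(\T, \C^N) \subseteq W^{1,p}(\T, \C^N)$ for $p\leq p'$ and the point-spectrum characterization from \Cref{theorem_generator_has_compact_resolvent_spectrum=point_spectrum} to obtain $\sigma(-A_{p'}+B_{p'}) \subseteq \sigma(-A_p+B_p)$; Step~2 then feeds in the explicit eigenfunctions $z e^{2\pi i k x}$ to show equality with the $p=2$ spectrum for all $p\geq 2$; and Step~3 transfers this to $1\leq p \leq 2$ by dualizing (via \Cref{proposition_adjoint_semigroup_pneq1} and \Cref{proposition_adjoint_generator_peq1}) and applying Step~2 to the adjoint generator $A_q + B_q^T$. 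You bypass all of that by solving the eigenvalue equation $Vu' = (B-\lambda I)u$ directly as a constant-coefficient linear ODE. The exponential formula $u(x)=e^{xV^{-1}(B-\lambda I)}u(0)$, the periodicity constraint $u(0)=u(1)$, and the matrix spectral mapping theorem for $z\mapsto e^z$ together produce $\sigma_p(-A_p+B_p)=\bigcup_{k\in\Z}\sigma(M(k))$ uniformly in $p$. The bootstrap to smoothness from $W^{1,p}(\T)$ via the one-dimensional Sobolev embedding (or absolute continuity plus the resulting Volterra integral equation for $p=1$) is sound, and your observation that the union is automatically closed once identified with the closed spectrum is correct, so the overline in the statement costs nothing. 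Your approach is more elementary and self-contained: it relies neither on the Fourier/matrix-multiplication-operator machinery behind \Cref{char_spectrum_generator} nor on the duality apparatus. The paper's route has the side benefit of checking the result on $L^\infty(\T, \C^N)$ along the way (needed for its duality step) and it illustrates a technique that survives when no explicit integration of the eigenvalue equation is available; for this particular operator, however, your direct ODE computation is the cleaner path.
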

	\begin{proof}
		Let us recall
		\begin{equation*}
			(-A_p+B_p)u = -\colvec{v_1 u_1^\prime \\ \vdots \\ v_N u_N^\prime} + Bu
		\end{equation*}
		from \Cref{corollary_domain_generator_one_dimensional}. We have
		\begin{equation}\label{auxeq4}
		\restr{-A_p+B_p}{W^{1,p}(\T, \, \C^N) \cap W^{1,p'}(\T, \, \C^N)} = \restr{-A_{p'}+B_{p'}}{W^{1,p}(\T, \, \C^N) \cap W^{1,p'}(\T, \, \C^N)}
		\end{equation}
		for all $1\leq p, \, p' \leq \infty$ by \Cref{corollary_domain_generator_one_dimensional}. It should be
		noted that we also defined the generator for $p=\infty$ with $D(-A_\infty+B_\infty) = W^{1,\infty}(\T, \, \C^N)$ in
		\eqref{def_transport_reaction_gen_p_eq_infty} and \eqref{def_domain_transport_reaction_gen_p_eq_infty}, cf. \Cref{proposition_adjoint_generator_peq1}. The proof includes this case because we apply an argument with the adjoints of the
		generators in Step 3. In fact, our proof shows that the theorem is also valid for
		$p=\infty$ but we do not need this result in the sequel. \newline
		
		Step 1: $\sigma(-A_{p'}+B_{p'}) \subseteq \sigma(-A_p+B_p)$ for all $1\leq p
		\leq p' \leq \infty.$ \newline
		Let $1\leq p \leq p' \leq \infty$ and let $\lambda \in \sigma(-A_{p'}+B_{p'}).$
		We know $\sigma(-A_{p'}+B_{p'}) = \sigma_p(-A_{p'}+B_{p'})$ from
		\Cref{theorem_generator_has_compact_resolvent_spectrum=point_spectrum} and
		\Cref{remark_spectrum_generator_p_equal_infty_point_spec}.	Hence, $\lambda$ is
		an eigenvalue and there exists $u\in W^{1,p'}(\T, \, \C^N) \backslash \{0\}$ with
		$(-A_{p'}+B_{p'})u = \lambda u$. The circle $\T$ has finite mass, so we have $W^{1,p'}(\T, \, \C^N) \subseteq W^{1,p}(\T, \, \C^N)$ and consequently
		\begin{equation*}
			(-A_p+B_p)u = (-A_{p'}+B_{p'})u = \lambda u
		\end{equation*}
		by \eqref{auxeq4}. In words, $\lambda$ is also an eigenvalue of $-A_p+B_p.$
		\newline
		
		Step 2: $\sigma(-A_p+B_p) = \sigma(-A_2+B_2)$ for all $2\leq p \leq \infty.$
		\newline
		Let $2\leq p \leq \infty.$ One direction follows from Step 1. It is left to
		show $\sigma(-A_2+B_2) \subseteq \sigma(-A_p+B_p).$ \Cref{char_spectrum_generator} implies
		\begin{equation*}
			\sigma(-A_2+B_2) = \overline{\bigcup_{k\in \Z} \sigma(M(k))}
		\end{equation*}
		where
		\begin{equation*}
			M(k) = -2\pi i \indextwo 
			\begin{pmatrix}
			v_1 & & \\
			& \ddots & \\
			& & v_N
			\end{pmatrix}
			+ B = - 2\pi i \indextwo V + B.
		\end{equation*}
		The spectrum $\sigma(-A_p+B_p)$ is closed and therefore, we only need
		to prove $\sigma(M(\indextwo)) \subseteq \sigma(-A_p+B_p)$ for all $\indextwo
		\in \Z.$ Let $\indextwo\in\Z$ and let $\lambda\in\sigma(M(k)).$ Then there
		exists $z \in \C^N \backslash \{0\}$ with $M(k)z = \lambda z$ and we define
		\begin{equation*}
			u \coloneqq z e^{2\pi i \indextwo x}.
		\end{equation*}
		Clearly, $u \in W^{1,p}(\T, \, \C^N) \cap H^1(\T, \, \C^N), \, u \neq 0$ and \eqref{auxeq4}, together with \Cref{proposition_generator_application_fourier}, implies
		\begin{equation*}
			(-A_p+B_p)u = (-A_2+B_2)u = M(k) z e^{2\pi i \indextwo x} = \lambda z e^{2\pi
			i \indextwo x} = \lambda u.
		\end{equation*}
		We obtain $\lambda \in \sigma(-A_p+B_p)$ and Step 2 follows. \newline
		
		Step 3: $\sigma(-A_p+B_p) = \sigma(-A_2+B_2)$ for all $1\leq p \leq 2.$
		\newline
		Let $1\leq p \leq 2.$ The whole idea is to apply the classical result that
		taking the adjoint does not change the spectrum; see for example \cite[Theorem
		1.24, p. 12]{schnaubelt2012lecture}. Let $q$ be the dual exponent of $p$. We
		have seen $(-A_p+B_p)^* = A_q + B_q^T$ in
		\Cref{proposition_adjoint_semigroup_pneq1} and
		\Cref{proposition_adjoint_generator_peq1} for the cases $1<p\leq2$ and $p=1$ respectively. For $1\leq p \leq 2,$ the dual
		exponent fulfills $2\leq q \leq \infty$ and an application of Step 2 yields
		\begin{align*}
		\sigma(-A_p+B_p) = \sigma((-A_p+B_p)^*) = \sigma(A_q+B_q^T) = \sigma(A_2+B_2^T) = \sigma((A_2+B_2^T)^*) = \sigma(-A_2+B_2).
		\end{align*}
		Notice that the adjoint generators are still of the class covered throughout the whole chapter, since the different sign for $A$ just changes the direction of movement and $B^T$ is still a real $N\times N$ matrix.
	\end{proof}
	
	We continue our study of the spectrum of the generator in \Cref{chapter_pattern_formation}, where we analyze the asymptotics of the eigenvalues of $M(k)$ as $|k|\to\infty.$ The results from \Cref{chapter_pattern_formation} give deep insights into the qualitative behavior of solutions to \eqref{linear_transport_reaction_eq}.
	
	\newpage
	
	\section{Weak Spectral Mapping Theorems}
	
	The idea to use existing theory of matrix multiplication operators is
	sufficient to prove a weak spectral mapping theorem for $p=2$ in all dimensions $d\in\N$. Our strategy is to not only apply \Cref{fourier_diagram_spectrum} for the generator
	$(-A_2+B_2, \, D(-A_2))$ of the semigroup, but also for each semigroup operator
	$R_2(t)$ itself. \newline
	Afterwards, we use the methods from \cite{latrach2008weak} to demonstrate mathematical tools one can use to extend the result to different $p.$ Given that the spectrum $\sigma(-A_p+B_p)$ is independent of $1\leq p <\infty$, the ideas from \cite{latrach2008weak} show that the weak spectral mapping theorem for all $p$ is essentially equivalent to the weak spectral mapping theorem for $p=1.$  \newline
	
	In particular, assuming that the $C_0$-group $(R_p(t))_{t\in\R}$ on $L^p(\T, \, \C^N)$ is positive, abstract theory from \cite{arendt1984spectral} can be used to show a weak spectral mapping theorem for all $1\leq p <\infty$. Unfortunately, the group is positive if and only if $B$ is a	diagonal matrix, see \Cref{corollary_group_positive}. \newline
	
	A weak spectral mapping theorem for $1\leq p <\infty, \, d\in\N$ and all matrices $B\in\R^{N\times N}$ remains an open problem.
	
	\begin{proposition}\label{proposition_semigroup_fourier_representation}
		Let $(R_2(t))_{t\geq0}$ be the transport-reaction semigroup on $\LtwoTCN$ generated by $(-A_2+B_2, \, D(-A_2))$. Then
		\begin{equation}\label{auxeq5}
			(R_2(t)u)(\mathbf{x}) = \sum_{\indextwo \in \Z^d} e^{tM(\indextwo)} \hat{u}(\indextwo)
			e^{2\pi i \indextwo \cdot \mathbf{x}}
		\end{equation}
		holds for all $u\in \LtwoTCN.$ Here, $M(\indextwo)$ is the matrix
		\begin{equation}
			M(\indextwo) = -2\pi i 
			\begin{pmatrix}
			\indextwo \cdot \mathbf{v_1} & & \\
			& \ddots & \\
			& & \indextwo \cdot \mathbf{v_N}
			\end{pmatrix}
			+ B = - 2\pi i \mathbf{V}(\indextwo) + B.
		\end{equation}
	\end{proposition}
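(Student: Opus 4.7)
The plan is to define a candidate semigroup $(\widetilde{R}(t))_{t \geq 0}$ via the Fourier series on the right-hand side of \eqref{auxeq5}, verify that it is a $C_0$-semigroup on $\LtwoTCN$ whose generator extends $(-A_2+B_2, C_c^\infty(\T^d, \, \C^N))$, and then invoke the uniqueness statement of \Cref{corollary_Ccinfty_core_A+B} to conclude $\widetilde{R}(t) = R_2(t)$ for every $t \geq 0$.

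The decisive analytic step is a uniform bound on $\|e^{tM(\indextwo)}\|$ in $\indextwo \in \Z^d$, which is necessary for $\widetilde{R}(t)$ to be a bounded operator on $\LtwoTCN$ in the first place. Because $\mathbf{V}(\indextwo)$ is a real diagonal matrix, the summand $-2\pi i \mathbf{V}(\indextwo)$ is skew-Hermitian, so $e^{-2\pi i t \mathbf{V}(\indextwo)}$ is unitary. Applying Duhamel's formula to $M(\indextwo) = -2\pi i \mathbf{V}(\indextwo) + B$,
\begin{equation*}
e^{tM(\indextwo)} = e^{-2\pi i t \mathbf{V}(\indextwo)} + \int_0^t e^{-2\pi i (t-s)\mathbf{V}(\indextwo)} \, B \, e^{sM(\indextwo)} \, ds,
\end{equation*}
and invoking Grönwall's inequality yields $\|e^{tM(\indextwo)}\| \leq e^{t\|B\|}$ \emph{uniformly} in $\indextwo$. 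Note that the naive estimate $\|e^{tM(\indextwo)}\| \leq e^{t\|M(\indextwo)\|}$ is useless here since $\|M(\indextwo)\|$ grows linearly in $|\indextwo|$; this is the main obstacle and its resolution via the skew-Hermiticity of the unbounded part is the heart of the argument.

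Given the uniform bound, Parseval's identity \eqref{parseval} immediately delivers $\|\widetilde{R}(t)u\|_{\LtwoTCN} \leq e^{t\|B\|} \|u\|_{\LtwoTCN}$, so each $\widetilde{R}(t)$ is a well-defined bounded operator. The semigroup property $\widetilde{R}(t+s) = \widetilde{R}(t)\widetilde{R}(s)$ follows from the pointwise matrix identity $e^{(t+s)M(\indextwo)} = e^{tM(\indextwo)} e^{sM(\indextwo)}$ together with the injectivity of $\mathcal{F}$. Strong continuity at $t=0$ reduces on the Fourier side to $\sum_{\indextwo} |e^{tM(\indextwo)}\hat{u}(\indextwo) - \hat{u}(\indextwo)|^2 \to 0$, which follows from dominated convergence with the $t$-uniform majorant $4e^{2\|B\|}|\hat{u}(\indextwo)|^2$ for $t \leq 1$.

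It remains to identify the generator of $(\widetilde{R}(t))_{t\geq 0}$ on $C_c^\infty(\T^d, \, \C^N)$. For such $u$, the rapid decay of $\hat{u}(\indextwo)$ from \eqref{decay_fourier_smooth} combined with the polynomial growth $\|M(\indextwo)\| \lesssim 1 + |\indextwo|$ allows a dominated convergence argument on the Fourier side: since $\big\|\tfrac{e^{tM(\indextwo)} - I}{t}\big\| \leq \|M(\indextwo)\| e^{t\|B\|}$, we may pass the limit $t \searrow 0$ inside the Fourier series to obtain
\begin{equation*}
\lim_{t \searrow 0} \frac{\widetilde{R}(t)u - u}{t} = \sum_{\indextwo \in \Z^d} M(\indextwo) \hat{u}(\indextwo) e^{2\pi i \indextwo \cdot \mathbf{x}} = \bigl((-A_2+B_2)u\bigr)(\mathbf{x})
\end{equation*}
in $\LtwoTCN$, where the last equality is \Cref{proposition_generator_application_fourier}. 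Hence the generator of $(\widetilde{R}(t))_{t\geq 0}$ extends $(-A_2+B_2, C_c^\infty(\T^d, \, \C^N))$, and the uniqueness statement in \Cref{corollary_Ccinfty_core_A+B} finishes the proof.
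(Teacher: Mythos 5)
Your proposal is correct and follows the same overall architecture as the paper's proof: define $\widetilde{R}(t)$ via the Fourier series, establish the uniform bound on $\|e^{tM(\indextwo)}\|$, verify that $(\widetilde{R}(t))_{t\geq0}$ is a $C_0$-semigroup whose generator extends $(-A_2+B_2, C_c^\infty(\T^d,\C^N))$, and conclude via \Cref{corollary_Ccinfty_core_A+B}. You deviate from the paper at two points, both to good effect. First, you obtain the uniform bound $\|e^{tM(\indextwo)}\| \leq e^{t\|B\|}$ via Duhamel's formula and Grönwall, exploiting that $e^{-2\pi i t\mathbf{V}(\indextwo)}$ is unitary; the paper instead invokes the Lie--Trotter product formula (\Cref{perturbation_theory_Lie_Trotter_formula}) for the same estimate. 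Your route is more elementary and self-contained, whereas the paper's is one line given the perturbation machinery already in place. Second, you identify the generator only on $C_c^\infty(\T^d,\C^N)$, which is all \Cref{corollary_Ccinfty_core_A+B} requires, and the rapid Fourier decay \eqref{decay_fourier_smooth} makes the dominated-convergence step for the difference quotient essentially trivial. The paper instead shows \eqref{auxeq8} on all of $D(-A_2)$, which requires a more delicate tail estimate using the characterization \eqref{auxiliary_eq37} of $\mathcal{F}(D(-A_2))$ and the integral identity \eqref{auxiliary_eq38}; your restriction to smooth functions sidesteps that entirely. A small bookkeeping remark: when you dominate $\big\|\tfrac{e^{tM(\indextwo)}-I}{t}\big\| \leq \|M(\indextwo)\|e^{t\|B\|}$ for the generator identification, you should explicitly restrict to $t \leq 1$ to make the majorant $t$-independent, as you already do for the strong continuity step.
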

	\begin{proof}
		Let us recall that a function $u\in\LtwoTCN$ is given by its Fourier series
		\begin{equation*}
			u(\mathbf{x}) = \sum_{\indextwo \in \Z^d} \hat{u}(\indextwo) e^{2\pi i
			\indextwo \cdot \mathbf{x}},
		\end{equation*}
		where $(\hat{u}(\indextwo))_{\indextwo\in\Z^d}$ are the unique Fourier coefficients of $u$. Moreover, we have already proven
		\begin{equation*}
			\big((-A_2+B_2)u\big)(\mathbf{x}) = \sum_{\indextwo \in \Z^d} M(\indextwo)
			\hat{u}(\indextwo) e^{2\pi i \indextwo \cdot \mathbf{x}}
		\end{equation*}
		for all $u\in D(-A_2)$ in \Cref{proposition_generator_application_fourier} and 
		\begin{align}\label{auxiliary_eq37}
			\mathcal{F}(D(-A_2)) = \bigg\{ \hat{u} \in \ltwoZCN \, \colon \,
			\sum_{\indextwo \in \Z^d} |M(\indextwo)\hat{u}(\indextwo)|^2 < \infty \bigg\}
		\end{align}
		in \Cref{proposition_domain_generator_fourier}. The statement of the proposition is very intuitive but a rigorous justification is a priori unclear and a little technical. \newline
		
		We start by estimating the matrix norm of $e^{tM(k)}.$ According to definition,
		$M(\indextwo) = -2\pi i \mathbf{V}(\indextwo) + B$. Notice that $\mathbf{V}(\indextwo)$ is a diagonal matrix
		with real entries, thus $e^{-t 2\pi i \mathbf{V}(\indextwo)}$ defines an isometry for all
		$t\geq0.$ The Lee-Trotter product formula from
		\Cref{perturbation_theory_Lie_Trotter_formula} yields
		\begin{equation}\label{auxeq6}
			\| e^{tM(\indextwo)} \|_\infty \leq \limsup_{n\to\infty} \| e^{-t/n \, 2 \pi i \mathbf{V}(\indextwo)} e^{t/n \, B} \|_\infty^n \leq \limsup_{n\to\infty} \| e^{t/n \, B}
			\|_\infty^n \leq e^{t\|B\|_\infty}.
		\end{equation}
		We obtain that the matrix norms of $e^{tM(\indextwo)}$ are uniformly bounded in $\indextwo\in\Z^d$.
		Consequently,
		\begin{equation}\label{domain_mmo_semigroup}
			\bigg\{ \hat{u} \in \ltwoZCN \, \colon \, \sum_{\indextwo \in \Z^d}
			|e^{tM(\indextwo)}\hat{u}(\indextwo)|^2 < \infty \bigg\} = \ltwoZCN
		\end{equation}
		and the series in \eqref{auxeq5} is well defined for all $u\in \LtwoTCN.$ We define the linear operators $\widetilde{R}(t)$ on $\LtwoTCN$ via
		\begin{equation*}
			(\widetilde{R}(t) u)(\mathbf{x}) \coloneqq \sum_{\indextwo \in \Z} e^{tM(\indextwo)}
			\hat{u}(\indextwo) e^{2\pi i \indextwo \cdot \mathbf{x}}
		\end{equation*}
		for all $t\geq0.$ Then, \eqref{auxeq6} gives the estimate
		\begin{equation}\label{auxiliary_eq36}
			\|\widetilde{R}(t)u\|_{\LtwoTCN} \leq e^{t\|B\|_\infty} \|u\|_{\LtwoTCN}
		\end{equation}
		and the family $(\widetilde{R}(t))_{t\geq0}$ fulfills the semigroup property. The next step is to show
		\begin{equation}\label{auxeq8}
			\lim_{t \searrow 0} \left(\frac{\widetilde{R}(t)u - u}{t}\right)(\mathbf{x}) = \sum_{\indextwo \in \Z^d}
			M(\indextwo) \hat{u}(\indextwo) e^{2\pi i \indextwo \cdot \mathbf{x}} =
			((-A_2+B_2)u)(\mathbf{x}) 
		\end{equation}
		for all $u\in D(-A_2)$ and with convergence in $\LtwoTCN$.  Let us firstly note that the series is well defined by \eqref{auxiliary_eq37}. Secondly, if \eqref{auxeq8} holds true, \eqref{auxiliary_eq36} and density of $D(-A_2)$ in $\LtwoTCN$ imply that $(\widetilde{R}(t))_{t\geq0}$ is indeed a strongly continuous semigroup.\newline
		We now start with the proof of \eqref{auxeq8}. By Parseval's identity, \eqref{auxeq8} is equivalent to
		\begin{equation*}
			\lim_{t \searrow 0} \sum_{\indextwo \in \Z^d} \bigg|
			\bigg(\frac{e^{tM(\indextwo)} - I_{\C^{N\times N}}}{t} - M(\indextwo) \bigg) \hat{u}(\indextwo) \bigg|^2 = 0.
		\end{equation*}
		The result comes down the question, whether we can interchange the two limits
		because
		\begin{equation*}
			\lim_{t \searrow 0} \sum_{|\indextwo|\leq n} \bigg|
			\bigg(\frac{e^{tM(\indextwo)} - I_{\C^{N\times N}}}{t} - M(\indextwo)\bigg) \hat{u}(\indextwo) \bigg|^2 = 0
		\end{equation*}
		holds for all $n\in\N$. The fundamental theorem of calculus implies
		\begin{equation}\label{auxiliary_eq38}
			e^{tM(\indextwo)} - I_{\C^{N\times N}} = t\int_0^1 M(\indextwo) e^{st M(\indextwo)} \, ds = t  \bigg(\int_0^1 e^{stM(\indextwo)} \, ds\bigg)	M(\indextwo)
		\end{equation}
		for all $\indextwo\in\Z^d.$ The last equality follows from the Riemann definition of the
		integral, boundedness of $M(\indextwo)$ and $M(\indextwo)e^{rM(\indextwo)} =
		e^{rM(\indextwo)} M(\indextwo)$ for all $r\geq0.$ As a result, \eqref{auxiliary_eq38} and \eqref{auxeq6} show that there is a finite constant $C=C(\|B\|_\infty, \, 1)>0$ with
		\begin{align*}
			\bigg| \bigg(\frac{e^{tM(\indextwo)} - I_{\C^{N\times N}}}{t} &- M(\indextwo) \bigg) \hat{u}(\indextwo) \bigg|^2 = \bigg| \bigg( \int_0^1
			e^{stM(\indextwo)} - I_{\C^{N\times N}} \, ds \bigg) M(\indextwo)
			\hat{u}(\indextwo) \bigg|^2 \\
			&\leq \bigg(\int_0^1 \| e^{st M(\indextwo)} - I_{\C^{N\times N}} \|_\infty \,
			ds\bigg)^2 |M(\indextwo) \hat{u}(\indextwo)|^2 \leq C
			|M(\indextwo)\hat{u}(\indextwo)|^2
		\end{align*}
		for all $t\leq1$. Finally, this estimate yields
		\begin{align*}
			\limsup_{t \searrow 0} &\sum_{\indextwo \in \Z^d} \bigg|
			\bigg(\frac{e^{tM(\indextwo)} - I_{\C^{N\times N}}}{t} - M(\indextwo) \bigg) \hat{u}(\indextwo) \bigg|^2 \\
			&\leq \limsup_{t \searrow 0} \sum_{|\indextwo| > n} \bigg|
			\bigg(\frac{e^{tM(\indextwo)} - I_{\C^{N\times N}}}{t} - M(\indextwo) \bigg) \hat{u}(\indextwo) \bigg|^2 \leq C \sum_{|\indextwo| > n} |M(\indextwo)\hat{u}(\indextwo)|^2
		\end{align*}
		for all $n\in\N.$ For $u\in D(-A_2),$ the limit $n\to\infty$ implies the
		desired result by \eqref{auxiliary_eq37}. \newline
		In summary, we have proven that $(\widetilde{R}(t))_{t\geq0}$ defines a strongly continuous semigroup with a generator that extends $(-A_2+B_2, \, D(-A_2))$. In particular, the generator of $(\widetilde{R}(t))_{t\geq0}$ extends $(-A_2+B_2, \, C_c^\infty(\T^d,\,\C^N))$ and \Cref{corollary_Ccinfty_core_A+B} implies $(\widetilde{R}(t))_{t\geq0} = (R_2(t))_{t\geq0}.$
	\end{proof}
	\begin{remark}
		Equation \eqref{auxeq5} is an explicit formula of the solution to the abstract
		Cauchy problem \eqref{equation_abstract_cauchy_problem} (in a $L^2(\T, \, \C^N)$
		setting).
	\end{remark}
	
	\begin{theorem}\label{theorem_WSMTHM_p_eq_2}
		Let $(R_2(t))_{t\geq0}$ be the transport-reaction semigroup on $\LtwoTCN$ generated by $(-A_2+B_2, \, D(-A_2))$. Then the weak spectral mapping theorem
		\begin{equation*}
			\sigma(R_2(t)) = \overline{e^{t\sigma(-A_2+B_2)}} \quad \text{ for all } t\in\R
		\end{equation*}
		holds.
	\end{theorem}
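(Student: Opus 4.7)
The plan is to apply \Cref{fourier_diagram_spectrum} not only to the generator, but also to each semigroup operator $R_2(t)$ itself. By \Cref{proposition_semigroup_fourier_representation}, $R_2(t)$ corresponds under the Fourier isomorphism to the matrix multiplication operator on $\ltwoZCN$ induced by $\indextwo \mapsto e^{tM(\indextwo)}$. The uniform bound $\|e^{tM(\indextwo)}\|_\infty \leq e^{|t|\|B\|_\infty}$, which is essentially the estimate \eqref{auxeq6} from the proof of \Cref{proposition_semigroup_fourier_representation}, shows that this matrix multiplication operator is bounded on $\ltwoZCN$ and hence has non-void resolvent set. An application of \Cref{fourier_diagram_spectrum} then yields
\begin{equation*}
\sigma(R_2(t)) = \overline{\bigcup_{\indextwo\in\Z^d} \sigma(e^{tM(\indextwo)})}.
\end{equation*}

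Next, since each $M(\indextwo) \in \C^{N\times N}$ is a finite-dimensional matrix, the classical spectral mapping theorem for the matrix exponential delivers $\sigma(e^{tM(\indextwo)}) = e^{t\sigma(M(\indextwo))}$ for every $\indextwo \in \Z^d$. Setting $S \coloneqq \bigcup_{\indextwo\in\Z^d} \sigma(M(\indextwo))$, the task reduces to verifying the purely topological identity $\overline{e^{tS}} = \overline{e^{t\overline{S}}}$. The inclusion $\subseteq$ is immediate, and for the reverse inclusion I use continuity of $\lambda \mapsto e^{t\lambda}$ to approximate any $e^{ts}$ with $s\in\overline{S}$ by a sequence in $e^{tS}$. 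Combining this with $\sigma(-A_2+B_2) = \overline{S}$ from \Cref{char_spectrum_generator} yields the weak spectral mapping theorem for $t\geq0$.

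For the negative half-line no new ideas are needed. The semigroup extends to a $C_0$-group $(R_2(t))_{t\in\R}$ by \Cref{lemma_-A+B_generates_C_0_group}, and the Fourier representation in \Cref{proposition_semigroup_fourier_representation} relies only on the Lie-Trotter formula and the uniform operator norm estimate, both of which remain valid for arbitrary $t\in\R$. Hence the identical argument applies to $R_2(t)$ for $t<0$. I do not expect a significant obstacle: the delicate step of translating the unbounded operator $-A_2+B_2$ into a Fourier multiplier on $\ltwoZCN$ has already been carried out in \Cref{proposition_generator_application_fourier}, \Cref{proposition_domain_generator_fourier}, and \Cref{proposition_semigroup_fourier_representation}, and what remains is the finite-dimensional spectral mapping theorem for matrix exponentials combined with a straightforward closure-continuity argument.
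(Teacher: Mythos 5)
Your proposal follows the paper's argument almost verbatim: apply \Cref{fourier_diagram_spectrum} to each $R_2(t)$ via \Cref{proposition_semigroup_fourier_representation}, reduce to the finite-dimensional spectral mapping theorem $\sigma(e^{tM(\indextwo)})=e^{t\sigma(M(\indextwo))}$, and relate the resulting union to $\sigma(-A_2+B_2)$ from \Cref{char_spectrum_generator}. The only small departures are: (1) you close the gap between $\overline{e^{tS}}$ and $\overline{e^{t\overline{S}}}$ directly by the continuity-of-$\lambda\mapsto e^{t\lambda}$ argument, whereas the paper proves the inclusion ``$\subseteq$'' by a chain of set inclusions and the converse ``$\supseteq$'' by invoking the spectral inclusion \Cref{spectral_inclusion_theorem}; these are logically equivalent and your topological identity is slightly more self-contained. (2) For $t<0$ you propose extending the Fourier representation \Cref{proposition_semigroup_fourier_representation} to negative $t$ directly; the paper instead applies the $t\geq0$ result to the reversed semigroup $(R_2(-t))_{t\geq0}$ via \Cref{lemma_-A+B_generates_C_0_group}. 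Both work, but the paper's route is the more economical one given that \Cref{proposition_semigroup_fourier_representation} is only stated for $t\geq0$.
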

	\begin{proof}
		Let $t\geq0.$ \Cref{proposition_semigroup_fourier_representation} allows us to apply the same methods we used for $(-A_2+B_2, \, D(-A_2))$ to $R_2(t)$. Let us check the assumptions of \Cref{fourier_diagram_spectrum} for $(R_2(t), \, \LtwoTCN)$ and the matrix multiplication operator $(\mathcal{A}_{e^{tM}}, \, D(\mathcal{A}_{e^{tM}}))$ on $\ltwoZCN$ induced by $e^{tM} \coloneqq (\indextwo\mapsto e^{tM(\indextwo)}).$ \newline
		Firstly, we have seen $D(\mathcal{A}_{e^{tM}}) = \ltwoZCN$ in
		\eqref{domain_mmo_semigroup} and $\mathcal{F}(L^2(\T, \,
		\C^N)) = \ltwoZCN$ obviously holds true. Secondly, the diagram
		\begin{equation*}
			\begin{tikzcd}
			D(R_2(t)) = \LtwoTCN \arrow[r, "R_2(t)"] &
			\LtwoTCN \\
			D(\mathcal{A}_{e^{tM}}) = \ltwoZCN \arrow[u, leftarrow, "\mathcal{F}"]
			\arrow[r, "\mathcal{A}_{e^{tM}}"] & \ltwoZCN \arrow[u, "\mathcal{F}^{-1}"]
			\end{tikzcd}
		\end{equation*}
		commutes by \Cref{proposition_semigroup_fourier_representation} and, last but not least, the resolvent set $\rho(R_2(t))$ is non-empty because $R_2(t)$ is bounded. An
		application of \Cref{fourier_diagram_spectrum} implies
		\begin{equation*}
			\sigma(R_2(t)) = \sigma(\mathcal{A}_{e^{tM}}) =
			\overline{\bigcup_{\indextwo\in \Z^d} \sigma(e^{tM(\indextwo)})}.
		\end{equation*}
		The operators $M(\indextwo)$ are just matrices, so we obtain
		\begin{equation*}
			\sigma(e^{tM(\indextwo)}) = e^{t\sigma(M(\indextwo))}
		\end{equation*}
		by \cite[Theorem 5.3, p. 84]{schnaubelt2012lecture}. Now, we apply \Cref{char_spectrum_generator} and conclude
		\begin{align*}
			\sigma(R_2(t)) &= \sigma(\mathcal{A}_{e^{tM}}) = \overline{\bigcup_{\indextwo\in\Z} \sigma(e^{tM(\indextwo)})} = \overline{\bigcup_{\indextwo\in\Z}			e^{t\sigma(M(\indextwo))}} \subseteq \overline{\bigcup_{\indextwo\in\Z} e^{t	\overline{\bigcup_{\indexthree\in\Z}\sigma(M(\indexthree))}}} \\
			&= \overline{e^{t\overline{\bigcup_{k\in \Z} \sigma(M(k))}}} =
			\overline{e^{t\sigma(-A_2+B_2)}}.
		\end{align*} 
		The other direction is the easy one, which follows from the spectral inclusion theorem
		\begin{equation*}
			\sigma(R_2(t)) \supseteq e^{t\sigma(-A_2+B_2)},
		\end{equation*}
		see \Cref{spectral_inclusion_theorem}, and the closedness of
		$\sigma(R_2(t))$. \newline
		For $t\leq0,$ the result can be shown with \Cref{lemma_-A+B_generates_C_0_group} and
		an application of the weak spectral mapping theorem to the semigroup
		$(R_2(-t))_{t\geq0}.$
	\end{proof}
	
	We now follow \cite{latrach2008weak} and present methods to extend the weak spectral mapping
	theorem to other $1\leq p < \infty$. We present the ideas in detail and adapt the
	proofs to our system of PDEs, if necessary. For the rest of the chapter, we will assume
	\begin{assumption}\label{assumption_spectrum}
		\begin{enumerate}
			\item
			The spectrum $\sigma(-A_p+B_p)$ of the generator of the transport-reaction semigroup $(R_p(t))_{t\geq0}$ on $\LpCn$ is independent of $1\leq p < \infty.$
			\item
			The spectrum $\sigma(A_p+B_p^T)$ of $(A_p+B_p^T, \, D(A_p))$ is independent of $1\leq p < \infty.$
		\end{enumerate}
	\end{assumption}
	\begin{remarks}
		\begin{enumerate}
			\item
			Under \Cref{assumption_spectrum} (i), \Cref{proposition_adjoint_semigroup_pneq1} almost implies \Cref{assumption_spectrum} (ii) because of
			\begin{equation*}
				\sigma(A_p+B_p^T) = \sigma(-A_q+B_q)
			\end{equation*}
			for all $1<p<\infty$ with dual exponent $q$. Here, we used that the spectrum is unchanged when taking the adjoint.
			\item
			The operator $(A_p+B_p^T, \, D(A_p))$ is still of the class covered in this thesis. The change of sign for $A_p$ just changes the direction of movement and $B_p^T$ is a real $N\times N$ matrix.
		\end{enumerate}
	\end{remarks}
	This assumption is of course only fulfilled in the case $d=1$ with non-vanishing transport directions $v_1,\cdots,v_N\neq 0,$ see \Cref{theorem_spectrum_generator}. Nevertheless, we decided to formulate all results under \Cref{assumption_spectrum} instead of restricting ourselves to the case $d=1$. The reason for this is to emphasize the independence of the arguments of the spacial dimension $d\in\N$. \newline

	Let us start by formulating some technicalities.
	
	\begin{lemma}\label{lemma_gen_semigroup_resolv_coincide}
		Let $(R_p(t))_{t\in\R}$ be the transport-reaction group on $\LpCn$ generated by $(-A_p+B_p, \, D(-A_p))$. The following properties hold true for all $1\leq p <  p' < \infty$ and all $t\in\R:$
		\begin{enumerate}
			\item
			$D(-A_p')\subseteq D(-A_p),$
			\item
			$\restr{-A_p+B_p}{D(-A_p')} = -A_{p'}+B_{p'}$,
			\item
			$\restr{R_p(t)}{L^{p'}(\T^d, \, \C^N)} = R_{p'}(t).$
		\end{enumerate}
		Under \Cref{assumption_spectrum} (i), it also holds
		\begin{enumerate}
			\setcounter{enumi}{3}
			\item
			$\restr{R(\lambda, \, -A_p+B_p)}{L^{p'}(\T^d, \, \C^N)} = R(\lambda, \, -A_{p'}+B_{p'})$ \quad for all $\lambda \in \rho(-A+B).$
		\end{enumerate}
	\end{lemma}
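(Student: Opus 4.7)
The fundamental observation is that the torus $\T^d$ has finite measure, so $L^{p'}(\T^d,\C^N) \subseteq \LpCn$ with a continuous embedding for $1 \leq p < p' < \infty$. This will drive all four statements. For $(i)$, I would unpack the definition \eqref{generator_by_definition} of $D(-A_p)$: a function $u \in L^{p'}(\T^d,\C^N)$ lies in $D(-A_{p'})$ iff the component-wise difference quotients converge in $L^{p'}(\T^d)$. Since $L^{p'}$-convergence implies $L^p$-convergence on the torus, the same limits exist in $L^p(\T^d)$, yielding $u \in D(-A_p)$. Statement $(ii)$ then follows immediately, because the pointwise values of the limit coincide (both are characterized by the same component-wise derivatives on smooth cores by \Cref{lemma_A_is_an_extension}), and the bounded operator $B$ acts identically on $L^{p'}$ and $L^p$ via pointwise matrix multiplication.

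For $(iii)$, the natural approach is to apply the Lie-Trotter product formula of \Cref{theorem_semigroup_generated_by_-A+B}$(v)$ in both $L^p$ and $L^{p'}$ simultaneously. Given $u \in L^{p'}(\T^d,\C^N)$, the approximants $(T(t/n)\, e^{(t/n)B})^n u$ are the \emph{same} functions in both spaces: the transport semigroup is an isometry on every $\LpCn$ (it is just a translation of components) and $e^{(t/n)B}$ is a bounded pointwise matrix multiplication that commutes with the inclusion. The limit exists in $L^{p'}$ and equals $R_{p'}(t)u$; since $L^{p'}$-convergence implies $L^p$-convergence, the very same limit equals $R_p(t)u$ in $L^p$, so $R_p(t)u = R_{p'}(t)u$.

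For $(iv)$, assume \Cref{assumption_spectrum} (i) and fix $\lambda \in \rho(-A+B)$, so that $\lambda$ lies in the resolvent set of both $-A_p+B_p$ and $-A_{p'}+B_{p'}$. For $u \in L^{p'}(\T^d,\C^N)$, set $v \coloneqq R(\lambda,\, -A_{p'}+B_{p'})u \in D(-A_{p'})$, which satisfies $(\lambda - (-A_{p'}+B_{p'}))v = u$. Parts $(i)$ and $(ii)$ ensure $v \in D(-A_p)$ and $(\lambda - (-A_p+B_p))v = u$. Since $\lambda - (-A_p+B_p)$ is a bijection $D(-A_p) \to \LpCn$, this forces $v = R(\lambda,\, -A_p+B_p)u$, which is precisely the desired equality.

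I do not expect a genuine obstacle here; the whole lemma is a compatibility statement and the embedding $L^{p'} \hookrightarrow L^p$ does all the work. The only point requiring a small amount of care is the justification that the Lie-Trotter limit in $(iii)$ can be read consistently in both spaces, which is safeguarded by the fact that both $T(t/n)$ and $e^{(t/n)B}$ restrict cleanly from $L^p$ to $L^{p'}$. Step $(iv)$ conceptually depends on \Cref{assumption_spectrum} only to guarantee that a common $\lambda \in \rho(-A_p+B_p)\cap \rho(-A_{p'}+B_{p'})$ exists so that both resolvents are simultaneously defined; without it, the statement would not even be well posed.
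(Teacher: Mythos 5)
Your proof is correct, and it follows the paper's structure for part $(iv)$ while taking a genuinely different route for $(i)$--$(iii)$. For $(i)$ and $(ii)$, you unwind the difference-quotient characterization of $D(-A_p)$ from \eqref{generator_by_definition} and use the embedding $L^{p'}(\T^d) \hookrightarrow L^p(\T^d)$ to transfer convergence from $L^{p'}$ to $L^p$; the paper instead takes $u \in D(-A_{p'})$, approximates it by a sequence in $C_c^\infty(\T^d,\C^N)$ that is Cauchy in the $(-A_{p'}+B_{p'})$-graph norm (using \Cref{corollary_Ccinfty_core_A+B}), observes the same sequence is graph-norm Cauchy in $L^p$, and invokes closedness of $-A_p+B_p$. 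Your approach is more elementary and avoids the core/closedness machinery, while the paper's approach has the advantage that it applies verbatim to any setting where $C_c^\infty$ is a common core, independent of whether one has an explicit difference-quotient description of the domain. For $(iii)$, you invoke the Lie--Trotter formula from \Cref{theorem_semigroup_generated_by_-A+B}$(v)$, noting the approximants $(T(t/n)e^{(t/n)B})^n$ are the same operators in both spaces; the paper instead uses the Dyson--Phillips series from $(iv)$ of the same theorem plus the compatibility of the transport groups. Both are legitimate, and each ultimately rests on the same compatibility of the transport semigroup and of $e^{tB}$ across the scale of $L^p$-spaces. One small gap: part $(iii)$ claims the identity for all $t \in \R$, but \Cref{theorem_semigroup_generated_by_-A+B}$(v)$ is stated for $t \geq 0$; to cover $t < 0$ you should invoke \Cref{lemma_-A+B_generates_C_0_group} and apply the same Lie--Trotter argument to the backward semigroup $(R_-(t))_{t\geq 0}$ generated by $(A-B,D(A))$ (a point the paper also leaves implicit). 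Your proof of $(iv)$ coincides with the paper's, including the correct observation that \Cref{assumption_spectrum}~(i) is needed precisely so that $\rho(-A_p+B_p)=\rho(-A_{p'}+B_{p'})$ makes the common notation $\rho(-A+B)$ meaningful.
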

	\begin{proof}
		We start by showing $(i)$ and $(ii)$ at once. On $C_c^\infty(\T^d,\, \C^N),$ the generators coincide by \Cref{lemma_A_is_an_extension} and the definition of $B$ as a pointwise matrix multiplication. Now, let $u\in D(-A_p')$. By \Cref{corollary_Ccinfty_core_A+B}, there exists a sequence $(u^{(n)})_{n\in\N} \subset C_c^\infty(\T^d, \, \C^N)$ with $u^{(n)} \to u$ in the graph norm of $(-A_{p'}+B_{p'}, \, D(-A_{p'})).$ We also assumed $p< p',$ so $u^{(n)}$ also converges to $u$ in $\LpCn$ and $((-A_p+B_p)u^{(n)})_{n\in\N}$ is a Cauchy sequence in $\LpCn.$ This yields
		\begin{equation*}
			(-A_p+B_p)u^{(n)} \longrightarrow (-A_{p'}+B_{p'})u
		\end{equation*}
		in $\LpCn$ and the closedness of $(-A_p+B_p, \, D(-A_p)),$ see \Cref{hille-yosida}, implies $u\in D(-A_p)$ with $(-A_p+B_p)u = (-A_{p'}+B_{p'})u.$ \newline
		Property $(iii)$ follows from the Dyson-Phillips series in \Cref{theorem_semigroup_generated_by_-A+B} and the fact that the transport groups $(T_p(t))_{t\in\R}$ and $(T_{p'}(t))_{t\in\R}$
		coincide on $L^{p'}(\T^d,\, \C^N)$. \newline
		Concerning property $(iv),$ notice that \Cref{assumption_spectrum}(i) justifies the notation $\rho(-A+B)$ used above. Let $\lambda\in\rho(-A+B)$. For $v\in L^{p'}(\T^d, \, \C^N) \subseteq \LpCn,$ there exist unique $u_p \coloneqq R(\lambda, \, -A_p+B_p)v\in D(-A_p)$ and $u_{p'} \coloneqq R(\lambda, \, -A_{p'}+B_{p'})v\in D(-A_{p'})$ such that
		\begin{equation*}
			\big(\lambda - (-A_p+B_p)\big)u_p = v = \big(\lambda -
			(-A_{p'}+B_{p'})\big)u_{p'}
		\end{equation*}
		holds. Property $(ii)$ yields
		\begin{equation*}
			\big(\lambda - (-A_p+B_p)\big)u_{p'} = \big(\lambda -
			(-A_{p'}+B_{p'})\big)u_{p'} = v,
		\end{equation*}
		which implies $u_{p'} = u_p$ by the injectivity of $\lambda - (-A_p+B_p)$.
	\end{proof}

	Before we show equivalence of the weak spectral mapping theorem for all $1\leq p < \infty$ to two weak spectral mapping theorems for $p=1$ (under \Cref{assumption_spectrum}), we state the two key theorems used in the proof. \newline
	For $1\leq p \leq 2,$ the idea is to use an interpolation argument based on the Riesz-Thorin
	theorem. Adapted to our application, the Riesz-Thorin theorem on a $\sigma$-finite measure space $(X, \Sigma, \mu)$ reads as follows.
	
	\begin{theorem}[{\cite[Theorem 3.16, p. 35]{lax2011complex}}]\label{theorem_Riesz_Thorin}
		Suppose that $S\colon L^1(X, \, \C) + L^2(X, \, \C) \rightarrow L^1(X, \, \C) +
		L^2(X, \, \C)$ is a linear operator such that
		\begin{align*}
			\restr{S}{L^1(X, \C)} &\colon L^1(X, \, \C) \longrightarrow L^1(X, \, \C), \\
			\restr{S}{L^2(X, \C)} &\colon L^2(X, \, \C) \longrightarrow L^2(X, \, \C)
		\end{align*}
		are bounded with norms $C_1$ and $C_2$ respectively. Then for $0<\theta < 1,$
		$S$ defines a bounded linear operator
		\begin{equation*}
			S\colon L^p(X, \, \C) \longrightarrow L^p(X, \C)
		\end{equation*}
		where
		\begin{equation}\label{auxeq9}
			\frac{1}{p} = \frac{1-\theta}{1} + \frac{\theta}{2} = 1 - \frac{\theta}{2}
		\end{equation}
		and $\|S\|_{L^p(X, \C)} \leq C_1^{1-\theta} C_2^{\theta}.$
	\end{theorem}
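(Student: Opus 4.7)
The plan is to prove this via complex interpolation, using Hadamard's three-lines theorem. The first step is to reduce to the case of simple functions. Since simple functions are dense in $L^p(X,\C)$ for every $1 \leq p < \infty$ and $S$ is already defined on $L^1 + L^2$, if we can establish the desired bound $\|Sf\|_{L^p} \leq C_1^{1-\theta} C_2^{\theta} \|f\|_{L^p}$ on simple functions, the operator $S$ extends uniquely and continuously to all of $L^p(X,\C)$. Using the duality $(L^p)^* = L^{p'}$ with $\tfrac{1}{p}+\tfrac{1}{p'}=1$, it further suffices to show that
\begin{equation*}
  \Bigl| \int_X (Sf)\, g \, d\mu \Bigr| \leq C_1^{1-\theta} C_2^{\theta}
\end{equation*}
for all simple $f, g$ with $\|f\|_{L^p}=\|g\|_{L^{p'}}=1$.

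Next, I would embed $f$ and $g$ in an analytic family parametrized over the closed strip $\overline{\Omega}=\{z\in\C : 0 \leq \Re z \leq 1\}$. Writing $f=\sum_j a_j e^{i\alpha_j}\chi_{A_j}$ and $g=\sum_k b_k e^{i\beta_k}\chi_{B_k}$ with $a_j,b_k>0$ and the $A_j, B_k$ pairwise disjoint of finite measure, I set
\begin{equation*}
  P(z) = p\Bigl(\tfrac{1-z}{1} + \tfrac{z}{2}\Bigr), \qquad Q(z) = p'\Bigl(\tfrac{1-z}{\infty} + \tfrac{z}{2}\Bigr),
\end{equation*}
and define $f_z \coloneqq \sum_j a_j^{P(z)/p} e^{i\alpha_j}\chi_{A_j}$ and $g_z\coloneqq \sum_k b_k^{Q(z)/p'} e^{i\beta_k}\chi_{B_k}$, with the convention that $\tfrac{1}{\infty}=0$. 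These choices are designed so that $f_\theta=f$, $g_\theta=g$, $\|f_{it}\|_{L^1}$ and $\|f_{1+it}\|_{L^2}$ are bounded uniformly in $t\in\R$, and analogously for $g_z$ with exponents $\infty$ and $2$. By \eqref{auxeq9}, this makes the exponents match on the two boundary lines.

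Now consider $F(z) \coloneqq \int_X (Sf_z)\, g_z \, d\mu$. Using the linearity of $S$, the structure of the simple functions, and that $z \mapsto a^z$ is entire for $a>0$, one checks that $F$ is entire in $z$ and bounded on $\overline{\Omega}$ (the boundedness coming from $S\colon L^1+L^2\to L^1+L^2$ and uniform boundedness of the norms of $f_z,g_z$ on the strip). On the line $\Re z=0$, Hölder's inequality combined with the hypothesis $\|S\|_{L^1\to L^1}\leq C_1$ gives $|F(it)| \leq C_1 \|f_{it}\|_{L^1}\|g_{it}\|_{L^\infty} \leq C_1$; on the line $\Re z=1$, a similar computation using $\|S\|_{L^2\to L^2}\leq C_2$ gives $|F(1+it)|\leq C_2$. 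Applying Hadamard's three-lines theorem to $\log|F|$ on $\overline{\Omega}$ yields $|F(\theta)|\leq C_1^{1-\theta} C_2^{\theta}$, which is the bound we sought since $F(\theta)=\int (Sf)g\,d\mu$.

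The main technical obstacle I anticipate is verifying the boundedness of $F$ on the closed strip $\overline{\Omega}$, in order to legitimately invoke Hadamard's three-lines theorem. A priori, $F(z)$ involves $Sf_z$ for arbitrary $z$ in the strip, but $f_z$ need not lie in $L^1$ or $L^2$ for $\Re z\in (0,1)$ on a general $\sigma$-finite space; it lies in $L^1+L^2$, which is exactly why we assumed $S$ is defined on $L^1+L^2$. Careful bookkeeping with the two summand decomposition of $f_z$ and the pairing with $g_z$, plus the fact that the functions have only finitely many distinct nonzero values supported on finite-measure sets, makes the bound uniform. Once this uniform bound is in hand, the rest of the argument is routine complex-analytic machinery.
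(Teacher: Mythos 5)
Your overall strategy---reduce to simple functions, pair $Sf$ against a dual simple function $g$, embed both into an analytic family over the closed strip, and apply the three-lines theorem to $F(z)=\int_X (Sf_z)\,g_z\,d\mu$---is the standard complex-interpolation proof of Riesz--Thorin, and it is the approach of the cited source \cite[p.~35]{lax2011complex}; the paper itself gives no proof, only the citation. The framing, the reduction to simple functions, the duality step, and the analysis of entirety and boundedness of $F$ on the strip are all correct.

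However, the construction of the analytic family contains an off-by-$p$ error that makes the argument fail as written. You take the exponent on $a_j$ to be $P(z)/p=\tfrac{1-z}{1}+\tfrac{z}{2}$. At $z=\theta$ this is $\tfrac{1}{p}$, so $f_\theta=\sum_j a_j^{1/p}e^{i\alpha_j}\chi_{A_j}\neq f$ unless $p=1$, contradicting the claim that $f_\theta=f$. The boundary normalizations also fail: one gets $\|f_{it}\|_{L^1}=\int|f|$ and $\|f_{1+it}\|_{L^2}^2=\int|f|$, neither of which is controlled by $\|f\|_{L^p}=1$. The intended exponent is $P(z)$ itself, i.e.\ $f_z=\sum_j a_j^{P(z)}e^{i\alpha_j}\chi_{A_j}$ with $P(z)=p\left(\tfrac{1-z}{1}+\tfrac{z}{2}\right)=p\left(1-\tfrac{z}{2}\right)$. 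Then $P(\theta)=p\cdot\tfrac{1}{p}=1$, so $f_\theta=f$, and $\Re P(it)=p$, $\Re P(1+it)=\tfrac{p}{2}$ give $\|f_{it}\|_{L^1}=\int|f|^p=1$ and $\|f_{1+it}\|_{L^2}^2=\int|f|^p=1$. The same $1/p'$ error occurs in $g_z$: the exponent should be $Q(z)=\tfrac{p'z}{2}$ rather than $Q(z)/p'$, giving $Q(\theta)=1$ (so $g_\theta=g$), $\Re Q(it)=0$ (so $\|g_{it}\|_{L^\infty}=1$), and $\Re Q(1+it)=\tfrac{p'}{2}$ (so $\|g_{1+it}\|_{L^2}=1$). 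With these corrected exponents, $F(\theta)=\int_X(Sf)g\,d\mu$, the boundary bounds $|F(it)|\leq C_1$ and $|F(1+it)|\leq C_2$ hold, and the three-lines theorem delivers $|F(\theta)|\leq C_1^{1-\theta}C_2^{\theta}$ exactly as you intend.
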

	\begin{remark}
		For $1\leq p \leq 2,$ it holds $L^p(X, \C^N) \subseteq L^1(X, \C^N) + L^2(X,
		\C^N)$, so it makes sense to consider $S$ as an operator on $L^p(X, \C^N)$.
	\end{remark}
	
	The Riesz-Thorin theorem is going to be applied in the setting of the next result.
	
	\begin{corollary}[{\cite[Corollary 1.2, p. 208]{greiner1991weak}}]\label{corollary_greiner}
		Let $(R(t))_{t\geq0}$ be a semigroup on a Banach space \mbox{$(E, \, \|\cdot\|_E)$} with generator
		$(\mathcal{A}, \, D(\mathcal{A})).$ Let $t>0.$ Then the following assertions are
		equivalent:
		\begin{enumerate}
			\item
			$e^{\lambda t} \in \rho(R(t))$
			\item
			$\lambda + \tfrac{2\pi i}{t}\Z \subseteq \rho(\mathcal{A})$ and the sequence
			of operators
			\begin{equation*}
				S^{(n)} \coloneqq \frac{1}{n} \sum_{m=0}^{n-1}  \sum_{\indextwo=-m}^m
				R\big(\lambda + \tfrac{2\pi i}{t}\indextwo, \, \mathcal{A}\big) 
			\end{equation*}
			is bounded in $\mathcal{L}(E).$
		\end{enumerate}
	\end{corollary}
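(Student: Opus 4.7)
The plan is to connect the Cesaro--Fej\'er sum $S^{(n)}$ to the operator $e^{\lambda t}-R(t)$ via an explicit integral identity on the circle of length $t$. The starting point is the bounded operator
\begin{equation*}
P_\indextwo \coloneqq \int_0^t e^{-\mu_\indextwo s} R(s) \, ds, \qquad \mu_\indextwo \coloneqq \lambda + \tfrac{2\pi i \indextwo}{t},
\end{equation*}
for which integration by parts yields the algebraic identity $P_\indextwo(\mu_\indextwo - \mathcal{A}) = (\mu_\indextwo - \mathcal{A}) P_\indextwo = I - e^{-\lambda t} R(t)$ (the first equality on $D(\mathcal{A})$, the second on all of $E$). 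The decisive observation is that $e^{-\mu_\indextwo t} = e^{-\lambda t}$ is independent of $\indextwo$, since $\indextwo \in \Z$.

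For the direction $(i) \Rightarrow (ii)$, the spectral inclusion theorem $e^{t\sigma(\mathcal{A})} \subseteq \sigma(R(t))$, cf.\ \Cref{spectral_inclusion_theorem}, immediately forces $\mu_\indextwo \in \rho(\mathcal{A})$ for all $\indextwo \in \Z$. The identity above then rearranges to $R(\mu_\indextwo, \mathcal{A}) = e^{\lambda t}(e^{\lambda t} - R(t))^{-1} P_\indextwo$, and interchanging the Cesaro summation with the integral yields
\begin{equation*}
(e^{\lambda t} - R(t)) S^{(n)} = \int_0^t e^{\lambda(t-s)} F_n^{[t]}(s) R(s) \, ds,
\end{equation*}
where $F_n^{[t]}(s) \coloneqq \sum_{|\indextwo|<n}(1-|\indextwo|/n) e^{2\pi i \indextwo s/t}$ is the Fej\'er kernel on the circle of length $t$. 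Since $F_n^{[t]} \geq 0$ and $\int_0^t F_n^{[t]}(s) \, ds = t$, the right-hand side is bounded in operator norm uniformly in $n$, and hence so is $S^{(n)}$.

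For the reverse direction $(ii) \Rightarrow (i)$, injectivity of $e^{\lambda t}-R(t)$ follows from a Fourier decomposition argument: any $x$ in the kernel produces a $t$-periodic orbit $v(s) \coloneqq e^{-\lambda s} R(s) x$ whose Fourier coefficients $v_\indextwo$ formally satisfy $(\mu_\indextwo - \mathcal{A}) v_\indextwo = 0$, forcing $v \equiv 0$ and thus $x = 0$. Surjectivity and boundedness of the inverse are extracted by exploiting the uniform boundedness of $(S^{(n)})$ together with the identity of paragraph one: a candidate inverse is constructed on the range of $e^{\lambda t}-R(t)$ via a weak operator limit point of $(S^{(n)})$, rearranged through $(e^{\lambda t}+R(t)) = 2e^{\lambda t} - (e^{\lambda t}-R(t))$, and then extended to $E$ by density and the uniform bound. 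The main obstacle lies precisely here, since the Fej\'er-kernel limit at the jump point of the periodic extension of $s \mapsto e^{-\lambda s} R(s) x$ produces the \emph{average} of the one-sided limits rather than the function value, so the construction of a genuine inverse requires a careful algebraic rearrangement and cannot be read off directly from pointwise Fej\'er convergence.
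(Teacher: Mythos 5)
The paper does not prove this corollary---it is cited from Greiner's 1991 paper without a proof---so there is no internal argument to compare against; I review your reconstruction on its own merits. Your plan is essentially the standard Fej\'er-kernel proof and it is sound: the identity $P_\indextwo(\mu_\indextwo - \mathcal{A}) = (\mu_\indextwo - \mathcal{A})P_\indextwo = I - e^{-\lambda t}R(t)$ is correct and does exploit $e^{-\mu_\indextwo t} = e^{-\lambda t}$, the Ces\`aro--Fej\'er resummation of $\sum_\indextwo P_\indextwo$ is exactly right, and your observation about the jump-point average $\tfrac{t}{2}(I + e^{-\lambda t}R(t))$ is the crux of the converse direction; the algebraic fix $I + e^{-\lambda t}R(t) = 2I - (I - e^{-\lambda t}R(t))$, which after dividing by $t$ amounts to replacing $S^{(n)}$ by $T_n \coloneqq \tfrac{1}{t}S^{(n)} + \tfrac{1}{2}I$, gives $T_n(I - e^{-\lambda t}R(t))x \to x$ and $(I - e^{-\lambda t}R(t))T_n x \to x$ for all $x$, which is what you need.

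The one step that would not go through as stated is the appeal to ``a weak operator limit point of $(S^{(n)})$.'' On a general Banach space $E$ the unit ball of $\mathcal{L}(E)$ is not compact in the weak operator topology, so bounded sequences of operators need not have WOT-convergent subnets; the existence of such a limit point is therefore not available. Fortunately you do not need it. Writing $Q = I - e^{-\lambda t}R(t)$, the relation $T_n Q z \to z$ already shows that $T_n$ converges \emph{strongly} on $\range(Q)$, and $Q T_n x \to x$ shows that $\range(Q)$ is dense. Since $\|T_n\|$ is uniformly bounded (this is where the hypothesis on $S^{(n)}$ enters), Banach--Steinhaus extends the strong convergence to all of $E$: $T_n \to T$ strongly with $\|T\| \le \sup_n \|T_n\|$. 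Passing to the limit in $T_n Q = \cdots$ and $Q T_n = \cdots$ then gives $TQ = QT = I$, so $Q$, and hence $e^{\lambda t} - R(t) = e^{\lambda t}Q$, is boundedly invertible. Replacing the WOT-limit-point step by this density-plus-uniform-boundedness argument closes the gap and completes the proof.
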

	
	\begin{proposition}\label{proposition_WSMTHM_1_leq_p_leq_2}
		Let $(R_p(t))_{t\in\R}$ be the transport-reaction group on $\LpCn$ generated by $(-A_p+B_p, \, D(-A_p))$. Under \Cref{assumption_spectrum} (i), the weak spectral mapping theorem
		\begin{equation*}
			\sigma(R_p(t)) = \overline{e^{t\sigma(-A_p+B_p)}} =
			\overline{e^{t\sigma(-A+B)}} \qquad \text{ for all } t\in\R
		\end{equation*}
		for all $1\leq p \leq 2$ is equivalent to the weak spectral mapping theorem
		\begin{equation*}
			\sigma(R_1(t)) = \overline{e^{t\sigma(-A_1+B_1)}} =
			\overline{e^{t\sigma(-A+B)}} \qquad \text{ for all } t\in\R
		\end{equation*}
		for $p=1.$
	\end{proposition}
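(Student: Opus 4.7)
The forward implication is trivial specialization, so the work lies in the converse: assuming the weak spectral mapping theorem for $p=1$, I want to deduce it for every $1 < p < 2$. My plan is to combine three ingredients, namely the already-established weak spectral mapping theorem for $p=2$ in \Cref{theorem_WSMTHM_p_eq_2}, Greiner's resolvent-sum characterization \Cref{corollary_greiner}, and the Riesz-Thorin interpolation theorem \Cref{theorem_Riesz_Thorin}. Throughout, \Cref{assumption_spectrum}(i) allows me to write $\sigma(-A+B)$ unambiguously.

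The inclusion $\overline{e^{t\sigma(-A+B)}} \subseteq \sigma(R_p(t))$ holds for every $p$ by the spectral inclusion theorem together with the closedness of the spectrum, so I only need to prove the reverse inclusion for $1 < p < 2$. Fix $t>0$ and pick $\lambda\in\C$ with $e^{\lambda t}\notin\overline{e^{t\sigma(-A+B)}}$. By hypothesis on $p=1$ and by \Cref{theorem_WSMTHM_p_eq_2}, we have $e^{\lambda t}\in \rho(R_1(t)) \cap \rho(R_2(t))$. Applying \Cref{corollary_greiner} on $L^1(\T^d,\,\C^N)$ and on $L^2(\T^d,\,\C^N)$ yields, for each of $p\in\{1,2\}$, a bound uniform in $n$ on the resolvent-sum operators
\begin{equation*}
S^{(n)}_p = \frac{1}{n}\sum_{m=0}^{n-1}\sum_{\indextwo=-m}^{m} R\bigl(\lambda + \tfrac{2\pi i}{t}\indextwo,\, -A_p+B_p\bigr)
\end{equation*}
in $\mathcal{L}(L^p(\T^d,\,\C^N))$.

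The critical observation is that \Cref{lemma_gen_semigroup_resolv_coincide}(iv) forces $S^{(n)}_2$ to be the restriction of $S^{(n)}_1$ to $L^2(\T^d,\,\C^N)$, so both operators are restrictions of a single linear map defined on $L^1(\T^d,\,\C^N)+L^2(\T^d,\,\C^N)$. An application of \Cref{theorem_Riesz_Thorin}, extended componentwise to the finite-dimensional target $\C^N$, then produces the interpolated bound on $S^{(n)}_p$ in $\mathcal{L}(\LpCn)$ for every $1 < p < 2$, still uniform in $n$. The resolvent condition $\lambda+\tfrac{2\pi i}{t}\Z\subseteq\rho(-A+B)$ is $p$-independent by \Cref{assumption_spectrum}(i), so Greiner's corollary read backwards on $\LpCn$ gives $e^{\lambda t}\in\rho(R_p(t))$, which is the desired inclusion. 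The case $t=0$ is trivial, and the case $t<0$ reduces to the positive case by applying the result to the reverse semigroup $(R_p(-t))_{t\geq 0}$ from \Cref{lemma_-A+B_generates_C_0_group} and using that $\sigma(R_p(t)) = 1/\sigma(R_p(-t))$ for the bounded invertible operator $R_p(t)$.

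The main difficulty I anticipate is not any single step in isolation but the bookkeeping needed to make sure Riesz-Thorin is genuinely applicable: the operators $S^{(n)}_p$ for different $p$ must be compatible restrictions of one ambient operator so that interpolation is meaningful, and this compatibility rests entirely on \Cref{lemma_gen_semigroup_resolv_coincide}(iv), hence on \Cref{assumption_spectrum}(i). A secondary technicality is that Riesz-Thorin is classically stated for scalar-valued functions, but as $\C^N$ is finite dimensional one can either interpolate componentwise or appeal to the standard vector-valued version with no further work.
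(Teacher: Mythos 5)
Your proposal follows the paper's own argument in every essential respect: the converse direction is reduced to verifying condition (ii) of \Cref{corollary_greiner} on $\LpCn$, the needed uniform bound on $(S_p^{(n)})_n$ is obtained by Riesz--Thorin interpolation between the $p=1$ and $p=2$ bounds, and the compatibility of the resolvents under \Cref{lemma_gen_semigroup_resolv_coincide}(iv) is correctly identified as what makes the interpolation meaningful (the paper realizes the vector-valued interpolation via the isomorphism $\LpCn\cong L^p(\T^d\times\{1,\dots,N\},\C)$ rather than ``componentwise,'' but that is cosmetic). The one omission is that by parametrizing the complement of $\overline{e^{t\sigma(-A+B)}}$ as $\mu=e^{\lambda t}$ you silently skip $\mu=0$; the paper closes this by noting that $R_p(t)$ is invertible, hence $0\in\rho(R_p(t))$ automatically, and the same remark completes your argument.
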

	\begin{proof}
		Let us assume that the weak spectral mapping theorem holds for $p=1$ and let $1<p<2.$ For $p=2,$ the weak spectral mapping theorem holds by \Cref{theorem_WSMTHM_p_eq_2}. Due to the spectral inclusion \Cref{spectral_inclusion_theorem} and the closedness of spectra, we only have to prove the inclusion
		\begin{equation*}
			\sigma(R_p(t)) \subseteq \overline{e^{t\sigma(-A+B)}}
		\end{equation*}
		for all $t\in\R.$ Let $t>0.$ The goal is to show
		\begin{equation*}
			\Big(\overline{e^{t\sigma(-A+B)}}\Big)^c \subseteq \rho(R_p(t)).
		\end{equation*}
		To this end, let $\mu \in \big(\overline{e^{t\sigma(-A+B)}}\big)^c.$ The semigroup operator $R_p(t)$ is invertible with inverse $R_p(-t)$, so we can assume $\mu \neq 0$ without loss of generality.	In this case, there exists $\lambda\in\C$ with $\mu=e^{\lambda t}$ and if we can show property $(ii)$ from \Cref{corollary_greiner}, then we are done. \newline
		Regarding the first condition of $(ii)$ in \Cref{corollary_greiner}, notice that $\mu = e^{\lambda t} \in
		\rho(R_2(t))$ by \Cref{theorem_WSMTHM_p_eq_2} and the assumed independence of $p$ for the
		spectrum of the generator. \Cref{corollary_greiner} implies
		\begin{equation*}
			\lambda + \tfrac{2\pi i}{t} \Z \subset \rho(-A_2+B_2) = \rho(-A+B) =
			\rho(-A_p+B_p).
		\end{equation*}
		It is left to show that the sequence of operators $(S^{(n)}_p)_{n\in\N}$ defined by
		\begin{equation*}
			S^{(n)}_p \coloneqq \frac{1}{n} \sum_{m=0}^{n-1}  \sum_{\indextwo=-m}^m
			R\big(\lambda + \tfrac{2\pi i}{t}\indextwo, \, -A_p+B_p\big) 
		\end{equation*}
		is bounded in
		$\mathcal{L}(\LpCn).$ From $\mu \in \rho(R_1(t))=\rho(R_2(t))$ and
		\Cref{corollary_greiner}, we already know that $(S^{(n)}_1)_{n\in\N}$ and
		$(S^{(n)}_2)_{n\in\N}$ are bounded sequences in $\mathcal{L}(L^1(\T^d, \, \C^N))$
		and $\mathcal{L}(\LtwoTCN)$ respectively. The idea is now to apply the
		Riesz-Thorin \Cref{theorem_Riesz_Thorin}. Notice that there exists $\theta \in (0,1)$ such that \eqref{auxeq9} holds. Originally, \Cref{theorem_Riesz_Thorin} is formulated for complex valued operators but it is easy to get around this restriction by identifying $\LpCn$ with $L^p(\T^d \times \{1,\cdots,N\}, \, \C)$ through the canonical isomorphism
		\begin{equation*}
			\Phi\colon \LpCn \longrightarrow L^p(\T^d \times \{1,\cdots,N\}, \, \C) 
		\end{equation*}
		with
		\begin{equation*}
			u \longmapsto \big[(\mathbf{x},\indexone) \mapsto u_\indexone(\mathbf{x})\big].
		\end{equation*}
		Let $n\in\N$ and define the operator
		\begin{align*}
			S^{(n)} \colon L^1(\T^d, \C^N) + \LtwoTCN &\longrightarrow L^1(\T^d, \, \C^N) +
			\LtwoTCN \\
			u &\longmapsto S^{(n)}_1u_1 + S^{(n)}_2 u_2
		\end{align*}
		for $u=u_1+u_2$ with $u_1\in L^1(\T^d, \, \C^N)$ and $u_2\in \LtwoTCN.$
		Let us show that this map is well defined. To this end, assume
		\begin{equation*}
			u = u_1+u_2 = v_1+v_2
		\end{equation*}
		for $u_1, \, v_1 \in L^1(\T^d, \, \C^N)$ and $u_2, \, v_2 \in \LtwoTCN.$ We
		obtain
		\begin{equation*}
			u_1-v_1 = v_2-u_2 \in L^1(\T^d, \, \C^N) \cap \LtwoTCN.
		\end{equation*}
		Hence, \Cref{lemma_gen_semigroup_resolv_coincide} $(iv)$ and the definitions
		of $S^{(n)}_1$ and $S^{(n)}_2$ imply
		\begin{align*}
			S^{(n)}_1u_1 + S^{(n)}_2 u_2 &= S^{(n)}_1v_1 + S^{(n)}_2 v_2 +
			S^{(n)}_1(u_1-v_1) + S^{(n)}_2 (u_2-v_2) \\
			&= S^{(n)}_1v_1 + S^{(n)}_2 v_2 + S^{(n)}_2(u_1-v_1) - S^{(n)}_2 (v_2-u_2) \\
			&= S^{(n)}_1v_1 + S^{(n)}_2 v_2.
		\end{align*}
		\Cref{theorem_Riesz_Thorin} implies that $S^{(n)}(p) \coloneqq
		\restr{S^{(n)}}{\LpCn}$ fulfills
		\begin{equation}\label{auxeq10}
			\|S^{(n)}(p)\|_{\mathcal{L}(\LpCn)} \leq \|S^{(n)}_1
			\|_{\mathcal{L}(L^1(\T^d, \C^N))}^{1-\theta} \| S^{(n)}_2\|_{\mathcal{L}(\LtwoTCN)}^{\theta} \eqqcolon C_{1,n}^{1-\theta} C_{2,n}^\theta.
		\end{equation}
		Furthermore, we have
		\begin{equation*}
			\restr{S^{(n)}(p)}{\LtwoTCN} = S^{(n)}_2 = \restr{S^{(n)}_p}{\LtwoTCN}
		\end{equation*}
		by \Cref{lemma_gen_semigroup_resolv_coincide} $(iv)$. Density of $\LtwoTCN$ in $\LpCn$ and continuity of and $S^{(n)}(p)$ and $S^{(n)}_p$
		respectively imply $S^{(n)}(p) = S^{(n)}_p$, which finally results in
		\begin{equation*}
			\sup_{n\in\N} \|S^{(n)}_p\|_{\mathcal{L}(\LpCn)} \leq \sup_{n\in\N}
			C_{1,n}^{1-\theta} \, \sup_{n\in\N} C_{2,n}^\theta < \infty
		\end{equation*}
		by \eqref{auxeq10} and the boundedness of $(S^{(n)}_1)_{n\in\N}$ and
		$(S^{(n)}_2)_{n\in\N}$ in $\mathcal{L}(L^1(\T^d, \, \C^N))$
		and $\mathcal{L}(\LtwoTCN)$ respectively. This shows the weak spectral mapping theorem for $1<p<2$ and all $t\geq0$. \newline
		For $t<0,$ the statement immediately follows from \Cref{lemma_-A+B_generates_C_0_group}.
	\end{proof}
	
	\begin{theorem}\label{theorem_WSMTHM_equiv_peq1}
		Let $(R_p(t))_{t\in\R}$ be the transport-reaction group on $\LpCn$ generated by $(-A_p+B_p, \, D(-A_p))$. Under \Cref{assumption_spectrum}, the weak spectral mapping theorem
		\begin{equation*}
			\sigma(R_p(t)) = \overline{e^{t\sigma(-A_p+B_p)}} =
			\overline{e^{t\sigma(-A+B)}} \qquad \text{ for all } t\in\R
		\end{equation*}
		for all $1\leq p < \infty$ is equivalent to the two weak spectral mapping theorems
		\begin{align*}
			\sigma(R_1(t)) &= \overline{e^{t\sigma(-A_1+B_1)}} =
			\overline{e^{t\sigma(-A+B)}} \qquad \text{ for all } t\in\R \\
			\sigma(\widetilde{R}_1(t)) &= \overline{e^{t\sigma(A_1+B_1^T)}} =
			\overline{e^{t\sigma(A+B^T)}} \qquad \text{ for all } t\in\R
		\end{align*}
		for $p=1,$ where $(\widetilde{R}_1(t))_{t\geq0}$ is the transport-reaction semigroup on $L^1(\T^d, \, \C^N)$ generated by $(A_1+B_1^T, \, D(A_1)).$
	\end{theorem}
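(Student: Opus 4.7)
The implication from the general weak spectral mapping theorem to the two theorems for $p=1$ is immediate, so the real content is the converse. Assume both weak spectral mapping theorems hold for $p=1$. The plan is to handle the range $1\leq p\leq 2$ by a direct interpolation argument and the range $2<p<\infty$ by passing to the adjoint.

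For $1\leq p\leq 2$, I would simply invoke \Cref{proposition_WSMTHM_1_leq_p_leq_2} with the hypothesis for $R_1$: this immediately yields
\begin{equation*}
\sigma(R_p(t))=\overline{e^{t\sigma(-A+B)}}\qquad\text{for all }t\in\R.
\end{equation*}
Next, observe that the family $(\widetilde{R}_p(t))_{t\geq0}$ generated by $(A_p+B_p^T,D(A_p))$ belongs to exactly the same class of transport-reaction semigroups covered in this chapter (reversing transport directions and replacing $B$ by $B^T$), so by \Cref{assumption_spectrum}(ii) the analogue of \Cref{assumption_spectrum}(i) holds for it. I may therefore apply \Cref{proposition_WSMTHM_1_leq_p_leq_2} verbatim to the tilde system and deduce, from the hypothesis for $\widetilde{R}_1$, the weak spectral mapping theorem for $\widetilde{R}_p$ for all $1\leq p\leq 2$.

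To cover $2<p<\infty$, fix the dual exponent $q\in(1,2)$. By \Cref{proposition_adjoint_semigroup_pneq1} the adjoint semigroup of $(R_p(t))_{t\geq0}$ is the strongly continuous semigroup $(\widetilde{R}_q(t))_{t\geq0}$ on $L^q(\T^d,\C^N)$, and the classical fact that an operator and its Banach-space adjoint have the same spectrum gives $\sigma(R_p(t))=\sigma(\widetilde{R}_q(t))$. Since $q\in[1,2]$, the previous paragraph applies and yields
\begin{equation*}
\sigma(R_p(t))=\sigma(\widetilde{R}_q(t))=\overline{e^{t\sigma(A_q+B_q^T)}}=\overline{e^{t\sigma(A+B^T)}},
\end{equation*}
where the last equality uses \Cref{assumption_spectrum}(ii). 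Finally, the same adjoint argument at the generator level (combined with \Cref{assumption_spectrum}) shows $\sigma(-A+B)=\sigma((-A_2+B_2)^*)=\sigma(A_2+B_2^T)=\sigma(A+B^T)$, so the right-hand side can be rewritten as $\overline{e^{t\sigma(-A+B)}}$, completing the case $t\geq 0$. The case $t<0$ follows by applying the already-established results to the inverse group, exactly as at the end of the proof of \Cref{theorem_WSMTHM_p_eq_2}.

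The genuine work has already been done in \Cref{proposition_WSMTHM_1_leq_p_leq_2}, where Riesz–Thorin interpolation is combined with \Cref{corollary_greiner}; here the only delicate point is the bookkeeping between the two systems $(-A+B)$ and $(A+B^T)$, and checking that \Cref{assumption_spectrum}(ii) is precisely the ingredient needed to transport results for the tilde system. I expect no further technical obstacle.
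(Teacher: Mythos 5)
Your proposal is correct and follows essentially the same route as the paper's proof: handle $1\leq p\leq 2$ via \Cref{proposition_WSMTHM_1_leq_p_leq_2} applied to $(R_p(t))$, then handle $2<p<\infty$ by passing to the adjoint semigroup $(\widetilde{R}_q(t))$ with $q\in(1,2)$ the dual exponent, observing it belongs to the same class so \Cref{proposition_WSMTHM_1_leq_p_leq_2} applies to it under \Cref{assumption_spectrum}(ii), and then using that an operator and its adjoint have the same spectrum together with \Cref{lemma_-A+B_generates_C_0_group} for $t<0$.
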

	\begin{proof}
		For $1\leq p \leq 2,$ the result simply follows from \Cref{proposition_WSMTHM_1_leq_p_leq_2}. Let $2<p<\infty$ and let $t\geq0.$ The
		adjoint semigroup of $(R_p(t))_{t\geq0}$ is the semigroup
		$(\widetilde{R}_q(t))_{t\geq0}$ on $L^q(\T^d, \, \C^N)$ generated by the adjoint of $(-A_p+B_p, \, D(-A_p))$, namely $(A_q+B_q^T, \,
		D(A_q)),$ see  \Cref{proposition_adjoint_semigroup_pneq1}. Here, $1<q<2$ is the
		dual exponent of $p$ and the adjoint semigroup and generator are still of the class covered by \Cref{proposition_WSMTHM_1_leq_p_leq_2}, since the different sign for $A$ just changes the direction of movement and $B^T$ is still a real $N\times N$ matrix. Hence, we can use \Cref{assumption_spectrum}(ii) to apply \Cref{proposition_WSMTHM_1_leq_p_leq_2} to the adjoint semigroup. We obtain
		\begin{equation*}
			\sigma(R_p(t)) = \sigma(\widetilde{R}_q(t)) = \overline{e^{t\sigma(A_q+B_q^T)}} = \overline{e^{t\sigma(-A_p+B_p)}} = \overline{e^{t\sigma(-A+B)}}.
		\end{equation*}
		As in the proof of \Cref{proposition_WSMTHM_1_leq_p_leq_2}, the statement for $t<0$ follows from \Cref{lemma_-A+B_generates_C_0_group}.
	\end{proof}
	
	Unfortunately, proving \Cref{assumption_spectrum} for arbitrary dimensions $d\in\N$ and showing weak spectral mapping theorems for $p=1$ are difficult tasks. Nevertheless, we can prove some partial results.
	
	\begin{corollary}\label{wsmthm_deq1_equiv}
		Let $d=1$ and let the transport directions $v_1,\cdots,v_N \neq 0 $ be non-vanishing. Let $(R_p(t))_{t\in\R}$ be the transport-reaction group on $L^p(\T, \, \C^N)$ generated by $(-A_p+B_p, \, D(-A_p))$. Then the weak spectral mapping theorem
		\begin{equation*}
			\sigma(R_p(t)) = \overline{e^{t\sigma(-A_p+B_p)}} =
			\overline{e^{t\sigma(-A+B)}} \qquad \text{ for all } t\in\R
		\end{equation*}
		for all $1\leq p < \infty$ is equivalent to to the two weak spectral mapping theorems
		\begin{align*}
			\sigma(R_1(t)) &= \overline{e^{t\sigma(-A_1+B_1)}} =
			\overline{e^{t\sigma(-A+B)}} \qquad \text{ for all } t\in\R \\
			\sigma(\widetilde{R}_1(t)) &= \overline{e^{t\sigma(A_1+B_1^T)}} =
			\overline{e^{t\sigma(A+B^T)}} \qquad \text{ for all } t\in\R
		\end{align*}
		for $p=1,$ where $(\widetilde{R}_1(t))_{t\geq0}$ is the transport-reaction semigroup on $L^1(\T, \, \C^N)$ generated by $(A_1+B_1^T, \, D(A_1)).$
	\end{corollary}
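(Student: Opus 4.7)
The plan is to deduce the corollary as an immediate instance of Theorem \ref{theorem_WSMTHM_equiv_peq1}. To do so, the only thing that requires checking is that Assumption \ref{assumption_spectrum} is satisfied in the present setting, namely $d=1$ with $v_1,\dots,v_N\neq 0$. Once both parts of that assumption are verified, Theorem \ref{theorem_WSMTHM_equiv_peq1} applies verbatim and yields the stated equivalence.

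For part (i) of the assumption, I would invoke Theorem \ref{theorem_spectrum_generator} directly: under the hypotheses $d=1$ and non-vanishing transport directions, it asserts that
\begin{equation*}
\sigma(-A_p+B_p)=\overline{\bigcup_{k\in\Z}\sigma(M(k))}
\end{equation*}
with $M(k)=-2\pi i k V+B$, and in particular this set does not depend on $1\le p<\infty$.

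For part (ii), the key observation is that the operator $(A_p+B_p^T,\,D(A_p))$ is itself of the exact form covered by Theorem \ref{theorem_spectrum_generator}: replacing each transport direction $v_\indexone$ by $-v_\indexone$ (which stays non-vanishing) and replacing the real reaction matrix $B$ by the real matrix $B^T$ produces a transport-reaction operator of the same type. Hence Theorem \ref{theorem_spectrum_generator} again applies and gives
\begin{equation*}
\sigma(A_p+B_p^T)=\overline{\bigcup_{k\in\Z}\sigma(2\pi i k V+B^T)},
\end{equation*}
which is independent of $1\le p<\infty$. Thus Assumption \ref{assumption_spectrum} holds in full.

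With Assumption \ref{assumption_spectrum} established, Theorem \ref{theorem_WSMTHM_equiv_peq1} delivers precisely the claimed equivalence between the family of weak spectral mapping theorems for all $1\le p<\infty$ and the two weak spectral mapping theorems for $p=1$ (one for $(R_1(t))_{t\ge 0}$ and one for $(\widetilde{R}_1(t))_{t\ge 0}$). There is no essential obstacle here; the only point one must be mindful of is the bookkeeping that the adjoint-type semigroup $(\widetilde{R}_1(t))_{t\ge 0}$ generated by $(A_1+B_1^T,\,D(A_1))$ really lies within the same framework, which is exactly what allows Theorem \ref{theorem_spectrum_generator} to be applied to it in step (ii) above.
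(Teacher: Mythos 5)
Your proposal is correct and matches the paper's own (very short) proof, which simply observes that Assumption \ref{assumption_spectrum} is fulfilled by \Cref{theorem_spectrum_generator} under $d=1$ and $v_1,\dots,v_N\neq 0$, and then invokes \Cref{theorem_WSMTHM_equiv_peq1}. Your elaboration of part (ii) — noting that $(A_p+B_p^T,\,D(A_p))$ is itself a transport-reaction operator of the class covered by \Cref{theorem_spectrum_generator}, with transport directions $-v_1,\dots,-v_N$ (still non-vanishing) and real reaction matrix $B^T$ — is exactly the argument the paper relies on (it is also flagged in the remark following Assumption \ref{assumption_spectrum}), and is in fact cleaner than the alternative route via \Cref{proposition_adjoint_semigroup_pneq1}, since the latter covers only $1<p<\infty$ whereas the direct application of \Cref{theorem_spectrum_generator} covers $p=1$ as well.
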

	\begin{proof}
		Given $d=1$ and $v_1,\cdots,v_N \neq 0,$ \Cref{assumption_spectrum} is fulfilled by \Cref{theorem_spectrum_generator} and we can apply \Cref{theorem_WSMTHM_equiv_peq1}.
	\end{proof}

	In particular, the weak spectral mapping theorem holds for all $1\leq p < \infty $ in the case that $(R_p(t))_{t\in\R}$ defines a positive group on $L^p(\T, \, \C^N)$ with non-vanishing transport directions $v_1,\cdots,v_N \neq 0$. This happens if and only if $B$ is a diagonal matrix, see \Cref{corollary_group_positive}, and it is a consequence of the following abstract result by Arendt an Greiner. Again, $(X, \Sigma, \mu)$  is assumed to be a $\sigma$-finite measure space.
	
	\begin{theorem}[{\cite[Corollary 1.4, p. 300]{arendt1984spectral}}]\label{theorem_WSMTHM_Arendt_Greiner}
		Let $(\mathcal{A}, \, D(\mathcal{A}))$ be the generator of a strongly
		continuous group $(R(t))_{t\in\R}$ of positive operators on $L^1(X, \mu).$ Then
		the weak spectral mapping theorem holds, i.e.
		\begin{equation*}
			\sigma(R(t)) = \overline{e^{t\sigma(\mathcal{A})}} \quad \text{ for all } t\in\R.
		\end{equation*}
	\end{theorem}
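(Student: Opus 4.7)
The easy inclusion $\overline{e^{t\sigma(\mathcal{A})}} \subseteq \sigma(R(t))$ is immediate from the spectral inclusion \Cref{spectral_inclusion_theorem} together with the closedness of $\sigma(R(t))$. For the reverse inclusion, note that $R(t)$ is invertible with inverse $R(-t)$, so $0 \in \rho(R(t))$; hence every $\mu \in \sigma(R(t))$ is nonzero and can be written as $\mu = e^{\lambda t}$ for some $\lambda \in \C$. We must show that if $\mu \notin \overline{e^{t\sigma(\mathcal{A})}}$, then $\mu \in \rho(R(t))$. By periodicity of the exponential, the hypothesis forces $\lambda + \tfrac{2\pi i}{t}\Z \subseteq \rho(\mathcal{A})$, and an application of \Cref{corollary_greiner} reduces the problem to establishing the uniform boundedness in $\mathcal{L}(L^1(X,\mu))$ of the operator averages
\begin{equation*}
S^{(n)} = \frac{1}{n} \sum_{m=0}^{n-1}  \sum_{k=-m}^{m} R\big(\lambda + \tfrac{2\pi i}{t}k, \, \mathcal{A}\big).
\end{equation*}

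The key is to exploit positivity together with the AL-space structure of $L^1(X,\mu)$, on which the norm is additive on the positive cone. In the region where $\real(\lambda) > s(\mathcal{A})$ (the spectral bound), the resolvent admits the Laplace representation $R(\lambda, \mathcal{A}) = \int_0^\infty e^{-\lambda s} R(s) \, ds$ in the strong operator topology, and a direct computation exchanging sum and integral yields
\begin{equation*}
S^{(n)} u = \int_0^\infty e^{-\lambda s} R(s) u \cdot F_n(s/t) \, ds,
\end{equation*}
where $F_n \geq 0$ is the Fej\'er kernel (the Ces\`aro average of Dirichlet kernels). Since $R(s)$ is a positive operator, $F_n$ is non-negative with uniformly bounded $L^1$-mass, and $e^{-\real(\lambda) s}$ is integrable, the bound $\|S^{(n)} u\|_{L^1} \leq C \|u\|_{L^1}$ follows first for $u \geq 0$ using Fubini and the additivity of the $L^1$-norm on the positive cone, and then for arbitrary $u$ by the decomposition $u = u^+ - u^-$ (applied componentwise in the complex case).

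The main obstacle is that \Cref{corollary_greiner} may force us to consider $\lambda$ whose real part lies inside $[s(\mathcal{A}), \infty)$, where the Laplace representation above no longer converges absolutely. To bridge this, one invokes the structure theory for positive $C_0$-groups on $L^1$: such a group is automatically a group of lattice isomorphisms, so its generator $\mathcal{A}$ decomposes (after a suitable measure-theoretic change of variables) as a multiplication operator perturbed by a bounded positive operator, and analytic continuation of the Fej\'er-kernel identity through the resolvent set transfers the positivity-based estimate to the full strip $\{\lambda : \lambda + \tfrac{2\pi i}{t}\Z \subseteq \rho(\mathcal{A})\}$. For $t<0$, the same reasoning applied to the positive group $(R(-t))_{t\geq 0}$ completes the argument.
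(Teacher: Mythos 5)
The paper does not prove this theorem: it is cited verbatim from Arendt--Greiner (Corollary~1.4 of \cite{arendt1984spectral}) and used as a black box, so there is no in-paper proof to compare against. Your proposal must therefore be judged on its own correctness, and it has a genuine gap.

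The first half of your argument is sound: the easy inclusion via \Cref{spectral_inclusion_theorem}, the observation that $\mu = e^{\lambda t}\notin\overline{e^{t\sigma(\mathcal{A})}}$ forces $\lambda + \tfrac{2\pi i}{t}\Z\subseteq\rho(\mathcal{A})$, the reduction via \Cref{corollary_greiner} to boundedness of the Ces\`aro averages $S^{(n)}$, and the Fej\'er-kernel identity with the Fubini--positivity estimate for $u\ge 0$ when the Laplace representation of the resolvent is available. (Two small points: the Laplace representation $R(\lambda,\mathcal{A})=\int_0^\infty e^{-\lambda s}R(s)\,ds$ holds for $\real\lambda>\omega_0(\mathcal{A})$, not a priori for $\real\lambda>s(\mathcal{A})$; you would need to invoke the Derndinger/Greiner--Nagel theorem that $s(\mathcal{A})=\omega_0(\mathcal{A})$ for positive semigroups on AL-spaces to close that seam. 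Also note that once $\real\lambda>\omega_0(\mathcal{A})$ you have $|e^{\lambda t}|>r(R(t))$ and $\mu\in\rho(R(t))$ outright, so in that half-plane the Fej\'er argument proves nothing new.)

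The hard case --- and the entire content of the theorem --- is $\lambda$ in the interior strip $-\omega_0(-\mathcal{A})\le\real\lambda\le\omega_0(\mathcal{A})$, where neither the forward nor the backward Laplace integral converges. This is precisely where your ``main obstacle'' paragraph is not a proof. ``Analytic continuation of the Fej\'er-kernel identity'' is not a meaningful operation here: analytic continuation extends identities between holomorphic functions, but the estimate $\|S^{(n)}u\|_{L^1}\le C\|u\|_{L^1}$ was obtained as an \emph{inequality} by pushing absolute values through a positive integral kernel, and inequalities do not continue analytically. Likewise, the structural claim that the generator of a positive $C_0$-group of lattice isomorphisms on $L^1$ ``decomposes as a multiplication operator perturbed by a bounded positive operator'' is not correct as stated (a group of weighted composition operators has a generator whose principal part is a first-order derivative along the flow, not a multiplier), and even if one had such a decomposition it is not shown how it would yield the uniform bound on $S^{(n)}$ for $\lambda$ in the central strip. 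The actual Arendt--Greiner argument establishes a positivity-based resolvent bound along vertical lines \emph{inside} the strip via the lattice structure of $L^1$ (comparison of $|R(\lambda + ik\beta,\mathcal{A})u|$ against a fixed positive operator applied to $|u|$, uniformly in $k$), which is a genuinely different and harder mechanism than the Laplace/Fej\'er computation. As written, the proposal proves only the trivial part of the statement and replaces the essential step with an appeal to machinery that does not do what is claimed.
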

	
	\begin{corollary}\label{corollary_CN_trick}
		Let $(R_1(t))_{t\in\R}$ be the transport-reaction group on $L^1(\T^d, \, \C^N)$ generated by $(-A_1+B_1, \, D(-A_1))$. Assume that $B\in \R^{N\times N}$ is a diagonal matrix. Then the two weak spectral mapping theorems
		\begin{align*}
			\sigma(R_1(t)) &= \overline{e^{t\sigma(-A_1+B_1)}} =
			\overline{e^{t\sigma(-A+B)}} \qquad \text{ for all } t\in\R \\
			\sigma(\widetilde{R}_1(t)) &= \overline{e^{t\sigma(A_1+B_1^T)}} =
			\overline{e^{t\sigma(A+B^T)}} \qquad \text{ for all } t\in\R
		\end{align*}
		hold, where $(\widetilde{R}_1(t))_{t\geq0}$ is the transport-reaction semigroup on $L^1(\T^d, \, \C^N)$ generated by $(A_1+B_1^T, \, D(A_1)).$
	\end{corollary}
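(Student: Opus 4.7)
The plan is to reduce the statement to Theorem \ref{theorem_WSMTHM_Arendt_Greiner}, which provides the weak spectral mapping theorem for any positive strongly continuous group on an $L^1$ space of a $\sigma$-finite measure space. Since $B \in \R^{N\times N}$ is diagonal, Corollary \ref{corollary_group_positive} immediately yields that $(R_1(t))_{t\in\R}$ is a positive group on $L^1(\T^d, \, \C^N)$. Moreover $B^T = B$ is also diagonal, and the operator $(A_1+B_1^T, \, D(A_1))$ belongs to the class of transport-reaction generators treated throughout the thesis -- one simply replaces $\mathbf{v_1},\cdots,\mathbf{v_N}$ by $-\mathbf{v_1},\cdots,-\mathbf{v_N}$ and $B$ by $B^T$, both of which are real. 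Applying Corollary \ref{corollary_group_positive} to this modified model shows that $(\widetilde{R}_1(t))_{t\in\R}$ is a positive group as well.

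The second ingredient is the canonical isometric isomorphism already used in the proof of Proposition \ref{proposition_WSMTHM_1_leq_p_leq_2}, namely
\begin{equation*}
	\Phi\colon L^1(\T^d, \, \C^N) \longrightarrow L^1\big(\T^d \times \{1,\cdots,N\}, \, \mathcal{L}^d \otimes \# \big), \qquad u \longmapsto \big[(\mathbf{x},\indexone) \mapsto u_\indexone(\mathbf{x})\big].
\end{equation*}
The target is the $L^1$ space of a $\sigma$-finite measure space, and $\Phi$ is a lattice isomorphism: by Definition \ref{definition_positive}, a function $u\in L^1(\T^d, \, \C^N)$ is non-negative precisely when $\Phi u \geq 0$ almost everywhere on $\T^d \times \{1,\cdots,N\}$. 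Conjugating by $\Phi$ therefore turns $(R_1(t))_{t\in\R}$ and $(\widetilde{R}_1(t))_{t\in\R}$ into strongly continuous groups of positive operators on a scalar-valued $L^1$ space, without altering the spectra of the groups or of their generators.

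Applying Theorem \ref{theorem_WSMTHM_Arendt_Greiner} to both conjugated groups then yields the two weak spectral mapping theorems claimed in the corollary. I do not expect a genuine obstacle in this argument: the only items to spell out are the straightforward verification that $\Phi$ is a lattice isomorphism, and the observation that the diagonal-off-diagonal structure of $B$ is preserved under transposition so that Corollary \ref{corollary_group_positive} can be invoked a second time for the adjoint-type group $(\widetilde{R}_1(t))_{t\in\R}$.
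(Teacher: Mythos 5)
Your proposal follows essentially the same route as the paper: invoke Corollary \ref{corollary_group_positive} to get positivity of both groups from the diagonality of $B$ (and $B^T = B$), transfer to the scalar-valued $L^1$ space via the canonical isomorphism $\Phi$, and then apply the Arendt--Greiner Theorem \ref{theorem_WSMTHM_Arendt_Greiner}. The only difference is that you spell out the lattice-isomorphism verification more explicitly, which the paper leaves implicit.
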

	\begin{proof}
		Given a diagonal matrix $B$, the groups $(R_1(t))_{t\in\R}$ and $(\widetilde{R}_1(t))_{t\geq0}$ are positive due to \Cref{corollary_group_positive}. Now the result is a direct consequence of \Cref{theorem_WSMTHM_Arendt_Greiner}. Technically, we should pause for a moment before simply applying \Cref{theorem_WSMTHM_Arendt_Greiner}. The above theorem only holds for positive groups on $L^1(\T^d, \, \C)$, but we consider positive groups on
		$L^1(\T^d, \, \C^N).$ However, this subtlety causes no problem whatsoever because the canonical mapping
		\begin{equation*}
			\Phi\colon L^1(\T^d, \, \C^N) \longrightarrow L^1(\T^d \times \{1,\cdots,N\}, \C) 
		\end{equation*}
		with
		\begin{equation*}
			u \longmapsto \big[(\mathbf{x},\indexone) \mapsto u_\indexone(\mathbf{x})\big]
		\end{equation*}
		defines an isomorphism.
	\end{proof}

	\begin{corollary}\label{wsmthm_positive_group_all_p}
		Let $d=1$ and let the transport directions $v_1,\cdots,v_N \neq 0 $ be non-vanishing. Let $(R_p(t))_{t\in\R}$ be the transport-reaction group on $L^p(\T, \, \C^N)$ generated by $(-A_p+B_p, \, D(-A_p))$ and assume that $B\in \R^{N\times N}$ is a diagonal matrix. Then the weak spectral mapping theorem
		\begin{equation*}
			\sigma(R_p(t)) = \overline{e^{t\sigma(-A_p+B_p)}} =
			\overline{e^{t\sigma(-A+B)}} \qquad \text{ for all } t\in\R
		\end{equation*}
		holds for all $1\leq p < \infty$.
	\end{corollary}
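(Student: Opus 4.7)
The plan is to simply chain together the two preceding corollaries, which have been set up precisely to combine into this statement. The hypotheses here are exactly the intersection of what those two results require: $d=1$ with non-vanishing transport directions is needed for \Cref{wsmthm_deq1_equiv}, and $B$ being diagonal is needed for \Cref{corollary_CN_trick}.

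First, I would verify that under the standing hypotheses, \Cref{assumption_spectrum} is in force by invoking \Cref{theorem_spectrum_generator}, so that \Cref{wsmthm_deq1_equiv} applies. This reduces the desired weak spectral mapping theorem for all $1\leq p < \infty$ to proving the two weak spectral mapping theorems at $p=1$, namely
\begin{equation*}
\sigma(R_1(t)) = \overline{e^{t\sigma(-A_1+B_1)}} \quad \text{and} \quad \sigma(\widetilde{R}_1(t)) = \overline{e^{t\sigma(A_1+B_1^T)}} \qquad \text{for all } t\in\R,
\end{equation*}
where $(\widetilde{R}_1(t))_{t\in\R}$ is the group generated by $(A_1+B_1^T, \, D(A_1))$.

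Next, since $B$ is assumed to be diagonal, I would apply \Cref{corollary_CN_trick} directly to obtain both of the above weak spectral mapping theorems at $p=1$. Note that \Cref{corollary_CN_trick} is stated for arbitrary dimension $d\in\N$, so there is no dimensional mismatch. Combining this with the equivalence provided by \Cref{wsmthm_deq1_equiv} yields the claim for all $1\leq p < \infty$. There is essentially no obstacle here; the corollary is by construction just a transcription of \Cref{corollary_CN_trick} through the equivalence of \Cref{wsmthm_deq1_equiv}. The only bookkeeping point worth double-checking is that the hypothesis ``$B$ diagonal'' of \Cref{corollary_CN_trick} is exactly what guarantees positivity of the group $(R_1(t))_{t\in\R}$ via \Cref{corollary_group_positive}, which is the actual substantive input used to invoke \Cref{theorem_WSMTHM_Arendt_Greiner} in the background.
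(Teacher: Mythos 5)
Your proposal is correct and matches the paper's own proof, which reads simply ``Combine \Cref{wsmthm_deq1_equiv} and \Cref{corollary_CN_trick}.'' You have merely spelled out the bookkeeping (that \Cref{theorem_spectrum_generator} supplies \Cref{assumption_spectrum}, and that \Cref{corollary_CN_trick} covers both $p=1$ statements) which the paper leaves implicit.
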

	\begin{proof}
		Combine \Cref{wsmthm_deq1_equiv} and \Cref{corollary_CN_trick}.
	\end{proof}
	
	At this point, we should critically reflect on \Cref{wsmthm_positive_group_all_p}. If $B$ is a diagonal matrix $D$ with diagonal entries $d_1,\cdots,d_N \in \R$, the transport-reaction model \eqref{equation_abstract_cauchy_problem} reads
	\begin{equation*}
		\partial_t u + \colvec{v_1 u'_1 \\ \vdots \\ v_N u'_N} = \colvec{d_1u_1 \\
		\vdots \\ d_N u_N}.
	\end{equation*}
	There is no interaction and one can directly compute the solution. A short
	computation (use e.g. \Cref{theorem_semigroup_generated_by_-A+B} (v) for the
	rigorous proof) shows
	\begin{equation*}
		R(t) = e^{tD} T(t) \eqqcolon \colvec{e^{d_1t} T_1(t) \\ \vdots \\ e^{d_Nt}
		T_N(t)}
	\end{equation*}
	for all $t\in\R$, where $(T(t))_{t\in\R}$ is the transport group on $L^p(\T,\, \C^N)$ defined in	\eqref{transport_semigroup_definition} and \Cref{remark_-A_generates_C0_group}.
	Together with the computations from	\Cref{example_spectral_analysis_transport_1d} and
	\Cref{example_spectrum_transport_sg_N}, we obtain
	\begin{align*}
		\sigma(R_p(t)) &= \bigcup_{\indexone=1}^N e^{d_\indexone t}
		\sigma(T_\indexone(t)) = \bigcup_{\indexone=1}^N e^{d_\indexone t}
		\overline{e^{t\sigma(-A_j)}} = \bigcup_{\indexone=1}^N
		\overline{e^{t(d_j+\sigma(-A_j))}} = \bigcup_{\indexone=1}^N
		\overline{e^{t\sigma(-A_j+d_j)}}\\
		&= \overline{e^{\cup_{\indexone=1}^N t \sigma(-A_\indexone+d_\indexone)}} =
		\overline{e^{t\sigma(-A+D)}},
	\end{align*}
	where we used the notation $-A_j \coloneqq -v_j \partial_x$ for the $j$-th component of
	$-A.$ This shows that it is actually possible to prove \Cref{wsmthm_positive_group_all_p} with explicit computations. \newline
	
	After considering \cite[Theorem 3.2, Theorem 3.3, p. 1049]{latrach2008weak} and
	\cite{lods2006stability}, a natural idea would be to split the reaction matrix
	$B$ into its diagonal entries and its strictly lower and upper triangular
	matrix, i.e. to write $B=D+(L+U).$ If $(U_p(t))_{t\geq0}$ is the semigroup
	generated by $-A_p+D_p$ and $(R_p(t))_{t\geq0}$ is the semigroup generated by
	$-A_p + B_p,$ compactness of $R_p(t)-U_p(t)$ would imply that the the essential
	spectra of the semigroup operators $U_p(t)$ and $R_p(t)$ coincide for all $t\in\R$. This would allow us to prove a weak spectral mapping theorem in dimension $d=1$ with non-vanishing transport directions $v_1,\cdots,v_N \neq 0$ for all $1\leq p < \infty$, cf. \cite[Theorem 3.3, p. 1049]{latrach2008weak}. Unfortunately, the next example illustrates that this procedure does not work in the case of discrete values for the velocity.
	
	\begin{example}
		Consider $d=1, \, N=2$ and the system
		\begin{equation*}
			\partial_t u + \partial_x u = \begin{pmatrix}
			1 & 1 \\
			0 & 1
			\end{pmatrix} u.
		\end{equation*}
		Then, $D=I_{\C^{2\times 2}}$ and the equations can be solved explicitly. We
		obtain
		\begin{align*}
			U(t)u &= e^t T(t)u = e^t \colvec{u_1(\cdot - t) \\ u_2(\cdot - t)}, \\
			R(t)u &= e^t\colvec{u_1(\cdot - t) + u_2(\cdot - t)t \\ u_2(\cdot -t)}.
		\end{align*}
		Consequently,
		\begin{equation*}
		(R(t) - U(t))u = te^t \colvec{u_2(\cdot-t) \\ 0}
		\end{equation*}
		and the difference of the semigroup operators is not compact for $t\neq 0$ because the
		unit ball in $L^p(\T)$ is not.
	\end{example}
	
	\newpage

	\chapter{Arbitrary Side Lengths and Neumann Boundary Conditions}\label{chapter_side_length_neumann}
	
	So far, we considered the transport-reaction models on the domain $[0,1]^d$ with
	periodic boundary conditions, i.e. the domain $\T^d$ defined in
	\Cref{chapter_Introduction}. An easy scaling argument in the spacial variable
	can be used to see that the assumption of side length $1$ of the torus can be
	made without loss of generality in terms of mathematical treatment - models on
	$\T^d_L \coloneqq L\T^d$ for $L>0$ can be studied completely analogously.
	Nevertheless, the side length can affect stability and pattern formation
	properties of the transport-reaction model and for this reason, it often makes
	sense to incorporate this additional parameter in concrete examples. \newline
	Secondly, we describe the connection of ``symmetric" models with appropriate
	Neumann boundary conditions on $[0,L]$ to models with periodic boundary
	conditions on $[0,2L].$ A precise mathematical explanation of what we mean by
	that is given at the beginning of the second section of this chapter.
	
	\section{Arbitrary Side Lengths}\label{section_arbitrary_side_lengths}
	
	Consider a side length $L>0$, the $d$-dimensional torus $\T^d_L = L \T^d$ and
	recall that the transport-reaction equation for $N\in\N$ particle groups reads
	\begin{equation}\label{recall_transport_reaction_eq}
		\partial_t u + \colvec{\mathbf{v_1} \cdot \nabla u_1 \\ \vdots \\ \mathbf{v_N} \cdot \nabla u_N} = Bu,
	\end{equation}
	where $\mathbf{v_1}, \cdots, \mathbf{v_N} \in \R^d$ are the transport directions
	and $B\in\R^{N\times N}$ describes linear reactions.
	\Cref{chapter_transport_reaction_SG} dealt with the question of well-posedness
	of \eqref{recall_transport_reaction_eq} on $[0,1]^d$ with periodic boundary
	conditions, which corresponds to the choice of $L^p(\T^d, \, \C^N)$ as the
	appropriate Banach space. Recall that the operator defined by
	\begin{equation}\label{auxeq26}
		(-A_p+B_p)u = - \colvec{\mathbf{v_1} \cdot \nabla u_1 \\ \vdots \\ \mathbf{v_N} \cdot \nabla u_N} + Bu
	\end{equation}
	and with domain $D(-A_p+B_p) = D(-A_p)$ studied in \Cref{chapter_transport_semigroup}
	generates the transport-reaction semigroup $(R_p(t))_{t\geq0}$ on $\LpCn$ from
	\Cref{theorem_semigroup_generated_by_-A+B}. \newline
	For this section, it is more important to stress the dependence on $L$ and on the
	directions $\mathbf{v} \coloneqq (\mathbf{v_1},\cdots,\mathbf{v_N}),$ rather
	than to stress the dependence on $p$. Consequently, we use the notation
	$(-A+B)^{L,\mathbf{v}}$ for the operator \eqref{auxeq26} on $L^p(\T^d_L, \,
	\C^N)$ with domain
	\begin{equation*}
		D(-A^{L,\mathbf{v}}) = \left\{ u \in L^p(\T^d_L, \, \C^N) \, \colon \,
		\lim_{t \searrow 0} \frac{u_\indexone(\cdot - t\mathbf{v_\indexone}) -
		u_\indexone(\cdot)}{t} \text{ exists in } \Lp(\T^d_L) \text{ for all }
		\indexone=1,\cdots,N \right\}.
	\end{equation*}
	and $(R^{L,\mathbf{v}}(t))_{t\geq 0}$ for its generated semigroup. It should
	be noted that this operator is indeed a generator. This follows similarly to
	\Cref{chapter_transport_semigroup} and \Cref{chapter_transport_reaction_SG} by
	defining the transport semigroup on $\T^d_L$, see
	\eqref{transport_semigroup_definition}, and using the perturbation argument from
	\Cref{theorem_semigroup_generated_by_-A+B}. If $L=1$, we drop the side length in the notation and
	write $(-A+B)^\mathbf{v}$ or $(R^\mathbf{v}(t))_{t\geq 0}$. To
	achieve rigor and conciseness, we also introduce the scaling operator
	\begin{equation}\label{scaling_operator}
		\begin{aligned}
			\mathcal{S}\colon L^p(\T^d_L, \, \C^{N}) &\longrightarrow L^p(\T^d, \, \C^{N}) \\
			u &\longmapsto (\mathbf{x} \mapsto u(L\mathbf{x}))
		\end{aligned}
	\end{equation}
	with inverse $\mathcal{S}^{-1}.$
	
	\begin{theorem}\label{theorem_side_length}
		Let $1\leq p < \infty, \, d\in\N,\, N\in\N$ and let $L>0$ be the side length of the torus. Moreover, let $\mathbf{v_1},
		\cdots, \mathbf{v_N}\in\R^d$ be arbitrary transport directions. Then
		\begin{equation}\label{auxeq27}
			R^{L,\mathbf{v}}(t) = \mathcal{S}^{-1} R^{\mathbf{v}/L}(t) \mathcal{S}
		\end{equation}
		holds for all $t\geq0$. In particular, we obtain the following properties:
		\begin{enumerate}
			\item
			$\sigma(R^{L,\mathbf{v}}(t)) = \sigma(R^{\mathbf{v}/L}(t)) \quad \text{ for all } t\geq 0,$
			\item
			$D(-A^{L,\mathbf{v}}) = \mathcal{S}^{-1} D(-A^{\mathbf{v}/L}) =
			\mathcal{S}^{-1} D(-A^{\mathbf{v}}),$
			\item
			$(-A+B)^{L, \mathbf{v}} = \mathcal{S}^{-1}
			(-A+B)^{\mathbf{v}/L}\mathcal{S},$
			\item
			$\sigma((-A+B)^{L,\mathbf{v}}) = \sigma((-A+B)^{\mathbf{v}/L}).$
		\end{enumerate}
	\end{theorem}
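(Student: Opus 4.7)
The plan is to establish the semigroup identity \eqref{auxeq27} directly via a core argument, after which properties $(i)$--$(iv)$ fall out essentially for free.

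First I would note that a change of variables yields $\|\mathcal{S}u\|_{L^p(\T^d,\C^N)} = L^{-d/p}\|u\|_{L^p(\T^d_L,\C^N)}$, so $\mathcal{S}$ is a bounded linear isomorphism between the two $L^p$ spaces that also maps $C_c^\infty(\T^d_L,\C^N)$ bijectively onto $C_c^\infty(\T^d,\C^N)$. Consequently, the conjugated family $\widetilde{R}(t)\coloneqq \mathcal{S}^{-1} R^{\mathbf{v}/L}(t)\mathcal{S}$ inherits the $C_0$-semigroup property from $(R^{\mathbf{v}/L}(t))_{t\geq0}$ on $L^p(\T^d_L,\C^N)$, with generator $\mathcal{S}^{-1}(-A+B)^{\mathbf{v}/L}\mathcal{S}$ on the domain $\mathcal{S}^{-1}D(-A^{\mathbf{v}/L})$.

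The key step is then a chain rule computation on $C_c^\infty(\T^d_L,\C^N)$: the identity $\partial_i(\mathcal{S}u)(\mathbf{x}) = L\,(\partial_i u)(L\mathbf{x})$ together with the observation that $B$ acts pointwise, and therefore commutes with $\mathcal{S}$, yields
\begin{equation*}
\mathcal{S}^{-1}(-A+B)^{\mathbf{v}/L}\mathcal{S}\,u = (-A+B)^{L,\mathbf{v}}u \qquad \text{on } C_c^\infty(\T^d_L,\C^N).
\end{equation*}
Since the generators of both $(\widetilde{R}(t))_{t\geq0}$ and $(R^{L,\mathbf{v}}(t))_{t\geq0}$ extend this common operator, and $C_c^\infty(\T^d_L,\C^N)$ is a core for $((-A+B)^{L,\mathbf{v}}, D(-A^{L,\mathbf{v}}))$ (by the side-length analog of \Cref{corollary_Ccinfty_core_A+B}), the uniqueness clause of that corollary forces $\widetilde{R}(t)=R^{L,\mathbf{v}}(t)$ for all $t\geq 0$, which is exactly \eqref{auxeq27}.

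The four consequences are then immediate. Property $(i)$ is spectral invariance under conjugation by a bounded isomorphism. For $(ii)$, the existence of the directional limit in \eqref{generator_by_definition} is trivially invariant under rescaling the velocity by a positive scalar (substitute $s=t/L$), giving $D(-A^{\mathbf{v}/L}) = D(-A^{\mathbf{v}})$; combined with the generator identification produced by $\widetilde{R}=R^{L,\mathbf{v}}$ this yields the desired chain of equalities. Property $(iii)$ is the generator equality that underlies \eqref{auxeq27}, and $(iv)$ follows from $(iii)$ by the same similarity argument as $(i)$. The main obstacle I anticipate is not the chain rule itself but the need to replay the core/uniqueness argument from \Cref{chapter_transport_semigroup} and \Cref{chapter_transport_reaction_SG} on the rescaled torus $\T^d_L$; once $C_c^\infty(\T^d_L,\C^N)$ is confirmed to be a core for $((-A+B)^{L,\mathbf{v}}, D(-A^{L,\mathbf{v}}))$, the identification of the two semigroups is forced and everything else is bookkeeping.
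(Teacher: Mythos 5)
Your argument is correct and rests on the same underlying chain-rule identity $\mathcal{S}^{-1}(-A+B)^{\mathbf{v}/L}\mathcal{S}=(-A+B)^{L,\mathbf{v}}$ on smooth functions that the paper uses. The only difference is packaging: you pass through the conjugated generator on the core $C_c^\infty(\T^d_L,\C^N)$ and invoke the uniqueness clause of \Cref{corollary_Ccinfty_core_A+B}, whereas the paper checks directly that $\mathcal{S}^{-1}R^{\mathbf{v}/L}(t)\mathcal{S}u$ solves the Cauchy problem on $\T^d_L$ for $u\in C^\infty(\T^d_L,\C^N)$ and then extends the equality of the two bounded operator families by density of smooth functions; both hinge on the same uniqueness mechanism and are essentially interchangeable.
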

	\begin{proof}
		Let $u\in C^\infty(\T^d_L)$ and consider $\widetilde{u} \coloneqq
		\mathcal{S}u.$ Then, $\widetilde{u}(t,\cdot) \coloneqq
		(R^{\mathbf{v}/L}(t)\widetilde{u})(\cdot)$ is the solution of
		\begin{equation*}
			\partial_t \colvec{\widetilde{u}_1 \\ \vdots \\ \widetilde{u}_N}(t,\mathbf{x}) + \frac{1}{L}
			\colvec{\mathbf{v_1} \cdot \nabla\widetilde{u}_1 \\ \vdots \\ \mathbf{v_N} \cdot
			\nabla\widetilde{u}_N}(t,\mathbf{x}) = B \colvec{\widetilde{u}_1 \\
			\vdots \\ \widetilde{u}_N}(t,\mathbf{x})
		\end{equation*}
		on $\T^d$ with initial value $\widetilde{u}(0,\cdot) = \widetilde{u}(\cdot).$
		The rescaled function $u(t,\cdot) \coloneqq
		\mathcal{S}^{-1}\widetilde{u}(t,\cdot)$ fulfills
		$u(0,\cdot)=\mathcal{S}^{-1}\widetilde{u}(\cdot)=u(\cdot)$ and solves
		\begin{align*}
			\partial_t \colvec{u_1 \\ \vdots \\ u_N}(t,\mathbf{x}) &+ \colvec{\mathbf{v_1} \cdot \nabla u_1 \\ \vdots \\ \mathbf{v_N} \cdot \nabla u_N}(t,\mathbf{x}) = \partial_t
			\colvec{\widetilde{u}_1 \\ \vdots \\ \widetilde{u}_N}(t,\frac{\mathbf{x}}{L}) +
			\frac{1}{L} \colvec{\mathbf{v_1} \cdot \nabla\widetilde{u}_1 \\ \vdots \\
			\mathbf{v_N}\cdot \nabla\widetilde{u}_N}(t,\frac{\mathbf{x}}{L}) \\
			&= B\widetilde{u}(t,\frac{\mathbf{x}}{L}) = Bu(t,\mathbf{x}).
		\end{align*}
		This shows $R^{L,\mathbf{v}}(t) = \mathcal{S}^{-1}
		R^{\mathbf{v}/L}(t)\mathcal{S}$ on $C^\infty(\T^d_L)$ and density in
		$L^p(\T^d_L)$ implies \eqref{auxeq27}. \newline
		The second equality in $(ii)$ is a direct consequence of the definition of the domain, see \eqref{generator_by_definition}. All other properties result from
		\eqref{auxeq27} and the fact that $\mathcal{S}$ and its inverse are both bounded
		bijections.
	\end{proof}
	\begin{remark}
		Property $(i)$ and $(iv)$ in \Cref{theorem_side_length} are by far the most important one
		for applications. In conjunction with a connection of the spectrum of the
		semigroup to the generator, for instance \Cref{theorem_WSMTHM_p_eq_2}, they allow us to study long-time asymptotics for arbitrary side lengths $L>0$.
	\end{remark}

	\section{Symmetric Models with Neumann Boundary Conditions}\label{section_neumann_boundary}
	
	This section only deals with the one dimensional case $d=1$ and we consider the Banach spaces $L^p((0,L), \, \C^{2N})$ for $L>0.$ Similar to the previous section, the dependence on $1\leq p<\infty$ plays a secondary role and our notation does not include this parameter, but stresses the boundary conditions and the length $L>0$ of the domain $(0,L).$ \newline 
	
	Many times in applications, for example in \cite{lutscher2002emerging,hillen1996turing,hillen2002hyperbolic,hadeler1999reaction}, there are $N$ different particles
	$u_1, \cdots, u_N$, whose density can be split into right and left moving
	subgroups $\alpha_\indexone$ and $\beta_\indexone$ with $u_\indexone =
	\alpha_\indexone + \beta_\indexone$ for $\indexone=1,\cdots,N$. We write
	$v_1,\cdots,v_N>0$ for the speeds of the particles and use the  notation
	\begin{equation*}
		\Gamma \coloneqq \begin{pmatrix}
		v_1  & & \\
		& \ddots & \\
		& & v_N
		\end{pmatrix}, \qquad
		\alpha \coloneqq (\alpha_1, \cdots, \alpha_N)^T \quad \text{ and } \quad \beta
		\coloneqq (\beta_1, \cdots, \beta_N)^T.
	\end{equation*}
	With this notation, many models studied in the above mentioned literature
	read
	\begin{equation}\label{symmetric_transport_reaction}
		\partial_t\colvec{\alpha \\ \beta} + \begin{pmatrix}
		\Gamma & 0 \\
		0 & -\Gamma
		\end{pmatrix} \partial_x \colvec{\alpha \\ \beta} = \begin{pmatrix}
		B_1 & B_2 \\
		B_2 & B_1
		\end{pmatrix} \colvec{\alpha \\ \beta} \eqqcolon B \colvec{\alpha \\ \beta}
	\end{equation}
	for some matrices $B_1, B_2 \in \R^{N\times N}.$ The left-hand side of
	\eqref{symmetric_transport_reaction} simply transcribes the model assumption
	that left and right moving particles have the same speed and the symmetry of $B$
	is always a consequence of symmetry assumptions on the linear turning behavior
	of the particles and on the nonlinearities appearing in the non-linearized
	models. Given this setup, it is very natural to assume that particles hitting
	the boundary of $[0,L]$ are reflected and turn into particles moving in the
	other direction. These considerations yield	 the Neumann boundary conditions
	\begin{equation}\label{neumann_boundary}
		\alpha(t,0) = \beta(t,0) \quad \text{ and } \quad \alpha(t,L) = \beta(t,L)
		\quad \text{ for all } t\geq 0.
	\end{equation}
	We show a connection of \eqref{symmetric_transport_reaction}
	with Neumann boundary conditions \eqref{neumann_boundary} on $(0,L)$ to
	\eqref{symmetric_transport_reaction} with periodic boundary conditions on
	$(0,2L).$ This connection has already been pointed out in \cite[Lemma 2, p. 242]{lutscher2002modeling} and in the proof of \cite[Theorem 4.5, p. 18]{hillen2010existence} for a Goldstein-Kac model, which is probably the most
	famous example of an equation of type \eqref{symmetric_transport_reaction}; see
	\Cref{section_goldstein_kac} for the model description. We show that the idea
	from \cite[Theorem 4.5, p. 18]{hillen2010existence} carries over to the general
	case by using (without proof) that
	\begin{equation}\label{neumann_generator}
		(-A+B)^{\text{Neu}, \, L} \coloneqq - \begin{pmatrix}
		\Gamma & 0 \\
		0 & -\Gamma
		\end{pmatrix} \partial_x + B
	\end{equation}
	with domain
	\begin{equation}\label{neumann_generator_domain}
		D((-A+B)^{\text{Neu}, \, L}) = \bigg\{ (\alpha, \, \beta)^T \in
		W^{1,p}((0,L), \, \C^{2N}) \, \colon \, \alpha(0)=\beta(0) \text{ and }
		\alpha(L) = \beta(L)\bigg\}
	\end{equation}
	generates a strongly continuous semigroup on $L^p((0,L), \, \C^{2N})$, which we
	call $(R^{\text{Neu},L}(t))_{t\geq0}.$ This statement seems very reasonable
	and it should not be too difficult to prove it with, for example, the
	Lumer-Phillips \Cref{lumer_phillips} and
	the same perturbation argument as in \Cref{theorem_semigroup_generated_by_-A+B}.
	\newline
	To make the connection to periodic boundary conditions precise, we introduce the
	extension operator
	\begin{equation}\label{extension_operator}
		\begin{aligned}
		\mathcal{E}\colon L^p((0,L), \, \C^{2N}) &\longrightarrow L^p((0,2L), \, \C^{2N})
		\\
		\colvec{\alpha \\ \beta} &\longmapsto \colvec{\widetilde{\alpha} \\ \widetilde{\beta}}
		\end{aligned}
	\end{equation}
	with
	\begin{equation*}
		\widetilde{\alpha}(x) = \begin{cases}
		\alpha(x) &  x\in (0,L) \\
		\beta(2L-x) & x\in(L, 2L)
		\end{cases}
		\quad \text{ and } \quad
		\widetilde{\beta}(x) = \begin{cases}
		\beta(x) &  x\in (0,L) \\
		\alpha(2L-x) & x\in(L, 2L)
		\end{cases}.
	\end{equation*}
	Furthermore, we introduce the notation $(-A+B)^{\text{per},L}$ for the
	operator defined in \eqref{neumann_generator} with domain
	\begin{equation}\label{periodic_gen_domain}
		D((-A+B)^{\text{per}, \, L}) = \bigg\{ (\alpha, \, \beta)^T \in
		W^{1,p}((0,L), \, \C^{2N}) \, \colon \, \alpha(0)=\alpha(L) \text{ and }
		\beta(0) = \beta(L)\bigg\}.
	\end{equation}
	It should be noted that $(-A+B)^{\text{per},L}$ is the the generator of the transport-reaction semigroup on the circle $\T_L$ from \Cref{theorem_semigroup_generated_by_-A+B}. We introduced this alternative
	definition solely for notation purposes.
	
	\begin{theorem}\label{theorem_neumann_periodic_connection}
		Let $1\leq p < \infty$ and let $L>0$. Moreover, let
		$(R^{\text{Neu},L}(t))_{t\geq0}$ be the semigroup on $L^p((0,L), \,
		\C^{2N})$ generated by $(-A+B)^{\text{Neu},L}$ and let $(R^{\text{per}, 2L}(t))_{t\geq0}$ be the semigroup on $L^p((0,2L), \, \C^{2N})$ generated by
		$(-A+B)^{\text{per},2L}$. Then
		\begin{equation*}
			R^{\text{Neu},L}(t) \colvec{\alpha \\ \beta} = \restr{R^{\text{per}, 2L}(t)
			\mathcal{E}\bigg(\colvec{\alpha \\ \beta}\bigg)}{(0,L)}
		\end{equation*}
		holds for all $(\alpha, \, \beta) \in L^p((0,L), \, \C^{2N})$ and all
		$t\geq0$, where $\mathcal{E}$ is the extension operator defined in
		\eqref{extension_operator}.
	\end{theorem}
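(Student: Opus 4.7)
The plan is to construct a candidate semigroup on $L^p((0,L), \, \C^{2N})$ by conjugating $(R^{\text{per},2L}(t))_{t\geq 0}$ with the extension operator $\mathcal{E}$, verify strong continuity and the semigroup property, identify its generator with $(-A+B)^{\text{Neu}, L}$, and then appeal to uniqueness of $C_0$-semigroups from their generators.

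I first introduce the reflection--swap operator
\begin{equation*}
	S\colon L^p((0,2L), \, \C^{2N}) \longrightarrow L^p((0,2L), \, \C^{2N}), \qquad \colvec{\tilde\alpha \\ \tilde\beta}(x) \longmapsto \colvec{\tilde\beta(2L-x) \\ \tilde\alpha(2L-x)},
\end{equation*}
which is an isometric involution. The key structural observation is that $S$ commutes with $(-A+B)^{\text{per}, 2L}$: the minus sign produced by the reflection $x \mapsto 2L-x$ exactly cancels the sign difference between the $\Gamma$ and $-\Gamma$ blocks of the transport operator, while the block structure of $B$ in \eqref{symmetric_transport_reaction} (equal diagonal blocks, equal off-diagonal blocks) is invariant under swapping the two $N$-tuples. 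Since $S$ also commutes with the transport semigroup on the $2L$-torus, the Dyson--Phillips series from \Cref{theorem_semigroup_generated_by_-A+B} transfers this commutation to $R^{\text{per}, 2L}(t)$ for every $t\geq 0$. A direct check shows that the range of $\mathcal{E}$ coincides with the closed fixed-point space $V := \ker(S - I_{L^p((0,2L), \, \C^{2N})})$, which is therefore invariant under $R^{\text{per}, 2L}(t)$. Since restriction to $(0,L)$ is a left inverse of $\mathcal{E}$ on $V$, the formula
\begin{equation*}
	\widetilde{R}(t) \colvec{\alpha \\ \beta} := \restr{R^{\text{per}, 2L}(t) \mathcal{E}\bigg(\colvec{\alpha \\ \beta}\bigg)}{(0,L)}
\end{equation*}
defines a strongly continuous semigroup on $L^p((0,L), \, \C^{2N})$.

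It remains to identify the generator $\widetilde{A}$ of $(\widetilde{R}(t))_{t\geq 0}$ with $(-A+B)^{\text{Neu}, L}$. For $u = (\alpha, \beta)^T \in D((-A+B)^{\text{Neu}, L})$ given by \eqref{neumann_generator_domain}, the extension $\mathcal{E}u$ lies in $W^{1,p}((0,2L), \, \C^{2N})$ precisely because the Neumann conditions $\alpha(0) = \beta(0)$ and $\alpha(L) = \beta(L)$ make $\tilde\alpha$ and $\tilde\beta$ continuous at the gluing point $x = L$ and satisfy the periodicity condition from \eqref{periodic_gen_domain} between $0$ and $2L$. Hence $\mathcal{E}u \in D((-A+B)^{\text{per}, 2L}) \cap V$, and a pointwise computation on $(0,L)$ and on $(L,2L)$ yields
\begin{equation*}
	(-A+B)^{\text{per}, 2L}\, \mathcal{E}u \;=\; \mathcal{E}\,(-A+B)^{\text{Neu}, L}\, u.
\end{equation*}
Differentiating $\widetilde{R}(t)u$ at $t=0$ shows $\widetilde{A} \supseteq (-A+B)^{\text{Neu}, L}$, and the standard argument via the Hille--Yosida \Cref{hille-yosida} (identical to the one at the end of \Cref{theorem_Ccinfty_core_A}) then forces $\widetilde{R}(t) = R^{\text{Neu}, L}(t)$ for all $t\geq 0$.

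The main obstacle is the rigorous proof of the invariance of $V$ under $R^{\text{per}, 2L}(t)$, since no explicit formula for the periodic semigroup is available when $B\neq 0$. The cleanest route is to first verify the intertwining $S\,(-A+B)^{\text{per}, 2L} = (-A+B)^{\text{per}, 2L}\, S$ on the smooth core provided by \Cref{corollary_Ccinfty_core_A+B} (scaled to $\T_{2L}$ via \Cref{theorem_side_length}), extend by closedness of the generator, pass the commutation to the resolvent, and finally transfer it to the semigroup through the Dyson--Phillips series.
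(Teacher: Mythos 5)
Your proof is correct and rests on the same structural symmetry as the paper's — the reflection–swap operator $S(\tilde\alpha,\tilde\beta)(x) = (\tilde\beta(2L-x),\tilde\alpha(2L-x))$ fixes exactly the range of $\mathcal{E}$ — but the two arguments are packaged differently, and the difference is worth noting. The paper argues at the level of an individual solution: for $(\alpha,\beta)$ in the Neumann domain it takes $u(t)=R^{\text{per},2L}(t)\mathcal{E}(\alpha,\beta)$, checks by a pointwise computation that $Su(t)$ solves the same periodic Cauchy problem with the same initial data, invokes uniqueness of the abstract Cauchy problem to conclude $Su(t)=u(t)$, reads off the Neumann boundary conditions from this identity, and then extends the formula to all of $L^p((0,L),\C^{2N})$ by density of the Neumann domain. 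You instead establish the identity $SR^{\text{per},2L}(t)=R^{\text{per},2L}(t)S$ as an operator identity (via $ST(t)=T(t)S$, $SB=BS$, and the Dyson–Phillips series, or equivalently via commutation with the resolvent), deduce that $V=\ker(S-I)=\range(\mathcal{E})$ is a closed invariant subspace, and then identify the compressed semigroup with $(R^{\text{Neu},L}(t))_{t\geq0}$ by computing its generator and invoking the Hille–Yosida bijectivity argument. Your route has the advantage that it never needs a density argument on the Neumann domain and isolates the $S$-commutation as a reusable operator-level fact; the paper's route is more elementary and avoids introducing $S$ explicitly, at the cost of the per-solution pointwise verification. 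Both are complete; one small caveat in yours is that when differentiating $\widetilde R(t)u$ at $t=0$ you should note explicitly that restriction from $L^p((0,2L))$ to $L^p((0,L))$ is a bounded (hence limit-preserving) operator, so the convergence of the difference quotient on the large interval transfers to the small one — this is what actually lets you read off $\widetilde A\supseteq(-A+B)^{\text{Neu},L}$.
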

	\begin{proof}
		Let $(\alpha, \beta) \in D((-A+B)^{\text{Neu},L})$ and define
		$(\widetilde{\alpha}, \widetilde{\beta}) \coloneqq \mathcal{E}((\alpha,\beta))
		\in D((-A+B)^{\text{per},2L}).$ Notice that the periodic boundary conditions are
		fulfilled due to the Sobolev embedding $W^{1,p}((0,L)) \subseteq
		C([0,L])$. Consider the solution
		\begin{equation}\label{auxeq23}
			\colvec{\widetilde{\alpha} \\ \widetilde{\beta}}(t, \cdot) = R^{\text{per},
			2L}(t) \colvec{\widetilde{\alpha} \\ \widetilde{\beta}}(\cdot)
		\end{equation}
		of the model \eqref{symmetric_transport_reaction} on $(0,2L)$ with periodic boundary conditions. The next step is
		to show that the symmetry induced by the extension $\mathcal{E}$ is preserved
		for all times. To this end, we define the function
		\begin{equation*}
			\colvec{\widehat{\alpha} \\ \widehat{\beta}}(t, \cdot) \coloneqq
			\colvec{\widetilde{\beta} \\ \widetilde{\alpha}}(t, 2L - \cdot),
		\end{equation*}
		which fulfills the initial condition
		\begin{equation*}
			(\widehat{\alpha}, \widehat{\beta})(0,x) = (\widetilde{\beta},
			\widetilde{\alpha})(0,2L-x) = (\widetilde{\alpha}, \widetilde{\beta})(0,x)
		\end{equation*}
		by definition \eqref{extension_operator} of the extension. In addition, $(\widehat{\alpha}, \widehat{\beta})$ fulfills periodic boundary conditions and we
		have
		\begin{align*}
			\partial_t \colvec{\widehat{\alpha} \\ \widehat{\beta}}(t,x) &+
			\begin{pmatrix}
			\Gamma & 0 \\
			0 & -\Gamma
			\end{pmatrix} \partial_x\colvec{\widehat{\alpha}\\ \widehat{\beta}}(t,x) =
			\partial_t \colvec{\widetilde{\beta} \\ \widetilde{\alpha}}(t, 2L-x) +
			\begin{pmatrix}
			-\Gamma & 0 \\
			0 & \Gamma
			\end{pmatrix} \partial_x\colvec{\widetilde{\beta} \\
			\widetilde{\alpha}}(t,2L-x) \\
			&= \begin{pmatrix}
			B_2 & B_1 \\
			B_1 & B_2
			\end{pmatrix} \colvec{\widetilde{\alpha} \\ \widetilde{\beta}}(t,2L-x) =
			\begin{pmatrix}
			B_1 & B_2 \\
			B_2 & B_1
			\end{pmatrix} \colvec{\widehat{\alpha} \\ \widehat{\beta}}(t,x)
		\end{align*}
		almost everywhere. Uniqueness of the solution to \eqref{symmetric_transport_reaction} implies $(\widehat{\alpha},	\widehat{\beta}) = (\widetilde{\alpha}, \widetilde{\beta}),$ i.e.
		\begin{equation}\label{auxeq22}
			\colvec{\widetilde{\alpha} \\ \widetilde{\beta}}(t,\cdot) =
			\colvec{\widetilde{\beta} \\ \widetilde{\alpha}}(t, 2L - \cdot)
		\end{equation}
		and, together with the periodic boundary conditions for
		$(\widetilde{\alpha},\widetilde{\beta})$, equation \eqref{auxeq22} particularly
		shows
		\begin{equation*}
			\widetilde{\alpha}(t,0) = \widetilde{\beta}(t,2L) = \widetilde{\beta}(t,0)
			\quad \text{ and } \quad \widetilde{\alpha}(t,L) = \widetilde{\beta}(t,L).
		\end{equation*}
		That is, the restriction of $(\widetilde{\alpha}, \widetilde{\beta})$ to
		$(0,L)$ solves \eqref{symmetric_transport_reaction} with the Neumann boundary
		conditions \eqref{neumann_boundary}. Recalling \eqref{auxeq23}, this means
		\begin{equation*}
			R^{\text{Neu},L}(t) \colvec{\alpha \\ \beta} = \restr{R^{\text{per}, 2L}(t)
			\mathcal{E}\bigg(\colvec{\alpha \\ \beta}\bigg)}{[0,L]}
		\end{equation*}
		for all $(\alpha, \beta) \in D((-A_p+B_p)^{\text{Neu},L})$ and all $t\geq0.$
		Density of the domain of the generator, see \Cref{hille-yosida}, yields the theorem.
	\end{proof}
	\begin{remarks}
		\begin{enumerate}
			\item
			The result simply states that every solution of the Neumann problem on
			$(0,L)$ can be obtained by solving the periodic problem on $(0,2L)$ with the
			appropriately chosen initial function and subsequently restricting it to the
			original interval.
			\item
			This correspondence heavily relies on the fact that right and left moving
			particles have the same speed. If the speeds differ, concentrations in the
			Neumann model will ``overlap", which is not the case in periodic models (this
			illustration refers to $B=0$).
		\end{enumerate}
	\end{remarks}
	
	The result clearly justifies our strong focus on periodic boundary conditions.
	Nevertheless, it would be desirable to characterize the spectrum of the
	generator $(-A+B)^{\text{Neu},L}$ and its corresponding semigroup
	$(R^{\text{Neu},L}(t))_{t\geq0}.$ Without any additional effort, we can only
	state the following result.
	
	\begin{corollary}\label{corollary_spectrum_neumann}
		Let $1\leq p < \infty$ and $ L>0$. Assume that the transport directions
		$v_1,\cdots,v_N$ are non-vanishing, i.e. $v_1,\cdots,v_N\neq 0$. Moreover, let $(R^{\text{Neu},L}(t))_{t\geq0}$ be the semigroup on $L^p((0,L), \, \C^{2N})$ generated by $(-A+B)^{\text{Neu},L}$ and let $(R^{\text{per}, 2L}(t))_{t\geq0}$ be
		the semigroup on $L^p((0,2L), \, \C^{2N})$ generated by
		$(-A+B)^{\text{per},2L}$. Then the following properties hold true:
		\begin{enumerate}
			\item
			$\sigma((-A+B)^{\text{Neu},L}) = \sigma_p((-A+B)^{\text{Neu},L}),$
			\item
			$\sigma((-A+B)^{\text{Neu},L}) \subseteq
			\sigma((-A+B)^{\text{per},2L}),$
			\item
			$\sigma_p(R^{\text{Neu},L}(t)) \subseteq
			\sigma_p(R^{\text{per},2L}(t)) \quad \text{ for all } t\geq0.$
		\end{enumerate}
	\end{corollary}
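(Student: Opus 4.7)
The plan for part $(i)$ is to mimic the argument of \Cref{theorem_generator_has_compact_resolvent_spectrum=point_spectrum} and show that $(-A+B)^{\text{Neu},L}$ has compact resolvent. Concretely, I would show that the graph norm of $(-A+B)^{\text{Neu},L}$ is equivalent to the $W^{1,p}((0,L), \C^{2N})$ norm on its domain. As in the proof of \Cref{corollary_domain_generator_one_dimensional}, this is where the hypothesis $v_1,\cdots,v_N\neq 0$ enters: since $\Gamma$ is invertible, $\|(\alpha,\beta)^T\|_{W^{1,p}}$ is controlled by $\|(-A)^{\text{Neu},L}(\alpha,\beta)\|_{L^p}+\|(\alpha,\beta)\|_{L^p}$, and the bounded perturbation $B$ changes nothing by \Cref{theorem_semigroup_generated_by_-A+B}. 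The Rellich-Kondrachov embedding $W^{1,p}((0,L),\C^{2N})\subset\subset L^p((0,L),\C^{2N})$ then gives compactness of the canonical inclusion of $D((-A+B)^{\text{Neu},L})$ into $L^p$, which by \cite[\Romannum{2}. 4.25, p. 117]{engel2001one} gives compact resolvent and hence $\sigma=\sigma_p$.

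For part $(ii)$, by $(i)$ it suffices to show $\sigma_p((-A+B)^{\text{Neu},L})\subseteq \sigma_p((-A+B)^{\text{per},2L})$. The idea is to lift eigenfunctions via the extension operator $\mathcal{E}$ from \eqref{extension_operator}. First I would check that $\mathcal{E}$ actually maps the Neumann domain \eqref{neumann_generator_domain} into the periodic domain \eqref{periodic_gen_domain}: the continuity matching $\alpha(0)=\beta(0)$ and $\alpha(L)=\beta(L)$ enforced by the Neumann condition guarantees that the extension is continuous at $0\sim 2L$ and at $L$, so $\mathcal{E}((\alpha,\beta)^T)\in W^{1,p}((0,2L),\C^{2N})$ and the periodic matching at $0$ and $2L$ is automatic. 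Then, given $(-A+B)^{\text{Neu},L}(\alpha,\beta)^T=\lambda(\alpha,\beta)^T$, a direct computation of $(-A+B)^{\text{per},2L}\mathcal{E}((\alpha,\beta)^T)$ on $(L,2L)$ using the substitution $y=2L-x$, together with the symmetry of $B$ (interchanging the roles of $B_1$ and $B_2$ correctly switches the two blocks), shows that $\mathcal{E}((\alpha,\beta)^T)$ is an eigenfunction of $(-A+B)^{\text{per},2L}$ with the same eigenvalue $\lambda$.

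For part $(iii)$, the plan is to use \Cref{theorem_neumann_periodic_connection} together with the symmetry $\widetilde{\alpha}(t,\cdot)=\widetilde{\beta}(t,2L-\cdot)$ established in its proof (equation \eqref{auxeq22}). Given an eigenfunction $(\alpha,\beta)^T\in L^p((0,L),\C^{2N})$ of $R^{\text{Neu},L}(t)$ with eigenvalue $\mu$, set $w\coloneqq R^{\text{per},2L}(t)\mathcal{E}((\alpha,\beta)^T)$. By \Cref{theorem_neumann_periodic_connection}, $w$ restricted to $(0,L)$ equals $\mu(\alpha,\beta)^T$. The symmetry \eqref{auxeq22} (applied with the initial datum $\mathcal{E}((\alpha,\beta)^T)$, which is preserved under $(\widetilde{\alpha},\widetilde{\beta})\mapsto(\widetilde{\beta}(2L-\cdot),\widetilde{\alpha}(2L-\cdot))$) forces $w$ on $(L,2L)$ to equal $\mu\mathcal{E}((\alpha,\beta)^T)$ on $(L,2L)$ as well, so $w=\mu\mathcal{E}((\alpha,\beta)^T)$ on all of $(0,2L)$, yielding $\mu\in\sigma_p(R^{\text{per},2L}(t))$.

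The main obstacle is essentially bookkeeping rather than a deep difficulty: one must carefully verify the domain-matching for $\mathcal{E}$ in $(ii)$ and correctly execute the substitution $y=2L-x$ so that the off-diagonal block structure of $B$ lines up with the sign flip in $\partial_x$. The symmetry step in $(iii)$ mirrors the argument already given inside the proof of \Cref{theorem_neumann_periodic_connection}, so the key work is to recognize that the same invariance principle applies to the solution at any fixed time $t\geq 0$.
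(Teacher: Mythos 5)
Your proposal matches the paper's own argument in all three parts: compact resolvent via equivalence of the graph norm with $W^{1,p}$ plus Rellich--Kondrachov for $(i)$, lifting eigenfunctions through $\mathcal{E}$ with the substitution $y=2L-x$ for $(ii)$, and combining \Cref{theorem_neumann_periodic_connection} with the symmetry \eqref{auxeq22} for $(iii)$. The only small item the paper adds and you omit is an explicit check that $\sigma((-A+B)^{\text{Neu},L})$ is non-empty (via a constant eigenfunction of $B_1+B_2$), needed to invoke the cited result on operators with compact resolvent; this is easily supplied.
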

	\begin{proof}
		\begin{enumerate}
			\item
			This follows similarly to the proof of
			\Cref{theorem_generator_has_compact_resolvent_spectrum=point_spectrum} and every
			property mentioned there also holds for the generator with Neumann boundary
			conditions. Everything works out nicely due to the compact embedding
			\begin{equation*}
				D((-A+B)^{\text{Neu,L}}) \subset \subset L^p((0,L), \, \C^{2N}),
			\end{equation*}
			which is a consequence of the assumption of non-vanishing transport
			directions. To see that spectrum is non-empty, let $\lambda\in\C$ be an eigenvalue of $B_1+B_2$ with eigenvector $z\in \C^N\backslash \{0\}$ and define the constant function $(\alpha,\beta) \equiv (z,z).$ Then, $(\alpha,\beta)$ fulfills the boundary conditions $\alpha(0) = \beta(0), \,
			\alpha(L)=\beta(L)$ and satisfies
			\begin{equation*}
				(-A+B)^{\text{Neu}, \, L}\colvec{\alpha \\ \beta} = \begin{pmatrix}
				B_1 & B_2 \\
				B_2 & B_1
				\end{pmatrix} \colvec{z \\ z} = \lambda \colvec{\alpha \\ \beta}.
			\end{equation*}
			\item
			Let $\lambda \in \sigma((-A+B)^{\text{Neu},L}).$ We know from $(i)$ that
			$\lambda$ is an eigenvalue, i.e. there exists $(\alpha,\beta)\in
			D((-A+B)^{\text{Neu},L})\backslash \{0\}$ with
			\begin{equation}\label{auxeq24}
				-\begin{pmatrix}
				\Gamma & 0 \\
				0 & -\Gamma
				\end{pmatrix} \partial_x \colvec{\alpha \\ \beta} + \begin{pmatrix}
				B_1 & B_2 \\
				B_2 & B_1
				\end{pmatrix} \colvec{\alpha \\ \beta} = \lambda \colvec{\alpha \\ \beta}.
			\end{equation}
			The extension $(\widetilde{\alpha},\widetilde{\beta}) \coloneqq
			\mathcal{E}((\alpha,\beta))$ fulfills periodic boundary conditions and for
			$x\in(L,2L)$ and we can use \eqref{auxeq24} to obtain
			\begin{align*}
				-\begin{pmatrix}
				\Gamma & 0 \\
				0 & -\Gamma
				\end{pmatrix} &\partial_x\colvec{\widetilde{\alpha} \\ \widetilde{\beta}}(x)
				+ \begin{pmatrix}
				B_1 & B_2 \\
				B_2 & B_1
				\end{pmatrix} \colvec{\widetilde{\alpha} \\ \widetilde{\beta}}(x) \\
				&= \begin{pmatrix}
				\Gamma & 0 \\
				0 & -\Gamma
				\end{pmatrix}\partial_x \colvec{\beta \\ \alpha}(2L-x) + \begin{pmatrix}
				B_1 & B_2 \\
				B_2 & B_1
				\end{pmatrix} \colvec{\beta \\ \alpha} (2L-x) \\
				&= \lambda \colvec{\beta \\ \alpha}(2L-x) = \lambda\colvec{\widetilde{\alpha}
				\\ \widetilde{\beta}}(x)
			\end{align*}
			almost everywhere. Hence, $(\widetilde{\alpha}, \widetilde{\beta}) \in
			D((-A+B)^{\text{per},2L})$ is an eigenfunction of
			$(-A+B)^{\text{per},2L}$ with eigenvalue $\lambda$.
			\item
			Let $t\geq0$ and $\lambda \in \sigma_p(R^{\text{Neu},L}(t))$ with
			eigenfunction $(\alpha, \beta) \in L^p((0,L),\C^{2N}).$ Consider the extension
			$(\widetilde{\alpha}, \widetilde{\beta}) =\mathcal{E}((\alpha,\beta)) \in
			L^p((0,2L), \, \C^{2N}).$ \Cref{theorem_neumann_periodic_connection} yields
			\begin{equation}\label{auxeq25}
				\restr{R^{\text{per},2L}(t)\colvec{\widetilde{\alpha} \\
				\widetilde{\beta}}}{(0,L)} = R^{\text{Neu},L}(t) \colvec{\alpha \\ \beta} =
				\lambda \colvec{\alpha \\ \beta} = \lambda \restr{\colvec{\widetilde{\alpha} \\
				\widetilde{\beta}}}{(0,L)}.
			\end{equation}
			Moreover, density of $D((-A+B)^{\text{Neu},L})$ in $L^p((0,L), \, \C^{2N})$ and \eqref{auxeq22}
			implies
			\begin{equation*}
				R^{\text{per},2L}(t) \colvec{\widetilde{\alpha} \\ \widetilde{\beta}}(x)
				= \begin{pmatrix}
				0 & I_{\C^{N\times N}} \\
				I_{\C^{N\times N}} & 0
				\end{pmatrix} \bigg( R^{\text{per},2L}(t) \colvec{\widetilde{\alpha} \\
				\widetilde{\beta}}(2L-x) \bigg)
			\end{equation*}
			almost everywhere on $(0,2L).$ Thus, almost every $x\in(L,2L)$ fulfills
			\begin{align*}
				R^{\text{per},2L}(t)\colvec{\widetilde{\alpha} \\ \widetilde{\beta}} (x) &=
				\begin{pmatrix}
				0 & I_{\C^{N\times N}} \\
				I_{\C^{N\times N}} & 0
				\end{pmatrix} R^{\text{Neu},L}(t) \colvec{\alpha \\ \beta} (2L-x) \\
				&= \lambda \colvec{\beta \\ \alpha} (2L-x) = \lambda
				\colvec{\widetilde{\alpha} \\ \widetilde{\beta}}(x)
			\end{align*}
			by \eqref{auxeq25} and we obtain $\lambda \in
			\sigma_p(R^{\text{per},2L}(t))$ with eigenfunction $(\widetilde{\alpha},
			\widetilde{\beta}).$
		\end{enumerate}
	\end{proof}

	\newpage
	
	\chapter{Applications: Long-Time Behavior of Hyperbolic Models}\label{chapter_application}
	
	The theory from \Cref{chapter_spectral_analysis} can deal with many existing myxobacteria models on a linearized level. It allows us to study the long-time
	behavior of solutions and the results obtained can, for example, be used to verify already existing qualitative stability theorems from the perspective of semigroup theory. In addition, the weak spectral mapping \Cref{theorem_WSMTHM_p_eq_2} enables us to prove sharp quantitative estimates. \newline
	The methods used do not rely on any symmetry assumption	or on the typically used Kac trick \cite{kac1974stochastic}, which allows to reduce symmetric systems of hyperbolic equations to reaction telegraph equations for sufficiently smooth solutions. It has for example been used in	\cite{hillen1996turing,lutscher2002emerging,eftimie2012hyperbolic} and we refer
	to these references and \cite{hillen2010existence} for any details on the trick.\newline
	
	We show an optimal convergence result for a classical Goldstein-Kac model, apply
	our theory to models ``with killing" and end with an application to a more
	delicate model from \cite{hillen1996turing}.
	
	\section{A Goldstein-Kac Model}\label{section_goldstein_kac}
	
	In \cite{lutscher2002emerging}, the authors proposed a myxobacteria model of
	Goldstein-Kac type. They focused on a system with $N=2$ bacteria groups and
	their equation reads
	\begin{equation}\label{system_goldstein_kac}
		\partial_t \colvec{w_1 \\ w_2} + \begin{pmatrix}
		v & 0 \\
		0 & -v
		\end{pmatrix} \partial_x \colvec{w_1 \\ w_2} = \begin{pmatrix}
		-(\mu+\eta_1) & \mu+\eta_2 \\
		\mu+\eta_1 & -(\mu+\eta_2)
		\end{pmatrix} \colvec{w_1 \\ w_2},
	\end{equation}
	where $v>0$ is the speed, $\mu>0$ is an autonomous turning rate and
	$\eta_\indexone = \eta_\indexone(w_1, \, w_2) \geq0$ for $j=1,2$ are turning rates, which
	depend on the concentrations. Additionally, they assume symmetry with respect to
	a change of left and right, i.e.
	\begin{equation*}
		\eta_1(w_1, w_2) = \eta_2(w_2, w_1) \eqqcolon \eta(w_1, w_2)
	\end{equation*}
	for all $w_1, \, w_2 \in \R.$ System \eqref{system_goldstein_kac} has the
	stationary states $w_1 = w_2 \equiv c$ for arbitrary $c\in\R.$ We define $u_1
	\coloneqq w_1 -c, \, u_2 \coloneqq w_2 - c$ and linearize with Taylor to obtain
	\begin{equation}\label{model_Goldstein_kac}
		\begin{aligned}
		\partial_t u &+ \begin{pmatrix}
		v & 0 \\
		0 & -v
		\end{pmatrix} \partial_x u = \begin{pmatrix}
		-\mu & \mu \\
		\mu & -\mu
		\end{pmatrix} w + \begin{pmatrix}
		-\eta(w_1, w_2) & \eta(w_2, w_1) \\
		\eta(w_1, w_2) & -\eta(w_2, w_1)
		\end{pmatrix} w \\
		&= \begin{pmatrix}
		-\mu & \mu \\
		\mu & -\mu
		\end{pmatrix} u + \colvec{(\eta(c,c)+\partial_1\eta(c,c)c -
			\partial_2\eta(c,c)c )(w_2-w_1) \\ (\eta(c,c)+\partial_1\eta(c,c)c -
			\partial_2\eta(c,c)c )(w_1-w_2)} \\
		&= \begin{pmatrix}
		-\Lambda & \Lambda \\
		\Lambda & -\Lambda
		\end{pmatrix} u
		\end{aligned}
	\end{equation}
	for $\Lambda \coloneqq \mu+\eta(c,c)+\partial_1\eta(c,c)c -
	\partial_2\eta(c,c)c$, cf. \cite[(5), p. 624]{lutscher2002emerging}. \newline
	This linearized problem is well-posed and the solution of the abstract Cauchy
	problem with initial function $u\in L^2(\T, \, \C^2)$ is given by $t \mapsto
	R(t)u$, where $(R(t))_{t\geq0}$ is the transport-reaction semigroup on $L^2(\T, \, \C^2)$ generated by 
	\begin{equation}\label{gen_goldstein_kac}
		-A+B \coloneqq -\begin{pmatrix}
		v & 0 \\
		0 & -v
		\end{pmatrix} \partial_x + \begin{pmatrix}
		-\Lambda & \Lambda \\
		\Lambda & -\Lambda
		\end{pmatrix},
	\end{equation}
	with domain $H^1(\T, \, \C^2)$ see \Cref{theorem_semigroup_generated_by_-A+B} and \Cref{corollary_domain_generator_one_dimensional}.
	Due to \Cref{theorem_WSMTHM_p_eq_2}, a stability analysis reduces to a spectral
	analysis of the generator. In particular, the growth bound of the semigroup
	coincides with the spectral bound of the generator by \Cref{wsmthm_implies_growth_bd_eq_spectral_bound}. Secondly, \Cref{theorem_spectrum_generator}
	implies
	\begin{equation*}
		\sigma(-A+B) = \overline{\bigcup_{k\in \Z} \sigma(M(k))},
	\end{equation*}
	where
	\begin{equation}\label{auxeq16}
		M(k) = -2 \pi i \indextwo \begin{pmatrix}
		v & 0 \\
		0 & -v
		\end{pmatrix} + \begin{pmatrix}
		-\Lambda & \Lambda \\
		\Lambda & -\Lambda
		\end{pmatrix} = \begin{pmatrix}
		-2\pi i \indextwo v -\Lambda & \Lambda \\
		\Lambda & 2\pi i\indextwo v -\Lambda
		\end{pmatrix}.
	\end{equation}
		Hence, we need to compute the roots of
	\begin{align*}
		\det\begin{pmatrix}
		-2\pi i \indextwo v -\Lambda- \lambda & \Lambda \\
		\Lambda & 2\pi i\indextwo v -\Lambda - \lambda
		\end{pmatrix} &= (-2\pi i \indextwo v-\Lambda-\lambda)(2\pi i \indextwo v -
		\Lambda - \lambda) - \Lambda^2 \\
		&= \lambda^2 + 2 \Lambda \lambda + 4 \pi^2 \indextwo^2 v^2.
	\end{align*}
	They are clearly given by
	\begin{equation*}
		\lambda_{1,2}(\indextwo) = - \Lambda\pm \sqrt{\Lambda^2 - 4 \pi^2\indextwo^2 v^2}
	\end{equation*}
	and particularly
	\begin{equation}\label{spectrum_goldstein_kac_generator}
		\sigma(-A+B) = \{ - \Lambda\pm \sqrt{\Lambda^2 - 4 \pi^2\indextwo^2 v^2} \,
		\colon \, \indextwo\in\Z \}
	\end{equation}
	because the union of all eigenvalues is a closed set. It follows immediately that the stationary states are unstable for $\Lambda < 0$ (consider $\indextwo=0$) and it always holds $0\in \sigma(-A+B)$. \newline
	The goal is now to show a precise convergence result if we have $\Lambda>0.$ Then, every eigenvalue except $0$ has negative real part and we essentially need a spectral decomposition of the generator. To this end, let
	\begin{equation}
		\begin{aligned}
			\{1\} &= \{u\in L^2(\T, \, \C^2) \, \colon \, u \equiv const \in \C^2\}, \\
			\{1\}^{\perp} &= \{u\in L^2(\T, \, \C^2) \, \colon \, \int_{\T} u(x) \, dx =
			0\},
		\end{aligned}
	\end{equation}
	such that $L^2(\T, \, \C^2) = \{1\} \oplus \{1\}^\perp.$ Notice that
	$E\coloneqq(\{1\}^\perp, \, \|\cdot\|_{L^2(\T, \C^2)})$ defines a Banach space.
	We consider the part of $(-A+B, \, H^1(\T, \, \C^2))$ in $E,$ i.e.
	\begin{equation*}
		\restr{-A+B}{E}u \coloneqq (-A+B)u
	\end{equation*}
	with
	\begin{equation}\label{restriction_generator_goldstein_kac}
		D(\restr{-A+B}{E}) \coloneqq \left\{u\in H^1(\T, \, \C^2) \, \colon \,
		\int_{\T}u(x) \, dx = 0\right\}.
	\end{equation}
	
	\begin{proposition}\label{proposition_char_spec_goldstein_kac_restr}
		Let $v>0$ and let $ \Lambda \in \R.$ Then $(\restr{-A+B}{E}, \, D(\restr{-A+B}{E}))$
		generates a strongly continuous semigroup on $E.$ This semigroup is given by
		$(\restr{R(t)}{E})_{t\geq0},$ i.e. the restriction of $(R(t))_{t\geq0}$ to $E.$
		Moreover, it holds
		\begin{equation*}
			\sigma(\restr{-A+B}{E}) =\sigma(-A+B)\backslash\{-2\Lambda, 0\} =  \{ -
			\Lambda\pm \sqrt{\Lambda^2 - 4 \pi^2\indextwo^2 v^2} \, \colon \,
			\indextwo\in\Z\backslash\{0\} \}
		\end{equation*}
	\end{proposition}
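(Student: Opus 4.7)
The plan is to first establish that the subspace $E$ is invariant under the transport-reaction semigroup, then identify the generator of the restricted semigroup, and finally compute the spectrum by adapting the Fourier-based argument of \Cref{fourier_diagram_spectrum} to this invariant subspace.

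For the semigroup statement, I would invoke \Cref{time_development_averages}: for any $u \in E$, i.e., with $\int_\T u(x)\, dx = 0$, one has
\begin{equation*}
	\int_\T (R(t)u)(x)\, dx = e^{tB} \int_\T u(x)\, dx = 0,
\end{equation*}
so $R(t)E \subseteq E$. Since $E$ is a closed subspace of $L^2(\T, \, \C^2)$ (being the kernel of the bounded functional $u \mapsto \int u$), the family $(\restr{R(t)}{E})_{t \geq 0}$ inherits strong continuity and the semigroup property, hence defines a $C_0$-semigroup on $E$. Its generator is, by standard restriction theory (\cite[\Romannum{2}. 2.3, p. 60f.]{engel2001one}), the part of $(-A+B, \, H^1(\T, \, \C^2))$ in $E$. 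A short calculation using $\int_\T \partial_x u_\indexone\, dx = 0$ on the torus and $\int_\T Bu\, dx = B\int_\T u\, dx$ shows that for every $u \in H^1(\T, \, \C^2) \cap E$ we automatically have $(-A+B)u \in E$, so the domain of the part reduces exactly to the one given in \eqref{restriction_generator_goldstein_kac}.

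For the spectral computation, I would pass to Fourier space. Under $\mathcal{F}$, the subspace $E$ corresponds to $\widetilde{E} \coloneqq \{\hat{u} \in l^2(\Z, \, \C^2) \colon \hat{u}(0) = 0\}$, which is closed in $l^2(\Z, \, \C^2)$. The matrix multiplication operator $\mathcal{A}_M$ introduced in the proof of \Cref{char_spectrum_generator} leaves $\widetilde{E}$ invariant because it acts diagonally in $k$, and its part in $\widetilde{E}$ is itself the matrix multiplication operator induced by the restriction of $k \mapsto M(k)$ to $\Z \setminus \{0\}$. Repeating the diagram argument of \Cref{fourier_diagram_spectrum} on the invariant subspaces $E$ and $\widetilde{E}$ yields
\begin{equation*}
	\sigma(\restr{-A+B}{E}) = \overline{\bigcup_{k \in \Z \setminus \{0\}} \sigma(M(k))}.
\end{equation*}
It remains to evaluate this union explicitly. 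From the computation preceding \eqref{spectrum_goldstein_kac_generator}, the eigenvalues of $M(k)$ are $\lambda_{1,2}(k) = -\Lambda \pm \sqrt{\Lambda^2 - 4\pi^2 k^2 v^2}$. Solving $\lambda_{1,2}(k) \in \{0, \, -2\Lambda\}$ forces $\Lambda^2 = \Lambda^2 - 4\pi^2 k^2 v^2$, hence $k = 0$; so neither $0$ nor $-2\Lambda$ appears in the remaining union. Moreover, for $|k|$ large enough the discriminant is negative with $|\real \lambda_{1,2}(k)| = |\Lambda|$ and $|\ima \lambda_{1,2}(k)| \to \infty$, while for the finitely many small $|k|$ the eigenvalues are real; in particular the union is already closed, so the closure adds nothing and the claimed identity
\begin{equation*}
	\sigma(\restr{-A+B}{E}) = \sigma(-A+B) \setminus \{-2\Lambda, \, 0\}
\end{equation*}
follows.

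The main point requiring care is the identification of the generator of the restricted semigroup with the part of $-A+B$ in $E$ (together with the simplification of its domain). Once this is in place, the Fourier argument is a verbatim adaptation of \Cref{fourier_diagram_spectrum} to the invariant subspaces $E$ and $\widetilde{E}$, and the final spectral description is a short algebraic check.
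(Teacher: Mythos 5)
Your proof is correct, and the spectral part takes a genuinely different route from the paper's own argument. For the generator identification you both invoke the same reference on parts of generators on invariant subspaces, but you simplify $D(\restr{-A+B}{E})$ by the direct observation that $\int_\T \partial_x u_\indexone \, dx = 0$ and $\int_\T Bu \, dx = B\int_\T u\, dx$ force $(-A+B)u\in E$ for every $u\in H^1(\T,\C^2)\cap E$, whereas the paper argues more abstractly via $E$-invariance, the definition of a generator, and closedness of $E$; both give the same domain. For the spectrum, the paper proves the inclusion $\supseteq$ by exhibiting the eigenfunctions $u(x)=z e^{2\pi i k x}$ for $k\neq0$, and proves $\subseteq$ by showing the restricted generator has compact resolvent (so $\sigma=\sigma_p$, every eigenvalue of $\restr{-A+B}{E}$ is one of $-A+B$) and then verifying by a direct ODE computation that neither $0$ nor $-2\Lambda$ can be an eigenvalue of the restriction. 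You instead rerun the matrix multiplication operator/Fourier argument of \Cref{fourier_diagram_spectrum} on the invariant subspaces $E$ and $\widetilde E=\{\hat u\in l^2(\Z,\C^2)\colon \hat u(0)=0\}$, obtain $\sigma(\restr{-A+B}{E})=\overline{\cup_{k\neq0}\sigma(M(k))}$, and finish with an algebraic check that the union is already closed and excludes $0$ and $-2\Lambda$. The paper itself remarks, immediately after the proposition, that these abstract matrix multiplication operator arguments provide an alternative proof of the spectral characterization, so your route is explicitly endorsed. What your version buys is uniformity with the proof of \Cref{theorem_WSMTHM_p_eq_2} (and it even yields the weak spectral mapping theorem \eqref{auxeq14} for the restricted semigroup at no extra cost, which the paper needs afterwards anyway); what the paper's version buys is a very concrete picture of why $0$ and $-2\Lambda$ drop out: they are eigenvalues of $-A+B$ only through the constant mode $k=0$, and the explicit ODE computation in the paper makes it transparent that removing the mean kills exactly those two modes.
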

	\begin{proof}
		Firstly, note that $E$ is $(R(t))_{t\geq0}$ invariant due to \Cref{time_development_averages}. Secondly, $E$ is continuously embedded into $L^2(\T,\C^2).$ We find, using \cite[\Romannum{2} 2.3, p. 60]{engel2001one}, that the restricted
		semigroup $(\restr{R(t)}{E})_{t\geq0}$ has the generator
		\begin{equation*}
			\restr{-A+B}{E}
		\end{equation*}
		with domain
		\begin{align*}
			D(\restr{-A+B}{E}) &= \big\{ u \in H^1(\T,\C^2) \cap E \, \colon \, (-A+B)u \in
			E\big\} = \left\{u\in H^1(\T, \, \C^2) \, \colon \, \int_{\T}u(x) \, dx = 0\right\}.
		\end{align*}
		Here, the second equality follows for example from the $E$-invariance of $(R(t))_{t\geq0}$, \Cref{def_generator} of generators and closedness of $E$ as a subspace of $L^2(\T, \, \C^2)$. \newline
		It is left to show the statements on the spectrum. The second equality is a
		direct consequence of \eqref{spectrum_goldstein_kac_generator} and we start with
		the direction $``\supseteq"$ for the first equality. If $\indextwo\neq0$ and
		$\lambda\in\sigma(M(\indextwo)),$ we can define
		\begin{equation*}
			u(x) \coloneqq z e^{2\pi i \indextwo x} \in E,
		\end{equation*}
		where $z\in\C^2\backslash\{0\}$ is an eigenvector of $M(\indextwo)$ with eigenvalue $\lambda.$ \Cref{proposition_generator_application_fourier} implies $\restr{-A+B}{E}u = \lambda u$ and $\lambda\in\sigma(\restr{-A+B}{E})$ follows. \newline
		Next, we show the direction $``\subseteq".$ Similarly to the proof of	\Cref{theorem_generator_has_compact_resolvent_spectrum=point_spectrum}, one can easily see that $(D(\restr{-A+B}{E}), \, \|\cdot\|_{-A+B})$ is compactly embedded into $E.$ This yields
		\begin{equation*}
			\sigma(\restr{-A+B}{E}) = \sigma_p(\restr{-A+B}{E}),
		\end{equation*}
		since we already know that the spectrum is non-empty from the previously proven
		direction. Notice that every eigenvalue of $\restr{-A+B}{E}$ is also an
		eigenvalue of $-A+B,$ simply by definition. We conclude
		\begin{equation*}
			\sigma(\restr{-A+B}{E})  \subseteq \sigma(-A+B)
		\end{equation*}
		and the final step is to show that neither $-2\Lambda$ nor $0$ can be eigenvalues
		of $\restr{-A+B}{E}.$ We present the proof for $\lambda = -2\Lambda$ and argue
		by contradiction. The proof for $\lambda=0$ is similar. To this end, let $u\in
		D(\restr{-A+B}{E}) \backslash \{0\}$ and assume
		\begin{equation*}
			-\begin{pmatrix}
			v & 0 \\
			0 & -v
			\end{pmatrix} \colvec{u^\prime_1 \\ u'_2} + \begin{pmatrix}
			-\Lambda & \Lambda \\
			\Lambda & -\Lambda
			\end{pmatrix}u = -2\Lambda\colvec{u_1 \\ u_2},
		\end{equation*}
		which is equivalent to
		\begin{equation}\label{auxeq13}
			-\begin{pmatrix}
			v & 0 \\
			0 & -v
			\end{pmatrix} \colvec{u'_1 \\ u'_2} + \begin{pmatrix}
			\Lambda & \Lambda \\
			\Lambda & \Lambda
			\end{pmatrix}u = 0.
		\end{equation}
		We obtain $(u_1+u_2)'=0,$ i.e. $u_1 \equiv -u_2+c$ for some constant $c\in\C.$
		The integral constraint \eqref{restriction_generator_goldstein_kac} implies
		$c=0$ and $u_1 \equiv -u_2.$ Plugging this information back into \eqref{auxeq13}
		yields $u_1 = -u_2 \equiv \widetilde{c}$ for a second constant
		$\widetilde{c}\in\C.$ Using again the integral constraint \eqref{auxeq13} gives
		$u=0,$ which contradicts our assumption of $u$ being an eigenfunction.
	\end{proof}
	
	In what follows, the idea is to use stability of the semigroup
	$(\restr{R(t)}{E})_{t\geq0}$ on $E$. We would expect stability due to shown
	spectral properties of the generator but actually, we still need a weak spectral
	mapping theorem for the restricted semigroup to make this connection rigorous,
	cf. the introduction of \Cref{chapter_spectral_analysis} and Appendix \ref{appendix_Long_time_behavior_SG} for a detailed
	explanation. \newline
	The theory on matrix multiplication operators, see
	\Cref{def_matrix_mult_op} and the subsequent page, can also deal with
	this technical problem. Taking $(X, \Sigma, \, \mu) = (\Z\backslash\{0\},
	\mathcal{P}(\Z\backslash\{0\}), \, \#)$ with counting measure $\#$ yields
	$L^2(X, \, \C^2) = l^2(\Z\backslash\{0\}, \, \C^2)$ and $\mathcal{F}(E) =
	l^2(\Z\backslash\{0\}, \, \C^2)$, where $\mathcal{F}$ is the Fourier transform
	defined in \eqref{def_fourier_transform}. Following the proof of
	\Cref{theorem_WSMTHM_p_eq_2} and using exactly the same arguments, we obtain
	\begin{equation}\label{auxeq14}
		\sigma(\restr{R(t)}{E}) = \overline{e^{t\sigma(\restr{-A+B}{E})}}.
	\end{equation}
	In fact, these abstract arguments with matrix multiplication operators offer an alternative proof for the characterization of $\sigma(\restr{-A+B}{E})$ in
	\Cref{proposition_char_spec_goldstein_kac_restr}. \newline
	
	\Cref{proposition_char_spec_goldstein_kac_restr} decomposes the spectrum of the generator $(-A+B, \, H^1(\T, \, \C^2))$ into its stable and	central spectrum. Together with \eqref{auxeq14}, we are finally able to prove an optimal convergence result for the model \eqref{model_Goldstein_kac} in the case
	$\Lambda>0$. For the sake of readability, we define the negative constant
	\begin{equation}\label{auxeq12}
		\omega \coloneqq -\Lambda + \real(\sqrt{\Lambda^2-4\pi^2v^2}) = \begin{cases}
		-\Lambda & \Lambda^2 \leq 4\pi^2v^2, \\
		-\Lambda + \sqrt{\Lambda^2-4\pi^2v^2} & \text{otherwise}
		\end{cases}
	\end{equation}
	before stating the theorem. Notice that $\omega\in [-\Lambda,0)$ is the largest
	real part of the eigenvalues in the stable spectrum of $-A+B.$
	
	\begin{theorem}\label{theorem_goldstein_kac_conv}
		Let $v>0, \, \Lambda>0$ and let $(R(t))_{t\geq0}$ be the transport-reaction semigroup on $L^2(\T, \, \C^2)$ generated by the operator defined in \eqref{gen_goldstein_kac}. Moreover, let $\omega$ be defined as in \eqref{auxeq12}. Then, for every $\varepsilon>0,$ there exists a constant $C_\varepsilon>0$ such that
		\begin{equation}\label{auxeq11}
			\| R(t)u - \frac{1}{2} \colvec{\int_{\T} u_1 + \int_{\T} u_2 \\ \int_{\T} u_1
			+ \int_{\T} u_2} \|_{L^2(\T, \C^2)} \leq C_\varepsilon e^{(\omega+\varepsilon)t}
			\|u\|_{L^2(\T, \, \C^2)}
		\end{equation}
		holds for all $u\in L^2(\T, \,\C^2).$ The estimate \eqref{auxeq11} does not
		hold for $\omega+\varepsilon$ replaced by any constant $\widetilde{\omega}<\omega.$
	\end{theorem}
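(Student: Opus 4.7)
The plan is to decompose $L^2(\T, \, \C^2)$ as the direct sum of the constant functions and $E = \{1\}^\perp$, treat the finite-dimensional piece by hand, and control the remainder via the weak spectral mapping theorem for the restricted semigroup. By \Cref{time_development_averages}, $E$ is invariant under $(R(t))_{t\geq 0}$, so writing $u = u_c + u_\perp$ with $u_c \equiv \int_{\T} u(x)\,dx \in \C^2$ one has $R(t)u_c = e^{tB}u_c$. The matrix $B$ has eigenvalue $0$ with eigenvector $(1,1)^T$ and eigenvalue $-2\Lambda$ with eigenvector $(1,-1)^T$, and a direct spectral decomposition of $u_c$ yields
\begin{equation*}
	e^{tB} u_c = \frac{1}{2}\left(\int_{\T} u_1 + \int_{\T} u_2\right) \colvec{1 \\ 1} + e^{-2\Lambda t} \cdot \frac{1}{2}\left(\int_{\T} u_1 - \int_{\T} u_2\right)\colvec{1 \\ -1}.
\end{equation*}
The first summand is exactly the quantity subtracted in \eqref{auxeq11}, so the proof reduces to bounding the $e^{-2\Lambda t}$-term together with $\restr{R(t)}{E}u_\perp$ in the $L^2(\T, \, \C^2)$-norm.

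The $e^{-2\Lambda t}$-term is handled directly: Cauchy--Schwarz gives an $L^2$-bound of order $e^{-2\Lambda t}\|u\|_{L^2(\T, \, \C^2)}$, and since $\omega \in [-\Lambda, 0)$ one has $-2\Lambda < \omega$, so this term is dominated by $C\,e^{(\omega+\varepsilon)t}\|u\|_{L^2(\T, \, \C^2)}$ for any $\varepsilon > 0$. For the second term I would invoke \eqref{auxeq14}, i.e.\ the weak spectral mapping theorem for the restricted semigroup, together with \Cref{proposition_char_spec_goldstein_kac_restr}, to conclude that the spectral bound of $\restr{-A+B}{E}$ equals $\omega$. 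Combining this with the equality of growth and spectral bound granted by \Cref{wsmthm_implies_growth_bd_eq_spectral_bound} yields $\|\restr{R(t)}{E}\|_{\mathcal{L}(E)} \leq C_\varepsilon e^{(\omega+\varepsilon)t}$ for suitable $C_\varepsilon > 0$, which finishes the upper bound.

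For optimality, I would check that $\omega$ is actually attained as the real part of some $\lambda_0 \in \sigma(\restr{-A+B}{E})$. Indeed, in the case $\Lambda^2 \leq 4\pi^2 v^2$ the eigenvalues of $M(\pm 1)$ from \eqref{auxeq16} both have real part $-\Lambda = \omega$, while in the case $\Lambda^2 > 4\pi^2 v^2$ the real number $-\Lambda + \sqrt{\Lambda^2 - 4\pi^2 v^2} = \omega$ is itself an eigenvalue. Picking an associated eigenfunction $u_0 \in E$, the left-hand side of \eqref{auxeq11} evaluates to $e^{\omega t}\|u_0\|_{L^2(\T, \, \C^2)}$, which cannot be bounded by $C\,e^{\tilde\omega t}$ for any $\tilde\omega < \omega$. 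The only delicate step, in my view, is establishing \eqref{auxeq14} rigorously; I would model the argument on the proof of \Cref{theorem_WSMTHM_p_eq_2}, replaying the Fourier and matrix-multiplication-operator argument on the measure space $(\Z \setminus \{0\}, \mathcal{P}(\Z \setminus \{0\}), \#)$ as outlined just after that theorem.
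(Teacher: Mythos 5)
Your proposal is correct and follows essentially the same route as the paper: the paper's Step 1 establishes exactly the estimate $\|\restr{R(t)}{E}u\|_{L^2}\leq C_\varepsilon e^{(\omega+\varepsilon)t}\|u\|_{L^2}$ via \eqref{auxeq14}, \Cref{proposition_char_spec_goldstein_kac_restr}, and \Cref{wsmthm_implies_growth_bd_eq_spectral_bound}; Step 2 computes $e^{tB}$ (the paper does it via full diagonalization, you via the eigendecomposition of $u_c$, which is the same thing); Step 3 decomposes $u=u_\perp+\int_\T u$ and combines the two pieces; and Step 4 builds the sharpness example from an eigenfunction $ze^{2\pi i x}\in E$ of $M(1)$ whose eigenvalue has real part $\omega$. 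Your remark that \eqref{auxeq14} is the one delicate ingredient and should be proved by replaying the matrix-multiplication-operator argument on $(\Z\setminus\{0\},\mathcal{P}(\Z\setminus\{0\}),\#)$ matches precisely what the paper says in the paragraph preceding the theorem.
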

	\begin{proof}
		Let $\varepsilon>0.$ \newline
		
		Step 1: Exponential stability of the restricted semigroup
		$(\restr{R(t)}{E})_{t\geq0}.$ \newline
		Recall that \Cref{proposition_char_spec_goldstein_kac_restr} states
		\begin{equation*}
			\sigma(\restr{-A+B}{E}) = \{ - \Lambda\pm \sqrt{\Lambda^2 - 4 \pi^2\indextwo^2
			v^2} \, \colon \, \indextwo\in\Z\backslash\{0\} \}.
		\end{equation*}
		The eigenvalue with the largest real part is $\lambda \coloneqq -\Lambda +
		\sqrt{\Lambda^2 - 4 \pi^2 v^2}$, i.e.
		\begin{equation*}
			s(\restr{-A+B}{E}) = \real(\lambda) = \omega,
		\end{equation*}
		where $s(\restr{-A+B}{E})$ is the spectral bound of $\restr{-A+B}{E},$ see
		\Cref{long_time_behavior_definition_exponential_growth_bound_spectral_bound}.
		The weak spectral mapping theorem \eqref{auxeq14} yields
		\begin{equation*}
			\omega_0(\restr{-A+B}{E}) = \omega
		\end{equation*}
		for the exponential growth bound of the restricted semigroup, see
		\Cref{long_time_behavior_definition_exponential_growth_bound_spectral_bound} and
		\Cref{wsmthm_implies_growth_bd_eq_spectral_bound}. Hence, there exists a constant
		$C_\varepsilon>0$ such that
		\begin{equation}\label{auxeq15}
			\| \restr{R(t)}{E}u \|_{L^2(\T, \, \C^2)} \leq C_\varepsilon
			e^{(\omega+\varepsilon)t} \|u\|_{L^2(\T, \, \C^2)}
		\end{equation}
		holds for all $u\in E.$ \newline
		
		Step 2: A computation of $e^{tB}.$ \newline
		The matrix
		\begin{equation*}
			B = \begin{pmatrix}
			-\Lambda & \Lambda \\
			\Lambda & -\Lambda
			\end{pmatrix} = \Lambda \begin{pmatrix}
			-1 &  1\\
			1 & -1
			\end{pmatrix}
		\end{equation*}
		can be diagonalized with
		\begin{equation*}
			B = \begin{pmatrix}
			1 & 1 \\
			1 & -1
			\end{pmatrix} \begin{pmatrix}
			0 & 0 \\
			0 & -2\Lambda
			\end{pmatrix} \begin{pmatrix}
			1 & 1 \\
			1 & -1
			\end{pmatrix}^{-1} = \half \begin{pmatrix}
			1 & 1 \\
			1 & -1
			\end{pmatrix} \begin{pmatrix}
			0 & 0 \\
			0 & -2\Lambda
			\end{pmatrix} \begin{pmatrix}
			1 & 1 \\
			1 & -1
			\end{pmatrix}
		\end{equation*}
		and we obtain
		\begin{equation*}
			e^{tB} = \begin{pmatrix}
			1 & 1 \\
			1 & -1
			\end{pmatrix} \begin{pmatrix}
			1 & 0 \\
			0 & e^{-2\Lambda t}
			\end{pmatrix} \begin{pmatrix}
			1 & 1 \\
			1 & -1
			\end{pmatrix}^{-1}
			= \half \begin{pmatrix}
			1+e^{-2\Lambda t} & 1-e^{-2\Lambda t} \\
			1-e^{-2\Lambda t} & 1+e^{-2\Lambda t}
			\end{pmatrix}.
		\end{equation*}
		In particular, $\Lambda>0$ implies
		\begin{equation}\label{auxeq17}
			e^{tB}y - \half\colvec{y_1 + y_2 \\ y_1+y_2} = \half e^{-2\Lambda t}
			\colvec{y_1-y_2 \\ y_2-y_1} \longrightarrow 0
		\end{equation}
		as $t\to\infty$ and for all $y\in\C^2.$ \newline
		
		Step 3: The convergence result for the full model. \newline
		Notice that, if $u\equiv const,$ an application of
		\Cref{proposition_semigroup_fourier_representation} shows
		\begin{equation*}
			R(t)u = e^{tM(0)}u = e^{tB} u.
		\end{equation*}
		Finally, using \eqref{auxeq15} and \eqref{auxeq17}, we obtain
		\begin{align*}
			\| R(t)u &- \frac{1}{2} \colvec{\int_{\T} u_1 + \int_{\T} u_2 \\ \int_{\T} u_1
			+ \int_{\T} u_2} \|_{L^2(\T, \C^2)} \\
			&= \| R(t)(u-\int_{\T} u) + R(t)\int_{\T}u - \frac{1}{2} \colvec{\int_{\T} u_1
			+ \int_{\T} u_2 \\ \int_{\T} u_1 + \int_{\T} u_2} \|_{L^2(\T, \C^2)} \\
			&\leq \| \restr{R(t)}{E} (u-\int_{\T} u) \|_{L^2(\T, \, \C^2)} + \| e^{tB}
			\int_{\T}u - \frac{1}{2} \colvec{\int_{\T} u_1 + \int_{\T} u_2 \\ \int_{\T} u_1
			+ \int_{\T} u_2} \|_{L^2(\T, \C^2)} \\
			&\leq C_\varepsilon e^{(\omega+\varepsilon)t} \| u - \int_{\T} u \|_{L^2(\T,
			\, \C^N)} + \half e^{-2\Lambda t} \bigg| \colvec{\int_{\T} u_1 - \int_{\T} u_2
			\\ \int_{\T} u_2 - \int_{\T } u_1} \bigg| \\
			&\leq C_\varepsilon e^{(\omega+\varepsilon)t} \bigg(\|u-\int_{\T} u\|_{L^2(\T,
			\, \C^2)} + \bigg|\int_{\T} u_1 - \int_{\T} u_2 \bigg| \bigg) \\
			&\leq C_\varepsilon e^{(\omega+\varepsilon)t} \|u\|_{L^2(\T, \, \C^2)}.
		\end{align*}
		In the last two steps, we used new constants $C_\varepsilon$ without
		relabeling. \newline
		
		Step 4: The estimate does not hold for $\omega+\varepsilon$ replaced by
		$\widetilde{\omega} < \omega$. \newline
		To see this claim, notice that $\lambda = -\Lambda + \sqrt{\Lambda^2 - 4 \pi^2
			v^2}$ is an eigenvalue of $M(1)$, where $M(1)$ was defined in \eqref{auxeq16}. We
		take an associated eigenvector $z\in\C^2$ with $|z|=1$ and define
		\begin{equation*}
			u \coloneqq z e^{2\pi i x} \in E.
		\end{equation*}
		\Cref{proposition_semigroup_fourier_representation} implies
		\begin{equation*}
			(R(t) u)(x) = e^{tM(1)}z e^{2\pi i x} = e^{t\lambda} z e^{2\pi i x}.
		\end{equation*}
		Therefore, we have
		\begin{equation*}
			\| R(t)u \|_{L^2(\T, \, \C^2)} = |e^{\lambda t}| = e^{\omega t} \|u\|_{L^2(\T,
			\, \C^2)}
		\end{equation*}
		and the exponential rate of convergence cannot be smaller than $\omega.$
	\end{proof}
	
	\begin{remarks}\label{remark_Goldstein_kac}
		\begin{enumerate}
			\item
			Notice that $\sigma(B) = \{-2\Lambda,0\}$. The qualitative behavior, namely
			convergence to the mean of the initial value or blow-up, does not change due to the additional spacial variable, cf. \eqref{auxeq17}.
			This is contrary to the pattern formation phenomena discussed in
			\Cref{chapter_pattern_formation}
			\item
			One might think that \Cref{theorem_goldstein_kac_conv} can be easily obtained
			from the exact solution formula given in
			\Cref{proposition_semigroup_fourier_representation}, the eigenvalues of $M(\indextwo)$ and Parseval's identity. However, this is a
			fallacy and incorrect. The matrices $M(\indextwo)$ are not normal for
			$\indextwo \neq 0$ and therefore, it is non-trivial to find estimates of the
			operator norms $\|M(\indextwo)\|$ and $\|e^{tM(\indextwo)}\|$ in terms of the
			eigenvalues of $M(\indextwo).$
			\item
			\Cref{theorem_side_length} allows us to state the corresponding result on
			$\T_L = L \T$ for arbitrary length $L>0$ of the circle. Let $\varepsilon>0.$
			Using the properties $(i)$ and $(iv)$ from \Cref{theorem_side_length} yields the
			optimal (up to $\varepsilon$) estimate
			\begin{equation*}
				\| R^{L,v}(t)u - \frac{1}{2} \colvec{\dashint_{\T_L} u_1 + \dashint_{\T_L}
				u_2 \\ \dashint_{\T_L} u_1 + \dashint_{\T_L} u_2} \|_{L^2(\T_L, \C^2)} \leq
				C_\varepsilon e^{(\omega_L+\varepsilon)t} \|u\|_{L^2(\T_L, \, \C^2)}
			\end{equation*}
			for all $u\in L^2(\T_L, \, \C^2),$ where $\omega_L$ is the constant
			\begin{equation*}
				\omega_L \coloneqq -\Lambda + \real(\sqrt{\Lambda^2-\tfrac{4\pi^2}{L^2}v^2})
				= \begin{cases}
				-\Lambda & \Lambda^2 \leq \tfrac{4\pi^2}{L^2}v^2, \\
				-\Lambda + \sqrt{\Lambda^2-\tfrac{4\pi^2}{L^2}v^2} & \text{otherwise}
				\end{cases}.
			\end{equation*}
			\item
			The Goldstein-Kac model \eqref{model_Goldstein_kac} is an example of a
			symmetric model discussed in \Cref{section_neumann_boundary}. Therefore,
			\Cref{theorem_neumann_periodic_connection} can be applied to study the model on
			$(0,L)$ with the Neumann boundary conditions $u_1(t,0)=u_2(t,0)$ and
			$u_1(t,L)=u_2(t,L)$ for all $t\geq0.$ For all $\varepsilon>0$, we obtain
			\begin{equation*}
				\| R^{\text{Neu},L}(t)u - \frac{1}{2} \colvec{\dashint_{[0,L]} u_1 +
				\dashint_{[0,L]} u_2 \\ \dashint_{[0,L]} u_1 + \dashint_{[0,L]} u_2}
				\|_{L^2([0,L], \C^2)} \leq C_\varepsilon e^{(\omega_{2L}+\varepsilon)t}
				\|u\|_{L^2([0,L], \, \C^2)}.
			\end{equation*}
			With Neumann boundary conditions, however, it is unclear whether the
			convergence rate is optimal, cf. \Cref{corollary_spectrum_neumann}.
		\end{enumerate}
	\end{remarks}
	
	The visualization of \Cref{theorem_goldstein_kac_conv} is intuitive, but also surprising. For small transport speeds $v>0,$ the movement of the particles is the factor slowing down the convergence and an increasing speed also increases the rate of convergence towards the equilibrium. Then, for transport faster than the critical velocity $v^* = \tfrac{\Lambda}{2\pi}$, a higher speed / faster ``stirring" motion does not correspond to a faster rate of convergence and the reactions described by $B$ become the factor slowing down the convergence. Somehow, however, the fastest rate is capped by half of the rate of
	the single cell model, cf. \eqref{auxeq17}.
	
	\section{Models with Killing}
	
	A second way to apply the theory from \Cref{chapter_spectral_analysis} is to
	carry over convergence results to different norms. We illustrate the main idea
	with the example of positive and mass conserving models \textit{exposed to killing}
	in the case $d=1$. As a first step, we briefly discuss what we mean by
	\textit{killing} and present the most important properties of this artificial
	modification of the model. The latter can be done in arbitrary dimension $d$.
	
	\begin{definition}
		Let $d\in\N, \, N\in\N$ and let $\delta>0.$ The (original) model
		\begin{equation}\label{original_model}
			\partial_t u + \colvec{\mathbf{v_1} \cdot \nabla u_1 \\ \vdots \\ \mathbf{v_N} \cdot \nabla u_N} = Bu
		\end{equation}
		on $\T^d$ with $B \in \R^{N\times N}$ and $\mathbf{v_1},\cdots,\mathbf{v_N}
		\in\R^d$ is exposed to $\delta$-killing if we modify the model to
		\begin{equation}\label{killing_model}
			\partial_t u + \colvec{\nabla u_1 \cdot \mathbf{v_1} \\ \vdots \\ \nabla u_N
			\cdot \mathbf{v_N}} = Bu - \delta u.
		\end{equation}
	\end{definition}
	\begin{remarks}
		\begin{enumerate}
			\item
			The additional term $-\delta u$ simply changes the birth/death rate of the
			$N$ subgroups in a uniform manner.
			\item
			The mathematically rigorous and consistent notation will often be $-\delta
			I_{E}$ for an appropriate Banach space $E$. But we will neglect this subtlety
			and always use the notation $-\delta$ from above.
		\end{enumerate}
		
	\end{remarks}
	
	Let $1\leq p < \infty.$ Recall that the operator
	\begin{equation*}
		(-A+B)u = -\colvec{\mathbf{v_1} \cdot \nabla u_1 \\ \vdots \\ \mathbf{v_N} \cdot \nabla u_N} + Bu
	\end{equation*}
	with appropriately chosen domain $D(-A)$ generates the transport-reaction semigroup $(R(t))_{t\geq0}$ on $\LpCn$, see \Cref{theorem_semigroup_generated_by_-A+B}. We refer to \Cref{chapter_transport_semigroup} and \Cref{chapter_transport_reaction_SG} for
	the precise setup and the definition of $D(-A)$. Of course, $B-\delta \in
	\R^{N\times N}$ is still a matrix and the whole theory from
	\Cref{chapter_transport_reaction_SG} and \Cref{chapter_spectral_analysis} can
	still be applied. The appropriate point of view, however, is to see
	$\delta$-killing as a modification of the original model. The next lemma
	clarifies this perturbative effect.
	
	\begin{lemma}\label{lemma_killing_sg}
		Let $1\leq p < \infty$ and let $(R(t))_{t\geq0}$ be the transport-reaction semigroup on $\LpCn$ generated by $(-A+B, \, D(-A)).$ Moreover, let $\delta>0.$ The semigroup $(R_\delta(t))_{t\geq0}$ on $\LpCn$ generated by $(-A+B-\delta, \, D(-A))$ is given by
		\begin{equation*}
			R_\delta(t) = e^{-\delta t} R(t)
		\end{equation*}
		for all $t\geq0$.
	\end{lemma}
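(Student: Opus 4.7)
The plan is to directly verify that the family $\big(e^{-\delta t}R(t)\big)_{t\geq 0}$ is the $C_0$-semigroup generated by $(-A+B-\delta, \, D(-A))$, rather than going through any perturbation machinery. Since $-\delta$ acts as a bounded scalar operator, no subtle domain issues arise.

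First, I would set $S(t) \coloneqq e^{-\delta t}R(t)$ for $t\geq 0$ and check that $(S(t))_{t\geq 0}$ is a $C_0$-semigroup on $\LpCn$. The identity $S(0)=I_{\LpCn}$ is immediate and the semigroup law follows from
\begin{equation*}
S(t+s) = e^{-\delta(t+s)}R(t+s) = e^{-\delta t}e^{-\delta s}R(t)R(s) = S(t)S(s),
\end{equation*}
where I use the semigroup property of $(R(t))_{t\geq 0}$ from \Cref{theorem_semigroup_generated_by_-A+B}. Strong continuity is a consequence of the strong continuity of $(R(t))_{t\geq 0}$ and the continuity of $t\mapsto e^{-\delta t}$, together with the estimate $\|S(t)\|_{\mathcal{L}(\LpCn)} \leq e^{-\delta t}e^{\|B\|_{\mathcal{L}(\LpCn)}t}$ from \Cref{theorem_semigroup_generated_by_-A+B}(i), which controls $R(t)$ locally uniformly.

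Second, I would compute the generator of $(S(t))_{t\geq 0}$. For $u\in D(-A)$, a product-rule style decomposition gives
\begin{equation*}
\frac{S(t)u - u}{t} = e^{-\delta t}\, \frac{R(t)u - u}{t} + \frac{e^{-\delta t}-1}{t}\, u,
\end{equation*}
and letting $t\searrow 0$ in $\LpCn$ yields $(-A+B)u - \delta u = (-A+B-\delta)u$. Hence the generator of $(S(t))_{t\geq 0}$ extends $(-A+B-\delta, \, D(-A))$.

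Third, I would invoke a uniqueness argument to upgrade this extension to equality. Since $-\delta$ is bounded, $(-A+B-\delta, \, D(-A))$ itself generates a $C_0$-semigroup by (another application of) \Cref{theorem_semigroup_generated_by_-A+B} with $B$ replaced by $B-\delta$. By \Cref{corollary_Ccinfty_core_A+B}, $C_c^\infty(\T^d, \, \C^N)$ is a core for $(-A+B-\delta, \, D(-A))$, and the generator of $(S(t))_{t\geq 0}$ extends $(-A+B-\delta, \, C_c^\infty(\T^d,\, \C^N))$. The uniqueness addendum of \Cref{corollary_Ccinfty_core_A+B} then forces $(S(t))_{t\geq 0} = (R_\delta(t))_{t\geq 0}$. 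There is really no hard step here; the only thing to be careful about is not to accidentally appeal to a product rule at the level of unbounded operators, which is why I keep the computation inside $\LpCn$ for fixed $u \in D(-A)$ and close the argument via the core property.
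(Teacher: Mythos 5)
Your proof is correct and takes essentially the same route as the paper: verify that $(e^{-\delta t}R(t))_{t\geq0}$ is a $C_0$-semigroup, compute its generator on $D(-A)$ via the product-rule decomposition, and close the argument by uniqueness from \Cref{corollary_Ccinfty_core_A+B}. You merely spell out the decomposition of the difference quotient more explicitly than the paper does.
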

	\begin{proof}
		Let $(R_\delta(t))_{t\geq0}$ be the semigroup generated by
		$(-A+B-\delta, \, D(-A)),$ which exists due to
		\Cref{theorem_semigroup_generated_by_-A+B}, and let $(\widetilde{R}(t))_{t\geq0} \coloneqq (e^{-\delta t} R(t))_{t\geq0}$. It is clear that $(\widetilde{R}(t))_{t\geq0}$ defines a strongly continuous semigroup. In addition, it holds
		\begin{equation*}
			\lim_{t \searrow 0} \frac{\widetilde{R}(t)u - u}{t} = e^{-\delta \cdot 0} (-A+B)u
			-\delta e^{-\delta \cdot 0} u = (-A+B-\delta)u
		\end{equation*}
		for all $u\in D(-A)$ by the product rule, i.e. the generator of
		$(\widetilde{R}(t))_{t\geq0}$ extends $(-A+B-\delta, \, D(-A)).$ However, $(R_\delta(t))_{t\geq0}$ is the only $C_0$-semigroup with a generator that extends $(-A+B-\delta, \, D(-A))$ by \Cref{corollary_Ccinfty_core_A+B}, so we obtain $(R_\delta(t))_{t\geq0} = (\widetilde{R}(t))_{t\geq0}$.
	\end{proof}
	
	\begin{theorem}\label{theorem_killing_spectral_prop_growth_bound}
		Let $1\leq p < \infty$ and let $\delta>0.$ Then
		\begin{align}
			\omega_0(-A+B-\delta) &= \omega_0(-A+B) - \delta,
			\label{killing_growth_bound}\\
			\sigma(-A+B-\delta) &= \sigma(-A+B) -\delta, \label{killing_spec_gen}\\
			\sigma(R_\delta(t)) &= e^{-\delta t} \sigma(R(t)) \label{killing_spec_sg},
		\end{align}
		where $\omega_0(\cdot)$ is the growth bound of the respective semigroup, see
		\Cref{long_time_behavior_definition_exponential_growth_bound_spectral_bound}. In
		particular, an exposure to $\delta$-killing turns the model exponentially stable
		if and only if $\delta>\omega_0(-A+B)$. Secondly, the weak spectral mapping
		theorem for the model exposed to $\delta$-killing holds if and only if it holds for the original model.
	\end{theorem}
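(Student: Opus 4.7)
The whole theorem is essentially a direct consequence of the identity $R_\delta(t) = e^{-\delta t} R(t)$ established in \Cref{lemma_killing_sg}, together with two elementary algebraic observations: the operator $-\delta$ acts as the scalar multiple $-\delta I_{\LpCn}$, and scalar shifts commute with all operators in sight. My plan is to first treat the two spectral identities \eqref{killing_spec_gen} and \eqref{killing_spec_sg} separately, then derive the growth bound identity \eqref{killing_growth_bound}, and finally read off the two addenda.

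For \eqref{killing_spec_gen}, I would simply write, for $\lambda \in \C$,
\begin{equation*}
\lambda I_{\LpCn} - (-A+B-\delta) = (\lambda+\delta) I_{\LpCn} - (-A+B)
\end{equation*}
on $D(-A)$, so the left-hand side is bijective from $D(-A)$ to $\LpCn$ precisely when the right-hand side is. This yields $\rho(-A+B-\delta) = \rho(-A+B) - \delta$ and hence \eqref{killing_spec_gen}. For \eqref{killing_spec_sg}, fix $t\geq0$ and use \Cref{lemma_killing_sg} to write
\begin{equation*}
\mu I_{\LpCn} - R_\delta(t) = e^{-\delta t}\big(e^{\delta t}\mu I_{\LpCn} - R(t)\big),
\end{equation*}
so $\mu \in \rho(R_\delta(t))$ if and only if $e^{\delta t}\mu \in \rho(R(t))$, which gives $\sigma(R_\delta(t)) = e^{-\delta t}\sigma(R(t))$. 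For negative times the group extension from \Cref{lemma_-A+B_generates_C_0_group} applies, yielding the same identity.

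The growth bound identity \eqref{killing_growth_bound} follows by plugging $R_\delta(t)=e^{-\delta t}R(t)$ into the definition of the exponential growth bound from \Cref{long_time_behavior_definition_exponential_growth_bound_spectral_bound}: $\tfrac{1}{t}\log\|R_\delta(t)\|_{\mathcal{L}(\LpCn)} = -\delta + \tfrac{1}{t}\log\|R(t)\|_{\mathcal{L}(\LpCn)}$, and taking the $\limsup$ as $t\to\infty$. The characterization of exponential stability then reduces to the equivalence $\omega_0(-A+B-\delta) < 0 \Leftrightarrow \delta > \omega_0(-A+B)$.

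The addendum on the weak spectral mapping theorem is obtained by combining \eqref{killing_spec_gen} and \eqref{killing_spec_sg}: applying $e^{t\cdot}$ and closing yields
\begin{equation*}
\overline{e^{t\sigma(-A+B-\delta)}} = \overline{e^{t(\sigma(-A+B)-\delta)}} = e^{-\delta t}\,\overline{e^{t\sigma(-A+B)}},
\end{equation*}
whereas $\sigma(R_\delta(t)) = e^{-\delta t}\sigma(R(t))$ by \eqref{killing_spec_sg}. Since multiplication by $e^{-\delta t}$ is a bijection on subsets of $\C$, equality of the two right-hand sides is equivalent to equality of the two left-hand sides, which is precisely the asserted equivalence. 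No step looks like a genuine obstacle here; the only mild care I would take is checking that the domain of $-A+B-\delta$ is indeed still $D(-A)$ (which is immediate from the boundedness of $\delta I_{\LpCn}$) so that the resolvent arithmetic above is legitimate.
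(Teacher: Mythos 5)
Your proof is correct and follows essentially the same approach as the paper: both arguments reduce everything to \Cref{lemma_killing_sg} and elementary algebraic identities for scalar shifts of operators and their spectra. The only cosmetic difference is that you use the $\lim_{t\to\infty}\tfrac{1}{t}\log\|R_\delta(t)\|$ characterization of the growth bound while the paper works directly with the infimum over admissible $\omega$; these are equivalent by \Cref{exp_stab_proposition}.
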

	\begin{remark}
		The existence of a weak spectral mapping theorem implies $\omega_0(-A+B) =
		s(-A+B)$, see \Cref{wsmthm_implies_growth_bd_eq_spectral_bound}.
	\end{remark}
	\begin{proof}
		Let $\omega \in \R.$ There exists $M_\omega \geq 1$ with
		\begin{equation*}
			\|R_\delta(t)\|_{\mathcal{L}(\LpCn)} \leq M_\omega e^{\omega t}
		\end{equation*}
		for all $t\geq0$ if and only if there exists $M_\omega\geq1$ such that
		\begin{equation*}
			\|R(t)\|_{\mathcal{L}(\LpCn)} \leq M_\omega e^{(\omega+\delta)t}
		\end{equation*}
		holds for all $t\geq0$ by \Cref{lemma_killing_sg}. Statement \eqref{killing_growth_bound} can now be shown with the definition of the growth bound, see
		\Cref{long_time_behavior_definition_exponential_growth_bound_spectral_bound},
		and the respective addendum on exponential stability results from
		\Cref{long_time_behavior_exponential_stability_equivalences} $(iv).$ \newline
		Equations \eqref{killing_spec_gen} and \eqref{killing_spec_sg} are immediate
		consequences of the definition of the spectrum and \Cref{lemma_killing_sg}. Both
		equations and a routine computation for sets yield that the weak
		spectral mapping theorem
		\begin{equation*}
			\sigma(R_\delta(t)) = \overline{e^{t\sigma(-A+B-\delta)}} \quad \text{ for all } t\geq 0
		\end{equation*}
		holds iff we have
		\begin{equation*}
			e^{-\delta t}  \sigma(R(t)) = e^{-\delta t} \overline{e^{t(\sigma(-A+B))}} \quad \text{ for all } t\geq 0.
		\end{equation*}
		The latter is true if and only if the weak spectral mapping theorem holds for
		the original model.
	\end{proof}
	
	\subsection{Positive and Mass Conserving Models with Killing}
	Assume that the transport-reaction model is positive and mass conserving in the
	sense of \Cref{definition_positive} and
	\Cref{definition_conservation}. Both properties have been characterized in
	\Cref{theorem_semigroup_positive} and \Cref{theorem_conservation} respectively.
	Together, given $B=(b_{\indexfour \indexone})_{\indexfour,\indexone=1,\cdots,N},$ they are equivalent to
	\begin{equation}\label{condition_positive}
		b_{\indexfour \indexone} \geq 0 \qquad \text{ for } \indexfour, \indexone = 1,
		\cdots, N \quad  \text{ with } \quad \indexfour \neq \indexone
	\end{equation}
	and
	\begin{equation}\label{condition_mass}
		b_{\indexone\indexone} = - \sum^N_{\substack{\indexfour=1 \\ \indexfour \neq \indexone}}
		b_{\indexfour\indexone}  \qquad \text{ for } \indexone=1,\cdots,N.
	\end{equation}
	Positivity of the semigroup and mass conservation allow us to bound the
	$L^1(\T^d, \C^N)$ norm of the solution with the $L^1(\T^d, \, \C^N)$ norm of the
	initial function. The next lemma is typical for positive semigroups and is
	similar to  a computation from \cite[p. 106]{schnaubelt2011lecture}.
	
	\begin{lemma}\label{positive_sg_estimate}
		Let $1\leq p < \infty$ and assume that $B\in \R^{N\times N}$ fulfills \eqref{condition_positive}. Let $(R(t))_{t\geq0}$ be the positive transport-reaction semigroup on $\LpCn$ generated by $(-A+B, \, D(-A))$. Then,
		\begin{equation*}
			|R(t)u| \leq R(t)|u|
		\end{equation*}
		holds for all $u\in\LpCn$ and all $t\geq0$, i.e.
		\begin{equation*}
			|(R(t)u)_\indexone(\mathbf{x})| \leq (R(t)|u|)_\indexone(\mathbf{x})
		\end{equation*}
		holds for all $\indexone=1,\cdots,N$ and almost all $\mathbf{x}\in\T^d.$
		Here, $|u| \in \LpCn$ is the componentwise absolute value of $u\in\LpCn$.
	\end{lemma}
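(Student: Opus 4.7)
The plan is to combine the Trotter product formula from \Cref{theorem_semigroup_generated_by_-A+B}$(v)$ with two elementary pointwise inequalities, one for the transport semigroup and one for the matrix exponential semigroup, and then pass to the limit almost everywhere.

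First, I would establish the two basic estimates. For the transport semigroup $(T(t))_{t\geq 0}$ from \Cref{chapter_transport_semigroup}, each component of $T(t)u$ is simply a translate of $u_\indexone$, so
\begin{equation*}
|T(t)u|(\mathbf{x}) = T(t)|u|(\mathbf{x})
\end{equation*}
holds with equality almost everywhere. For the semigroup $(e^{tB})_{t\geq 0}$ of pointwise matrix multiplications, \Cref{lemma_S(t)_positive_SG_iff_B_off_diag_non-neg} and the assumption that the off-diagonal entries of $B$ are non-negative imply that every matrix $e^{tB}$ has non-negative entries. Hence for each fixed $\mathbf{x}$ and each index $\indexfour$, the triangle inequality in $\C$ gives
\begin{equation*}
\big|(e^{tB}u(\mathbf{x}))_\indexfour\big| = \Big|\sum_{\indexone=1}^N (e^{tB})_{\indexfour\indexone} u_\indexone(\mathbf{x})\Big| \leq \sum_{\indexone=1}^N (e^{tB})_{\indexfour\indexone} |u_\indexone(\mathbf{x})| = (e^{tB}|u|(\mathbf{x}))_\indexfour,
\end{equation*}
i.e.\ $|e^{tB}u| \leq e^{tB}|u|$ componentwise almost everywhere.

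Next, I would iterate these estimates. For any $u\in\LpCn$ and $n\in\N$, positivity of $T(t/n)$ and the two bounds above yield
\begin{equation*}
|T(\tfrac{t}{n}) e^{t/n\, B} u| = T(\tfrac{t}{n}) |e^{t/n\, B} u| \leq T(\tfrac{t}{n}) e^{t/n\, B} |u|,
\end{equation*}
and induction (using positivity of $T(t/n)$ and $e^{tB/n}$ at each step) gives
\begin{equation*}
\big|\big(T(\tfrac{t}{n}) e^{t/n\, B}\big)^n u\big| \leq \big(T(\tfrac{t}{n}) e^{t/n\, B}\big)^n |u|
\end{equation*}
componentwise almost everywhere on $\T^d$.

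Finally I would pass to the limit using \Cref{theorem_semigroup_generated_by_-A+B}$(v)$. The right-hand side converges in $\LpCn$ to $R(t)|u|$. On the left-hand side, $L^p$-convergence of $(T(t/n)e^{t/n\,B})^n u$ to $R(t)u$ together with the reverse triangle inequality $\big||f_n|-|f|\big|\leq |f_n-f|$ yields $L^p$-convergence of $|(T(t/n)e^{t/n\,B})^n u|$ to $|R(t)u|$. Extracting a common subsequence along which both sides converge pointwise almost everywhere (as noted in \Cref{remark_positive_functions_closed_subset}), the pointwise inequality is preserved in the limit, giving $|R(t)u|\leq R(t)|u|$ almost everywhere. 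The main subtlety, and the only place where care is needed, is precisely this passage to the limit: one must not forget that $L^p$-convergence gives only almost-everywhere convergence along a subsequence, but since both the approximating inequality and the limit inequality are statements about almost-everywhere comparison, this is enough.
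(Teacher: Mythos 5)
Your proof is correct, but it takes a genuinely different route from the paper's. You establish the pointwise estimates $|T(t)u| = T(t)|u|$ and $|e^{tB}u|\leq e^{tB}|u|$ (the latter via non-negativity of the entries of $e^{tB}$, which follows from \Cref{lemma_S(t)_positive_SG_iff_B_off_diag_non-neg}), iterate them through the Trotter approximants from \Cref{theorem_semigroup_generated_by_-A+B}$(v)$, and pass to the limit along a subsequence with a.e.\ convergence. The paper instead treats $(R(t))_{t\geq 0}$ as a black-box positive semigroup and uses a classical Banach-lattice trick: for smooth $u$ and a fixed $\mathbf{x}$ one picks a phase $\alpha_\indexone$ with $|\alpha_\indexone|=1$ rotating $(R(t)u)_\indexone(\mathbf{x})$ onto the real line, exploits that $R(t)$ preserves realness (since $B$ and the $\mathbf{v_\indexone}$ are real), splits $\real(\alpha_\indexone u)$ into positive and negative parts, and bounds $\pm R(t)\real(\alpha_\indexone u)$ by $R(t)|u|$ via positivity; density of $C^\infty(\T^d,\C^N)$ then finishes the argument. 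Your approach leans on the explicit transport-reaction structure and the already-proven Trotter formula, which makes each step elementary and concrete; the paper's argument is purely order-theoretic and would transfer verbatim to any positive $C_0$-semigroup on $\LpCn$ that commutes with complex conjugation, independent of how it was constructed. Both are fine; yours is more tailored, theirs more portable.
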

	\begin{proof}
		Let $u\in C^\infty(\T^d, \, \C^N)$ and $\mathbf{x}\in\T^d.$ For $\indexone=1,\cdots,N$, choose
		$\alpha_\indexone = \alpha_\indexone(\mathbf{x}) \in \C$ with
		$|\alpha_\indexone|=1$ and
		\begin{equation*}
			\R\ni \alpha_j \big((R(t)u)_\indexone(\mathbf{x})\big) =
			(R(t)(\alpha_\indexone u))_\indexone(\mathbf{x}) = (R(t)\real(\alpha_\indexone
			u))_\indexone (\mathbf{x}).
		\end{equation*}
		The second equality follows by decomposing $\alpha_\indexone u$ into its real
		and imaginary part and using that $R(t)$ maps real valued functions to real
		valued ones. Positivity implies
		\begin{align*}
			\pm R(t)\real(\alpha_\indexone u) &= \pm R(t) \real(\alpha_\indexone u )_+ \mp
			R(t)\real(\alpha_\indexone u )_- \leq R(t) \real(\alpha_\indexone u )_+ +
			R(t)\real(\alpha_\indexone u )_-\\
			&= R(t) |\real(\alpha_\indexone u)| \leq R(t) |\alpha_\indexone u| =  R(t) |u| 
		\end{align*}
		and therefore $|R(t)\real(\alpha_\indexone u)| \leq R(t) |u|,$
		i.e. it holds
		\begin{equation}\label{auxiliary_eq39}
			|(R(t)\real(\alpha_\indexone u))_\indexfour(\mathbf{y})| \leq
			(R(t)|u|)_\indexfour(\mathbf{y})
		\end{equation}
		for all $\indexfour=1,\cdots,N$ and all $\mathbf{y}\in\T^d$. Using this and $|\alpha_\indexone|=1$, we obtain
		\begin{align*}
			|(R(t)u)_\indexone(\mathbf{x})| = |\alpha_j \big((R(t)u)_\indexone(\mathbf{x})\big)| = |(R(t)\real(\alpha_\indexone u))_\indexone (\mathbf{x})| \leq (R(t)|u|)_\indexone(\mathbf{x})
		\end{align*}
		for all $\indexone=1,\cdots,N$. Notice that this estimate holds for all $\mathbf{x}\in\T^d.$ Given an arbitrary function $u\in\LpCn$, the claim follows by taking an approximating sequence $(u^{(n)})_{n\in\N} \subset C^\infty(\T^d, \, \C^N)$ of $u$.
	\end{proof}
	
	\begin{corollary}\label{pos_mass_con_L1_bound}
		Let $p=1$ and assume that $B\in \R^{N\times N}$ fulfills \eqref{condition_positive} and \eqref{condition_mass}. Let $(R_1(t))_{t\geq0}$ be the positive and mass conserving transport-reaction semigroup on $L^1(\T^d, \, \C^N)$ generated by $(-A_1+B_1, \, D(-A_1))$. Then,
		\begin{equation*}
			\| R_1(t)u\|_{L^1(\T^d, \, \C^N)} \leq \sqrt{N} \|u\|_{L^1(\T^d, \, \C^N)}
		\end{equation*}
		holds for all $u\in L^1(\T^d, \, \C^N)$ and all $t\geq0$. Furthermore, we have $\omega_0(-A_1+B_1) = 0.$
	\end{corollary}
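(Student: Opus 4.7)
The plan is to combine the pointwise positivity bound $|R_1(t)u|\leq R_1(t)|u|$ from \Cref{positive_sg_estimate} with mass conservation from \Cref{theorem_conservation} applied to the vector $y=(1,\cdots,1)^T,$ and carefully exploit the fact that the $L^1(\T^d,\C^N)$-norm is defined via the $l^2$-norm of the component norms, which is responsible for the suboptimal factor $\sqrt{N}.$

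First I would reduce to non-negative initial data. Given any $u \in L^1(\T^d, \C^N),$ \Cref{positive_sg_estimate} yields $|(R_1(t)u)_\indexone(\mathbf{x})| \leq (R_1(t)|u|)_\indexone(\mathbf{x})$ almost everywhere, so $\| (R_1(t)u)_\indexone \|_{L^1(\T^d)} \leq \| (R_1(t)|u|)_\indexone \|_{L^1(\T^d)}.$ Taking the square, summing over $\indexone$ and observing that $\||u|\|_{L^1(\T^d,\C^N)} = \|u\|_{L^1(\T^d,\C^N)}$ reduces everything to proving the claim for the non-negative function $|u|.$

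For non-negative $u \in \LpCn_+,$ positivity of $(R_1(t))_{t\geq0}$ gives $R_1(t)u \geq 0$ pointwise and hence $\|(R_1(t)u)_\indexone\|_{L^1(\T^d)} = \intT (R_1(t)u)_\indexone \, d\mathbf{x}.$ Mass conservation with $y=(1,\cdots,1)^T$ (which satisfies $B^Ty = 0$ by \eqref{condition_mass}, so \Cref{theorem_conservation} applies) then gives
\begin{equation*}
\sum_{\indexone=1}^N \| (R_1(t)u)_\indexone \|_{L^1(\T^d)} = \sum_{\indexone=1}^N \intT u_\indexone \, d\mathbf{x} = \sum_{\indexone=1}^N \|u_\indexone\|_{L^1(\T^d)}.
\end{equation*}
Using $\sum a_\indexone^2 \leq (\sum a_\indexone)^2$ for $a_\indexone \geq 0$ on the left-hand side (after squaring the $L^1(\T^d,\C^N)$-norm) and Cauchy--Schwarz $\sum a_\indexone \leq \sqrt{N} (\sum a_\indexone^2)^{1/2}$ on the right-hand side yields $\| R_1(t)u \|_{L^1(\T^d,\C^N)} \leq \sqrt{N} \|u\|_{L^1(\T^d,\C^N)},$ and this is the only place where the factor $\sqrt{N}$ enters.

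It remains to pin down the growth bound. The just-proven estimate directly gives $\omega_0(-A_1+B_1) \leq 0$ by the definition of the exponential growth bound in \Cref{long_time_behavior_definition_exponential_growth_bound_spectral_bound}. For the reverse inequality, I would argue by contradiction: if $\omega_0(-A_1+B_1) < 0,$ exponential stability in the sense of \Cref{long_time_behavior_exponential_stability_equivalences} would force $\|R_1(t)u\|_{L^1(\T^d,\C^N)} \to 0$ for every $u \in L^1(\T^d, \C^N).$ Testing with $u \equiv e_1 \in L^1(\T^d,\C^N)_+,$ mass conservation keeps $\sum_\indexone \intT (R_1(t)u)_\indexone \, d\mathbf{x} = 1$ for all $t\geq0,$ preventing convergence to zero. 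I do not expect any real obstacle in the argument; the only slightly delicate point is remembering to use the $l^2$-style definition \eqref{definition_Lp_norm} of $\| \cdot \|_{L^1(\T^d,\C^N)}$ consistently, which is exactly what produces the (otherwise surprising) constant $\sqrt{N}.$
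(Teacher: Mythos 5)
Your proposal is correct and follows essentially the same route as the paper: pointwise domination $|R_1(t)u|\le R_1(t)|u|$, mass conservation applied to $(1,\dots,1)^T$, the two-sided comparison between the $\ell^1$ and $\ell^2$ norms of the vector of component integrals (which is exactly where the $\sqrt{N}$ comes from), and then the contradiction with exponential stability for the reverse growth-bound inequality. The only cosmetic difference is that you phrase the norm comparison via the scalar inequalities $\sum a_j^2\le(\sum a_j)^2$ and Cauchy--Schwarz, while the paper writes the same thing as $\|a\|_2\le\|a\|_1\le\sqrt{N}\,\|a\|_2$ on $\mathbb{R}^N_{\ge 0}$.
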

	\begin{proof}
		Let $u\in L^1(\T^d, \, \C^N).$ Applying \Cref{positive_sg_estimate}, positivity and mass conservation yields
		\begin{align*}
			\|R(t)u\|_{L^1(\T^d, \, \C^N)} &= \|\int_{\T^d} |R(t)u|(\mathbf{x}) \,
			d\mathbf{x}\|_2 \leq \|\int_{\T^d} (R(t)|u|)(\mathbf{x}) \, d\mathbf{x}\|_2 \\
			&\leq (1,\cdots,1)^T \cdot \int_{T^d} (R(t)|u|)(\mathbf{x}) \, d\mathbf{x} =
			(1,\cdots,1)^T \cdot \int_{T^d} |u|(\mathbf{x}) \, d\mathbf{x} \\
			&\leq \sqrt{N} \, \|\int_{\T^d} |u|(\mathbf{x}) \, d\mathbf{x} \|_2 = \sqrt{N}
			\|u\|_{L^1(\T^d, \, \C^N)}.
		\end{align*}
		This estimate particularly shows $\omega_0(A_1+B_1) \leq 0.$ Equality follows
		from \Cref{long_time_behavior_exponential_stability_equivalences} $(iv):$ if we
		had $\omega_0(-A_1+B_1) < 0,$ the semigroup $(R_1(t))_{t\geq0}$ would be
		exponentially stable and we would obtain
		\begin{equation*}
			\|R(t)u\|_{L^1(\T^d, \, \C^N)} \longrightarrow 0
		\end{equation*}
		as $t\to\infty$ for all $u\in L^1(\T^d, \, \C^N).$ This would yield a contradiction to mass
		conservation for all $u> 0$ due to
		\begin{equation*}
			0 < (1,\cdots,1)^T \cdot \int_{\T^d} u(\mathbf{x}) \, d\mathbf{x} =
			(1,\cdots,1)^T \cdot \int_{\T^d} (R(t)u)(\mathbf{x}) \, d\mathbf{x} \leq \sqrt{N}
			\|R(t)u\|_{L^1(\T^d, \, \C^N)} \to 0.
		\end{equation*}
	\end{proof}
	\begin{remark}
		Neither positivity nor mass conservation alone are sufficient to get a bound as
		in \Cref{pos_mass_con_L1_bound}. This is easy to see if we solely assume
		positivity. For a diagonal matrix $B=D$ with positive diagonal entries, the transport-reaction semigroup is positive and every component grows exponentially. For mass conservation, consider the following example.
	\end{remark}
	\begin{example}
		Consider the mass conserving equation
		\begin{equation*}
			\partial_t u(t,\mathbf{x}) = \begin{pmatrix}
			1 & -1 \\
			-1 & 1
			\end{pmatrix} u(t,\mathbf{x}) = Bu(t,\mathbf{x})
		\end{equation*}
		on $\T^d$ with some initial function $u \in L^1(\T^d, \, \C^2)$ with $u_1 \neq
		u_2.$ The computation of the matrix exponential in step (ii) of
		\Cref{theorem_goldstein_kac_conv} shows that the solution is given by
		\begin{equation*}
			(R(t)u)(\mathbf{x}) = \half \colvec{u_1(\mathbf{x}) + u_2(\mathbf{x}) \\
			u_1(\mathbf{x}) + u_2(\mathbf{x})} + \half e^{2t} \colvec{u_1(\mathbf{x}) -
			u_2(\mathbf{x}) \\ u_2(\mathbf{x}) - u_1(\mathbf{x})}
		\end{equation*}
		and consequently
		\begin{equation*}
			\|R(t)u\|_{L^1(\T^d, \, \C^2)} \geq \frac{1}{\sqrt{2}} \bigg( e^{2t} \intT
			|u_1(\mathbf{x}) - u_2(\mathbf{x}) | \, d\mathbf{x} - \intT |u_1(\mathbf{x}) +
			u_2(\mathbf{x}) | \, d\mathbf{x}  \bigg) \longrightarrow \infty
		\end{equation*}
		as $t\to\infty.$ A similar blow-up behavior with motion of the particles
		included happens in the mass conserving model \eqref{model_Goldstein_kac} with
		$\Lambda < 0.$
	\end{example}
	
	The explicit computation $\omega_0(-A_1+B_1) = 0$ in the case of positive and
	mass conserving model done in \Cref{pos_mass_con_L1_bound} and
	\Cref{theorem_killing_spectral_prop_growth_bound} imply that $\delta$-killing
	for any $\delta>0$ results in exponential stability with respect to the $L^1(\T^d, \,
	\C^N)$ norm. More precisely, $\omega_0(-A_1+B_1-\delta) = -\delta$ and
	\Cref{lemma_killing_sg} and \Cref{pos_mass_con_L1_bound} show the intuitive
	estimate
	\begin{equation}\label{convergence_killing_mass_cons_and_positive}
		\|R_{1,\delta}(t)u\|_{L^1(\T^d, \, C^N)} \leq \sqrt{N} e^{-\delta t} \|u\|_{L^1(\T^d,
		\, \C^N)}.
	\end{equation}
	for all $u\in L^1(\T^d, \, \C^N).$ The result is sharp in the sense that the
	convergence rate is optimal. \newline
	
	So far, every result holds true for arbitrary dimension $d$ but in the
	following, we will assume $d=1$. The weak spectral mapping \Cref{theorem_WSMTHM_p_eq_2} and the independence of the spectrum of the generator $(-A_p+B_p, \, D(-A_p))$ of $1\leq p< \infty$ shown in \Cref{theorem_spectrum_generator} allow us to expand the convergence result
	\eqref{convergence_killing_mass_cons_and_positive} to the $L^2(\T, \, \C^N)$
	norm. In the process, we ``lose" an arbitrary small fraction of the convergence
	rate.
	
	\begin{theorem}
		Let $d=1$ and assume that $B\in \R^{N\times N}$ fulfills \eqref{condition_positive} and \eqref{condition_mass}. Moreover, assume that the transport directions $v_1,\cdots,v_N$ are non-vanishing, i.e. $v_1,\cdots,v_N\neq0.$ Let $(R_2(t))_{t\geq0}$ be the positive and
		mass conversing semigroup on $L^2(\T, \, \C^N)$ generated by $(-A_2+B_2, \,
		D(-A_2)).$ For every $\varepsilon>0,$ there exists a constant $C_\varepsilon>0$
		such that
		\begin{equation}\label{auxeq21}
			\| R_{2,\delta}(t)u\|_{L^2(\T, \, \C^N)} \leq C_\varepsilon
			e^{(-\delta+\varepsilon)t} \|u\|_{L^2(\T, \, \C^N)}
		\end{equation}
		holds for all $u\in L^2(\T, \, \C^N).$ The estimate \eqref{auxeq21} does not
		hold for $-\delta+\varepsilon$ replaced by any constant $\widetilde{\delta} <
		-\delta.$
	\end{theorem}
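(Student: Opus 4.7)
The plan is to pin down the growth bound $\omega_0(-A_2+B_2-\delta)$ exactly and then invoke the standard characterization of growth bounds to read off the estimate. The critical intermediate claim that I would prove first is $\omega_0(-A_2+B_2)=0$.

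To establish this claim, I would first show $s(-A_2+B_2)\leq 0$ by combining two inputs. On $L^1(\T,\C^N)$, \Cref{pos_mass_con_L1_bound} gives $\omega_0(-A_1+B_1)=0$, hence $s(-A_1+B_1)\leq 0$. Under the assumption $d=1$ with non-vanishing transport directions, \Cref{theorem_spectrum_generator} states that $\sigma(-A_p+B_p)$ is independent of $1\leq p < \infty$, so $s(-A_2+B_2)=s(-A_1+B_1)\leq 0$. The reverse inequality $s(-A_2+B_2)\geq 0$ is then immediate: the mass-conservation condition \eqref{condition_mass} amounts to $B^T(1,\dots,1)^T=0$, so $0\in\sigma(B)$; picking $z\in\C^N\backslash\{0\}$ with $Bz=0$, the constant function $u\equiv z\in D(-A_2)$ satisfies $(-A_2+B_2)u=0$, witnessing $0\in\sigma_p(-A_2+B_2)$. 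Thus $s(-A_2+B_2)=0$, and the weak spectral mapping \Cref{theorem_WSMTHM_p_eq_2} combined with \Cref{wsmthm_implies_growth_bd_eq_spectral_bound} upgrades this to $\omega_0(-A_2+B_2)=0$.

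With the unperturbed growth bound in hand, \Cref{theorem_killing_spectral_prop_growth_bound} yields $\omega_0(-A_2+B_2-\delta)=-\delta$, so by the definition of the growth bound, for every $\varepsilon>0$ there exists $C_\varepsilon>0$ with $\|R_{2,\delta}(t)\|_{\mathcal{L}(L^2(\T,\C^N))}\leq C_\varepsilon e^{(-\delta+\varepsilon)t}$ for all $t\geq 0$, which is \eqref{auxeq21}. Optimality follows by taking again the constant $u\equiv z$: since $R_2(t)u=u$ (as $u$ is in the kernel of $-A_2+B_2$), \Cref{lemma_killing_sg} gives $R_{2,\delta}(t)u=e^{-\delta t}u$, hence $\|R_{2,\delta}(t)u\|_{L^2}=e^{-\delta t}\|u\|_{L^2}$, which rules out any exponent $\widetilde{\delta}<-\delta$.

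The step I expect to be the most delicate is the equality $\omega_0(-A_2+B_2)=0$: neither the $L^1$ analysis nor the presence of the zero eigenvalue alone suffices, and the bridge to $L^2$ requires simultaneously invoking the $p$-independence of the spectrum (which crucially needs $d=1$ with non-vanishing transport directions) and the weak spectral mapping theorem to convert a spectral bound into a growth bound. Without this bridge one would only obtain an $L^2$ estimate with a suboptimal rate, and the loss of $\varepsilon$ cannot be avoided since it reflects the generic gap between $C_\varepsilon e^{(\omega_0+\varepsilon)t}$ and a pure exponential bound. The remainder of the argument is routine, consisting of the standard identity $\omega_0(\mathcal{A}-\delta)=\omega_0(\mathcal{A})-\delta$ and an explicit eigenfunction computation for sharpness.
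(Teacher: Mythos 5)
Your proof is correct and takes essentially the same route as the paper: both use the $L^1$ bound from \Cref{pos_mass_con_L1_bound}, the $p$-independence of the spectrum (\Cref{theorem_spectrum_generator}), the eigenvalue $0\in\sigma(B)$ from mass conservation, the weak spectral mapping theorem and \Cref{wsmthm_implies_growth_bd_eq_spectral_bound} to convert a spectral bound into a growth bound, and \Cref{theorem_killing_spectral_prop_growth_bound}/\Cref{lemma_killing_sg} for the shift by $\delta$. The only difference is cosmetic — you establish $\omega_0(-A_2+B_2)=0$ first and then subtract $\delta$, whereas the paper carries the killed operator throughout — and your explicit eigenfunction computation for sharpness is a clean (if slightly more verbose) substitute for the paper's implicit appeal to the definition of the growth bound.
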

	\begin{proof}
		We have already seen
		\begin{equation*}
			\omega_0(-A_1+B_1-\delta) = -\delta
		\end{equation*}
		and it holds
		\begin{equation*}
			s(-A_1+B_1-\delta) \leq \omega_0(-A_1+B_1-\delta) = -\delta
		\end{equation*}
		because the spectral bound of a semigroup is always dominated by the growth
		bound, see \Cref{exp_stab_proposition}. In fact,
		we have equality: \Cref{theorem_spectrum_generator} characterizes the spectrum
		of the generator, which is independent of $1\leq p<\infty$, and implies
		\begin{equation*}
			\sigma(-A+B-\delta) \supseteq \sigma(M(0)) = \sigma(B-\delta).
		\end{equation*}
		Furthermore, the model was assumed to be mass conserving, so we obtain
		$0\in\sigma(B^T) = \sigma(B)$ by \Cref{theorem_conservation}. Hence, there
		exists $z\in\C^N\backslash \{0\}$ with $Bz = 0$ and $(B-\delta)z = -\delta z$,
		i.e. $-\delta \in \sigma(-A+B-\delta)$ and
		\begin{equation*}
			s(-A+B-\delta) = -\delta.
		\end{equation*}
		Now, the weak spectral mapping \Cref{theorem_WSMTHM_p_eq_2} for $p=2$ and
		\Cref{wsmthm_implies_growth_bd_eq_spectral_bound} result in
		\begin{equation*}
			\omega_0(-A_2+B_2-\delta) = s(-A+B-\delta) = -\delta
		\end{equation*}
		and the theorem follows from the definition of the growth bound.
	\end{proof}
	\begin{remark}
		If we had a weak spectral mapping theorem for $1\leq p < \infty$, the same
		result would be true on $L^p(\T, \, \C^N)$.
	\end{remark}

	\section{A Reaction Random Walk System}\label{reaction_random_walk}
	
	In this section, we always assume $d=1$ and consider the domain $\T$. The goal
	is to apply our theory from \Cref{chapter_spectral_analysis} to the main model
	from \cite{hillen1996turing} with periodic boundary conditions. This is
	different to the original paper, where Neumann boundary conditions in the sense
	of \Cref{section_neumann_boundary} were studied. Essentially, we give an easier and shorter proof of \cite[Lemma 4.1, p. 61 and Theorem 4.3, p. 63]{hillen1996turing}. \newline
	
	The author of \cite{hillen1996turing} assumed that the particle densities $u_1$
	and $u_2$ of two different species can be split into particle densities of right
	and left moving particles, i.e. $u_1=\alpha_1 + \beta_1$ and $u_2 = \alpha_2 +
	\beta_2.$ Each species $\indexone=1,2$ has a speed $v_\indexone > 0$ and a
	turning rate $\mu_\indexone > 0$. Moreover, the reactions of the particles are
	assumed to be described by a continuously differentiable function $F\colon \R^2
	\to \R^2,$ which depends on the concentrations of the \textit{species}.
	Introducing the variables
	\begin{align*}
		\Gamma \coloneqq \begin{pmatrix}
		v_1 & 0 \\
		0 & v_2
		\end{pmatrix},
		\qquad M \coloneqq \begin{pmatrix}
		\mu_1 & 0 \\
		0 & \mu_2
		\end{pmatrix}
	\end{align*}
	and
	\begin{equation*}
		\alpha \coloneqq (\alpha_1, \, \alpha_2)^T, \qquad \beta \coloneqq (\beta_1,
		\,\beta_2)^T, \qquad u \coloneqq \alpha + \beta,
	\end{equation*}
	the model, cf. \cite[(15), p. 54 and (38), p. 58]{hillen1996turing}, reads
	\begin{equation}\label{model_hillen}
		\begin{aligned}
		\partial_t \alpha + \Gamma \partial_x\alpha &= M(\beta-\alpha) + \tfrac{1}{2}
		F(\alpha+\beta), \\
		\partial_t\beta - \Gamma \partial_x\beta &= M(\alpha-\beta) + \tfrac{1}{2}
		F(\alpha+\beta).
		\end{aligned}
	\end{equation}
	Assuming that the reactions $F$ have a stationary state $u^* \in \R^2,$ i.e.
	$F(u^*) = 0,$ the equilibrium of \eqref{model_hillen} is given by
	\begin{equation*}
		\alpha^* = \beta^* \equiv \frac{u^*}{2}.
	\end{equation*}
	Let $N\coloneqq DF(u^*) \in \R^{2\times 2}$ be the Jacobian of $F$ at the
	stationary state. A simple computation shows that the linearization
	around the equilibrium of \eqref{model_hillen} reads
	\begin{equation}\label{model_hillen_linear}
		\partial_t \colvec{\alpha \\ \beta} + \begin{pmatrix}
		\Gamma & 0 \\
		0 & -\Gamma
		\end{pmatrix} \partial_x \colvec{\alpha \\ \beta} = \begin{pmatrix}
		-M & M \\
		M & -M
		\end{pmatrix} \colvec{\alpha \\ \beta} + \half \begin{pmatrix}
		N & N \\
		N & N
		\end{pmatrix} \colvec{\alpha \\ \beta}.
	\end{equation}
	It should be noted that \eqref{model_hillen_linear} describes the time evolution
	of the functions $\alpha-u^*/2$ and $\beta-u^*/2$ but we relabeled them back to
	$\alpha$ and $\beta$ respectively. System \eqref{model_hillen_linear} is
	equivalent to \cite[(20), p. 55]{hillen1996turing} with the introduction of
	the variables $\alpha+\beta$ and $\alpha-\beta.$ In \cite{hillen1996turing},
	these new variables were used to reduce \eqref{model_hillen} to a reaction
	telegraph system, see \cite[p. 54]{hillen1996turing} for details on this
	connection. \newline
	The linearized model \eqref{model_hillen_linear} is a transport-reaction model
	studied in \Cref{chapter_transport_reaction_SG}. In order to be consistent with
	our previous notation, we introduce the block matrices
	\begin{equation*}
		V \coloneqq \begin{pmatrix}
		\Gamma & 0 \\
		0 & -\Gamma
		\end{pmatrix}, \qquad B \coloneqq \begin{pmatrix}
		-M & M \\
		M & -M
		\end{pmatrix} + \half \begin{pmatrix}
		N & N \\
		N & N
		\end{pmatrix}
	\end{equation*}
	and the operator
	\begin{equation*}
		-A+B = -V \partial_x + B
	\end{equation*}
	on $L^2(\T, \, \C^4)$ with domain $D(-A+B)=D(-A)=H^1(\T, \, \C^4).$ Then, $-A+B$
	generates the strongly continuous transport-reaction semigroup $(R(t))_{t\geq0}$ on $L^2(\T, \, \C^4)$ by \Cref{theorem_semigroup_generated_by_-A+B} and \Cref{corollary_domain_generator_one_dimensional}. The solution of
	\eqref{model_hillen_linear} with initial function $(\alpha,\beta) \in L^2(\T, \,
	C^4)$ is given by $t\mapsto R(t)(\alpha,\beta)$, see Appendix \ref{appendix_semigroup_theory} for details on the solution concepts used in a semigroup context. \newline
	The stability analysis can be performed with \Cref{theorem_spectrum_generator}
	and \Cref{theorem_WSMTHM_p_eq_2}. If all eigenvalues of the generator $(-A+B, \,
	H^1(\T, \, \C^4))$ have a negative real part, the zero solution of
	\eqref{model_hillen_linear} is exponentially stable because of \Cref{wsmthm_implies_growth_bd_eq_spectral_bound}. Conversely, existence of an	eigenvalue $\lambda \in \sigma(-A+B)$ with $\real \lambda>0$ means that the zero solution is unstable. Therefore, by \Cref{theorem_spectrum_generator}, the question of stability comes down to the computation of the eigenvalues of
	\begin{equation*}
		M(\indextwo) = -2\pi i \indextwo V + B
	\end{equation*}
	for $k\in\Z.$ It is left to determine the roots of
	\begin{equation*}
		\lambda \mapsto P(\lambda) \coloneqq \det(-2\pi i \indextwo V + B -\lambda
		I_{\C^{4\times 4}}) = \det\begin{pmatrix}
		-2\pi i \indextwo \Gamma - M + \tfrac{1}{2}N - \lambda & M + \tfrac{1}{2}N \\
		M + \tfrac{1}{2}N & 2\pi i\indextwo \Gamma - M + \tfrac{1}{2} N - \lambda
		\end{pmatrix}
	\end{equation*}
	with the abbreviation $\lambda$ for $\lambda I_{\C^{2\times 2}}.$ At first glance,
	this looks quite laborious but some clever manipulations reduce the length of
	the computation tremendously. Notice that we have
	\begin{align*}
		\begin{pmatrix}
		I & -I \\
		0 & I
		\end{pmatrix} &\begin{pmatrix}
		-2\pi i \indextwo \Gamma - M + \tfrac{1}{2}N - \lambda & M + \tfrac{1}{2}N \\
		M + \tfrac{1}{2}N & 2\pi i\indextwo \Gamma - M + \tfrac{1}{2} N - \lambda
		\end{pmatrix} \begin{pmatrix}
		I & I \\
		0 & I
		\end{pmatrix} \\
		&= \begin{pmatrix}
		I & -I \\
		0 & I
		\end{pmatrix} \begin{pmatrix}
		-2\pi i \indextwo\Gamma - M + \tfrac{1}{2}N - \lambda & -2\pi i \indextwo
		\Gamma + N - \lambda \\
		M + \tfrac{1}{2}N & 2\pi i\indextwo\Gamma + N - \lambda
		\end{pmatrix} \\
		&= \begin{pmatrix}
		-2\pi i \indextwo \Gamma - 2M - \lambda & -4\pi i \indextwo \Gamma \\
		M + \tfrac{1}{2} N & 2 \pi i \indextwo \Gamma + N - \lambda
		\end{pmatrix}.
	\end{align*}
	This transformation does not change the determinant and now, the upper left and
	upper right matrices are diagonal matrices because $\Gamma$ and $M$ are diagonal
	by definition. They particularly commute and we can use \cite[Theorem 3, p. 
	4]{silvester2000determinants} to deduce
	\begin{align*}
		P(\lambda) &= \det\begin{pmatrix}
		-2\pi i \indextwo \Gamma - 2M - \lambda & -4\pi i \indextwo \Gamma \\
		M + \tfrac{1}{2} N & 2 \pi i \indextwo \Gamma + N - \lambda
		\end{pmatrix} \\
		&= \det \begin{pmatrix}
		(2\pi i \indextwo \Gamma + N - \lambda)(-2\pi i \indextwo \Gamma - 2M -
		\lambda) - (M+\tfrac{1}{2}N)(-4\pi i \indextwo \Gamma)
		\end{pmatrix} \\
		&= \det(\lambda^2 + \lambda(2M-N) + (4\pi^2 \indextwo^2 \Gamma^2 - 2 NM)).
	\end{align*}
	With the notation
	\begin{equation*}
		N = \begin{pmatrix}
		\nu_1 & \nu_2 \\
		\nu_3 & \nu_4
		\end{pmatrix},
	\end{equation*}
	the obtained $2\times 2$-matrix reads
	\begin{equation*}
		\begin{pmatrix}
		\lambda^2 + (2\mu_1-\nu_1)\lambda + (4\pi^2\indextwo^2v_1^2-2\nu_1\mu_1) &
		-\nu_2 \lambda - 2\nu_2\mu_2 \\
		-\nu_3\lambda - 2\nu_3\mu_1 & \lambda^2 + (2\mu_2-\nu_4)\lambda +
		(4\pi^2\indextwo^2v_2^2-2\nu_4\mu_2)
		\end{pmatrix}
	\end{equation*}
	and its determinant is given by
	\begin{alignat}{2}
		P(\lambda) &= \, && \lambda^4 + \big[(2\mu_2-\nu_4) +
		(2\mu_1-\nu_1)\big]\lambda^3 \nonumber \\
		& &&+  \big[ (4\pi^2\indextwo^2v_2^2-2\nu_4\mu_2) +
		(2\mu_1-\nu_1)(2\mu_2-\nu_4) +(4\pi^2\indextwo^2v_1^2 - 2\nu_1\mu_1) -
		\nu_2\nu_3\big] \lambda^2\nonumber \\
		& &&+  \big[(2\mu_1 - \nu_1)(4\pi^2\indextwo^2v_2^2-2\nu_4\mu_2) +
		(4\pi^2\indextwo^2v_1^2-2\nu_1\mu_1)(2\mu_2-\nu_4) - 2\nu_2\nu_3\mu_1 -
		2\nu_2\nu_3\mu_2\big]\lambda \nonumber \\
		& &&+ \big[(4\pi^2\indextwo^2v_1^2 - 2
		\nu_1\mu_1)(4\pi^2\indextwo^2v_2^2-2\nu_4\mu_2) - 4\nu_2\nu_3\mu_1\mu_2\big]
		\nonumber \\
		&= \, && \lambda^4 +  \big[2(\mu_1+\mu_2) - (\nu_1+\nu_4)\big]\lambda^3
		\nonumber \\
		& &&+ \big[4\pi^2\indextwo^2(v_1^2+v_2^2) - 2(\mu_1+\mu_2)(\nu_1+\nu_4) +
		4\mu_1\mu_2 + \det N \big]\lambda^2 \nonumber \\
		& &&+  \big[4\pi^2\indextwo^2\big(v_1^2(2\mu_2-\nu_4) + v_2^2(2\mu_1 -
		\nu_1)\big) - 4(\nu_1 + \nu_4)\mu_1 \mu_2 + 2 (\mu_1+\mu_2)\det N\big]\lambda
		\nonumber \\
		& &&+ \big[16\pi^4\indextwo^4 v_1^2 v_2^2 -
		8\pi^2\indextwo^2\big(v_1^2\nu_4\mu_2 + v_2^2\nu_1\mu_1\big) + 4 \mu_1\mu_2 \det
		N\big].
	\end{alignat}
	
	Including the side length $L>0$ to the model by applying \Cref{theorem_side_length} and using the notations $(R^{L,v}(t))_{t\geq0}$ and $(-A+B)^{L,v}$ for the semigroup and generator on $L^2(\T_L, \, \C^4)$ yields the following theorem.
	
	\begin{theorem}\label{theorem_random_walk}
		A complex number $\lambda\in\C$ is an eigenvalue of the operator
		$(-A+B)^{L,v}$ if and only if there is a mode $k\in\Z$ such that $P_L(\lambda)=0,$ where $P_L$ is the
		polynomial
		\begin{equation*}
			P_L(\lambda) = \lambda^4 + a_3 \lambda^3 + a_2 \lambda^2 + a_1 \lambda + a_0
		\end{equation*}
		with coefficients
		\begin{align*}
			a_3 &= 2(\mu_1+\mu_2) - (\nu_1+\nu_4), \\
			a_2 &= \tfrac{4\pi^2\indextwo^2}{L^2}(v_1^2+v_2^2) -
			2(\mu_1+\mu_2)(\nu_1+\nu_4) + 4\mu_1\mu_2 + \det N,\\
			a_1 &= \tfrac{4\pi^2\indextwo^2}{L^2}\big(v_1^2(2\mu_2-\nu_4) + v_2^2(2\mu_1 -
			\nu_1)\big) - 4(\nu_1 + \nu_4)\mu_1 \mu_2 + 2 (\mu_1+\mu_2)\det N,\\
			a_0 &= \tfrac{16\pi^4\indextwo^4}{L^4} v_1^2 v_2^2 - 2
			\tfrac{4\pi^2\indextwo^2}{L^2}\big(v_1^2\nu_4\mu_2 + v_2^2\nu_1\mu_1\big) + 4
			\mu_1\mu_2 \det N.
		\end{align*}
	\end{theorem}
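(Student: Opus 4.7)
The plan is to combine the side-length scaling of \Cref{theorem_side_length} with the spectral characterization from \Cref{char_spectrum_generator}, and then carry out an explicit determinant computation that has largely been staged in the discussion preceding the theorem.

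First, I would invoke \Cref{theorem_side_length}(iv) to obtain $\sigma((-A+B)^{L,v}) = \sigma((-A+B)^{v/L})$, which reduces the problem to the unit torus $\T$ with rescaled transport speeds $v_j/L$ for $j=1,2$. Since $d=1$ and all speeds are non-vanishing, \Cref{theorem_generator_has_compact_resolvent_spectrum=point_spectrum} guarantees that the spectrum of the generator on $L^2(\T, \, \C^4)$ coincides with its point spectrum. Combining \Cref{char_spectrum_generator} with the Fourier representation from \Cref{proposition_generator_application_fourier}, a number $\lambda \in \C$ is an eigenvalue if and only if there exists $k \in \Z$ and a nonzero $z \in \C^4$ with $M(k) z = \lambda z$, where $M(k) = -(2\pi i k / L) V + B$. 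Equivalently, $\lambda$ is an eigenvalue iff $\det(M(k) - \lambda I_{\C^{4\times 4}}) = 0$ for some $k \in \Z$.

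The remaining work is to identify $\det(M(k) - \lambda I_{\C^{4\times 4}})$ with $P_L(\lambda)$. I would write $M(k) - \lambda I_{\C^{4\times 4}}$ as a $2 \times 2$ block matrix whose $2 \times 2$ blocks are built from $\Gamma$, $M$, $N$ and $\lambda I_{\C^{2 \times 2}}$. The crucial observation is that $\Gamma$ and $M$ are diagonal. I would first subtract the second block row from the first and then add the first block column to the second (left- and right-multiplying by unimodular unitriangular block matrices, so the determinant is preserved); this replaces the top-left and top-right blocks by diagonal matrices involving only $\Gamma$, $M$, $\lambda$. After this transformation, all four resulting $2 \times 2$ blocks are diagonal and hence pairwise commute, so the block determinant formula from \cite[Theorem 3, p. 4]{silvester2000determinants} collapses the $4 \times 4$ determinant to the ordinary determinant of a single $2 \times 2$ matrix whose entries are quadratic polynomials in $\lambda$.

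Expanding that $2 \times 2$ determinant and regrouping in powers of $\lambda$ yields the coefficients $a_0, a_1, a_2, a_3$ of $P_L$. The main obstacle is purely algebraic bookkeeping in this last step: the mixed terms produced by the off-diagonal entries $\nu_2, \nu_3$ of $N$ must be collected using $\det N = \nu_1 \nu_4 - \nu_2 \nu_3$, and the cross terms between the transport contribution $4\pi^2 k^2/L^2$ and the reaction entries $\nu_i, \mu_j$ need to be sorted carefully into the quantities $v_i^2 \nu_j \mu_j$ and $v_i^2(2\mu_j - \nu_\ell)$ appearing in $a_0$ and $a_1$. No additional ideas are required beyond this bookkeeping, so the substantive content of the theorem is the spectral characterization and the block triangularization; the polynomial form is the output of symbolic computation.
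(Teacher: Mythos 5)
Your proposal takes essentially the same route as the paper: reduce to the unit circle via \Cref{theorem_side_length}, identify the spectrum with eigenvalues of $M(k)$ via \Cref{theorem_spectrum_generator}, and evaluate $\det(M(k)-\lambda I_{\C^{4\times 4}})$ by a unimodular block row/column elimination followed by Silvester's block-determinant formula and a regrouping in powers of $\lambda$. One correction to your account of the block step: after subtracting the second block row from the first and adding the first block column to the second, it is \emph{not} true that all four resulting $2\times 2$ blocks are diagonal. The bottom-left block $M+\tfrac{1}{2}N$ and the bottom-right block $2\pi i k \Gamma + N - \lambda$ still contain the general Jacobian $N$ and do not commute with each other in general; only the top-row blocks $-2\pi i k\Gamma - 2M - \lambda$ and $-4\pi i k\Gamma$ are diagonal. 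This is nevertheless sufficient, since the block-determinant result of \cite[Theorem 3, p.~4]{silvester2000determinants} requires only one row (or column) of blocks to commute, and the paper invokes it precisely for the top row, collapsing the $4\times 4$ determinant to $\det\bigl((2\pi i k\Gamma + N - \lambda)(-2\pi i k\Gamma - 2M - \lambda) - (M+\tfrac{1}{2}N)(-4\pi i k\Gamma)\bigr)$.
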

	In \cite{hillen1996turing}, the author computed the eigenvalues of the generator
	$(-A+B)^{\text{Neu},L}$ on $(0,L)$ with the Neumann boundary conditions
	introduced in \Cref{section_neumann_boundary}. The astonishing thing is the
	correspondence between both results.
	
	\begin{theorem}[{\cite[Lemma 4.1, p. 61; Theorem 4.3, p. 63]{hillen1996turing}}]
		A complex number $\lambda\in\C$ with $\real\lambda>\max\{-2\mu_1,-2\mu_2\}$
		is an eigenvalue of the operator $(-A+B)^{\text{Neu},L}$ if and only if there
		is a mode $k\in\Z$ such that $P_{2L}(\lambda) = 0,$ where $P_{2L}$ is exactly
		the same polynomial as in \Cref{theorem_random_walk}.
	\end{theorem}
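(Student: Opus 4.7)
The forward direction is essentially immediate from existing theory. By \Cref{corollary_spectrum_neumann}$(i)$--$(ii)$, any $\lambda \in \sigma((-A+B)^{\text{Neu},L})$ is in fact an eigenvalue and lies in $\sigma((-A+B)^{\text{per},2L})$; \Cref{theorem_random_walk} with $L$ replaced by $2L$ then identifies this latter spectrum with the set of $\lambda\in\C$ such that $P_{2L}(\lambda)=0$ for some mode $k\in\Z$. The hypothesis on $\real\lambda$ is not needed in this direction.

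For the converse, the plan is to lift a mode-$k$ periodic eigenfunction on $\T_{2L}$ to a Neumann eigenfunction on $(0,L)$ via symmetrization in the swap of right- and left-moving particles. Fix $k\in\Z$ with $P_{2L}(\lambda)=0$ and $\real\lambda>\max\{-2\mu_1,-2\mu_2\}$. Applying \Cref{theorem_random_walk} to $\T_{2L}$, one obtains $z\in\C^4\setminus\{0\}$ with $M(k)z=\lambda z$, where $M(k) := -\tfrac{i\pi k}{L}V + B$. Introduce the involution $J\colon\C^4\to\C^4$ swapping the $\alpha$- and $\beta$-blocks, $J(z_\alpha,z_\beta)^T := (z_\beta,z_\alpha)^T$; a short block computation yields $JVJ = -V$ and $JBJ = B$. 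Define
\begin{equation*}
	u(x) := z\,e^{i\pi kx/L} + Jz\,e^{-i\pi kx/L}, \qquad x\in(0,2L).
\end{equation*}
Applying $-V\partial_x + B$ termwise and using $VJ=-JV$ and $BJ=JB$ shows that $u$ is a periodic eigenfunction of $(-A+B)^{\text{per},2L}$ for $\lambda$, and $e^{\pm 2\pi ik}=1$ implies the reflection symmetry $Ju(2L-x) = u(x)$. Evaluating this symmetry at $x=0$ (using periodicity) and at $x=L$ gives $Ju(0)=u(0)$ and $Ju(L)=u(L)$, i.e.\ the $\alpha$- and $\beta$-blocks of $u$ agree at $x=0$ and $x=L$. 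Hence $u|_{(0,L)}\in D((-A+B)^{\text{Neu},L})$ is a candidate eigenfunction for $\lambda$.

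The main obstacle, and the only place where the spectral hypothesis enters, is showing $u\not\equiv 0$. For $k\neq 0$ the exponentials $e^{\pm i\pi kx/L}$ are linearly independent on any interval, so $u\equiv 0$ would force $z=0$, contradicting $z\neq 0$. For $k=0$ one has $u\equiv z+Jz$, which vanishes precisely when $z$ lies in the antisymmetric subspace $V_- := \{(w,-w)^T : w\in\C^2\}$. Since $B$ commutes with $J$ it preserves the decomposition $\C^4 = V_+\oplus V_-$, and a direct block computation identifies $B|_{V_-}$ with $-2M = \operatorname{diag}(-2\mu_1,-2\mu_2)$. A $V_-$-eigenvector can therefore exist only for $\lambda\in\{-2\mu_1,-2\mu_2\}$, which the hypothesis rules out; so the $\lambda$-eigenspace of $B=M(0)$ contains a vector in $V_+$, for which $u \equiv 2z \neq 0$. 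In either case the restriction of $u$ to $(0,L)$ is nonzero by the linear independence of the Fourier modes, producing the desired Neumann eigenfunction with eigenvalue $\lambda$.
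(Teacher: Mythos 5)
The paper does not prove this statement: it is a direct citation of Hillen's results (obtained there via the Kac trick and reduction to a reaction telegraph system), and the paper only remarks that the agreement with \Cref{theorem_random_walk} is ``astonishing'' before \emph{conjecturing} that the inclusions of \Cref{corollary_spectrum_neumann} are equalities. Your proposal supplies an actual proof, and it is correct. The forward direction is routine. The converse is new and considerably more direct than Hillen's route: you superpose the mode-$k$ periodic eigenvector $z\,e^{i\pi kx/L}$ with its reflected image $Jz\,e^{-i\pi kx/L}$, which, via the block symmetries $JVJ=-V$ and $JBJ=B$ (valid for any symmetric model of the form \eqref{symmetric_transport_reaction}), yields a $J$-symmetric periodic eigenfunction on $(0,2L)$ that restricts to a Neumann eigenfunction on $(0,L)$ --- an eigenfunction-level analogue of the semigroup identity in \Cref{theorem_neumann_periodic_connection}. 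Your identification of $B|_{V_-}$ with $-2M = \operatorname{diag}(-2\mu_1,-2\mu_2)$ is correct (the $N$-block cancels on the $J$-antisymmetric subspace) and cleanly \emph{explains} the half-plane restriction $\real\lambda>\max\{-2\mu_1,-2\mu_2\}$, which the paper can only call a ``technical assumption'': it is exactly the spectrum of the constant ($k=0$) mode on $V_-$, where your superposition degenerates to the zero function. As a byproduct, your argument establishes the paper's conjectured equality $\sigma((-A+B)^{\text{Neu},L})=\sigma((-A+B)^{\text{per},2L})$ on the half-plane $\{\real\lambda>\max\{-2\mu_1,-2\mu_2\}\}$ for symmetric models.
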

	\begin{remark}
		The restriction $\real\lambda>\max\{-2\mu_1,-2\mu_2\}$ is a technical
		assumption needed for the computations in \cite{hillen1996turing} and, due to
		$\mu_1,\mu_2>0,$ it is irrelevant if one is only interested in stability or
		blow-up results.
	\end{remark}
	The theorems strongly support the presumption that there is a deeper connection
	of the spectral properties between the generator of symmetric models on $(0,L)$ with Neumann boundary conditions to the same models on $(0,2L)$ with periodic boundary conditions than the one pointed out in \Cref{section_neumann_boundary}. We conjecture
	that all inclusions in \Cref{corollary_spectrum_neumann} are actually
	equalities. \newline
	
	The next logical step is to study the polynomial from \Cref{theorem_random_walk}
	and to characterize parameter constellations which turn the model
	\eqref{model_hillen_linear} stable or unstable. T. Hillen studied this question
	in the second half of his paper \cite{hillen1996turing} under the two
	assumptions that $u^* \in \R^2$ is a stable equilibrium of the ODE model
	$y'=F(y)$ and that $N=DF(u^*)$ corresponds to an activator-inhibitor system, see
	\cite[(H1) and (H2), p. 56]{hillen1996turing} for the exact parameter choices. We
	briefly elucidate his main idea, which we believe is currently the standard approach to
	deal with stability questions of sophisticated transport-reaction models. The
	author used, cf. \cite[p. 63]{hillen1996turing}, the Routh-Hurwitz criterion
	from \cite[p. 194ff.]{gantmakher2000theory}: \newline
	
	All roots of a polynomial
	\begin{equation*}
		P(\lambda) = \lambda^4 + a_3 \lambda^3 + a_2 \lambda^2 + a_1 \lambda + a_0
	\end{equation*}
	have negative real part if and only if
	\begin{align*}
		0 &< a_3, \\
		0 &< a_2a_3 - a_1, \\
		0 &< (a_2a_3-a_1)a_1-a_3^2a_0, \\
		0 &< a_0 \big((a_2a_3-a_1)a_1-a_3^2a_0\big).
	\end{align*}
	These conditions are equivalent to
	\begin{equation*}
		a_3, \, a_2, \, a_1, \, a_0 > 0 \quad \text{ and } \quad
		(a_2a_3-a_1)a_1-a_3^2a_0 > 0.
	\end{equation*}
	We refer to \cite{hillen1996turing} for the actual computations and findings for
	the reaction random walk system \eqref{model_hillen_linear}.
	
	\chapter{Pattern Formation}\label{chapter_pattern_formation}
	
	The goal of this chapter is to study transport-driven instabilities. This means that we are interested in the question whether an additional spacial variable and transport can cause stable reactions to become unstable. If so, what are the patterns or phenomena occurring and how does the solution of the transport-reaction equation \eqref{linear_transport_reaction_eq} behave qualitatively? \newline
	
	Originally, this questions was studied by Turing in the context of reaction-diffusion equations \cite{turing1990chemical}. Although chemical reactions and diffusion are both homogenizing and stabilizing mechanisms, together they can drive very regular instabilities, so called Turing patterns. His discovery led to a broad study of these patterns in reaction-diffusion equations, especially in the context of biology. \newline
	The first section gives an introduction to the mathematical modeling of chemical reactions and the concept of Turing patterns, where the second part of it follows the recent review \cite{woolley2017turing}. Beyond motivating reaction-diffusion equations and the concept ``Turing patterns", we also emphasize that the typical parameter choices causing these instabilities are of activator-inhibitor type. \newline
	
	For the transport-reaction system
	\begin{equation}\label{transport_reaction_system}
		\partial_t u + \colvec{\mathbf{v_1} \cdot \nabla u_1\\ \vdots \\ \mathbf{v_N}
		\cdot \nabla u_N} = Bu
	\end{equation}
	with $N\in\N$ components, transport directions $\mathbf{v_1},\cdots,\mathbf{v_N} \in \R^d$ and linear reactions described by a matrix $B\in \R^{N\times N},$ pattern formation mechanisms and instabilities caused by movement are less studied than for reaction-diffusion equations. Even though linear transport is not homogenizing, the detailed study of the simple Goldstein-Kac model in \Cref{section_goldstein_kac} and the \Cref{remark_Goldstein_kac} could suggest that it corresponds to a ``stirring motion", which has no influence on the stability of an equilibrium. On top of that, \Cref{time_development_averages} shows that the averages of the concentrations will converge to zero, if $B$ is stable, i.e. if $B$ has only eigenvalues with a negative real part. \newline
	However, it is known that \eqref{transport_reaction_system} can also generate Turing patterns. This has for example been shown in \cite{hillen1996turing} for a concrete system of $N=4$ equations. In \cite[Theorem 11, p. 20]{stevens2008partial}, specific conditions for a system with an arbitrary number $N$ of components were found, which lead to Turing patterns. These conditions from \cite{stevens2008partial} are unfortunately a little elusive, less concrete as for reaction-diffusion equations and require a priori assumptions on $B$. One reason for this is that Turing patterns in transport-reactions models require a minimal degree of complexity in the sense that the number of components $N$ needs to be greater than two.\newline
	To the best of our knowledge, there is currently no general analysis of transport-driven instabilities and we aim to close this gap.  Restricting ourselves to the case $d=1$, the transport reaction system \eqref{transport_reaction_system} reads
	\begin{equation}\label{transport_reaction_one_dimensional}
		\partial_t u + \colvec{v_1 u_1'\\ \vdots \\ v_N u_N'} = Bu
	\end{equation}
	with $N\in\N$ components, transport directions $v_1,\dots,v_N \in \R$ and linear reactions described by a matrix $B\in \R^{N\times N}$. We consider the equation on the interval $(0,1)$ with periodic boundary conditions, i.e. on the circle $\T.$ At this point, it should be mentioned that this restriction to the one-dimensional case is, in principle, unnecessary. \Cref{char_spectrum_generator}, \Cref{theorem_WSMTHM_p_eq_2} and the methods we present in this chapter also allow a rigorous pattern formation analysis on the $d$-dimensional torus $\T^d.$ Nevertheless, we do not address the higher dimensional case here and propose it as a topic for future research. \newline
	
	In contrast to reaction-diffusion systems, Turing patterns are not the only instabilities caused by the additional spacial variable. We introduce the notion of hyperbolic instabilities which allows us to categorize all possible instabilities for \eqref{transport_reaction_one_dimensional}: an instability is either a Turing pattern or a hyperbolic instability. Intuitively, hyperbolic instabilities correspond to an ever growing oscillation due to the dominance of increasingly high frequencies. As a side product of our analysis, we find a fairly general condition which ensures the existence of Turing patterns.
	
	\section{Reaction-Diffusion Equations and Turing Patterns}
	
	Before introducing reaction-diffusion equations and diffusion-driven instabilities, we give a brief introduction to the mathematical modeling of chemical reaction networks.
	
	\subsection{Reaction Networks}\label{section_reaction_networks}
	This subsection will only deal with	the dynamics in a single cell. As a mathematical consequence, there is no spacial
	variable and the reaction network can be fully described by a system of ODEs. The short
	overview given here is inspired by the lecture "Computational Systems Biology"
	held by Professor Jan Hasenauer in the winter term 2020 at the University of
	Bonn and \cite{kuttler2011reaction}.\newline
	To begin with, let us put chemical reactions in concrete and formal terms.
	Chemical species $Y_1, \cdots, Y_N $ are chemically identical molecular entities
	and their state $Y =(Y_1, \cdots, Y_N)$ is given by a $\N_0^N$ valued vector
	describing the current number of molecules. These chemical species interconvert
	by reacting. In general, a single chemical reaction
	\begin{align}
		\underbrace{\sum_{\indexone=1}^{N} s_{\indexone}^-
		Y_\indexone}_{\text{reactants}} \quad \xrightarrow{a(Y)} \quad
		\underbrace{\sum_{\indexone=1}^{N} s_{\indexone}^+
		Y_\indexone}_{\text{products}}
	\end{align}
	is characterized by the \textit{stoichiometric coefficients} $s_{\indexone}^-,
	\, s_{\indexone}^+ \in \N_0$ for $\indexone=1, \cdots N$ and by the
	state-dependent \textit{reaction propensity} $a(Y) \in \R_+$. The latter is
	defined such that $a(Y)h + o(h)$ gives the probability of the reaction taking
	place in $(t, \, t+h)$ for $h>0.$ In principle, this propensity can have the
	time $t$ as an additional argument but under the assumption of thermodynamic
	equilibrium it is (almost) proportional to the product of molecule counts
	$\prod_{\indexone=1}^{N} Y_\indexone^{s_\indexone^-}$
	\cite{gillespie1992rigorous}. Prototypes of these reactions are zeroth order
	reactions $\emptyset \xrightarrow{k} products$ with propensity $k$ for some
	constant $k \in \R_+$ or first order reactions $Y_\indexone \xrightarrow{k}
	products$ and second order reactions $Y_\indexfour + Y_\indexone
	\xrightarrow{k} products$ with propensities $kY_\indexone$ and $kY_\indexfour
	Y_\indexone$ respectively (in the case $\indexfour \neq \indexone$). \newline
	Instead of tracking the number of molecules, we are interested in the
	concentrations $y_\indexone = [Y_\indexone]$ of the chemical species and accordingly, the vector $y=(y_1,\cdots,y_N)$ of all concentrations is $\R_{\geq0}^N$ valued. This
	macroscopic perspective is mainly motivated by the fact that molecule numbers
	are often high and continuous models are much easier to study qualitatively and
	quantitatively with analytical tools. The propensities $a(y)$ in this
	macroscopic setting are proportional to $\prod_{\indexone=1}^{N}
	y_\indexone^{s_\indexone^-}$. In particular, the propensities of the zeroth,
	first and second order reactions are $k, \, ky_\indexone$ and $ky_\indexfour
	y_\indexone$ respectively. One can use the above mentioned considerations to
	obtain a system of ODEs given by
	\begin{align*}
		\frac{d}{dt} \colvec{y_1 \\ \vdots \\ y_N} = \colvec{(s_1^+ - s_1^-)a(y) \\
		\vdots \\ (s_N^+ - s_N^-)a(y)}.
	\end{align*}
	Let us now consider an arbitrary number of reactions $R_1, \cdots, R_M$ with
	reaction $m=1,\cdots,M$ being
	\begin{align*}
		\sum_{\indexone=1}^{N} s_{m\indexone}^- Y_\indexone \quad \xrightarrow{a_m(y)}
		\quad \sum_{\indexone=1}^{N} s_{m\indexone}^+ Y_\indexone
	\end{align*}
	and define the stoichiometric matrix $S \in \R^{N\times M}$ by $S_{\indexone m}
	\coloneqq s_{m \indexone}^+ - s_{m \indexone}^-.$ Summing up all reaction fluxes
	yields the ODE system
	\begin{equation}\label{reaction_rate_equation_ODE}
		\frac{d}{dt} y = S a(y) \eqqcolon F(y)
	\end{equation}
	for the concentrations of the chemical species and $a(y) \coloneqq (a_1(y),
	\cdots, a_M(y)).$ 
	
	\begin{example}[Reversible reaction]
		Consider three chemical species and the reversible reaction
		\begin{equation*}
			Y_1 + Y_2 \xrightleftharpoons[k_-]{k_+} Y_3,
		\end{equation*}
		which can be split up into the two reactions
		\begin{equation*}
			\begin{array}{lrcl}
			R_1\colon & Y_1 + Y_2 &\overset{k_+}{\longrightarrow}&  Y_3,\\
			R_2\colon & Y_3 &\overset{k_-}{\longrightarrow}&  Y_1 + Y_2. \\
			\end{array}
		\end{equation*}
		The corresponding system of ODEs reads
		\begin{align*}
			\frac{d}{dt} \colvec{y_1 \\ y_2 \\ y_3} = 
			\begin{pmatrix}
			-1 & 1 \\
			-1 & 1 \\
			1 & -1
			\end{pmatrix}
			\colvec{k_+ \, y_1 y_2 \\ k_- \, y_3} = \colvec{-k_+ \, y_1 y_2 + k_- \, y_3
				\\ -k_+ \, y_1 y_2 + k_- \, y_3 \\ k_+ \, y_1 y_2 - k_- \, y_3}.
		\end{align*}
	\end{example}
	
	\subsection{The Full Model and Diffusion-Driven Instabilities}
	
	This subsection follows \cite{woolley2017turing}. For reasons of consistency, we consider all equations on the one-dimensional domain $(0,L)$ for $L>0$ and with periodic boundary conditions, i.e. on the circle $\T_L.$ \newline
	
	Reaction-diffusion equations in the context of chemicals appear, if one adds a spacial variable to the models from \Cref{section_reaction_networks}. In contrast to bacteria and without any external force, the movement of chemicals is determined by Fick's Law of Diffusion \cite{fick1855v}. Therefore, the full model reads
	\begin{equation}\label{reaction_diff_eq}
		\partial_t u(t,x) - D \Delta u(t,x) = F(u(t,x))
	\end{equation}
	where $u=(u_1,\cdots,u_N)$ is the vector of all concentrations and $D$ is a diagonal matrix with entries $D_{\indexone}>0$ for $\indexone=1,\cdots,N.$ These diagonal entries are often called diffusivity constants and they control how quickly chemicals spread along the circle \cite[p. 221]{woolley2017turing}. The function $F\colon \R^N \to \R^N$ describes the chemical reactions and is typically given by \eqref{reaction_rate_equation_ODE}. \newline
	
	For clarity, we now consider the reaction-diffusion equation describing two concentrations $u_1$ and $u_2.$ In this simple case, \eqref{reaction_diff_eq} reads
	\begin{equation}\label{reaction_diffusion_2D}
		\partial_t \colvec{u_1 \\ u_2} - \begin{pmatrix}
		D_1 & 0 \\
		0 & D_2
		\end{pmatrix} \partial_x^2 \colvec{u_1 \\ u_2} = \colvec{F_1(u_1,u_2) \\ F_2(u_1,u_2)}.
	\end{equation}
	Talking about a diffusion-driven instability only makes sense if there exists a homogeneous steady state $c\in\C^2$ such that $F(c) = 0$ and $y\equiv c$ is a stable equilibrium of the ODE model $y'=F(y),$ i.e. all eigenvalues of the Jacobian of $F$ evaluated at the steady state
	\begin{equation*}
		B \coloneqq DF(c)
	\end{equation*}
	have a negative real part. Let us write
	\begin{equation*}
		B = \begin{pmatrix}
		a & b \\
		c & d
		\end{pmatrix}.
	\end{equation*}
	Then, both eigenvalues of $B$ having a negative real part is equivalent to
	\begin{align}
		\trace(B) &= a + d < 0, \label{neg_trace} \\
		\det(B) &= ad - bc > 0. \label{pos_det}
	\end{align}
	Notice that this works out so nicely because $B$ is only a $2\times 2$ matrix. Also, these conditions cover both the cases of complex conjugate eigenvalues and two real eigenvalues. \newline
	In order to characterize conditions which turn the stable equilibrium $y\equiv c$ to an unstable steady state $u\equiv c$ of \eqref{reaction_diffusion_2D}, we study the linearization
	\begin{equation}\label{reaction_diffusion_2D_linearized}
		\partial_t u - \begin{pmatrix}
		D_1 & 0 \\
		0 & D_2
		\end{pmatrix} \partial_x^2 u = Bu
	\end{equation}
	of \eqref{reaction_diffusion_2D} with the same Fourier approach as in \Cref{chapter_spectral_analysis}. From the perspective of semigroup theory, the operator
	\begin{equation}\label{generator_reaction_diff}
		D + B \coloneqq \begin{pmatrix}
		D_1 & 0 \\
		0 & D_2
		\end{pmatrix}\partial_x^2 + B
	\end{equation}
	with domain $H^2(\T_L, \, \C^2)$ generates an analytic semigroup on $L^2(\T_L, \, \C^2).$ This semigroup fulfills the spectral mapping theorem, see \Cref{smtm}, and the question of stability comes down to studying the spectrum of $D+B.$ If there exists $\lambda \in \sigma(D+B)$ with $\real\lambda>0,$ the equilibrium $y\equiv c$ of the ODE model will turn unstable in the reaction-diffusion model. Now, any function $u\in L^2(\T_L, \, \C^2)$ can be written as its unique Fourier series
	\begin{equation}\label{fourier_series_reaction_diff}
		u(x) = \sum_{\indextwo\in\Z} \hat{u}(\indextwo) e^{2\pi i \tfrac{k}{L} x}
	\end{equation}
	with Fourier coefficients 
	\begin{equation*}
		\hat{u}(\indextwo) = \int_{\T_L} u(x)e^{-2\pi i \tfrac{k}{L}x} \, dx \in \C^2.
	\end{equation*}
	An informal application of the generator \eqref{generator_reaction_diff} to the Fourier series \eqref{fourier_series_reaction_diff} of $u$ yields
	\begin{equation*}
		\big((D+B)u\big)(x) = \sum_{\indextwo\in\Z} M(\indextwo) \hat{u}(\indextwo)  e^{2\pi i \tfrac{k}{L} x},
	\end{equation*}
	where $M(\indextwo) \in \R^{2\times 2}$ are the matrices
	\begin{equation}\label{matrices_Mk_react_diff}
		M(\indextwo) = \begin{pmatrix}
		-\frac{4\pi^2 \indextwo^2}{L^2} D_1 + a & b \\
		c & -\frac{4\pi^2 \indextwo^2}{L^2} D_2 + d
		\end{pmatrix} \eqqcolon \begin{pmatrix}
		-\indextwo_L^2 D_1 + a & b \\
		c & - \indextwo_L^2 D_2 + d
		\end{pmatrix}.
	\end{equation}
	These considerations can be made rigorous on the domain $H^2(\T_L, \, \C^2)$ of $D+B$ and 
	\begin{equation}\label{spec_reaction_diff}
		\sigma(D+B) = \bigcup_{\indextwo\in\Z} \sigma(M(k))
	\end{equation}
	follows similarly to \Cref{proposition_generator_application_fourier} and \Cref{char_spectrum_generator}. Because we assumed $D_{1,2}>0$, the eigenvalues of $M(\indextwo)$ never converge to a complex number as $|k|\to \infty$ and we do not need to take the closure on the right-hand side of \eqref{spec_reaction_diff}. It is left to characterize all parameter constellations which imply existence of an eigenvalue $\lambda$ of some $M(\indextwo)$ with a positive real part. \newline
	
	Hence, we need to compute the roots of
	\begin{align*}
		\det (( M(\indextwo) - \lambda I_{\C^{2\times 2}}) &= \det \begin{pmatrix}
		-\indextwo_L^2 D_1 + a - \lambda & b \\
		c & - \indextwo_L^2 D_2 + d - \lambda
		\end{pmatrix} = (\indextwo_L^2 D_1 + \lambda - a)(\indextwo_L^2 D_2 + \lambda - d) - bc \\
		&= \lambda^2 + \lambda \big( \indextwo_L^2(D_1+D_2) - \trace(B) \big) + \indextwo_L^4 D_1 D_2 - \indextwo_L^2(D_1 d + D_2 a) + \det(B) \\
		&\eqqcolon \lambda^2 + \lambda \big( \indextwo_L^2(D_1+D_2) - \trace(B) \big) + h(\indextwo_L^2).
	\end{align*}
	They are given by
	\begin{equation*}
		\lambda_{\pm}(\indextwo) = \frac{\trace(B) - \indextwo_L^2(D_1+D_2)}{2} \pm \sqrt{\left(\frac{\trace(B)-\indextwo_L^2(D_1+D_2)}{2}\right)^2 - h(\indextwo_L^2)}
	\end{equation*}
	and \eqref{neg_trace} implies $\real(\lambda_-(\indextwo))<0$ for all $\indextwo\in\Z.$ Also, $\real(\lambda_+(\indextwo))$ is positive if and only if $h(\indextwo_L^2)<0,$ which is equivalent to
	\begin{equation*}
		\indextwo_L^4 - \frac{D_1 d + D_2 a}{D_1 D_2} \indextwo_L^2 + \frac{\det(B)}{D_1 D_2} < 0
	\end{equation*}
	by the definition of $h(\indextwo_L^2)$. This implies the necessary condition
	\begin{equation}\label{condition_Turing_instab_hL}
		\indextwo_-^2 < \indextwo_L^2 < \indextwo_+^2,
	\end{equation}
	where
	\begin{equation}\label{def_k_pm_reaction_diffusion}
		\indextwo_\pm^2 = \frac{D_1 d + D_2 a}{2 D_1 D_2} \pm \sqrt{\left(\frac{D_1 d + D_2 a}{2D_1 D_2}\right)^2 - \frac{\det(B)}{D_1 D_2}}.
	\end{equation}
	Condition \eqref{condition_Turing_instab_hL} can be realized iff $\indextwo_+^2>0,$ i.e. if and only if
	\begin{equation}\label{auxiliary_eq40}
		\indextwo_+^2 = \frac{D_1 d + D_2 a}{2 D_1 D_2} + \sqrt{\left(\frac{D_1 d + D_2 a}{2D_1 D_2}\right)^2 - \frac{\det(B)}{D_1 D_2}} > 0.
	\end{equation}
	Given $\det(B)>0$, see \eqref{pos_det}, \eqref{auxiliary_eq40} is equivalent to the two conditions
	\begin{equation*}
		\frac{D_1 d + D_2 a}{2 D_1 D_2} > 0 \quad \text{ and } \quad \left(\frac{D_1 d + D_2 a}{2D_1 D_2}\right)^2 - \frac{\det(B)}{D_1 D_2} > 0.
	\end{equation*}
	These two inequalities yield the single condition
	\begin{equation}\label{auxiliary_eq41}
		D_1 d + D_2 a > 2 \sqrt{D_1 D_2} \sqrt{\det(B)} > 0.
	\end{equation}
	Finally, putting \eqref{neg_trace}, \eqref{pos_det}, \eqref{auxiliary_eq40} and \eqref{auxiliary_eq41} together yields a characterization of the existence of Turing patterns for the reaction-diffusion equation \eqref{reaction_diffusion_2D}.
	
	\begin{theorem}[{\cite[p. 225]{woolley2017turing}}]\label{turing_instab_reaction_diffusion_eq}
		Let $L>0$ be the length of the circle $\T_L$. Moreover, let $c\in \R^2$ with $F(c)=0$ and let $B=DF(c).$ The reaction-diffusion equation \eqref{reaction_diffusion_2D} generates Turing patterns if and only if the diffusion and reaction parameters fulfill
		\begin{align*}
			\trace(B) &= a + d < 0, \\
			\det(B) &= ad - bc > 0, \\
			D_1 d + D_2 a > 2 &\sqrt{D_1 D_2} \sqrt{\det(B)} > 0
		\end{align*}
		and there exists $k\in\Z$ with
		\begin{equation*}
			\indextwo_-^2 < \frac{4\pi^2 \indextwo^2}{L^2} < \indextwo_+^2,
		\end{equation*}
		where $\indextwo_\pm^2$ are defined in \eqref{def_k_pm_reaction_diffusion}.
	\end{theorem}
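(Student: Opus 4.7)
The plan is to translate the phrase ``generates Turing patterns" into two simultaneous spectral conditions and then read off the inequalities stated in the theorem from a direct analysis of $2\times 2$ matrices. By definition, a Turing instability requires (i) that $y \equiv c$ is a linearly stable equilibrium of the kinetic ODE $y' = F(y)$, and (ii) that $u \equiv c$ becomes unstable for the full PDE \eqref{reaction_diffusion_2D}. Condition (i) is an assertion purely about the $2\times 2$ Jacobian $B$, while (ii) translates through the spectral mapping theorem for the analytic semigroup generated by $D\partial_x^2 + B$ on $H^2(\T_L, \C^2)$ into a statement about $\sigma(D + B)$.

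For (i), since $B \in \R^{2\times 2}$, standard Routh-Hurwitz (or direct inspection of the characteristic polynomial $\lambda^2 - \trace(B)\lambda + \det(B)$) gives that both eigenvalues of $B$ have negative real part if and only if $\trace(B) < 0$ and $\det(B) > 0$; these are precisely \eqref{neg_trace} and \eqref{pos_det}. For (ii), the discussion immediately preceding the theorem has already justified the identity $\sigma(D+B) = \bigcup_{k\in\Z} \sigma(M(k))$ via Fourier series along the lines of \Cref{char_spectrum_generator}, so instability is equivalent to existence of some $k \in \Z$ and some $\lambda \in \sigma(M(k))$ with $\real\lambda > 0$.

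Next I would compute the characteristic polynomial of $M(k)$, which factors as $\lambda^2 + \lambda(k_L^2(D_1+D_2) - \trace(B)) + h(k_L^2)$ with $h(k_L^2) = k_L^4 D_1 D_2 - k_L^2(D_1 d + D_2 a) + \det(B)$. Under (i) the coefficient of $\lambda$ is positive for every $k$, so the root $\lambda_-(k)$ always has negative real part, and the remaining root $\lambda_+(k)$ satisfies $\real\lambda_+(k) > 0$ if and only if $h(k_L^2) < 0$. Treating $h$ as a quadratic in $k_L^2$, this sign change is possible only when $h$ has two real positive roots $k_\pm^2$, which (given $\det(B) > 0$ from (i)) is equivalent to requiring both $D_1 d + D_2 a > 0$ and a positive discriminant; rearranging yields the single compact condition $D_1 d + D_2 a > 2\sqrt{D_1 D_2 \det(B)} > 0$.

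Finally, the existence of an actual mode realizing the instability is the requirement that some integer $k \in \Z$ places $k_L^2 = 4\pi^2 k^2 / L^2$ strictly inside the interval $(k_-^2, k_+^2)$, which is the last condition in the theorem. The converse direction is automatic since each step above is an equivalence. The only genuinely nontrivial ingredient is the spectral decomposition $\sigma(D+B) = \bigcup_{k\in\Z} \sigma(M(k))$ together with the spectral mapping theorem for analytic semigroups (\Cref{smtm}), but both are available off the shelf; everything else is elementary $2\times 2$ linear algebra. I therefore expect no serious obstacle beyond carefully verifying the algebraic equivalence in the discriminant step.
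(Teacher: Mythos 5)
Your proposal follows essentially the same route as the paper: reduce Turing pattern formation to the two spectral conditions (stable kinetics for $B$, instability of $D+B$), decompose $\sigma(D+B)=\bigcup_k\sigma(M(k))$ by Fourier series, compute the characteristic polynomial of each $M(k)$, observe that $\real\lambda_-(k)<0$ always holds under $\trace(B)<0$, reduce $\real\lambda_+(k)>0$ to $h(k_L^2)<0$, and then analyze $h$ as a quadratic in $k_L^2$ to extract the discriminant condition $D_1 d + D_2 a > 2\sqrt{D_1D_2\det(B)} > 0$ together with the existence of an admissible integer mode. This is precisely the chain of reasoning the paper lays out in the paragraphs leading up to the theorem, so no genuinely different ideas are involved.
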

	\begin{remarks}
		\begin{enumerate}
			\item
			The reaction-diffusion equation \eqref{reaction_diffusion_2D} is symmetrical in $u_1$ and $u_2.$ Moreover, either $a$ or $d$ has to be negative for Turing patterns to exist. Let $d<0$ without loss of generality. Then, for Turing patterns to exist, $a$ has to be positive and $b$ and $c$ need to have different signs. \newline
			The situation in which $b, \, d<0 $ is often interpreted as an activator-inhibitor system. The activator $u_1$ increases by itself and its growth can be offset by the inhibitor $u_2,$ which increases only in the presence of the activator \cite[p. 175]{evans10}.
			\item
			The wave number $|\indextwo|$ with the largest possible real part for an eigenvalue of $M(\indextwo)$ is dominant for large times and indicates the number of peaks in the emerging pattern \cite[cf. p. 223]{woolley2017turing}.
			\item
			The last condition in the above theorem shows that a decreasing domain size shrinks the window of viable wave numbers $\indextwo$ \cite[p. 226]{woolley2017turing}.
		\end{enumerate}
	\end{remarks}

	We end this introduction to Turing pattern formation with the general definition of Turing patterns, which can be applied in a broader context, e.g. in the context of linear systems of $N\in\N$ equations. To this end, we introduce the notation
	\begin{equation*}
		\Sigma(\indextwo) = \max_{\lambda \in \sigma(M(\indextwo))} \real\lambda.
	\end{equation*}
	
	\begin{definition}[Turing pattern formation]
		Model \eqref{reaction_diffusion_2D_linearized} generates Turing patterns if $\sigma(B) \subseteq \C_- = \{ \lambda \in \C \, \colon \, \real\lambda < 0\}$ and if there exist finitely many $k_1, \cdots, k_n \in \Z$ such that the following holds:
		\begin{enumerate}
			\item
			\begin{equation*}
				\Sigma(\indextwo_\indexfour) = \Sigma(\indextwo_\indexone) > 0 \qquad \text{for all } \indexfour,\indexone=1,\cdots,n,
			\end{equation*}
			\item
			\begin{equation*}
				\sup_{\substack{\indextwo\in\Z \\ \indextwo\neq \indextwo_1,\cdots, \indextwo_n}} \Sigma(\indextwo) < \Sigma(\indextwo_\indexfour)\qquad \text{for all } \indexfour=1,\cdots,n.
			\end{equation*}
		\end{enumerate}
	\end{definition}
	
	\section{Transport-Driven Instabilities}\label{section_transport_driven_instab}
	
	As mentioned in the introduction of this chapter, the goal is now to study the phenomenon of instabilities driven by an additional spacial variable in the context of the transport-reaction equation
	\begin{equation}\label{transport_driven_instab_eq}
		\partial_t u + \colvec{v_1 u_1'\\ \vdots \\ v_N u_N'} = Bu
	\end{equation}
	on the circle $\T$ and with transport directions $v_1, \cdots, v_N \in \R.$ Here, $B\in \R^{N\times N}$ is a real matrix. The length of the circle can be easily incorporated with the knowledge from \Cref{section_arbitrary_side_lengths} but we decided to neglect this additional index for the sake of readability. \newline
	
	\setcounter{assumption}{0}
	
	The main difference to reaction-diffusion equations is that transport does not damp high frequencies. Mathematically, this corresponds to the transport-reaction semigroup $(R(t))_{t\geq0}$ on $L^2(\T, \, \C^N)$ generated by
	\begin{equation*}
		-A+B = -\begin{pmatrix}
		v_1  & & \\
		& \ddots & \\
		& & v_N
		\end{pmatrix} \partial_x + B = - V \partial_x + B,
	\end{equation*}
	see \Cref{theorem_semigroup_generated_by_-A+B}, not being analytic. Nevertheless, the question of stability still reduces to the study of the spectrum of the generator $\sigma(-A+B)$ because of the weak spectral mapping \Cref{theorem_WSMTHM_p_eq_2}. By \Cref{char_spectrum_generator}, this spectrum is given by
	\begin{equation*}
		\sigma(-A+B) = \overline{\bigcup_{\indextwo\in \Z} \sigma(M(\indextwo))},
	\end{equation*}
	where $M(\indextwo)\in \C^{N\times N}$ are the matrices
	\begin{equation}\label{matrices_Mk_trans_react}
		M(\indextwo) = -2\pi i \indextwo
		\begin{pmatrix}
		v_1  & & \\
		& \ddots & \\
		& & v_N
		\end{pmatrix}
		+ B = - 2\pi i \indextwo V + B
	\end{equation}
	for $\indextwo\in\Z.$ Compared to \eqref{matrices_Mk_react_diff}, the matrices \eqref{matrices_Mk_trans_react} are complex valued. Furthermore, there exist constants $c_1, \, c_2 \in \R$ such that $\sigma(-A+B)$ is contained in a strip $\C_{c_1,c_2} \coloneqq \{\lambda \in \C \, \colon \, c_1 < \real\lambda < c_2 \}$ by \Cref{lemma_-A+B_generates_C_0_group}. In particular, we will see that infinitely many eigenvalues of $-A+B$ can have a positive real part, even if $B$ is stable and has only eigenvalues with a negative real part. However, there is still an underlying structure in the spectrum $\sigma(-A+B),$ which becomes apparent after a detailed study of the eigenvalues of $M(\indextwo).$ We emphasize that the following considerations are valid for arbitrary matrices $B$. \newline
	
	The main idea is to write $M(\indextwo)$ as
	\begin{equation*}
		M(\indextwo) = -2\pi i \indextwo \left(V + \frac{i}{2\pi \indextwo} B\right),
	\end{equation*}
	so it suffices to compute the eigenvalues of
	\begin{equation}\label{Mk_factored_out}
		V + \frac{i}{2\pi \indextwo} B.
	\end{equation}
	Now for large $|k|$, the matrices \eqref{Mk_factored_out} are a small perturbation of $V$ and perturbation theory from \cite[Chapter 2]{kato2013perturbation} can be applied. The theory in \cite{kato2013perturbation} is presented in the most general way possible, which makes the application not too straight forward, even though our situation is essentially the easiest case regarding \cite[Chapter 2, (2.1), p. 74]{kato2013perturbation}. Therefore, we follow \cite{kato2013perturbation} carefully, add some intermediate steps and try to attach importance to bibliographical references. \newline
	
	In the following, we always assume:
	
	\begin{assumption}\label{assumption_transport_dir}
		The transport-directions $v_1,\cdots, v_N \in \R$ are pairwise different, i.e. $v_\indexfour \neq v_\indexone$ holds for all $\indexfour,\indexone =1 , \cdots, N.$
	\end{assumption}
	
	\Cref{assumption_transport_dir} is necessary in order to get exact formulas for the eigenvalues of $M(\indextwo)$ for large $|\indextwo|$ from an application of \cite[Chapter 2]{kato2013perturbation}. It has also been assumed in \cite{stevens2008partial} and is very unlikely to be violated in applications. \newline
	
	Now, we follow \cite{kato2013perturbation} to study the eigenvalues of
	\begin{equation}\label{perturbed_V}
		V(z) \coloneqq V + zB
	\end{equation}
	for $z\in\C$ with small $|z|.$ This eigenvalue problem falls into the category of \cite[(1.2), p. 63]{kato2013perturbation} with $B^{(1)} = B$ and $B^{(n)}=0$ for all $n\geq2.$ Notice that we adapted the notation from \cite{kato2013perturbation} to our setup by using the letters $V$ and $B^{(1)}$ instead of $T$ and $T^{(1)}$ respectively. \newline
	Under \Cref{assumption_transport_dir}, the unperturbed matrix $V(0) = V$ has the distinct eigenvalues $v_\indexone$ with eigenprojections $P_\indexone = e_\indexone \otimes e_\indexone$, eigennilpotents $D_\indexone=0$ and algebraic and geometric multiplicities $m_\indexone=1$ for $\indexone=1,\cdots,N.$ \newline
	Let $\indexone=1,\cdots,N$ and let us introduce the matrices
	\begin{align*}
		S_\indexone(\zeta) = - \sum^N_{\substack{\indexfour=1 \\ \indexfour \neq \indexone}} \left[ (\zeta-v_\indexfour)^{-1} (e_\indexfour \otimes e_\indexfour) 	+ \sum_{n=1}^{m_\indexfour - 1} (\zeta - v_\indexfour)^{-n-1}D_\indexfour^n \right] = - \sum^N_{\substack{\indexfour=1 \\ \indexfour \neq \indexone}} (\zeta-v_\indexfour)^{-1} (e_\indexfour \otimes e_\indexfour)
	\end{align*}
	from \cite[(5.32), p. 40]{kato2013perturbation} and
	\begin{equation}\label{matrices_Sj}
		S_\indexone \coloneqq S_\indexone(v_\indexone) = - \sum^N_{\substack{\indexfour=1 \\ \indexfour \neq \indexone}} (v_\indexone-v_\indexfour)^{-1} (e_\indexfour \otimes e_\indexfour) = \sum^N_{\substack{\indexfour=1 \\ \indexfour \neq \indexone}} (v_\indexfour-v_\indexone)^{-1} (e_\indexfour \otimes e_\indexfour)
	\end{equation}
	from \cite[(5.28), p. 40]{kato2013perturbation}. Given this notation, the Laurent series of the resolvent $R(\zeta) = (V-\zeta)^{-1}$ (caution: the definition from \cite{kato2013perturbation} differs to the definition of the resolvent we used in the previous chapters) at $\zeta = v_\indexone$ is given by
	\begin{align}\label{Laurent_series_resolvent}
		R(\zeta) &= -(\zeta - v_\indexone)^{-1} (e_\indexone \otimes e_\indexone) + \sum_{n=0}^\infty (\zeta - v_\indexone)^n S_\indexone^{n+1} \eqqcolon \sum_{n=-1}^\infty (\zeta - v_\indexone)^n S_\indexone^{(n+1)},
	\end{align}
	see \cite[(5.18), p. 39]{kato2013perturbation} and \cite[(2.9), p. 76]{kato2013perturbation}. Here, we used the notation
	\begin{equation}\label{definition_Sj^n}
		S_\indexone^{(0)} = -P_\indexone = - e_\indexone \otimes e_\indexone \quad \text{ and } \quad S_\indexone^{(n)} = S_\indexone^n \text{ for } n\geq 1.
	\end{equation}
	Formula \eqref{Laurent_series_resolvent} does not come as a surprise and can also be computed with the geometric series because $R(\zeta)$ is of course just given by
	\begin{equation}\label{resolvent}
		R(\zeta) = \begin{pmatrix}
		(v_1-\zeta)^{-1}  & & \\
		& \ddots & \\
		& & (v_N-\zeta)^{-1}
		\end{pmatrix}
	\end{equation}
	on $\C \backslash \{v_1,\cdots,v_N\}.$ \newline
	For the perturbed matrix $V(z)$ from \eqref{perturbed_V}, the resolvent $R(\zeta, z) = (V(z) - \zeta)^{-1}$ can be written as the power series
	\begin{equation}\label{perturbed_resolvent_power_series}
		R(\zeta, z) = R(\zeta) + \sum_{n=1}^\infty z^n R^{(n)}(\zeta),
	\end{equation}
	where
	\begin{align*}
		R^{(n)}(\zeta) &= \sum_{p=1}^n \sum_{\substack{\nu_1 + \cdots + \nu_p = n \\ \nu_\indexthree \geq 1}} (-1)^p R(\zeta) B^{(\nu_1)} R(\zeta) B^{(\nu_2)} \cdots B^{(\nu_p)} R(\zeta) = (-1)^n R(\zeta) \big(B R(\zeta)\big)^n,
	\end{align*}
	see \cite[(1.13) and (1.14), p. 67]{kato2013perturbation}. Notice that we used $B^{(n)}=0$ for all $n\geq 2$ because $V(z)$ is just a linear perturbation of $V$. \newline
	
	Let $\indexone=1,\cdots,N$ and let $\Gamma_\indexone$ be a closed positively-oriented circle in the resolvent set $\rho(V) = \C \backslash \{v_1,\cdots,v_N\}$ enclosing $v_\indexone$ but no other $v_\indexfour$ for $\indexfour \neq \indexone.$ Let $|z|$ be sufficiently small. Then, the operator
	\begin{equation}\label{projection}
		P_\indexone(z) = - \frac{1}{2\pi i} \int_{\Gamma_\indexone} R(\zeta,z) \, d\zeta
	\end{equation}
	is equal to the sum of the eigenprojections for all the eigenvalues of $V(z)$ lying inside $\Gamma_\indexone$  \cite[p. 67]{kato2013perturbation}. The eigenvalues $\lambda_\indexone(z)$ are continuous functions in $z$ by \cite[Summary, p. 73]{kato2013perturbation} and therefore, assuming small $|z|,$ \Cref{assumption_transport_dir} ensures that there is exactly one eigenvalue $\lambda_\indexone(z)$ lying inside $\Gamma_\indexone$ and $P_\indexone(z)$ is itself the eigenprojection of this eigenvalue $\lambda_\indexone(z)$ \cite[p. 68]{kato2013perturbation}. At $z=0,$ no eigenvalues depart from the unperturbed eigenvalue $\lambda_\indexone(0) = v_\indexone$, i.e. there is no splitting, cf. \cite[Chapter 2, §1.2, p. 65f., p. 68]{kato2013perturbation}. \newline
	The trick from \cite{kato2013perturbation} is now the following: the trace of a matrix is the sum of all its eigenvalues (including multiplicities). Consequently, we obtain the formula
	\begin{equation}\label{first_formula_eigenvalues}
		\lambda_\indexone(z) = \lambda_\indexone(z) + (N-1) \cdot 0  = \trace(V(z)P_\indexone(z)) = v_\indexone + \trace\big((V(z) - v_\indexone) P_\indexone(z)\big),
	\end{equation}
	see \cite[(2.5), p. 75]{kato2013perturbation}. \newline
	
	It is left to find more concrete formulas for the trace appearing in \eqref{first_formula_eigenvalues}. Combining \eqref{perturbed_resolvent_power_series} and \eqref{projection} yields
	\begin{align}\label{series_Pj}
		P_\indexone(z) = e_\indexone \otimes e_\indexone + \sum_{n=1}^{\infty} z^n P_\indexone^ {(n)}
	\end{align}
	with
	\begin{equation*}
		P_\indexone^{(n)} = - \frac{1}{2\pi i} \int_{\Gamma_\indexone} R^{(n)}(\zeta) \, d\zeta = (-1)^{n+1} \frac{1}{2\pi i} \int_{\Gamma_\indexone}  R(\zeta) \big(B R(\zeta)\big)^n \, d\zeta
	\end{equation*}
	see \cite[(1.17) and (1.18), p. 68]{kato2013perturbation}. Now, we plug the Laurent series \eqref{Laurent_series_resolvent} of the resolvent $R(\zeta)$ at $\zeta = v_\indexone$ into the integrand. This gives a Laurent series in $\zeta - v_\indexone,$ of which only the term with power $(\zeta-v_\indexone)^{-1}$ contributes to the integral \cite[p. 76]{kato2013perturbation}. More precisely, the substitution yields
	\begin{alignat*}{2}
		P_\indexone^{(n)} &=  (-1)^{n+1} \frac{1}{2\pi i} \int_{\Gamma_\indexone} && R(\zeta) \big(B R(\zeta)\big)^n \, d\zeta  \\
		&= (-1)^{n+1} \frac{1}{2\pi i} \int_{\Gamma_\indexone} &&\Bigg[\bigg( \sum_{\indextwo_1 = -1}^\infty (\zeta - v_\indexone)^{\indextwo_1} S_\indexone^{(\indextwo_1+1)} \bigg) \bigg( B \sum_{\indextwo_2 = -1}^\infty (\zeta - v_\indexone)^{\indextwo_2} S_\indexone^{(\indextwo_2+1)} \bigg) \\
		&  && \cdots \bigg( B \sum_{\indextwo_{n+1} = -1}^\infty (\zeta - v_\indexone)^{\indextwo_{n+1}} S_\indexone^{(\indextwo_{n+1}+1)} \bigg) \Bigg] \, d\zeta \\
		&= (-1)^{n+1} \sum_{\indextwo_{1} = -1}^\infty \cdots && \sum_{\indextwo_{n+1} = -1}^\infty \bigg(\frac{1}{2\pi i} \int_{\Gamma_\indexone} (\zeta-v_\indexone)^{\indextwo_1+\cdots+\indextwo_{n+1}} \, d\zeta \bigg) S_\indexone^{(\indextwo_{1}+1)} B S_\indexone^{(\indextwo_{2}+1)} \cdots S_\indexone^{(\indextwo_{n}+1)} B S_\indexone^{(\indextwo_{n+1}+1)},
	\end{alignat*}
	which results in the finite sum
	\begin{equation}\label{formula_Pjn}
		\begin{aligned}
		P_\indexone^{(n)} &= (-1)^{n+1} \sum_{\substack{\indextwo_1+\cdots + \indextwo_{n+1} = -1 \\ \indextwo_\indexthree \geq -1}} S_\indexone^{(\indextwo_{1}+1)} B S_\indexone^{(\indextwo_{2}+1)} \cdots S_\indexone^{(\indextwo_{n}+1)} B S_\indexone^{(\indextwo_{n+1}+1)} \\
		&= (-1)^{n+1} \sum_{\substack{\indextwo_1+\cdots + \indextwo_{n+1} = n \\ \indextwo_\indexthree \geq 0}} S_\indexone^{(\indextwo_{1})} B S_\indexone^{(\indextwo_{2})} \cdots S_\indexone^{(\indextwo_{n})} B S_\indexone^{(\indextwo_{n+1})},
		\end{aligned}
	\end{equation}
	cf. \cite[(2.12), p. 76]{kato2013perturbation}. \newline
	
	Concerning the computation of \eqref{first_formula_eigenvalues}, the same idea can be used. Notice that \eqref{projection} and
	\begin{equation*}
		(V(z) - v_\indexone) R(\zeta, z) = I_{\C^{N\times N}} + (\zeta - v_\indexone) R(\zeta,z)
	\end{equation*}
	for all $\zeta\in \range(\Gamma_\indexone)$ gives
	\begin{equation}
		(V(z) - v_\indexone) P_\indexone(z) = - \frac{1}{2\pi i} \int_{\Gamma_\indexone} (\zeta - v_\indexone) R(\zeta,z) \, d\zeta,
	\end{equation}
	see \cite[(2.15), p. 77]{kato2013perturbation}. Similar to above, using the power series \eqref{perturbed_resolvent_power_series} for $R(\zeta,z)$ and holomorphy of 
	\begin{equation*}
		\zeta \longmapsto (\zeta - v_\indexone) R(\zeta)
	\end{equation*}
	in the interior of the circle $\Gamma_\indexone$, see \eqref{resolvent}, yields
	\begin{align}\label{auxiliary_eq42}
		(V(z) - v_\indexone) P_\indexone(z) &= - \frac{1}{2 \pi i} \int_{\Gamma_\indexone} (\zeta - v_\indexone) R(\zeta) \, d\zeta + \sum_{n=1}^\infty z^n \widetilde{B}_\indexone^{(n)} = \sum_{n=1}^\infty z^n \widetilde{B}_\indexone^{(n)}.
	\end{align}
	Here, $\widetilde{B}_\indexone^{(n)}$ are given by
	\begin{equation}\label{formula_tilde_Bj}
		\begin{aligned}
			\widetilde{B}_\indexone^{(n)} &= - \frac{1}{2\pi i} \int_{\Gamma_\indexone} (\zeta - v_\indexone) R^{(n)}(\zeta) \, d\zeta = (-1)^{n+1} \frac{1}{2\pi i} \int_{\Gamma_\indexone} (\zeta-v_\indexone) R(\zeta) \big( BR(\zeta) \big)^n \, d\zeta \\
			&= (-1)^{n+1} \sum_{\substack{\indextwo_1+\cdots + \indextwo_{n+1} = n-1 \\ \indextwo_\indexthree \geq 0}} S_\indexone^{(\indextwo_{1})} B S_\indexone^{(\indextwo_{2})} \cdots S_\indexone^{(\indextwo_{n})} B S_\indexone^{(\indextwo_{n+1})},
		\end{aligned}
	\end{equation}
	cf. \cite[(2.18), p. 77]{kato2013perturbation}. Notice that the last step follows analogously to the computation of $P_\indexone^{(n)}$ with the only difference being the additional factor $(\zeta-v_\indexone)$. \newline
	
	Finally, \eqref{first_formula_eigenvalues} and \eqref{auxiliary_eq42} gives the exact formula
	\begin{equation}\label{formula_eval}
		\lambda_\indexone(z) = v_\indexone + \sum_{n=1}^\infty z^n \hat{\lambda}_\indexone^{(n)} \coloneqq v_\indexone + \sum_{n=1}^\infty z^n \trace \widetilde{B}_\indexone^{(n)}
	\end{equation}
	for the eigenvalues of $V(z) = V + zB,$ assuming that $|z|$ is sufficiently small, cf. \cite[(2.22), p. 78]{kato2013perturbation}. Although being horribly complicated, the formula for the coefficients $\hat{\lambda}_\indexone^{(n)}$ only depend on the model parameters $v_1,\cdots,v_N$ and $B$, see \eqref{matrices_Sj} and \eqref{formula_tilde_Bj}. Moreover, these coefficients $\hat{\lambda}_\indexone^{(n)}$ are always real. \newline
	There is a second more convenient expression for $\hat{\lambda}_\indexone^{(n)},$ which is
	\begin{equation}\label{formula_lamdas}
		\begin{aligned}
		\hat{\lambda}_\indexone^{(n)} &= \frac{1}{m_\indexone} \sum_{p=1}^n \frac{(-1)^p}{p} \sum_{\substack{\nu_1+\cdots+\nu_p = n \\ \indextwo_1+\cdots+\indextwo_p = p-1 \\ \nu_\indexthree \geq 1, \, \indextwo_p \geq 0}} \trace \big( B^{(\nu_1)} S_\indexone^{(\indextwo_1)} \cdots B^{(\nu_p)} S_\indexone^{(\indextwo_p)} \big) \\
		&= \frac{(-1)^n}{n} \sum_{\substack{\indextwo_1+\cdots+\indextwo_n = n-1 \\ \indextwo_\indexthree \geq 0}} \trace \big( B S_\indexone^{(\indextwo_1)} \cdots B S_\indexone^{(\indextwo_n)}\big),
		\end{aligned}
	\end{equation}
	see \cite[(2.31), p. 79]{kato2013perturbation}. We refer to \cite[p. 78f.]{kato2013perturbation} for any details on the proof. Notice that \eqref{formula_lamdas} and \eqref{formula_Pjn} also prove \cite[Problem 2.1, p. 80]{kato2013perturbation}, namely the expression
	\begin{equation*}
		\hat{\lambda}_\indexone^{(n)} = \frac{1}{n} \trace \big( B P_\indexone^{(n-1)} \big).
	\end{equation*}
	For convenience, we summarize everything in a theorem.
	
	\begin{theorem}\label{theorem_perturbed_eigenvalues}
		Let $V \in \R^{N\times N}$ be a diagonal matrix with pairwise different diagonal entries and let $B\in \R^{N\times N}.$ Then for $z\in\C$ with sufficiently small $|z|$, the matrix
		\begin{equation*}
			V(z) = V + zB
		\end{equation*}
		has $N$ distinct eigenvalues $\lambda_1(z), \cdots, \lambda_N(z)$ with
		\begin{equation}\label{series_evalues}
			\lambda_\indexone(z) = v_\indexone + \sum_{n=1}^\infty z^n \hat{\lambda}_\indexone^{(n)}
		\end{equation}
		for all $\indexone=1,\cdots,N.$ The coefficients $\hat{\lambda}_\indexone^{(n)} \in \R$ are given by
		\begin{align}
			\hat{\lambda}_\indexone^{(n)} &= (-1)^{n+1} \sum_{\substack{\indextwo_1+\cdots + \indextwo_{n+1} = n-1 \\ \indextwo_\indexthree \geq 0}} \trace \big(S_\indexone^{(\indextwo_{1})} B S_\indexone^{(\indextwo_{2})} \cdots S_\indexone^{(\indextwo_{n})} B S_\indexone^{(\indextwo_{n+1})}\big) \nonumber \\
			&= \frac{(-1)^n}{n} \sum_{\substack{\indextwo_1+\cdots+\indextwo_n = n-1 \\ \indextwo_\indexthree \geq 0}} \trace \big( B S_\indexone^{(\indextwo_1)} \cdots B S_\indexone^{(\indextwo_n)}\big) \label{second_formula},
		\end{align}
		where $S_\indexone^{(n)}$ are the matrices
		\begin{equation*}
			S_\indexone^{(0)} = -P_\indexone = - e_\indexone \otimes e_\indexone \quad \text{ and } \quad S_\indexone^{(n)} = S_\indexone^n \text{ for } n\geq 1
		\end{equation*}
		with
		\begin{equation*}
			S_\indexone = \sum^N_{\substack{\indexfour=1 \\ \indexfour \neq \indexone}} (v_\indexfour-v_\indexone)^{-1} (e_\indexfour \otimes e_\indexfour).
		\end{equation*}
	\end{theorem}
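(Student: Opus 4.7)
The proof is essentially a consolidation of the derivation carried out in the text leading up to the statement, following \cite[Chapter 2]{kato2013perturbation}. I would present it as a verification that each ingredient applies in our setting rather than a fresh computation.

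The plan is as follows. First I would invoke the general perturbation theorem for finite-dimensional operators from \cite[Chapter 2, §1.2]{kato2013perturbation}: under \Cref{assumption_transport_dir}, the unperturbed matrix $V$ has $N$ simple eigenvalues $v_1, \ldots, v_N$ with eigenprojections $P_\indexone = e_\indexone \otimes e_\indexone$ and vanishing eigennilpotents. Continuity of the spectrum under perturbation gives a radius $r>0$ such that for $|z|<r$, disjoint closed positively oriented circles $\Gamma_1, \ldots, \Gamma_N \subset \rho(V)$, each enclosing only $v_\indexone$, remain in $\rho(V(z))$, and the spectral projection
\begin{equation*}
P_\indexone(z) = -\frac{1}{2\pi i} \int_{\Gamma_\indexone} R(\zeta,z)\, d\zeta
\end{equation*}
is the eigenprojection of a single eigenvalue $\lambda_\indexone(z)$ of $V(z)$. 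This yields the $N$ distinct eigenvalues as well as the trace identity
\begin{equation*}
\lambda_\indexone(z) = \trace\bigl(V(z) P_\indexone(z)\bigr) = v_\indexone + \trace\bigl((V(z)-v_\indexone)P_\indexone(z)\bigr).
\end{equation*}

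Next I would expand the resolvent as a Neumann (geometric) series
\begin{equation*}
R(\zeta,z) = R(\zeta) + \sum_{n=1}^\infty z^n (-1)^n R(\zeta)\bigl(BR(\zeta)\bigr)^n,
\end{equation*}
valid for small $|z|$ uniformly in $\zeta$ on $\Gamma_\indexone$ since $R(\zeta)$ is bounded on the compact contour. Substituting into the Cauchy integral defining $(V(z)-v_\indexone)P_\indexone(z)$ and interchanging sum and integral gives the series \eqref{auxiliary_eq42} with coefficients $\widetilde{B}_\indexone^{(n)}$. To evaluate these coefficients, I would plug in the Laurent expansion \eqref{Laurent_series_resolvent} of $R(\zeta)$ at $\zeta=v_\indexone$, multiply out the $n+1$ factors, and use that only the $(\zeta-v_\indexone)^{-1}$ term of the resulting Laurent series in $\zeta$ contributes to $\tfrac{1}{2\pi i}\oint_{\Gamma_\indexone}$. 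The extra factor $(\zeta-v_\indexone)$ in $\widetilde{B}_\indexone^{(n)}$ shifts the index selection rule from $\indextwo_1+\cdots+\indextwo_{n+1}=-1$ to $\indextwo_1+\cdots+\indextwo_{n+1}=n-1$ (with the reindexing \eqref{definition_Sj^n}). Taking traces gives the first formula for $\hat{\lambda}_\indexone^{(n)}$ in the theorem. Reality of the coefficients follows from uniqueness of the power series expansion and the fact that $\overline{V(z)} = V(\bar z)$, so $\overline{\lambda_\indexone(z)}$ also satisfies an analogous series with coefficients $\overline{\hat{\lambda}_\indexone^{(n)}}$, forcing $\hat{\lambda}_\indexone^{(n)} \in \R$.

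Finally, for the more compact expression \eqref{second_formula} I would derive the identity $\hat{\lambda}_\indexone^{(n)} = \tfrac{1}{n}\trace(BP_\indexone^{(n-1)})$ by combining the cyclic property of the trace with the combinatorial lemma from \cite[§2, p. 78f.]{kato2013perturbation}: among the $n+1$ summation indices in the formula for $\widetilde{B}_\indexone^{(n)}$, cyclically permuting the product inside the trace shows that the contribution is symmetric in the $n$ ``$B$''-insertions, and a counting argument produces the factor $1/n$ together with the reduced constraint $\indextwo_1+\cdots+\indextwo_n = n-1$. Together with formula \eqref{formula_Pjn} for $P_\indexone^{(n-1)}$, this yields the second expression. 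The only real subtlety — and the step I would dwell on — is the interchange of the infinite sum with the integral over $\Gamma_\indexone$ and the combinatorial bookkeeping that turns the Cauchy integral formulas into the indexed sums; everything else is a direct application of the general finite-dimensional perturbation framework.
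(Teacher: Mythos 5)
Your proposal is correct and follows essentially the same route as the paper: the theorem is stated as a summary of the derivation preceding it (the Neumann expansion of the perturbed resolvent, Cauchy integrals for the eigenprojections, the trace identity $\lambda_j(z) = \trace(V(z)P_j(z))$, residue calculus via the Laurent expansion, and the cyclic-trace reduction to the second formula), all directly following \cite[Chapter 2]{kato2013perturbation}. The only small variation is your conjugation argument for $\hat{\lambda}_j^{(n)} \in \R$ via $\overline{V(z)} = V(\bar z)$, whereas the paper simply reads reality off the explicit formula (traces of products of the real matrices $B$ and $S_j^{(k)}$) — both are fine.
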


	Before applying this result in the context of transport-driven instabilities, we actually compute the first three coefficients $\hat{\lambda}_\indexone^{(n)}$ for $\indexone=1,\cdots,N$ and estimate $|\hat{\lambda}_\indexone^{(n)}|.$ The latter verifies convergence of the series in \Cref{theorem_perturbed_eigenvalues} and is also useful for estimates of the remainder of the series in \eqref{series_evalues}.
	
	\begin{lemma}\label{formulas_small_lambda_hat_jn}
		Consider the setup from \Cref{theorem_perturbed_eigenvalues}. For $\indexone=1,\cdots,N$ and $n=1,2,3$, the coefficients $\hat{\lambda}_\indexone^{(n)} \in \R$ are given by
		\begin{align*}
			\hat{\lambda}_\indexone^{(1)} &= b_{\indexone\indexone}, \\
			\hat{\lambda}_\indexone^{(2)} &= - \sum^N_{\substack{\indexfour=1 \\ \indexfour \neq \indexone}} (v_\indexfour - v_\indexone)^{-1} b_{\indexone \indexfour} b_{\indexfour \indexone}, \\
			\hat{\lambda}_\indexone^{(3)} &= \sum_{\substack{i,l=1 \\ i,l\neq j}}^{N} (v_l-v_j)^{-1}(v_i-v_j)^{-1}b_{jl}b_{li}b_{ij} - \sum_{\substack{i=1 \\ i\neq j}}^{N} (v_i-v_j)^{-2}b_{ij}b_{ji}b_{jj}. \\
		\end{align*}
	\end{lemma}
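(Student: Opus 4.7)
The plan is to prove the lemma by direct computation, starting from the cleaner formula \eqref{second_formula} in \Cref{theorem_perturbed_eigenvalues}. That formula reads
\begin{equation*}
\hat{\lambda}_\indexone^{(n)} = \frac{(-1)^n}{n} \sum_{\substack{\indextwo_1+\cdots+\indextwo_n = n-1 \\ \indextwo_\indexthree \geq 0}} \trace \big( B S_\indexone^{(\indextwo_1)} B S_\indexone^{(\indextwo_2)} \cdots B S_\indexone^{(\indextwo_n)}\big),
\end{equation*}
so once the summands are understood, the cases $n=1,2,3$ are essentially mechanical. Before grinding through any traces, I would record two structural observations that make the whole computation painless. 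First, because $S_\indexone^{(0)} = -e_\indexone \otimes e_\indexone$ is (up to sign) a rank-one projector, for any matrix $M\in\C^{N\times N}$ we have $(MS_\indexone^{(0)})_{ik} = -m_{i\indexone}\mathbbm{1}_{k=\indexone}$ and $(S_\indexone^{(0)}M)_{ik} = -m_{\indexone k}\mathbbm{1}_{i=\indexone}$; multiplication by $S_\indexone^{(0)}$ simply extracts the $\indexone$-th column or row. Second, for $k\geq 1$ the matrix $S_\indexone^{(k)}$ is diagonal with entries $(v_i-v_\indexone)^{-k}$ for $i\neq \indexone$ and $0$ at position $(\indexone,\indexone)$.

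For $n=1$ the only tuple is $(k_1)=(0)$ and the formula immediately yields $\hat{\lambda}_\indexone^{(1)} = -\trace(BS_\indexone^{(0)}) = \trace(B\,(e_\indexone\otimes e_\indexone)) = b_{\indexone\indexone}$. For $n=2$, the admissible tuples are $(1,0)$ and $(0,1)$; by the cyclicity of the trace they contribute the same value, so
\begin{equation*}
\hat{\lambda}_\indexone^{(2)} = \tfrac{1}{2}\cdot 2\,\trace(BS_\indexone BS_\indexone^{(0)}) = \trace(BS_\indexone BS_\indexone^{(0)}).
\end{equation*}
Using the two structural observations, $BS_\indexone^{(0)}$ has only its $\indexone$-th column nonzero (equal to minus the $\indexone$-th column of $B$), so one computes $\trace(BS_\indexone BS_\indexone^{(0)}) = -(BS_\indexone B)_{\indexone\indexone} = -\sum_{i\neq \indexone} (v_i-v_\indexone)^{-1} b_{\indexone i}b_{i\indexone}$, which is the claimed formula for $\hat{\lambda}_\indexone^{(2)}$.

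For $n=3$, the six admissible tuples split into two cyclic equivalence classes: $\{(2,0,0),(0,2,0),(0,0,2)\}$ and $\{(1,1,0),(1,0,1),(0,1,1)\}$, each of size three, and within each class the traces are equal by cyclicity. This reduces the sum to just two representative traces:
\begin{equation*}
\hat{\lambda}_\indexone^{(3)} = -\trace\big(BS_\indexone^{(2)}BS_\indexone^{(0)}BS_\indexone^{(0)}\big) - \trace\big(BS_\indexone^{(1)}BS_\indexone^{(1)}BS_\indexone^{(0)}\big).
\end{equation*}
Each trace is then computed by iterating the column/row-extraction observation: starting from $BS_\indexone^{(0)}$ on the right, applying $B$ and then $S_\indexone^{(1)}$ or $S_\indexone^{(2)}$ introduces a sum over an index $i\neq \indexone$ weighted by $(v_i-v_\indexone)^{-1}$ or $(v_i-v_\indexone)^{-2}$, and the final cyclic rotation fixing the $\indexone$-th row gives the expected triple product $b_{\indexone l}b_{li}b_{i\indexone}$ (or $b_{\indexone i}b_{i\indexone}b_{\indexone\indexone}$). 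Collecting the terms produces exactly the expression in the lemma. The only real obstacle is bookkeeping in the $n=3$ step; once one writes out the two representative traces explicitly using the two structural observations, the index manipulations are straightforward and the sign $(-1)^3/3$ combines with the $3$-fold multiplicity per cyclic class to give the overall sign pattern in the statement.
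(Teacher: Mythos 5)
Your proposal is correct and follows essentially the same route as the paper: apply formula \eqref{second_formula}, group the admissible index tuples by cyclicity of the trace, and evaluate the remaining representative traces using the explicit structure of $P_j = e_j\otimes e_j$ (rank-one row/column extraction) and of $S_j^{(k)}$ (diagonal with $(v_i-v_j)^{-k}$ off the $j$-th slot). Your organization into cyclic equivalence classes up front is slightly tidier than the paper's listing of all six $n=3$ tuples before collapsing, but the underlying computation is identical.
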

	\begin{proof}
		Let $\indexone=1,\cdots,N.$ We always use formula \eqref{second_formula}. For $n=1$, we have
		\begin{equation*}
			\hat{\lambda}_\indexone^{(1)} = - \trace \big(B S_\indexone^{(0)}\big) = \trace \big( B (e_\indexone \otimes e_\indexone)\big) = b_{\indexone \indexone}.
		\end{equation*}
		The cyclic property of the trace yields
		\begin{equation*}
			\hat{\lambda}_\indexone^{(2)} = \half \sum_{\substack{\indextwo_1 + \indextwo_2 = 1 \\ k_l \geq 0}} \trace \big( B S_j^{(k_1)} B S_j^{(k_2)} \big) = \half \left( \trace \big(B S_j B (-P_j)\big) + \trace \big( B (-P_j) B S_j \big) \right) = - \trace \big(B S_j B P_j \big)
		\end{equation*}
		and
		\begin{alignat*}{2}
			\hat{\lambda}_\indexone^{(3)} &= - \frac{1}{3} \Big( &&\trace\big( B S_j B S_j B (-P_j)\big) + \trace\big( B S_j B (-P_j) B S_j \big)+ \trace\big( B (-P_j) B S_j B S_j \big) \\
			& &&+ \trace\big( B S^2_j B (-P_j) B (-P_j)\big) + \trace\big( B (-P_j) B S_j^2 B (-P_j)\big) + \trace\big( B (-P_j) B (-P_j) B S_j \big) \, \Big) \\
			&= \trace \big(B &&S_j B S_j BP_j\big) - \trace\big( B S_j^2 B P_j BP_j\big).
		\end{alignat*}
		We refer to Appendix \ref{proof_formulas_small_lambdahatjn} for the computation of the traces.
	\end{proof}
	
	\begin{lemma}\label{estimate_lamda_hat_jn}
		Consider the setup from \Cref{theorem_perturbed_eigenvalues}. Then
		\begin{equation*}
			|\hat{\lambda}_\indexone^{(n)}| \leq \max_{i\neq l} |v_i - v_l| \left(\frac{2 \|B\|_\infty}{\min_{i\neq l} |v_i - v_l|}\right)^n
		\end{equation*}
		holds for all $j=1,\cdots,N$ and all $n\in\N.$
	\end{lemma}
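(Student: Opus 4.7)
The plan is to derive the bound from Cauchy's coefficient inequality applied to the holomorphic extension of $z \mapsto \lambda_\indexone(z)$, thereby sidestepping the combinatorial constants that a direct term-by-term estimate of the series in \Cref{theorem_perturbed_eigenvalues} produces (such an estimate yields $|\hat\lambda_\indexone^{(n)}| \lesssim N d_{\min} \binom{2n-1}{n}(\|B\|_\infty/d_{\min})^n$, which is too weak by a factor $\sim 2^n$). The central task is thus to locate a sufficiently large disk of analyticity for $\lambda_\indexone$ together with a uniform upper bound for $|\lambda_\indexone(z) - v_\indexone|$ on that disk.

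First I would set $d_{\min} \coloneqq \min_{i\neq l}|v_i - v_l|$, $d_{\max} \coloneqq \max_{i\neq l}|v_i - v_l|$, $r_0 \coloneqq d_{\min}/(2\|B\|_\infty)$, and let $\Gamma_\indexone$ be the positively oriented circle of radius $d_{\min}/2$ centered at $v_\indexone$. Since $V$ is diagonal, the unperturbed resolvent $R(\zeta) = (V-\zeta)^{-1}$ is diagonal as well and satisfies $\|R(\zeta)\|_\infty = \max_i |v_i - \zeta|^{-1} \leq 2/d_{\min}$ uniformly on $\Gamma_\indexone$. For $|z| < r_0$ the Neumann expansion
$$R(\zeta,z) = R(\zeta)\sum_{n=0}^{\infty} (-z)^n \bigl(B R(\zeta)\bigr)^n$$
therefore converges uniformly in $\zeta \in \Gamma_\indexone$, so that $\Gamma_\indexone \subset \rho(V(z))$ for every such $z$.

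Next I would exploit the contour integral $P_\indexone(z) = -\frac{1}{2\pi i}\int_{\Gamma_\indexone} R(\zeta,z)\,d\zeta$, which is holomorphic on $\{|z| < r_0\}$. Being spectral projections, the operators $P_\indexone(z)$ are idempotent and hence have integer trace; since $\trace P_\indexone(0) = 1$ and $z \mapsto \trace P_\indexone(z)$ is continuous on the connected disk, $P_\indexone(z)$ remains a rank-one projection throughout. This forces $V(z)$ to possess a single eigenvalue $\lambda_\indexone(z)$ inside $\Gamma_\indexone$, and the identity $\lambda_\indexone(z) = \trace(V(z)P_\indexone(z))$ then exhibits $z \mapsto \lambda_\indexone(z)$ as a holomorphic function on $\{|z| < r_0\}$ whose Taylor series at the origin coincides with \eqref{series_evalues}. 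Containment in $\Gamma_\indexone$ furthermore yields the uniform bound $|\lambda_\indexone(z) - v_\indexone| \leq d_{\min}/2$ on the whole disk.

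Finally, Cauchy's coefficient inequality gives, for every $0 < r < r_0$,
$$|\hat\lambda_\indexone^{(n)}| \leq \frac{\sup_{|z|=r}|\lambda_\indexone(z) - v_\indexone|}{r^n} \leq \frac{d_{\min}/2}{r^n},$$
and letting $r \nearrow r_0$ produces $|\hat\lambda_\indexone^{(n)}| \leq (d_{\min}/2)(2\|B\|_\infty/d_{\min})^n$. Since $d_{\min}/2 \leq d_{\max}$, the asserted estimate follows. I expect the only genuine obstacle to be certifying the rank-one stability of $P_\indexone(z)$ across the full disk of radius $r_0$; once secured through the connectedness and integer-trace argument above, the remainder is a routine Cauchy estimate.
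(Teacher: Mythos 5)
Your proposal is correct and follows essentially the same route as the paper's proof, which likewise goes through Kato's Cauchy-inequality argument: both choose circles $\Gamma_\indexone$ around $v_\indexone$ of radius $\sim d_{\min}/2$, bound $\|R(\zeta)\|_\infty$ uniformly on $\Gamma_\indexone$ to secure convergence of the Neumann/Dyson series for $|z| < d_{\min}/(2\|B\|_\infty)$, then locate $\lambda_\indexone(z)$ inside $\Gamma_\indexone$ and apply Cauchy's coefficient estimate. The only minor differences are cosmetic: the paper uses circles of radius $\tfrac{1}{2}\dist(v_\indexone,\cup_{i\neq\indexone}\{v_i\})$ (which depends on $\indexone$) rather than the uniform $d_{\min}/2$, and it cites Kato for the rank-one persistence of $P_\indexone(z)$ where you supply the integer-trace-plus-connectedness argument explicitly; you also obtain the slightly sharper intermediate bound $|\lambda_\indexone(z)-v_\indexone|\leq d_{\min}/2$ before relaxing to $d_{\max}$.
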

	\begin{proof}
		The proof follows \cite[p. 88]{kato2013perturbation}. Notice that the power series of $R(\zeta, z)$ given in \eqref{perturbed_resolvent_power_series} converges for
		\begin{equation*}
			|z| \, \|BR(\zeta)\| < 1.
		\end{equation*}
		Moreover, the resolvent
		\begin{equation}
			R(\zeta) = \begin{pmatrix}
			(v_1-\zeta)^{-1}  & & \\
			& \ddots & \\
			& & (v_N-\zeta)^{-1}
			\end{pmatrix}
		\end{equation}
		has the matrix norm
		\begin{equation*}
			\|R(\zeta)\|_\infty = \frac{1}{\min_{i=1,\cdots,N} |v_i - \zeta|}
		\end{equation*}
		for $\zeta \in \C \backslash \{v_1,\cdots,v_N\}.$ Let $j=1,\cdots,N$ and let $\Gamma_j$ be the closed positively-oriented circles around $v_j$ with radii
		\begin{equation*}
			\half \dist \Big( v_j, \, \bigcup_{i\neq j} \{v_i\} \Big).
		\end{equation*}
		The series \eqref{perturbed_resolvent_power_series} is uniformly convergent for $\zeta\in\Gamma_j$ if
		\begin{equation*}
			|z| < \min_{\zeta\in\Gamma_j} \frac{1}{\|BR(\zeta)\|},
		\end{equation*}
		cf. \cite[(3.3), p. 88]{kato2013perturbation}. In particular, it is uniformly convergent for
		\begin{equation}\label{z_small}
			|z| < \frac{\min_{i\neq l} |v_i-v_l|}{2 \|B\|_\infty}
		\end{equation}
		because of
		\begin{align*}
			\frac{\min_{i\neq l} |v_i-v_l|}{2 \|B\|_\infty} &\leq \frac{\dist \big( v_j, \, \cup_{i\neq j} \{v_i\}\big)}{2 \|B\|_\infty} = \frac{1}{\|B\|_\infty} \min_{\zeta\in\Gamma_j} |v_j-\zeta| = \frac{1}{\|B\|_\infty} \min_{\zeta\in\Gamma_j} \min_{i=1,\cdots,N} |v_i-\zeta| 
			\\
			&= \frac{1}{\|B\|_\infty} \min_{\zeta\in\Gamma_j} \frac{1}{\|R(\zeta)\|_\infty} \leq \min_{\zeta\in\Gamma_j} \frac{1}{\|BR(\zeta)\|}.
		\end{align*}
		Under the assumption of $z$ satisfying \eqref{z_small}, als	o the power series \eqref{series_Pj} for $P_j(z)$ and \eqref{series_evalues} for $\lambda_j(z) - v_j$ is convergent \cite[p. 88]{kato2013perturbation}, which ensures that every function we are dealing with is indeed holomorphic for small $|z|.$ As a consequence, Cauchy's inequality can be used to estimate the coefficients $|\hat{\lambda}_\indexone^{(n)}|.$\newline
		For $z\in\C$ satisfying \eqref{z_small}, the eigenvalue $\lambda_j(z)$ lies inside $\Gamma_j$ by \cite[p. 88]{kato2013perturbation} and it follows
		\begin{equation*}
			|\lambda_j(z) - v_j| \leq \max_{\zeta\in\Gamma_j} |\zeta-v_j| \leq \max_{i\neq l} |v_i - v_l|,
		\end{equation*}
		cf. \cite[(3.4), p. 88]{kato2013perturbation}. Finally, Cauchy's inequality for the Taylor coefficients of holomorphic functions gives
		\begin{equation*}
			|\hat{\lambda}_\indexone^{(n)}| \leq \max_{i\neq l} |v_i - v_l| \left(\frac{2 \|B\|_\infty}{\min_{i\neq l} |v_i - v_l|}\right)^n,
		\end{equation*}
		see \cite[(3.5), p. 88]{kato2013perturbation}.
	\end{proof}
	
	Given all the knowledge of the behavior of the eigenvalues of $V(z)$ for small $|z|,$ we can return our attention to the problem of transport-driven instabilities.
	
	\begin{corollary}\label{eigenvalues_Mk}
		Assume that the transport directions satisfy \Cref{assumption_transport_dir}. Then for $k\in\Z$ with sufficiently large absolute value, the matrix
		\begin{equation*}
			M(\indextwo) = - 2\pi i \indextwo V + B
		\end{equation*}
		has $N$ distinct eigenvalues $\lambda_1(k),\cdots,\lambda_N(k)$ with
		\begin{equation*}
			\lambda_j(k) = b_{jj} - 2\pi i k v_j + \sum_{n=1}^\infty \left(\frac{i}{2\pi k}\right)^n \hat{\lambda}_\indexone^{(n+1)}
		\end{equation*}
		for all $j=1,\cdots,N.$ The coefficients $\hat{\lambda}_\indexone^{(n)} \in \R$ are defined in \Cref{theorem_perturbed_eigenvalues}. In particular, the real parts of the eigenvalues are given by
		\begin{equation}\label{formula_real_parts_ev}
			\real \lambda_j(k) = b_{jj} + \sum_{n=1}^\infty \frac{(-1)^n}{(2\pi k)^{2n}} \hat{\lambda}_\indexone^{(2n+1)}.
		\end{equation}
	\end{corollary}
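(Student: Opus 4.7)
The plan is to reduce the problem to an application of Theorem \ref{theorem_perturbed_eigenvalues} by pulling out the prefactor $-2\pi i k$ from $M(k)$. Writing
\[
M(k) \;=\; -2\pi i k \left( V + \frac{i}{2\pi k} B \right) \;=\; -2\pi i k \, V(z_k), \qquad z_k \coloneqq \frac{i}{2\pi k},
\]
we see that for $|k|$ large, $|z_k|$ is as small as we please. Under Assumption \ref{assumption_transport_dir} the diagonal matrix $V$ has $N$ pairwise distinct eigenvalues $v_1,\dots,v_N$, so Theorem \ref{theorem_perturbed_eigenvalues} applies to $V(z_k) = V + z_k B$: for $|k|$ large enough (quantitatively, one can invoke Lemma \ref{estimate_lamda_hat_jn} to make this precise, namely $|z_k| < \min_{i\neq l} |v_i - v_l|/(2\|B\|_\infty)$), the matrix $V(z_k)$ has $N$ distinct eigenvalues
\[
\mu_j(k) \;=\; v_j + \sum_{n=1}^\infty z_k^{\,n}\, \hat{\lambda}_j^{(n)}, \qquad j = 1,\dots,N,
\]
with real coefficients $\hat{\lambda}_j^{(n)}$. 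Since multiplication by the nonzero scalar $-2\pi i k$ is a bijection of $\C$ preserving distinctness, $M(k)$ also has $N$ distinct eigenvalues $\lambda_j(k) = -2\pi i k \, \mu_j(k)$.

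The main step is then a clean bookkeeping exercise that exploits the crucial identity
\[
-2\pi i k \cdot z_k \;=\; -2\pi i k \cdot \frac{i}{2\pi k} \;=\; 1,
\]
so that $(-2\pi i k)\, z_k^{\,n} = z_k^{\,n-1}$ for every $n\geq 1$. Multiplying the series for $\mu_j(k)$ by $-2\pi i k$ and using Lemma \ref{formulas_small_lambda_hat_jn} to identify $\hat{\lambda}_j^{(1)} = b_{jj}$ yields
\[
\lambda_j(k) \;=\; -2\pi i k\, v_j + \hat{\lambda}_j^{(1)} + \sum_{n=2}^\infty z_k^{\,n-1}\,\hat{\lambda}_j^{(n)} \;=\; b_{jj} - 2\pi i k\, v_j + \sum_{m=1}^\infty \left(\frac{i}{2\pi k}\right)^m \hat{\lambda}_j^{(m+1)},
\]
which is the first claim. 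Convergence is inherited from convergence of the series for $\mu_j(k)$, which is guaranteed by Lemma \ref{estimate_lamda_hat_jn} for large $|k|$.

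For the real parts, observe that the coefficients $\hat{\lambda}_j^{(n+1)}$ are real and $-2\pi i k v_j$ is purely imaginary, so only the real part of $(i/(2\pi k))^m$ contributes. Since $i^m$ is real precisely when $m$ is even, with $i^{2n} = (-1)^n$, the odd terms drop out and we are left with
\[
\real \lambda_j(k) \;=\; b_{jj} + \sum_{n=1}^\infty \frac{(-1)^n}{(2\pi k)^{2n}}\, \hat{\lambda}_j^{(2n+1)},
\]
as claimed. There is no real obstacle here: the only substantive input is Theorem \ref{theorem_perturbed_eigenvalues}, and the rest is the algebraic rearrangement triggered by the lucky cancellation $-2\pi i k \cdot z_k = 1$; the mildest subtlety is making the threshold on $|k|$ explicit, which is handled by the bound in Lemma \ref{estimate_lamda_hat_jn}.
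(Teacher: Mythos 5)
Your proof is correct and follows essentially the same route as the paper: factor out $-2\pi i k$ to write $M(k) = -2\pi i k\,V(z_k)$ with $z_k = i/(2\pi k)$, invoke Theorem \ref{theorem_perturbed_eigenvalues} for small $|z_k|$, exploit $-2\pi i k \cdot z_k = 1$ together with $\hat{\lambda}_j^{(1)} = b_{jj}$ from Lemma \ref{formulas_small_lambda_hat_jn} to reindex the series, and use the reality of the coefficients to extract the even-power terms for the real part. Your explicit mention of Lemma \ref{estimate_lamda_hat_jn} to quantify the threshold on $|k|$ is a small additional clarification beyond what the paper states.
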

	\begin{proof}
		Notice that $M(k)$ can be written as
		\begin{equation*}
			M(\indextwo) = -2\pi i \indextwo \left(V + \frac{i}{2\pi \indextwo} B\right).
		\end{equation*}
		Then for $k\in\Z$ with large $|k|,$ \Cref{theorem_perturbed_eigenvalues} implies that $M(k)$ has the $N$ distinct eigenvalues given by
		\begin{align*}
			\lambda_j(k) &= -2\pi i k \left( v_j + \sum_{n=1}^\infty \left(\frac{i}{2\pi k}\right)^n \hat{\lambda}_\indexone^{(n)} \right) = b_{jj} - 2\pi i k v_j + \sum_{n=1}^\infty \left(\frac{i}{2\pi k}\right)^n \hat{\lambda}_\indexone^{(n+1)},
		\end{align*}
		where we used \Cref{formulas_small_lambda_hat_jn} in the last step. The addendum on the real part of the eigenvalues follows from the fact that all coefficients $\hat{\lambda}_\indexone^{(n)}$ are real.
	\end{proof}
	\begin{remark}\label{remark_conv_real_parts}
		Formula \eqref{formula_real_parts_ev} for the real parts of the eigenvalues of $\lambda_j(k)$ and \Cref{estimate_lamda_hat_jn} directly imply
		\begin{equation}\label{limit_real_parts_ev}
			\lim_{|k| \to \infty} \real \lambda_j(k) = b_{jj}.
		\end{equation}
	\end{remark}
	
	The statement from \eqref{limit_real_parts_ev} can be strengthened in the sense that the convergence in \eqref{limit_real_parts_ev} is ``almost always" strictly monotone. This turns out to be the core result for the study of transport-driven instabilities. Before proving a result on the monotonicity of the convergence in \eqref{limit_real_parts_ev}, we need to estimate the remainder of \eqref{formula_real_parts_ev}.
	
	\begin{lemma}\label{estimate_remainder}
		Assume that the transport directions satisfy \Cref{assumption_transport_dir}. Let $j=1,\cdots,N$ and let $n^*\in\N.$ Then it holds
		\begin{equation*}
			\sum_{n=n^*+1}^\infty \frac{(-1)^n}{(2\pi k)^{2n}} \hat{\lambda}_\indexone^{(2n+1)} = \mathcal{O}\left( \frac{1}{k^{2(n^*+1)}}\right) \quad \text{ as } |k| \to \infty.
		\end{equation*}
	\end{lemma}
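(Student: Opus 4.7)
The plan is to reduce the claim to a geometric series tail estimate by invoking the uniform bound on the coefficients $\hat{\lambda}_\indexone^{(n)}$ from \Cref{estimate_lamda_hat_jn}. Introducing the shorthands
\begin{equation*}
	C \coloneqq \max_{\indexfour \neq \indexthree} |v_\indexfour - v_\indexthree| \quad \text{and} \quad r \coloneqq \frac{2\|B\|_\infty}{\min_{\indexfour \neq \indexthree} |v_\indexfour - v_\indexthree|},
\end{equation*}
\Cref{estimate_lamda_hat_jn} yields $|\hat{\lambda}_\indexone^{(m)}| \leq C r^m$ for every $m\in\N$.

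First I would apply this uniform estimate termwise to see that, for every $k\in\Z$,
\begin{equation*}
	\left| \sum_{n=n^*+1}^\infty \frac{(-1)^n}{(2\pi k)^{2n}} \hat{\lambda}_\indexone^{(2n+1)} \right| \leq Cr \sum_{n=n^*+1}^\infty \left( \frac{r^2}{(2\pi k)^2} \right)^n.
\end{equation*}
Next, I would observe that as soon as $|k|$ is large enough that $\frac{r^2}{(2\pi k)^2} \leq \tfrac{1}{2}$ (which occurs for all but finitely many $k$), the right-hand side is a convergent geometric tail and can be evaluated explicitly as
\begin{equation*}
	Cr \left( \frac{r^2}{(2\pi k)^2} \right)^{n^*+1} \frac{1}{1 - \frac{r^2}{(2\pi k)^2}} \leq 2 Cr \left( \frac{r^2}{(2\pi k)^2} \right)^{n^*+1}.
\end{equation*}

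The right-hand side is precisely of order $|k|^{-2(n^*+1)}$ as $|k|\to\infty$, which is the desired bound. No step here is particularly delicate; the only thing to double-check is that the exponential estimate in \Cref{estimate_lamda_hat_jn} is strong enough so that the series in \Cref{eigenvalues_Mk} and its remainder make sense for all sufficiently large $|k|$, but this was already built into the derivation of $\lambda_\indexone(k)$ via the perturbation parameter $z = i/(2\pi k)$ satisfying the smallness condition \eqref{z_small}.
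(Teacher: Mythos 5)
Your argument is correct and follows essentially the same route as the paper's: apply the uniform exponential bound $|\hat{\lambda}_\indexone^{(m)}|\leq Cr^m$ from \Cref{estimate_lamda_hat_jn} termwise, then control the resulting geometric tail, extracting a factor of order $|k|^{-2(n^*+1)}$. The only cosmetic difference is that the paper first factors out $(2\pi k)^{-2(n^*+1)}$ and bounds the leftover series by a constant, whereas you evaluate the geometric tail directly; both are the same estimate.
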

	\begin{proof}
		This is a simple consequence of \Cref{estimate_lamda_hat_jn}. Firstly, notice that we have
		\begin{align*}
			\sum_{n=n^*+1}^\infty  \frac{(-1)^n}{(2\pi k)^{2n}} \hat{\lambda}_\indexone^{(2n+1)} &= \frac{1}{(2\pi k)^{2(n^*+1)}} \sum_{n=n^*+1}^\infty \frac{(-1)^n}{(2\pi k)^{2(n-n^*-1)}} \hat{\lambda}_\indexone^{(2n+1)} \\
			&= \frac{1}{(2\pi k)^{2(n^*+1)}} \sum_{n=0}^\infty \frac{(-1)^{n+n^*+1}}{(2\pi k)^{2n}} \hat{\lambda}_\indexone^{(2(n+n^*+1)+1)}.
		\end{align*}
		Secondly, \Cref{estimate_lamda_hat_jn} implies existence of constants $C_1, \, C_2 > 0,$ independent of $j$, with
		\begin{align*}
			\left| \sum_{n=0}^\infty \frac{(-1)^{n+n^*+1}}{(2\pi k)^{2n}} \hat{\lambda}_\indexone^{(2(n+n^*+1)+1)} \right| \lesssim \sum_{n=0}^\infty k^{-2n} C_1 C_2^{2(n+n^*+1)+1} \lesssim \sum_{n=0}^\infty \left(\frac{C}{k}\right)^n \lesssim 1
		\end{align*}
		as $|k| \to \infty.$
	\end{proof}
	
	\begin{theorem}\label{theorem_eventually_monotone}
		Assume that the transport directions satisfy \Cref{assumption_transport_dir} and let $\lambda_1(k), \cdots, \lambda_N(k)$ be the eigenvalues of
		\begin{equation*}
			M(\indextwo) = -2\pi i k V + B.
		\end{equation*}
		Then one of the following is true:
		\begin{enumerate}
			\item
			$\real\lambda_j(k)$ is eventually constant,
			\item
			$\real\lambda_j(k)$ is eventually strictly monotone.
		\end{enumerate}
	\end{theorem}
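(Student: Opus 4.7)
The strategy is to exhibit $\real\lambda_j(k)$ as the restriction to the sequence $k^{-2}$, $k\in\Z$ large, of a real analytic function of a real variable near the origin, and then exploit the classical fact that a non-constant real analytic function germ is strictly monotone on a one-sided neighborhood of its center.

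Concretely, I would fix $j\in\{1,\dots,N\}$ and introduce
\begin{equation*}
f_j(x) \coloneqq b_{jj} + \sum_{n=1}^{\infty} \frac{(-1)^n}{(2\pi)^{2n}}\,\hat{\lambda}_j^{(2n+1)}\, x^{n}.
\end{equation*}
By \Cref{estimate_lamda_hat_jn}, the coefficients of this series grow at most geometrically, so the radius of convergence is positive and $f_j$ is real analytic on an interval $(-\delta,\delta)$ around $0$. Moreover, by the formula for the real parts in \Cref{eigenvalues_Mk}, we have $\real\lambda_j(k) = f_j(1/k^2)$ for all sufficiently large $|k|$.

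Now I would distinguish the two cases. If $\hat{\lambda}_j^{(2n+1)} = 0$ for every $n\geq 1$, then $f_j \equiv b_{jj}$ on $(-\delta,\delta)$, so $\real\lambda_j(k) = b_{jj}$ for all large $|k|$, giving (i). Otherwise there is a smallest index $n^\ast \geq 1$ with $\hat{\lambda}_j^{(2n^\ast+1)} \neq 0$. Setting $c \coloneqq (-1)^{n^\ast}(2\pi)^{-2n^\ast}\hat{\lambda}_j^{(2n^\ast+1)} \neq 0$, one has
\begin{equation*}
f_j(x) - b_{jj} = c\, x^{n^\ast} + o(x^{n^\ast}), \qquad f_j'(x) = n^\ast c\, x^{n^\ast-1} + o(x^{n^\ast-1}),
\end{equation*}
as $x\to 0$; the $o$-terms are controlled uniformly by the remainder estimate underlying \Cref{estimate_remainder} applied to $f_j$ and its derivative. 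Hence there exists $\varepsilon\in(0,\delta)$ such that $f_j'(x)$ has constant (non-zero) sign on $(0,\varepsilon)$, so $f_j$ is strictly monotone on $(0,\varepsilon)$.

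Since the map $k \mapsto 1/k^2$ is strictly decreasing in $|k|$, the composition $k\mapsto f_j(1/k^2)$ is strictly monotone (in the opposite sense) for all $|k|$ large enough that $1/k^2 < \varepsilon$. Recalling that $\real\lambda_j(k)$ depends on $k$ only through $k^2$ (a consequence of \Cref{spectrum_symmetric} together with the formula in \Cref{eigenvalues_Mk}), this yields eventual strict monotonicity in the natural sense on both sides, i.e.\ case (ii). The only delicate point is the transfer from monotonicity on the continuum $(0,\varepsilon)$ to monotonicity along the discrete sequence $\{1/k^2 : k\in\Z,\, |k|\text{ large}\}$, but since $f_j$ is monotone on an entire interval this is immediate. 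I expect no serious obstacle beyond carefully invoking the quantitative coefficient bound from \Cref{estimate_lamda_hat_jn} to justify convergence and term-by-term differentiation.
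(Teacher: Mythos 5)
Your proof is correct and closely related to the paper's, but you phrase the key step differently: you pass to the continuum by setting $f_j(x) = b_{jj} + \sum_{n\geq 1}\frac{(-1)^n}{(2\pi)^{2n}}\hat{\lambda}_j^{(2n+1)}x^n$, invoke the classical fact that a non-constant real analytic germ has a derivative of constant nonzero sign on a one-sided neighborhood of its center, and then restrict to the discrete sequence $x=1/k^2$. The paper works entirely discretely: it isolates the same $n^\ast$ and the same leading term, but rather than differentiating $f_j$, it estimates the difference $\real\lambda_j(k+1) - \real\lambda_j(k)$ directly using the mean-value-type expansion $\tfrac{1}{(k+1)^{2n^\ast}} - \tfrac{1}{k^{2n^\ast}} \approx \tfrac{-2n^\ast}{|k|^{2n^\ast+1}}$ and controls the tail via \Cref{estimate_remainder}. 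Your continuum viewpoint is slightly cleaner because monotonicity on an interval transfers to monotonicity along any monotone sequence without any mean-value estimate, and the "eventually" is visibly tied to the location where $f_j'$ first changes sign inside the radius of convergence. The paper's version avoids invoking differentiability of the power series and stays closer to the explicit $k$-dependence already established in \Cref{eigenvalues_Mk}, which fits the surrounding computations, but the underlying reason it works — the leading term with $\hat{\lambda}_j^{(2n^\ast+1)}$ dominates the $\mathcal{O}(1/k^{2(n^\ast+1)})$ remainder — is identical. One small point you could state explicitly: the convergence of $f_j$ and the legitimacy of term-by-term differentiation both follow immediately from the geometric bound on $|\hat{\lambda}_j^{(n)}|$ in \Cref{estimate_lamda_hat_jn}, as you note, so there is no gap.
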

	\begin{remarks}
		\begin{enumerate}
			\item
			We say that $(a_k)_{k\in\Z} \subset \R$ is eventually constant / (strictly) monotone, if there exists some $K\in \N$ such that the sequences $(a_k)_{k\geq K}$ and $(a_{-k})_{k\geq K}$ are both constant / (strictly) monotone.
			\item
			The symmetry
			\begin{equation}
				\real\lambda_j(k) = \real\lambda_j(-k)
			\end{equation}
			holds for all $j=1,\cdots,N$ and all $k\in\Z$ by \Cref{spectrum_symmetric}.
		\end{enumerate}
	\end{remarks}
	\begin{proof}
		Let $j=1,\cdots,N$ and define
		\begin{equation*}
			n^* = \inf \big\{ n\in\N \, \colon \, \hat{\lambda}_\indexone^{(2n+1)} \neq 0 \big\}.
		\end{equation*}
		If $n^* = \infty,$ then the real part of $\lambda_j(k)$ is eventually constant and equal to $b_{jj}$ by \eqref{formula_real_parts_ev} and \eqref{limit_real_parts_ev}. Otherwise, they are given by
		\begin{equation*}
			\real \lambda_j(k) = b_{jj} + \sum_{n=n^*}^\infty \frac{(-1)^n}{(2\pi k)^{2n}} \hat{\lambda}_\indexone^{(2n+1)} = b_{jj} + \frac{(-1)^{n^*}}{(2\pi k)^{2n^*}} \hat{\lambda}_\indexone^{(2n^*+1)} + \sum_{n=n^*+1}^\infty \frac{(-1)^n}{(2\pi k)^{2n}} \hat{\lambda}_\indexone^{(2n+1)}.
		\end{equation*}
		 Now, \Cref{estimate_remainder} implies
		\begin{align*}
			\real \lambda_j(k+1) - \real \lambda_j(k) &= \frac{(-1)^{n^*}}{(2\pi)^{2n^*}} \hat{\lambda}_\indexone^{(2n^*+1)} \left( \frac{1}{(k+1)^{2n^*}} - \frac{1}{k^{2n^*}} \right) + \mathcal{O}\left( \frac{1}{k^{2(n^*+1)}}\right)\\
			&=  \frac{(-1)^{n^*}}{(2\pi)^{2n^*}} \hat{\lambda}_\indexone^{(2n^*+1)} \frac{(-2n^*)}{|k|^{2n^*+1}} + \mathcal{O}\left( \frac{1}{k^{2(n^*+1)}}\right).
		\end{align*}
		Consequently, in the case $(-1)^{n^*+1} \hat{\lambda}_\indexone^{(2n^*+1)} > 0$ there exists a constant $C = C(n^*)>0$ with
		\begin{equation*}
			\real \lambda_j(k+1) - \real \lambda_j(k) \geq \frac{2n^*(-1)^{n^*+1}}{(2\pi)^{2n^*}} \hat{\lambda}_\indexone^{(2n^*+1)} \frac{1}{|k|^{2n^*+1}}  - \frac{C}{k^{2(n^*+1)}} > 0
		\end{equation*}
		for large $|k|,$ i.e. the real part of $\lambda_j(k)$ is eventually strictly monotonically increasing. \newline
		Similarly, $(-1)^{n^*+1} \hat{\lambda}_\indexone^{(2n^*+1)} < 0$ implies that the real part of $\lambda_j(k)$ is eventually strictly monotonically decreasing.
	\end{proof}
	\begin{remarks}\label{remarks_eventually_monotone}
		\begin{enumerate}
			\item		
			The set of parameter constellations $(V,B)$, which imply existence of an eventually constant real part of an eigenvalue $\lambda_j(k)$ of $M(k)$ is closed in the set of all parameters $(V,B)$ fulfilling \Cref{assumption_transport_dir}. This is a consequence of the continuity of the map
			\begin{equation*}
			(V,B) \longmapsto \hat{\lambda}_\indexone^{(2n+1)}
			\end{equation*}
			with domain
			\begin{equation*}
			\{(V,B) \, \colon \, V\in\R^{N\times N} \text{ diagonal with } v_i\neq v_j \text{ for all } i,j=1,\cdots,N \text{ and } B\in\R^{N\times N} \},
			\end{equation*}
			see \eqref{second_formula}.
			\item
			In some sense, it is ``very unlikely" that one eigenvalue $\lambda_j(k)$ has eventually constant real part: the coefficients $\hat{\lambda}_\indexone^{(2n+1)}$ have to vanish for all $n\in\N$.
			\item
			Typically, $\hat{\lambda}_\indexone^{(3)} \neq 0$. In this case, $\real \lambda_j(k)$ is eventually strictly monotonically increasing if and only if
			\begin{equation*}
				\sum_{\substack{i,l=1 \\ i,l\neq j}}^{N} (v_l-v_j)^{-1}(v_i-v_j)^{-1}b_{jl}b_{li}b_{ij} - \sum_{\substack{i=1 \\ i\neq j}}^{N} (v_i-v_j)^{-2}b_{ij}b_{ji}b_{jj} = \hat{\lambda}_\indexone^{(3)} > 0
			\end{equation*}
			and strictly monotonically decreasing if and only if
			\begin{equation*}
				\sum_{\substack{i,l=1 \\ i,l\neq j}}^{N} (v_l-v_j)^{-1}(v_i-v_j)^{-1}b_{jl}b_{li}b_{ij} - \sum_{\substack{i=1 \\ i\neq j}}^{N} (v_i-v_j)^{-2}b_{ij}b_{ji}b_{jj} = \hat{\lambda}_\indexone^{(3)} < 0.
			\end{equation*}
		\end{enumerate}
	\end{remarks}
	
	Although being very unlikely, the real part of an eigenvalue $\lambda_j(k)$ can be eventually constant. For $N=2,$ we can give a characterization of all parameter choices leading to this instance.
	
	\begin{lemma}\label{eventually_constant_N2}
		Let $N=2.$ Assume that the transport directions satisfy \Cref{assumption_transport_dir}, i.e. $v_1 \neq v_2.$ Let $\lambda_1(k)$ and $ \lambda_2(k)$ be the eigenvalues of
		\begin{equation*}
			M(\indextwo) = -2\pi i k V + B.
		\end{equation*} 
		Then, $\real\lambda_j(k)$ is eventually constant if and only if $B$ fulfills (at least) one of the following three conditions:
		\begin{enumerate}
			\item
			$b_{12} = 0$,
			\item
			$b_{21} = 0$,
			\item
			$b_{11} = b_{22}$.
		\end{enumerate}
	\end{lemma}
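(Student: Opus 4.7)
My plan is to use the fact that $N=2$ allows an explicit computation of the eigenvalues via the quadratic formula. The trace and determinant of $M(k)$ yield $\lambda_\pm(k)=\tfrac{1}{2}(b_{11}+b_{22})-\pi i k(v_1+v_2)\pm\tfrac{1}{2}\sqrt{D(k)}$, and a short calculation of $(\operatorname{tr} M(k))^2-4\det M(k)$ shows that the discriminant factors cleanly as
\begin{equation*}
D(k)=\bigl(\alpha-2\pi i k\beta\bigr)^2+4\,b_{12}b_{21},
\end{equation*}
where $\alpha\coloneqq b_{11}-b_{22}$ and $\beta\coloneqq v_1-v_2$ (with $\beta\neq 0$ by Assumption~\ref{assumption_transport_dir}). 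Since $\real\lambda_j(k)=\tfrac{1}{2}(b_{11}+b_{22})\pm\tfrac{1}{2}\real\sqrt{D(k)}$, the lemma reduces to characterizing when $\real\sqrt{D(k)}$ is eventually constant in $k$.

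For the sufficiency direction I would dispatch the three cases directly. If $(i)$ or $(ii)$ holds, then $b_{12}b_{21}=0$, so $D(k)=(\alpha-2\pi i k\beta)^2$ is a perfect square and $\real\sqrt{D(k)}=\pm\alpha$ is manifestly constant in $k$. If $(iii)$ holds, then $\alpha=0$, which makes $D(k)=4b_{12}b_{21}-4\pi^2 k^2\beta^2$ a strictly negative real number for all $|k|$ large enough, so $\sqrt{D(k)}$ lies on the imaginary axis and $\real\sqrt{D(k)}=0$.

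For the converse I would assume $\alpha\neq 0$ and $b_{12}b_{21}\neq 0$ and carry out an asymptotic expansion of $\sqrt{D(k)}$ as $|k|\to\infty$. Factoring out $(\alpha-2\pi i k\beta)^2$ from $D(k)$ and applying the binomial series gives
\begin{equation*}
\sqrt{D(k)} = \pm\Bigl[(\alpha-2\pi i k\beta)+\tfrac{2\,b_{12}b_{21}}{\alpha-2\pi i k\beta}\Bigr]+\mathcal{O}(k^{-3}),
\end{equation*}
and taking real parts together with $\real\bigl(1/(\alpha-2\pi i k\beta)\bigr)=\alpha/(\alpha^2+4\pi^2 k^2\beta^2)$ will yield
\begin{equation*}
\real\sqrt{D(k)} = \pm\alpha\;\pm\;\frac{2\alpha\,b_{12}b_{21}}{\alpha^2+4\pi^2 k^2\beta^2}\;+\;\mathcal{O}(k^{-3}).
\end{equation*}
Because $\alpha\,b_{12}b_{21}\neq 0$, the $\Theta(k^{-2})$ correction is nonzero and strictly monotone in $k^2$ for large $|k|$, so $\real\sqrt{D(k)}$ cannot be eventually constant.

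The only minor subtlety I anticipate is the continuous choice of a branch of $\sqrt{D(k)}$ on each of the two half-lines $k\geq K$ and $k\leq -K$, which is needed to regard $\lambda_1,\lambda_2$ as functions of $k$. This choice only fixes a labeling of the two eigenvalues and is immaterial to the statement, since the two real parts differ precisely by $\pm\real\sqrt{D(k)}$.
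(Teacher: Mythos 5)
Your proof is correct, and the necessity direction takes a genuinely different route from the paper's. The paper derives necessity by invoking the perturbation-series machinery: \Cref{theorem_eventually_monotone} (and its proof) shows that if $\hat{\lambda}_j^{(2n+1)}\neq 0$ for some $n$ then $\real\lambda_j(k)$ is eventually strictly monotone, hence eventually constant forces $\hat{\lambda}_j^{(3)}=0$; plugging in the explicit $N=2$ formula $\hat{\lambda}_1^{(3)}=(v_2-v_1)^{-2}b_{12}b_{21}(b_{22}-b_{11})$ yields the three conditions. You instead perform a fully self-contained asymptotic expansion of $\sqrt{D(k)}$ with $D(k)=(\alpha-2\pi i k\beta)^2+4b_{12}b_{21}$, which is a clean and arguably more elementary argument since it avoids any reference to the general Kato-type results built up in the section, at the cost of only working because $N=2$ makes the discriminant tractable. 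Your sufficiency direction is essentially identical in content to the paper's (the paper uses triangularity for cases (i)--(ii) and the discriminant for case (iii); you handle all three via the discriminant, which unifies them slightly). One small imprecision: the phrase ``strictly monotone in $k^2$'' suggests you might try to deduce strict monotonicity of $\real\sqrt{D(k)}$ itself, which does not follow since the $\mathcal{O}(k^{-3})$ error is of the same order as the increment of the $\Theta(k^{-2})$ term between consecutive $k$; but you don't actually need monotonicity---the argument that the $\Theta(k^{-2})$ correction dominates the error and hence $\real\sqrt{D(k)}$ never equals its limit $\pm\alpha$ already rules out eventual constancy, so the conclusion stands.
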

	\begin{remark}
		For $N=2$, the real parts of both eigenvalues $\lambda_j(k)$ of $M(k)$ are eventually constant if and only if the real part of one sequence $\lambda_1(k)$ or $\lambda_2(k)$ is eventually constant.
	\end{remark}
	\begin{proof}
		The proof is given in Appendix \ref{proof_eventually_constant_N2}
	\end{proof}
	
	As of now, we assume that $(V,B)$ is not such an exceptional parameter constellation yielding existence of some eigenvalue $\lambda_j(k)$ of $M(k)$ with eventually constant real part. \newline
	
	\Cref{theorem_eventually_monotone} and \eqref{limit_real_parts_ev} have massive consequences for the long-term behavior of solutions of the transport-reaction system \eqref{transport_driven_instab_eq}. \newline
	
	In the context of transport-driven instabilities, we assume
	
	\begin{assumption}\label{assumption_B}
		The spectrum $\sigma(B)$ is contained in the left half plane of $\C,$ that is
		\begin{equation*}
			\sigma(B) \subseteq \C_- = \{ \lambda \in \C \, \colon \, \real\lambda < 0 \}.
		\end{equation*}
	\end{assumption}
	
	Let us recall the definition of Turing patterns. To this end, we again use the notation
	\begin{equation}\label{defSigma}
		\Sigma(\indextwo) = \max_{\lambda \in \sigma(M(\indextwo))} \real\lambda.
	\end{equation}
	
	\begin{definition}[Turing pattern formation]
		The transport-reaction model \eqref{transport_driven_instab_eq} generates Turing patterns if $B$ fulfills \Cref{assumption_B} and if there exist finitely many $k_1, \cdots, k_n \in \Z$ such that the following holds:
		\begin{enumerate}
			\item
			\begin{equation*}
				\Sigma(\indextwo_\indexfour) = \Sigma(\indextwo_\indexone) > 0 \qquad \text{for all } \indexfour,\indexone=1,\cdots,n,
			\end{equation*}
			\item
			\begin{equation*}
				\sup_{\substack{\indextwo\in\Z \\ \indextwo\neq \indextwo_1,\cdots, \indextwo_n}} \Sigma(\indextwo) < \Sigma(\indextwo_\indexfour)\qquad \text{for all } \indexfour=1,\cdots,n.
			\end{equation*}
		\end{enumerate}
	\end{definition}
	
	In addition to Turing patterns, there is a second possible qualitative behavior for the solutions of \eqref{transport_driven_instab_eq}.
	
	\begin{definition}[{Hyperbolic instabilities}]\label{def_hyperbolic_instability}
		The transport-reaction model \eqref{transport_driven_instab_eq} generates hyperbolic instabilities if $B$ fulfills \Cref{assumption_B} and if there exists $b > 0$ and $K\in\N$ such that the following holds:
		\begin{enumerate}
			\item
			The sequence $(\Sigma(k))_{k\geq K}$ is strictly monotonically increasing with limit $b$,
			\item
			\begin{equation*}
				\sup_{\substack{k\in\Z \\ |k|\leq K}} \Sigma(k) < b.
			\end{equation*}
		\end{enumerate}
	\end{definition}
 	\begin{remark}
 		The definition relies on the symmetry $\sigma(M(-k)) = \overline{\sigma(M(k))}$ for all $k\in\N,$ see \Cref{spectrum_symmetric}.
 	\end{remark}
 	
 	Intuitively, hyperbolic instabilities describe chaotic and increasingly oscillating behavior: for large $t$, high wave numbers $|k|$ are dominant and for even larger $t$, even higher waves numbers are dominant due to the eventual strict monotonicity of $(\Sigma(k))_{k\in\Z}$. The remarkable thing is that \textit{every single} high frequency is important. Given that the spectrum of the generator has to be contained in a strip $\C_{c_1,c_2} = \{\lambda \in \C \, \colon \, c_1 < \real\lambda < c_2 \}$ for some constants $c_1,\,c_2\in\R$ by \Cref{lemma_-A+B_generates_C_0_group}, hyperbolic instabilities in some sense correspond to the most chaotic qualitative behavior theoretically possible. \newline
 	In general, if the initial function has an infinite Fourier series, increasingly high frequencies are dominant. If the initial function has a finite Fourier series, either the highest wave number or finitely many wave numbers independent of the initial function ($\text{argmax}_{|k|\leq K} \Sigma(k)$) are dominant. The number of peaks of the solution typically increases over time.\newline
 	
 	Regarding transport-driven instabilities, there is the following dichotomy.
 	
 	\begin{theorem}\label{theorem_dichotomy}
 		Let the transport-directions $v_1,\cdots,v_N$ be pairwise different. Moreover, assume that $B$ is a stable matrix, i.e. \Cref{assumption_transport_dir} and \Cref{assumption_B} are fulfilled. If the transport-reaction model \eqref{transport_driven_instab_eq} generates an instability, the instability is either a Turing pattern or a hyperbolic instability.
 	\end{theorem}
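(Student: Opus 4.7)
The plan is to reformulate the claim as a purely combinatorial statement about the sequence $\Sigma(k)$ defined in \eqref{defSigma} and then exploit the sharp perturbative information from \Cref{eigenvalues_Mk} to enforce the dichotomy. First, by the weak spectral mapping theorem \Cref{theorem_WSMTHM_p_eq_2} together with \Cref{char_spectrum_generator}, the transport-reaction semigroup $(R(t))_{t\geq0}$ on $L^2(\T,\C^N)$ is exponentially unstable iff $s:=\sup_{k\in\Z}\Sigma(k)>0$; so ``generates an instability'' is equivalent to $s>0$. Under \Cref{assumption_transport_dir} and the standing hypothesis that no $\real\lambda_j(k)$ is eventually constant, \Cref{theorem_eventually_monotone} and \Cref{remark_conv_real_parts} give that for each $j$ the real sequence $(\real\lambda_j(k))_{k\in\Z}$ is eventually strictly monotone and converges to $b_{jj}$ as $|k|\to\infty$.

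Set $b:=\max_{j}b_{jj}$ and $J^*:=\{j:b_{jj}=b\}$. The two-sided estimate $\real\lambda_{j^*}(k)\le\Sigma(k)\le\max_j\real\lambda_j(k)$ for any $j^*\in J^*$ then yields $\lim_{|k|\to\infty}\Sigma(k)=b$. Moreover, eventual strict monotonicity forces $\real\lambda_j(k)\neq b_{jj}$ and, for $j\notin J^*$, even $\real\lambda_j(k)<b-\delta$ for some $\delta>0$ and all large $|k|$, so for $|k|$ large one has $\Sigma(k)=\max_{j\in J^*}\real\lambda_j(k)\neq b$. With these asymptotics established, I would perform the following case split on $s$ versus $b$. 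If $s>b$, then because $\Sigma(k)\to b<s$, only finitely many $k$ can satisfy $\Sigma(k)>\tfrac{s+b}{2}$; in particular the attaining set $\{k:\Sigma(k)=s\}$ is finite (and nonempty, as $s$ is the sup of a bounded-above sequence whose tail lies strictly below $s$) and all other $k$ satisfy $\Sigma(k)<s$, so one obtains a Turing pattern directly from the definition. If instead $s=b>0$, I claim that every $j\in J^*$ must have $\real\lambda_j(k)$ eventually strictly \emph{increasing} to $b$: any $j\in J^*$ with $\real\lambda_j(k)$ eventually decreasing would give $\real\lambda_j(k)>b$, hence $\Sigma(k)>b=s$, which is impossible. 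Consequently $\Sigma(k)=\max_{j\in J^*}\real\lambda_j(k)$ for large $|k|$, and since a pointwise maximum of finitely many eventually strictly increasing sequences is itself eventually strictly increasing, $(\Sigma(k))_{k\ge K_0}$ strictly increases to $b$ for some $K_0$. Combined with the symmetry $\Sigma(-k)=\Sigma(k)$ from \Cref{spectrum_symmetric} and the fact that $\Sigma$ only takes finitely many values on $\{|k|\le K_0\}$ each of which is $<b$, one enlarges $K_0$ to an appropriate $K$ to obtain \Cref{def_hyperbolic_instability}(i)--(ii), giving a hyperbolic instability.

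The main obstacle is the borderline subcase $s=b$ where the supremum happens to be attained at some finite ``low-frequency'' index $k_0$ with $\Sigma(k_0)=b$. Here the perturbative tail is strictly below $b$ and increases to $b$, while the attaining set $\{k:\Sigma(k)=b\}$ is finite; the strict inequality in the Turing definition fails because $\sup_{k\notin\{k_1,\dots,k_n\}}\Sigma(k)=b$, and the monotonicity hypothesis of \Cref{def_hyperbolic_instability} fails at $k_0$. Resolving this requires either a mild genericity remark ensuring $\Sigma(k_0)\neq b$ for any finite $k_0$ outside the asymptotic regime (which follows by an algebraic non-degeneracy on the model parameters $(V,B)$, analogous in spirit to \Cref{remarks_eventually_monotone}(ii)), or an ad hoc assignment of such borderline instabilities to one of the two categories and a corresponding widening of the definition. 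Assuming such a non-degeneracy condition the two exhaustive cases above cover every possibility, and the dichotomy follows.
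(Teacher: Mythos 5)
Your proposal follows essentially the same strategy as the paper's proof: invoke \Cref{eigenvalues_Mk} and \Cref{theorem_eventually_monotone} to get $\Sigma(k)\to b:=\max_j b_{jj}$ along an eventually strictly monotone tail, then case-split. The paper organizes the split around $b_1$ versus $b_2$ (the limits of the eventually-increasing and eventually-decreasing branches in $\Sigma=\max\{\Sigma_{\text{incr}},\Sigma_{\text{decr}}\}$); you parametrize by $s:=\sup_k\Sigma(k)$ versus $b$, which is equivalent once one notes $b=\max\{b_1,b_2\}$ and that in the paper's sub-case $b_1\le b_2$ the tail decreases strictly to $b$, so $s>b$ automatically.

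The substantive content of your writeup is that you correctly isolate a borderline case that the paper's own argument glosses over. When the tail of $\Sigma(k)$ increases strictly to $b$ ($b_1>b_2$ in the paper's notation), the paper asserts that either some $|k|<K$ has $\Sigma(k)>b$ (Turing) or else ``$\Sigma(k)<b$ for all $k\in\Z$'' (hyperbolic). But negating the first alternative only yields $\Sigma(k)\le b$ for $|k|<K$; the possibility $\Sigma(k_0)=b$ at some finite $k_0$ is never excluded. In that situation neither condition~(ii) of \Cref{def_hyperbolic_instability} nor condition~(ii) in the definition of Turing pattern formation (both demand a strict inequality $<b$, resp.\ $<\Sigma(k_i)$) holds for any choice of $K$ or attaining set, so the dichotomy as stated fails verbatim. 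Your remark that a non-degeneracy hypothesis on $(V,B)$, or a mild widening of the two definitions, would close this accurately diagnoses what is missing; it is a gap in the paper's written proof, not something introduced by your argument. The remaining steps of your proposal (the squeeze forcing $\lim\Sigma(k)=b$, finiteness of the attaining set when $s>b$, and the contradiction ruling out any decreasing branch indexed by $j\in J^*$ when $s=b$) are all correct and mirror the paper's reasoning.
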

 	\begin{proof}
 		Notice that \Cref{theorem_eventually_monotone} implies
 		\begin{align*}
 			\Sigma(k) &= \max_{\lambda \in \sigma(M(\indextwo))} \real\lambda = \max_{j=1,\cdots,N} \real \lambda_j(k) = \max \big\{ \Sigma_{\text{incr}}(k), \, \Sigma_{\text{decr}}(k) \big\},
 		\end{align*}
 		with
 		\begin{equation*}
 			\Sigma_{\text{incr}}(k) \coloneqq \max_{\substack{j=1,\cdots,N \\ \real \lambda_j(k) \text{ ev. strictly } \\ \text{ increasing }}} \real \lambda_j(k)
 		\end{equation*}
 		and
 		\begin{equation*}
	 		\Sigma_{\text{decr}}(k) \coloneqq \max_{\substack{j=1,\cdots,N \\ \real \lambda_j(k) \text{ ev. strictly } \\ \text{ decreasing }}} \real \lambda_j(k).
 		\end{equation*}
 		Hence, $\Sigma(k)$ is eventually the maximum of a strictly monotonically increasing and a strictly monotonically decreasing function. Moreover, \eqref{limit_real_parts_ev} implies
 		\begin{align}\label{limit_sigma}
 			\lim_{|k|\to\infty} \Sigma_{\text{incr}}(k) &= \max\{ b_{jj} \, \colon \, \real \lambda_j(k) \text{ eventually strictly monotonically increasing} \} \eqqcolon b_1, \nonumber \\
 			\lim_{|k|\to\infty} \Sigma_{\text{decr}}(k) &= \max\{ b_{jj} \, \colon \, \real \lambda_j(k) \text{ eventually strictly monotonically decreasing} \} \eqqcolon b_2, \nonumber\\
 			\lim_{|k|\to\infty} \Sigma(k) &=  \max \{b_1, \, b_2\} = \max_{j=1,\cdots,N} b_{jj} \eqqcolon b.
 		\end{align}
 		A case distinction yields the dichotomy: if we have $b_1 \leq b_2$, the sequence $\Sigma(k)$ converges eventually strictly monotonically decreasing to $b$ and Turing patterns emerge. If we have $b_1 > b_2,$ there exists some $K\in\N$ such that the sequence $(\Sigma(k))_{k\geq K}$ converges strictly monotonically increasing to $b$. Now, if there exists some $k$ with absolute value smaller than $K$ and
 		\begin{equation*}
 			\Sigma(k)  > b \qquad,
 		\end{equation*}
 		the transport-reaction model generates Turing patterns. If not,
 		\begin{equation*}
 			\Sigma(k) < b
 		\end{equation*}
 		holds for all $k\in\Z$ and the instability is hyperbolic.
 	\end{proof}
 
 	We also obtain a useful criteria, which guarantees the existence of transport-driven instabilities.
 	
 	\begin{corollary}
 		Let the transport-directions $v_1,\cdots,v_N$ be pairwise different. Moreover, assume that $B$ is a stable matrix, i.e. \Cref{assumption_transport_dir} and \Cref{assumption_B} are fulfilled. Assume that $B$ has a positive diagonal entry $b_{jj}>0$. Then the transport-reaction model \eqref{transport_driven_instab_eq} is unstable and it either generates a Turing pattern or a hyperbolic instability.
 	\end{corollary}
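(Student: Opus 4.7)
The proof reduces to verifying that the model is actually unstable; once we have an instability, \Cref{theorem_dichotomy} immediately yields the stated dichotomy. So the plan is to show that under the hypothesis $b_{jj}>0$, the spectrum of the generator $(-A+B, \, D(-A))$ on $L^2(\T, \, \C^N)$ must contain points with strictly positive real part.

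First, I would invoke the asymptotic formula derived in \Cref{eigenvalues_Mk} together with \Cref{remark_conv_real_parts}: for each component $\indexone = 1, \dots, N$ the eventual eigenvalue branch satisfies $\lim_{|k|\to\infty} \real\lambda_\indexone(k) = b_{\indexone\indexone}$. Consequently, using the notation $\Sigma(k) = \max_{\lambda \in \sigma(M(k))} \real\lambda$ introduced in \eqref{defSigma}, the combined limit computed in \eqref{limit_sigma} gives
\begin{equation*}
    \lim_{|k|\to\infty} \Sigma(k) = \max_{\indexone=1,\dots,N} b_{\indexone\indexone} \geq b_{jj} > 0.
\end{equation*}
Thus there exists some $k^* \in \Z$ such that $\Sigma(k^*) > 0$, i.e. the matrix $M(k^*)$ possesses an eigenvalue with strictly positive real part. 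By \Cref{char_spectrum_generator} every eigenvalue of $M(k^*)$ lies in $\sigma(-A+B)$, so the spectral bound $s(-A+B)$ is strictly positive.

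Next I would transfer this spectral information to the semigroup $(R_2(t))_{t\geq 0}$. The weak spectral mapping \Cref{theorem_WSMTHM_p_eq_2} and \Cref{wsmthm_implies_growth_bd_eq_spectral_bound} yield $\omega_0(-A_2+B_2) = s(-A+B) > 0$, which in particular means that the zero equilibrium of \eqref{transport_driven_instab_eq} is unstable (arbitrarily small $L^2$-perturbations produce solutions of arbitrarily large norm for large $t$). Hence the transport-reaction model exhibits a transport-driven instability in the sense of \Cref{section_transport_driven_instab}.

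Finally, \Cref{theorem_dichotomy} applies verbatim: since \Cref{assumption_transport_dir} (pairwise different transport directions) and \Cref{assumption_B} (stability of $B$) are both in force and the model is unstable, the resulting instability is either a Turing pattern or a hyperbolic instability. I do not foresee a real obstacle here, the argument is essentially a bookkeeping exercise chaining the asymptotic formula for the eigenvalues of $M(k)$, the spectral characterization of the generator, the weak spectral mapping theorem, and the dichotomy theorem; the only point requiring a modicum of care is making sure that the eventually monotone / eventually constant alternative from \Cref{theorem_eventually_monotone} does not interfere with the conclusion, but since we only use that $\Sigma(k) \to b > 0$, this alternative plays no role.
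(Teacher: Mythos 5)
Your proof is correct and follows essentially the same route as the paper's: use \eqref{limit_sigma} to conclude $\lim_{|k|\to\infty}\Sigma(k) = \max_j b_{jj}\geq b_{jj}>0$, deduce instability, then invoke \Cref{theorem_dichotomy}. You simply spell out the implicit middle step (from the positive limit to instability) by chaining \Cref{char_spectrum_generator}, \Cref{theorem_WSMTHM_p_eq_2} and \Cref{wsmthm_implies_growth_bd_eq_spectral_bound}, which the paper leaves unstated but is indeed the underlying justification.
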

 	\begin{proof}
 		By assumption and \eqref{limit_sigma}, we have
 		\begin{equation*}
 			\lim_{|k|\to\infty} \Sigma(k) = \max_{j=1,\cdots,N} b_{jj} > 0
 		\end{equation*}
 		and the transport-reaction model \eqref{transport_driven_instab_eq} generates an instability. It is either a Turing pattern or a hyperbolic instability by the dichotomy \Cref{theorem_dichotomy}.
 	\end{proof}
 
 	\Cref{remarks_eventually_monotone} and \eqref{limit_real_parts_ev} also reveal a useful criterion for the existence of Turing patterns.
 	
 	\begin{corollary}\label{corollary_existence_turing_patterns}
 		Let the transport-directions $v_1,\cdots,v_N$ be pairwise different. Moreover, assume that $B$ is a stable matrix, i.e. \Cref{assumption_transport_dir} and \Cref{assumption_B} are fulfilled. Let $j^*$ be the index with $b_{j^*j^*} = \max_{j=1,\cdots,N} b_{jj}$ (wlog unique). Assume that the model is unstable and assume
 		\begin{equation*}
	 		\lambda_{j^*}^{(3)} = \sum_{\substack{i,l=1 \\ i,l\neq j^*}}^{N} (v_l-v_{j^*})^{-1}(v_i-v_{j^*})^{-1}b_{{j^*}l}b_{li}b_{i{j^*}} - \sum_{\substack{i=1 \\ i\neq {j^*}}}^{N} (v_i-v_{j^*})^{-2}b_{i{j^*}}b_{{j^*}i}b_{{j^*}{j^*}} < 0.
 		\end{equation*}
 		Then the model generates Turing patterns.
 	\end{corollary}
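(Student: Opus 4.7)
My plan is to invoke the dichotomy \Cref{theorem_dichotomy} and rule out a hyperbolic instability, leaving Turing patterns as the only possibility. Since $B$ is stable and the model is unstable by assumption, \Cref{theorem_dichotomy} asserts that the instability is either a Turing pattern or a hyperbolic one, so the entire task reduces to excluding the hyperbolic alternative.

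The decisive step is translating the cubic hypothesis into a monotonicity statement for $\real \lambda_{j^*}(k)$. First I would observe that, by \Cref{formulas_small_lambda_hat_jn}, the expression appearing in the assumption is exactly the third Taylor coefficient $\hat{\lambda}_{j^*}^{(3)}$ from \Cref{theorem_perturbed_eigenvalues}. Invoking part~(iii) of \Cref{remarks_eventually_monotone} with the index $j^*$, the condition $\hat{\lambda}_{j^*}^{(3)} < 0$ forces $\real \lambda_{j^*}(k)$ to be eventually strictly monotonically decreasing, and \Cref{remark_conv_real_parts} identifies its limit as $b_{j^*j^*}$. By the choice of $j^*$, this limit coincides with $b = \max_{j=1,\ldots,N} b_{jj}$, so $\real \lambda_{j^*}(k)$ converges to $b$ from above, strictly and monotonically.

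From here the proof of \Cref{theorem_dichotomy} does the rest: the index $j^*$ contributes to the decreasing family $\Sigma_{\text{decr}}(k)$, so in the notation of that proof one has $b_2 \geq b_{j^*j^*} = b$, while trivially $b_1 \leq \max_j b_{jj} = b$, giving $b_1 \leq b_2$. This is precisely the branch of the dichotomy in which $\Sigma(k)$ is eventually strictly monotonically \emph{decreasing} to $b$, which is incompatible with the eventually strictly \emph{increasing} behaviour demanded by \Cref{def_hyperbolic_instability}. Hence no hyperbolic instability can occur, and by \Cref{theorem_dichotomy} the model generates Turing patterns. The only mildly subtle point of the whole argument is the identification of the algebraic hypothesis with $\hat{\lambda}_{j^*}^{(3)}$ via \Cref{formulas_small_lambda_hat_jn}; once it is made, the conclusion is a direct corollary of the already established dichotomy.
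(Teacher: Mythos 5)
Your proposal is correct and takes essentially the same approach as the paper: identify the hypothesis with $\hat{\lambda}_{j^*}^{(3)}<0$ via \Cref{formulas_small_lambda_hat_jn}, invoke \Cref{remarks_eventually_monotone} to get eventual strict decrease of $\real\lambda_{j^*}(k)$, and conclude from the case analysis in the proof of \Cref{theorem_dichotomy}. The paper's own proof is terser — it simply notes that, because $b_{j^*j^*}$ is the (unique) largest diagonal entry, eventually $\Sigma(k)=\real\lambda_{j^*}(k)$, which is eventually strictly decreasing — but the content is the same, and your more explicit use of the $b_1\leq b_2$ branch is a faithful unpacking of that one-liner.
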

 	\begin{proof}
 		We eventually have
 		\begin{equation*}
 			\Sigma(k) = \real \lambda_{j^*}(k),
 		\end{equation*}
 		which is eventually strictly monotonically decreasing due to $\lambda_{j^*}^{(3)} < 0,$ see \Cref{remarks_eventually_monotone}.
 	\end{proof}
 
 	\subsection{Transport-Driven Instabilities for $N=2$}
 	
 	Until now, we almost always considered the general case of arbitrary many components $N\in \N$. This subsection gives a very brief excursion to the case $N=2.$ In this simple case, the transport-reaction model \eqref{transport_driven_instab_eq} reads
 	\begin{equation}\label{transport_reaction_N2}
 		\partial_t \colvec{u_1 \\ u_2} + \colvec{v_1 u_1'\\ v_2 u_2'} = \begin{pmatrix}
 		a & b \\
 		c & d
 		\end{pmatrix} \colvec{u_1 \\ u_2},
 	\end{equation}
 	where we used the notation
 	\begin{equation*}
 		B = \begin{pmatrix}
 		a & b \\
 		c & d
 		\end{pmatrix}
 	\end{equation*}
 	for the matrix $B\in \R^{2\times 2}.$ The key takeaway is that basically the same parameter choices for $B$ which lead to Turing pattern formation in reaction-diffusion equations, see \Cref{turing_instab_reaction_diffusion_eq}, lead to hyperbolic instabilities in transport-reaction equations. The difference is that the transport speeds $v_1$ and $v_2$ as well as the length $L$ of the circle $\T_L$ have no influence on the instability to occur. \newline
 	In contrast to the general case with $N$ components, $N=2$ ensures that the sequences of the real parts of the eigenvalues of
 	\begin{equation*}
 		M(k) = - 2\pi i k \begin{pmatrix}
 		v_1 & 0 \\
 		0 & v_2
 		\end{pmatrix} + \begin{pmatrix}
 		a & b \\
 		c & d
 		\end{pmatrix}
 	\end{equation*}
	are not only eventually monotone, but even monotone from $k=0$ on. That is, $\big(\real \lambda_1(k)\big)_{k\geq0}$ and $\big(\real \lambda_2(k)\big)_{k\geq0}$ are monotone sequences. Also the assumption $v_1 \neq v_2$ from \Cref{eigenvalues_Mk} is no mathematical restriction at all. However, the case $v_1 = v_2$ is uninteresting, because thereby the eigenvalues of $M(k)$ lie on straight lines. \newline
	
	In any case, for a transport-driven instability to occur, $B$ has to be stable, i.e.
	\begin{align*}
		\trace(B) &= a + d < 0, \\
		\det(B) &= ad - bc > 0.
	\end{align*}
	Additionally, the case of eventually constant real parts of the eigenvalues of $M(k)$ in \Cref{theorem_eventually_monotone} can be ruled out by \Cref{eventually_constant_N2}. If both eigenvalues are eventually constant, the eigenvalues lie either on straight lines or eventually have a real part equal to $a=d.$ Under the assumption that $B$ is stable and given the immediate monotonicity of the real parts of the eigenvalues, the whole spectrum of $-A+B$ lies in $\C_-$ and the transport-reaction model does not generate instabilities. \newline
	This consideration, \eqref{limit_real_parts_ev} and the immediate monotonicity of the real parts of the eigenvalues can be summarized in the following theorem.
	
	\begin{theorem}
		Let $N=2$. The transport-reaction model \eqref{transport_reaction_N2} generates instabilities if and only if $v_1 \neq v_2$ and the matrix $B$ fulfills
		\begin{align*}
			\trace(B) &= a + d < 0, \\
			\det(B) &= ad - bc > 0, \\
			a > 0 &\text{ or } d>0.
		\end{align*}
		In this case, the instability is a hyperbolic instability in the sense of \Cref{def_hyperbolic_instability}.
	\end{theorem}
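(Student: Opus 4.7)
The plan is to split the iff into its two directions and, once an instability has been produced, to identify it as hyperbolic by ruling out the Turing alternative with the dichotomy \Cref{theorem_dichotomy}. The three ingredients I will use are \Cref{eigenvalues_Mk}, the asymptotic $\real\lambda_j(k)\to b_{jj}$ from \eqref{limit_real_parts_ev}, and the fact recorded in the paragraph preceding the theorem that for $N=2$ each sequence $(\real\lambda_j(k))_{k\geq 0}$ is monotone from $k=0$ on.

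For sufficiency I would assume $v_1\neq v_2$, $\trace B<0$, $\det B>0$ and, without loss of generality, $a>0$; writing $j^*$ for the index with $b_{j^*j^*}=a$, \Cref{eigenvalues_Mk} together with \eqref{limit_real_parts_ev} forces $\real\lambda_{j^*}(k)\to a$, hence $\Sigma(k)>0$ for all sufficiently large $|k|$, which is the claimed instability. For necessity I would argue contrapositively for each condition. Both Turing and hyperbolic patterns require $\sigma(B)\subseteq\C_-$ by their definitions, so $B$ must be stable, which for a real $2\times 2$ matrix amounts to $\trace B<0$ and $\det B>0$. If $v_1=v_2$ then $M(k)=-2\pi i k v_1 I_{\C^{2\times 2}}+B$ has spectrum $-2\pi i k v_1+\sigma(B)\subseteq\C_-$ for every $k$, so $\Sigma(k)<0$ throughout, contradicting instability. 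Finally, if $a\leq 0$ and $d\leq 0$, then for each $j$ the monotonicity on $k\geq 0$ combined with the limit $b_{jj}\leq 0$ bounds $\real\lambda_j(k)$ above either by $b_{jj}\leq 0$ (increasing case) or by $\real\lambda_j(0)<0$ (decreasing case, using stability of $B$); by the symmetry $\Sigma(-k)=\Sigma(k)$ this yields $\Sigma(k)\leq 0$ for every $k\in\Z$, ruling out an instability.

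The instability produced under the sufficiency conditions is hyperbolic by a short application of \Cref{theorem_dichotomy}. Without loss of generality $a>0$, so $d<-a<0$ by $a+d<0$; the sequence $(\real\lambda_{j^*}(k))_{k\geq 0}$ starts strictly below $0$ and converges to $a>0$, so its monotone direction must be increasing, while the other sequence with limit $d<0$ stays bounded above by $\max(\real\lambda_j(0),d)<0$. Consequently $\Sigma(k)=\real\lambda_{j^*}(k)$ for all large $k$, is strictly monotone increasing to $a$, and $\Sigma(k)<a$ for every $k\in\Z$. Both clauses of \Cref{def_hyperbolic_instability} are thereby verified with $b=a$, and Turing pattern formation is excluded since the supremum $a$ is not attained at any finite mode.

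The main obstacle I foresee is supplying a self-contained proof of the $N=2$ monotonicity of $(\real\lambda_j(k))_{k\geq 0}$ on all of $k\geq 0$, which is asserted without proof in the surrounding discussion. I would do this by writing the eigenvalues explicitly as
\[
\lambda_\pm(k)=\tfrac{a+d}{2}-i\pi k(v_1+v_2)\pm\tfrac12\sqrt{(a-d-2i\pi k(v_1-v_2))^2+4bc},
\]
setting $s=k^2$ and analysing $\real\sqrt{w(s)}=\sqrt{\tfrac12(|w|+\real w)}$ for $w(s)$ the radicand. A short differentiation in $s$ should show that this quantity is monotone with direction determined by the sign of $bc$, a conclusion that is also consistent with the three degenerate flat cases $b=0$, $c=0$ and $a=d$ classified in \Cref{eventually_constant_N2}. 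Once this monotonicity is granted, the remaining reasoning is the short case analysis sketched above.
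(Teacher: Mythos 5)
Your proposal is correct and follows essentially the same route as the paper, which likewise derives the theorem from the stability of $B$ at mode $k=0$, the limits $\real\lambda_j(k)\to a,d$ from \eqref{limit_real_parts_ev}, the monotonicity of the real parts from $k=0$ on, and the exclusion of the degenerate cases $v_1=v_2$, $bc=0$, $a=d$ via \Cref{eventually_constant_N2}. In fact you go slightly further than the paper, which asserts the $N=2$ monotonicity on all of $k\geq0$ without proof, whereas your explicit computation of $\real\sqrt{w(s)}=\sqrt{\tfrac12(|w|+\real w)}$ and its derivative in $s=k^2$ (whose sign is that of $-(a-d)^2bc$) actually establishes it.
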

	\begin{remark}
		Wlog, let $d<0.$ For hyperbolic instabilities to exist, $a$ has to be positive and $b$ and $c$ need to have different signs. \newline
		The situation in which $b,\,d<0$ and $ a, \, c > 0$ fulfill all conditions is remarkably similar to the activator-inhibitor scenario for reaction-diffusion equations. The ``hyperbolic interpretation" of these parameters is to see $u_1$ as a prey and $u_2$ as a predator. The prey $u_1$ increases by itself and its growth can be offset by the predator $u_2,$ which increases only in the presence of the prey.
	\end{remark}
	
	\section{Supplementary Figures and Simulations}
	
	Let $v_1,\cdots,v_N\in\R$ and let $B\in\R^{N\times N}.$ Consider the transport-reaction equation
	\begin{equation}\label{transport_reaction_eq}
		\partial_t u + \colvec{v_1 u_1'\\ \vdots \\ v_N u_N'} = Bu
	\end{equation}
	Complementary to our abstract findings from \Cref{eigenvalues_Mk}, we add figures to demonstrate how the spectrum
	\begin{equation*}
		\sigma(-A+B) = \overline{\bigcup_{\indextwo\in \Z} \sigma(M(\indextwo))}
	\end{equation*}
	of generator of the transport-reaction semigroup $(R(t))_{t\geq0}$ on $L^2(\T, \, \C^N)$ typically looks like. As always, the matrices $M(k)$ are given by
	\begin{equation*}
		M(\indextwo) = -2\pi i \indextwo
		\begin{pmatrix}
		v_1  & & \\
		& \ddots & \\
		& & v_N
		\end{pmatrix}
		+ B = - 2\pi i \indextwo V + B.
	\end{equation*}
	
	In addition, we simulated the solution of \eqref{transport_reaction_eq} in one exemplary case of Turing patterns and hyperbolic instabilities respectively. The simulations are based on formula \eqref{auxeq5}. We have no expertise in numerical simulations of hyperbolic equations and used the exact formula in order to avoid difficulties like numerical diffusion.

	\subsection{Figures of The Spectrum for $N=2$}
	
	In the case $N=2$, the transport-reaction model \eqref{transport_reaction_eq} reads
	\begin{equation*}
		\partial_t \colvec{u_1 \\ u_2} + \colvec{v_1 u_1'\\ v_2 u_2'} = B \colvec{u_1 \\ u_2},
	\end{equation*}
	for a matrix $B\in \R^{2\times 2}.$ What follows are plots of
	\begin{equation*}
		\sigma(-A+B) = \overline{\bigcup_{\indextwo\in \Z} \sigma(M(\indextwo))},
	\end{equation*}
	where the matrices $M(k)$ are given by
	\begin{equation*}
		M(\indextwo) = -2\pi i \indextwo
		\begin{pmatrix}
		v_1  & 0 \\
		0 & v_2
		\end{pmatrix}
		+ B.
	\end{equation*}
	Obviously, the plots only contain a part of the spectrum and there is a cutoff in $|k|.$ Each dot in the pictures corresponds to one eigenvalue of $M(k), \, k\in\Z$. The black crosses are the eigenvalues of $B$ and the parameter choices are written down on the right of each plot.
	
	\begin{minipage}{0.75\textwidth}
		\begin{figure}[H]
		\centering
		\includegraphics[width = 4in]{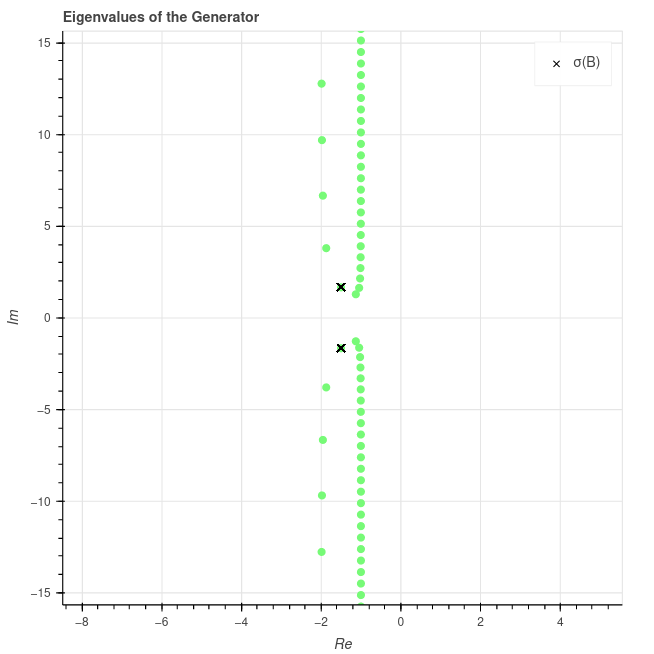}
		\caption[Typical Spectrum for $N=2.$ Example 1]{A typical spectrum of $-A+B$ for $N=2.$}
		\label{fig:typical_spec_N2}
		\end{figure}
	\end{minipage} \hfill
	\begin{minipage}{0.2\textwidth}
		Parameter choices: \newline
		\begin{equation*}
			B = \begin{pmatrix}
			-2 & 3 \\
			-1 & -1
			\end{pmatrix}
		\end{equation*}
		\begin{align*}
			v_1 &= 0.5 \\
			v_2 &= -0.1
		\end{align*}
	\end{minipage}

	\begin{minipage}{0.75\textwidth}
		\begin{figure}[H]
			\centering
			\includegraphics[width = 4in]{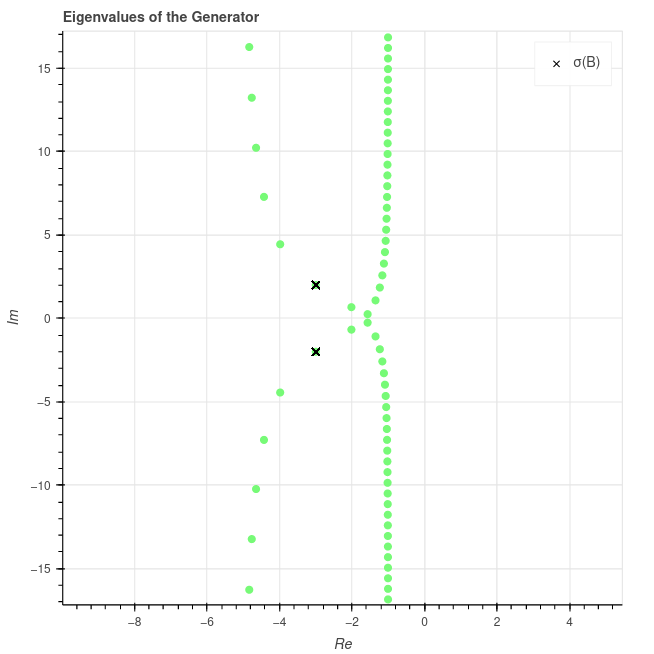}
			\caption[Typical Spectrum for $N=2.$ Example 2]{A typical spectrum of $-A+B$ for $N=2.$}
			\label{fig:typical_spec2_N2}
		\end{figure}
	\end{minipage} \hfill
	\begin{minipage}{0.2\textwidth}
		Parameter choices: \newline
		\begin{equation*}
		B = \begin{pmatrix}
		-5 & 2 \\
		-4 & -1
		\end{pmatrix}
		\end{equation*}
		\begin{align*}
		v_1 &= -0.5 \\
		v_2 &= -0.1
		\end{align*}
	\end{minipage}

	\begin{minipage}{0.75\textwidth}
		\begin{figure}[H]
			\centering
			\includegraphics[width = 4in]{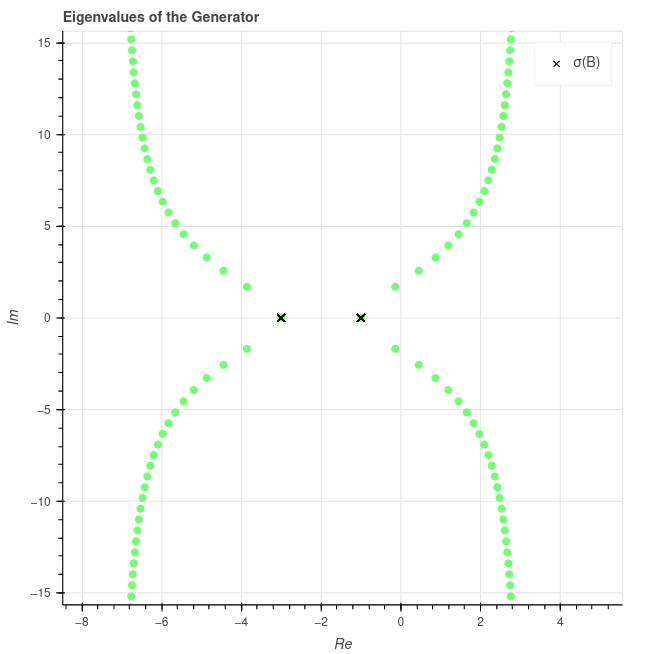}
			\caption[Hyperbolic Instabilities for $N=2.$ Example 1]{An example of the spectrum of $-A+B$ in the case of hyperbolic instabilities for $N=2.$}
			\label{fig:hyperbolic_instab_N2}
		\end{figure}
	\end{minipage} \hfill
	\begin{minipage}{0.2\textwidth}
		Parameter choices: \newline
		\begin{equation*}
			B = \begin{pmatrix}
			3 & 8 \\
			-3 & -7
			\end{pmatrix}
		\end{equation*}
		\begin{align*}
			v_1 &= 0.1 \\
			v_2 &= -0.1
		\end{align*}
	\end{minipage}

	\begin{minipage}{0.75\textwidth}
		\begin{figure}[H]
			\centering
			\includegraphics[width = 4in]{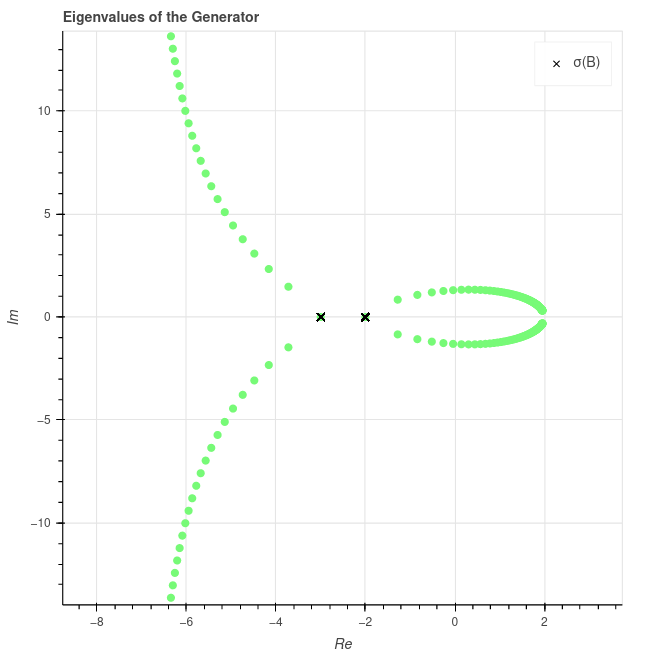}
			\caption[Hyperbolic Instabilities for $N=2.$ Example 2]{An example of the spectrum of $-A+B$ in the case of hyperbolic instabilities for $N=2$ with only one moving component.}
			\label{fig:hyperbolic_instab2_N2}
		\end{figure}
	\end{minipage} \hfill
	\begin{minipage}{0.2\textwidth}
		Parameter choices: \newline
		\begin{equation*}
			B = \begin{pmatrix}
			-7 & 4 \\
			-5 & 2
			\end{pmatrix}
		\end{equation*}
		\begin{align*}
			v_1 &= -0.1 \\
			v_2 &= 0
		\end{align*}
	\end{minipage}

	\begin{minipage}{0.75\textwidth}
		\begin{figure}[H]
			\centering
			\includegraphics[width = 4in]{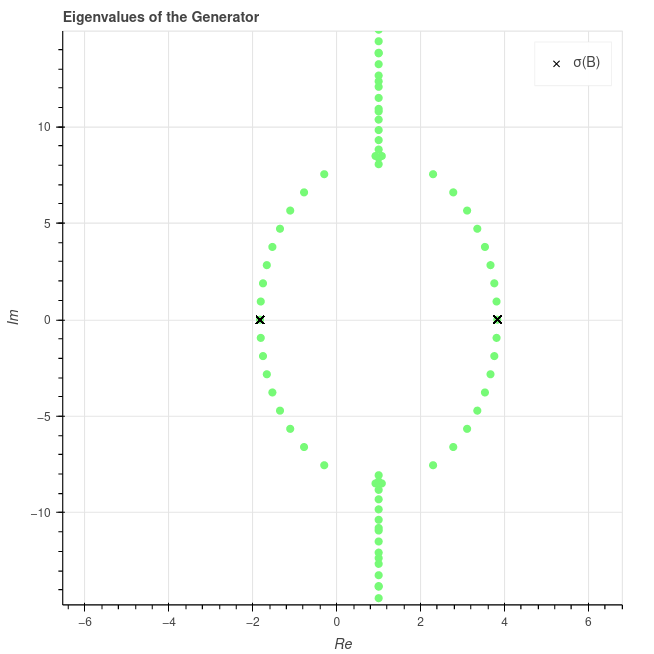}
			\caption[Eventually Constant Real Parts for $N=2.$ An Example]{An example of the spectrum of $-A+B$ in the case of eventually constant real parts for $N=2,$ see \Cref{eventually_constant_N2}.}
			\label{fig:ev_const_real_partN2}
		\end{figure}
	\end{minipage} \hfill
	\begin{minipage}{0.2\textwidth}
		Parameter choices: \newline
		\begin{equation*}
			B = \begin{pmatrix}
			1 & 2 \\
			4 & 1
			\end{pmatrix}
		\end{equation*}
		\begin{align*}
			v_1 &= -0.1 \\
			v_2 &= -0.2
		\end{align*}
	\end{minipage}

	\subsection{Figures of The Spectrum for $N=3$}
	
	In the case $N=3$, the transport-reaction model \eqref{transport_reaction_eq} reads
	\begin{equation*}
		\partial_t \colvec{u_1 \\ u_2 \\ u_3} + \begin{pmatrix}
		v_1  & 0 & 0\\
		0 & v_2 & 0 \\
		0 & 0 & v_3
		\end{pmatrix} \colvec{ u_1'\\ u_2' \\u_3'} = B \colvec{u_1 \\ u_2 \\ u_3}
	\end{equation*}
	with $B\in\R^{3\times 3}.$  What follows are plots of
	\begin{equation*}
		\sigma(-A+B) = \overline{\bigcup_{\indextwo\in \Z} \sigma(M(\indextwo))},
	\end{equation*}
	where the matrices $M(k)$ are given by
	\begin{equation}\label{Mk_case_N3}
		M(\indextwo) = -2\pi i \indextwo
		\begin{pmatrix}
		v_1  & 0 & 0\\
		0 & v_2 & 0 \\
		0 & 0 & v_3
		\end{pmatrix}
		+ B.
	\end{equation}
	Again, the plots only contain a part of the spectrum and there is a cutoff in $|k|.$ Each dot in the pictures corresponds to one eigenvalue of $M(k), \, k\in\Z$. The black crosses are the eigenvalues of $B$ and the parameter choices are written down on the right of each plot. \newline
	
	In contrast to $N=2,$ there are a variety of possibilities for the location of the eigenvalues for small $|k|.$ In particular, the real parts of the eigenvalues are generally no longer immediately monotone.
	
	\begin{minipage}{0.75\textwidth}
		\begin{figure}[H]
			\centering
			\includegraphics[width = 4in]{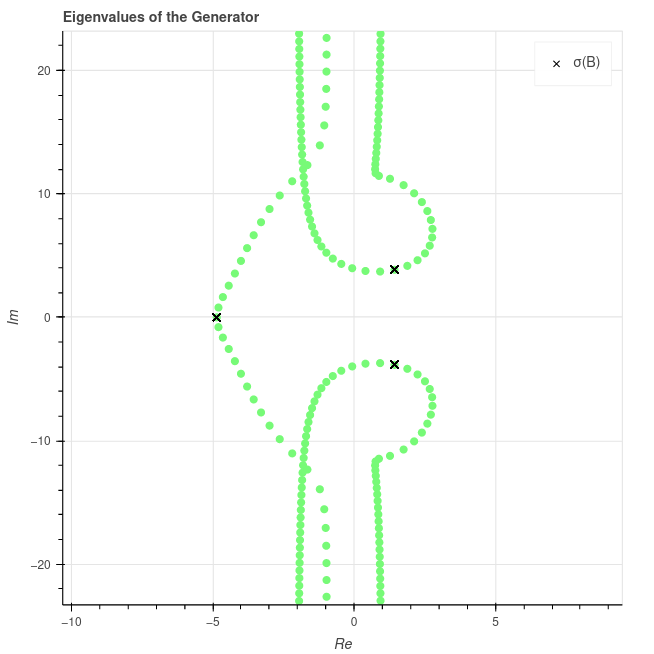}
			\caption[Spectrum for $N=3$. An Example]{An example of the spectrum of $-A+B$ for $N=3.$}
			\label{fig:spec_N3}
		\end{figure}
	\end{minipage} \hfill
	\begin{minipage}{0.2\textwidth}
		Parameter choices: \newline
		\begin{equation*}
		B = \begin{pmatrix}
		1 & 2 & -4\\
		-5 & -2 & 2 \\
		-4 & -5 & -1
		\end{pmatrix}
		\end{equation*}
		\begin{align*}
		v_1 &= 0.1 \\
		v_2 &= -0.1 \\
		v_3 &= 0.2
		\end{align*}
	\end{minipage}

	\begin{minipage}{0.75\textwidth}
		\begin{figure}[H]
			\centering
			\includegraphics[width = 4in]{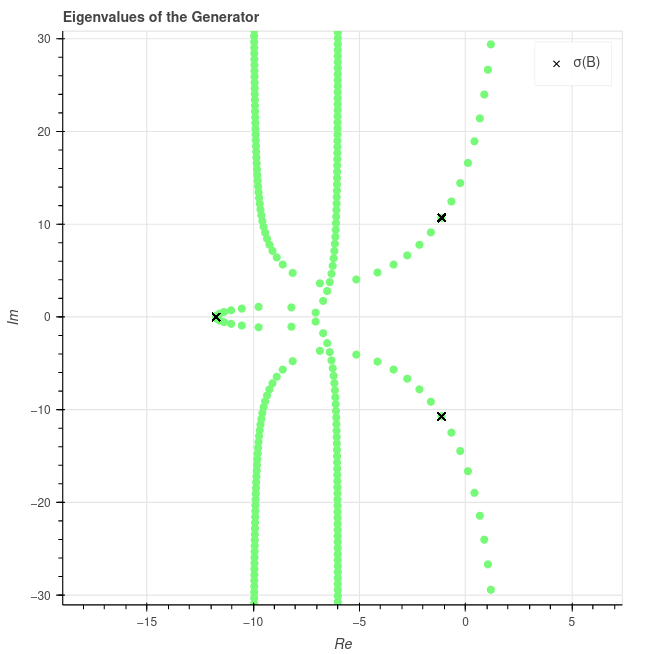}
			\caption[Hyperbolic Instabilities for $N=3$. An Example]{An example of the spectrum of $-A+B$ in the case of hyperbolic instabilities for $N=3.$}
			\label{fig:hyperbolic_instab_N3}
		\end{figure}
	\end{minipage} \hfill
	\begin{minipage}{0.2\textwidth}
		Parameter choices: \newline
		\begin{equation*}
			B = \begin{pmatrix}
			-6 & 2 & -9\\
			4 & -10 & -5 \\
			8 & 10 & 2
			\end{pmatrix}
		\end{equation*}
		\begin{align*}
			v_1 &= 0.1 \\
			v_2 &= -0.1 \\
			v_3 &= 0.5
		\end{align*}
	\end{minipage}

	\begin{minipage}{0.75\textwidth}
		\begin{figure}[H]
			\centering
			\includegraphics[width = 4in]{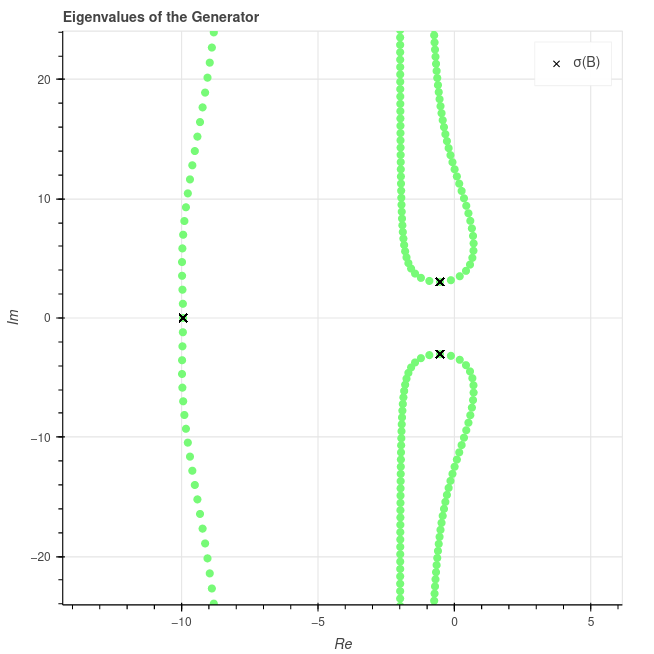}
			\caption[Turing Patterns for $N=3$. Example 1]{An example of the spectrum of $-A+B$ in the case of Turing patterns for $N=3.$}
			\label{fig:turing_N3}
		\end{figure}
	\end{minipage} \hfill
	\begin{minipage}{0.2\textwidth}
		Parameter choices: \newline
		\begin{equation*}
			B = \begin{pmatrix}
			-1 & 2 & -4\\
			-2 & -2 & 2 \\
			-6 & -7 & -8
			\end{pmatrix}
		\end{equation*}
		\begin{align*}
			v_1 &= 0.1 \\
			v_2 &= -0.1 \\
			v_3 &= 0.2
		\end{align*}
	\end{minipage}

	\begin{minipage}{0.75\textwidth}
		\begin{figure}[H]
			\centering
			\includegraphics[width = 4in]{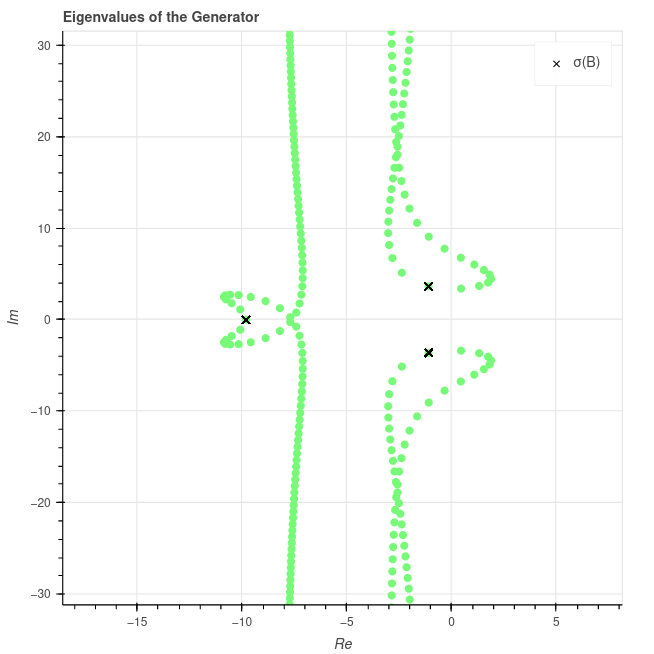}
			\caption[Turing Patterns for $N=3$. Example 2]{An example of the spectrum of $-A+B$ in the case of Turing patterns for $N=3.$}
			\label{fig:turing2_N3}
		\end{figure}
	\end{minipage} \hfill
	\begin{minipage}{0.2\textwidth}
		Parameter choices: \newline
		\begin{equation*}
			B = \begin{pmatrix}
			-8 & 2 & -9\\
			-5 & -3 & -10 \\
			9 & -9 & -1
			\end{pmatrix}
		\end{equation*}
		\begin{align*}
			v_1 &= 0.1 \\
			v_2 &= -0.2 \\
			v_3 &= 0.2
		\end{align*}
	\end{minipage}

	\begin{minipage}{0.75\textwidth}
		\begin{figure}[H]
			\centering
			\includegraphics[width = 4in]{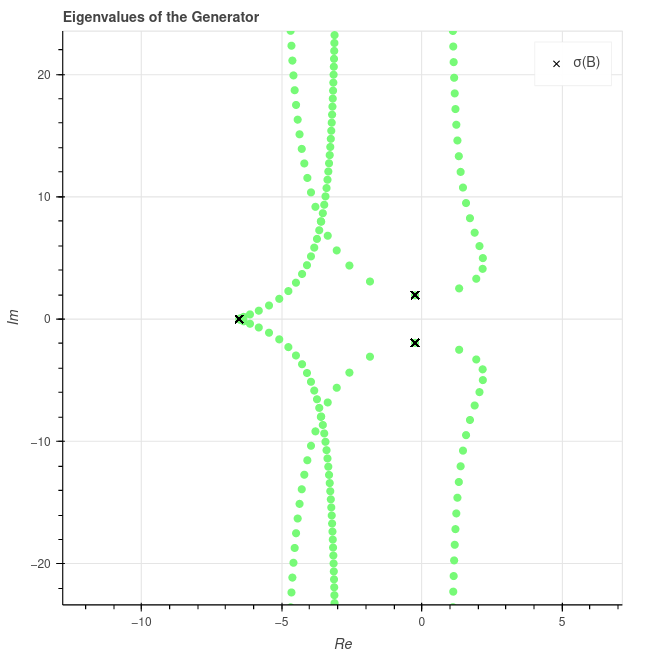}
			\caption[Turing Patterns for $N=3$. Example 3]{An example of the spectrum of $-A+B$ in the case of Turing patterns due to \Cref{corollary_existence_turing_patterns}.}
			\label{fig:turing_corollary_N3}
		\end{figure}
	\end{minipage} \hfill
	\begin{minipage}{0.2\textwidth}
		Parameter choices: \newline
		\begin{equation*}
		B = \begin{pmatrix}
		-3 & 2 & -4\\
		-5 & -5 & 2 \\
		-5 & -5 & 1
		\end{pmatrix}
		\end{equation*}
		\begin{align*}
		v_1 &= -0.1 \\
		v_2 &= -0.2 \\
		v_3 &= 0.2
		\end{align*}
	\end{minipage}

	\begin{minipage}{0.75\textwidth}
		\begin{figure}[H]
			\centering
			\includegraphics[width = 4in]{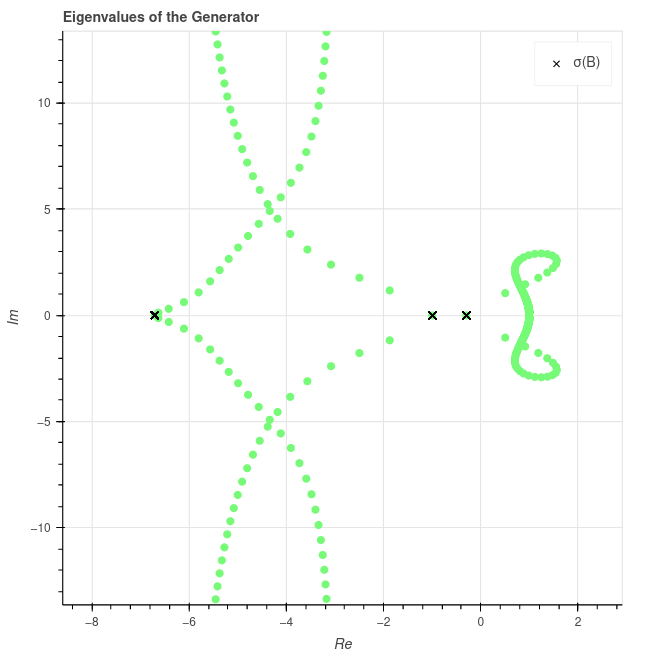}
			\caption[Turing Patterns for $N=3$. Example 4]{An example of the spectrum of $-A+B$ in the case of Turing patterns for $N=3$ with only two moving components.}
			\label{fig:turing_N3_not_moving}
		\end{figure}
	\end{minipage} \hfill
	\begin{minipage}{0.2\textwidth}
		Parameter choices: \newline
		\begin{equation*}
			B = \begin{pmatrix}
			-3 & 2 & -4\\
			-5 & -6 & 2 \\
			-5 & -5 & 1
			\end{pmatrix}
		\end{equation*}
		\begin{align*}
			v_1 &= -0.1 \\
			v_2 &= 0.1 \\
			v_3 &= 0
		\end{align*}
	\end{minipage}

	\subsection{A Simulation for $N=3$}
	
	As already mentioned, the solution formula \eqref{auxeq5} was used for the simulations. We cut off the frequencies at $K = 100,$ i.e. we considered an initial function $u_0$ with
	\begin{equation*}
		\hat{u}_0(k) = 0
	\end{equation*}
	for all $|k| > 100.$ Then, the formula
	\begin{equation*}
		u(t,x) = \sum_{k = - 100}^{100} e^{tM(\indextwo)} \hat{u}_0(\indextwo)
		e^{2\pi i \indextwo x}
	\end{equation*}
	for the solution to \eqref{transport_reaction_eq} with initial function $u_0$ is exact. Regarding the initial function, we perturbed each component $j=1,2,3$ of the constant equilibrium by sampling the Fourier coefficients as random normal variables with
	\begin{equation*}
		\hat{u}_{0,j}(k) \sim \mathcal{N}\left(0, 10^{-4}(k+1)^{-2} \right)
	\end{equation*}
	for $k=0,\cdots,100$ and setting $\hat{u}_{0,j}(-k) = \overline{\hat{u}_{0,j}(k)}$. The computation of the matrix exponential in the solution formula was implemented with \texttt{scipy.linalg.expm} from the SciPy library. \newline
	Concerning the $3$D-plots below, an exponential scaling of the solution is necessary in order to get a reasonable visualization of the time evolution. In both our examples, we plotted the rescaled solution
	\begin{equation}\label{expo_scaling_sol}
		\widetilde{u}(t,x) \coloneqq e^{-\frac{3}{2}t} u(t,x).
	\end{equation}
	Given a specific model, the appropriate rate of the rescaling depends on the maximum time $t_{\text{max}}$ of the simulation and on the growth rate of the solution - the latter can be read off from a plot of the spectrum of the generator. \newline

	The simulation of Turing patterns was performed in the exemplary case of
	\begin{equation}\label{turing_example}
		\partial_t \colvec{u_1 \\ u_2 \\ u_3} + \begin{pmatrix}
		0.1 & 0 & 0 \\
		0 & -0.2 & 0 \\
		0 & 0 & 0.2
		\end{pmatrix} \partial_x \colvec{u_1 \\ u_2 \\ u_3} = \begin{pmatrix}
		-8 & 2 & -9\\
		-5 & -3 & -10 \\
		9 & -9 & -1
		\end{pmatrix} \colvec{u_1 \\ u_2 \\ u_3}.
	\end{equation}
	This parameter choice coincides with the one from \Cref{fig:turing2_N3}. In particular, the spectrum of the generator of the transport-reaction semigroup, see \Cref{fig:turing2_N3}, indicates that Turing patterns emerge. It is probably not easily apparent from \Cref{fig:turing2_N3} but the wave numbers $k=\pm 4$ maximize $\Sigma(k),$ with $\Sigma(k)$ being defined in \eqref{defSigma}. The matrices $M(k)$ are accordingly given by
	\begin{equation*}
		M(k) = - 2\pi i k \begin{pmatrix}
		0.1 & 0 & 0 \\
		0 & -0.2 & 0 \\
		0 & 0 & 0.2
		\end{pmatrix} + \begin{pmatrix}
		-8 & 2 & -9\\
		-5 & -3 & -10 \\
		9 & -9 & -1
		\end{pmatrix},
	\end{equation*}
	see \eqref{Mk_case_N3}. Therefore, the number of peaks of the solution of \eqref{turing_example} for large times should be equal to $4$. Our simulation verifies these theoretical considerations.
	
	\begin{figure}[H]
		\centering
		\includegraphics[width=\textwidth]{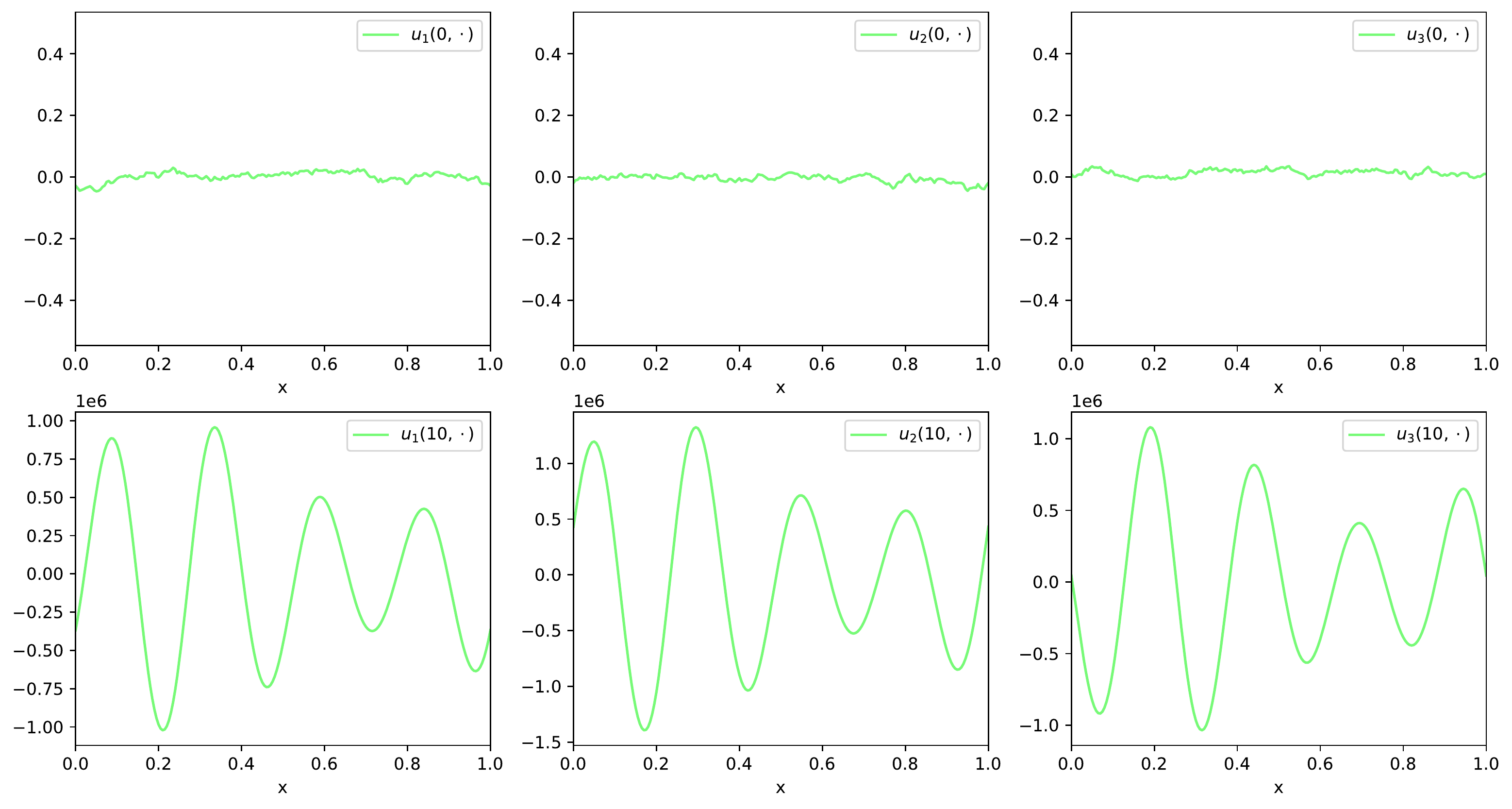}
		\caption[A Simulation of Turing Patterns for $N=3$.]{A Simulation of Turing pattern formation in the exemplary case \eqref{turing_example}. The graphs in the upper row are plots of the three components of the initial function $u_0$. The lower row shows plots of each component of the solution at the time $t=10.$}
		\label{fig:simulation_turing_graphs}
	\end{figure}
	
	\begin{figure}[H]
		\subfloat[The time evolution of $u_1$]{\includegraphics[width = .5\textwidth]{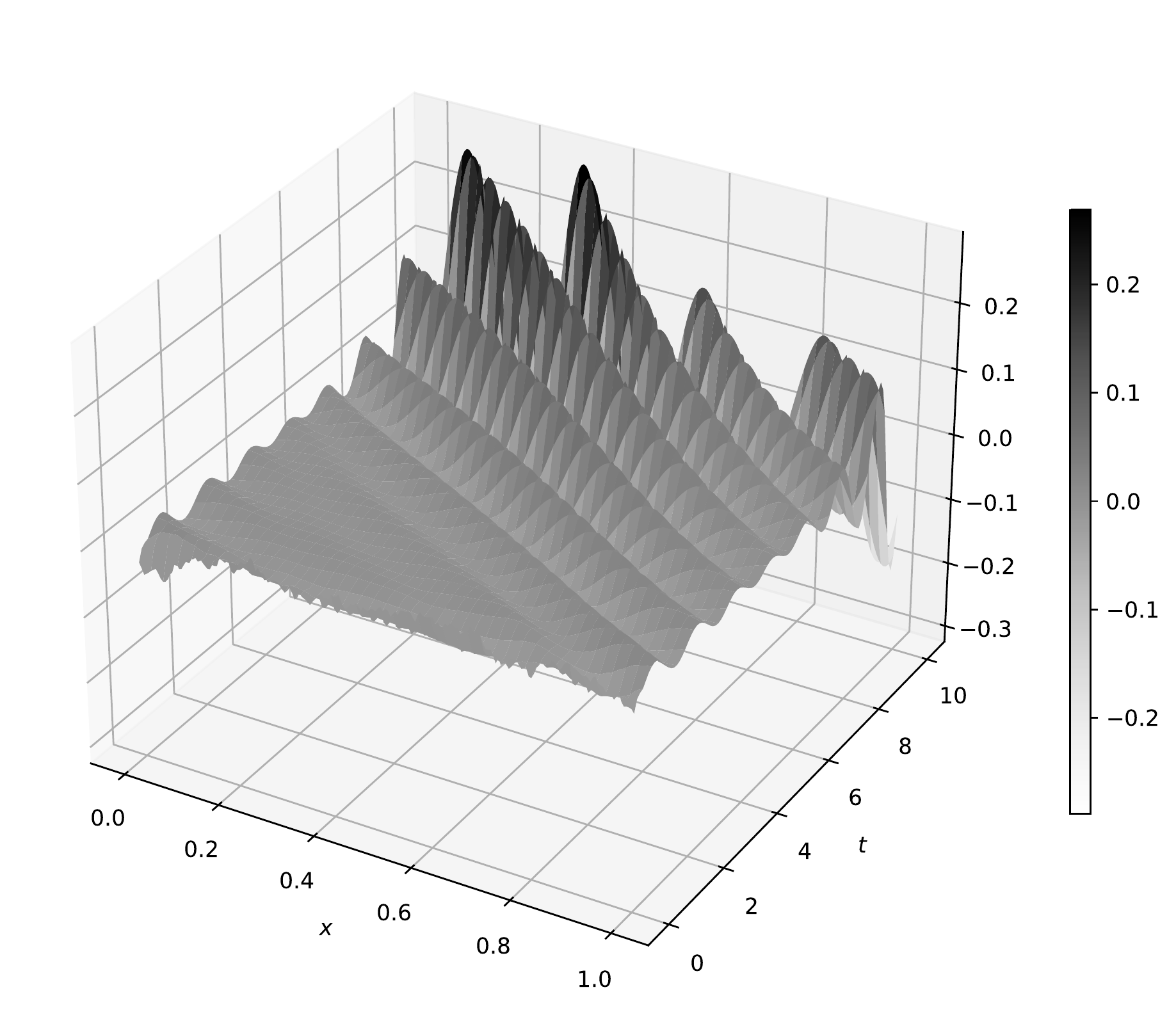}} 
		\subfloat[The time evolution of $u_2$]{\includegraphics[width = .5\textwidth]{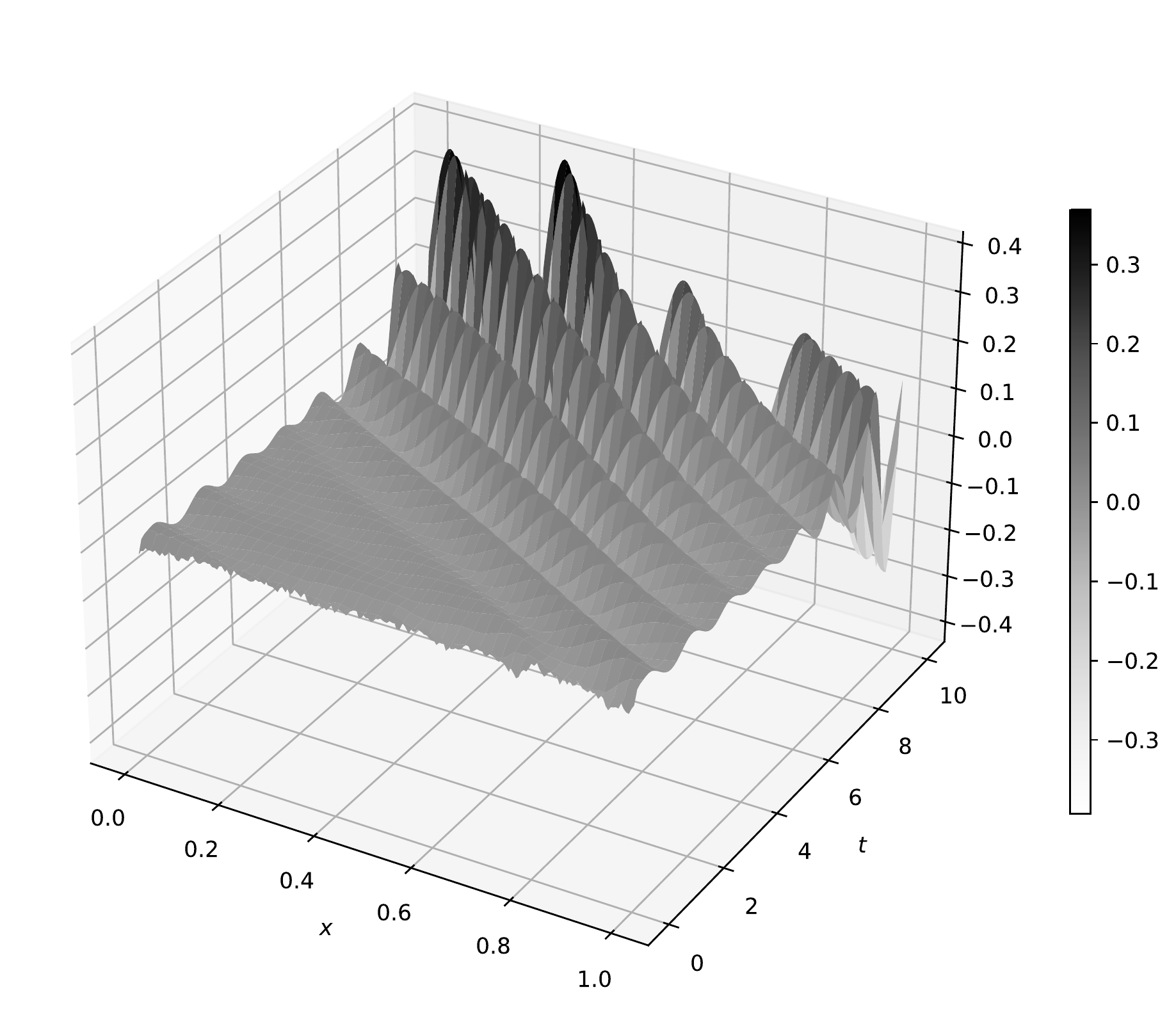}} \\
		\subfloat[The time evolution of $u_3$]{\includegraphics[width = .5\textwidth]{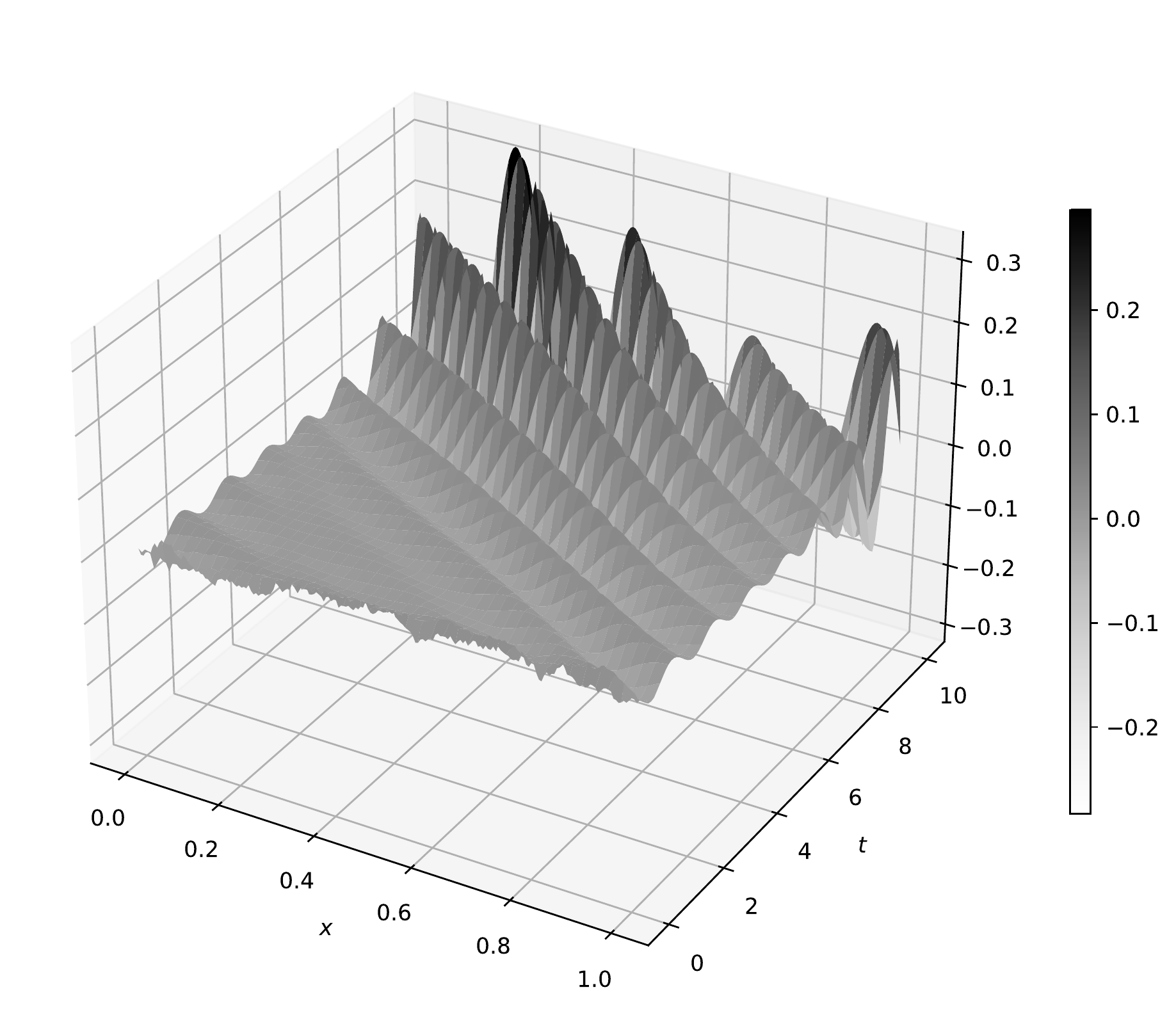}}
		\caption[A Simulation of the Time Evolution of Turing Patterns]{The emerging of Turing patterns in the exemplary case \eqref{turing_example}. Notice that the values of $u$ are scaled, see \eqref{expo_scaling_sol}.}
		\label{time_evo_turing_pattern}
	\end{figure}

	The simulation of hyperbolic instabilities was performed in the exemplary case of
	\begin{equation}\label{hyperbolic_instab_example}
		\partial_t \colvec{u_1 \\ u_2 \\ u_3} + \begin{pmatrix}
		0.1 & 0 & 0 \\
		0 & -0.2 & 0 \\
		0 & 0 & 0.5
		\end{pmatrix} \partial_x \colvec{u_1 \\ u_2 \\ u_3} = \begin{pmatrix}
		-6 & 2 & -9\\
		4 & -10 & -5 \\
		8 & 10 & 2
		\end{pmatrix} \colvec{u_1 \\ u_2 \\ u_3}.
	\end{equation}
	This parameter choice coincides with the one from \Cref{fig:hyperbolic_instab_N3}. In particular, the spectrum of the generator of the transport-reaction semigroup, see \Cref{fig:hyperbolic_instab_N3}, indicates that hyperbolic instabilities emerge. Therefore, the number of peaks of the solution to \eqref{hyperbolic_instab_example} should increase and chaotic, increasingly oscillating behavior should emerge. Indeed, the simulation verifies our theoretical considerations.
	
	\begin{figure}[H]
		\centering
		\includegraphics[width=\textwidth]{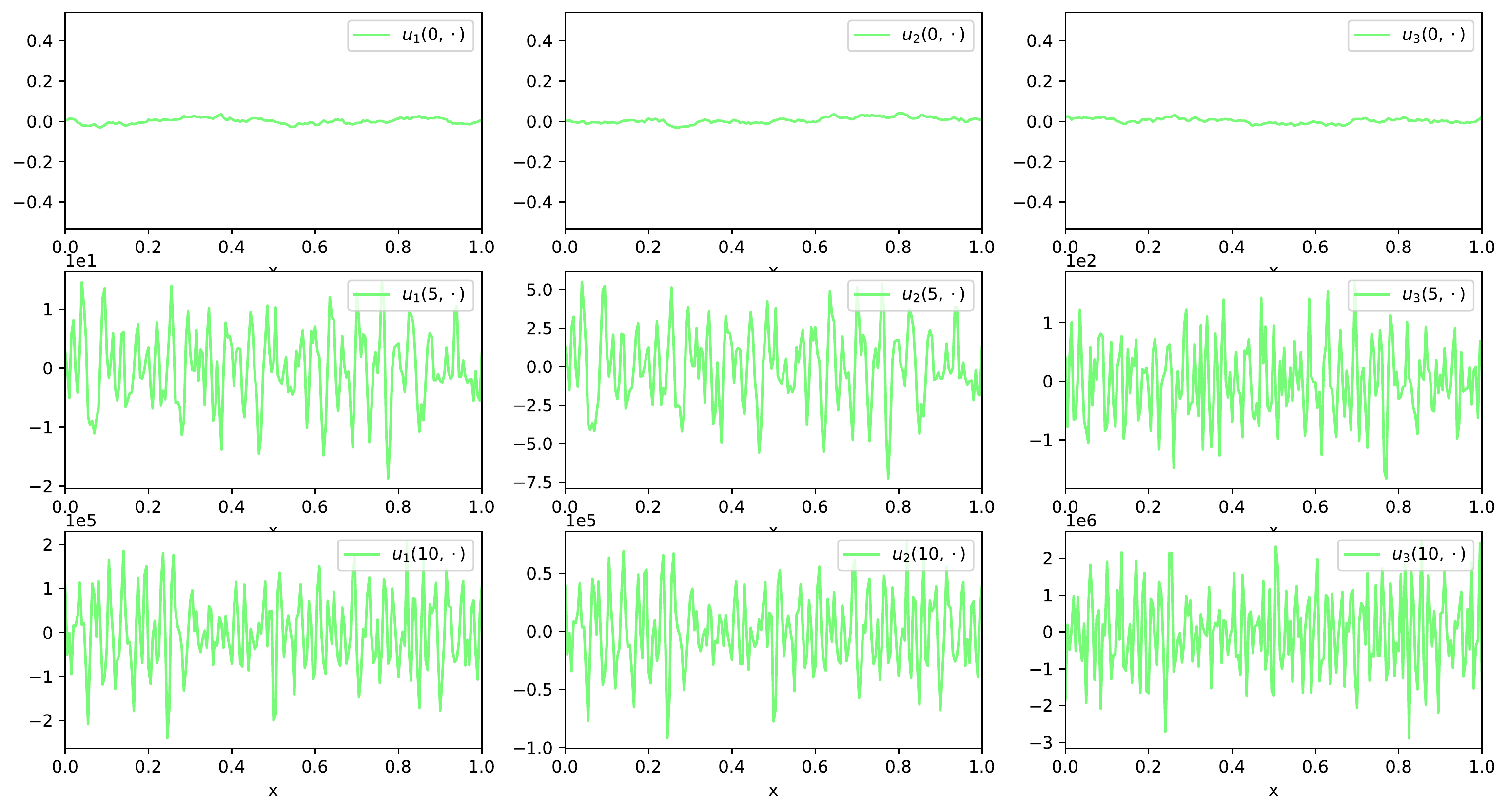}
		\caption[A Simulation of Hyperbolic Instabilities for $N=3$.]{A Simulation of hyperbolic instabilities in the exemplary case \eqref{hyperbolic_instab_example}. The graphs in the upper row are plots of the three components of the initial function $u_0$. The lower rows show plots of each component of the solution at the times $t=5,10.$}
		\label{fig:simulation_hyperbolic_instability}
	\end{figure}
	
	\begin{figure}[H]
		\subfloat[The time evolution of $u_1$]{\includegraphics[width = .5\textwidth]{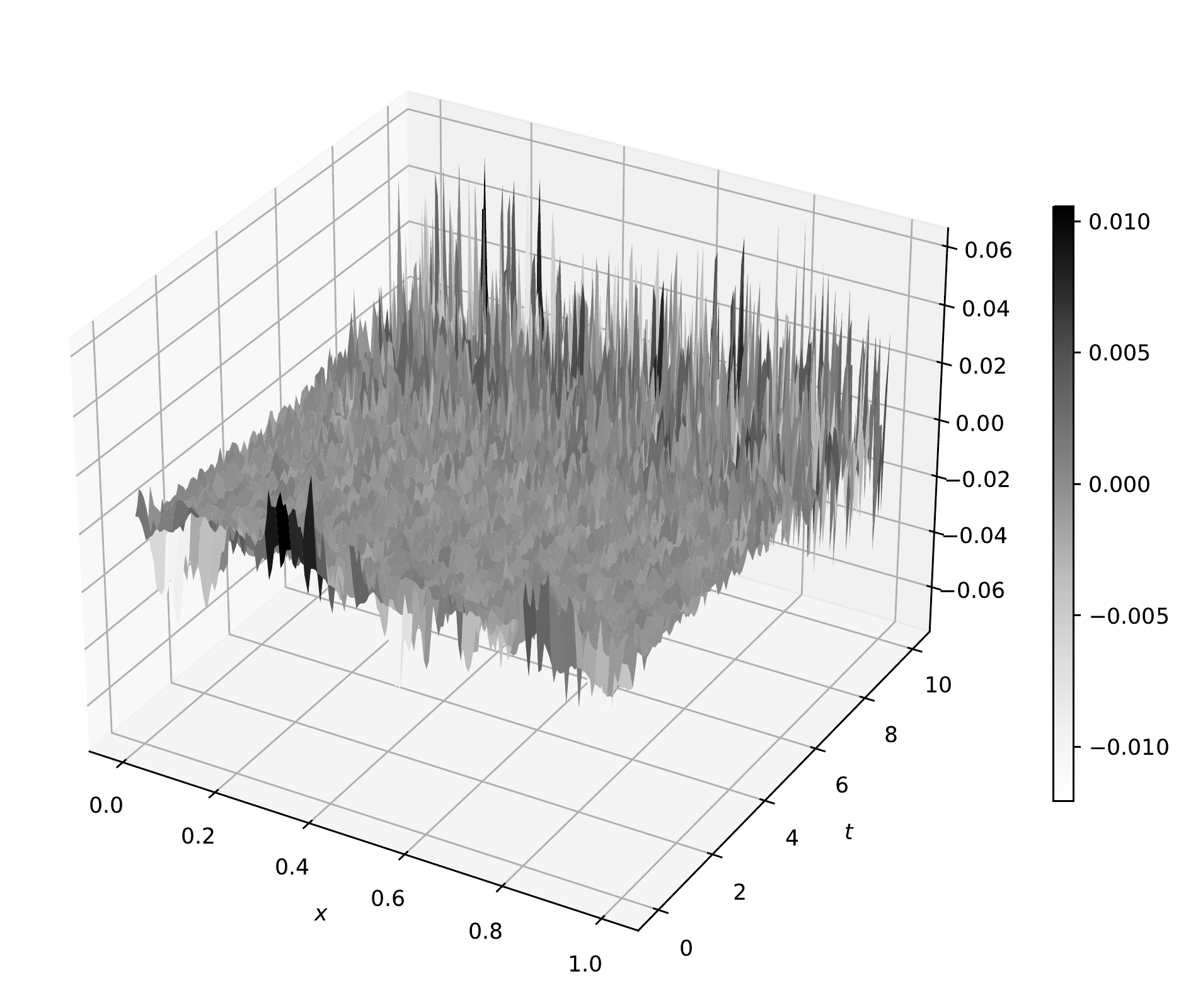}} 
		\subfloat[The time evolution of $u_2$]{\includegraphics[width = .5\textwidth]{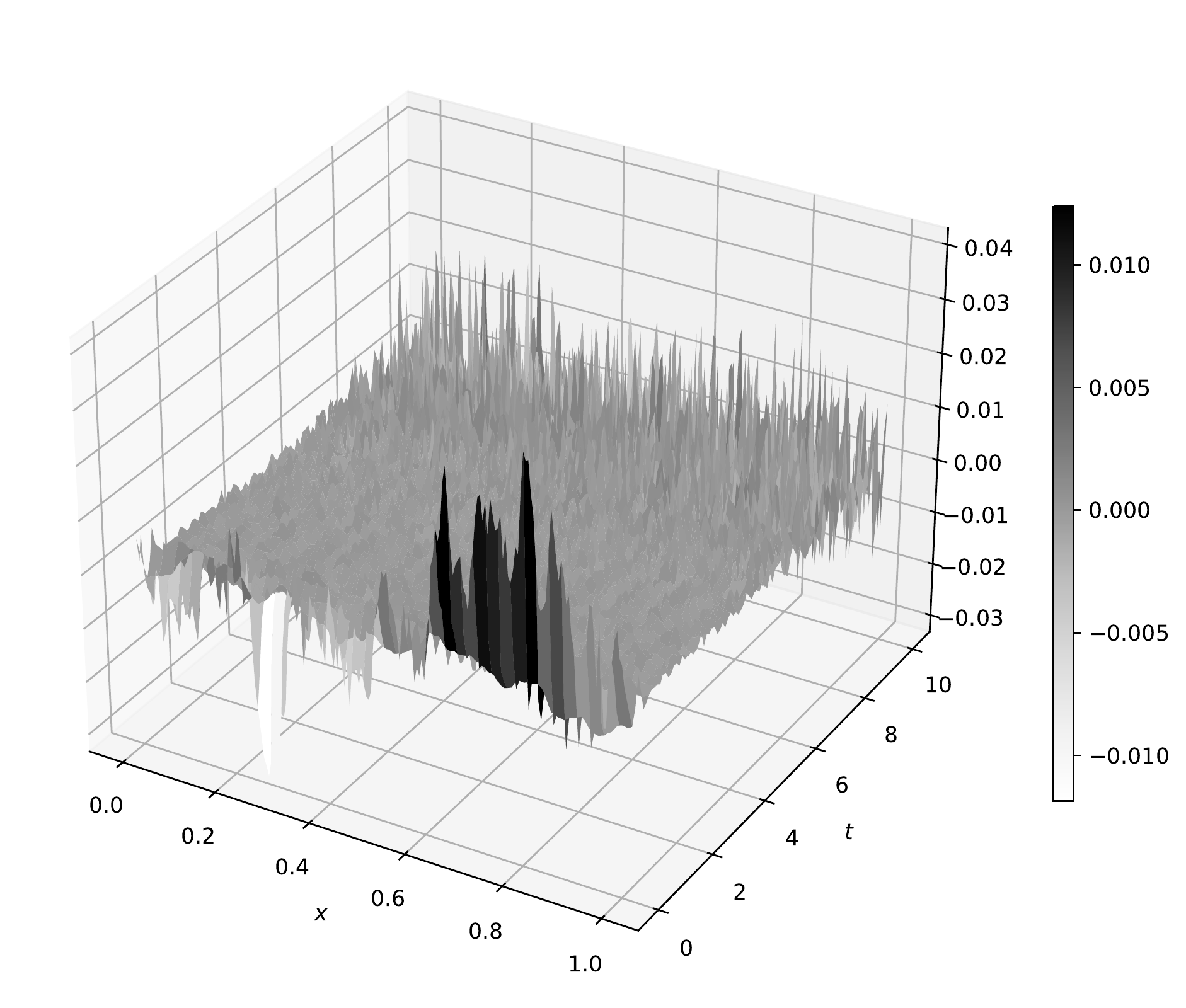}} \\
		\subfloat[The time evolution of $u_3$]{\includegraphics[width = .5\textwidth]{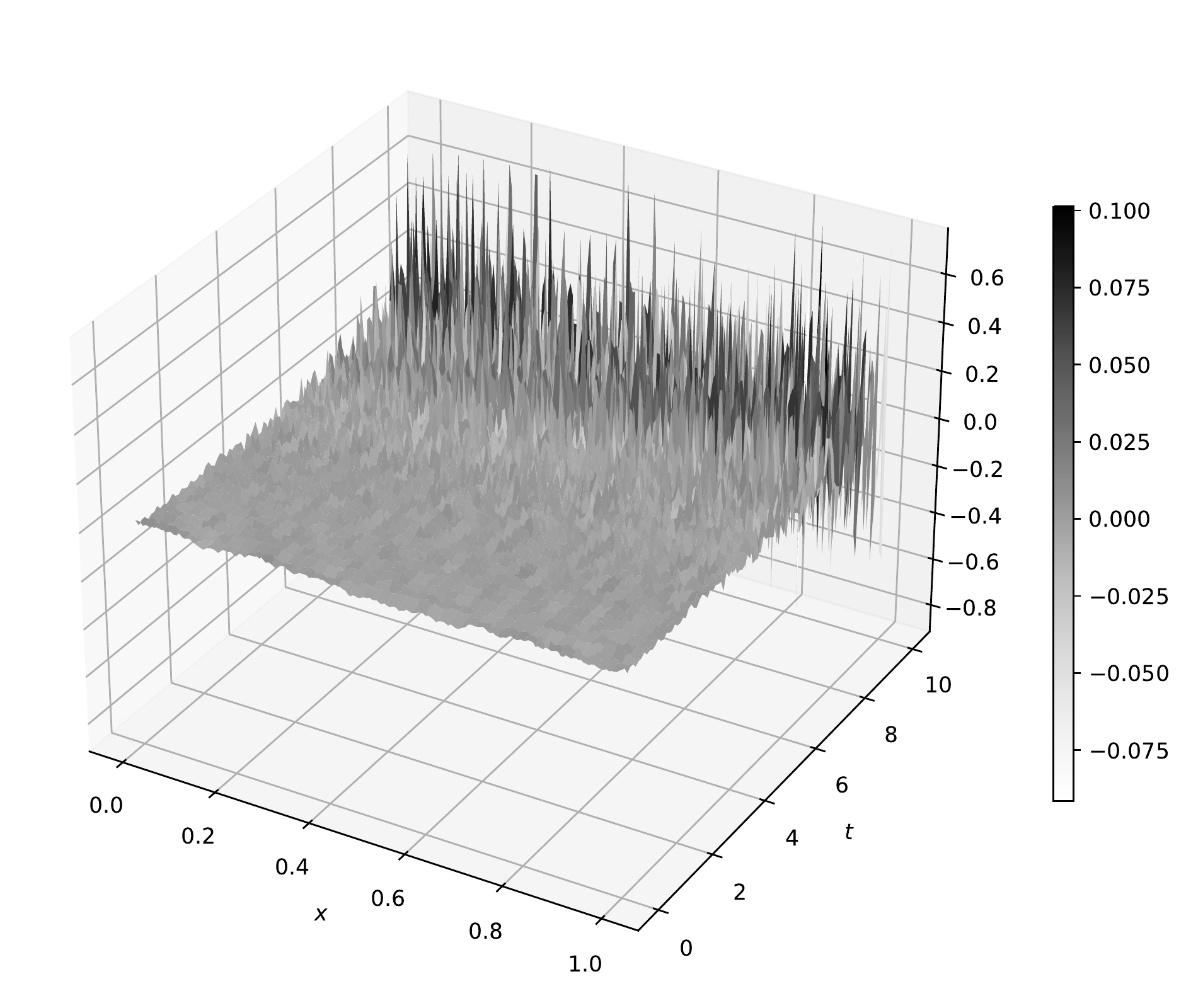}}
		\caption[A Simulation of the Time Evolution of Hyperbolic Instabilities]{The emerging of hyperbolic instabilities in the exemplary case \eqref{hyperbolic_instab_example}. Notice that the values of $u$ are scaled, see \eqref{expo_scaling_sol}.}
		\label{time_evo_hyperbolic_instab}
	\end{figure}

	\begin{appendices}
		
		\vspace*{2cm}
		The purpose of these detailed appendices is to strongly facilitate reading the
		thesis and to write the paper as self-contained as possible.\newline
		We decided to present the most important results from semigroup and spectral
		theory we applied, including standard references and comments for readers
		unfamiliar with semigroup theory. \newline
		
		Our summaries of semigroup and spectral theory are based on the excellent books
		from A. Pazy \cite{pazy2012semigroups} and K.-J. Engel and R. Nagel
		\cite{engel2001one}.
		
		\chapter{Postponed Proofs}\label{appendix_postponed_proofs}
		\renewcommand{\thechapter}{\arabic{chapter}}
		\setcounter{chapter}{2}
		\setcounter{theorem}{1}
		\begin{lemma}
			Let $u \in \Lp(\T^d)$ and let $k=1,\cdots,d.$
			\begin{enumerate}
				\item
				Let $D_\indextwo^h u \rightharpoonup v$ in $\Lp(\T^d)$. Then the
				$\indextwo$-th weak derivative exists in $\Lp(\T^d)$ and $\partial_\indextwo u =
				v.$
				\item
				Conversely, if $u \in \Wkp{1}{p}$, it follows that $D_\indextwo^h u \to
				\partial_\indextwo u$ in $\Lp(\T^d)$ for $p>1$ and $D_\indextwo^h u
				\rightharpoonup \partial_\indextwo u$ for $p=1$.
			\end{enumerate}
			All convergences are with respect to $h \to 0.$
		\end{lemma}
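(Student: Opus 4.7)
For part (i), the plan is a ``discrete integration by parts'' followed by passing to the limit. For any test function $\varphi \in C_c^\infty(\T^d)$, translation invariance of Lebesgue measure on $\T^d$ gives
\begin{equation*}
\int_{\T^d} D_\indextwo^h u \, \varphi \, d\mathbf{x} = -\int_{\T^d} u \, D_\indextwo^{-h}\varphi \, d\mathbf{x}.
\end{equation*}
The left-hand side converges to $\int v\,\varphi$ by the weak convergence hypothesis. Since $\varphi$ is smooth on the compact torus, $D_\indextwo^{-h}\varphi \to \partial_\indextwo\varphi$ uniformly, so the right-hand side converges to $-\int u\,\partial_\indextwo\varphi$. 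Comparing the two limits identifies $v$ with the weak derivative $\partial_\indextwo u$.

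The cornerstone of part (ii) is the a priori estimate
\begin{equation*}
\|D_\indextwo^h u\|_{L^p(\T^d)} \le \|\partial_\indextwo u\|_{L^p(\T^d)} \qquad \text{for every } u \in \Wkp{1}{p}.
\end{equation*}
I would first prove this for smooth $u$ via the representation $D_\indextwo^h u(\mathbf{x}) = h^{-1}\int_0^h \partial_\indextwo u(\mathbf{x}+s\mathbf{e_\indextwo})\,ds$, Jensen's inequality, Fubini, and translation invariance of the torus integral. The bound then extends to all of $\Wkp{1}{p}$ by density of $C^\infty(\T^d)$ together with $L^p$-continuity of $D_\indextwo^h$.

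For $p>1$ strong convergence is obtained by a density argument: given $\eps>0$, choose $u_\eps \in C^\infty(\T^d)$ with $\|u-u_\eps\|_{W^{1,p}(\T^d)}<\eps$ and decompose
\begin{equation*}
\|D_\indextwo^h u - \partial_\indextwo u\|_{L^p(\T^d)} \le \|D_\indextwo^h(u-u_\eps)\|_{L^p(\T^d)} + \|D_\indextwo^h u_\eps - \partial_\indextwo u_\eps\|_{L^p(\T^d)} + \|\partial_\indextwo(u_\eps-u)\|_{L^p(\T^d)}.
\end{equation*}
The first and third terms are each bounded by $\eps$ via the a priori estimate, while the middle term vanishes as $h\to 0$ because $D_\indextwo^h u_\eps \to \partial_\indextwo u_\eps$ uniformly on the compact torus. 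A purely functional-analytic alternative uses reflexivity of $L^p(\T^d)$ to extract a weakly convergent subsequence from $(D_\indextwo^{h_n}u)$ and identifies its limit via part (i), followed by a subsequence-of-subsequences argument to promote subsequential to full weak convergence.

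The main obstacle is $p=1$: since $L^1(\T^d)$ is not reflexive, the weak-compactness shortcut is unavailable, which is precisely the technical difficulty the author flags in the subsequent remark. For the stated weak convergence, the plan is to test against $\psi \in L^\infty(\T^d)$ and mimic the decomposition above with $\|\cdot\|_{L^p(\T^d)}$ replaced by $\|\psi\|_{L^\infty(\T^d)}\|\cdot\|_{L^1(\T^d)}$, concluding by the same two-parameter limit in $\eps$ and $h$. The density argument in fact yields strong $L^1$-convergence, a sharper statement than what is claimed; this sharper fact is then also recovered in the paper via the semigroup route in \Cref{lemma_A_is_an_extension}.
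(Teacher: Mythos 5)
Part (i) matches the paper's argument exactly. Part (ii) is where you diverge, and your route is both simpler and strictly stronger. The paper first establishes the a priori bound $\|D_\indextwo^h u\|_{L^p} \le \|\partial_\indextwo u\|_{L^p}$ the same way you do, but then for $p>1$ it invokes reflexivity to extract a weakly convergent subsequence, identifies the limit via part (i), upgrades to full-sequence weak convergence by a subsubsequence argument, and finally promotes weak to strong convergence via the Radon--Riesz property of the uniformly convex space $L^p$ (cite{brezis2010functional}, Proposition 3.32); for $p=1$ it falls back on Dunford--Pettis and an equiintegrability estimate, obtaining only weak convergence. Your three-term $\eps/3$-decomposition against a smooth approximant avoids reflexivity and uniform convexity entirely, and---as you correctly observe---it delivers strong $L^1$-convergence as well, since the middle term vanishes by uniform convergence of $D_\indextwo^h u_\eps \to \partial_\indextwo u_\eps$ on the compact torus, and uniform convergence implies $L^1$-convergence. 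This is exactly the sharpening the paper only recovers later, via the closedness-of-generator argument in \Cref{lemma_A_is_an_extension}. The one place your write-up could be tightened: you propose a duality argument (testing against $\psi\in L^\infty$) to get the stated \emph{weak} $L^1$-convergence, but this is unnecessary---you already have strong convergence, which trivially implies weak. The density approach is a legitimate and arguably cleaner alternative that the paper could have used in place of its more elaborate functional-analytic machinery; the author seems to have chosen the reflexivity route deliberately to illustrate where $L^1$ becomes awkward and thereby motivate the semigroup-theoretic proof that follows.
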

		\renewcommand{\thechapter}{\Alph{chapter}}
		\setcounter{chapter}{1}
		\begin{proof}[Proof of
			\Cref{lemma_convergence_unit_difference_quotients}]\label{proof_lemma_diff_quot}
			\begin{enumerate}
				%(i)
				\item
				Let $\varphi \in C_c^\infty(\T^d)$ be a test function. A translation yields
				\begin{equation*}
				\intT D_\indextwo^h u (\mathbf{x}) \varphi(\mathbf{x}) \, d\mathbf{x} =
				-\intT u(\mathbf{x}) D_\indextwo^{-h}\varphi(\mathbf{x}) \, d\mathbf{x}
				\end{equation*}
				and by assumption, the left-hand side converges to
				\begin{equation*}
				\intT v(\mathbf{x}) \varphi(\mathbf{x}) \, d\mathbf{x}.
				\end{equation*}
				On the other hand $\varphi$ is smooth, so the difference quotient converges
				and dominated convergence implies
				\begin{equation*}
				\intT v(\mathbf{x}) \varphi(\mathbf{x}) \, d\mathbf{x} =  - \intT
				u(\mathbf{x}) \partial_\indextwo \varphi(\mathbf{x}) \, d\mathbf{x},
				\end{equation*}
				i.e. the $\indextwo$-th weak derivative of $u$ exists and is given by $v$.
				
				%(ii)
				\item
				The converse direction is a little bit more involved and we first prove a
				uniform $\Lp$ bound of the difference quotients for $1 \leq p < \infty$. To this
				end, let us firstly consider $u \in C^1(\T^d).$ By the fundamental theorem of
				calculus
				\begin{equation*}
				\left| \frac{u(\mathbf{x} + h\mathbf{e_\indextwo}) - u(\mathbf{x})
				}{h}\right| \leq \int_{0}^{1} \left|\partial_\indextwo u(\mathbf{x} +
				sh\mathbf{e_\indextwo}) \right| \, ds
				\end{equation*}
				holds for arbitrary $\mathbf{x} \in \T^d.$ We now integrate over the torus,
				apply Jensen's inequality and use Fubini to obtain
				\begin{align*}
				\intT | D_\indextwo^h u (\mathbf{x})|^p \, d\mathbf{x} &\leq \intT \left(
				\int_0^1 |\partial_\indextwo u(\mathbf{x} + sh \mathbf{e_\indextwo})|\, ds
				\right)^p d \mathbf{x} \leq \intT \int_0^1 |\partial_\indextwo u(\mathbf{x} + sh
				\mathbf{e_\indextwo})|^p \, ds d\mathbf{x} \\
				&\leq \int_0^1 \intT |\partial_\indextwo u(\mathbf{x} + sh
				\mathbf{e_\indextwo})|^p \, d\mathbf{x} ds = \pnorm{p}{\partial_\indextwo u}^p. 
				\end{align*}
				For $u \in \Wkp{1}{p},$ the bound follows by the density of $C^1(\T^d)$ with
				respect to $\|\cdot\|_{W^{1,p}(\T^d)}.$ Notice that the difference quotients of
				an approximation sequence $(u^{(n)})_{n\in\N}$ converge for fixed $h$ as
				\begin{equation*}
				\pnorm{p}{D_\indextwo^h u - D_\indextwo^h u^{(n)}} \leq 2 |h|^{-1}
				\pnorm{p}{u - u^{(n)}} \to 0 \qquad \text{ as } n\to\infty.
				\end{equation*}
				This shows
				\begin{equation}\label{auxeq31}
				\|D_\indextwo^h u\|_{L^p(\T^d)} \leq \|\partial_\indextwo u \|_{L^p(\T^d)}
				\end{equation}
				for all $W^{1,p}(\T^d).$ \newline
				In the following, let $1<p<\infty.$ Since $\Lp(\T^d)$ is reflexive in this
				case, we can choose a subsequence (without relabeling) such that $D_\indextwo^h
				u \rightharpoonup v$ in $\Lp(\T^d).$ It follows by (i) that $v =
				\partial_\indextwo u.$ Notice that starting with a subsequence of
				$(D_\indextwo^h u)_h$ implies that every subsequence has a subsequence which
				converges weakly to the same limit in $\Lp(\T^d),$ namely $\partial_\indextwo
				u$. This implies weak convergence for $p>1$. Since \eqref{auxeq31} implies
				\begin{equation*}
				\limsup_{h \to 0} \pnorm{p}{D_\indextwo^h u} \leq
				\pnorm{p}{\partial_\indextwo u},
				\end{equation*}
				strong convergence follows from \cite[Proposition 3.32,
				p. 78]{brezis2010functional} as $\Lp(\T^d)$ is uniformly convex for $p>1.$
				\newline
				Unfortunately, $L^1(\T^d)$ is not reflexive. However, the Dunford-Pettis
				theorem, see \cite[Theorem 4.30, p. 115]{brezis2010functional}, characterizes
				weakly compact sets and it suffices to show that $(D_\indextwo^hu)_h$ is
				equiintegrable. The same subsubsequence argument as before then implies weak
				convergence of the whole sequence. Let $A \in \mathcal{B}(\T^d)$ and let $u \in
				C^1(\T^d).$ Note that we also have
				\begin{equation*}
				\int_A |D_\indextwo^h u(\mathbf{x})| \, d\mathbf{x} \leq \int_0^1 \int_A
				|\partial_\indextwo u(\mathbf{x}+sh\mathbf{e_\indextwo})| \, d\mathbf{x} ds =
				\int_0^1 \int_{A_{s,h}} |\partial_\indextwo u (\mathbf{x})| \, d\mathbf{x} ds
				\end{equation*}
				for arbitrary measurable $A$ and with $A_{s,h}$ being the translated set.
				Now, consider an arbitrary function $u \in \Wkp{1}{1}$ and take a sequence
				$(u^{(n)})_{n\in\N} \subset C^1(\T^d)$ such that $u^{(n)} \to u$ in
				$\Wkp{1}{1}$. In particular, $(\partial_\indextwo u^{(n)})_{n\in\N}$ is
				uniformly integrable and $D_\indextwo^h u^{(n)} \to D_\indextwo^h u$ in
				$L^1(\T^d)$ for every fixed $h$. Let $\varepsilon>0.$ By the uniform
				integrability of the approximation sequence, there exists $\delta > 0$ such that
				\begin{equation*}
				\mathcal{L}^d(B) \leq \delta \quad \implies \quad \sup_{n \in \N} \int_B
				|\partial_\indextwo u^{(n)}| \, d\mathbf{x} \leq \varepsilon
				\end{equation*}
				for all measurable sets $B\in\mathcal{B}(\T^d).$ Now take $A \in
				\mathcal{B}(\T^d)$ with $\mathcal{L}^d(A) \leq \delta$ and fix $h>0.$ It holds
				\begin{equation*}
				\int_A |D_\indextwo^h u(\mathbf{x})| \, d\mathbf{x} = \lim_{n\to\infty}
				\int_A |D_\indextwo^h u^{(n)}(\mathbf{x})| \, d\mathbf{x} \leq
				\limsup_{n\to\infty} \int_0^1 \int_{A_{s,h}} |\partial_\indextwo
				u^{(n)}(\mathbf{x})| \, d\mathbf{x} ds \leq \varepsilon
				\end{equation*}
				due to $\mathcal{L}^d(A_{s,h}) = \mathcal{L}^d(A).$ This estimate is uniform
				in $h$ and hence, it holds
				\begin{equation*}
				\sup_{h\neq 0} \int_A |D_\indextwo^h u| \, d\mathbf{x} \leq \varepsilon,
				\end{equation*}
				i.e. $(D_\indextwo^h u)_h$ is equiintegrable.
			\end{enumerate}
		\end{proof}

		\renewcommand{\thechapter}{\arabic{chapter}}
		\setcounter{chapter}{2}
		\setcounter{theorem}{14}
		\begin{lemma}
			Let $\mathbf{v_1}, \cdots, \mathbf{v_N} \in \R^d$ be non-vanishing, i.e.
			$\mathbf{v_1,} \cdots, \mathbf{v_N} \neq 0.$ Moreover, let $\indexone = 1,
			\cdots, N.$ The following assertions are equivalent:
			\begin{enumerate}
				\item
				Component $j$ is transport periodic.
				\item
				$\mathbf{v_{\indexone,\indextwo}} \in \mathbf{v_{\indexone,\indextwo^*}} \Q$
				for all $\indextwo=1,\cdots,d$ and one (and hence all) $\indextwo^*$ with
				$\mathbf{v_{\indexone,\indextwo^*}} \neq 0.$
				\item
				$ \displaystyle\bigcap^d_{\substack{\indextwo=1 \\
						\mathbf{v_{\indexone,\indextwo}} \neq 0}} \mathbf{v_{\indexone,\indextwo}} \Z
				\neq \{0\}.$
			\end{enumerate}
		\end{lemma}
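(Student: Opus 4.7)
The plan is to establish the cycle (i)$\Rightarrow$(ii)$\Rightarrow$(iii)$\Rightarrow$(i). The content is purely elementary number theory, so I do not anticipate a genuine obstacle; the only mild nuisance will be bookkeeping with sign conventions and least common multiples so that the concluding objects land in exactly the right form.

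First I would prove (i)$\Rightarrow$(ii). Fix any $k^*$ with $\mathbf{v_{j,k^*}} \neq 0$. Since $\mathbf{v_{j,k^*}} \neq 0$ and $t^* > 0$, the integer $t^* \mathbf{v_{j,k^*}} \in \Z$ is nonzero, so for every $k = 1,\dots,d$ the quotient $\mathbf{v_{j,k}}/\mathbf{v_{j,k^*}} = (t^* \mathbf{v_{j,k}})/(t^* \mathbf{v_{j,k^*}})$ is a ratio of two integers, hence rational. The independence of the choice of $k^*$ follows from transitivity of the relation ``having rational ratio''.

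Next I would prove (ii)$\Rightarrow$(iii) by an explicit construction of $x$. Fix $k^*$ as above and, for each $k$ with $\mathbf{v_{j,k}} \neq 0$, write $\mathbf{v_{j,k}}/\mathbf{v_{j,k^*}} = p_k/q_k$ in lowest terms with $p_k \neq 0$. Set $L \coloneqq \operatorname{lcm}\{|p_k| : \mathbf{v_{j,k}} \neq 0\}$ (the index set is finite and, because $\mathbf{v_j} \neq 0$, nonempty) and $x \coloneqq L \mathbf{v_{j,k^*}}$. Then $x \neq 0$, and $x/\mathbf{v_{j,k}} = L q_k / p_k \in \Z$ because $p_k \mid L$; hence $x$ lies in every $\mathbf{v_{j,k}}\Z$ appearing in the intersection.

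Finally, for (iii)$\Rightarrow$(i), I would take a nonzero $x$ from the intersection and write $x = n_k \mathbf{v_{j,k}}$ with $n_k \in \Z \setminus \{0\}$ for each $k$ with $\mathbf{v_{j,k}} \neq 0$. Setting $M \coloneqq \operatorname{lcm}\{|n_k|\}$ and $t^* \coloneqq M/|x|$ produces a positive real number, and a one-line computation gives $t^* \mathbf{v_{j,k}} = (M/n_k)\operatorname{sgn}(x) \in \Z$ for these $k$, while components with $\mathbf{v_{j,k}} = 0$ contribute $0 \in \Z$, so altogether $t^* \mathbf{v_j} \in \Z^d$. The ``hard part'' is really no more than absorbing the sign of $x$ into the factor $M/n_k$ so that $t^*$ comes out positive; this is why I define $t^*$ using $|x|$ rather than $x$.
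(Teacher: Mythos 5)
Your proof is correct and follows the same cyclic route (i)$\Rightarrow$(ii)$\Rightarrow$(iii)$\Rightarrow$(i) with the same elementary number-theoretic constructions as the paper; the only cosmetic differences are that you use least common multiples where the paper uses products of numerators, and you go directly from (iii) to (i) rather than passing back through (ii). Your use of $|x|$ to guarantee $t^* > 0$ is in fact slightly more careful than the paper's analogous step, where positivity of the constructed $t$ is asserted without visible sign bookkeeping.
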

		\begin{proof}[Proof of
			\Cref{lemma_component_transport_periodic_characterization}]\label{proof_lemma_periodicity}
			$(i) \implies (ii)$ If component $\indexone$ is transport periodic, there
			exists $t^*>0$ with $t^*\mathbf{v_\indexone} = (\indexthree_1, \cdots,
			\indexthree_d)^T \in \Z^d.$ Then
			\begin{equation*}
			\mathbf{v_{\indexone,\indextwo}} = \mathbf{v_{\indexone,\indextwo^*}}
			\frac{\indexthree_\indextwo}{\indexthree_{\indextwo^*}} \in
			\mathbf{v_{\indexone,\indextwo^*}} \Q
			\end{equation*}
			for all $\indextwo=1,\cdots,d$ and all $\indextwo^*$ with
			$\mathbf{v_{\indexone,\indextwo^*}} \neq 0.$ \newline
			
			$(ii) \implies (iii)$ By assumption, there exists $\indextwo^*$ with
			$\mathbf{v_{\indexone,\indextwo^*}} \neq 0$ and
			\begin{equation*}
			\mathbf{v_{\indexone,\indextwo}} = \mathbf{v_{\indexone,\indextwo^*}}
			\frac{p_\indextwo}{q_\indextwo}
			\end{equation*}
			for all $\indextwo=1,\cdots,d$ and some $p_\indextwo\in\Z, \, q_\indextwo \in
			\N.$ Take an arbitrary $\indextwo$ with $\mathbf{v_{\indexone,\indextwo}} \neq
			0.$ It follows that
			\begin{equation*}
			0 \neq \mathbf{v_{\indexone,\indextwo^*}} \prod_{\substack{\indexthree=1 \\
					p_\indexthree \neq 0}}^{d} p_\indexthree = \mathbf{v_{\indexone,\indextwo}}
			\bigg(q_\indextwo \prod_{\substack{\indexthree\neq \indextwo \\ p_\indexthree
					\neq 0}} p_\indexthree\bigg) \in \mathbf{v_{\indexone,\indextwo}} \Z.
			\end{equation*}
			
			$(iii) \implies (ii) \implies (i)$ Condition $(iii)$ implies that there exists
			$\lambda \neq 0$ and $p_\indextwo \in \Z$ such that
			\begin{equation*}
			\lambda = \mathbf{v_{\indexone,\indextwo}} p_\indextwo
			\end{equation*}
			for all $\indextwo$ with $\mathbf{v_{\indexone,\indextwo}} \neq 0.$ In this
			case
			\begin{equation*}
			\mathbf{v_{\indexone,\indextwo}} = \frac{\lambda}{p_\indextwo} =
			\mathbf{v_{\indexone,\indextwo^*}} \frac{p_{\indextwo^*}}{p_\indextwo} \in
			\mathbf{v_{\indexone,\indextwo^*}}\Q
			\end{equation*}
			for one (and hence all) $\indextwo^*$ with $\mathbf{v_{\indexone,\indextwo^*}}
			\neq 0.$ For
			\begin{equation*}
			t\coloneqq \frac{1}{\mathbf{v_{\indexone,\indextwo^*}}}
			\prod_{\substack{\indexthree=1 \\ \mathbf{v_\indexone^\indexthree \neq 0}}}^d
			p_\indexthree > 0
			\end{equation*}
			we obtain
			\begin{equation*}
			t \mathbf{v_{\indexone,\indextwo}} = t \mathbf{v_{\indexone,\indextwo^*}}
			\frac{p_{\indextwo^*}}{p_\indextwo} = p_{\indextwo^*}
			\prod_{\substack{\indexthree\neq \indextwo \\ \mathbf{v_\indexone^\indexthree}
					\neq 0}} p_\indexthree \in \Z
			\end{equation*}
			for all $\indextwo$ with $\mathbf{v_{\indexone,\indextwo}} \neq0.$ For the
			other $\indextwo,$ we trivially have $t\mathbf{v_{\indexone,\indextwo}} = 0 \in
			\Z.$
		\end{proof}
		\renewcommand{\thechapter}{\Alph{chapter}}
		\setcounter{chapter}{1}
		
		\renewcommand{\thechapter}{\arabic{chapter}}
		\setcounter{chapter}{7}
		\setcounter{theorem}{5}
		\begin{lemma}
			Consider the setup from \Cref{theorem_perturbed_eigenvalues}. For $j=1,\cdots,N$ and $n=1,2,3$, the coefficients $\hat{\lambda}_\indexone^{(n)} \in \R$ are given by
			\begin{align*}
				\hat{\lambda}_\indexone^{(1)} &= b_{\indexone\indexone}, \\
				\hat{\lambda}_\indexone^{(2)} &= - \sum^N_{\substack{\indexfour=1 \\ \indexfour \neq \indexone}} (v_\indexfour - v_\indexone)^{-1} b_{\indexone \indexfour} b_{\indexfour \indexone}, \\
				\hat{\lambda}_\indexone^{(3)} &= \sum_{\substack{i,l=1 \\ i,l\neq j}}^{N} (v_l-v_j)^{-1}(v_i-v_j)^{-1}b_{jl}b_{li}b_{ij} - \sum_{\substack{i=1 \\ i\neq j}}^{N} (v_i-v_j)^{-2}b_{ij}b_{ji}b_{jj}. \\
			\end{align*}
		\end{lemma}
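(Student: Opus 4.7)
The plan is to apply formula~\eqref{second_formula} from \Cref{theorem_perturbed_eigenvalues} for $n=1,2,3$, collapse equivalent summands using the cyclic invariance of the trace, and then compute the remaining traces explicitly from the representations $P_j = e_j\otimes e_j$ and $S_j = \sum_{i\neq j}(v_i-v_j)^{-1}(e_i\otimes e_i)$. The two facts I will use repeatedly are that $P_j$ projects onto the $j$-th coordinate axis and that $S_j$ and all its powers are diagonal with $(S_j^m)_{ii}=(v_i-v_j)^{-m}$ for $i\neq j$ and $(S_j^m)_{jj}=0$ for $m\ge 1$.

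For $n=1$ the sum contains the single tuple $k_1=0$, so $\hat{\lambda}_j^{(1)}= -\trace(BS_j^{(0)}) = \trace(BP_j) = b_{jj}$. For $n=2$, the tuples are $(0,1)$ and $(1,0)$; by cyclicity both traces agree, yielding $\hat{\lambda}_j^{(2)} = -\trace(BS_jBP_j)$. A direct expansion gives $(BP_j)_{i\ell}=b_{ij}\delta_{\ell j}$, hence $(S_jBP_j)_{i\ell}=(v_i-v_j)^{-1}b_{ij}\delta_{\ell j}$ for $i\neq j$ (and zero at $i=j$), so that the trace evaluates to $\sum_{i\neq j}(v_i-v_j)^{-1}b_{ji}b_{ij}$, delivering the claimed formula.

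For $n=3$ there are six tuples $(k_1,k_2,k_3)$ with $k_\ell\geq 0$ and $k_1+k_2+k_3=2$. Cyclicity partitions them into two orbits of size three: $\{(2,0,0),(0,2,0),(0,0,2)\}$ contributes $3\trace(BP_jBP_jBS_j^2)$ (after absorbing the two signs from $S_j^{(0)}=-P_j$), while $\{(1,1,0),(1,0,1),(0,1,1)\}$ contributes $-3\trace(BS_jBS_jBP_j)$. The prefactor $-1/3$ then yields
\[
\hat{\lambda}_j^{(3)} = \trace(BS_jBS_jBP_j) - \trace(BS_j^2 BP_jBP_j).
\]
Both traces become trivial sums: inserting $P_j=e_j\otimes e_j$ confines one index to $j$ and $S_j,S_j^2$ contribute the weights $(v_i-v_j)^{-1}$ or $(v_i-v_j)^{-2}$. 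The first trace expands to $\sum_{i,l\neq j}(v_l-v_j)^{-1}(v_i-v_j)^{-1}b_{jl}b_{li}b_{ij}$ and the second to $\sum_{i\neq j}(v_i-v_j)^{-2}b_{ij}b_{ji}b_{jj}$, giving the claimed expression.

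The only genuine obstacle is the bookkeeping for $n=3$: one must verify that the combinatorial orbits under cyclic shifts really have size three (this requires $\gcd(\text{shift},3)=1$, which is automatic) and that signs from $S_j^{(0)}=-P_j$ combine correctly. Everything else is mechanical matrix multiplication, which I would relegate to the appendix exactly as the lemma statement already indicates.
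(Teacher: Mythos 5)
Your proposal is correct and follows essentially the same route as the paper: apply formula~\eqref{second_formula}, group the partitions by cyclic symmetry of the trace to reduce to $\hat{\lambda}_j^{(2)}=-\trace(BS_jBP_j)$ and $\hat{\lambda}_j^{(3)}=\trace(BS_jBS_jBP_j)-\trace(BS_j^2BP_jBP_j)$, then evaluate the traces using that $P_j$ and powers of $S_j$ are diagonal (with $S_j$ vanishing on the $j$-th entry). The paper does exactly this, with the trace evaluations relegated to Appendix~\ref{proof_formulas_small_lambdahatjn}; your orbit-size remark is slightly imprecise (the correct observation is simply that no tuple summing to $2$ can have the form $(a,a,a)$, so every cyclic orbit has size $3$), but the conclusion and the computation are right.
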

		\begin{proof}[Proof of
			\Cref{formulas_small_lambda_hat_jn}]\label{proof_formulas_small_lambdahatjn}
			Recall
			\begin{align*}
				\hat{\lambda}_\indexone^{(2)} &= - \trace \big(B S_j B P_j \big), \\
				\hat{\lambda}_\indexone^{(3)} &= \trace \big(B S_j B S_j BP_j\big) - \trace\big( B S_j^2 B P_j BP_j\big),
			\end{align*}
			where $P_j$ and $S_j$ are the matrices
			\begin{equation*}
				P_j = e_j \otimes e_j \quad \text{ and } \quad S_j = \sum^N_{\substack{\indexfour=1 \\ \indexfour \neq \indexone}} (v_\indexfour-v_\indexone)^{-1} (e_\indexfour \otimes e_\indexfour).
			\end{equation*}
			For the following matrices, the column differing from all other columns is always the $j$-th one. We have
			\begin{gather*}
				BP_j = \begin{pmatrix}
					0 & \cdots & 0 & b_{1j} & 0 & \cdots & 0 \\
					\vdots & \vdots & \vdots & \vdots & \vdots & \vdots & \vdots \\
					0 & \cdots & 0 & b_{Nj} & 0 & \cdots & 0
				\end{pmatrix}, \\
				BS_j = \begin{pmatrix}
					(v_1-v_j)^{-1} b_{11} & \cdots & (v_{j-1}-v_j)^{-1} b_{1,j-1} & 0 & (v_{j+1}-v_j)^{-1} b_{1,j+1} & \cdots & (v_N-v_j)^{-1} b_{1N} \\
					\vdots & \vdots & \vdots & \vdots &\vdots &\vdots & \\
					(v_1-v_j)^{-1} b_{N1} & \cdots & (v_{j-1}-v_j)^{-1} b_{N,j-1} & 0 & (v_{j+1}-v_j)^{-1} b_{N,j+1} & \cdots & (v_N-v_j)^{-1} b_{NN}
				\end{pmatrix}, \\
				BS_j BP_j = \begin{pmatrix}
				0 & \cdots & 0 & \sum_{i\neq j} (v_i-v_j)^{-1}b_{1i}b_{ij} & 0 & \cdots & 0 \\
				\vdots & \vdots & \vdots & \vdots & \vdots & \vdots & \vdots \\
				0 & \cdots & 0 & \sum_{i\neq j} (v_i-v_j)^{-1}b_{Ni}b_{ij} & 0 & \cdots & 0
				\end{pmatrix}.
			\end{gather*}
			This implies
			\begin{equation*}
				\hat{\lambda}_\indexone^{(2)} = - \trace \big(B S_j B P_j \big) = - \big(BS_j BP_j\big)_{jj} = - \sum_{i\neq j} (v_i-v_j)^{-1}b_{ji}b_{ij}
			\end{equation*}
			and
			\begin{align}\label{auxiliary_eq43}
				\trace \big(B S_j (B S_j BP_j)\big) &= \big( BS_j BS_j BP_j \big)_{jj} = \sum_{l\neq j} (v_l-v_j)^{-1} b_{jl} \sum_{i\neq j} (v_i-v_j)^{-1} b_{li} b_{ij} \nonumber \\
				&= \sum_{i,l\neq j} (v_l-v_j)^{-1} (v_i-v_j)^{-1} b_{jl} b_{li} b_{ij}.
			\end{align}
			Concerning the computation of $\hat{\lambda}_\indexone^{(3)},$ we also need to compute
			\begin{gather*}
				(BP_j)^2 = \begin{pmatrix}
					0 & \cdots & 0 & b_{1j}b_{jj} & 0 & \cdots & 0 \\
					\vdots & \vdots & \vdots & \vdots & \vdots & \vdots & \vdots \\
					0 & \cdots & 0 & b_{Nj}b_{jj} & 0 & \cdots & 0
				\end{pmatrix}, \\
				BS_j^2 = \begin{pmatrix}
					(v_1-v_j)^{-2} b_{11} & \cdots & (v_{j-1}-v_j)^{-2} b_{1,j-1} & 0 & (v_{j+1}-v_j)^{-2} b_{1,j+1} & \cdots & (v_N-v_j)^{-2} b_{1N} \\
					\vdots & \vdots & \vdots & \vdots &\vdots &\vdots & \\
					(v_1-v_j)^{-2} b_{N1} & \cdots & (v_{j-1}-v_j)^{-2} b_{N,j-1} & 0 & (v_{j+1}-v_j)^{-2} b_{N,j+1} & \cdots & (v_N-v_j)^{-2} b_{NN}
				\end{pmatrix}.
			\end{gather*}
			We obtain
			\begin{align}\label{auxiliary_eq44}
				\trace\big( B S_j^2 B P_j BP_j\big) = \big( B S_j^2 B P_j BP_j \big)_{jj} = \sum_{i\neq j} (v_i - v_j)^{-2} b_{ji} b_{ij} b_{jj}
			\end{align}
			and finally, combining \eqref{auxiliary_eq43} and \eqref{auxiliary_eq44} gives
			\begin{equation*}
				\hat{\lambda}_\indexone^{(3)} = \sum_{i,l\neq j} (v_l-v_j)^{-1} (v_i-v_j)^{-1} b_{jl} b_{li} b_{ij} - \sum_{i\neq j} (v_i - v_j)^{-2} b_{ji} b_{ij} b_{jj}.
			\end{equation*}
		\end{proof}
		\renewcommand{\thechapter}{\Alph{chapter}}
		\setcounter{chapter}{1}
		
		\renewcommand{\thechapter}{\arabic{chapter}}
		\setcounter{chapter}{7}
		\setcounter{theorem}{13}
		\begin{lemma}
			Let $N=2.$ Assume that the transport directions satisfy \Cref{assumption_transport_dir}, i.e. $v_1 \neq v_2.$ Let $\lambda_1(k), \lambda_2(k)$ be the eigenvalues of
			\begin{equation*}
				M(\indextwo) = -2\pi i k V + B.
			\end{equation*} 
			Then $\real\lambda_j(k)$ is eventually constant if and only if $B$ fulfills (at least) one of the following three conditions:
			\begin{enumerate}
				\item
				$b_{12} = 0$,
				\item
				$b_{21} = 0$,
				\item
				$b_{11} = b_{22}$.
			\end{enumerate}
		\end{lemma}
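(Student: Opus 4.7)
The plan is to combine the explicit eigenvalue formula for the $2\times 2$ matrix $M(k)$ with the third-order perturbation coefficient already computed in \Cref{formulas_small_lambda_hat_jn}. A direct computation of the characteristic polynomial of $M(k)$ gives
\begin{equation*}
\lambda_\pm(k) = \half\big[(b_{11}+b_{22}) - 2\pi i k(v_1+v_2)\big] \pm \half\sqrt{D(k)},
\end{equation*}
where the complex discriminant is $D(k) = (s - 2\pi i k w)^2 + 4 b_{12}b_{21}$ with $s \coloneqq b_{11}-b_{22}$ and $w \coloneqq v_1 - v_2 \neq 0$.

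For the sufficiency direction I will check each of the three conditions separately. If $b_{12} = 0$ or $b_{21} = 0$, the matrix $M(k)$ is triangular, so its eigenvalues are precisely the diagonal entries $-2\pi i k v_j + b_{jj}$, whose real parts equal the constants $b_{jj}$ for \emph{every} $k \in \Z$. If instead $b_{11}=b_{22}$, i.e.\ $s = 0$, then $D(k) = -4\pi^2 k^2 w^2 + 4 b_{12}b_{21}$ is real and strictly negative for all sufficiently large $|k|$ (because $w\neq 0$). Hence $\sqrt{D(k)}$ is purely imaginary for such $k$, and consequently $\real \lambda_\pm(k) = \half(b_{11}+b_{22})$, which is constant.

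For the necessity direction I will apply \Cref{formulas_small_lambda_hat_jn} with $N=2$. Both sums occurring in the formula for $\hat{\lambda}_j^{(3)}$ collapse to a single summand (since the only index $i \neq j$ is $i = 3-j$), which yields
\begin{equation*}
\hat{\lambda}_j^{(3)} = (v_i - v_j)^{-2}\, b_{12} b_{21}\, (b_{ii} - b_{jj}), \qquad i \coloneqq 3 - j.
\end{equation*}
If none of the conditions $(i)$, $(ii)$, $(iii)$ holds, then $\hat{\lambda}_j^{(3)} \neq 0$. Therefore the index $n^* = \inf\{ n \in \N \colon \hat{\lambda}_j^{(2n+1)} \neq 0\}$ from the proof of \Cref{theorem_eventually_monotone} equals $1$, and the argument given there shows that $\real\lambda_j(k)$ is eventually strictly monotone, hence not eventually constant.

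The only point requiring a little care is the eventual sign of $D(k)$ in the case $s = 0$, which is immediate once one observes that $w \neq 0$ forces the quadratic-in-$k$ term to dominate. Everything else is bookkeeping with the explicit eigenvalue formula and the already-proved perturbative expansion, so I do not foresee any substantial obstacle.
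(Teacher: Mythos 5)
Your proof is correct and follows essentially the same route as the paper's: for necessity you invoke the third-order coefficient $\hat\lambda_j^{(3)}$ from \Cref{formulas_small_lambda_hat_jn}, which for $N=2$ factors as $(v_i-v_j)^{-2}\,b_{12}b_{21}(b_{ii}-b_{jj})$, and combine this with the dichotomy in the proof of \Cref{theorem_eventually_monotone}; for sufficiency you check $(i)$ and $(ii)$ by triangularity and $(iii)$ by the explicit discriminant $D(k)=(s-2\pi i k w)^2+4b_{12}b_{21}$, which becomes a negative real number for large $|k|$ when $s=0$. This is exactly the paper's argument, only with the discriminant written in a slightly more compact form.
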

	\begin{proof}[Proof of \Cref{eventually_constant_N2}]\label{proof_eventually_constant_N2}
		Wlog, consider $j=1.$ Let the real parts of $\lambda_1(k)$ be eventually constant, i.e. constant for large $|k|.$ In particular, $\hat{\lambda}_1^{(3)} = 0$ has to hold due to the proof of \Cref{theorem_eventually_monotone}. Given the formula for $\hat{\lambda}_1^{(3)}$ from \Cref{formulas_small_lambda_hat_jn} and $N=2$, we obtain
		\begin{align*}
			0 &= \hat{\lambda}_1^{(3)} = \sum_{\substack{i,l=1 \\ i,l\neq 1}}^{2} (v_l-v_1)^{-1}(v_i-v_1)^{-1}b_{1l}b_{li}b_{i1} - \sum_{\substack{i=1 \\ i\neq 1}}^{2} (v_i-v_1)^{-2}b_{i1}b_{1i}b_{11} \\
			&= (v_2-v_1)^{-2} b_{12}b_{22}b_{21} - (v_2-v_1)^{-2}b_{21}b_{12}b_{11} \\
			&= (v_2-v_1)^{-2} b_{12} b_{21} (b_{22}-b_{11}).
		\end{align*}
		Hence, one of the three conditions of $(i), \, (ii)$ and $(iii)$ has to be fulfilled. \newline
		
		\noindent Conversely, if $(i)$ holds true, i.e. $b_{12}=0$, the matrices $M(k)$ read
		\begin{equation*}
			M(k) = -2\pi i k \begin{pmatrix}
			v_1 & 0 \\
			0 & v_2
			\end{pmatrix} + \begin{pmatrix}
			b_{11} & 0 \\
			b_{21} & b_{22}
			\end{pmatrix}
		\end{equation*}
		and the eigenvalues of $M(k)$ are given by
		\begin{equation*}
			\lambda_j(k) = b_{jj} - 2\pi i k v_{j}
		\end{equation*}
		for $j=1,2.$ In particular, their real parts are constantly equal to $b_{jj}.$ In the case $(ii),$ $M(k)$ is an upper triangular matrix and the computation is the same as in $(i).$ \newline
		Given $b_{11} = b_{22} \eqqcolon b,$ the matrices $M(k)$ read
		\begin{equation*}
			M(k) = -2\pi i k \begin{pmatrix}
			v_1 & 0 \\
			0 & v_2
			\end{pmatrix} + \begin{pmatrix}
			b & b_{12} \\
			b_{21} & b
			\end{pmatrix}
		\end{equation*}
		with
		\begin{align*}
			\det(M(k)-\lambda I_{\C^{2\times 2}}) &= (b-2\pi i k v_1 - \lambda )(b-2\pi i k v_2 - \lambda ) - b_{12}b_{21} \\
			&= \lambda^2 - \big( 2b - 2\pi i k(v_1+v_2)\big)\lambda + (b-2\pi i k v_1)(b-2\pi i k v_2) - b_{12}b_{21}
		\end{align*}
		Consequently, the eigenvalues of $M(k)$ are given by
		\begin{align*}
			\lambda_k(k) &= b-\pi i k (v_1+v_2) \pm \sqrt{\big(b-\pi i k (v_1+v_2)\big)^2 - (b-2\pi i k v_1)(b-2\pi i k v_2) + b_{12}b_{21}} \\
			&= b-\pi i k (v_1+v_2) \pm \sqrt{\pi^2 k^2 \big(4v_1 v_2 - (v_1+v_2)^2\big) + b_{12}b_{21} } \\
			&= b-\pi i k (v_1+v_2) \pm \sqrt{-\pi^2 k^2 (v_1-v_2)^2 + b_{12}b_{21}}.
		\end{align*}
		For
		\begin{equation*}
			k^2 > \frac{b_{12}b_{21}}{\pi^2 (v_1-v_2)^2},
		\end{equation*}
		the term under the root is negative and $\real\lambda_j(k) = b.$
	\end{proof}
		\renewcommand{\thechapter}{\Alph{chapter}}
		\setcounter{chapter}{1}

		\chapter{$C_0$-Semigroups, Generators and Abstract Cauchy
			Problems}\label{appendix_semigroup_theory}
		Throughout this section, $(E, \| \cdot \|_E)$ is a complex Banach space and
		$\Omega \subseteq \R^d$ is an open domain. \newline
		
		In this chapter, we give a heuristic introduction of semigroups and explain
		central definitions, generator theorems and solution concepts. \newline
		
		Consider the partial differential equation
		\begin{equation}\label{auxeq29}
			\begin{cases}
			\begin{array}{rrll}
			\partial_t u(t,\mathbf{x}) + Au(t,\mathbf{x}) & = & F(t,\mathbf{x},u(t,\mathbf{x})) & \qquad (t,\mathbf{x})\in
			(0,\infty)\times \Omega, \\
			u(0,\mathbf{x}) & = & u_0(\mathbf{x}) & \qquad \mathbf{x}\in\Omega
			\end{array}
			\end{cases}
		\end{equation}
		for a spacial differential operator $-A$, e.g. the Laplacian $\Delta,$ reasonable boundary conditions on $\partial \Omega$ and a function $F\colon (0,\infty) \times \Omega \times \C \to \C$.\newline
		The basic idea in semigroup theory is to consider the mapping $t\mapsto
		u(t,\cdot) \in E$ for fixed initial value $u_0$ and some Banach space $(E,
		\|\cdot\|_E).$ Then, $u(t) \coloneqq u(t,\cdot) \in E$ is the solution at time
		$t$ and \eqref{auxeq29} is interpreted as an ordinary differential equation on
		$E,$ i.e. as
		\begin{equation}\label{abstract_cp}
			\begin{cases}
			\begin{array}{rrll}
			\dot{u}(t) + Au(t) & = & F(t,u(t)) & \qquad t>0, \\
			u(0) & = & u_0
			\end{array}
			\end{cases}
		\end{equation}
		for a densely defined operator $-A$ on $E$ with domain $D(-A)$  and a function $F\colon (0,\infty) \times E \to E.$ Equation \eqref{abstract_cp} is called
		abstract Cauchy problem and the family of operators $(T(t))_{t\geq0}$ on $E$,
		which maps an initial function $u_0$ to the solution for all
		$t\geq0$, is called the semigroup generated by $(-A, \, D(-A)).$ This family
		should clearly satisfy $T(0) = I_E,$ because the solution at time $t=0$ is the
		initial function itself. Secondly, taking $t,s>0$ and restarting
		\eqref{abstract_cp} after time $s$ with initial function $T(s)u_0$ should give
		the same solution as if we did not restart the equation. Thirdly, the solution
		map $t\mapsto T(t)u_0$ should fulfill a continuity property for every fixed
		initial function. These considerations motivate the definition of a strongly
		continuous semigroup.
		
		\begin{definition}[{\cite[Chapter 1, Definition 1.1, p. 1 and Definition 2.1,
				p. 4]{pazy2012semigroups}}]
			A one parameter family $(T(t))_{t\geq0}$ of bounded linear operators from $E$
			into $E$ is a strongly continuous semigroup if
			\begin{enumerate}
				\item
				$T(0) = I_E,$
				\item
				$T(t+s) = T(t)T(s) \quad \text{ for all } t,s>0,$
				\item
				$\displaystyle\lim_{t \searrow 0} T(t)u = u \quad \text{ for every } u\in E.$
			\end{enumerate}
		\end{definition}
		\begin{remark}
			Strongly continuous semigroups are often called $C_0$-semigroups and $(ii)$ is
			the so called semigroup property.
		\end{remark}
		
		\begin{definition}[{\cite[Chapter 1, Definition 1.1, p. 1]{pazy2012semigroups}}]\label{def_generator}
			Let $(T(t))_{t\geq0}$ be a strongly continuous semigroup. The linear operator
			$(-A, \, D(-A))$ defined by
			\begin{equation*}
			D(-A) = \bigg\{ u\in E \, \colon \, \lim_{t \searrow 0} \frac{T(t)u-u}{t}
			\text{ exists}\bigg\}
			\end{equation*}
			and
			\begin{equation*}
			-Au = \lim_{t \searrow 0} \frac{T(t)u-u}{t} \quad \text{ for } u\in D(-A)
			\end{equation*}
			is the infinitesimal generator of the semigroup, $D(-A)$ is the domain of
			$-A.$
		\end{definition}
		
		In applications, one almost always starts with an abstract Cauchy problem
		\eqref{abstract_cp} and well-posedness comes down to the question, whether $-A$
		generates a $C_0$-semigroup. The domain of the operator has to be chosen
		carefully and should take the boundary values of the PDE into account. With this
		in mind, criteria which characterize infinitesimal generators of
		$C_0$-semigroups are extremely useful and are given through the theorems of
		Hille-Yosida and Lumer-Phillips.
		
		\begin{theorem}[{\cite[Chapter 1, Theorem 2.2, p. 4]{pazy2012semigroups}}]
			Let $(T(t))_{t\geq0}$ be a $C_0$-semigroup. There exists constants
			$\omega\geq0$ and $M\geq1$ such that
			\begin{equation}\label{auxeq30}
			\|T(t)\|_{\mathcal{L}(E)} \leq M e^{\omega t} \quad \text{ for all } t\geq 0.
			\end{equation}
		\end{theorem}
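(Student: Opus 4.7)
The plan is the classical two-step argument: first establish a uniform bound on $\|T(t)\|_{\mathcal{L}(E)}$ over some small interval $[0,\delta]$, and then use the semigroup property to bootstrap this to exponential growth on $[0,\infty)$.

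For the first step, I would argue by contradiction that there exists $\delta>0$ and $M\geq 1$ with $\|T(t)\|_{\mathcal{L}(E)} \leq M$ for all $t\in[0,\delta]$. If no such $\delta$ exists, one can find a sequence $t_n \searrow 0$ with $\|T(t_n)\|_{\mathcal{L}(E)} \to \infty$. By the uniform boundedness principle, this forces existence of some $u\in E$ with $\|T(t_n)u\|_E$ unbounded. But strong continuity at $t=0$ gives $T(t_n)u \to u$ in $E$, so $(T(t_n)u)_{n\in\N}$ must be bounded, a contradiction. Replacing $M$ by $\max(M,1)$ if necessary, we may assume $M\geq 1$, which is consistent with $T(0)=I_E$ forcing $\|T(0)\|_{\mathcal{L}(E)}=1$ (unless $E=\{0\}$, in which case the result is trivial).

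For the second step, given any $t\geq 0$, I would write $t=n\delta+r$ with $n\in\N_0$ and $r\in[0,\delta)$ and use the semigroup property to compute
\begin{equation*}
    \|T(t)\|_{\mathcal{L}(E)} = \|T(r)T(\delta)^n\|_{\mathcal{L}(E)} \leq \|T(r)\|_{\mathcal{L}(E)} \|T(\delta)\|_{\mathcal{L}(E)}^n \leq M\cdot M^n = M^{n+1}.
\end{equation*}
Setting $\omega \coloneqq \max\{0, \tfrac{\ln M}{\delta}\} \geq 0$, we get $M^{n+1} = M\,e^{n\delta\omega} \leq M\,e^{\omega t}$, since $n\delta \leq t$ and $\omega\geq 0$. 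This yields the desired estimate \eqref{auxeq30}.

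The main obstacle is the local boundedness on $[0,\delta]$; everything else is a clean algebraic consequence of the semigroup property. The subtlety there is that strong continuity at $0$ is only pointwise, so passing to uniform boundedness of operator norms genuinely requires invoking the Banach-Steinhaus (uniform boundedness) theorem together with the completeness of $E$. Once this local bound is in hand, the extension to $[0,\infty)$ and the exponential form of the bound are routine.
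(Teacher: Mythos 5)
Your proof is correct and matches the standard argument from Pazy that the paper cites for this theorem (the paper itself does not reprove it). The contradiction via Banach--Steinhaus to get local boundedness on a small interval, followed by the Euclidean division $t = n\delta + r$ and the semigroup property, is exactly the argument in \cite[Chapter 1, Theorem 2.2]{pazy2012semigroups}; the one cosmetic remark is that once you replace $M$ by $\max(M,1)$ the extra $\max$ in the definition of $\omega$ is redundant, since $\ln M \geq 0$ automatically.
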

		
		\begin{definition}[{\cite[Chapter 1, p. 8]{pazy2012semigroups}}]
			A strongly continuous semigroup $(T(t))_{t\geq0}$ which fulfills the estimate
			\eqref{auxeq30} with $\omega=0$ is called uniformly bounded and if moreover
			$M=1$, it is called a semigroup of contractions.
		\end{definition}
		
		\begin{theorem}[Hille-Yosida, {\cite[Chapter 1, Theorem 5.3, p. 20]{pazy2012semigroups}}]\label{hille-yosida}
			A linear operator $(-A, \, D(-A))$ is the infinitesimal generator of a
			$C_0$-semigroup $(T(t))_{t\geq0}$, satisfying $\|T(t)\|_{\mathcal{L}(E)} \leq M
			e^{\omega t},$ if and only if
			\begin{enumerate}
				\item
				$-A$ is closed and $D(-A)$ is dense in $E$,
				\item
				The resolvent set $\rho(-A)$ of $-A$ contains the ray $(\omega,\infty)$ and
				\begin{equation}\label{resolvent_est}
				\|R(\lambda, \, -A)^n \|_{\mathcal{L}(E)} \leq \frac{M}{(\lambda-\omega)^n}
				\quad \text{ for all } \lambda > \omega \text{ and } n\in\N,
				\end{equation}
				where $R(\lambda, \, -A)=(\lambda+A)^{-1}$ is the resolvent.
			\end{enumerate}
		\end{theorem}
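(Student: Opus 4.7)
The plan is to prove the equivalence in two directions, following the classical Hille--Yosida argument via Yosida approximations.

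For the necessity direction, I would first establish density of $D(-A)$ by observing that for every $u\in E$ and $t>0$, the Ces\`aro average $u_t := \frac{1}{t}\int_0^t T(s)u\,ds$ lies in $D(-A)$ (since $-A u_t = \frac{T(t)u - u}{t}$ is well defined by the semigroup property) and converges to $u$ as $t\searrow 0$ by strong continuity. Closedness would follow from the identity $T(t)u - u = \int_0^t T(s)(-A)u\,ds$ for $u\in D(-A)$: given $u_n \to u$ with $-Au_n \to v$, passing to the limit yields $T(t)u-u = \int_0^t T(s)v\,ds$, and dividing by $t$ and taking $t\searrow 0$ shows $u\in D(-A)$ with $-Au=v$. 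For the resolvent, I would introduce the Laplace transform $R(\lambda)u := \int_0^\infty e^{-\lambda t}T(t)u\,dt$ for $\lambda>\omega$, absolutely convergent by the growth estimate, and verify directly that it inverts $\lambda I + A$, so $(\omega,\infty)\subseteq \rho(-A)$ with $R(\lambda,-A)=R(\lambda)$. The estimate on powers then falls out of differentiating $n-1$ times under the integral, giving $R(\lambda,-A)^n u = \frac{1}{(n-1)!}\int_0^\infty t^{n-1}e^{-\lambda t}T(t)u\,dt$, whose norm is dominated by $M\int_0^\infty t^{n-1}e^{-(\lambda-\omega)t}\,dt/(n-1)! = M/(\lambda-\omega)^n$.

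For the sufficiency, the plan is to construct the semigroup from the abstract operator by approximation. First I would reduce to the case $\omega=0$ by passing from $-A$ to $-A-\omega I$, which only rescales the semigroup bound by $e^{\omega t}$ and preserves the resolvent estimate in the shifted form. Then I would define the Yosida approximations $(-A)_\lambda := -\lambda I + \lambda^2 R(\lambda,-A) = \lambda(-A)R(\lambda,-A)$ for $\lambda>0$. These are bounded linear operators on $E$, so each generates a uniformly continuous semigroup $T_\lambda(t) := e^{t(-A)_\lambda}$. Three facts are then crucial and to be verified in order: first, that $\|T_\lambda(t)\|_{\mathcal{L}(E)}\leq M$ uniformly in $\lambda,t$, which I would obtain by expanding $T_\lambda(t) = e^{-\lambda t}e^{\lambda^2 t R(\lambda,-A)}$ and applying the power series together with the bound on $\|R(\lambda,-A)^n\|$; second, that for $u\in D(-A)$ one has $(-A)_\lambda u = \lambda R(\lambda,-A)(-A)u \to -Au$ as $\lambda\to\infty$, which reduces to showing $\lambda R(\lambda,-A)u \to u$ on $D(-A)$ (and hence everywhere by density and the uniform bound); third, that $\{T_\lambda(t)u\}_\lambda$ is Cauchy as $\lambda\to\infty$, uniformly on compact time intervals, for every $u\in D(-A)$. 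The Cauchy property I would obtain from the identity $T_\lambda(t)u - T_\mu(t)u = \int_0^t \frac{d}{ds}\bigl(T_\mu(t-s)T_\lambda(s)u\bigr)\,ds = \int_0^t T_\mu(t-s)T_\lambda(s)\bigl((-A)_\lambda - (-A)_\mu\bigr)u\,ds$ combined with the uniform bound and the convergence of the approximations. Density of $D(-A)$ and the uniform bound of $M$ then extend the strong limit $T(t)u := \lim_{\lambda\to\infty} T_\lambda(t)u$ to all of $E$, giving a $C_0$-semigroup satisfying $\|T(t)\|\leq M e^{\omega t}$ (after undoing the rescaling). Finally, I would identify the generator of $T(t)$ with $-A$ by passing to the limit in $T_\lambda(t)u - u = \int_0^t T_\lambda(s)(-A)_\lambda u\,ds$ for $u\in D(-A)$, obtaining $T(t)u-u = \int_0^t T(s)(-A)u\,ds$, and using closedness together with uniqueness of the generator (via injectivity of $\lambda I + A$ on the domain) to conclude equality with the hypothesized operator.

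The main obstacle is the sufficiency direction, and within it the simultaneous use of the estimate on \emph{all} powers of $R(\lambda,-A)$ rather than just $R(\lambda,-A)$ itself: this is precisely what is needed to obtain the uniform bound $\|T_\lambda(t)\|\leq M$ through the power series expansion, and without it one would only get $\|T_\lambda(t)\|\leq M^2 e^{\text{stuff}}$ with a constant that blows up. A second delicate point is verifying that the limit operator really is $-A$ and not merely an extension, which is where the resolvent estimate reenters through the uniqueness clause.
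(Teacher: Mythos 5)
Your proposal is correct and follows the classical Yosida-approximation argument, which is precisely the proof given in the cited reference (Pazy, Chapter~1, Theorem~5.3); the paper itself states this theorem without proof, deferring to that source. Both directions — the Laplace-transform representation of the resolvent with the $n$-fold differentiation trick for the necessity, and the Yosida approximations $(-A)_\lambda = \lambda^2 R(\lambda,-A) - \lambda I$ with the telescoping Cauchy estimate for the sufficiency — match the standard treatment, and your closing observation about identifying the generator via injectivity of $\lambda I + A$ correctly handles the subtle point of ruling out a proper extension.
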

		\begin{remark}
			This general version of the Hille-Yosida theorem is mainly of theoretical
			importance. In practice, the estimate \eqref{resolvent_est} is hard to check for
			all $n\in\N.$ For contraction semigroups, there is a version of the Hille-Yosida
			theorem which avoids this problem, see \cite[Chapter 1, Theorem 3.1, p. 8]{pazy2012semigroups}.
		\end{remark}
		
		\begin{definition}[{\cite[Chapter 1, Definition 4.1, p. 13 and Theorem 4.2,
				p. 14]{pazy2012semigroups}}]
			A linear operator $(-A, \, D(-A))$ is dissipative if
			\begin{equation*}
			\|(\lambda I_E + A)u \|_E \geq \lambda \|u\|_E \quad \text{ for all } u\in
			D(-A) \text{ and all } \lambda > 0.
			\end{equation*}
		\end{definition}
		
		\begin{theorem}[Lumer-Phillips{\cite[Chapter 1, Theorem 4.3, p. 14]{pazy2012semigroups}}]\label{lumer_phillips}
			Let $(-A, \, D(-A))$ be a linear operator with dense domain $D(-A)$ in $E.$
			\begin{enumerate}
				\item
				If $-A$ is dissipative and there is a $\lambda_0>0$ such that the range
				$\range(\lambda_0I_E+A)$ of $\lambda_0I_E+A$ is $E$, then $-A$ is the
				infinitesimal generator of a $C_0$-semigroup of contractions.
				\item
				Conversely, if $-A$ is the infinitesimal generator of a $C_0$-semigroup of
				contractions on $E,$ then $\range(\lambda I_E + A) = E$ for all $\lambda>0$ and $-A$ is dissipative.
			\end{enumerate}
		\end{theorem}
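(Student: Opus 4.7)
The plan is to derive both directions from the Hille-Yosida \Cref{hille-yosida} applied with constants $M=1$ and $\omega=0$. For (i), dissipativity together with the range hypothesis will be leveraged to show that $(0,\infty) \subseteq \rho(-A)$ with the bound $\|R(\lambda,-A)\|_{\mathcal{L}(E)} \leq 1/\lambda$; this immediately gives $\|R(\lambda,-A)^n\|_{\mathcal{L}(E)} \leq 1/\lambda^n$, which is precisely the Hille-Yosida resolvent estimate with $M=1$, $\omega=0$. For (ii), applying Hille-Yosida to a contraction semigroup already delivers the resolvent bound, from which both dissipativity and surjectivity of $\lambda I_E+A$ follow directly.

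For direction (i), I would first observe that dissipativity reads $\|(\lambda I_E+A)u\|_E \geq \lambda\|u\|_E$ for all $u\in D(-A)$ and $\lambda>0$, so $\lambda I_E+A$ is always injective. The assumption $\range(\lambda_0 I_E+A)=E$ then turns $\lambda_0 I_E+A \colon D(-A) \to E$ into a bijection whose inverse $R(\lambda_0, -A)$ is defined on all of $E$ with operator norm at most $1/\lambda_0$. Boundedness of this inverse forces $-A$ to be closed, establishing the first condition of Hille-Yosida; density of $D(-A)$ is already assumed.

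The main obstacle is upgrading from $\lambda_0 \in \rho(-A)$ to the full ray $(0,\infty) \subseteq \rho(-A)$. I would argue by connectedness: the set $\Lambda = \{\lambda>0 \, \colon \, \lambda \in \rho(-A)\}$ is nonempty and, as the resolvent set of a closed operator is open, open in $(0,\infty)$. For closedness in $(0,\infty)$, take $\lambda_n \in \Lambda$ with $\lambda_n \to \lambda > 0$; by dissipativity $\|R(\lambda_n,-A)\|_{\mathcal{L}(E)} \leq 1/\lambda_n$, so these resolvents are uniformly bounded as $n\to\infty$. A standard Neumann series argument then shows that the open disc of radius $1/\|R(\lambda_n,-A)\|_{\mathcal{L}(E)}$ around $\lambda_n$ lies in $\rho(-A)$, which forces $\lambda \in \rho(-A)$ once $\lambda_n$ is sufficiently close. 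Connectedness of $(0,\infty)$ gives $\Lambda=(0,\infty)$, the dissipativity bound propagates to the entire ray, and an application of \Cref{hille-yosida} completes the argument.

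For the converse (ii), \Cref{hille-yosida} applied to the contraction semigroup yields $(0,\infty) \subseteq \rho(-A)$ with $\|R(\lambda,-A)\|_{\mathcal{L}(E)} \leq 1/\lambda$, so $\range(\lambda I_E+A)=E$ for all $\lambda>0$. Dissipativity then follows by setting $v=(\lambda I_E+A)u$ for $u\in D(-A)$ and using $u=R(\lambda,-A)v$ to obtain $\|u\|_E \leq (1/\lambda)\|(\lambda I_E+A)u\|_E$, i.e.\ $\|(\lambda I_E+A)u\|_E \geq \lambda \|u\|_E$.
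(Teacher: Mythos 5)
The paper does not prove Lumer--Phillips; it simply cites it from Pazy. Your proof is correct, and it is in fact the standard route that Pazy himself takes: reduce both directions to Hille-Yosida with $M=1$, $\omega=0$, and for part (i) use the dissipativity estimate $\|R(\lambda,-A)\|_{\mathcal{L}(E)} \leq 1/\lambda$ together with an open-and-closed (Neumann-series) argument on $\{\lambda > 0 \colon \lambda \in \rho(-A)\}$ to propagate the single point $\lambda_0$ to the whole ray $(0,\infty)$. The details are all right: injectivity from dissipativity, closedness of $-A$ inferred from the boundedness of the inverse on all of $E$, the radius $1/\|R(\lambda_n,-A)\|_{\mathcal{L}(E)} \geq \lambda_n$ for the resolvent disc, and the converse direction unwinding $\|R(\lambda,-A)\| \leq 1/\lambda$ back into the dissipativity inequality. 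No gaps.
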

		
		After ensuring that $(-A, \, D(-A))$ generates a strongly continuous semigroup,
		the abstract Cauchy Problem \eqref{abstract_cp} can be ``solved". There are
		specific solution concepts for semigroup theory, since \eqref{auxeq29} was
		reformulated into a Banach space setting and we state the basic concepts and
		results. \newline
		\indent	Let $T>0,$ possibly $T=\infty,$ and let $I=[0,T) \subseteq \R_{\geq0}	$
		be an interval. For simplicity, we start with the cases $F\equiv 0$ and
		$F(t,u)=F(t)$ in order to obtain a feeling for abstract Cauchy problems. In the
		homogeneous case, the abstract Cauchy problem \eqref{abstract_cp} on $I=\R_{\geq0}$ reads
		\begin{equation}\label{abstract_cp_homo}
			\begin{cases}
			\begin{array}{rrll}
			\dot{u}(t) + Au(t) & = & 0 & \qquad t>0, \\
			u(0) & = & u_0
			\end{array}
			\end{cases}
		\end{equation}
		and the next theorem validates the concept of semigroups.
		
		\begin{theorem}[{\cite[Chapter 4, Theorem 1.3, p. 102]{pazy2012semigroups}}]\label{existence_homogeneous_eq}
			Let $(-A, \, D(-A))$ be a densely defined linear operator with non-empty
			resolvent set $\rho(-A).$ The initial value problem \eqref{abstract_cp_homo} has
			a unique solution $u \in C^1([0,\infty), \, E)$ for every $u_0 \in D(-A)$ if and
			only if $-A$ is the infinitesimal generator of a $C_0$-semigroup
			$(T(t))_{t\geq0}.$ In this case, the solution is given by $u(t)=T(t)u_0.$
		\end{theorem}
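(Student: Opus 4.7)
The plan for the easy direction ($\Leftarrow$): Assume $(-A, \, D(-A))$ generates the $C_0$-semigroup $(T(t))_{t\geq0}$ and fix $u_0 \in D(-A)$. I would set $u(t) := T(t)u_0$ and verify the required properties directly. The invariance $T(t)D(-A) \subseteq D(-A)$ together with the identity $\tfrac{d}{dt}T(t)u_0 = -AT(t)u_0 = T(t)(-A)u_0$, which is a standard consequence of \Cref{def_generator} and the semigroup property, immediately yields $u \in C^1([0,\infty), E)$ and shows that $u$ solves \eqref{abstract_cp_homo}. For uniqueness, given any other solution $v \in C^1([0,\infty), E)$ with $v(0)=u_0$ and $v(s) \in D(-A)$, I would form the auxiliary function $w(s) := T(t-s)v(s)$ for $s \in [0,t]$ and compute $w'(s) = -T(t-s)(-A)v(s) + T(t-s)\dot{v}(s) = T(t-s)\bigl(Av(s) - Av(s)\bigr) = 0$ using the product rule and the defining property of the generator. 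Hence $w$ is constant, so $v(t) = w(t) = w(0) = T(t)u_0$.

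The plan for the harder direction ($\Rightarrow$): Assume that for every $u_0 \in D(-A)$ there is a unique solution $u(\cdot, u_0) \in C^1([0,\infty), E)$ to \eqref{abstract_cp_homo}, and that $\rho(-A) \neq \emptyset$. I would define $T(t)u_0 := u(t, u_0)$ for $u_0 \in D(-A)$ and $t \geq 0$. Linearity of $T(t)$ on $D(-A)$ follows from linearity of the ACP combined with uniqueness (any linear combination of solutions solves the ACP with the corresponding initial data). The semigroup property $T(t+s) = T(t)T(s)$ follows from the same type of uniqueness argument applied to $s \mapsto u(t+s, u_0)$ and $s \mapsto u(s, T(t)u_0)$, and strong continuity at $t = 0$ is automatic on $D(-A)$ from $u \in C^1([0,\infty), E)$.

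The crucial step, which I expect to be the main obstacle, is to produce a local-in-$t$ operator bound of the form $\|T(t)u_0\|_E \leq M(t)\|u_0\|_E$ valid on the dense subspace $D(-A)$, so that $T(t)$ extends by density to a bounded operator on all of $E$. My plan is first to equip $D(-A)$ with the graph norm $\|\cdot\|_{-A}$; since $\rho(-A) \neq \emptyset$ forces $(-A, \, D(-A))$ to be closed, $(D(-A), \|\cdot\|_{-A})$ is a Banach space. Applying the closed graph theorem to the solution map $u_0 \mapsto u(\cdot, u_0)$ viewed as an operator from $(D(-A), \|\cdot\|_{-A})$ into $C([0,t_0], E)$ yields a bound of the form $\|T(t)u_0\|_E \leq M_1(t_0)\|u_0\|_{-A}$ for $t \in [0, t_0]$.

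To upgrade this to a bound in the original norm, I would exploit $\lambda \in \rho(-A)$ and the fact that $R(\lambda, -A)$ is a bounded bijection from $E$ onto $(D(-A), \|\cdot\|_{-A})$. The key auxiliary identity is that $T(t)$ commutes with $R(\lambda, -A)$, which I would derive by noting that for $u_0 \in D(-A)$ the function $t \mapsto R(\lambda, -A)T(t)u_0$ solves the ACP with initial value $R(\lambda, -A)u_0$, so uniqueness forces $R(\lambda, -A)T(t)u_0 = T(t)R(\lambda, -A)u_0$. Writing an arbitrary $v \in D(-A)$ as $v = R(\lambda, -A)w$ with $w = (\lambda + A)v \in E$, the commutation gives $T(t)v = R(\lambda, -A)T(t)w$ in the sense that the $E$-norm of $T(t)v$ is controlled by $\|R(\lambda, -A)\|_{\mathcal{L}(E)}$ times the graph-norm bound applied to $R(\lambda, -A)w$, producing the desired estimate $\|T(t)v\|_E \leq M(t)\|v\|_E$. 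Once this is in place, extending $T(t)$ to $E$ by density and identifying $(-A, \, D(-A))$ as its generator follow directly from \Cref{def_generator} and the ACP, and an application of the Hille-Yosida \Cref{hille-yosida} or a direct verification gives strong continuity on all of $E$.
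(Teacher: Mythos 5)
Your easy direction and the structural setup for the hard direction (linearity, the semigroup property, strong continuity on $D(-A)$ from $C^1$ regularity) are fine, but the boundedness argument has two gaps. First, the closed graph theorem is applied with the target $C([0,t_0], E)$: this is not obviously a closed operator, because from $u_0^{(n)} \to u_0$ in graph norm and $u(\cdot, u_0^{(n)}) \to w$ uniformly in $E$ you can only pass to the limit in the integral identity $w(t) = u_0 - A\int_0^t w(s)\,ds$ and conclude that $w$ is a \emph{mild} solution, while your hypothesis gives uniqueness only in the class of classical $C^1$ solutions. The argument does go through if you instead target $C([0,t_0], (D(-A),\|\cdot\|_{-A}))$ --- equivalently $C^1([0,t_0], E)$ via $\dot u = -Au$ --- where uniform convergence of both $u(\cdot, u_0^{(n)})$ and $Au(\cdot, u_0^{(n)})$ together with closedness of $A$ yields a $C^1$ limit $w$ with $w(t)\in D(-A)$ and $\dot w = -Aw$, so that uniqueness applies. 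This choice of target also produces the \emph{graph-norm} bound $\|u(t,u_0)\|_{-A} \leq M_1 \|u_0\|_{-A}$, which is what the next step actually requires.

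Second, and more seriously, the resolvent trick is applied in the wrong direction. Writing $v = R(\lambda,-A)w$ with $w = (\lambda+A)v$, you only have $w \in E$, so $T(t)w$ is not defined prior to the extension and the commutation $T(t)R(\lambda,-A)w = R(\lambda,-A)T(t)w$ is unavailable; and even if one restricts to $v \in D(-A^2)$ so that $w \in D(-A)$, the chain of inequalities terminates in $\|w\|_{-A}$ or $\|w\|_E = \|(\lambda+A)v\|_E$, neither of which is dominated by $\|v\|_E$. The correct maneuver goes \emph{up} in regularity rather than down: for $v \in D(-A)$ set $z := R(\lambda,-A)v$. The commutation you established via uniqueness, applied to $v$, gives $u(t,z) = R(\lambda,-A)u(t,v)$, hence $u(t,v) = (\lambda+A)u(t,z)$, and therefore
\begin{equation*}
\|u(t,v)\|_E = \|(\lambda+A)u(t,z)\|_E \leq C_\lambda \|u(t,z)\|_{-A} \leq C_\lambda M_1 \|z\|_{-A} \leq C_\lambda M_1 C_R \|v\|_E,
\end{equation*}
where $C_R$ is the operator norm of $R(\lambda,-A)$ from $E$ into $(D(-A),\|\cdot\|_{-A})$. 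In other words, the bounded extension of $T(t)$ to all of $E$ is $(\lambda+A)\,T(t)\,R(\lambda,-A)$, not the formal composition $R(\lambda,-A)\,T(t)\,(\lambda+A)$ suggested by your decomposition. (The paper itself cites \cite[Chapter 4, Theorem 1.3]{pazy2012semigroups} without reproducing the proof; the reference follows essentially this corrected route.)
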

		
		Hereafter, we will assume that $(-A, \, D(-A))$ generates the $C_0$-semigroup
		$(T(t))_{t\geq0}$ so that the homogeneous equation \eqref{abstract_cp_homo} has
		a unique solution for $u_0\in D(-A)$. \newline
		The inhomogeneous abstract Cauchy problem on $I=[0,T)$ reads
		\begin{equation}\label{abstract_cp_linear}
		\begin{cases}
		\begin{array}{rrll}
		\dot{u}(t) + Au(t) & = & F(t) & \qquad 0 < t < T, \\
		u(0) & = & u_0
		\end{array}
		\end{cases}.
		\end{equation}	
		
		\begin{definition}[{\cite[Chapter 4, Definition 2.1, p. 105]{pazy2012semigroups}}]\label{classical_solution}
			A function $u\colon I \to E$ is a classical solution on $I$ to the abstract Cauchy problem \eqref{abstract_cp} if
			\begin{enumerate}
				\item
				$u \in C(I, \, E) \cap C^1((0,T), \, E),$
				\item
				$u(t) \in D(-A) $ for all $ t \in (0,T)$ and \eqref{abstract_cp} is satisfied
				on $I$.
			\end{enumerate}
		\end{definition}
		
		\begin{definition}[{\cite[Chapter 4, Definition 2.3, p. 106]{pazy2012semigroups}}]
			Let $u\in E$ and $F\in L^1((0,T), \, E).$ The function $u\in C([0,T], \, E)$
			given by
			\begin{equation}\label{mild_solution}
			u(t) = T(t)u + \int_0^t T(t-s)F(s) \, ds, \qquad t \in [0,T],
			\end{equation}
			is the mild solution to the initial value problem \eqref{abstract_cp_linear}
			on $[0,T].$
		\end{definition}
		\begin{remark}[{\cite[Chapter 4, Corollary 2.2, p. 106]{pazy2012semigroups}}]
			Under the assumption $F\in L^1((0,T), \, E)$, a classical solution coincides
			with the mild solution and \eqref{abstract_cp_linear} has at most one solution.
			If \eqref{abstract_cp_linear} has a solution, it is given by
			\eqref{mild_solution}.
		\end{remark}
		
		\begin{theorem}[{\cite[Chapter 4, Corollary 2.5, p. 107 and Corollary 2.6,
				p. 108]{pazy2012semigroups}}]\label{classical_sol_linear}
			The initial value problem \eqref{abstract_cp_linear} has a unique classical
			solution on $I$ for every $u_0\in D(-A),$ if $F$ satisfies one of the following
			two properties:
			\begin{enumerate}
				\item
				$F\in C^1([0,T], \, E),$
				\item
				$F \in C((0,T), \, E) \cap L^1((0,T), \, E) \quad \text{ and } \quad F(s) \in
				D(-A) \text{ for all } s\in (0,T) \text{ with } AF(\cdot) \in L^1((0,T), \, E).$
			\end{enumerate}
		\end{theorem}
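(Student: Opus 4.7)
My proof plan is to reduce both cases to showing that the mild solution
\begin{equation*}
u(t) = T(t)u_0 + \int_0^t T(t-s)F(s)\,ds \eqqcolon T(t)u_0 + v(t)
\end{equation*}
is in fact classical. Uniqueness is free: any classical solution is in $C([0,T],E)\cap L^1((0,T),E)$ and hence (by the standard argument differentiating $s\mapsto T(t-s)u(s)$ on $(0,t)$) must coincide with the mild solution. Existence of a classical solution for $u_0\in D(-A)$ then reduces to proving, in each case, that $v\in C([0,T],E)\cap C^1((0,T),E)$, that $v(t)\in D(-A)$ for $t\in(0,T)$, and that $\dot v(t) + Av(t) = F(t)$. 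Once this is shown, $t\mapsto T(t)u_0\in C^1([0,\infty),E)$ by \Cref{existence_homogeneous_eq} and the full $u$ is classical.

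For case (i), the natural move is the change of variables
\begin{equation*}
v(t) = \int_0^t T(s)F(t-s)\,ds.
\end{equation*}
Because $F\in C^1([0,T],E)$, the integrand is jointly continuous in $(s,t)$ and differentiable in $t$ with derivative $T(s)\dot F(t-s)$, which is continuous. Leibniz's rule gives $\dot v(t) = T(t)F(0) + \int_0^t T(s)\dot F(t-s)\,ds$, both terms continuous on $(0,T)$. On the other hand, from the original parametrization,
\begin{equation*}
\frac{T(h)-I_E}{h}\,v(t) = \frac{1}{h}\int_0^t \bigl(T(t+h-s)-T(t-s)\bigr)F(s)\,ds = \frac{v(t+h)-v(t)}{h} - \frac{1}{h}\int_t^{t+h} T(t+h-s)F(s)\,ds,
\end{equation*}
and letting $h\searrow 0$ the right-hand side converges to $\dot v(t) - F(t)$. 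This shows $v(t)\in D(-A)$ and $-Av(t) = \dot v(t)-F(t)$, i.e.\ $\dot v + Av = F$.

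For case (ii), the idea is to push $A$ inside the integral using closedness. Since $F(s)\in D(-A)$ and $AF(\cdot)\in L^1((0,T),E)$, the vector-valued integral $\int_0^t T(t-s)AF(s)\,ds$ is well-defined and continuous in $t$. Approximating by Riemann sums in $D(-A)$ endowed with the graph norm and using that $(-A,D(-A))$ is closed (\Cref{hille-yosida}), I obtain $v(t)\in D(-A)$ with
\begin{equation*}
Av(t) = \int_0^t T(t-s)AF(s)\,ds.
\end{equation*}
Now $v(t) = \int_0^t T(s)F(t-s)\,ds$ looks awkward without extra regularity on $F$, so instead I differentiate the original integral directly: from the semigroup property,
\begin{equation*}
\frac{v(t+h)-v(t)}{h} = \frac{T(h)-I_E}{h}\,v(t) + \frac{1}{h}\int_t^{t+h} T(t+h-s)F(s)\,ds,
\end{equation*}
and as $h\searrow 0$ the first term tends to $-Av(t)$ (by the previous display) and the second to $F(t)$ by continuity of $F$ on $(0,T)$. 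This yields $\dot v(t) = -Av(t) + F(t)$ on $(0,T)$, completing the proof.

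The main obstacle is the interchange $A\int = \int A$ in case (ii); $L^1$ regularity of $AF$ is just enough to make this rigorous via a closedness/approximation argument, but some care is needed because $D(-A)$ need not be closed in $E$ — only in its graph norm. Everything else is bookkeeping with Leibniz's rule and continuity of $s\mapsto T(s)$ on $E$.
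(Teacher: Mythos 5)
Your proof is correct and follows essentially the same route as Pazy's Corollaries 2.5 and 2.6 (Chapter 4), which the thesis cites for this theorem without reproducing a proof: in case (i) substitute $s\mapsto t-s$, apply Leibniz's rule, and use the difference-quotient identity to show $v(t)\in D(-A)$; in case (ii) commute the closed operator $A$ past the Bochner integral via Hille's theorem and then differentiate directly. The decompositions and the uniqueness argument via $\tfrac{d}{ds}\bigl(T(t-s)u(s)\bigr)$ are the standard ones, so nothing is gained or lost relative to the cited proof.
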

		
		We now summarize the main results for the semilinear initial value problem
		\begin{equation}\label{abstract_cp_semilinear}
			\begin{cases}
			\begin{array}{rrll}
			\dot{u}(t) + Au(t) & = & F(t,u(t)) & \qquad 0 < t < T, \\
			u(0) & = & u_0
			\end{array}
			\end{cases}.
		\end{equation}
		
		The definition of classical solutions for \eqref{abstract_cp_semilinear} is
		the same as before, see \Cref{classical_solution}. Similar to the linear
		inhomogeneous case, classical solutions are mild solutions in the sense of the
		next definition.
		
		\begin{definition}[{\cite[Chapter 6, Definition 1.1, p. 184]{pazy2012semigroups}}]
			A function $u\in C(I, \, E)$ which solves the integral equation
			\begin{equation*}
			u(t) = T(t)u_0 + \int_0^t T(t-s)F(s, u(s)) \,ds, \qquad t \in I,
			\end{equation*}
			is called a mild solution to the initial value problem
			\eqref{abstract_cp_semilinear} on $I$ .
		\end{definition}
		
		A typical Lipschitz assumption on $F$ ensures local existence of mild
		solutions.
		
		\begin{definition}[{\cite[Chapter 6, p. 185]{pazy2012semigroups}}]\label{lipschitz_assumption}
			A function $F\colon [0,\infty)\times E \to E$ is called locally Lipschitz
			continuous in $u$, uniformly in $t$ on bounded intervals, if for every $t'\geq0$
			and $n\in\N$, there is a constant $C(t',n)$ such that
			\begin{equation*}
			\| F(t,u) - F(t,\widetilde{u}) \|_E \leq C(t',n) \| u -\widetilde{u} \|_E
			\end{equation*}
			holds for all $u, \widetilde{u} \in E$ with $\|u\|_E, \, \|\widetilde{u} \|_E
			\leq n$ and $t\in [0,t'].$
		\end{definition}
		
		\begin{theorem}[{\cite[Chapter 6, Theorem 1.4, p. 185]{pazy2012semigroups}}]\label{theorem_mildsol}
			Let $F\colon [0,\infty)\times E \to E$ be continuous in $t$ and locally
			Lipschitz continuous in $u$, uniformly in $t$ on bounded intervals. Then for
			every $u_0\in E,$ there is a time $T_{\text{max}} = T_{\text{max}}(\|u_0\|_E) >
			0,$ possibly $T_{\text{max}}=T,$ such that the initial value problem
			\eqref{abstract_cp_semilinear} has a unique mild solution $u$ on
			$[0,T_{\text{max}}).$ Moreover, if $T_{\text{max}} < T,$ then
			\begin{equation*}
			\lim_{t \nearrow T_{\text{max}}} \| u(t) \|_E = \infty.
			\end{equation*}
		\end{theorem}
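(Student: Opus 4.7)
The plan is to prove local existence via a Banach fixed point argument, extend to a maximal interval, and then establish the blow-up alternative by contradiction.

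First I would fix $u_0\in E$ and set $n \coloneqq 2\|u_0\|_E + 1$. Using \eqref{auxeq30}, choose $M\geq 1$ and $\omega\geq 0$ with $\|T(t)\|_{\mathcal{L}(E)}\leq M e^{\omega t}$. Pick any $t'\in (0, T)$ and let $L \coloneqq C(t', Mn e^{\omega t'})$ be the Lipschitz constant from \Cref{lipschitz_assumption} for the ball of radius $Mne^{\omega t'}$ on $[0,t']$. Set
\begin{equation*}
K \coloneqq \sup_{s\in [0,t']} \|F(s, 0)\|_E,
\end{equation*}
which is finite by continuity of $F(\cdot, 0)$. Now pick $\delta \in (0, t']$ small enough so that
\begin{equation*}
M\delta e^{\omega \delta} \left(L\, Mn e^{\omega t'} + K\right) \leq \frac{n}{2} \quad\text{and}\quad M L \delta e^{\omega \delta} \leq \frac{1}{2}.
\end{equation*}
On the complete metric space $X_\delta \coloneqq \{u\in C([0,\delta], E) \,\colon\, \|u\|_{C([0,\delta], E)} \leq Mne^{\omega\delta}\}$ I would define
\begin{equation*}
(\Phi u)(t) \coloneqq T(t)u_0 + \int_0^t T(t-s) F(s, u(s))\, ds.
\end{equation*}
The first inequality for $\delta$ ensures $\Phi(X_\delta)\subseteq X_\delta$, and the second makes $\Phi$ a strict contraction, so Banach's fixed point theorem yields a unique mild solution on $[0,\delta]$.

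Next I would address uniqueness on arbitrary common intervals. If $u, \tilde{u}$ are two mild solutions on $[0,\tau]$ with $u(0)=\tilde{u}(0)=u_0$, both remain bounded on $[0,\tau]$, so the Lipschitz assumption applies uniformly and gives
\begin{equation*}
\|u(t) - \tilde{u}(t)\|_E \leq M e^{\omega t} L \int_0^t \|u(s) - \tilde{u}(s)\|_E\, ds
\end{equation*}
for all $t\in [0,\tau]$. Gronwall's inequality then forces $u \equiv \tilde{u}$. Using uniqueness, I would define
\begin{equation*}
T_{\max} \coloneqq \sup\{\tau \in (0, T) \,\colon\, \text{a mild solution exists on } [0,\tau]\}
\end{equation*}
and patch the solutions together to obtain a mild solution $u$ on $[0, T_{\max})$.

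Finally, for the blow-up alternative, I would argue by contradiction: suppose $T_{\max} < T$ and $\liminf_{t \nearrow T_{\max}} \|u(t)\|_E < \infty$. Then there exists $R>0$ and a sequence $t_k \nearrow T_{\max}$ with $\|u(t_k)\|_E \leq R$. Applying the local existence step with initial data $u(t_k)$ and noting that the lifespan $\delta$ depends only on $\|u(t_k)\|_E$ and parameters tied to any fixed interval $[0, t']$ containing $T_{\max}$, one obtains a uniform lower bound $\delta_R > 0$ on the lifespan starting from each $u(t_k)$. Choosing $t_k$ with $T_{\max} - t_k < \delta_R/2$, one would extend $u$ past $T_{\max}$ (the extension matches $u$ on the overlap by the uniqueness argument), contradicting the definition of $T_{\max}$.

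The main obstacle I expect is the uniform lower bound on the restart lifespan $\delta_R$ in the blow-up step: one must verify that the constants $L$, $K$, $M e^{\omega\cdot}$ can be controlled uniformly for initial data at times $t_k \nearrow T_{\max}$, which is why it is essential that the Lipschitz assumption in \Cref{lipschitz_assumption} is uniform in $t$ on bounded intervals rather than merely pointwise. The rest is routine once the local existence estimate is set up cleanly.
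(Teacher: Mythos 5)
Your proof is correct and follows the standard route (local existence by Banach fixed point on a ball in $C([0,\delta],E)$, uniqueness by Gronwall, maximal interval by patching, blow-up alternative by restarting), which is essentially the same argument as in the cited reference (Pazy, Chapter 6); the paper itself offers no proof but defers to that source. The only cosmetic point is that \Cref{lipschitz_assumption} indexes the Lipschitz constants by integers $n$, so one should take $L = C(t', \lceil Mne^{\omega t'}\rceil)$, but this is harmless since the constants may be taken nondecreasing in $n$.
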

		\begin{remarks}\label{remark_global_mild_sol}
			\begin{enumerate}
				\item
				The mild solution on $[0,T_{\text{max}})$ is often called maximal mild
				solution.
				\item
				If for each $t'>0,$ there exists a constant $C(t') >0$ such that
				\begin{equation*}
				\|F(t,u)\|_E \leq C (1+\|u\|_E)
				\end{equation*}
				holds for all $t\in [0,t']$ and all $u\in E$, then
				\eqref{abstract_cp_semilinear} has a mild solution on $[0,\infty),$ i.e. one can
				choose $T=\infty$ and it holds $T_{\text{max}}=T.$
			\end{enumerate}
		\end{remarks}
		
		There are multiple criteria which ensure that a mild solution is a classical
		solution (see \cite[Chapter 6, p. 187ff.]{pazy2012semigroups}) and they
		typically involve stronger regularity of the right-hand side. We endow the
		domain $D(-A)$ with the graph norm $\|u\|_{-A} = \|u\|_E + \|Au\|_E,$ which
		turns \mbox{$(D(-A),\|\cdot\|_{-A})$} to a Banach space due to the closedness of
		$-A,$ see \Cref{hille-yosida}. Then, for example, \Cref{classical_sol_linear}
		and \Cref{theorem_mildsol} imply the next statement.
		
		\begin{theorem}\label{semilinear_strong_sol}
			Let $F\in C([0,\infty)\times E, \, D(-A))$ be uniformly (in t) locally
			Lipschitz continuous in $u$. Then for every $u_0\in D(-A),$ the initial value
			problem \eqref{abstract_cp_semilinear} possesses a unique maximal strong solution
			\begin{equation*}
			u \in C([0,T_{\text{max}}), \, D(-A)) \cap C^1((0,T_{\text{max}}), \, E).
			\end{equation*}
		\end{theorem}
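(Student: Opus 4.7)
The plan is to apply \Cref{theorem_mildsol} to obtain a maximal mild solution and then bootstrap, using \Cref{classical_sol_linear}, to upgrade the mild solution to a strong (classical) one.

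First I would verify that the hypotheses of \Cref{theorem_mildsol} persist when $F$ is viewed as an $E$-valued map. Since $(D(-A),\|\cdot\|_{-A})$ embeds continuously into $E$ via $\|u\|_E\le \|u\|_{-A}$, continuity of $F$ into $D(-A)$ and the local Lipschitz bound (uniform in $t$ on bounded intervals) transfer verbatim to $F$ regarded as a map into $E$. \Cref{theorem_mildsol} then yields, for every $u_0\in D(-A)\subseteq E$, a unique maximal mild solution $u\in C([0,T_{\max}),E)$ of \eqref{abstract_cp_semilinear} obeying the standard blow-up alternative.

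Next I would bootstrap. Define $g(t)\coloneqq F(t,u(t))$. Since $u\in C([0,T_{\max}),E)$ and $F\in C([0,\infty)\times E,D(-A))$, the composition lies in $C([0,T_{\max}),D(-A))$; in particular $g(s)\in D(-A)$ for every $s$ and both $g$ and $Ag$ are continuous into $E$, hence locally integrable. For any $0<T<T_{\max}$, condition (ii) of \Cref{classical_sol_linear} applied on $[0,T)$ gives a unique classical solution $v\in C([0,T],E)\cap C^1((0,T),E)$ of the linear inhomogeneous problem
\begin{equation*}
\dot v(t)+Av(t)=g(t),\qquad v(0)=u_0,
\end{equation*}
with $v(t)\in D(-A)$ for $t\in(0,T)$. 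Because classical solutions of the linear problem coincide with the mild solution produced by the variation-of-constants formula, and because $u$ itself is defined by that same formula with right-hand side $g$, uniqueness forces $u\equiv v$ on $[0,T]$. Letting $T\nearrow T_{\max}$ yields $u\in C^1((0,T_{\max}),E)$ with $u(t)\in D(-A)$.

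Finally, to upgrade $u$ to $C([0,T_{\max}),D(-A))$ and recover the dichotomy, I would exploit the representation
\begin{equation*}
u(t)=T(t)u_0+\int_0^t T(t-s)g(s)\,ds.
\end{equation*}
Since $u_0\in D(-A)$, the first term is in $C([0,\infty),D(-A))$. For the Duhamel term, closedness of $(-A,D(-A))$ together with $Ag\in C([0,T_{\max}),E)$ allows us to interchange $A$ with the integral, giving $A\int_0^t T(t-s)g(s)\,ds=\int_0^t T(t-s)Ag(s)\,ds\in C([0,T_{\max}),E)$. Hence $Au\in C([0,T_{\max}),E)$, so $u\in C([0,T_{\max}),D(-A))$, and the ODE $\dot u+Au=F(\cdot,u)$ extends continuously to $t=0$. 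Uniqueness of the strong solution is inherited from uniqueness of the mild one. The main, albeit modest, obstacle is this last continuity-in-graph-norm step at $t=0$: one must verify carefully that the Duhamel integral lies in $D(-A)$ and that $A$ commutes with it, which rests on closedness of the generator and on the fact that $Ag=AF(\cdot,u(\cdot))$ is genuinely continuous into $E$ up to $t=0$ -- a property that ultimately uses the stronger hypothesis $F\in C([0,\infty)\times E,D(-A))$ rather than merely $F\in C([0,\infty)\times E,E)$.
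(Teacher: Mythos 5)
Your proposal is correct and is exactly the approach the paper intends: the paper's own "proof" is the one-line remark that \Cref{theorem_mildsol} and \Cref{classical_sol_linear} together imply the statement, and you have spelled out that combination—obtain the maximal mild solution, feed $g(t)=F(t,u(t))\in C([0,T_{\max}),D(-A))$ into the linear result to conclude classical regularity, identify the two solutions by the variation-of-constants formula, and then upgrade to $C([0,T_{\max}),D(-A))$ via closedness of $-A$. The details you supply, including the interchange of $A$ with the Duhamel integral and why $Ag$ is genuinely continuous up to $t=0$, are precisely the steps the paper leaves implicit.
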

		
		We finish our overview with a result concerning the continuous dependence of
		the solution on the initial data.
		
		\begin{corollary}[{\cite[Chapter 6, Theorem 1.2, p. 184]{pazy2012semigroups}}]\label{semilinear_lipschitz_dependence}
			Let $F\in C([0,\infty)\times E, \, E)$ be globally Lipschitz continuous in
			$u.$  Then the mild solution depends Lipschitz continuously on $u_0\in E,$ i.e.
			for all $T>0,$ the mapping $u_0\mapsto u$ is Lipschitz continuous from $E$ into
			$C([0,T], \, E)$.
		\end{corollary}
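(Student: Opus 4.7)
The plan is to derive the Lipschitz estimate directly from the mild solution integral equation via Gronwall's inequality. First I would verify global existence of mild solutions on every interval $[0,T]$: the global Lipschitz assumption on $F$ in $u$ implies the linear growth bound $\|F(t,u)\|_E \leq \|F(t,0)\|_E + L\|u\|_E$, so by the remark following \Cref{theorem_mildsol} the maximal existence time is $T_{\text{max}} = \infty$ for every $u_0 \in E$, and in particular both mild solutions are defined on $[0,T]$. Fix $T>0$ and choose constants $M\geq 1$ and $\omega\in\R$ with $\|T(t)\|_{\mathcal{L}(E)} \leq Me^{\omega t}$ for all $t\in [0,T]$.

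Now take two initial data $u_0, v_0 \in E$ with corresponding mild solutions $u,v \in C([0,T], E)$. Subtracting the two integral equations gives
\begin{equation*}
u(t) - v(t) = T(t)(u_0 - v_0) + \int_0^t T(t-s)\big(F(s,u(s)) - F(s,v(s))\big) \, ds.
\end{equation*}
Combining the semigroup bound with the global Lipschitz estimate for $F$ then yields
\begin{equation*}
\|u(t) - v(t)\|_E \leq Me^{\omega T}\|u_0 - v_0\|_E + MLe^{\omega T} \int_0^t \|u(s) - v(s)\|_E \, ds
\end{equation*}
for every $t\in[0,T]$.

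Next I would apply Gronwall's inequality to the continuous, nonnegative function $t \mapsto \|u(t)-v(t)\|_E$, which produces
\begin{equation*}
\|u(t) - v(t)\|_E \leq Me^{\omega T} e^{MLe^{\omega T} T} \|u_0 - v_0\|_E
\end{equation*}
uniformly in $t\in[0,T]$. Taking the supremum over $t$ gives exactly the Lipschitz continuity of the mapping $u_0 \mapsto u$ from $E$ into $C([0,T], E)$, with a Lipschitz constant depending only on $T$, $M$, $\omega$ and $L$. There is no serious obstacle here: the proof is a routine Gronwall argument applied to the mild solution formula, and the only subtlety is the preliminary step of ensuring that the two mild solutions are actually defined on the full interval $[0,T]$, which is resolved by the linear growth consequence of the global Lipschitz hypothesis.
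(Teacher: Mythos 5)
Your argument is correct and follows the same route as the proof of the cited result in Pazy (Chapter 6, Theorem 1.2): subtract the two mild solution formulas, bound the semigroup and use the global Lipschitz estimate for $F$, then close with Gronwall's inequality. One minor point worth a half-line in a clean write-up: when you bound $\|T(t-s)\|_{\mathcal{L}(E)}\leq Me^{\omega T}$ you are implicitly assuming $\omega\geq 0$; if $\omega<0$ the supremum over $s\in[0,t]$ is $M$, not $Me^{\omega T}$. This is harmless because one may always replace $\omega$ by $\max\{\omega,0\}$ in the Hille--Yosida bound, but it should be said. Also note that the linear growth bound $\|F(t,u)\|_E\leq \|F(t,0)\|_E+L\|u\|_E$ requires $\sup_{t\in[0,T]}\|F(t,0)\|_E<\infty$, which you get from $F\in C([0,\infty)\times E,\,E)$ and compactness of $[0,T]$; you use this tacitly and it is worth making explicit since it is what lets you invoke the global existence remark.
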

		
		\chapter{Perturbation and Approximation of
			$C_0$-Semigroups}\label{appendix_perturbation_theory}
		
		We assume throughout this chapter, similar to the previous one, that $(E, \,
		\|\cdot\|_E)$ is a complex Banach space. \newline
		
		\indent	Many times in applications, the verification of the generator theorems
		from Appendix \ref{appendix_semigroup_theory} is a difficult task and cannot be
		performed in a direct way. Therefore, one tries to build up the given operator
		and its semigroup from simpler ones. Perturbation and approximation are the
		standard methods for this approach \cite[Chapter 3, Introduction, p. 157]{engel2001one}. \newline
		We present the main results for the perturbation of an infinitesimal generator
		$(-A, \, D(-A))$ of a semigroup $(T(t))_{t\geq0}$ with a bounded linear operator
		$B \in \mathcal{L}(E).$ \newline
		
		The following theorem summarizes multiple results from
		\cite{engel2001one,pazy2012semigroups}, namely \cite[\Romannum{3}. Theorem 1.3,
		p. 158]{engel2001one} and \cite[\Romannum{3}. Corollary 1.7, p. 161]{engel2001one}. The uniqueness claim concerning properties $(ii)$ and
		$(iii)$ is proven in \cite[Chapter 3, Proposition 1.2, p. 77]{pazy2012semigroups} and a proof of the (in our case obvious) equivalence of
		the graph norms can be found in \cite[\Romannum{3}. Lemma 2.4, p. 171]{engel2001one} for a more general framework.
		
		\begin{theorem}\label{perturbation_theory_-A+B_gen_integral_identities}
			Let $(-A, \, D(-A))$ generate the $C_0$-semigroup $(T(t))_{t\geq0}$ satisfying $\| T(t) \|_{\mathcal{L}(E)} \leq M e^{\omega t}$ for all $t\geq0$ and constants $M\geq
			1$ and $\omega \in \R.$ Let $B \in \mathcal{L}(E).$ Then the sum $-A+B$ with
			$D(-A+B)=D(-A)$ generates a $C_0$-semigroup $(R(t))_{t\geq0}$ which fulfills
			\begin{enumerate}
				\item
				$\|R(t)\|_{\mathcal{L}(E)} \leq M e^{(\omega+M\|B\|_{\mathcal{L}(E)})t},$
				\item
				$R(t)u = T(t)u + \int_{0}^{t} T(t-s)BR(s)u \, ds,$
				\item
				$R(t)u = T(t)u + \int_{0}^{t} R(t-s)BT(s)u \, ds$
			\end{enumerate}
			for all $t\geq0$ and $u\in E.$ The operator family $(R(t))_{t\geq0}$ is the
			only strongly continuous family of operators solving $(ii)$ and/or $(iii).$ The
			graph norms of $-A$ and $-A+B$ on $D(-A)$ are equivalent.
		\end{theorem}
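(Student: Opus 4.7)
The plan is to apply the Hille--Yosida \Cref{hille-yosida} to the operator $(-A+B, D(-A))$. First I will verify that $-A+B$ with $D(-A+B) := D(-A)$ is densely defined and closed: density is inherited from $-A$, and closedness follows because $B$ is bounded, so a sequence $u_n \to u$ with $(-A+B)u_n \to v$ forces $Au_n = Bu_n - (-A+B)u_n \to Bu - v$, and then closedness of $-A$ yields $u \in D(-A)$ with $(-A+B)u = v$. The norm-equivalence statement is an immediate byproduct, since $\|Bu\|_E \le \|B\|_{\mathcal{L}(E)}\|u\|_E$ gives the two-sided bounds $\|u\|_{-A+B} \le (1+\|B\|)\|u\|_{-A}$ and $\|u\|_{-A} \le (1+\|B\|)\|u\|_{-A+B}$.

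Next I will construct the resolvent of $-A+B$ via a Neumann series. For $\lambda > \omega + M\|B\|_{\mathcal{L}(E)}$, Hille--Yosida applied to $-A$ gives $\|R(\lambda,-A)\|_{\mathcal{L}(E)} \le M/(\lambda-\omega)$, so $\|R(\lambda,-A)B\|_{\mathcal{L}(E)} < 1$ and I can define
\begin{equation*}
R(\lambda, -A+B) := R(\lambda,-A)\sum_{n=0}^{\infty}\bigl(B R(\lambda,-A)\bigr)^n.
\end{equation*}
A direct computation shows this is a two-sided inverse of $\lambda I_E - (-A+B)$ on $D(-A)$. The main technical step is then to bound the powers $R(\lambda,-A+B)^k$; here I use the integral representation $R(\lambda,-A)^k u = \tfrac{1}{(k-1)!}\int_0^\infty s^{k-1}e^{-\lambda s} T(s)u\, ds$, expand the Neumann series inside the $k$-fold product, collect terms, and obtain the bound $\|R(\lambda,-A+B)^k\|_{\mathcal{L}(E)} \le M/(\lambda-\omega-M\|B\|)^k$. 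Applying \Cref{hille-yosida} with the constants $M$ and $\omega + M\|B\|_{\mathcal{L}(E)}$ yields existence of the $C_0$-semigroup $(R(t))_{t\ge 0}$ together with estimate $(i)$.

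With the semigroup in hand, I will derive the integral identities $(ii)$ and $(iii)$ by variation of constants. For $u \in D(-A)$, the map $s \mapsto T(t-s)R(s)u$ is differentiable on $(0,t)$ with derivative $T(t-s) B R(s)u$, so integration gives $R(t)u - T(t)u = \int_0^t T(t-s) B R(s)u\, ds$; density of $D(-A)$ and continuity of both sides extend this to all $u\in E$. Identity $(iii)$ follows symmetrically by differentiating $s \mapsto R(t-s)T(s)u$. Uniqueness of $(R(t))_{t\ge 0}$ as a strongly continuous family solving $(ii)$ (or $(iii)$) will be established by Gronwall: if $(\widetilde R(t))_{t\ge 0}$ is another such family with some local bound $\|\widetilde R(t)\|\le \widetilde M e^{\widetilde\omega t}$, subtracting the two integral equations gives $\|R(t)u-\widetilde R(t)u\|_E \le C \int_0^t \|R(s)u - \widetilde R(s)u\|_E\, ds$, forcing equality.

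The main obstacle I anticipate is the careful estimate on the iterated resolvent $R(\lambda,-A+B)^k$ needed to invoke Hille--Yosida with the sharp constants $M$ and $\omega+M\|B\|$ appearing in $(i)$; a naive bound $\|R(\lambda,-A+B)\|^k$ loses the factor $M$ and one instead needs the combinatorial argument via the integral kernel representation of $R(\lambda,-A)^k$. Everything else, including the identification of the generator (from differentiating $(ii)$ at $t=0$ and using that $C_c^\infty$-type cores are preserved), is routine once this estimate is in place.
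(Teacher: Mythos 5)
The paper itself does not prove this theorem: it states it as a compilation of citations to Engel--Nagel (III.\ Theorem 1.3, Corollary 1.7, Lemma 2.4) and Pazy (Chapter 3, Proposition 1.2), and the standard proof in those references produces the semigroup and estimate $(i)$ via the Dyson--Phillips series or, equivalently, a fixed-point argument on the variation-of-parameters formula $(ii)$. Your route is genuinely different: you build the resolvent of $-A+B$ directly by the Neumann series $R(\lambda,-A)\sum_{n\ge0}(BR(\lambda,-A))^n$, establish the iterated resolvent bound $\|R(\lambda,-A+B)^k\|\le M/(\lambda-\omega-M\|B\|)^k$, and only then invoke Hille--Yosida; the integral identities and uniqueness come afterwards, rather than being the engine of existence. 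This is a valid and somewhat cleaner conceptual path (the generator is visibly in hand before any semigroup is constructed), at the price of exactly the combinatorial estimate you flag. For that estimate, expanding $R(\lambda,-A+B)^k$ as $\sum_{n_1,\dots,n_k\ge0} R(BR)^{n_1}\cdots R(BR)^{n_k}$ and noting that a term with $N=\sum n_i$ copies of $B$ has exactly $N+1$ maximal runs of consecutive $R(\lambda,-A)$-factors gives, via the Hille--Yosida bound on each run,
\begin{equation*}
\|R(\lambda,-A+B)^k\| \le \sum_{N\ge0}\binom{N+k-1}{k-1}\frac{M^{N+1}\|B\|^N}{(\lambda-\omega)^{N+k}} = \frac{M}{(\lambda-\omega-M\|B\|)^k},
\end{equation*}
so the integral representation of $R(\lambda,-A)^k$ is not strictly needed — the $n$-power estimate from Hille--Yosida suffices. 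Alternatively, the classical renorming $\|u\|' = \sup_{t\ge0}e^{-\omega t}\|T(t)u\|$, which satisfies $\|u\|\le\|u\|'\le M\|u\|$ and $\|T(t)\|'\le e^{\omega t}$, converts everything to the $M=1$ case where $\|R(\lambda,-A+B)\|'\le(\lambda-\omega-M\|B\|)^{-1}$ trivially, and converting back gives the same bound with no combinatorics; you may find that cleaner. The remaining steps (closedness from boundedness of $B$, graph-norm equivalence, derivation of $(ii)$ and $(iii)$ by differentiating $s\mapsto T(t-s)R(s)u$ and $s\mapsto R(t-s)T(s)u$ for $u\in D(-A)$ and extending by density, and uniqueness via Gronwall after the uniform boundedness principle delivers local bounds on any strongly continuous family) are all correct and match what the cited sources do.
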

		
		Aside from the implicit representation in
		\Cref{perturbation_theory_-A+B_gen_integral_identities}, there are other, more
		useful formulas for the semigroup $(R(t))_{t\geq0}.$
		
		\begin{theorem}[Dyson-Phillips series, {\cite[\Romannum{3}. Theorem 1.10, p. 163]{engel2001one}}]\label{perturbation_theory_-A+B_dyson_phillips_series}
			Let $(-A, \, D(-A))$ generate the $C_0$-semigroup $(T(t))_{t\geq0}$ and let
			$B\in\mathcal{L}(E).$ The semigroup $(R(t))_{t\geq0}$ generated by $-A+B$ with
			$D(-A+B)=D(-A)$ satisfies
			\begin{equation}\label{perturbation_theory_Dyson_Phillips_series_formula}
			R(t) = \sum_{k=0}^{\infty} R_k(t)
			\end{equation}
			with
			\begin{equation*}
			R_0(t) = T(t), \quad R_{k+1}(t)u = \int_{0}^{t} T(t-s)BR_k(t)u \, ds
			\end{equation*}
			for all $t\geq0$ and $u\in E.$ The Dyson-Phillips series
			\eqref{perturbation_theory_Dyson_Phillips_series_formula} converges in
			$\mathcal{L}(E)$ and uniformly on compact subsets of $\R_{\geq0}.$
		\end{theorem}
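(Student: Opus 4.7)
The plan is to construct the series $\sum_{k=0}^\infty R_k(t)$ directly, verify convergence and strong continuity, and then identify the limit with $R(t)$ via the uniqueness assertion in \Cref{perturbation_theory_-A+B_gen_integral_identities}. The natural starting point is an inductive estimate of the operator norms of the iterates. Using $\|T(t)\|_{\mathcal{L}(E)} \leq M e^{\omega t}$, one checks by induction on $k$ that
\begin{equation*}
\|R_k(t)\|_{\mathcal{L}(E)} \leq \frac{M^{k+1} \|B\|_{\mathcal{L}(E)}^k t^k}{k!} e^{\omega t}
\end{equation*}
for all $t \geq 0$. The base case is trivial and the induction step reduces, via the defining integral $R_{k+1}(t)u = \int_0^t T(t-s) B R_k(s)u\, ds$, to integrating the previous bound against $Me^{\omega(t-s)}\|B\|$.

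Summing these estimates yields $\sum_{k=0}^\infty \|R_k(t)\|_{\mathcal{L}(E)} \leq M e^{(\omega + M\|B\|_{\mathcal{L}(E)})t}$, and this bound is locally uniform in $t$ on $\R_{\geq 0}$. Hence the series $S(t) \coloneqq \sum_{k=0}^\infty R_k(t)$ converges absolutely in $\mathcal{L}(E)$, uniformly on compact subsets of $\R_{\geq 0}$. Each $R_k$ is strongly continuous (this again follows by an induction using continuity of $T(\cdot)$ and standard properties of the Bochner integral), so the uniform convergence transfers strong continuity to $S(\cdot)$.

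The next step is to show that $(S(t))_{t\geq 0}$ fulfills the integral identity $(ii)$ from \Cref{perturbation_theory_-A+B_gen_integral_identities}, namely $S(t)u = T(t)u + \int_0^t T(t-s) B S(s)u \, ds$ for all $u \in E$ and $t\geq 0$. Fix $t>0$ and $u \in E$. Since the partial sums $\sum_{k=0}^n R_k(s)u$ converge to $S(s)u$ uniformly on $[0,t]$ and $\|T(t-s)B\|_{\mathcal{L}(E)}$ is bounded there, one may interchange sum and integral to obtain
\begin{equation*}
\int_0^t T(t-s) B S(s)u \, ds = \sum_{k=0}^\infty \int_0^t T(t-s) B R_k(s)u \, ds = \sum_{k=0}^\infty R_{k+1}(t)u = S(t)u - T(t)u,
\end{equation*}
which is exactly the asserted identity.

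Finally, \Cref{perturbation_theory_-A+B_gen_integral_identities} guarantees that $(R(t))_{t\geq 0}$ is the unique strongly continuous operator family solving this integral equation, so $S(t) = R(t)$ for all $t\geq 0$, and the uniform convergence on compact subsets of $\R_{\geq 0}$ is precisely the addendum. The main technical obstacle is the interchange of the infinite sum with the Bochner integral; this is handled cleanly by the locally uniform majorant derived from the induction and dominated convergence, so no genuinely delicate analysis is required beyond the iterated bound.
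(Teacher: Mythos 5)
Your proof is correct and is the standard argument for the Dyson--Phillips series; the paper itself does not prove this theorem but simply cites Engel and Nagel, and your argument (iterated bound $\|R_k(t)\|_{\mathcal{L}(E)} \le M^{k+1}\|B\|^k t^k e^{\omega t}/k!$, absolute and locally uniform convergence, verification of the Volterra integral equation by interchanging sum and integral, then uniqueness from \Cref{perturbation_theory_-A+B_gen_integral_identities}) is precisely the proof given in the cited reference. One small but worthwhile observation: the recursion as written in the paper's statement reads $R_{k+1}(t)u = \int_0^t T(t-s) B R_k(t) u\,ds$, which is a typo; it should be $R_k(s)$ inside the integral, and you correctly used $R_k(s)$ throughout your argument.
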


		\begin{theorem}[Lie-Trotter product forumla, {\cite[\Romannum{3}. Corollary
				5.8, p. 227]{engel2001one}}]\label{perturbation_theory_Lie_Trotter_formula}
			Assume that $(-A, \,D(-A))$ and $(B, \, D(B))$ generate the $C_0$-semigroups
			$(T(t))_{t\geq 0}$ and $(S(t))_{t\geq0}$ respectively, subject to the stability
			bound
			\begin{equation*}
			\| (T(\tfrac{t}{n}) S(\tfrac{t}{n}))^n \|_{\mathcal{L}(E)} \leq M e^{\omega
				t}
			\end{equation*}
			for all $n\in\N, \, t\geq0$ and some $M\geq1$ and $\omega\in\R.$ Let
			$D\coloneqq D(-A) \cap D(B)$ and $(\lambda I_E- (-A+B))D$ be dense in $E$ for
			some $\lambda > \omega.$ Then the sum $-A+B$ with domain $D(-A+B)\coloneqq D$
			has a closure $\overline{-A+B}$ which generates the $C_0$-semigroup
			$(R(t))_{t\geq0}$ given by
			\begin{equation*}
			R(t)u = \lim_{n \to \infty} \left(T(\tfrac{t}{n}) S(\tfrac{t}{n})\right)^nu
			\end{equation*}
			for all $t\geq0$ and $u\in E.$ For every fixed $u\in E,$ the convergence is
			uniform on compact subsets of $\R_{\geq0}.$
		\end{theorem}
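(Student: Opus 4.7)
The plan is to reduce the claim to Chernoff's product formula, applied to the operator-valued function $V(t) \coloneqq T(t)S(t)$ for $t\geq 0$. This $V$ is strongly continuous with $V(0)=I_E$, and the stability bound $\|V(t/n)^n\|_{\mathcal{L}(E)}\leq M e^{\omega t}$ is exactly one of the hypotheses. Chernoff's formula then says that, provided $V$ has the right infinitesimal behaviour on a suitable subspace and the resulting operator has a closure which generates a $C_0$-semigroup $(R(t))_{t\geq 0}$, one has $V(t/n)^n u \to R(t)u$ for every $u\in E$, uniformly on compact subsets of $\R_{\geq 0}$. So the task splits into identifying the right infinitesimal operator, showing that it generates a semigroup of the right type, and invoking Chernoff.

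First I would identify the generator candidate on $D=D(-A)\cap D(B)$. For $u\in D$ I would decompose
\begin{equation*}
    \frac{V(t)u-u}{t} \;=\; T(t)\,\frac{S(t)u-u}{t} \;+\; \frac{T(t)u-u}{t}.
\end{equation*}
Since $u\in D(B)$, the difference quotient $(S(t)u-u)/t$ converges to $Bu$ in $E$; because $(T(t))_{t\geq 0}$ is locally uniformly bounded and strongly continuous, the first summand converges to $Bu$ as $t\searrow 0$. The second summand converges to $-Au$ by the definition of $D(-A)$. Thus $(V(t)u-u)/t \to (-A+B)u$ for every $u\in D$, so the operator $-A+B$ with domain $D$ is the natural generator candidate.

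Next I would show that this candidate has a closure which generates a $C_0$-semigroup with growth bound $Me^{\omega t}$. Combining the stability bound on $V$ with the first step gives, via a Laplace-transform-type estimate against $V(t/n)^n$, the dissipativity-type inequality $\|(\lambda I_E-(-A+B))u\|_E \geq \frac{\lambda-\omega}{M}\,\|u\|_E$ for $u\in D$ and $\lambda>\omega$. Together with the assumed density of $(\lambda I_E-(-A+B))D$ in $E$ for some $\lambda>\omega$, this yields both closability of $(-A+B,D)$ and the fact that $\lambda-\overline{-A+B}$ is bijective onto $E$ with bounded inverse. The analogous bound for powers of the resolvent (extracted from the same Laplace-transform estimate, now applied to $V(t/n)^n$ rather than $V(t)$) verifies the hypotheses of the Hille-Yosida \Cref{hille-yosida}, which produces the desired $C_0$-semigroup $(R(t))_{t\geq 0}$ with $\|R(t)\|\leq Me^{\omega t}$.

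Finally, with the generator identified and the semigroup $(R(t))_{t\geq 0}$ in hand, Chernoff's theorem applies and gives the convergence $V(t/n)^n u\to R(t)u$ uniformly on compact time intervals. The main obstacle is the second step: turning an \emph{a priori} densely defined operator with good infinitesimal data on $D$ into a genuine semigroup generator. The range condition on $D$ is assumed, but one must couple it with the stability bound on $V$ to obtain both closability and the full resolvent estimate required by Hille-Yosida; this is the core of Chernoff's argument and the only place where the hypotheses interact non-trivially. Everything else is a mechanical unpacking of the two defining semigroup properties together with strong continuity.
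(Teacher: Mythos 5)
Your approach is the standard one: the result is an instance of Chernoff's product formula applied to $V(t) \coloneqq T(t)S(t)$, and indeed the paper gives no proof but cites Engel--Nagel, where the Lie--Trotter formula is derived in precisely this way from Chernoff's theorem. Your step~1 -- the decomposition of $(V(t)u-u)/t$ into $T(t)\big(S(t)u-u\big)/t + (T(t)u-u)/t$ and the identification of the limit $(-A+B)u$ for $u\in D$ -- is exactly right. Step~2 is where the sketch is too thin. In the form of Chernoff's theorem used here, closability of $(-A+B, D)$ and generation by the closure are \emph{conclusions}, not hypotheses; you only need the stability bound, the infinitesimal limit on $D$, and the density of $(\lambda I_E - (-A+B))D$, all of which are given, so there is no gap to fill by hand. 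If you do want to reproduce the internal argument, the ``Laplace-transform-type estimate against $V(t/n)^n$'' is not convincing as written: Laplace-transforming $V(t/n)^n$ only yields the resolvent of the limit semigroup once the convergence is known, so it does not obviously avoid circularity. The standard route instead introduces the bounded approximants $C_n \coloneqq n\big(V(1/n)-I_E\big)$, derives $\|e^{tC_n}\|_{\mathcal{L}(E)} \leq M e^{n(e^{\omega/n}-1)t}$ directly from the stability bound, uses $C_n u \to (-A+B)u$ on $D$ together with the range density to feed the Trotter--Kato approximation theorem, and reads off generation and the uniform-on-compacts convergence simultaneously; the dissipativity-type bound you state follows by passing to the limit in the Hille--Yosida estimate for $C_n$, not by a direct Laplace transform of $V(t/n)^n$. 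With that correction, your outline matches the reference proof.
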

		
		\chapter{Spectral Theory and Long-Term Behavior of $C_0$-Semigroups}\label{appendix_Long_time_behavior_SG}
		
		Again, $(E, \, \|\cdot\|_E)$ is assumed to be a complex Banach space. \newline
		
		The purpose of this chapter is to give an overview of the main theory we used
		when studying the spectral properties and the long-term behavior of the
		transport-reaction semigroups from \Cref{chapter_transport_reaction_SG}.
		\newline
		It is necessary to recall definitions from spectral theory before presenting
		statements on the asymptotics of strongly continuous semigroups as many results
		characterize asymptotical properties in terms of spectral properties of the
		semigroup.\newline
		As a guideline, the goal is often to deduce stability or more complex long-term
		behavior of the semigroup $(T(t))_{t\geq0}$ from spectral properties of its
		generator $(-A, \, D(-A))$. To this end, it is necessary to study the connection
		of the spectrum of the generator and the spectrum of the semigroup. This
		connection should intuitively be given by
		\begin{equation}\label{intuition_smthm}
			``\quad\sigma(T(t)) = e^{t\sigma(-A)}\quad",
		\end{equation}
		but generally, this formula is incorrect. Especially for hyperbolic equations,
		proving a connection in the fashion of \eqref{intuition_smthm} is non-trivial.
		\newline
		
		In infinite dimensional spaces, a linear operator is no longer injective if and
		only if it is surjective. Therefore, one needs a more refined definition for the
		spectrum than in finite dimensions, where the spectrum of a matrix is simply
		given by the set of its eigenvalues. The next definition and its remark unite
		\cite[\Romannum{4}. Definition 1.1, p. 239]{engel2001one}, \cite[\Romannum{4}.
		Definition 1.6, p. 241]{engel2001one}, \cite[\Romannum{4}. Definition 1.8 and
		Lemma 1.9, p. 242]{engel2001one} and \cite[\Romannum{4}. Definition 1.11, p. 243]{engel2001one}.
		
		\begin{definition}\label{long_time_behavior_definition_spectrum}
			Let $(-A, \, D(-A))$ be a closed operator. We define
			\begin{enumerate}
				\item
				the spectrum $\sigma(-A) = \left\{ \lambda \in \C \, \colon \, \lambda I_E +
				A \text{ is not bijective} \right\},$
				\item
				the resolvent set $\rho(-A) = \sigma(-A)^c,$
				\item
				the point spectrum $\sigma_p(-A) = \left\{ \lambda \in \C \, \colon \,
				\lambda I_E + A \text{ is not injective} \right\},$
				\item
				the approximate point spectrum
				\begin{align*}
				\sigma_{ap}(-A) &= \{ \lambda \in \C \, \colon \, \lambda I_E+A \text{ is
					not injective} \, \text{ or } \, R(\lambda+A) \text{ is not closed in } E \} \\
				&= \{ \lambda \in \C \, \colon \, \exists (u^{(n)})_{n\in\N} \subset D(-A)
				\colon \| u^{(n)} \|_E = 1, \, \lambda u^{(n)} + Au^{(n)} \to 0 \text{ as }
				n\to\infty \},
				\end{align*}
				\item
				the residual spectrum $\sigma_r(-A) = \{ \lambda \in \C \, \colon \, (\lambda
				I_E + A)D(-A) \text{ is not dense in E} \}.$
			\end{enumerate}
		\end{definition}
		\begin{remarks}
			\begin{enumerate}
				\item
				For $\lambda \in \rho(-A), $ the inverse
				\begin{equation*}
				R(\lambda, -A) \coloneqq (\lambda+A)^{-1}
				\end{equation*}
				is, by the closed graph theorem, a bounded linear operator on $E$ and is
				called the resolvent (of $-A$ at the point $\lambda$).
				\item
				Each $\lambda \in \sigma_p(-A)$ is called an eigenvalue and each $0\neq u \in
				D(-A)$ satisfying $(\lambda+A)u = 0$ is an eigenvector of $-A$ (corresponding to
				$\lambda$).
			\end{enumerate}
		\end{remarks}
		
		\begin{definition}[{\cite[\Romannum{4}. Corollary 1.4, p. 241]{engel2001one}}]\label{long_time_behavior_definition_spectral_radius}
			Let $T \in \mathcal{L}(E).$ The quantity
			\begin{equation*}
			r(T) = \max \left\{ |\lambda| \, \colon \, \lambda \in \sigma(T) \right\}
			\end{equation*}
			is called the spectral radius of $T.$
		\end{definition}
		
		\begin{definition}[{\cite[\Romannum{1}. Definition 5.6, p. 40 and
				\Romannum{4}. Definition 2.1, p. 250]{engel2001one}}]\label{long_time_behavior_definition_exponential_growth_bound_spectral_bound}
			Let $\mathcal{T} \coloneqq (T(t))_{t\geq0}$ be a $C_0$-semigroup on $E$ with
			generator $(-A, \, D(-A)).$ The quantity 
			\begin{equation*}
				\omega_0 = \omega_0(\mathcal{T}) = \inf \left\{ \omega \in \R \, \colon \,
				\exists M_\omega \geq 1 \, \forall t \geq 0 \colon \|T(t)\|_{\mathcal{L}(E)}
				\leq M_\omega e^{\omega t} \right\}
			\end{equation*}
			is called its growth bound and
			\begin{equation*}
				s(-A) = \sup \left\{ \real \lambda \, \colon \, \lambda \in \sigma(-A)
				\right\}
			\end{equation*}
			is called the spectral bound of its generator.
		\end{definition}
		\begin{remark}
			Another common notation for the growth bound of a semigroup $(T(t))_{t\geq0}$ with generator $(-A, \, D(-A))$ is $\omega_0(-A).$
		\end{remark}
		
		The spectral radius of the semigroup operators, the growth bound of the
		semigroup and the spectral bound of the generator are closely connected via the
		next proposition.
		
		\begin{proposition}[{\cite[\Romannum{4}. Proposition 2.2, p. 251]{engel2001one}}]\label{exp_stab_proposition}
			Let $\mathcal{T} = (T(t))_{t\geq 0}$ be a semigroup with generator $(-A, \,
			D(-A)).$ One has
			\begin{equation*}
			-\infty \leq s(-A) \leq \omega_0(\mathcal{T}) = \inf_{t>0} \frac{1}{t} \log
			\| T(t) \|_{\mathcal{L}(E)}= \lim_{t \to \infty} \frac{1}{t} \log
			\|T(t)\|_{\mathcal{L}(E)}= \frac{1}{t_0} \log r(T(t_0)) < \infty
			\end{equation*}
			for all $t_0>0.$ In particular, the spectral radius of the operator $T(t)$ is
			given by
			\begin{equation*}
			r(T(t)) = e^{\omega_0 t} \qquad \text{ for all } t\geq 0.
			\end{equation*}
		\end{proposition}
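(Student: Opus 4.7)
The plan is to prove the three central identities in the displayed chain, the bound $s(-A) \le \omega_0(\mathcal T)$, and the finiteness $\omega_0 < \infty$ separately, and then read off the spectral radius formula as a corollary.

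First I would establish $\omega_0(\mathcal T) < \infty$ together with the existence of the infimum. By Hille--Yosida (\Cref{hille-yosida}) the generator satisfies an exponential estimate $\|T(t)\|_{\mathcal L(E)} \le M e^{\omega t}$ for some $M \ge 1$ and $\omega \in \R$, so the set defining $\omega_0$ is non-empty and bounded below by $-\infty$. Next I would introduce the auxiliary function $f(t) \coloneqq \log \|T(t)\|_{\mathcal L(E)}$ on $(0,\infty)$. The semigroup property $T(t+s) = T(t)T(s)$ and submultiplicativity of the operator norm give $f(t+s) \le f(t) + f(s)$, i.e.\ $f$ is subadditive. Fekete's subadditive lemma then yields
\begin{equation*}
\lim_{t\to\infty} \frac{f(t)}{t} = \inf_{t>0} \frac{f(t)}{t} \in [-\infty, \infty),
\end{equation*}
which is the equality of the $\inf$ and the $\lim$ in the proposition. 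It remains to identify this common value with $\omega_0(\mathcal T)$: the inequality $\inf_{t>0} f(t)/t \le \omega_0$ is immediate from the definition of $\omega_0$, while the reverse inequality follows because, for any $\omega' > \inf_{t>0} f(t)/t$, local boundedness of $t \mapsto \|T(t)\|_{\mathcal L(E)}$ on compact intervals (a standard consequence of strong continuity and the uniform boundedness principle) together with the subadditivity produces a constant $M_{\omega'}$ with $\|T(t)\|_{\mathcal L(E)} \le M_{\omega'} e^{\omega' t}$.

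For the link with the spectral radius, I would apply the classical Gelfand formula $r(T(t_0)) = \lim_{n\to\infty} \|T(t_0)^n\|_{\mathcal L(E)}^{1/n}$ and combine it with $T(t_0)^n = T(nt_0)$ to obtain
\begin{equation*}
\tfrac{1}{t_0}\log r(T(t_0)) = \lim_{n\to\infty} \tfrac{1}{nt_0}\log \|T(nt_0)\|_{\mathcal L(E)} = \lim_{t\to\infty} \tfrac{1}{t} \log \|T(t)\|_{\mathcal L(E)} = \omega_0(\mathcal T),
\end{equation*}
where the middle equality uses the existence of the limit along all of $\R_{>0}$ established above, so the subsequential limit along $nt_0$ has the same value. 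This in particular gives $r(T(t_0)) = e^{\omega_0 t_0}$ for every $t_0 > 0$, which is the addendum.

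Finally, for $s(-A) \le \omega_0(\mathcal T)$ the natural tool is the spectral inclusion \Cref{spectral_inclusion_theorem}, which guarantees $e^{t\sigma(-A)} \subseteq \sigma(T(t))$ for every $t \ge 0$. Fixing $\lambda \in \sigma(-A)$ and $t > 0$ gives $e^{t\lambda} \in \sigma(T(t))$, hence $e^{t\real\lambda} = |e^{t\lambda}| \le r(T(t)) = e^{t\omega_0}$, so $\real\lambda \le \omega_0$ and, taking the supremum, $s(-A) \le \omega_0$. The main obstacle here is not conceptual but rather the careful verification that Fekete's lemma applies on the continuous parameter $t > 0$ (as opposed to $n \in \N$); this is handled by the local boundedness of $\|T(t)\|_{\mathcal L(E)}$ on compact subsets of $[0,\infty)$, itself a standard consequence of strong continuity that I would invoke without further proof.
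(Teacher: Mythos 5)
The paper does not prove this proposition---it is stated as a cited result from Engel--Nagel (Proposition~IV.2.2) in the reference appendix without an argument---so there is no in-paper proof to compare against; what follows evaluates your proof against the standard textbook treatment that the citation points to.

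Your proof is correct, and three of its four ingredients are exactly the textbook's: Hille--Yosida for finiteness of $\omega_0(\mathcal{T})$; the subadditivity of $f(t) = \log\|T(t)\|_{\mathcal{L}(E)}$ together with local boundedness of $t\mapsto\|T(t)\|_{\mathcal{L}(E)}$ (via strong continuity and uniform boundedness) feeding a Fekete-type lemma for the $\inf = \lim$ identity; and Gelfand's formula $r(T(t_0)) = \lim_{n\to\infty}\|T(t_0)^n\|^{1/n}$ combined with $T(t_0)^n = T(nt_0)$ for the spectral-radius identity. Where you genuinely diverge is the bound $s(-A) \le \omega_0(\mathcal{T})$: you derive it from the spectral inclusion \Cref{spectral_inclusion_theorem} via $e^{t\real\lambda} = |e^{t\lambda}| \le r(T(t)) = e^{\omega_0 t}$, whereas Engel--Nagel obtain this inequality earlier and more directly from the Laplace-transform representation of the resolvent, $R(\lambda,-A)u = \int_0^\infty e^{-\lambda t}T(t)u\,dt$ for $\real\lambda > \omega_0(\mathcal{T})$, which exhibits every such $\lambda$ as a point of $\rho(-A)$ without any spectral-mapping machinery. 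Both routes work. Your ordering (spectral radius formula first, then $s(-A)\le\omega_0$) avoids circularity, and the spectral inclusion theorem is proved in the textbook without reference to this proposition, so no hidden dependency is created; the cost is that you need spectral inclusion as a prerequisite, while the integral-representation route is self-contained at the level of the growth bound alone and is the one you would use if you wanted the chain $s(-A)\le\omega_0 < \infty$ available before any spectral theory of the semigroup operators has been developed.
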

		
		\begin{definition}[{equivalent to \cite[\Romannum{5}. Definition 1.1 (a), p. 296]{engel2001one}}]\label{long_time_behavior_definition_exponentially_stable}
			A $C_0$-semigroup $(T(t))_{t\geq0}$ is called uniformly exponentially stable
			if there exist constants $M, \, \eps >0$ such that
			\begin{equation*}
			\| T(t) \|_{\mathcal{L}(E)} \leq M e^{-\eps t}
			\end{equation*}
			holds for all $t\geq0.$
		\end{definition}
		
		An application of \Cref{exp_stab_proposition} yields the following
		characterizations of exponential stability.
		
		\begin{proposition}[{\cite[\Romannum{5}. Proposition 1.7, p. 299]{engel2001one}}]\label{long_time_behavior_exponential_stability_equivalences}
			Let $\mathcal{T} = (T(t))_{t\geq0}$ be a $C_0$-semigroup with generator $(-A,
			\, D(-A)).$ Then the following assertions are equivalent:
			\begin{enumerate}
				\item
				$(T(t))_{t\geq0}$ is uniformly exponentially stable.
				\item
				$\exists t_0>0$ such that $\|T(t_0) \|_{\mathcal{L}(E)} < 1.$
				\item
				$\exists t_1>0$ such that $r(T(t_1)) < 1.$
				\item
				$\omega_0(\mathcal{T}) < 0.$
			\end{enumerate}
			If this is the case, $(ii)$ is valid for all sufficiently large $t_0>0,$
			assertion $(iii)$ is true for all $t_1 > 0$ and we have $s(-A) < 0.$
		\end{proposition}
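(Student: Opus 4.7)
The plan is to prove the chain of equivalences by reducing everything to Proposition~\ref{exp_stab_proposition}, which already supplies the key identities
\[
\omega_0(\mathcal{T}) \;=\; \inf_{t>0}\tfrac{1}{t}\log\|T(t)\|_{\mathcal{L}(E)} \;=\; \tfrac{1}{t_0}\log r(T(t_0)) \quad \text{for all } t_0>0,
\]
together with $s(-A)\le \omega_0(\mathcal{T})$. Given these, the four-way equivalence is essentially a bookkeeping exercise around this single formula, so most of the work has already been done upstream and the proof reduces to routine manipulations.

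First I would establish (i)$\Leftrightarrow$(iv). The implication (i)$\Rightarrow$(iv) is immediate from Definition~\ref{long_time_behavior_definition_exponentially_stable}: an estimate $\|T(t)\|_{\mathcal{L}(E)}\le M e^{-\varepsilon t}$ forces $\omega_0(\mathcal{T})\le -\varepsilon<0$ by the definition of the growth bound. Conversely, if $\omega_0(\mathcal{T})<0$, pick any $\omega\in(\omega_0(\mathcal{T}),0)$; by the defining infimum in Definition~\ref{long_time_behavior_definition_exponential_growth_bound_spectral_bound} there is $M_\omega\ge 1$ with $\|T(t)\|_{\mathcal{L}(E)}\le M_\omega e^{\omega t}$ for all $t\ge 0$, which is precisely uniform exponential stability with $\varepsilon=-\omega$.

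Next I would show (iii)$\Leftrightarrow$(iv) using the identity $r(T(t_0))=e^{\omega_0(\mathcal{T})t_0}$ from Proposition~\ref{exp_stab_proposition}. If $\omega_0(\mathcal{T})<0$, then $r(T(t_1))=e^{\omega_0(\mathcal{T})t_1}<1$ for \emph{every} $t_1>0$, which simultaneously proves (iii) and the addendum about $t_1$. Conversely, existence of one $t_1>0$ with $r(T(t_1))<1$ gives $\omega_0(\mathcal{T})=\tfrac{1}{t_1}\log r(T(t_1))<0$. To close the loop via (ii), the easy direction (ii)$\Rightarrow$(iii) follows from $r(T(t_0))\le \|T(t_0)\|_{\mathcal{L}(E)}<1$; the converse (iii)$\Rightarrow$(ii), with the stronger ``for all sufficiently large $t_0$'' claim, follows from (iv): with $\|T(t)\|_{\mathcal{L}(E)}\le M e^{\omega t}$, $\omega<0$, one has $\|T(t)\|_{\mathcal{L}(E)}<1$ whenever $t>\tfrac{\log M}{-\omega}$. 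The remaining addendum $s(-A)<0$ is just $s(-A)\le \omega_0(\mathcal{T})<0$ from the same proposition.

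There is no real obstacle: the entire content is packaged in Proposition~\ref{exp_stab_proposition}, and if anything subtle arises it is only the mild point that $r(T(t_0))\le\|T(t_0)\|_{\mathcal{L}(E)}$ can be strict, so (ii) is \emph{a priori} stronger than (iii) for a fixed $t_0$; the gap is closed by passing through (iv), which is why the addendum says (ii) holds only for sufficiently large $t_0$ whereas (iii) holds for all $t_1>0$.
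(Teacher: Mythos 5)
Your proof is correct and follows exactly the standard route: the paper itself only cites this result from Engel--Nagel without proof, and the cited proof likewise reduces everything to the identities $\omega_0(\mathcal{T})=\inf_{t>0}\tfrac{1}{t}\log\|T(t)\|_{\mathcal{L}(E)}=\tfrac{1}{t_0}\log r(T(t_0))$ and $s(-A)\le\omega_0(\mathcal{T})$ from \Cref{exp_stab_proposition}. Your handling of the addenda (in particular why (ii) only holds for sufficiently large $t_0$ while (iii) holds for all $t_1>0$) is also correct.
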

		
		The above given criteria for uniform exponential stability have a huge
		disadvantage: they rely on properties of the semigroup itself. Typically in
		applications, only the generator is known. It is therefore desirable to
		characterize uniform exponential stability directly in terms of the generator
		\cite[\Romannum{5}. Introduction, p. 301]{engel2001one}. \newline
		One would hope that the inequality $s(-A) \leq \omega_0$ from
		\Cref{exp_stab_proposition} is actually an equality. This would imply that
		\begin{equation*}
		s(-A) < 0
		\end{equation*}
		is equivalent to uniform exponential stability. Unfortunately, this is generally incorrect
		and the situation is non-trivial. There are (even positive) semigroups
		$(T(t))_{t\geq0}$ with $s(-A)<0$ and $\omega_0 \geq 0;$ see for example
		\cite[\Romannum{4}. Counterexample 2.7, p. 254]{engel2001one}. The equality
		can also fail on less artificial spaces and there are counterexamples	on
		reflexive Banach spaces or even Hilbert spaces, cf. \cite[\Romannum{4}.
		Counterexamples 3.3 and 3.4, p. 271-275]{engel2001one}. \newline
		
		The so called weak spectral mapping theorem
		\begin{equation}\label{wsmthm}
			\sigma(T(t)) \cup \{0\} = \overline{e^{t\sigma(-A)}} \cup \{0\} \quad \text{
			for all } t\geq 0 \tag{WSMT}
		\end{equation}
		is one possibility to get around the problem.
		
		\begin{lemma}[{\cite[\Romannum{5}. Lemma 1.9, p. 301]{engel2001one}}]\label{wsmthm_implies_growth_bd_eq_spectral_bound}
			If for a strongly continuous semigroup $(T(t))_{t\geq0}$ and its generator
			$(-A, \, D(-A))$ the weak spectral mapping theorem \eqref{wsmthm} holds, then
			the growth bound $\omega_0$ and the spectral bound $s(-A)$ coincide, i.e.
			\begin{equation*}
			s(-A) = \omega_0.
			\end{equation*}
			In this case, the semigroup is uniformly exponentially stable if and only if
			$s(-A)<0.$
		\end{lemma}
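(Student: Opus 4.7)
The plan is to combine the weak spectral mapping theorem with the formula $r(T(t)) = e^{\omega_0 t}$ from \Cref{exp_stab_proposition} in order to compute the spectral radius of $T(t)$ in two different ways. Since $s(-A) \leq \omega_0$ holds unconditionally by \Cref{exp_stab_proposition}, it suffices to establish the reverse inequality.

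First, I would fix $t > 0$ and compute $r(T(t))$ directly from the assumed identity
\begin{equation*}
\sigma(T(t)) \cup \{0\} = \overline{e^{t\sigma(-A)}} \cup \{0\}.
\end{equation*}
Since the spectral radius involves a supremum of absolute values, and taking topological closure does not alter the supremum of a continuous function, we obtain
\begin{equation*}
r(T(t)) = \sup\bigl\{|\lambda| \, \colon \, \lambda \in \sigma(T(t))\bigr\} = \sup\bigl\{e^{t\real\mu} \, \colon \, \mu \in \sigma(-A)\bigr\} = e^{t\, s(-A)},
\end{equation*}
where in the last step I use that $x \mapsto e^{tx}$ is monotone and continuous, so that the supremum commutes with the exponential. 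The degenerate case $\sigma(-A) = \emptyset$ is handled by the convention $s(-A) = -\infty$, which then forces $r(T(t)) = 0$ and $\omega_0 = -\infty$ via \Cref{exp_stab_proposition}, so both quantities coincide.

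Next, I invoke \Cref{exp_stab_proposition} again to conclude $r(T(t)) = e^{\omega_0 t}$. Combining the two expressions gives $e^{t\, s(-A)} = e^{t\omega_0}$ for every $t > 0$, which implies $s(-A) = \omega_0$ as desired. For the addendum, I simply chain this equality with the equivalence $(i) \Leftrightarrow (iv)$ of \Cref{long_time_behavior_exponential_stability_equivalences}: uniform exponential stability of $(T(t))_{t\geq 0}$ is equivalent to $\omega_0 < 0$, which under the proven equality is equivalent to $s(-A) < 0$.

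The main (minor) obstacle is the careful bookkeeping of the distinguished point $\{0\}$ appearing in the WSMT and the empty-spectrum edge case; both are purely notational issues and cause no real difficulty. The substance of the argument is the two-line computation of $r(T(t))$ via WSMT, and everything else follows from results already stated in the excerpt.
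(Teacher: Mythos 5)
Your proof is correct and follows the same spectral-radius computation that Engel--Nagel use for the cited Lemma V.1.9 (the paper itself defers to that reference rather than proving the statement); the two key ingredients you identify, $r(T(t)) = e^{\omega_0 t}$ from \Cref{exp_stab_proposition} and the invariance of the supremum of $|\cdot|$ under closure and the added point $\{0\}$, are exactly what is needed, and your treatment of the $\sigma(-A)=\emptyset$ edge case is correct.
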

		
		The question remaining is which assumptions on the semigroup $(T(t))_{t\geq0}$ ensure the existence of a weak spectral mapping theorem. The next result shows that neither the point, nor the residual spectrum,
		cause any problems.
		
		\begin{theorem}[{\cite[\Romannum{4}. Theorem 3.7, p. 277]{engel2001one}}]\label{spectral_theorem_point_spec}
			For a generator $(-A, \, D(-A))$ of a strongly continuous semigroup
			$(T(t))_{t\geq0}$, we have the identities
			\begin{align*}
			\sigma_p(T(t)) \backslash \{0\} &= e^{t\sigma_p(-A)}, \\
			\sigma_r(T(t)) \backslash \{0\} &= e^{t\sigma_r(-A)}
			\end{align*}
			for all $t\geq0.$
		\end{theorem}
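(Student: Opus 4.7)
The plan is to treat the two assertions separately, reducing each to a concrete construction involving an eigenvector and a Laplace-type integral.

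For the point spectrum, I would begin with the easy inclusion $e^{t\sigma_p(-A)} \subseteq \sigma_p(T(t)) \setminus \{0\}$. If $u \in D(-A) \setminus \{0\}$ satisfies $-Au = \lambda u$, then both $t \mapsto T(t)u$ and $t \mapsto e^{t\lambda}u$ are classical solutions of the homogeneous Cauchy problem $\dot{w} + Aw = 0$, $w(0) = u$, so by \Cref{existence_homogeneous_eq} they agree, yielding $T(t)u = e^{t\lambda}u$ and hence $e^{t\lambda} \in \sigma_p(T(t))$. For the converse, fix $\mu \in \sigma_p(T(t)) \setminus \{0\}$ with eigenvector $u \neq 0$, pick any $\lambda_0 \in \C$ with $e^{t\lambda_0} = \mu$, and set $\lambda_n \coloneqq \lambda_0 + \tfrac{2\pi i n}{t}$ for $n \in \Z$. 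For each $n$ define
\begin{equation*}
    v_n \coloneqq \int_0^t e^{-s\lambda_n} T(s)u \, ds \in E.
\end{equation*}
A standard semigroup calculation (integrating by parts using the fundamental theorem of calculus on the orbit $s \mapsto T(s)u$) gives $v_n \in D(-A)$ together with $(\lambda_n I_E + A)v_n = e^{-t\lambda_n}T(t)u - u = e^{-t\lambda_n}\mu u - u = 0$. If some $v_n \neq 0$, then $\lambda_n \in \sigma_p(-A)$ and $e^{t\lambda_n} = \mu$, and we are done.

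The heart of the proof is ruling out the degenerate case in which $v_n = 0$ for every $n \in \Z$. If this happened, then the continuous function $f\colon [0,t] \to E$, $f(s) = e^{-s\lambda_0} T(s)u$, would have vanishing Fourier coefficients
\begin{equation*}
    \int_0^t e^{-2\pi i n s/t} f(s) \, ds = v_n = 0 \qquad \text{for all } n \in \Z.
\end{equation*}
A Hahn-Banach argument reduces this to the scalar case: testing against any $\varphi \in E^*$ yields a continuous $\C$-valued function on $[0,t]$ with all Fourier coefficients zero, hence identically zero. Since $\varphi$ was arbitrary, $f \equiv 0$, which in particular forces $u = f(0) = 0$, a contradiction. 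This is the technical heart of the argument, and it is the main obstacle one has to be careful about in the point spectrum direction.

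For the residual spectrum, I would exploit the standard duality between the residual spectrum of an operator and the point spectrum of its adjoint: $\lambda \in \sigma_r(-A)$ if and only if $\overline{\lambda} \in \sigma_p((-A)^*)$ (restricted appropriately), and similarly $\mu \in \sigma_r(T(t))$ if and only if $\overline{\mu} \in \sigma_p(T(t)^*)$. On a general Banach space the adjoint semigroup $(T(t)^*)_{t\geq 0}$ need not be strongly continuous, so the precise execution requires passing to the sun dual $E^\odot$, the largest closed subspace of $E^*$ on which the adjoint semigroup is strongly continuous, whose generator is the part of $(-A)^*$ in $E^\odot$. One checks that the sun dual does not shrink the relevant spectra, then applies the point spectrum identity already proved to the sun dual semigroup $(T(t)^\odot)_{t \geq 0}$ and dualises back. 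The only real obstacle here is the careful bookkeeping around the sun dual construction; once that machinery is set up, the result falls out of the point spectrum case.
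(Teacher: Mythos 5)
Your point-spectrum argument is correct and is essentially the textbook proof that the paper is citing: the easy inclusion via uniqueness for the homogeneous Cauchy problem, the regularised integrals $v_n = \int_0^t e^{-\lambda_n s} T(s)u\,ds$ landing in $D(-A)$ with $(\lambda_n + A)v_n = 0$, and the contradiction via vanishing Fourier coefficients tested against $E^*$. (There is a harmless sign slip: the standard identity is $(\lambda_n + A)v_n = u - e^{-t\lambda_n}T(t)u$, but this is $0$ all the same since $e^{t\lambda_n} = \mu$.)

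The residual-spectrum part has the right reduction in mind but a real gap at the sun-dual step. Writing $\sigma_r(T(t)) = \sigma_p(T(t)')$ and $\sigma_r(-A) = \sigma_p((-A)')$ (up to the conjugation convention) is fine. The problem is the assertion that ``one checks that the sun dual does not shrink the relevant spectra.'' If $T(t)'\varphi = \mu\varphi$ with $0\neq\varphi\in E^*$, there is no reason for $\varphi$ to lie in $E^\odot = \overline{D((-A)')}$, so you cannot simply apply the already-proven point-spectrum identity to the strongly continuous sun-dual semigroup $(T(t)^\odot)_{t\geq0}$ and pull the conclusion back; the claim $\sigma_p(T(t)^\odot) = \sigma_p(T(t)')$ is exactly what is at issue and is not obvious. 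The gap is avoidable, and the fix is short: run your own integral argument directly on $E^*$, reading $v_n := \int_0^t e^{-\lambda_n s} T(s)'\varphi\,ds$ as a weak* integral. For $x\in D(-A)$ one computes
\begin{equation*}
\langle -Ax, v_n\rangle = \int_0^t e^{-\lambda_n s}\Big\langle \tfrac{d}{ds}T(s)x,\,\varphi\Big\rangle\,ds
= \big(e^{-\lambda_n t}\mu - 1\big)\langle x,\varphi\rangle + \lambda_n\langle x, v_n\rangle
= \lambda_n \langle x, v_n\rangle,
\end{equation*}
so $v_n\in D((-A)')$ and $(-A)'v_n = \lambda_n v_n$. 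If some $v_n\neq 0$ you are done; if all $v_n=0$, testing against $x\in E$ shows the continuous function $s\mapsto e^{-\lambda_0 s}\langle T(s)x,\varphi\rangle$ has all Fourier coefficients zero on $[0,t]$, hence vanishes, and taking $s=0$ gives $\langle x,\varphi\rangle=0$ for all $x$, i.e.\ $\varphi=0$, a contradiction. This closes the gap without invoking the sun dual at all and keeps the two halves of the proof parallel.
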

		
		Moreover, there is the spectral inclusion theorem.
		
		\begin{theorem}[Spectral Inclusion Theorem, {\cite[\Romannum{4}. Theorem 3.6, p. 276]{engel2001one}}]\label{spectral_inclusion_theorem}
			For the generator $(-A, \, D(-A))$ of a strongly continuous semigroup
			$(T(t))_{t\geq0}$, we have the inclusions
			\begin{align*}
				\sigma(T(t)) &\supseteq e^{t\sigma(-A)} \\
				\sigma_{ap}(T(t)) &\supseteq e^{t\sigma_{ap}(-A)}
			\end{align*}
			for all $t\geq0.$
		\end{theorem}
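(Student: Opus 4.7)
The plan is to construct an auxiliary bounded operator family that intertwines $\mu I_E + A$ with $e^{t\mu}I_E - T(t)$. For $t=0$ both inclusions are trivial since $T(0)=I_E$ and $e^{0\sigma(-A)} \subseteq \{1\} = \sigma(T(0))$, so I fix $t>0$. The key ingredient is the bounded operator
\begin{equation*}
B_\mu(t) \coloneqq \int_0^t e^{(t-s)\mu}\, T(s)\, ds \in \mathcal{L}(E), \qquad \mu \in \C.
\end{equation*}
I would first establish the two intertwining identities
\begin{equation*}
(\mu I_E + A)\, B_\mu(t) u = \big(e^{t\mu}I_E - T(t)\big) u \qquad \text{for all } u \in E,
\end{equation*}
\begin{equation*}
B_\mu(t)\, (\mu I_E + A) u = \big(e^{t\mu}I_E - T(t)\big) u \qquad \text{for all } u \in D(-A).
\end{equation*}
The first identity in particular asserts $B_\mu(t)u \in D(-A)$ for every $u \in E$; this would be obtained from the standard regularizing property that $\int_0^t T(s)u\, ds$ lies in $D(-A)$ with $(-A)\int_0^t T(s)u\, ds = T(t)u - u$, combined with an integration-by-parts computation that tracks the factor $e^{(t-s)\mu}$. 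The second identity follows from the closedness of $-A$ and the fact that $T(s)$ and $-A$ commute on $D(-A)$, which allows one to pull $(-A)$ inside the integral defining $B_\mu(t)$.

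For the approximate point spectrum inclusion, let $\mu \in \sigma_{ap}(-A)$ and choose a sequence $(u_n)_{n\in\N} \subset D(-A)$ with $\|u_n\|_E = 1$ and $(\mu I_E + A)u_n \to 0$ as supplied by \Cref{long_time_behavior_definition_spectrum}. Applying the bounded operator $B_\mu(t)$ to the second identity gives
\begin{equation*}
\big(e^{t\mu}I_E - T(t)\big) u_n = B_\mu(t)\,(\mu I_E + A) u_n \longrightarrow 0,
\end{equation*}
so $e^{t\mu} \in \sigma_{ap}(T(t))$ by the sequential characterization in \Cref{long_time_behavior_definition_spectrum}(iv).

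For the full spectral inclusion I would argue by contraposition: assume $e^{t\mu} \in \rho(T(t))$ and deduce $\mu \in \rho(-A)$. The candidate inverse of $\mu I_E + A$ is $Q \coloneqq B_\mu(t)\,\big(e^{t\mu}I_E - T(t)\big)^{-1}$, which is bounded on $E$. For surjectivity, given $v \in E$, setting $u = Qv$ and using the first intertwining identity yields $(\mu I_E + A)u = (e^{t\mu}I_E - T(t))(e^{t\mu}I_E - T(t))^{-1} v = v$. For injectivity, if $u \in D(-A)$ with $(\mu I_E + A)u = 0$, the second identity gives $(e^{t\mu}I_E - T(t))u = 0$, whence $u = 0$ since $e^{t\mu} \in \rho(T(t))$. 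Hence $\mu I_E + A$ is bijective and $\mu \in \rho(-A)$, contradicting $\mu \in \sigma(-A)$.

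The main obstacle is the rigorous verification of the first intertwining identity, specifically the claim that $B_\mu(t)u \in D(-A)$ for \emph{every} $u \in E$ rather than just for $u \in D(-A)$; this is where the smoothing property of the integral against the semigroup must be invoked. Once both intertwining identities are in hand, the remaining deductions for $\sigma_{ap}$ and $\sigma$ are essentially algebraic, and the approximate point spectrum argument in fact also recovers the weaker statement that $e^{t\sigma_p(-A)} \subseteq \sigma_p(T(t))$, consistent with \Cref{spectral_theorem_point_spec}.
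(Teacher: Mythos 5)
Your proof is correct and matches the standard argument given in the reference the paper cites (Engel--Nagel, IV.\,Theorem 3.6), namely the intertwining identities for $B_\mu(t)=\int_0^t e^{(t-s)\mu}T(s)\,ds$ followed by the "pseudo-resolvent" construction $Q = B_\mu(t)(e^{t\mu}I_E-T(t))^{-1}$; the paper itself does not reprove the theorem but simply cites it. One small remark: the fact that $B_\mu(t)u\in D(-A)$ for all $u\in E$ is most cleanly obtained not by integration by parts but by applying the identity $(-G)\int_0^t S(s)u\,ds = S(t)u - u$ to the rescaled semigroup $S(s)=e^{-\mu s}T(s)$, whose generator is $-A-\mu$, and then multiplying by $e^{\mu t}$; this avoids any regularity bookkeeping and gives both intertwining identities at once.
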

		
		Special classes of semigroups fulfill a stronger version of \eqref{wsmthm}, the
		spectral mapping theorem. A prominent example is the class of analytic
		semigroups, which appears in the study of parabolic equations; see
		\cite[\Romannum{2}. Definition 4.5, p. 101]{engel2001one} for their
		definition.
		
		\begin{corollary}[{\cite[\Romannum{4}. Corollary 3.12, p. 281]{engel2001one}}]\label{smtm}
			The spectral mapping theorem
			\begin{equation}
				\sigma(T(t)) \backslash \{0\} = e^{t\sigma(-A)} \quad \text{
					for all } t\geq 0  \tag{SMT}
			\end{equation}
			holds for the following class of strongly continuous semigroups:
			\begin{enumerate}
				\item
				eventually compact semigroups,
				\item
				eventually differentiable semigroups,
				\item
				analytic semigroups,
				\item
				uniformly continuous semigroups.
			\end{enumerate}
		\end{corollary}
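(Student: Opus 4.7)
The plan is to unify all four cases by showing that each of the listed semigroup classes is \emph{eventually norm continuous}, that is, the map $t \mapsto T(t)$ is continuous from some $t_0 \geq 0$ onward with respect to the operator norm on $\mathcal{L}(E)$. Once this reduction is in place, it suffices to prove the spectral mapping theorem for eventually norm continuous semigroups, and the corollary falls out. The $``\supseteq"$ inclusion in the spectral mapping theorem always holds by the spectral inclusion \Cref{spectral_inclusion_theorem}, so only $``\subseteq"$ is at issue.

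First, I would verify the implications among the classes. A uniformly continuous semigroup is given by $T(t) = e^{tA}$ with $A \in \mathcal{L}(E)$ and extends to an entire operator-valued function, hence is analytic. An analytic semigroup is differentiable for all $t > 0$ into $\mathcal{L}(E)$, so it is eventually differentiable. An eventually differentiable semigroup is eventually norm continuous since norm differentiability on $(t_0, \infty)$ implies norm continuity there. Finally, for an eventually compact semigroup $(T(t))_{t \geq 0}$ (i.e.\ $T(t_0)$ is compact for some $t_0 > 0$), strong continuity together with compactness of $T(t_0)$ forces $t \mapsto T(t)$ to be norm continuous on $[t_0, \infty)$: the orbit $\{T(s)u : s \in [0, \delta]\}$ restricted via $T(t_0)$ becomes a norm-compact family, which upgrades strong to uniform convergence.

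Second, I would prove the spectral mapping theorem for an eventually norm continuous $C_0$-semigroup $(T(t))_{t\geq 0}$ with generator $(-A, D(-A))$. Fix $t > 0$ and let $\lambda \in \sigma(T(t)) \setminus \{0\}$. By a well-known result on boundary spectrum of bounded operators, some rotate of $\lambda$ lies in the approximate point spectrum; exploiting the semigroup structure via the identity $T(nt) - \lambda^n = (T(t)^{n-1} + \lambda T(t)^{n-2} + \cdots + \lambda^{n-1})(T(t) - \lambda)$ and letting $n \to \infty$, one shows in fact $\lambda \in \sigma_{ap}(T(t))$ directly. Write $\lambda = e^{\mu t}$ with $\mu \in \C$ and choose unit vectors $u_n \in E$ with $\|T(t)u_n - \lambda u_n\|_E \to 0$. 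The key construction is to form the averaged vectors
\begin{equation*}
w_n^{(k)} \coloneqq \frac{1}{t} \int_0^t e^{-(\mu + 2\pi i k/t)s} T(s) u_n \, ds, \qquad k \in \Z,
\end{equation*}
which lie in $D(-A)$, and to verify the identity $(-A - (\mu + 2\pi i k/t)) w_n^{(k)} = \frac{1}{t} e^{-\mu t}(T(t)u_n - \lambda u_n)$. This already places $\mu + 2\pi i k/t$ in $\sigma_{ap}(-A)$ as soon as $\|w_n^{(k)}\|_E$ stays bounded away from zero along a subsequence for some $k$. Proving that this uniform non-degeneracy occurs for at least one frequency $k$ is the crux and is where eventual norm continuity enters.

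The main obstacle will be precisely this non-degeneracy. Using Parseval-type reasoning on the finite interval $[0, t]$, one decomposes $u_n$ in terms of the Fourier-averaged pieces $w_n^{(k)}$; the question reduces to showing that the high-frequency tail $\sum_{|k|>K} \|w_n^{(k)}\|^2$ can be made small uniformly in $n$. Eventual norm continuity implies, via a Riemann-Lebesgue type estimate for the operator-valued function $s \mapsto T(s)$, that $\|w_n^{(k)}\|_E \to 0$ uniformly in $n$ as $|k| \to \infty$. Hence for some $|k| \leq K$ the vectors $w_n^{(k)}$ cannot all vanish, yielding $\mu + 2\pi i k/t \in \sigma_{ap}(-A) \subseteq \sigma(-A)$ and therefore $\lambda = e^{(\mu + 2\pi i k/t) t} \in e^{t\sigma(-A)}$. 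This gives $\sigma(T(t)) \setminus \{0\} \subseteq e^{t\sigma(-A)}$; combined with \Cref{spectral_inclusion_theorem} the corollary follows, and the closure operation becomes unnecessary because in each listed class the spectrum $e^{t\sigma(-A)} \setminus \{0\}$ turns out to already be closed in $\C \setminus \{0\}$.
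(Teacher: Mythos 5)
The paper itself offers no proof of this statement; it is quoted verbatim from \cite[\Romannum{4}. Corollary 3.12, p. 281]{engel2001one}, where it is obtained as an immediate consequence of the spectral mapping theorem for \emph{eventually norm continuous} semigroups (\cite[\Romannum{4}. Theorem 3.10]{engel2001one}). Your high-level strategy — show each of the four classes is eventually norm continuous and then prove the spectral mapping theorem for that larger class — is therefore exactly the route Engel--Nagel take; the chain of implications (uniformly continuous $\Rightarrow$ analytic $\Rightarrow$ eventually differentiable $\Rightarrow$ eventually norm continuous, and eventually compact $\Rightarrow$ eventually norm continuous) is correct, as is your integration-by-parts identity
\begin{equation*}
\big(-A - (\mu + \tfrac{2\pi i k}{t})\big) w_n^{(k)} = \tfrac{1}{t} e^{-\mu t}\big(T(t)u_n - \lambda u_n\big),
\end{equation*}
which is the mechanism by which an approximate eigenvalue of $T(t)$ is converted into an approximate eigenvalue of the generator.

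Where your sketch has genuine gaps is in the ``non-degeneracy'' step, and in the bookkeeping of the spectral parts. First, not every $\lambda\in\sigma(T(t))\setminus\{0\}$ lies in $\sigma_{ap}(T(t))$; only the union $\sigma_{ap}\cup\sigma_r$ exhausts the spectrum. The residual part must be dispatched separately, which is painless via \Cref{spectral_theorem_point_spec} ($\sigma_r(T(t))\setminus\{0\}=e^{t\sigma_r(-A)}$ for \emph{every} $C_0$-semigroup), but you have not said this and the assertion that $\lambda\in\sigma_{ap}(T(t))$ ``directly'' is not substantiated by the polynomial identity you cite. Second, the Parseval-type reasoning simply does not exist on a general Banach space $E$: the statement
\begin{equation*}
\sum_{|k|>K}\|w_n^{(k)}\|^2 \text{ small uniformly in } n
\end{equation*}
is a Hilbert-space argument, whereas the Corollary is stated for arbitrary Banach spaces. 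Third, even the Riemann--Lebesgue estimate ``$\|w_n^{(k)}\|\to 0$ uniformly in $n$'' does not follow from eventual norm continuity alone, because $T$ is only \emph{eventually} norm continuous: the integral over $[0,t_0]$ where $T$ is merely strongly continuous does not decay in operator norm. Closing this requires the extra ingredient you omit — the approximate periodicity $T(s+t)u_n\approx\lambda\, T(s)u_n$, used to translate the Fourier average from $[0,t]$ to a window $[mt,(m+1)t]\subset[t_0,\infty)$ on which norm continuity does give uniform Riemann--Lebesgue decay. (Engel--Nagel avoid this entirely by proving a resolvent-decay characterization of eventual norm continuity and building the resolvent of $T(t)$ directly; that route sidesteps Parseval altogether, which is why it works on Banach spaces.) Finally, your closing remark about ``the closure operation becomes unnecessary'' is a non sequitur: the spectral mapping theorem as stated contains no closure, so there is nothing to remove; that remark belongs to a discussion of the weak spectral mapping theorem, not this one.
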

		\begin{remark}
			For the definitions of eventually compact, eventually differentiable and
			uniformly continuous semigroups, we refer to \cite[\Romannum{2}. Definition
			4.23, p. 117]{engel2001one}, \cite[\Romannum{2}. Definition 4.13, p. 109]{engel2001one} and \cite[\Romannum{1}. Definition 3.6, p. 16]{engel2001one} respectively.
		\end{remark}
		
		Unfortunately, the transport-reaction semigroups appearing when studying the
		linearized version of the hyperbolic equation
		\begin{equation}\label{transport-reaction-pde}
		\partial_t \colvec{u_1 \\ \vdots \\ u_N} + \colvec{\mathbf{v_1} \cdot \nabla u_1 \\ \vdots \\ \mathbf{v_N} \cdot \nabla u_N} = F(u)
		\end{equation}
		on the $d$-dimensional torus $\T^d$ with $N\in\N$ population subgroups, transport directions
		$\mathbf{v_1}, \, \cdots, \mathbf{v_N} \in \R^d$ and a sufficiently smooth
		reaction function $F$ are less regular than all classes of semigroups from
		\Cref{smtm}. Having said this, they can be expanded to a strongly continuous
		group $(T(t))_{t\in\R}$ and for these groups, there is the following result.
		
		\begin{theorem}[{\cite[\Romannum{4}. Theorem 3.16 and Exercise 3.22, p. 283ff.]{engel2001one}}]\label{theorem_wsmt_bounded_grop}
			Let $(T(t))_{t\in\R}$ be a polynomially bounded stongly continuous group with
			generator $(-A, \, D(-A))$. Then the weak spectral mapping theorem
			\begin{equation*}
				\sigma(T(t)) = \overline{e^{t\sigma(-A)}}
			\end{equation*}
			for $t\in\R$ holds.
		\end{theorem}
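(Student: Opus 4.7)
The $\supseteq$ inclusion is essentially free: the spectral inclusion \Cref{spectral_inclusion_theorem} gives $e^{t\sigma(-A)} \subseteq \sigma(T(t))$, and since $\sigma(T(t))$ is closed one obtains $\overline{e^{t\sigma(-A)}} \subseteq \sigma(T(t))$. The substance of the theorem is the reverse inclusion, and my plan is to extract it from \Cref{corollary_greiner} (Greiner's characterization of $e^{\lambda t} \in \rho(T(t))$).

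By rescaling time (replace $T(\cdot)$ by $T(t_0 \,\cdot)$ and $-A$ by $t_0(-A)$) it suffices to treat $t_0 = 1$; negative $t$ then follow from $T(-1) = T(1)^{-1}$ and the identity $\sigma(T(1)^{-1}) = \sigma(T(1))^{-1}$. Fix $\mu \notin \overline{e^{\sigma(-A)}}$. Since this complement is open, there is some $\delta > 0$ such that the disk of radius $\delta$ around $\mu$ misses $e^{\sigma(-A)}$. Writing $\mu = e^{\lambda}$ and passing through the exponential, this translates into a uniform lower bound
\[
    \dist\bigl(\lambda + 2\pi i \Z,\, \sigma(-A)\bigr) \geq c > 0,
\]
so that $\lambda + 2\pi i \Z \subseteq \rho(-A)$ and, crucially, the resolvents $R(\lambda + 2\pi i k,\, -A)$ are \emph{uniformly} bounded in $k \in \Z$. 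In view of \Cref{corollary_greiner}, proving $\mu \in \rho(T(1))$ now reduces to showing the uniform bound
\[
    \sup_{n \in \N} \Big\| \frac{1}{n} \sum_{m=0}^{n-1} \sum_{k=-m}^{m} R(\lambda + 2\pi i k, -A) \Big\|_{\mathcal{L}(E)} < \infty.
\]

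To establish this, I would exploit the Laplace-transform representation of the resolvent. Since $(T(t))_{t \in \R}$ is a group, writing $-A_+ = -A$ and $-A_- = A$ for the generators of the two semigroup wings, one has
\[
    R(\lambda + 2\pi i k, -A)u = \int_0^\infty e^{-(\lambda + 2\pi i k) s} T(s) u \, ds + \int_0^\infty e^{(\lambda + 2\pi i k) s} T(-s) u \, ds \cdot (\text{correction})
\]
depending on the sign of $\Re \lambda$; in any case the resolvent is expressed as a Fourier integral of a polynomially bounded orbit $s \mapsto e^{-\lambda s} T(\pm s) u$. Plugging this into $S^{(n)}$ and interchanging the finite sums with the integral produces a convolution of the orbit against a Fejér-type kernel in the $s$-variable. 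The averaging in $m$ inside $S^{(n)}$ is exactly what turns the Dirichlet-kernel partial sums into a Fejér kernel, which is a non-negative approximate identity of uniformly bounded $L^1$-mass. Polynomial boundedness $\|T(s)\|_{\mathcal{L}(E)} \lesssim (1+|s|)^N$ then guarantees that the orbit lies in a weighted space on which convolution with this Fejér kernel is uniformly bounded.

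The main obstacle is precisely this Fejér-kernel estimate: one must convert the partial-sum structure of $S^{(n)}$ into an approximate-identity convolution against the orbit, and then absorb the polynomial growth of $\|T(s)\|$ using the averaging in $m$. In the bounded case ($N = 0$) this reduces to the classical $L^1(\R)$-norm bound on the Fejér kernel and closes the argument immediately, but for genuine polynomial growth one has to work in weighted spaces (with weight $(1+|s|)^{-N-1}$, say) and exploit that the Fejér kernel concentrates sharply enough to beat the polynomial weight. Once the uniform bound on $S^{(n)}$ is in hand, \Cref{corollary_greiner} yields $\mu \in \rho(T(1))$, contradicting any supposed membership $\mu \in \sigma(T(1)) \setminus \overline{e^{\sigma(-A)}}$, and the theorem follows.
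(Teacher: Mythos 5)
The paper does not prove this theorem; it is quoted directly from Engel--Nagel (Theorem~IV.3.16, extended to the polynomially bounded case by Exercise~3.22), so there is no in-paper argument to compare against. The only question is whether your sketch closes, and I do not think it does as written.

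Your framework --- reduce to $t=1$, invoke \Cref{corollary_greiner}, and bound the Cesàro averages $S^{(n)}$ via a Fejér-kernel representation of the resolvent sums --- is a reasonable plan, and the $\supseteq$ direction via \Cref{spectral_inclusion_theorem} and closedness of $\sigma(T(t))$ is fine. The gap is in the Fejér estimate. Polynomial boundedness of the group forces $\omega_0$ of both semigroup wings to equal $0$, hence $\sigma(-A)\subseteq i\R$ and $\sigma(T(1))\subseteq\S^1$; so the only $\mu$ that matter are those with $|\mu|=1$, equivalently $\real\lambda=0$. But the Laplace representation
\begin{equation*}
R(\lambda+2\pi i k,-A)=\int_0^\infty e^{-(\lambda+2\pi i k)s}\,T(s)\,ds
\end{equation*}
(and its mirror with $T(-s)$) converges only for $\real\lambda>0$ (resp.\ $\real\lambda<0$) --- exactly the regimes in which $\mu\in\rho(T(1))$ is already trivial by the spectral-radius bound. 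For $\real\lambda=0$ the integrand has $|e^{-\lambda s}|=1$, and since the Fejér kernel $F_n$ is $1$-periodic with unit mass per period, the integral $\int_0^\infty F_n(s)\,\|T(s)u\|\,ds$ diverges already for a \emph{bounded} group; the polynomial weight only makes this worse. Your ``Fejér concentration beats polynomial growth'' idea therefore never engages in the one case that matters. A telling symptom is that the crucial hypothesis $\dist(\lambda+2\pi i\Z,\,\sigma(-A))\geq c>0$ is never actually used in your estimate --- any valid proof must bring it to bear through oscillatory cancellation in the resolvent sums or through a different representation near $i\R$. Engel--Nagel's own route avoids the resolvent-average criterion altogether: it builds a Fourier-multiplier functional calculus $f\mapsto\int_\R\hat f(s)T(s)\,ds$ on a suitable (weighted, for the polynomially bounded case) convolution algebra and constructs an explicit two-sided inverse of $\mu-T(1)$ from a function $f$ that is $\equiv (\mu-e^{\,\cdot})^{-1}$ near $\sigma(-A)\subseteq i\R$. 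If you want to salvage the Greiner route, you would need a rigorous argument that the $S^{(n)}$ are uniformly bounded when $\lambda\in i\R$ stays away from $\sigma(-A)$, and your present absolute-value estimate cannot give that.
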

		\begin{remark}
			Polynomially bounded means that there is a polynomial $P\colon \R \to \R$ with
			\begin{equation*}
				\|T(t)\|_{\mathcal{L}(E)} \leq P(t) \quad \text{ for all } t\in\R.
			\end{equation*}
		\end{remark}
		
		Without any reactions, i.e. the homogeneous case of
		\eqref{transport-reaction-pde}, the generated transport group is bounded and
		therefore, \Cref{theorem_wsmt_bounded_grop} applies. This is, to some extent,
		the motivation for a weak spectral mapping theorem for the linearized version of
		\eqref{transport-reaction-pde}. It should be noted that the transport-reaction
		groups generated in this case are almost never polynomially bounded. Depending
		on the reactions, some components of the solution grow exponentially, either for
		$t\to \infty$ or for $t\to -\infty.$

		\nomenclature{$\T^d$}{$d$-dimensional torus with side length $1$}
		\nomenclature{$\Ind{U}$}{characteristic function of the set $U$}
		\nomenclature{$\lVert\,{\cdot}\,\rVert_E$}{norm on the Banach space $E$}
		\nomenclature{$\lVert\,{\cdot}\,\rVert_{-A}$}{graph norm of $-A$}
		\nomenclature{$\langle\cdot, \, \cdot \rangle_H$}{inner product on the Hilbert
			space $H$}
		\nomenclature{$\langle\cdot, \, \cdot \rangle$}{canonical bilinear form on
			$E\times E^*$ for the Banach space $E$ and its dual space $E^*$}
		\nomenclature{$-A^*$}{adjoint of the linear operator $-A$}
		\nomenclature{$\overline{-A}$}{closure of $-A$}
		\nomenclature{$\restr{-A}{Y}$}{part of $-A$ in $Y$}
		\nomenclature{$C^\infty(\Omega)$}{space of infinitely many times differentiable
			functions}
		\nomenclature{$C^k(\Omega)$}{space of $k$-times differentiable functions,
			$k\in\N$}
		\nomenclature{$C(\Omega)$}{space of continuous functions}
		\nomenclature{$C_c(\Omega)$}{space of continuous functions having compact
			support}
		\nomenclature{$D(-A)$}{domain of $-A$}
		\nomenclature{$\mathcal{F}$}{Fourier transform}
		\nomenclature{$\hat{u}(\indextwo)$}{$\indextwo$-th Fourier coefficient of the
			periodic function $u$ with $\indextwo\in\Z^d$}
		\nomenclature{$\range(-A)$}{range of $-A$}
		\nomenclature{$H^\indextwo(\Omega)$}{Sobolev space of order $(k,2)$}
		\nomenclature{$L^p(\Omega)$}{space of $p$-integrable functions}
		\nomenclature{$\mathcal{L}(E)$}{space of bounded linear operators on $E$}
		\nomenclature{$\mathcal{A}_q$}{matrix multiplication operator associated to
			$q$}
		\nomenclature{$\omega_0(\mathcal{T})$}{growth bound of the semigroup
			$\mathcal{T}=(T(t))_{t\geq0}$}
		\nomenclature{$\sigma(-A)$}{spectrum of $-A$}
		\nomenclature{$\sigma_p(-A)$}{point spectrum of $-A$}
		\nomenclature{$\sigma_{ap}(-A)$}{approximate point spectrum of $-A$}
		\nomenclature{$\sigma_r(-A)$}{residual spectrum of $-A$}
		\nomenclature{$\rho(-A)$}{resolvent set of $-A$}
		\nomenclature{$r(T)$}{spectral radius of $T$}
		\nomenclature{$s(-A)$}{spectral bound of $-A$}
		\nomenclature{$R(\lambda, -A)$}{resolvent of $-A$ at the point $\lambda$}
		\nomenclature{$(T(t))_{t\geq0}$}{semigroup of bounded linear operators}
		\nomenclature{$(T(t))_{t\in\R}$}{group of bounded linear operators}
		\nomenclature{$W^{\indextwo,p}(\Omega)$}{Sobolev space of order $(k,p)$}
		\nomenclature{$(E, \, \lVert\,{\cdot}\,\rVert_E)$}{Banach space}
		\nomenclature{$(X, \, \Sigma, \, \mu)$}{$\sigma$-finite measure space}
		\nomenclature{$\lesssim$}{less or equal up to a constant}
		
		\printnomenclature[1in]
		
	\end{appendices}

	% Add bibliography to the table of contents
	\addcontentsline{toc}{chapter}{Bibliography}

	\bibliography{mather_thesis.bbl}
	
\end{document}